\newcommand{\indicator}[1]{\ensuremath{\mathbf{1}_{\{#1\}}}}
\newcommand{\oindicator}[1]{\ensuremath{\mathbf{1}_{{#1}}}}
\newif\ifdetail
\newenvironment{Details}{}{}
\DeclareMathOperator{\var}{Var}
\DeclareMathOperator{\tr}{tr}
\DeclareMathOperator{\rank}{rank}
\newcommand{\Prob}{\mathbb{P}}
\newcommand{\E}{\mathbb{E}}
\newcommand{\C}{\mathbb{C}}
\renewcommand{\P}{\mathbb{P}}
\renewcommand\Re{\operatorname{Re}}
\renewcommand\Im{\operatorname{Im}}
\newcommand{\eps}{\varepsilon}
\def\R{\mathbb{R}}
\theoremstyle{plain}
  \newtheorem{theorem}{Theorem}[section]
  \newtheorem{lemma}[theorem]{Lemma}
  \newtheorem{corollary}[theorem]{Corollary}
  \newtheorem{proposition}[theorem]{Proposition}
\theoremstyle{definition}
  \newtheorem{definition}[theorem]{Definition}
  \newtheorem{assumption}[theorem]{Assumption}
\theoremstyle{remark}
  \newtheorem{remark}[theorem]{Remark}
\newcommand{\Var}{\text{Var}}
\newcommand{\lnorm}{\left\lVert}
\newcommand{\rnorm}{\right\rVert}
\newcommand{\blmat}[2]{\mathcal{#1}_{#2}}
\begin{document}
\title[Gaussian fluctuations for linear eigenvalue statistics]{Gaussian fluctuations for linear eigenvalue statistics of products of independent iid random matrices} 

\author[N. Coston]{Natalie Coston}
\address{Department of Mathematics, University of Colorado at Boulder, Boulder, CO 80309 }
\email{natalie.coston@colorado.edu}

\author[S. O'Rourke]{Sean O'Rourke}
\address{Department of Mathematics, University of Colorado at Boulder, Boulder, CO 80309 }
\email{sean.d.orourke@colorado.edu}

\begin{abstract}
Consider the product $X = X_{1}\cdots X_{m}$ of $m$ independent $n\times n$ iid random matrices.  When $m$ is fixed and the dimension $n$ tends to infinity, we prove Gaussian limits for the centered linear spectral statistics of $X$ for analytic test functions.  We show that the limiting variance is universal in the sense that it does not depend on $m$ (the number of factor matrices) or on the distribution of the entries of the matrices.  The main result generalizes and improves upon previous limit statements for the linear spectral statistics of a single iid matrix by Rider and Silverstein as well as Renfrew and the second author.  
\end{abstract}

\maketitle

\setcounter{tocdepth}{2}
\tableofcontents

\newpage

\section{Introduction and Background Material}
\label{Sec:Intro}

This paper is concerned with fluctuations of linear eigenvalue statistics for products of random matrices with independent and identically distributed (iid) entries.  

\begin{definition}[iid random matrix]
Let $\xi$ be a complex-valued random variable.  We say $X_n$ is an $n \times n$ \emph{iid random matrix} with atom variable $\xi$ if $X_n$ is an $n \times n$ matrix whose entries are iid copies of $\xi$.  
\end{definition}

Recall that the \textit{eigenvalues} of an $n \times n$ matrix $M_n$ are the roots in $\mathbb{C}$ of the characteristic polynomial $\det (zI - M_n)$, where $I$ is the identity matrix.  We let $\lambda_1(M_n), \ldots, \lambda_n(M_n)$ denote the eigenvalues of $M_n$ counted with (algebraic) multiplicity. The \textit{empirical spectral measure} $\mu_{M_n}$ of $M_n$ is given by
\begin{equation*} 
\mu_{M_n} := \frac{1}{n} \sum_{j=1}^n \delta_{\lambda_j(M_n)}. 
\end{equation*}
If $M_n$ is a random $n \times n$ matrix, then $\mu_{M_n}$ is also random.  In this case, we say $\mu_{M_n}$ converges \textit{weakly in probability} (resp. \textit{weakly almost surely}) to another Borel probability measure $\mu$ on the complex plane $\mathbb{C}$ if, for every bounded and continuous function $f:\mathbb{C} \to \mathbb{C}$, 
$$ \int_{\mathbb{C}} f d \mu_{M_n} \longrightarrow \int_{\mathbb{C}} f d \mu $$
in probability (resp. almost surely) as $n \to \infty$.  

For iid random matrices whose atom variable has finite variance, the limiting behavior of the empirical spectral measure is described by the circular law.    
Recall that the Hilbert-Schmidt norm $\|M\|_2$ of a matrix $M$ is defined by the formula
\begin{equation} \label{eq:def:hs}
	\|M\|_2 := \sqrt{ \tr (M M^\ast) } = \sqrt{ \tr (M^\ast M)}. 
\end{equation}

\begin{theorem}[Circular law; Corollary 1.12 from \cite{TVesd}] \label{thm:circ}
Let $\xi$ be a complex-valued random variable with mean zero and unit variance.  For each $n \geq 1$, let $X_n$ be an $n \times n$ iid random matrix with atom variable $\xi$, and let $A_n$ be a deterministic $n \times n$ matrix.  If $\rank(A_n) = o(n)$ and $\sup_{n \geq 1} \frac{1}{n} \|A_n\|^2_2 < \infty$, then the empirical measure $\mu_{\frac{1}{\sqrt{n}} X_n + A_n}$ of $\frac{1}{\sqrt{n}} X_n + A_n$ converges weakly almost surely to the uniform probability measure on the unit disk centered at the origin in the complex plane as $n \to \infty$.  
\end{theorem}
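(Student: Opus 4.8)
My plan is to invoke Girko's Hermitization together with the logarithmic-potential criterion for weak convergence, and to reduce the perturbed statement to the classical circular law, i.e.\ the case $A_n = 0$ treated in \cite{TVesd}. Set $B_n := \frac{1}{\sqrt n} X_n + A_n$ and $r_n := \rank(A_n) = o(n)$, and for $z \in \mathbb{C}$ let $\nu_{n,z}$ (resp.\ $\nu^{0}_{n,z}$) denote the empirical measure of the singular values of $B_n - zI$ (resp.\ of $\frac{1}{\sqrt n} X_n - zI$). Since $|\det(B_n - zI)| = \prod_j s_j(B_n - zI)$,
\[
\int_{\mathbb{C}} \log|z - w|\, d\mu_{B_n}(w) \;=\; \frac{1}{n} \log\bigl|\det(B_n - zI)\bigr| \;=\; \int_0^\infty \log t\, d\nu_{n,z}(t),
\]
so it suffices to show that for Lebesgue-almost every $z$ this quantity converges, almost surely, to $\int_0^\infty \log t\, d\nu_z(t)$, where $\nu_z$ is the limiting symmetrized singular-value law attached to the circular measure; the standard potential-theoretic lemma (as applied in \cite{TVesd}) then yields weak almost sure convergence of $\mu_{B_n}$ to the circular law, provided the family $\{\mu_{B_n}\}$ is tight. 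Tightness is the one place the Hilbert--Schmidt hypothesis is used: $\sum_j |\lambda_j(B_n)|^2 \le \|B_n\|_2^2 \le 2\|\tfrac{1}{\sqrt n} X_n\|_2^2 + 2\|A_n\|_2^2$, and $\tfrac1n$ of the right side is $O(1)$ almost surely by the strong law of large numbers and the assumption $\sup_n \tfrac1n \|A_n\|_2^2 < \infty$.

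Next I would split $\int_0^\infty \log t\, d\nu_{n,z}$ into the three ranges $(0,\delta)$, $[\delta, K]$, $(K, \infty)$ and dispatch the outer two by comparison with the unperturbed matrix. For $t > K$, using $\log t \le t^2/K$ one gets $\int_{(K,\infty)} \log t\, d\nu_{n,z}(t) \le \tfrac{1}{nK}\|B_n - zI\|_2^2 = O(1/K)$, uniformly in $n$ and locally uniformly in $z$ (almost surely), again by the Hilbert--Schmidt bound. For $t \in [\delta, K]$: since $B_n - zI$ and $\frac{1}{\sqrt n} X_n - zI$ differ by the rank-$r_n$ matrix $A_n$, the rank inequality for empirical singular-value distribution functions bounds the Kolmogorov distance between $\nu_{n,z}$ and $\nu^{0}_{n,z}$ by $2r_n/n = o(1)$, whence, integrating by parts with $\log$ of bounded variation on $[\delta, K]$, $\int_{[\delta,K]} \log t\, d\nu_{n,z} - \int_{[\delta,K]} \log t\, d\nu^{0}_{n,z} \to 0$; the unperturbed integral converges by \cite{TVesd}. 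The same rank/resolvent comparison also identifies the full limiting law $\nu_z$ with the one for $A_n=0$: the Stieltjes transforms of the Hermitizations $(B_n - zI)(B_n - zI)^\ast$ and $(\tfrac{1}{\sqrt n}X_n - zI)(\tfrac{1}{\sqrt n}X_n - zI)^\ast$ differ by $O(r_n/n)$ in the upper half-plane. So the only genuinely new work is the range $(0,\delta)$.

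The main obstacle is therefore controlling $\int_{(0,\delta)} \log t\, d\nu_{n,z}(t)$, i.e.\ establishing that $\log$ is uniformly integrable against the family $\{\nu_{n,z}\}_n$. Here comparison to $\frac{1}{\sqrt n} X_n - zI$ through the rank inequality is \emph{not} sufficient, because transferring a rank-$r_n$ discrepancy through the unbounded weight $|\log t|$ would cost a factor $\tfrac{r_n}{n}\log n$, which need not be $o(1)$ when $r_n$ is merely $o(n)$. Instead I would prove the needed small-singular-value estimates directly for $B_n - zI = \frac{1}{\sqrt n}\bigl(X_n + \sqrt n\,(A_n - zI)\bigr)$, treating $\sqrt n\,(A_n - zI)$ as a deterministic additive shift of polynomially bounded operator norm (the Hilbert--Schmidt hypothesis gives $\|A_n\| \le \|A_n\|_2 = O(\sqrt n)$). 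Two inputs suffice, both available in this shifted setting from \cite{TVesd}: (i) a polynomial lower bound $s_n(B_n - zI) \ge n^{-B}$ with probability $1-o(1)$, from least-singular-value bounds for iid matrices under a deterministic perturbation (which, after the usual truncation of the atom variable, hold under the bare second-moment hypothesis); and (ii) a quantitative count of the small and intermediate singular values, to the effect that with high probability at most $n^{1-c}$ of the $s_j(B_n - zI)$ lie below $n^{-c'}$ and, above that scale, $\#\{j : s_j(B_n - zI) \le \lambda\} \le n^{o(1)}(n\lambda^{c''}+1)$ for some positive exponents.

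Combining (i) and (ii) by integration by parts gives $\int_{(0,\delta)} |\log t|\, d\nu_{n,z}(t) \le C(z)\,\delta^{c''} + o(1)$, which tends to $0$ as $\delta \to 0$, uniformly in probability and locally uniformly in $z$; this is the uniform integrability. Together with the weak convergence $\nu_{n,z} \to \nu_z$ from the previous paragraph it yields convergence of the logarithmic potential at each fixed $z$ outside a Lebesgue-null set, in probability; the almost-sure upgrade and the uniformity in $z$ needed to apply the potential-theoretic lemma then follow from a standard net-in-$z$ argument and Borel--Cantelli, exploiting the polynomial decay rates in (i)--(ii). In short, the heavy machinery is imported from \cite{TVesd}; the content of the corollary is the verification that each ingredient survives both the deterministic shift $-zI$ (which those arguments already accommodate) and the rank-$o(n)$ perturbation $A_n$ (which, as above, only alters the bulk limiting law and is controlled by the rank inequality).
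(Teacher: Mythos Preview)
The paper does not prove this theorem; it is stated as background (Corollary~1.12 of \cite{TVesd}) and simply cited, with the remark that it ``is the culmination of work by many authors.'' There is therefore no proof in the paper to compare against. Your sketch is a faithful outline of the Hermitization strategy actually used in \cite{TVesd}: reduce to convergence of logarithmic potentials, handle the bulk of the singular-value distribution via the rank inequality (which is insensitive to the rank-$o(n)$ perturbation $A_n$), and control the contribution of small singular values directly for the shifted matrix using least-singular-value bounds and a count of intermediate singular values. That is exactly the content of the cited corollary, so your proposal is appropriate in spirit, but for the purposes of this paper a bare citation to \cite{TVesd} is all that is expected.
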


This result appears as \cite[Corollary 1.12]{TVesd}, but is the culmination of work by many authors including \cite{Bcirc, BSbook, Ed-cir, Gi, G1, G2, GTcirc, M, M:B, PZ, TVcirc, TVbull, TVesd}.  We refer the interested reader to the survey \cite{BC} for further details.

%
%


\subsection{Products of Independent iid Matrices}

The result presented in this paper focuses not on a single iid random matrix, but instead on the product of several independent iid matrices.  The analogue of the circular law (Theorem \ref{thm:circ}) in this case has been derived by several authors \cite{GTprod, ORSV, OS} under various assumptions on the factor matrices; the version presented below is from \cite{ORSV}. Similar results are stated in \cite{G4}.

\begin{theorem}[Theorem 2.4 from \cite{ORSV}] \label{thm:ORSV}
Let $m \geq 1$ be an integer and $\tau > 0$.  Let $\xi_1, \ldots, \xi_m$ be real-valued random variables with mean zero, and assume, for each $1 \leq k \leq m$, $\xi_k$ has nonzero variance $\sigma^2_k$ and satisfies $\E|\xi_k|^{2 + \tau} < \infty$.  For each $n \geq 1$ and $1 \leq k \leq m$, let $X_{n,k}$ be an $n \times n$ iid random matrix with atom variable $\xi_k$, and let $A_{n,k}$ be a deterministic $n \times n$ matrix.  Assume $X_{n,1}, \ldots, X_{n,m}$ are independent.  If
$$ \max_{1 \leq k \leq m} \rank(A_{n,k}) = O(n^{1 - \eps}) \quad \text{and} \quad \sup_{n \geq 1} \max_{1 \leq k \leq m} \frac{1}{n} \|A_{n,k}\|_2^2 < \infty $$
for some $\eps > 0$, then the empirical spectral measure $\mu_{P_n}$ of the product
$$ P_n :=  \left( \frac{1}{\sqrt{n}} X_{n,1} + A_{n,1} \right)\left( \frac{1}{\sqrt{n}} X_{n,2} + A_{n,2} \right)\cdots \left( \frac{1}{\sqrt{n}} X_{n,m} + A_{n,m} \right) $$
converges weakly almost surely to a (non-random) probability measure $\mu_m$ as $n \to \infty$.  Here, the probability measure $\mu_m$ is absolutely continuous with respect to Lebesgue measure on $\mathbb{C}$ with density
\begin{equation} \label{eq:density}
    \varphi_m(z) := \left\{
     \begin{array}{rr}
       \frac{1}{m \pi} \sigma^{-2/m} |z|^{\frac{2}{m} - 2}, & \text{if } |z| \leq \sigma, \\
       0, & \text{if } |z| > \sigma,
     \end{array}
   \right. 
\end{equation}
where $\sigma := \sigma_1 \cdots \sigma_m$.  
\end{theorem}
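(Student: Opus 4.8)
The plan is to use Girko's Hermitization method together with a linearization of the product. For $z \in \C$, let $\nu_{n,z}$ denote the empirical distribution of the squared singular values of $P_n - zI$, that is, the empirical spectral measure of the Hermitian matrix $(P_n - zI)(P_n - zI)^\ast$. By the Hermitization lemma (see the formulations in \cite{BC, TVesd}), it is enough to establish two facts for Lebesgue-almost every $z \in \C$: (i) $\nu_{n,z}$ converges weakly, almost surely, to a non-random probability measure $\nu_z$ on $[0,\infty)$; and (ii) almost surely, the map $x \mapsto \log x$ is uniformly integrable with respect to the family $\{\nu_{n,z}\}_{n \ge 1}$. Concretely, (ii) is a polynomial-in-$n$ lower bound on the least singular value $s_{\min}(P_n - zI)$ together with control on how many singular values of $P_n - zI$ fall in a small neighbourhood of the origin. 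Granting (i)--(ii), the logarithmic potentials converge, $-\int_{\C}\log|z-w|\,d\mu_{P_n}(w) = -\tfrac12\int_0^\infty \log x\,d\nu_{n,z}(x) \longrightarrow -\tfrac12\int_0^\infty\log x\,d\nu_z(x)$ for a.e.\ $z$, and hence $\mu_{P_n}$ converges weakly, almost surely, to the probability measure $\mu_m$ whose logarithmic potential is $U_{\mu_m}(z) = -\tfrac12\int_0^\infty\log x\,d\nu_z(x)$; it then remains to identify $\mu_m$ with the measure in \eqref{eq:density}.

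The first reduction is to discard the deterministic perturbations in the analysis of (i). Write $B_{n,k} = \tfrac{1}{\sqrt n}X_{n,k} + A_{n,k}$, $C_{n,k} = \tfrac{1}{\sqrt n}X_{n,k}$, and $Q_n = C_{n,1}\cdots C_{n,m}$. The telescoping identity $P_n - Q_n = \sum_{k=1}^m B_{n,1}\cdots B_{n,k-1}\,A_{n,k}\,C_{n,k+1}\cdots C_{n,m}$ gives $\rank(P_n - Q_n) \le \sum_{k=1}^m \rank(A_{n,k}) = O(n^{1-\eps}) = o(n)$, so $(P_n - zI)(P_n - zI)^\ast$ and $(Q_n - zI)(Q_n - zI)^\ast$ differ by a Hermitian matrix of rank $o(n)$; by the rank inequality for spectral distributions of Hermitian matrices the corresponding distribution functions differ by $o(1)$ in the sup norm, so it suffices to prove (i) for $Q_n$. (The least-singular-value estimate (ii), by contrast, must be proved for the perturbed matrix $P_n - zI$ directly, and this is where the hypotheses on the $A_{n,k}$ enter.)

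To identify $\nu_z$ for the unperturbed product, I would linearize. The squared singular values of $Q_n - zI$ are the squares of the eigenvalues of $\left(\begin{smallmatrix} 0 & Q_n - zI \\ (Q_n - zI)^\ast & 0 \end{smallmatrix}\right)$, and writing $Q_n - zI = C_{n,1}\cdots C_{n,m} - zI$ as a linear pencil expresses the relevant resolvent through a sparse Hermitian block matrix of size $O(mn)$ built from the independent iid blocks $C_{n,k}$ and the scalar $z$. The matrix Dyson / self-consistent equation for the resolvent of this block model yields a closed fixed-point system for the Stieltjes transform of $\nu_z$; because the entries have mean zero, $z$ enters the system only through $|z|^2$, so $\nu_z$ depends only on $|z|$. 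Solving the system, computing $U_{\mu_m}(z) = -\tfrac12\int_0^\infty\log x\,d\nu_z(x)$, and taking the distributional Laplacian, one finds $U_{\mu_m}(z) = -\log|z|$ for $|z| > \sigma$ and $-\tfrac{1}{2\pi}\Delta U_{\mu_m}(z) = \varphi_m(z)$ for $|z| < \sigma$; by radial symmetry this reduces to the one-variable check that the radial distribution function of $\mu_m$ is $(|z|/\sigma)^{2/m}$ on $[0,\sigma]$, which matches $\int_{|w|\le r}\varphi_m(w)\,d^2w = (r/\sigma)^{2/m}$. A convenient alternative route to both $\nu_z$ and $\mu_m$ is to work with the $mn \times mn$ cyclic block matrix having $C_{n,1},\dots,C_{n,m}$ as its off-diagonal blocks: its eigenvalues are precisely the $m$-th roots of those of $Q_n$, and establishing a circular-type law for this structured ensemble and pushing forward under $w \mapsto w^m$ recovers $\mu_m$.

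The main obstacle is step (ii): controlling $s_{\min}(P_n - zI)$, and more precisely the tail probabilities $\P(s_{n-j}(P_n - zI) \le c\,j/n)$ for all $1 \le j \le n$, which is what prevents a logarithmic divergence near the origin of the singular-value spectrum — now for a \emph{product} of $m$ independent perturbed iid matrices. For a single iid matrix this is the Rudelson--Vershynin / Tao--Vu anti-concentration machinery (decomposing the unit sphere into compressible and incompressible vectors, with small-ball estimates from Littlewood--Offord-type inequalities); for a product one must propagate these estimates through all $m$ factors, and the deterministic low-rank shifts $A_{n,k}$ further complicate the sphere decomposition and the surviving anti-concentration. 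By comparison, the identification of $\nu_z$ and the potential-theoretic bookkeeping above are technically involved but essentially mechanical once the linearization is in place, and upgrading convergence in probability to almost-sure convergence is routine (truncate using the $2+\tau$ moment, then combine a martingale-difference concentration bound for the Stieltjes transform of $\nu_{n,z}$ with the Borel--Cantelli lemma).
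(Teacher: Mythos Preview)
This theorem is not proved in the present paper; it is quoted from \cite{ORSV} as background (see the attribution ``Theorem 2.4 from \cite{ORSV}'' in the statement), so there is no proof here to compare against. That said, your outline---Girko Hermitization, rank-inequality removal of the deterministic perturbations for the singular-value limit, linearization via the cyclic block matrix, a self-consistent equation for the Stieltjes transform of $\nu_z$, and a separate least-singular-value / uniform-integrability argument for the perturbed product---is exactly the architecture used in \cite{ORSV} (and in the closely related works \cite{OS,GTprod}); your remark that the push-forward of the circular-type law for the linearized block matrix under $w\mapsto w^m$ yields $\mu_m$ is also how the limiting density is identified there. You correctly flag step (ii) as the substantive part: in \cite{ORSV} the least-singular-value bound for the product with low-rank shifts is obtained by reducing, via the linearization, to a least-singular-value problem for a single structured $mn\times mn$ matrix and then invoking Rudelson--Vershynin / Tao--Vu type anti-concentration for that model, which is where the rank and Hilbert--Schmidt hypotheses on the $A_{n,k}$ are used.
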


\begin{figure}[t]
	\includegraphics[scale=.35]{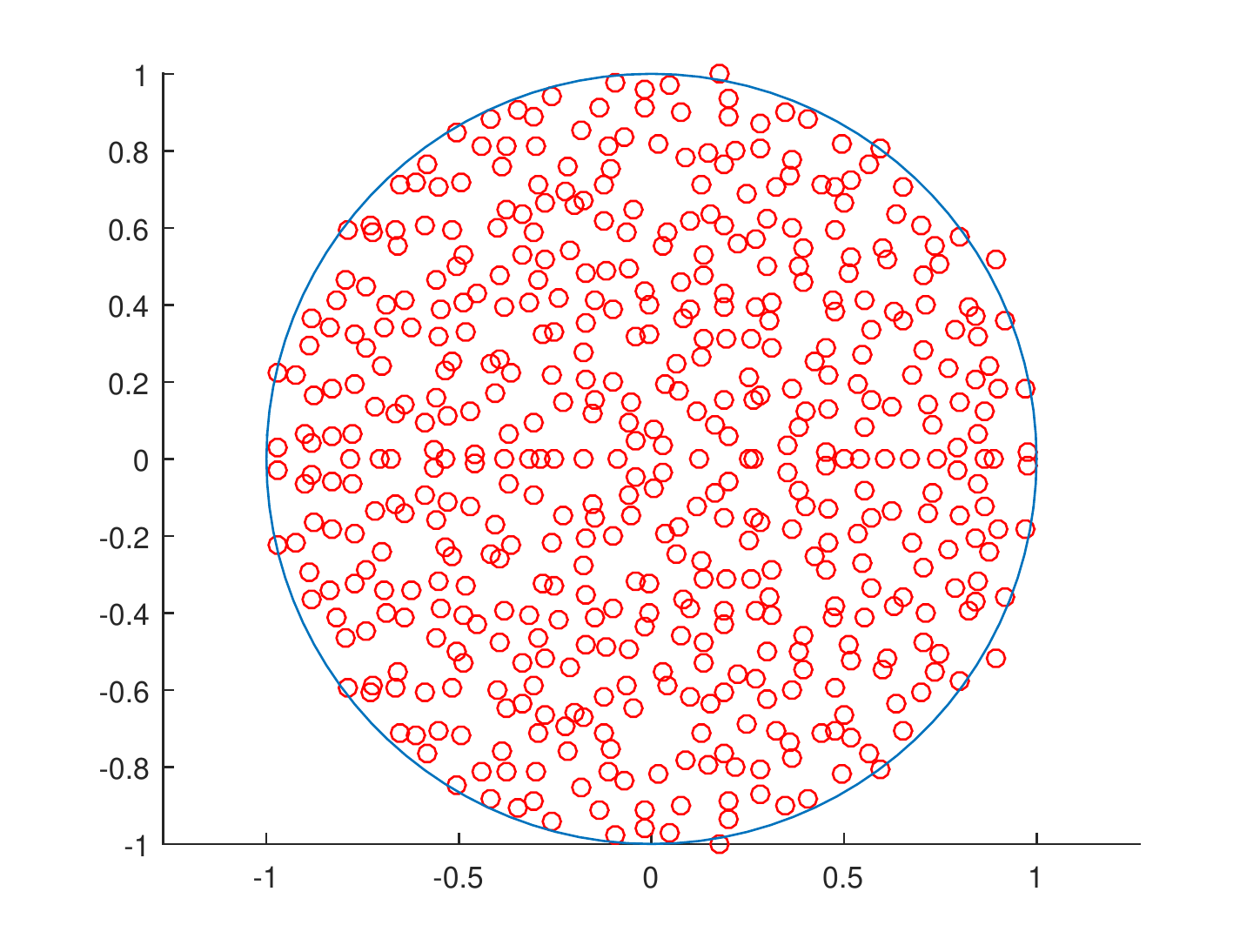}
	\includegraphics[scale=.35]{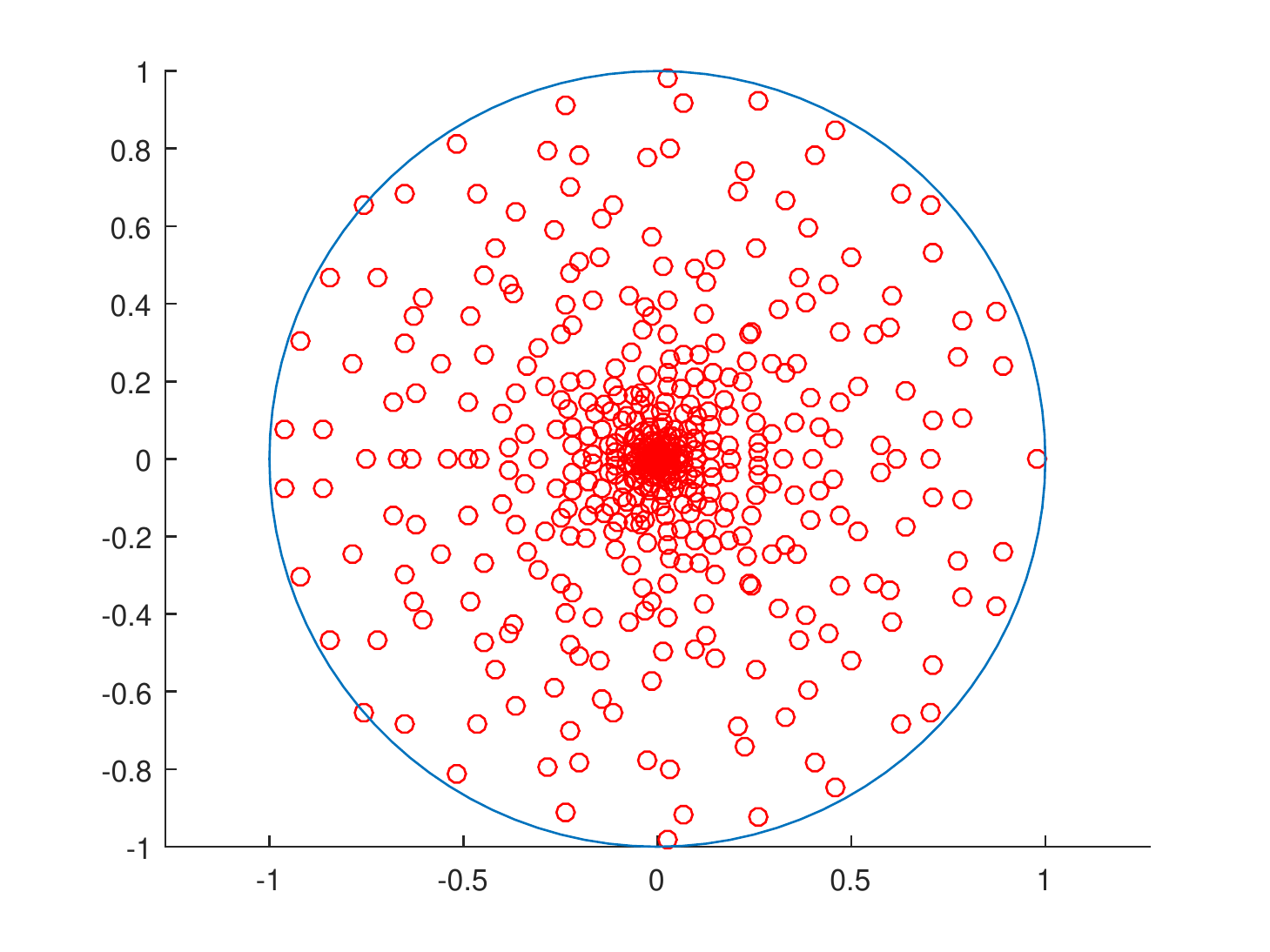}
	\caption{The leftmost figure shows the eigenvalues, denoted by the small circles, of a single $500 \times 500$ iid random matrix $\frac{1}{\sqrt{500}}X_{500}$ with Gaussian entries. The rightmost figure shows the eigenvalues, denoted by small circles, of a product of four independent $500\times 500$ random matrices, each scaled by $\frac{1}{\sqrt{500}}$, where the entries in each random matrix are independent iid Gaussian random variables. }
	\label{Fig:NoOutliers}
\end{figure}

\begin{remark}
When $\sigma = 1$, the density in \eqref{eq:density} is easily related to the circular law (Theorem \ref{thm:circ}).  Indeed, in this case, $\varphi_m$ is the density of $\psi^m$, where $\psi$ is a complex-valued random variable uniformly distributed on the unit disk centered at the origin in the complex plane.  
\end{remark}

Theorem \ref{thm:ORSV} can be viewed as a generalization of the circular law (Theorem \ref{thm:circ}).  Indeed, $\mu_1$ is simply the uniform measure on a disk of radius $\sigma$ centered at the origin.  We emphasis here that the limiting empirical spectral measure $\mu_m$ depends on $m$ and is different for each integer $m$.  Figure \ref{Fig:NoOutliers} provides a numerical illustration of Theorem \ref{thm:ORSV}.

The random matrix theory literature contains many papers concerning products of independent matrices with Gaussian entries; we refer the reader to \cite{ARRS, AB,ABK, AIK, AIK2, AKW, AS, BJW, BNS, Bsurv, BJLNS, F, F2, I, IK, KZ, S} and references therein.  Some other models of products and sums of random matrices have also been considered in \cite{B}.  

Recently in \cite{N2}, Nemish proved a local law version of Theorem \ref{thm:ORSV} up to the optimal scale, under the assumption that the entries of each iid matrix have subexponential decay.  Nemish's local law has also been extended by G\"{o}tze, Naumov, and Tikhomirov \cite{GNT} to include the case where the entries do not have subexponential decay but instead have finite $4 + \tau$ moment for some fixed $\tau > 0$.  The universality of the local correlation functions for the eigenvalues of such product matrices was recently established in \cite{KOV}.  

\begin{Details}
	Akemann and Strahov examine product matrix processes as multi-level point process formed by the singular values of the product of random matrices in \cite{AS2}. In this result, the matrices are complex and not necessarily independent of one another. The authors prove that despite the dependence between matrices, the processes still converges as multi-level determinantal processes and they provide the formula for correlation functions for these processes.
\end{Details}

\begin{Details}
In this note, we will consider the case when $A_{n,k}$ is the zero matrix, so we have a product of unperturbed matrices. Theorem \ref{thm:ORSV} is a special case of \cite[Theorem 2.4]{ORSV}. Indeed, \cite[Theorem 2.4]{ORSV} applies to so-called elliptic random matrices, which generalize iid matrices.  Theorem \ref{thm:ORSV} and the results in \cite{ORSV} are stated only for real random variables, but the proofs can be extended to the complex setting.  Similar results have also been obtained in \cite{B, GTprod, OS}.  The Gaussian case was originally considered by Burda, Janik, and Waclaw \cite{BJW}; see also \cite{Bsurv}.  We refer the reader to \cite{AB,ABK, AIK, AIK2, AKW, AS, BJLNS, F, F2, KZ, S} and references therein for many other interesting results concerning products random matrices with Gaussian entries.
\end{Details}

\subsection{Fluctuations of Linear Eigenvalue Statistics}
The linear eigenvalue statistics of a random matrix describe the fluctuations of the spectrum about its limiting distribution.  The uncentered linear eigenvalue statistics for an $n\times n$ matrix $M$ and sufficiently smooth test function $f$ (whose smoothness depends on the matrix ensemble under consideration) is defined by 
\begin{equation}
\tr f(M):=\sum_{i=1}^{n}f(\lambda_{i}(M))
\label{def:trfM}
\end{equation}
where $\lambda_1(M), \ldots, \lambda_n(M)$ denote the eigenvalues of $M$. 

In the classical central limit theorem, sums of $n$ iid random variables have variance on the order of $\sqrt{n}$.  In contrast, the variance of linear spectral statistics for many ensembles of random matrices is often on the order of a constant.  There are many results regarding the fluctuations of linear eigenvalue statistics for various ensembles of random matrices (and under various assumptions on the test functions $f$).  Because the subject is so well studied, we do not give a full treatment here.  We refer the reader to  \cite{AZ,BS:CLT,DS,DE,J,Kphil,KOV,LP,NP,OR:CLT,RiS,Sh,SS,So,SW} and the references therein for further details.  In the discussion below, we will only focus on linear statistics for iid random matrices and their products.

Rider and Silverstein, in the seminal paper \cite{RiS}, established Gaussian fluctuations for the linear eigenvalue statistics of iid random matrices with analytic test functions.  

\begin{theorem}[Theorem 1.1 from \cite{RiS}] \label{thm:RiS}
Let $\xi$ be a complex-valued random variable which satisfies the following conditions. 
\begin{enumerate}[label=\emph{(\roman*)}]
	\item $\E[\xi]=0,\;\;\text{and}\;\;\E[|\xi|^{2}]=1$,
	\item \label{item:cond:2mom} $\E[\xi^{2}]=0$,
	\item \label{item:cond:mom} $\E[|\xi|^{k}]\leq k^{\alpha k}$ for every $k>2$ and some $\alpha>0$,
	\item \label{item:cond:reim} $\Re(\xi)$ and $\Im(\xi)$ possesses a bounded joint density. 
\end{enumerate}
	For each $n\geq 1$, let $X_{n}$ be an $n\times n$ iid random matrix with atom variable $\xi$. Consider test functions $f_{1}, f_{2},\dots ,f_{k}$ analytic in a neighborhood of the disk $\{ z \in \C : |z|\leq 4\}$ and otherwise bounded. Then as $n\rightarrow\infty$, the vector
	\[\left(\tr f_{j}\left(\frac{1}{\sqrt{n}}X_{n}\right)-nf_{j}(0)\right)_{j=1}^{k}\]
	converges in distribution to a mean-zero multivariate Gaussian vector $(F(f_{1}),F(f_{2}),\dots,F(f_{k}))$ with covariances 
	\[\E\left[F(f_{l})\overline{F(f_{m})}\right]=\frac{1}{\pi}\int_{\mathbb{U}}\frac{d}{dz}f_{l}(z)\overline{\frac{d}{dz}f_{m}(z)}d^{2}z,\]
	in which $\mathbb{U}$ is the unit disk centered at the origin and $d^{2}z=d\Re(z)d\Im(z)$.  
\end{theorem}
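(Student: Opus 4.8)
The plan is to exploit analyticity of the test functions to reduce the statement to a joint central limit theorem for the traces of powers $\tr(\tfrac{1}{\sqrt n}X_n)^k$, $k\ge1$, and then to prove that central limit theorem by the method of moments. For the reduction, fix a radius $\rho\in(2,4)$. Assumption \emph{(iii)} gives, via a Bai--Yin type bound, that $\|\tfrac{1}{\sqrt n}X_n\|\le2+o(1)$ with probability $1-o(1)$, so there is an event $\mathcal E_n$ with $\P(\mathcal E_n)=1-o(1)$ on which the operator norm (hence every eigenvalue) of $\tfrac{1}{\sqrt n}X_n$ lies in $\{|z|\le\rho\}$. On $\mathcal E_n$ the holomorphic functional calculus gives, writing the Taylor expansion $f_j(z)=\sum_{k\ge0}c_k^{(j)}z^k$ (convergent in a neighborhood of $\{|z|\le4\}$),
\begin{equation*}
\tr f_j\!\left(\tfrac{1}{\sqrt n}X_n\right)-n f_j(0)=\sum_{k\ge1}c_k^{(j)}\,\tr\!\left(\tfrac{1}{\sqrt n}X_n\right)^{\!k},
\end{equation*}
the series converging in operator norm since $\|\tfrac{1}{\sqrt n}X_n\|<4$ on $\mathcal E_n$. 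As $|c_k^{(j)}|\le C_j r^{-k}$ for every $r<4$, the crucial auxiliary input is a uniform-in-$n$ bound $\E[|\tr(\tfrac{1}{\sqrt n}X_n)^k|^2]\le P(k)$ with $P$ a polynomial; granting it, the tails $\sum_{k>K}c_k^{(j)}\tr(\tfrac{1}{\sqrt n}X_n)^k$ are negligible in $L^2$ uniformly in $n$ once $K$ is large, and it suffices to prove the joint CLT for the finite-dimensional vectors $\big(\tr(\tfrac{1}{\sqrt n}X_n)^k\big)_{k=1}^{K}$ for each fixed $K$.

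For that CLT I would apply the method of moments to the expansion $\tr X_n^k=\sum_{i_1,\dots,i_k}(X_n)_{i_1i_2}(X_n)_{i_2i_3}\cdots(X_n)_{i_ki_1}$ as a sum over closed walks of length $k$. Using \emph{(i)}--\emph{(iii)} one first checks $\E[\tr(\tfrac{1}{\sqrt n}X_n)^k]\to0$ for $k\ge1$ --- the hypothesis $\E\xi^2=0$ rules out pairing a factor with an identical copy of itself, so no walk survives to leading order --- so the vectors are asymptotically centered. For the covariance one expands $\E[\tr X_n^j\,\overline{\tr X_n^k}]$ as a sum over \emph{pairs} of walks; because $\E[(X_n)_{ab}\overline{(X_n)_{ab}}]=1$ while $\E[(X_n)_{ab}^2]=0$, the leading term forces both walks to traverse the \emph{same} simple $k$-cycle, which is possible only when $j=k$ and then contributes $k\,n^{k}(1+o(1))$; thus
\begin{equation*}
\cov\!\left(\tr(\tfrac{1}{\sqrt n}X_n)^j,\ \overline{\tr(\tfrac{1}{\sqrt n}X_n)^k}\right)\longrightarrow k\,\delta_{jk}=\frac1\pi\int_{\mathbb U}\frac{d}{dz}z^j\,\overline{\frac{d}{dz}z^k}\,d^2z,
\end{equation*}
together with $\E[\tr(\tfrac{1}{\sqrt n}X_n)^j\,\tr(\tfrac{1}{\sqrt n}X_n)^k]\to0$ (the pseudo-covariances vanish). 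To promote this to joint Gaussianity one shows that every joint cumulant of order $r\ge3$ of the entries of $\big(\tr(\tfrac{1}{\sqrt n}X_n)^k\big)_{k=1}^{K}$ and their conjugates tends to $0$: the $r$-th cumulant is a sum over $r$-tuples of walks that are \emph{connected} (cannot be split into mutually non-interacting subfamilies), and a count of the admissible vertex labelings shows its normalized size tends to $0$. The method of moments then yields convergence of $\big(\tr(\tfrac{1}{\sqrt n}X_n)^k\big)_{k=1}^{K}$ to the complex Gaussian vector with the covariance above.

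Reassembling: for each fixed $K$ the truncated statistics converge jointly to the Gaussian with covariance $\sum_{j,k\le K}c_j^{(l)}\overline{c_k^{(m)}}\,\tfrac1\pi\int_{\mathbb U}(z^j)'\,\overline{(z^k)'}\,d^2z$; letting $K\to\infty$ this equals $\tfrac1\pi\int_{\mathbb U}f_l'(z)\,\overline{f_m'(z)}\,d^2z$ by uniform convergence of the differentiated Taylor series on $\mathbb U$, and together with the $L^2$-smallness of the tails this gives the asserted multivariate Gaussian limit. The hard part will be the combinatorics in the second step: one must control, \emph{uniformly in} $k$ --- so that the truncation of the first step is legitimate --- every non-leading family of walks, namely those with extra vertex coincidences and those using higher moments $\E|\xi|^p$ of the atom variable, which is genuinely more delicate than in the Wigner case because $X_n$ is non-symmetric (one counts \emph{directed} closed walks) and because it is precisely the complex structure $\E\xi^2=0$ that organizes the admissible pairings. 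Assumption \emph{(iii)} is exactly what renders these error terms summable in $k$, whereas assumption \emph{(iv)} enters through localization of the spectrum --- for instance via Girko's Hermitization and a lower bound on the least singular value of $zI-\tfrac{1}{\sqrt n}X_n$ --- which is needed to pass to the power series in the first step (and, in an alternative route, to run a Hermitian linear-eigenvalue-statistics CLT for $\log|\det(zI-\tfrac{1}{\sqrt n}X_n)|$ directly).
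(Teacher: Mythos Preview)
This theorem is quoted from \cite{RiS} as background and is not proved in the present paper, so there is no ``paper's own proof'' to compare against directly. For context, both \cite{RiS} and the present paper (for its $m$-factor generalization) take the resolvent route: write $\tr f_j(\tfrac{1}{\sqrt n}X_n)$ as a contour integral of $f_j(z)\tr G_n(z)$ with $G_n(z)=(\tfrac{1}{\sqrt n}X_n-zI)^{-1}$, prove a functional CLT for the process $z\mapsto\tr G_n(z)-\E\tr G_n(z)$ on a circle $|z|=1+\delta$ via the martingale CLT with respect to the column filtration, and identify the limiting covariance by deriving and solving a self-consistent recursive equation.

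Your program---Taylor-expand each $f_j$ and prove a joint CLT for $(\tr(\tfrac{1}{\sqrt n}X_n)^k)_{k\ge1}$ by the moment/cumulant method---is a genuinely different route, closer in spirit to the trace-method CLTs for Haar-unitary or Wigner ensembles. The covariance heuristic you give is correct: with $\E\xi^2=0$ only conjugate pairings survive, the dominant contribution to $\E[\tr Y^j\,\overline{\tr Y^k}]$ comes from both walks tracing the same simple $k$-cycle, and this yields $k\,\delta_{jk}$, matching $\tfrac1\pi\int_{\mathbb U}(z^j)'\overline{(z^k)'}\,d^2z$. What each approach buys: the resolvent method treats all analytic $f$ at once and the variance emerges from a single fixed-point equation, at the cost of heavy resolvent algebra and a least-singular-value input; your method is combinatorially transparent, but the real work is exactly the part you defer---the uniform-in-$k$ bound $\sup_n\E|\tr(\tfrac{1}{\sqrt n}X_n)^k|^2\le P(k)$ and the vanishing of all higher joint cumulants, with enough control in $k$ to sum the tail of the power series. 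That uniformity is not routine for non-Hermitian ensembles. One correction on the role of hypothesis \emph{(iv)}: in your scheme spectral localization comes from the operator-norm bound $\|\tfrac{1}{\sqrt n}X_n\|\le 2+o(1)$, which follows from \emph{(iii)} alone; condition \emph{(iv)} is a resolvent-method artifact, used in \cite{RiS} to bound $s_n(\tfrac{1}{\sqrt n}X_n-zI)$ from below on the contour, an issue your route sidesteps entirely.
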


Theorem \ref{thm:RiS} was later generalized and extended by Renfrew and the second author in \cite{OR:CLT}. The results in \cite{OR:CLT} remove several technical assumptions present in Theorem \ref{thm:RiS}.  Specifically, for the results in \cite{OR:CLT} to hold, conditions \ref{item:cond:2mom} and \ref{item:cond:reim} from Theorem \ref{thm:RiS} are no longer required, and condition \ref{item:cond:mom} is replaced by the finiteness of $\E|\xi|^{6+\tau}$.   In addition, the functions $f_1, \ldots, f_k$ are only required to be analytic in a neighborhood of the disk $\{z \in \C : |z| \leq 1\}$.  More generally, the results in \cite{OR:CLT} also hold for an ensemble of elliptic random matrices which include iid random matrices as a special case.

For products of independent iid random matrices, much less in known.  To the best of the authors' knowledge, the only result for fluctuations of linear eigenvalue statistics for products of iid random matrices is \cite[Theorem 3]{KOV}, which requires the factor matrices to match moments with the complex Ginibre ensemble; we state this result from \cite{KOV} below.    

\begin{theorem}[Theorem 3 from \cite{KOV}] \label{thm:KOV} 
	 Let $f:\C\rightarrow\R$ be a test function with at least two continuous derivatives, supported in the region $\{z\in\C\;:\;\tau_{0}<|z|<1-\tau_{0}\}$ for some fixed $\tau_{0}>0$. Let $m\geq 1$ be an integer and let 
	 \[P_{n}:=n^{-m/2}X_{n,1}\cdots X_{n,m}\]
	 be a matrix product such that each $X_{n,i}$ is an $n\times n$ iid random matrix (which are all jointly independent) with an atom variable $\xi_{i}$ which satisfies the following:
	 \begin{itemize}
	 	\item $\xi_{i}$ has mean zero and unit variance,
	 	\item $\xi_{i}$ has independent real and imaginary parts,
	 	\item $\xi_{i}$ satisfies the subgaussian decay condition that there exist constants $C,c>0$ (independent of $n$) such that for each $t>0$, $\P(|\xi_{i}|>t)\leq Ce^{-ct^{2}}$, and 
	 	\item $\xi_{i}$ matches moments with a standard complex Gaussian random variable to four moments: for all $a,b\geq 0$ such that $a+b\leq 4$, $\E\left[\Re(\xi_{i})^{a}\Im(\xi_{i})^{b}\right]=\E\left[\Re(\zeta)^{a}\Im(\zeta)^{b}\right]$ where $\zeta$ is a standard complex Gaussian random variable.  
 	\end{itemize}
 	Then the centered linear statistic
\[\tr f(P_{n}) -\E\left[\tr f(P_{n})\right]\] 
converges in distribution as $n\rightarrow\infty$ to the mean-zero Gaussian distribution with limiting variance
\[\frac{1}{4\pi}\int_{\mathbb{U}}|\bigtriangledown f(z)|^{2}d^{2}z \]
where $\mathbb{U}$ is the unit disk centered at the origin. 
\end{theorem}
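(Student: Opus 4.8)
\emph{Proof proposal.} The plan is to use Girko's Hermitization to rewrite the linear statistic in terms of the log-determinant of $P_n-zI$, reduce to the complex Ginibre product ensemble by a Lindeberg-type replacement, and treat the Ginibre case via the explicit determinantal structure of its eigenvalue process. To begin, since $f$ is $C^2$ with compact support inside the annulus $\{\tau_0<|z|<1-\tau_0\}$, Green's identity gives the exact identity
\[
  \tr f(P_n)=\frac{1}{4\pi}\int_{\C}\Delta f(z)\,\log\bigl|\det(P_n-zI)\bigr|^2\,d^2z
  =\frac{n}{4\pi}\int_{\C}\Delta f(z)\Bigl(\int_0^\infty\log x\,d\nu_{n,z}(x)\Bigr)\,d^2z,
\]
where $\nu_{n,z}$ is the empirical distribution of the squared singular values of $P_n-zI$, i.e.\ the empirical spectral measure of the Hermitization $(P_n-zI)(P_n-zI)^\ast$. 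Setting $s_{n,z}(w):=\tfrac1n\tr\bigl((P_n-zI)(P_n-zI)^\ast-w\bigr)^{-1}$ and using the standard identity
\[
  \int_0^\infty\log x\,d\nu_{n,z}(x)=\int_0^\infty\log(x+\eta_0)\,d\nu_{n,z}(x)-\int_0^{\eta_0}s_{n,z}(-\eta)\,d\eta,
\]
the centered statistic $\tr f(P_n)-\E\tr f(P_n)$ is $\tfrac{n}{4\pi}$ times the integral against $\Delta f$ of a centered functional of resolvent traces of the Hermitization.

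\textbf{Quantitative inputs.} Two results from the literature control this functional uniformly over the compact set $K:=\supp\Delta f\subset\{\tau_0<|z|<1-\tau_0\}$. First, the local law for products of iid matrices (Nemish \cite{N2}; extended to finite $4+\tau$ moments by G\"{o}tze--Naumov--Tikhomirov \cite{GNT}) shows that for $z\in K$ and $\eta\ge n^{-1+\eps}$ the trace $s_{n,z}(-\eta)$ is $o(1)$-close to its deterministic limit in probability, which controls the $\eta_0$-piece and the part of $\int_0^{\eta_0}$ above scale $n^{-1+\eps}$. Second, the universality of the least singular value of product matrices established in \cite{KOV} shows that, uniformly over $z\in K$, the smallest singular value of $P_n-zI$ is at least $n^{-O(1)}$ with high probability and exhibits the same small-ball behaviour as for the Ginibre product; this bounds the contribution of scales $\eta\le n^{-1+\eps}$ to $\int_0^{\eta_0}s_{n,z}(-\eta)\,d\eta$ and shows this hard-edge term does not affect the fluctuations. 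Combining the two, one may replace the statistic, with $o(1)$ error (also after centering), by a truncated version depending only on resolvent traces of the Hermitization at scales $\eta\ge n^{-1+\eps}$.

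\textbf{Replacement by the Ginibre product.} Next, run a Lindeberg swapping argument, replacing the entries of $X_{n,1},\dots,X_{n,m}$ one at a time by independent standard complex Gaussians. The truncated functional above is a stable function of the matrix entries with derivatives controlled by the local law; a third-order Taylor expansion in the swapped entry, combined with the hypothesis that each $\xi_i$ matches a standard complex Gaussian to four moments, makes the first three terms cancel in expectation and leaves an $O(n^{-2+\eps})$ error per entry. Summing over the $O(mn^2)$ entries gives total error $o(1)$ (the subgaussian tails and the independence of real and imaginary parts of $\xi_i$ are used for atom truncation and the moment bookkeeping). Hence $\tr f(P_n)-\E\tr f(P_n)$ and $\tr f(P_n^{\mathrm{Gin}})-\E\tr f(P_n^{\mathrm{Gin}})$ have the same limit in distribution, where $P_n^{\mathrm{Gin}}=n^{-m/2}G_1\cdots G_m$ with $G_1,\dots,G_m$ independent $n\times n$ complex Ginibre matrices.

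\textbf{The Ginibre product and the main obstacle.} The eigenvalues of $P_n^{\mathrm{Gin}}$ form a determinantal point process whose (projection) correlation kernel $K_n^{(m)}$ is known explicitly, so that $\var\,\tr f(P_n^{\mathrm{Gin}})=\tfrac12\iint\bigl(f(z)-f(w)\bigr)^2\,\bigl|K_n^{(m)}(z,w)\bigr|^2\,d^2z\,d^2w$. Since $|K_n^{(m)}(z,w)|^2$ concentrates near the diagonal inside the unit disk with the universal Ginibre microscopic profile $|K_n^{(m)}(z,w)|^2\approx\rho(z)^2\,e^{-\pi\rho(z)|z-w|^2}$, $\rho(z)=n\varphi_m(z)$, expanding $f(z)-f(w)=\nabla f(z)\cdot(z-w)+O(|z-w|^2)$ and using $\int_{\R^2}|u|^2 e^{-\pi\rho|u|^2}\,d^2u=(\pi\rho^2)^{-1}$ makes the local density $\rho$ cancel, so $\var\,\tr f(P_n^{\mathrm{Gin}})\to\frac{1}{4\pi}\int_{\mathbb{U}}|\nabla f(z)|^2\,d^2z$ for every $m$; this is the source of the claimed independence from $m$. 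The same diagonal concentration forces every cumulant of order $\ge 3$ of the determinantal linear statistic to tend to $0$, which gives Gaussianity, and together with the previous step this proves the theorem. I expect the heart of the work to lie in the middle steps: obtaining the hard-edge estimate uniformly over $z\in K$ and tracking how it --- and, just as importantly, the absence of fluctuations there --- survives the entry-by-entry replacement, together with verifying the third-order cancellation in the Lindeberg expansion, which for a product of $m$ factors requires carefully unfolding the dependence of the Hermitization resolvent on a single matrix entry through all $m$ factors.
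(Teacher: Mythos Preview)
This theorem is not proved in the paper: it is quoted as Theorem~3 of \cite{KOV} and appears in the introduction solely as a survey of prior work, to be contrasted with the paper's own main result (Theorem~\ref{thm:mainCLT}). There is therefore no proof here to compare your proposal against. Your sketch --- Hermitization, local law plus least-singular-value control, a four-moment Lindeberg replacement to the Ginibre product, and then exploitation of the determinantal structure of the Ginibre product eigenvalue process --- is a reasonable outline of how such a result is obtained, and the obstacles you flag are genuine, but any detailed assessment would have to be made against \cite{KOV} itself rather than the present paper.
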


%

\subsection*{Acknowledgments}
The paper is based on a chapter from N. Coston's doctoral thesis, and she would like to thank her thesis committee for their feedback and support.  The authors would also like to thank Philip Wood for providing useful feedback on an earlier draft of the manuscript. S. O'Rourke has been supported in part by NSF grants ECCS-1610003 and DMS-1810500.

\section{Main Results}
\label{Sec:Main}

\subsection{Fluctuations of Linear Eigenvalue Statistics for Product Matrices} 
\label{Subsec:Main:Products}
The main result of this paper is the analogue of Theorem \ref{thm:RiS} (and its generalization in \cite{OR:CLT}) for the product of independent iid random matrices.    

For $1\leq k\leq m$, let $\xi_{k}$ be a random variable which satisfies the following conditions. 
\begin{assumption} \label{assump:4PlusTau}
The real-valued random variables $\xi_1, \ldots, \xi_m$ (not necessarily defined on the same probability space) are said to satisfy Assumption \ref{assump:4PlusTau} if, for each $1 \leq k \leq m$, 
\begin{itemize}
\item $\xi_k$ has mean zero,
\item $\xi_k$ has nonzero variance $\sigma_k^2$, and
\item there exists $\tau>0$ such that $\E|\xi_{k}|^{4+\tau}<\infty$.
\end{itemize}
\end{assumption}

The following theorem is the main result of the paper.

\begin{theorem}[Fluctuations of linear statistics for products of iid random matrices] \label{thm:mainCLT}
Let $m \geq 1$ be a fixed integer, and assume $\xi_1, \ldots, \xi_m$ are real-valued random variables which satisfy Assumption \ref{assump:4PlusTau}.  For each $n \geq 1$, let $X_{n,1}, \ldots, X_{n,m}$ be independent $n \times n$ iid random matrices with atom variables $\xi_1, \ldots, \xi_m$, respectively.  Define the products
\begin{equation} 
P_n := n^{-m/2} X_{n,1} \cdots X_{n,m}
\label{Def:P_n} 
\end{equation}
and 
$$\sigma:=\sigma_{1}\cdots\sigma_{m}.$$
Let $\delta>0$, $s\geq 1$ be a fixed integer, and $f_{1},f_{2},\dots, f_{s}$ be test functions analytic in some neighborhood containing the disk $D_{\delta}:=\{z\in\C\;:\;|z|\leq 1+\delta\}$ and bounded otherwise.  Then there exist deterministic sequences $\mathcal{A}_{n}(f_{1}),\dots ,\mathcal{A}_{n}(f_{k})$ (with $\mathcal{A}_n(f_i)$ depending only  $n,f_{i},$ and the distribution of $\xi_{1},\dots\xi_{m}$) such that the random vector
\begin{equation} 
\left(\tr f_{i}(P_{n}/\sigma)-\mathcal{A}_{n}(f_{i})\right)_{i=1}^{s}
\label{Equ:OrigVector}
\end{equation}
converges in distribution to a mean-zero multivariate Gaussian random vector 
\[(F(f_{1}),\dots, F(f_{s}))\] 
with variance and covariance terms defined by
\begin{equation}\E\left[F(f_{i})F(f_{j})\right]=-\frac{1}{4\pi^{2}}\oint_{\mathcal{C}}\oint_{\mathcal{C}}f_{i}(z)f_{j}(w)(zw-1)^{-2}dzdw \label{Equ:Thm:CLT:Var}
\end{equation}
and \begin{equation}\E\left[F(f_{i})\overline{F(f_{j})}\right]=\frac{1}{4\pi^{2}}\oint_{\mathcal{C}}\oint_{\mathcal{C}}f_{i}(z)\overline{f_{j}(w)}(z\bar{w}-1)^{-2}dzd\bar{w}\label{Equ:Thm:CLT:Covar}
\end{equation}
where $\mathcal{C}$ is the contour around the boundary of the disk $D_{\delta}$.
\end{theorem}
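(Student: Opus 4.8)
The strategy is to reduce the eigenvalue statistics of the product $P_n$ to the eigenvalue statistics of a single large matrix whose spectrum contains (copies of) the eigenvalues of $P_n$, and then to run a moment/cumulant-type argument in the spirit of \cite{RiS} and \cite{OR:CLT}. The key observation is that if we form the $mn \times mn$ block-circulant (or block companion) matrix
\[
\mathcal{Y}_n := \begin{pmatrix} 0 & n^{-1/2}X_{n,1} & & \\ & 0 & \ddots & \\ & & \ddots & n^{-1/2}X_{n,m-1} \\ n^{-1/2}X_{n,m} & & & 0 \end{pmatrix},
\]
then $\mathcal{Y}_n^m = \diag(P_n^{(1)}, \ldots, P_n^{(m)})$ where each $P_n^{(i)}$ is a cyclic reordering of the product, so the $m$-th powers of the eigenvalues of $\mathcal{Y}_n$ are exactly the eigenvalues of $P_n$, each appearing with multiplicity $m$. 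Thus $\tr f(P_n/\sigma) = \frac{1}{m}\tr g(\mathcal{Y}_n/\sigma^{1/m})$ where $g(z) = f(z^m)$, which is analytic in a neighborhood of the appropriate disk since $f$ is. This converts the problem into a CLT for linear statistics of the single iid-type matrix $\mathcal{Y}_n$. The matrix $\mathcal{Y}_n$ is not itself an iid matrix, but it is a structured matrix with independent entries, and its resolvent is tractable.

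The next step is the standard reduction from analytic test functions to the resolvent (equivalently, to traces of powers). By Cauchy's integral formula, $\tr f(P_n/\sigma) - \mathcal{A}_n(f) = \frac{1}{2\pi i}\oint_{\mathcal{C}} f(z)\bigl(\tr(zI - P_n/\sigma)^{-1} - (\text{centering})\bigr)\,dz$, so it suffices to prove a functional CLT for the centered resolvent trace $M_n(z) := \tr(zI - P_n/\sigma)^{-1} - \E[\cdots]$ (or, after expanding in $1/z$, for the centered moments $\tr(P_n/\sigma)^p - \E[\tr(P_n/\sigma)^p]$, $p \geq 1$, since the contour $\mathcal{C}$ lies outside the support and $f$ is analytic there). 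One then establishes: (a) convergence of all finite-dimensional joint moments/cumulants of the $M_n(z)$ to Gaussian values; (b) a tightness/uniform-integrability estimate so that the contour integral of the limiting Gaussian process is well-defined and the interchange of limit and integral is justified; and (c) identification of the limiting covariance. For steps (a) and (c) the natural tool is the combinatorial expansion of $\E[\tr(P_n/\sigma)^{p_1}\cdots]$ and its joint cumulants as sums over closed walks on the relevant index sets; because the factor matrices are independent and iid, the leading-order contributions come from a restricted class of pairings, and one checks that the $m$-dependence cancels exactly (this is the source of the claimed universality in $m$). The limiting covariance of the moments $\tr(P_n/\sigma)^p$ works out to a simple closed form (essentially $\min(p,q)$-type, matching a single iid matrix), and plugging this into the double contour integral and resumming yields the kernels $(zw-1)^{-2}$ and $(z\bar w - 1)^{-2}$ in \eqref{Equ:Thm:CLT:Var}–\eqref{Equ:Thm:CLT:Covar}.

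A technically important intermediate point is controlling the \emph{least singular value} / stability of $zI - P_n/\sigma$ for $z$ on the contour $\mathcal{C}$, i.e., ensuring that no eigenvalues of $P_n$ escape the disk $D_\delta$ with high enough probability and with enough integrability to make the truncation arguments go through; this is where Assumption \ref{assump:4PlusTau} ($4+\tau$ moments) enters, and where one invokes (or re-proves in the product setting) polynomial bounds on $\P(\|P_n/\sigma\| \geq 1+\delta)$ together with bounds on the smallest singular value of shifted iid matrices — results of the type underlying Theorem \ref{thm:ORSV} and the local-law literature cited (\cite{N2,GNT}). One also needs to handle the non-Gaussian, non-symmetric nature of the entries: the two-moment condition $\E[\xi^2]=0$ is \emph{not} assumed here, so unlike \cite{RiS} there will be an extra ``pseudo-variance'' contribution to track through the walk expansion, and part of the work is showing it does not affect the leading-order covariance (this parallels how \cite{OR:CLT} removed that hypothesis for a single matrix).

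The main obstacle, as I see it, is step (a)–(c) carried out \emph{uniformly in $m$ with explicit cancellation}: the combinatorial walk expansion for a product of $m$ independent iid matrices is substantially more involved than for a single matrix (walks now live on a cylinder of $m$ layers, and one must classify which pairing/graph structures are $O(1)$ versus $o(1)$ in $n$ while tracking the $\sigma_k$ normalizations and the $m$-fold multiplicity), and the miracle that the limit is independent of $m$ must be extracted from this expansion rather than assumed. Managing the truncation of entries to bounded random variables (to license higher moment bounds) while preserving the CLT, and then removing the truncation, is the other place where care is needed. I would expect the bulk of the paper to be devoted to the resolvent/moment expansion and the associated variance computation, with the spectral-radius and least-singular-value inputs quoted from or adapted from the cited product-matrix literature.
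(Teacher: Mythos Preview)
Your high-level architecture matches the paper: linearize to the $mn\times mn$ block matrix $\mathcal{Y}_n$, use Cauchy's formula to reduce analytic test functions to a resolvent process on the contour $\mathcal{C}$, truncate entries to get control on the spectral radius/least singular value, then prove a functional CLT for $\Xi_n(z)=\tr(\mathcal{Y}_n-zI)^{-1}-\E[\cdots]$ via finite-dimensional convergence plus tightness. On those structural points you and the paper agree.

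Where you diverge is the engine that computes the limiting variance. You propose a moment/cumulant expansion: expand $\tr(P_n/\sigma)^p$ (equivalently the resolvent in a Neumann series) as sums over closed walks on an $m$-layered index set, classify pairings, and extract the $m$-independent answer combinatorially. The paper does \emph{not} do this. Following Rider--Silverstein and Bai--Silverstein, it writes $\Xi_n(z)$ as a martingale difference sequence with respect to the column filtration $\mathcal{F}_k=\sigma(c_k,c_{n+k},\ldots,c_{(m-1)n+k}:\,k'\le k)$, applies the martingale CLT of Billingsley, and reduces everything to showing $\sum_k \E_{k-1}[\breve M_{n,k}^2]\to\nu^2$ in probability. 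The novelty is in that last step: rather than a single self-consistent equation (as for $m=1$), the block structure of $\mathcal{Y}_n$ forces them to iterate the resolvent expansion $\mathcal{G}^{(k)}(z)=-z^{-1}I+z^{-1}\sum_t \mathcal{G}^{(k)}(z)c_te_t^T$ exactly $m$ times (Lemma~6.12), cycling through the block-projection matrices $\mathcal{D}_{i-1},\mathcal{D}_{i-2},\ldots,\mathcal{D}_{i-m}$, to close a \emph{system} of $m$ equations. Solving that system produces the antiderivative $-\ln(1-(z\bar w)^{-m})$, whose mixed $\partial_z\partial_{\bar w}$ derivative (via Vitali) gives the kernel $m^2(z\bar w)^{m-1}/((z\bar w)^m-1)^2$ for $\mathcal{Y}_n$; the $m$-dependence then cancels only after the change of variables $z\mapsto z^m$ back to $P_n$.

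Both routes are in principle viable, but note that \cite{RiS} and \cite{OR:CLT}, which you cite as your template, actually use the martingale/resolvent method, not walk combinatorics; your described method is closer in spirit to Anderson--Zeitouni or Nourdin--Peccati style arguments. The paper's approach trades your graph-theoretic bookkeeping for iterated Sherman--Morrison and resolvent identities, and the ``miracle'' you anticipate (that $m$ drops out) appears not inside the variance computation for $\mathcal{Y}_n$ but only in the final contour-integral substitution. One small correction: your remark about $\E[\xi^2]=0$ and pseudo-variance is a complex-entry issue; here the entries are real with $\E[\xi^2]=1$, and the paper handles this uniformly without a separate pseudo-variance track.
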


A few remarks concerning Theorem \ref{thm:mainCLT} are in order. Heuristically, we may think of $\mathcal{A}_{n}(f_{i})$ as the centering term, and subtracting this quantity is similar to subtracting the expectation (as was done in Theorem \ref{thm:KOV}) or $n f_i(0)$ (as was done in Theorem \ref{thm:RiS}). For technical reasons, defining this term requires some notation and concepts which will be introduced in the forthcoming sections; see \eqref{Def:A_n} for details. 

While the limiting empirical spectral measure for the product of $m$ iid matrices does depend on $m$ (Theorem \ref{thm:ORSV}),  the variance and covariance terms, \eqref{Equ:Thm:CLT:Var} and \eqref{Equ:Thm:CLT:Covar}, for the fluctuations of the linear eigenvalue statistics do not depend on $m$.  In other words, the variance and covariance terms are the same as in the case of a single iid matrix ($m=1$); indeed,  \eqref{Equ:Thm:CLT:Var} and \eqref{Equ:Thm:CLT:Covar} match the analogous terms appearing in \cite{OR:CLT} for a single iid matrix.  In this sense, the fluctuations of the linear statistics appear to be more universal than the global distribution of the eigenvalues.  In certain cases, these covariance terms can be rewritten in terms of an iterated integral over the real and imaginary parts of $z$ as was done in \cite[Theorem 1.1]{RiS}.

We also remark that Theorem \ref{thm:mainCLT} can be extended to the case where each atom variable is complex-valued under the assumption that the real and imaginary parts of each atom variable are independent.  In this case, the covariance terms in Theorem \ref{thm:mainCLT} would change slightly.  The changes required for the complex case can easily be found by inspecting the proof; we refer the reader to Remarks \ref{Remark:ComplexCase4}, \ref{Remark:ComplexCase1}, \ref{Remark:ComplexCase2}, and \ref{Remark:ComplexCase3} for the details.

While Theorem \ref{thm:KOV} holds for a more general class of test functions than Theorem \ref{thm:mainCLT}, it also requires subgaussian decay and a moment matching condition on the entries.  In particular, Theorem \ref{thm:KOV} does not apply to iid matrices with real-valued entries.  Thus, Theorem \ref{thm:mainCLT}, while restricted to analytic functions, does apply to a much larger class of iid random matrices (such as the real Ginibre ensemble and Bernoulli matrices, whose entries take the values $\pm 1$ with equal probability).

Even for the case of a single iid matrix ($m=1$), Theorem \ref{thm:mainCLT} improves upon the existing results in the literature.  Compared to the main results of \cite{OR:CLT} (which were already an improvement over Theorem \ref{thm:RiS}), Theorem \ref{thm:mainCLT} applies to a more general class of iid matrices while still applying to the same class of test functions.


Figure \ref{Fig:CLT} provides a numerical illustration of Theorem \ref{thm:mainCLT} for various test functions and values of $m$.

\begin{figure}[t]
	\begin{center}
		\includegraphics[scale = 0.24]{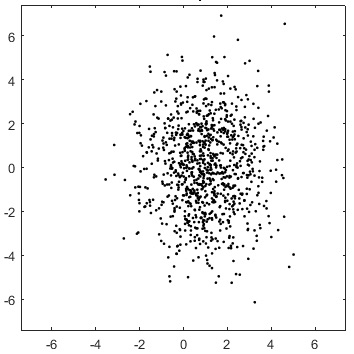}
		\includegraphics[scale = 0.24]{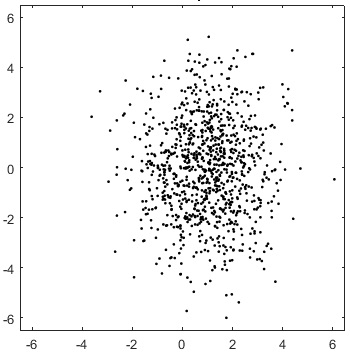}
		\includegraphics[scale = 0.24]{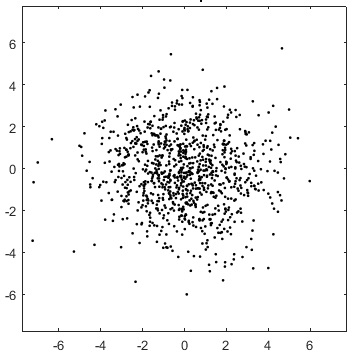}
		\includegraphics[scale = 0.24]{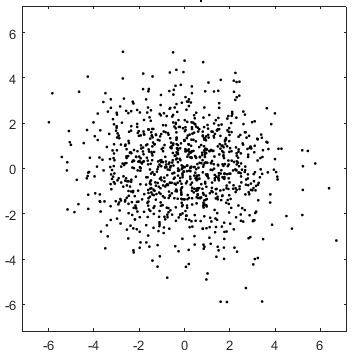}
	\end{center} 
	\caption[Simulations of linear statistics]{This figure provides an illustration of Theorem \ref{thm:mainCLT}. All plots show 1000 observations of the linear statistic in \eqref{def:trfM}. The leftmost plot shows linear statistics computed with a product of three Bernoulli$(-1,1)$ $300\times 300$ matrices scaled by $300^{-3/2}$, and with $f(z)=z^{2}+2\sqrt{-1}z$. The second plot from the left shows linear statistics for a product of ten Bernoulli$(-1,1)$ $300\times 300$ matrices scaled by $300^{-10/2}$, and with $f(z)=z^{2}+2\sqrt{-1}z$. The second plot from the right shows linear statistics for a product of three mean-zero Gaussian $300\times 300$ matrices scaled by $300^{-3/2}$, and $g(z)=\sqrt{-1}z^{3}+z^{2}$. Finally, the rightmost plot shows linear statistics for a product of ten mean-zero Gaussian $300\times 300$ matrices scaled by $300^{-10/2}$, and with $g(z)=\sqrt{-1}z^{3}+z^{2}$.}
	\label{Fig:CLT}
\end{figure}


Since the covariance formulas, \eqref{Equ:Thm:CLT:Var} and \eqref{Equ:Thm:CLT:Covar}, do not depend on $m$, it is an interesting open question to consider the case when the number of product matrices is allowed to depend on $n$, e.g., when $m$ grows (slowly) with $n$.  The proof given below requires $m$ to be fixed, and there are several key bounds which depend on $m$.  If this dependence could be tracked carefully, it may be possible that parts of the proof could be adapted to the case where $m$ grows with $n$.

\subsection{Outline and Overview}

The remainder of the paper is devoted to the proof of Theorem \ref{thm:mainCLT}.  Roughly speaking, the proof follows the main ideas from \cite{RiS,OR:CLT} for studying the linear eigenvalue statistics of a single iid matrix.  However, the product structure of the matrix $P_n$ introduces substantial new difficulties.  For instance, when $m > 1$, the entries of $P_n$ are no longer independent.  To get around this issue, we use a linearization technique.  That is, instead of studying the product matrix $P_n$, we introduce the linearized matrix $\mathcal{Y}$, which is a $mn \times mn$ block matrix defined as follows:
\[ \mathcal{Y} := \frac{1}{\sqrt{n}} \begin{bmatrix} 
				0 &  {X}_{n,1} &  0   &  &\cdots          & 0       \\
                         		0 & 0    & {X}_{n,2} &    0  &  \cdots    & 0        \\      
                           	\vdots &      & \ddots & \ddots &    &      \vdots   \\
                         		0 &    0  &  \cdots   &        &  0 & {X}_{n,m-1} \\
                       		{X}_{n,m} & 0     &  \cdots   &    & 0       &  0      
	\end{bmatrix}, \]
where any block not specified above is assumed to be zero.  
As has been observed previously \cite{BJW, COW, ORSV, OS}, the eigenvalues of $\mathcal{Y}^m$ are the same as the eigenvalues of $P_n$, up to some multiplicity factor.  Thus, the problem can be reduced to studying the linear eigenvalue statistics of $\mathcal{Y}$.  

The main challenge in studying the linear statistics of non-Hermitian random matrices is often computing the limiting variance.  For example, in \cite{RiS, OR:CLT}, for a single iid matrix the variance is computed by deriving a recursive equation, which then must be solved in the limit as $n$ tends to infinity to obtain the limiting variance.  In the case of analyzing the limiting variance for the linear statistics of $\mathcal{Y}$, the block structure of $\mathcal{Y}$ itself introduces new difficulties.  In this case, it does not seem possible to derive a single recursive equation as was done in \cite{RiS, OR:CLT} due to the prescience of so many deterministic zero blocks in $\mathcal{Y}$.  To get around this issue, we instead derive a system of $m$ recursive equations and then solve the system of equations simultaneously.  The derivation and solution of this system of recursive equations for the variance is the main technical advance of the present article and occupies the bulk of the proof.  Interestingly, the limiting variance we derive for the linear statistics of $\mathcal{Y}$ \emph{does} depend on $m$ and appears to have a form not encountered before in random matrix theory.  When this limiting variance is translated back to the variance for the product matrix $P_n$, the dependence on $m$ vanishes.

The paper is organized as follows.  In Section \ref{Sec:PrelimToolsAndNotation}, we present some preliminary results, tools, and notation that will be used throughout the paper. In Section \ref{Sec:Reduction}, we make some preliminary reductions and reduce the problem to the study of the linear statistics of the linearized matrix $\mathcal{Y}$.  Section \ref{Sec:ProofOfReduction} begins the proof of the main result, which by Cauchy's integral formula, involves studying a sequence of stochastic processes involving the trace of the resolvent matrix.  Section \ref{Sec:FiniteDimDist} proves the finite dimensional convergence of this sequence of stochastic processes, and Section \ref{Sec:Tightness} shows that this sequence of stochastic processes is tight, concluding the proof.  These last two sections are based on \cite{RiS, OR:CLT}.  We warn the reader that the last two sections appear to inherit many of the technical challenges present in \cite{RiS, OR:CLT} along with several additional challenges (based on the block structure of the linearized matrix $\mathcal{Y}$, as discussed above). As such, the material presented in these two sections is rather technical and some of the calculations are tedious.  Some appendices follow with auxiliary results.

\section{Preliminary Tools and Notation}
\label{Sec:PrelimToolsAndNotation}

This section is devoted to introducing some additional concepts and notation required for the proofs of our main results.

\subsection{Notation} 
\label{Subsec:PrelimTools:Notation}

We use asymptotic notation (such as $O,o, \Omega$) under the assumption that $n \to \infty$.  In particular, $X= O(Y)$, $Y = \Omega(X)$, $X \ll Y$, and $Y \gg X$ denote the estimate $|X| \leq C Y$, for some constant $C > 0$ independent of $n$ and for all $n \geq C$.  If we need the constant $C$ to depend on a parameter $k$, e.g. $C = C_k$, we indicate this with subscripts, e.g. $X = O_{k}(Y)$, $Y = \Omega_k(X)$, $X \ll_k Y$, and $Y\gg_k X$.  We write $X = o(Y)$ if $|X| \leq c(n) Y$ for some sequence $c(n)$ that goes to zero as $n \to \infty$.  Specifically, $o(1)$ denotes a term which tends to zero as $n \to \infty$.  If we need the sequence $c(n)$ to depend on a parameter $k$, e.g. $c(n) = c_k(n)$, we indicate this with subscripts, e.g. $X = o_k(Y)$.  

Throughout the paper, we view $m$ as a fixed integer.  Thus, when using asymptotic notation, we will allow the implicit constants (and implicit rates of convergence) to depend on $m$ without including $m$ as a subscript (i.e., we will not write $O_m$ or $o_m$).  

An event $E$, which depends on $n$, is said to hold with \emph{overwhelming probability} if $\Prob(E) \geq 1 - O_C(n^{-C})$ for every constant $C > 0$.  We let $\oindicator{E}$ denote the indicator function of the event $E$.  $E^{c}$ denotes the complement of the event $E$.  For $\delta > 0$, $D_\delta$ denotes the disk $\{z \in \mathbb{C} : |z| \leq 1 + \delta\}$.  

For a matrix $M$, we let $\|M\|$ denote the spectral norm of $M$.  $\|M\|_2$ denotes the Hilbert-Schmidt norm of $M$ (defined in \eqref{eq:def:hs}).  We let $I_n$ denote the $n \times n$ identity matrix.  Often we will just write $I$ for the identity matrix when the size can be deduced from context. 

The singular values of a matrix $M$ are the square roots of the eigenvalues of the matrix $M^{*}M$. For an $n\times n$ matrix, we will denote these $s_{1}(M_{n}),\dots ,s_{n}(M_{n})$. Note that all singular values real and non-negative, so we let $s_{1}(M_{n})\geq \cdots \geq s_{n}(M_{n})$ by convention. 

We write a.s., a.a., and a.e. for almost surely, Lebesgue almost all, and Lebesgue almost everywhere respectively.  We use $\sqrt{-1}$ to denote the imaginary unit and reserve $i$ as an index.

We let $C$ and $K$ denote constants that are non-random and may take on different values from one appearance to the next.  The notation $K_p$ means that the constant $K$ depends on another parameter $p$.  We allow these constants to depend on the fixed integer $m$ without explicitly denoting or mentioning this dependence.

\subsection{Linearization}
Let $M_1, \ldots, M_m$ be $n \times n$ matrices, and suppose we wish to study the product $M_1 \cdots M_m$.  A useful trick is to linearize this product and instead consider the $mn \times mn$ block matrix 
\begin{equation} \label{def:M}
	\mathcal{M} := \left[\begin{array}{ccccc}
	0 & M_{1} & 0 & \cdots & 0\\
	0 & 0 & M_{2} & \dots & 0\\
	\vdots & \vdots & \vdots & \ddots & \vdots\\
	0 & 0 & 0 & \cdots & M_{m-1}\\
	M_{m} & 0 & 0 & \dots & 0
	\end{array}\right].
\end{equation}
The following proposition relates the eigenvalues of $\mathcal{M}$ to the eigenvalues of the product $M_1 \cdots M_m$.  We note that similar linearization tricks have been used previously; see, for example, \cite{A, BJW, COW, ORSV, OS} and references therein.  

\begin{proposition}[Proposition 4.1 from \cite{COW}]\label{prop:linear}
Let $M_1, \ldots, M_m$ be $n \times n$ matrices.  Let $P := M_1 \cdots M_m$, and assume $\mathcal{M}$ is the $mn \times mn$ block matrix defined in \eqref{def:M}.  Then
$$ \det(\mathcal{M}^m - z I) = [\det( P - z I)]^m $$
for every $z \in \mathbb{C}$.  In other words, the eigenvalues of $\mathcal{M}^m$ are the eigenvalues of $P$, each with multiplicity $m$.  
\end{proposition}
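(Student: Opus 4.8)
The plan is to compute $\det(\mathcal{M}^m - zI)$ directly by first understanding the block structure of $\mathcal{M}^m$ and then using a Schur-complement (cofactor) expansion. First I would observe that $\mathcal{M}$ is, up to the cyclic placement of the block $M_m$ in the bottom-left corner, a nilpotent-type shift operator weighted by the $M_k$'s; concretely, writing $\mathcal{M}$ in terms of the block index, the $(i,j)$ block of $\mathcal{M}$ is $M_i$ when $j \equiv i+1 \pmod m$ and $0$ otherwise. Multiplying $\mathcal{M}$ by itself $m$ times cycles the shift all the way around, so $\mathcal{M}^m$ is block-diagonal: its $i$-th diagonal block is the cyclic product $M_i M_{i+1} \cdots M_m M_1 \cdots M_{i-1}$ (indices mod $m$). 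I would verify this by an easy induction on the power, tracking which block entries are nonzero after each multiplication.

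Next I would note that each diagonal block of $\mathcal{M}^m$ is a cyclic permutation of $P = M_1 \cdots M_m$, and cyclic products have the same nonzero eigenvalues; more precisely, for any square matrices $A, B$ one has $\det(AB - zI) = \det(BA - zI)$ (valid for all $z$, not just invertible ones, by a standard continuity/polynomial-identity argument). Applying this repeatedly shows $\det(M_i \cdots M_{i-1} - zI) = \det(M_1 \cdots M_m - zI) = \det(P - zI)$ for each $i$. Since $\mathcal{M}^m - zI$ is block diagonal with these $m$ blocks, its determinant is the product of the block determinants, giving $\det(\mathcal{M}^m - zI) = \prod_{i=1}^m \det(P - zI) = [\det(P - zI)]^m$. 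The statement about eigenvalues and multiplicities then follows immediately by comparing the characteristic polynomials: every root of $\det(P-zI)$ of multiplicity $\mu$ is a root of $\det(\mathcal{M}^m - zI)$ of multiplicity $m\mu$.

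The main obstacle — though a mild one — is carefully justifying that $\mathcal{M}^m$ is block-diagonal with the claimed cyclic-product blocks; the bookkeeping with indices modulo $m$ needs to be done cleanly (it is easiest to think of the block rows/columns as indexed by $\mathbb{Z}/m\mathbb{Z}$ and of $\mathcal{M}$ as $\sum_i E_{i,i+1} \otimes M_i$, so that $\mathcal{M}^m = \sum_i E_{i,i} \otimes (M_i M_{i+1}\cdots M_{i-1})$ after the cross terms telescope). The only other point requiring a word of care is the identity $\det(AB-zI)=\det(BA-zI)$ at $z=0$ or when the factors are singular, which one handles by noting both sides are polynomials in the entries agreeing on a dense open set. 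Since the excerpt already cites this as Proposition 4.1 of \cite{COW}, I would keep the argument brief and self-contained along these lines.
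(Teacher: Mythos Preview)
Your proposal is correct and follows essentially the same approach as the paper: compute $\mathcal{M}^m$ as a block-diagonal matrix whose diagonal blocks are the cyclic products $Z_k = M_k \cdots M_m M_1 \cdots M_{k-1}$, observe each $Z_k$ has the same characteristic polynomial as $P$, and multiply the block determinants. The paper's version is terser (it simply asserts the block-diagonal form and that the $Z_k$ share a characteristic polynomial), while you spell out the index bookkeeping and the $\det(AB-zI)=\det(BA-zI)$ identity more explicitly, but the argument is the same.
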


\begin{Details}
\begin{proof}
A simple computation reveals that $\mathcal{M}^m$ is a block diagonal matrix of the form
$$ \mathcal{M}^m = \begin{bmatrix} 
				Z_1 &  & 0 \\
				 & \ddots &  \\
				0 &  & Z_m 
			\end{bmatrix}, $$
where $Z_1 := P$ and
$$ Z_k := M_k \cdots M_m M_1 \cdots M_{k-1} $$
for $1 < k \leq m$.  Since each product $Z_2, \ldots, Z_m$ has the same characteristic polynomial 
as $P$, it follows that
$$ \det( \mathcal{M}^m - zI) = \prod_{k=1}^m \det(Z_k - z I) = [\det (P - zI)]^m $$
for all $z \in \mathbb{C}$.  
\end{proof}
We will exploit Proposition \ref{prop:linear} many times in the coming proofs. 
	For instance, in order to study the product $X_{n,1} \cdots X_{n,m}$, we will consider a truncated version of the  $mn \times mn$ block matrix
	\begin{equation*}
	\frac{1}{\sqrt{n}}\begin{bmatrix} 
	0 &  {X}_{n,1} &     &            & 0       \\
	0 & 0    & {X}_{n,2} &            & 0        \\      
	&      & \ddots & \ddots     &         \\
	0 &      &     &          0 & {X}_{n,m-1} \\
	{X}_{n,m} &      &     &            &  0      
	\end{bmatrix}
	\end{equation*}
	and its resolvent
	\begin{equation*}
	\left( \frac{1}{\sqrt{n}}\begin{bmatrix} 
	0 &  {X}_{n,1} &     &            & 0       \\
	0 & 0    & {X}_{n,2} &            & 0        \\      
	&      & \ddots & \ddots     &         \\
	0 &      &     &          0 & {X}_{n,m-1} \\
	{X}_{n,m} &      &     &            &  0      
	\end{bmatrix} - z I \right)^{-1},
	\end{equation*}
	defined for $z \in \mathbb{C}$ provided $z$ is not an eigenvalues of the above block matrix. 
\end{Details}
\subsection{Matrix Notation}
Here and in the sequel, we will deal with matrices of various sizes.  The most common dimensions are $n \times n$ and $N \times N$, where we take $N := mn$.  Unless otherwise noted, we denote $n \times n$ matrices by capital letters (such as $M, X$) and larger $N \times N$ matrices using calligraphic symbols (such as $\mathcal{M}$, $\mathcal{Y}$).  

If $M$ is an $n \times n$ matrix and $1 \leq i,j \leq n$, we let $M_{ij}$ and $M_{(i,j)}$ denote the $(i,j)$-entry of $M$.  Similarly, if $\mathcal{M}$ is an $N \times N$ matrix, we let $\mathcal{M}_{ij}$ and $\mathcal{M}_{(i,j)}$ denote the $(i,j)$-entry of $\mathcal{M}$ for $1 \leq i,j \leq N$. Sometimes we will deal with $n \times n$ matrices notated with a subscript such as $M_n$.  In this case, for $1 \leq i,j \leq n$, we write $(M_n)_{ij}$ or $M_{n,(i,j)}$ to denote the $(i,j)$-entry of $M_n$. 



\subsection{Singular Value Inequalities and Useful Identities}
Let $M$ denote an $n\times n$ matrix. We often want to know about the smallest and largest singular values of a matrix. Recall $s_{1}(M)\geq\dots\geq s_{n}(M)$ denote the singular values of the matrix $M$. 
\begin{proposition}
	Let $M$ be an $n\times n$ matrix, and assume that $E \subset \C$ and $c > 0$.  If $$\inf_{z\in E} s_{n}(M-zI)\geq c,$$ then no eigenvalue of $M$ is contained in $E$ and 
	$$\sup_{z\in E}\lnorm G(z)\rnorm \leq \frac{1}{c}$$
	where $G(z)=(M-zI)^{-1}$ is the resolvent of $M$.
	\label{Prop:LargeAndSmallSingVals} 
\end{proposition}
The proof of Proposition \ref{Prop:LargeAndSmallSingVals} follows easily by observing that the operator norm of the resolvent can be bounded above by $1/s_{n}(M-zI)$; similar bounds were used in \cite{OR:CLT}.   

We will make use of the Sherman--Morrison rank one perturbation formula (see \cite[Section 0.7.4]{HJ}). Suppose $A$ is an invertible square matrix, and let $u$, $v$ be vectors. If $1+v^{*}A^{-1}u\neq 0$, then
\begin{equation}
(A+uv^{*})^{-1}=A^{-1}-\frac{A^{-1}uv^{*}A^{-1}}{1+v^{*}A^{-1}u}
\label{equ:ShermanMorrison1}
\end{equation}
and 
\begin{equation}
(A+uv^{*})^{-1}u=\frac{A^{-1}u}{1+v^{*}A^{-1}u}.
\label{equ:ShermanMorrison2}
\end{equation}
Also recall the Sherman--Morrison--Woodbury formula (for example, \cite[Theorem 1.1]{D}), which states that for an invertible $N\times N$ matrix $A$ and $a\times N$ matrices $V,U$ for some fixed $a<N$, 
\begin{equation}
(A+UV^{T})^{-1}U=A^{-1}U(I_{a}+V^{T}A^{-1}U)^{-1}
\label{Equ:ShermanMorrisonWoodbury}
\end{equation}
provided $I_{a}+V^{T}A^{-1}U$ is invertible.

Another identity we will make use of is the Resolvent Identity, which states that \begin{equation}
A^{-1}-B^{-1}=A^{-1}(B-A)B^{-1}
\label{Equ:ResolventIndentity}
\end{equation}
whenever $A$ and $B$ are invertible.

We also use Weyl's inequality for the singular values (see, for example, \cite[Problem III.6.5]{Bhatia}), which states that for $n\times n$ matrices $A$ and $B$, 
\begin{equation} \label{Equ:weyl}
\max_{1 \leq i \leq n}\left|s_{i}(A)-s_{i}(B)\right|\leq \lnorm A-B\rnorm.
\end{equation}

\begin{Details}
\subsection{Overview of Proof}
\label{Subsec:PrelimTools:Overview}
In order to prove that 
\[\left(\tr f_{i}(P_{n}/\sigma)\oindicator{E_{n}}-\E[\tr f_{i}(P_{n}/\sigma)\oindicator{E_{n}}]\right)_{i=1}^{s}\]
converges to the appropriate mean-zero multivariate Gaussian process under the assumptions of Theorem \ref{thm:mainCLT}, we first make some preliminary reductions. In particular, we first use the Cramer--Wold theorem to consider a single test function, then we move to a slightly more general result, Theorem \ref{thm:firstReduction}. We proceed by showing that it is sufficient to consider a rescaled linearized version of truncated matrices, which we will call $\blmat{Y}{n}$ and we define formally in \eqref{Def:Y_n}. We introduce a truncated event $\hat{E}_{n}$ and prove that it is sufficient to show that
$$\tr f(\blmat{Y}{n})\oindicator{\hat{E}_{n}}-\E[\tr f(\blmat{Y}{n})\oindicator{\hat{E}_{n}}]$$
converges in distribution to an appropriate mean zero Gaussian process. We next rewrite this trace using Cauchy's integral formula, 
\begin{align}
&\tr f(\blmat{Y}{n})\oindicator{\hat{E}_{n}}-\E[\ f(\blmat{Y}{n})\oindicator{\hat{E}_{n}}]\notag\\
&=-\frac{1}{2\pi i}\oint_{C}f(z)(\tr (\blmat{Y}{n}-zI)^{-1}\oindicator{\hat{E}{n}}-\E[\tr(\blmat{Y}{n}-zI)^{-1}\oindicator{\hat{E}{n}}])dz
\label{equ:CauchyIntFormula}
\end{align}
where $\mathcal{C}$ is a contour on the boundary of the disk $D_{\delta}$. By viewing
\begin{equation*}
\tr (\blmat{Y}{n}-zI)^{-1}\oindicator{\hat{E}{n}}-\E[\tr(\blmat{Y}{n}-zI)^{-1}\oindicator{\hat{E}{n}}]
\end{equation*}
as a sequence of stochastic process, the continuous mapping theorem implies that it is sufficient to prove that
this sequence of stochastic processes converges in distribution to a mean zero Gaussian process with appropriate covariance structure. In order to prove that this sequence of stochastic processes converges, we prove that finite dimensional distributions of this process converge and that this sequence is  tight in the space of continuous functions on the contour $\mathcal{C}$. 

\end{Details}

\section{Reductions} \label{Sec:Reduction}
In order to prove Theorem \ref{thm:mainCLT}, we will make a series of reductions by truncating the entries in each factor matrix and applying the linearization techniques discussed above.   
\begin{Details}
If it does, then \[\left(\tr f_{i}(P_{n}/\sigma)\oindicator{E_{n}}-\E\left[\tr f_{i}(P_{n}/\sigma)\oindicator{E_{n}}\right]\right)_{i=1}^{s}\] 
must converge to a mean-zero multivariate Gaussian vector as well. It only remains to calculate the covariance structure, which will follow from the variance of $\tr f(P_{n}/\sigma)\oindicator{E_{n}}-\E\left[\tr f(P_{n}/\sigma)\oindicator{E_{n}}\right]$. 
\end{Details} 
\begin{Details}
	Observe that
	\begin{align*}
	\tr f(P_{n}/\sigma)&=\tr\left(\gamma_{1}f_{1}(P_{n}/\sigma)+\gamma_{2}f_{2}(P_{n}/\sigma)+\cdots +\gamma_{s}f_{s}(P_{n}/\sigma)\right)\\
	&=\gamma_{1}\tr\left(f_{1}(P_{n}/\sigma)\right)+\gamma_{2}\tr\left(f_{2}(P_{n}/\sigma)\right)+\cdots +\gamma_{s}\tr\left(f_{s}(P_{n}/\sigma)\right)\\
	&=\gamma_{1}\left(-\frac{1}{2\pi i}\oint_{\mathcal{C}}f_{1}(z)\tr(\mathcal{G}_{n}(z))dz\right)+\gamma_{2}\left(-\frac{1}{2\pi i}\oint_{\mathcal{C}}f_{2}(z)\tr(\mathcal{G}_{n}(z))dz\right)\\
	&\quad+\dots+ \gamma_{s}\left(-\frac{1}{2\pi i}\oint_{\mathcal{C}}f_{s}(z)\tr(\mathcal{G}_{n}(z))dz\right)\\
	&=-\frac{1}{2\pi i}\oint_{\mathcal{C}}\left(\gamma_{1}f_{1}(z)+\gamma_{2}f_{2}(z)+\cdots +\alpha_{s}f_{s}(z)\right)\tr(\mathcal{G}_{n}(z))dz\\
	&=-\frac{1}{2\pi i}\oint_{\mathcal{C}}f(z)\tr(\mathcal{G}_{n}(z))dz.
	\end{align*}
	Thus the covariance structure induced by the function $f$ will determine the variance and covariance of the multi-dimensional Gaussian vector.
\end{Details}
Theorem \ref{thm:mainCLT} will follow from the following result. 

\begin{theorem}
	Let $m\geq 1$ be a fixed integer, and assume $\xi_1, \ldots, \xi_m$ are real-valued random variables which satisfy Assumption \ref{assump:4PlusTau}.  For each $n \geq 1$, let $X_{n,1}, \ldots, X_{n,m}$ be independent $n \times n$ iid random matrices with atom variables $\xi_1, \ldots, \xi_m$, respectively.  Define the products
	$$ P_n := n^{-m/2} X_{n,1} \cdots X_{n,m}\;\text{ and }\;\sigma=\sigma_{1}\cdots\sigma_{m}.$$
	Let $\delta>0$, and let $f$ be analytic in some neighborhood containing the disk $D_{\delta}:=\{z\in\C\;:\;|z|\leq 1+\delta\}$ and bounded otherwise. Then, there exists a constant $c>0$ and a deterministic sequence $\mathcal{A}_n(f)$ (depending on $n$, $f$, and the distribution of $\xi_1, \ldots, \xi_m$) such that the event
	\begin{equation}
	E_{n}:=\left\{\inf_{|z|> 1+\delta/2}s_{n}\left(P_{n}/\sigma-zI\right)\geq c\right\}
	\label{Def:E_n}
	\end{equation}
	holds with probability $1-o(1)$ and as $n\rightarrow\infty$,
	\begin{equation}
	\tr f(P_{n}/\sigma)\oindicator{E_{n}}-\mathcal{A}_{n}(f)
	\label{Equ:SingleFConv}
	\end{equation}
	converges in distribution to a mean zero Gaussian random variable $F(f)$ with covariance structure 
	\begin{equation} 
	\E\left[\left(F(f)\right)^{2}\right]=-\frac{1}{4\pi^{2}}\oint_{\mathcal{C}}\oint_{\mathcal{C}}f(z)f(w)(zw-1)^{-2}dzdw
	\label{Equ:FirstReductionVar1}
	\end{equation}
	and  
	\begin{equation}
	\E\left[F(f)\overline{F(f)}\right] =\frac{1}{4\pi^{2}}\oint_{\mathcal{C}}\oint_{\mathcal{C}}f(z)\overline{f(w)}(z\bar{w}-1)^{-2}dzd\bar{w}
	\label{Equ:FirstReductionVar2}
	\end{equation}
	where $\mathcal{C}$ is the contour around the boundary of the disk $D_{\delta}$.  In addition, the function $f \mapsto \mathcal{A}_n(f)$ is continuous and linear with the property that if $f(z) \in \mathbb{R}$ for all $z \in \mathbb{R} \cap D_{\delta}$, then $\mathcal{A}_n(f)$ is real-valued.  
	\label{thm:firstReduction}
\end{theorem}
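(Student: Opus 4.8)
The plan is to follow the route of \cite{RiS,OR:CLT} for a single iid matrix, but carried out on the linearized block matrix discussed after Proposition~\ref{prop:linear}, and to proceed in four stages: (1) truncate the entries and pass to the linearization; (2) reduce, via Cauchy's integral formula together with a least singular value bound, to a resolvent trace process on a contour; (3) prove convergence of that process (finite-dimensional distributions plus tightness), computing the limiting covariance from a \emph{system} of recursive equations; and (4) construct $\mathcal{A}_n(f)$ and verify its linearity, continuity, and reality. For Stage~1, I would replace each $\xi_k$ by $\hat\xi_k$, its truncation at level $n^{\gamma}$ (for small $\gamma>0$) recentered to have mean zero, and let $\hat X_{n,k}$ be the corresponding iid matrix. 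Assumption~\ref{assump:4PlusTau} (finite $(4+\tau)$-th moment) guarantees that with overwhelming probability no entry is truncated and that the recentering has size $o(n^{-C})$ for every $C$; hence $P_n/\sigma$ and its truncated analogue agree off an event of probability $o(1)$, and by \eqref{Equ:weyl} together with the stability of $\tr f$ under low-rank / small Hilbert--Schmidt perturbations (once a resolvent bound is available, as in Stage~2) the corresponding linear statistics agree up to $o(1)$ in probability. Then, forming the $mn\times mn$ block matrix $\mathcal{Y}_n$ from suitably rescaled blocks of the $\hat X_{n,k}$ as in \eqref{def:M} so that $\mathcal{Y}_n^{\,m}$ has the same eigenvalues as the truncated $P_n/\sigma$ (each with multiplicity $m$, by Proposition~\ref{prop:linear}, up to a negligible rescaling error), and writing $\tilde f(z):=f(z^{m})$, which is analytic in a neighborhood of $\{|z|\le(1+\delta)^{1/m}\}$, reduces the problem to the fluctuations of $\tfrac1m\tr\tilde f(\mathcal{Y}_n)$.

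\textbf{Stage 2: Cauchy's formula and the resolvent bound.} On a circular contour $\mathcal{C}_m$ of radius strictly between $(1+\delta/2)^{1/m}$ and $(1+\delta)^{1/m}$, Cauchy's formula gives
\[
\tr\tilde f(\mathcal{Y}_n)\;=\;-\frac{1}{2\pi\sqrt{-1}}\oint_{\mathcal{C}_m}\tilde f(z)\,\tr\mathcal{G}_n(z)\,dz,\qquad \mathcal{G}_n(z):=(\mathcal{Y}_n-zI)^{-1},
\]
valid on the event that $\mathcal{Y}_n$ has no eigenvalue on or outside $\mathcal{C}_m$. I would show that the event $\{\inf_{|z|\ge(1+\delta/2)^{1/m}}s_{mn}(\mathcal{Y}_n-zI)\ge c\}$ holds with probability $1-o(1)$, using known least singular value bounds for shifted iid matrices together with an $O(1)$ bound on $\|\mathcal{Y}_n\|$ (which, after truncation, also controls the location of the spectrum), and that this event forces the event $E_n$ of \eqref{Def:E_n} for $P_n/\sigma$ (through the relation between $s_{mn}(\mathcal{Y}_n-zI)$ and $s_n(P_n/\sigma-z^mI)$). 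On this event Proposition~\ref{Prop:LargeAndSmallSingVals} bounds $\|\mathcal{G}_n(z)\|\le 1/c$ uniformly on $\mathcal{C}_m$, so attaching the indicator of this (and the earlier high-probability) event(s) to every quantity changes nothing in the limit. I denote by $\hat E_n$ the resulting truncated high-probability event.

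\textbf{Stage 3: convergence of the resolvent trace process and the variance.} Put $N:=mn$ and regard $Z_n(z):=\tr\mathcal{G}_n(z)\oindicator{\hat E_n}-\E[\tr\mathcal{G}_n(z)\oindicator{\hat E_n}]$ as a random element of $C(\mathcal{C}_m)$. Convergence of $Z_n$ to a centered Gaussian process follows from: (i) convergence of finite-dimensional distributions, proved by decomposing $Z_n(z)$ into martingale differences with respect to the filtration generated by successively revealing the columns of $\mathcal{Y}_n$, expressing each increment via the Sherman--Morrison and Sherman--Morrison--Woodbury identities \eqref{equ:ShermanMorrison1}--\eqref{Equ:ShermanMorrisonWoodbury}, and applying the martingale central limit theorem (a Cram\'er--Wold step handles several points $z_1,\dots,z_r$ jointly); and (ii) tightness, from the bound $\E|Z_n(z)-Z_n(w)|^2\ll|z-w|^2$ on $\mathcal{C}_m$ obtained from the resolvent identity \eqref{Equ:ResolventIndentity} and the uniform bound on $\|\mathcal{G}_n\|$, via Kolmogorov's criterion. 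The delicate point---and the one I expect to be the main obstacle---is identifying $\lim_n\cov(\tr\mathcal{G}_n(z),\tr\mathcal{G}_n(w))$ and, using that $\mathcal{Y}_n$ is real, its conjugated counterpart. Because $\mathcal{Y}_n$ has $m-1$ deterministic zero blocks, one cannot close a single self-consistent equation as in \cite{RiS,OR:CLT}; instead I would track the $m$ normalized traces of the diagonal $n\times n$ sub-blocks of $\mathcal{G}_n(z)$ (together with the relevant off-diagonal couplings), derive a closed system of $m$ coupled recursive equations relating them, and solve this system in the limit $n\to\infty$ to obtain an explicit covariance kernel. Plugging that kernel into the double contour integral produced by Cauchy's formula and performing the substitutions $z\mapsto z^{m}$, $w\mapsto w^{m}$ to return to $P_n/\sigma$ yields exactly \eqref{Equ:FirstReductionVar1}--\eqref{Equ:FirstReductionVar2}; the multiplicity factor $1/m$ and the Jacobians of the substitution combine to cancel all dependence on $m$.

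\textbf{Stage 4: the centering term, and conclusion.} Define $\mathcal{A}_n(f)$ to be $\tfrac1m$ times $-\tfrac{1}{2\pi\sqrt{-1}}\oint_{\mathcal{C}_m}\tilde f(z)\,\E[\tr\mathcal{G}_n(z)\oindicator{\hat E_n}]\,dz$, transported back through the Stage-1 comparison to $P_n/\sigma$ (a precise formula is \eqref{Def:A_n}); by construction it depends only on $n$, $f$, and the laws of $\xi_1,\dots,\xi_m$. Linearity and continuity of $f\mapsto\mathcal{A}_n(f)$ follow from linearity of the trace and of the contour integral together with the uniform bound $\|\mathcal{G}_n(z)\|\le 1/c$ on $\mathcal{C}_m$. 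Finally, if $f$ is real on $\R\cap D_\delta$, then $f$ (hence $\tilde f$) has real Taylor coefficients, so $\overline{\tilde f(z)}=\tilde f(\bar z)$; and since $\xi_1,\dots,\xi_m$ are real, $\mathcal{Y}_n$ is a real matrix, so $\overline{\mathcal{G}_n(z)}=\mathcal{G}_n(\bar z)$ while $\oindicator{\hat E_n}$ is a real random variable. Choosing $\mathcal{C}_m$ symmetric about the real axis and substituting $z\mapsto\bar z$ in the defining integral then gives $\overline{\mathcal{A}_n(f)}=\mathcal{A}_n(f)$, i.e.\ $\mathcal{A}_n(f)\in\R$. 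Combining Stages~1--3 with Slutsky's theorem (to discard the various indicators of probability-$(1-o(1))$ events and the truncation discrepancy) completes the proof.
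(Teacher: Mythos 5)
Your outline follows essentially the same route as the paper: truncate the entries, linearize via the $mn\times mn$ block matrix, pass through Cauchy's integral formula and a uniform least-singular-value/resolvent bound to the resolvent trace process on a contour, prove finite-dimensional convergence by a column-revealing martingale decomposition with Sherman--Morrison expansions and tightness by a Lipschitz second-moment bound, solve a system of $m$ coupled recursive equations for the covariance (this is exactly the paper's iteration with the block projections), undo the substitution $z\mapsto z^m$ to remove the $m$-dependence, and take $\mathcal{A}_n(f)$ to be the expectation of the truncated statistic, checking reality from $\overline{\mathcal{G}_n(z)}=\mathcal{G}_n(\bar z)$ and realness of the entries. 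So there is no difference of method to report.

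There is, however, one concrete misstep in your Stage 1. Under Assumption \ref{assump:4PlusTau} the atom variables only have finite $(4+\tau)$-th moments, so truncating at level $n^{\gamma}$ with ``small $\gamma>0$'' does not leave the matrix unchanged with high probability: the union bound gives $\P(\exists\, i,j,k:\ |(X_{n,k})_{ij}|>n^{\gamma})\ll n^{2-\gamma(4+\tau)}$, which is $o(1)$ only when $\gamma>2/(4+\tau)$, i.e.\ $\gamma$ must be taken near $1/2$ (the paper uses $n^{1/2-\varepsilon}$ with $\varepsilon$ small depending on $\tau$), and even then the event has probability $1-o(1)$, never overwhelming. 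Likewise the recentering $\E[\xi\,\indicator{|\xi|\le n^{1/2-\varepsilon}}]$ is only polynomially small (of order $n^{-(1/2-\varepsilon)(3+\tau)}$), not $o(n^{-C})$ for every $C$; that rate of decay would require much stronger tail assumptions than the theorem allows. What the argument actually needs, and what the paper proves, is weaker: $\P(P_n\neq \dot P_n)=o(1)$ for the plainly truncated product, plus an $L^2$ Hilbert--Schmidt comparison $\E\lVert \dot P_n-\hat P_n\rVert_2^2=o(n^{-1})$ between the truncated and the recentered/rescaled product, fed into the resolvent identity inside Cauchy's formula on the intersection of the two least-singular-value events. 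With the truncation level corrected to $n^{1/2-\varepsilon}$ and the ``overwhelming''/``$o(n^{-C})$'' claims replaced by these $1-o(1)$ and $L^2$ estimates, your Stage 1 coincides with the paper's and the rest of your plan goes through.
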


We will now prove Theorem \ref{thm:mainCLT} assuming Theorem \ref{thm:firstReduction}. 

\begin{proof}[Proof of Theorem \ref{thm:mainCLT}]
	Assume Theorem \ref{thm:firstReduction}.  It follows from standard least singular value bounds that  there exists a constant $c>0$ such that $E_{n}$ holds with probability $1-o(1)$; the details are presented as Lemma \ref{Lem:E_nOverwhelming} from Appendix \ref{Sec:AppendexEvents}. Thus, the prescience (or lack thereof) of the indicator function in \eqref{Equ:SingleFConv} does not affect the limiting distribution. 
	
	To prove Theorem \ref{thm:mainCLT}, we will invoke the Cramer--Wold device, but we will need to be careful as we are dealing with complex-valued random variables.  If $f$ is analytic in a neighborhood containing the disk $D_\delta$, we can express $f$ as the power series
	\[ f(z) = \sum_{i=0}^\infty a_i z^i \]
	in the same disk.  We then define
	\begin{align*}
		\mathfrak{R}f(z) = \sum_{i=0}^\infty \Re(a_i) z^i \quad \text{and} \quad
		\mathfrak{I}f(z) = \sum_{i=0}^\infty \Im(a_i) z^i,
	\end{align*}
	which are both analytic in $D_\delta$.  (Notice that these are not the real and imaginary parts of $f$, which would not be analytic in $D_\delta$.)  
	
	In order to invoke the Cramer--Wold device and prove Theorem \ref{thm:mainCLT}, we consider
	\[ f(z) = \sum_{l=0}^s \left( \alpha_l \mathfrak{R}f_l(z) + \beta_l \mathfrak{I}f_l(z)  \right) \]
	for some real-valued constants $\alpha_1, \beta_1, \ldots, \alpha_s, \beta_s$.  
	
	As $f(z) \in \mathbb{R}$ for all $z \in \mathbb{R} \cap D_\delta$ and since the non-real eigenvalues of $P_n/\sigma$ come in complex conjugate pairs, it follows that $\tr f(P_n/\sigma) - \mathcal{A}_n(f)$ is real-valued.  Applying the Cramer--Wold device and the convergence of \eqref{Equ:SingleFConv}, we conclude that 
	\[ \left( \tr f_i(P_n/\sigma) - \mathcal{A}_n(f_i) \right)_{i=1}^s \]
	converges to a multivariate Gaussian vector.  The limiting covariances can now be extrapolated from \eqref{Equ:FirstReductionVar1} and \eqref{Equ:FirstReductionVar2}. 
	\begin{Details}
		Indeed, it is assumed that $f_{i}(z)$ is analytic in some neighborhood containing the disk $D_{\delta}$, there exists an expansion $f_{i}(z)=\sum_{j=0}^{\infty}(a_{i})_{j}z^{j}$ for coefficients $(a_{i})_{j}\in \C$. Then define the function $g_{i}(z)=\sum_{j=0}^{\infty}\overline{(a_{i})_{j}}z^{j}$ for each $1\leq i\leq s$ and note that $g_{i}(z)$ is also analytic in the same neighborhood as $f_{i}(z)$. Thus $f_{i}(z)+g_{i}(z)$ and $f_{i}(z)-g_{i}(z)$ are analytic in the same neighborhood as well. Now, since $P_{n}/\sigma$ has real entries, the eigenvalues come in complex conjugate pairs. Since the event $E_{n}$ holds with probability $1-o(1)$ we may say that, for all $1\leq i\leq s$, $\alpha_{i}\tr(f_{i}(P_{n}/\sigma)+g_{i}(P_{n}/\sigma))$ and $\beta_{i}\sqrt{-1}\tr(f_{i}(P_{n}/\sigma)-g_{i}(P_{n}/\sigma))$ are both real for any arbitrary $\alpha_{i},\beta_{i}\in\R$ with probability $1-o(1)$ as well. 
	\end{Details}
	\ifdetail Letting $\alpha_{i}=\beta_{i}=1/2$, we recover $\Re\left(\tr(f_{i}(P_{n}/\sigma))\right)+\sqrt{-1}\Im\left(\tr(f_{i}(P_{n}/\sigma))\right)$, which is analytic. Therefore, it is sufficient to consider an arbitrary linear combination and in this case we may still use the Cramer--Wold theorem.\fi 
	\begin{Details}
	\begin{align*} 
	\E\left[\left(F(f)\right)^{2}\right]&=-\frac{1}{4\pi^{2}}\oint_{\mathcal{C}}\oint_{\mathcal{C}}f(z)f(w)(zw-1)^{2}dzdw\\
	&=-\frac{1}{4\pi^{2}}\oint_{\mathcal{C}}\oint_{\mathcal{C}}(\gamma_{1}f_{1}(z)+\dots+\gamma_{s}f_{s}(z))\\
	&\quad\quad\quad\quad\quad\quad\quad\times(\gamma_{1}f_{1}(w)+\dots+\gamma_{s}f_{s}(w))(zw-1)^{2}dzdw\\
	\end{align*}
	where $\mathcal{C}$ is the contour around the boundary of the disk $D_{\delta}$. A similar expression holds for $\E\left[F(f)\overline{F(f)}\right]$.
	and  
	\begin{align*}
	\E\left[F(f)\overline{F(f)}\right]&=\frac{1}{4\pi^{2}}\oint_{\mathcal{C}}\oint_{\mathcal{C}}f(z)\overline{f(w)}(z\bar{w}-1)^{-2}dzd\bar{w}\\
	\ifdetail&=\frac{1}{4\pi^{2}}\oint_{\mathcal{C}}\oint_{\mathcal{C}}(\gamma_{1}f_{1}(z)+\dots+\gamma_{s}f_{s}(z))\overline{(\gamma_{1}f_{1}(w)+\dots+\gamma_{s}f_{s}(w))}(z\bar{w}-1)^{-2}dzd\bar{w}\\ \fi
	&=\frac{1}{4\pi^{2}}\oint_{\mathcal{C}}\oint_{\mathcal{C}}(\gamma_{1}f_{1}(z)+\dots+\gamma_{s}f_{s}(z))\\
	&\quad\quad\quad\quad\quad\quad\quad\times(\overline{\gamma_{1}f_{1}(w)}+\dots+\overline{\gamma_{s}f_{s}(w)})(z\bar{w}-1)^{-2}dzd\bar{w}\\
	\end{align*}
	Note that by selecting $\gamma_{i}=1$ for some fixed $1\leq i\leq L$, and $\alpha_{j}=0$ for all $j\neq i$, this implies 
	\[\E\left[\left(F(f_{i})\right)^{2}\right]=-\frac{1}{4\pi^{2}}\oint_{\mathcal{C}}\oint_{\mathcal{C}}f_{i}(z)f_{i}(w)(zw-1)^{2}dzdw\]
	and  
	\[\E\left[F(f_{i})\overline{F(f_{i})}\right]=\frac{1}{4\pi^{2}}\oint_{\mathcal{C}}\oint_{\mathcal{C}}(f_{i}(z)\overline{f_{i}(w)})(z\bar{w}-1)^{-2}dzd\bar{w}.\]
	Next, if we consider $\gamma_{i}=\gamma_{j}=1$ and all other $\gamma_{k}=0$ for $k\neq i,j$, then we have the limiting variance of
	\[\tr\left(f_{i}(P_{n}/\sigma)+f_{j}(P_{n}/\sigma)\right)\]
	is given by 
	\begin{align*} 
	&-\frac{1}{4\pi^{2}}\oint_{\mathcal{C}}\oint_{\mathcal{C}}\left(f_{i}(z)+f_{j}(z)\right)\left(f_{i}(w)+f_{j}(w)\right)(zw-1)^{2}dzdw\\
	&\quad\quad =-\frac{1}{4\pi^{2}}\oint_{\mathcal{C}}\oint_{\mathcal{C}}\left(f_{i}(z)f_{i}(w)+f_{i}(z)f_{j}(w)\right.\\
	&\quad\quad\quad\quad\quad\quad\quad\quad\quad\quad\left.+f_{j}(z)f_{i}(w)+f_{j}(z)f_{j}(w)\right)(zw-1)^{2}dzdw\\
	&\quad\quad =\E[F(f_{i})F(f_{i})]-\frac{1}{4\pi^{2}}\oint_{\mathcal{C}}\oint_{\mathcal{C}}f_{i}(z)f_{j}(w)(zw-1)^{2}dzdw\\
	&\quad\quad\quad\quad\quad\quad\quad\quad\quad\quad-\frac{1}{4\pi^{2}}\oint_{\mathcal{C}}\oint_{\mathcal{C}}f_{j}(z)f_{i}(w)(zw-1)^{2}+\E[F(f_{j})F(f_{j})]\\
	&\quad\quad =\E[F(f_{i})F(f_{i})]+\E[F(f_{j})F(f_{j})]-\frac{1}{2\pi^{2}}\oint_{\mathcal{C}}\oint_{\mathcal{C}}f_{j}(z)f_{i}(w)(zw-1)^{2}\\
	\end{align*}
	and therefore 
	\[\E\left[F(f_{i})F(f_{j})\right]=-\frac{1}{4\pi^{2}}\oint_{\mathcal{C}}\oint_{\mathcal{C}}f_{j}(z)f_{i}(w)(zw-1)^{2}\]
	Additionally, if we select $\gamma_{i}=\gamma_{j}=\frac{1}{\sqrt{2}}+\frac{\sqrt{-1}}{\sqrt{2}}$ for some fixed $1\leq i,j\leq L$ and all other $\gamma_{k}=0$ for $k\neq i,j$, then we show that the limiting distribution of
	\[\tr\left( \left(\frac{1}{\sqrt{2}}+\frac{\sqrt{-1}}{\sqrt{2}}\right)f_{i}(P_{n}/\sigma)+\left(\frac{1}{\sqrt{2}}+\frac{\sqrt{-1}}{\sqrt{2}}\right)f_{j}(P_{n}/\sigma)\right)\]
	has variance
	\begin{align*} 
	&-\frac{1}{4\pi^{2}}\oint_{\mathcal{C}}\oint_{\mathcal{C}}\left(\left(\frac{1}{\sqrt{2}}+\frac{\sqrt{-1}}{\sqrt{2}}\right)f_{i}(z)+\left(\frac{1}{\sqrt{2}}+\frac{\sqrt{-1}}{\sqrt{2}}\right)f_{j}(z)\right)\\
	&\quad\quad\quad\quad\quad\quad\quad\times\left(\left(\frac{1}{\sqrt{2}}+\frac{\sqrt{-1}}{\sqrt{2}}\right)f_{i}(w)+\left(\frac{1}{\sqrt{2}}+\frac{\sqrt{-1}}{\sqrt{2}}\right)f_{j}(w)\right)(zw-1)^{2}dzdw\\
	&=-\frac{1}{4\pi^{2}}\oint_{\mathcal{C}}\oint_{\mathcal{C}}\left(\sqrt{-1}f_{i}(z)f_{i}(w)+\sqrt{-1}f_{i}(z)f_{j}(w)\right.\\
	&\quad\quad\quad\quad\quad\quad\quad\quad \left.+\sqrt{-1}f_{j}(z)f_{i}(w)+\sqrt{-1}f_{j}(z)f_{j}(w)\right)(zw-1)^{2}dzdw\\
	\end{align*}
	and 
	\begin{align*} 
	&\frac{1}{4\pi^{2}}\oint_{\mathcal{C}}\oint_{\mathcal{C}}\left(\left(\frac{1}{\sqrt{2}}+\frac{\sqrt{-1}}{\sqrt{2}}\right)f_{i}(z)+\left(\frac{1}{\sqrt{2}}+\frac{\sqrt{-1}}{\sqrt{2}}\right)f_{j}(z)\right)\\
	&\quad\quad\quad\quad\quad\quad\quad\times\overline{\left(\left(\frac{1}{\sqrt{2}}+\frac{\sqrt{-1}}{\sqrt{2}}\right)f_{i}(w)+\left(\frac{1}{\sqrt{2}}+\frac{\sqrt{-1}}{\sqrt{2}}\right)f_{j}(w)\right)}(z\bar{w}-1)^{2}dzd\bar{w}\\
	\ifdetail&=\frac{1}{4\pi^{2}}\oint_{\mathcal{C}}\oint_{\mathcal{C}}\left(\left(\frac{1}{\sqrt{2}}+\frac{\sqrt{-1}}{\sqrt{2}}\right)f_{i}(z)+\left(\frac{1}{\sqrt{2}}+\frac{\sqrt{-1}}{\sqrt{2}}\right)f_{j}(z)\right)\\\fi
	\ifdetail&\quad\quad\quad\quad\quad\quad\quad\times\left(\left(\frac{1}{\sqrt{2}}-\frac{\sqrt{-1}}{\sqrt{2}}\right)\overline{f_{i}(w)}+\left(\frac{1}{\sqrt{2}}-\frac{\sqrt{-1}}{\sqrt{2}}\right)\overline{f_{j}(w)}\right)(z\bar{w}-1)^{2}dzd\bar{w}\\\fi
	&=\frac{1}{4\pi^{2}}\oint_{\mathcal{C}}\oint_{\mathcal{C}}\left(f_{i}(z)\overline{f_{i}(w)}+f_{i}(z)\overline{f_{j}(w)}+f_{j}(z)\overline{f_{i}(w)}+f_{j}(z)\overline{f_{j}(w)}\right)(z\bar{w}-1)^{2}dzd\bar{w}\\
	&=\E\left[F(f_{i})\overline{F(f_{i})}\right]+\E\left[F(f_{j})\overline{F(f_{j})}\right]\\
	&\quad\quad\quad\quad+\frac{1}{4\pi^{2}}\oint_{\mathcal{C}}\oint_{\mathcal{C}}f_{i}(z)\overline{f_{j}(w)}(z\bar{w}-1)^{2}dzd\bar{w}+\frac{1}{4\pi^{2}}\oint_{\mathcal{C}}\oint_{\mathcal{C}}f_{j}(z)\overline{f_{i}(w)}(z\bar{w}-1)^{2}dzd\bar{w}\\
	\end{align*}
	and therefore 
	\[\E\left[F(f_{i})\overline{F(f_{j})}\right]=\frac{1}{4\pi^{2}}\oint_{\mathcal{C}}\oint_{\mathcal{C}}f_{i}(z)\overline{f_{j}(w)}(z\bar{w}-1)^{2}dzd\bar{w}.\]
	By choosing values of the coefficients $\gamma_{1},\dots,\gamma_{s}$ properly, this expression recovers the variance and covariance terms \eqref{Equ:Thm:CLT:Var} and \eqref{Equ:Thm:CLT:Covar}.
\end{Details} 
\end{proof}

\begin{Details}
From here, we may proceed by proving Theorem \ref{thm:firstReduction}. Thus, the goal is to prove that
\begin{equation*}
\tr f (P_{n}/\sigma)\oindicator{E_{n}}-\E[\tr f (P_{n}/\sigma)\oindicator{E_{n}}]
\end{equation*}
converges to a mean zero Gaussian random variable $F(f)$ with covariance structure
\[\E\left[\left(F(f)\right)^{2}\right]=-\frac{1}{4\pi^{2}}\oint_{\mathcal{C}}\oint_{\mathcal{C}}f(z)f(w)(zw-1)^{2}dzdw\]
and  
\[\E\left[F(f)\overline{F(f)}\right] =\frac{1}{4\pi^{2}}\oint_{\mathcal{C}}\oint_{\mathcal{C}}f(z)\overline{f(w)}(z\bar{w}-1)^{-2}dzd\bar{w}.\]
where $f$ is analytic on the disk $D_{\delta}=\{z\;:\;|z|\leq 1+\delta\}$ and bounded otherwise and $E_{n}=\left\{\inf_{|z|>1+\delta/2}s_{n}(P_{n}/\sigma-zI)\geq \frac{c}{2}\right\}$ as defined in (\ref{Def:E_n}).
\end{Details} 

\begin{remark}
	The proof above exploits the fact that the non-real eigenvalues come in complex conjugate pairs since the entries of the product matrix are real.  In the case where the entries in each matrix are allowed to be complex-valued, this approach would no longer hold.  In this case, one may apply a complex-valued version of the Cramer--Wold Theorem.
	\label{Remark:ComplexCase4}
\end{remark}
It remains to prove Theorem \ref{thm:firstReduction}.  By a simple rescaling, it is sufficient to prove Theorem \ref{thm:firstReduction} when $\sigma_{i}=1$ for $1\leq i\leq k$.  For this reason, for the remainder of the paper we assume all atom variables have unit variance unless stated otherwise.  

\begin{Details} 
\subsection{Unit Disk Rescaling}
Here, we rescale the product $P_{n}$ defined in \eqref{Def:P_n}. Observe the following lemma.
\begin{lemma}
	Let $m\geq 1$ be a fixed integer, and assume $\xi_1, \ldots, \xi_m$ are complex-valued random variables which satisfy Assumption \ref{assump:4PlusTau} with $\Var(\xi_{k})=1$ for $1\leq k\leq m$.  For each $n \geq 1$, let $X_{n,1}, \ldots, X_{n,m}$ be independent $n \times n$ iid random matrices with atom variables $\xi_1, \ldots, \xi_m$, respectively.  Define the product
	$$ P_n := n^{-m/2} X_{n,1} \cdots X_{n,m}.$$
	Let $\delta>0$ and let $f$ be analytic in some neighborhood containing the disk $D_{\delta}:=\{z\in\C\;:\;|z|\leq 1+\delta\}$ and bounded otherwise. Then, there exists a constant $c>0$ such that the event
	\begin{equation}
	E_{n}=\left\{\inf_{|z|> 1+\delta/2}s_{n}\left(P_{n}-zI\right)\geq c\right\}
	\end{equation}
	holds with probability $1-o(n^{-1})$ and as $n\rightarrow\infty$,
	$$\tr f(P_{n})\oindicator{E_{n}}-\E\left[\tr f(P_{n})\oindicator{E_{n}}\right]$$
	converges in distribution to a mean zero Gaussian random variable $F(f)$ with covariance structure
	\[\E\left[\left(F(f)\right)^{2}\right]=-\frac{1}{4\pi^{2}}\oint_{\mathcal{C}}\oint_{\mathcal{C}}f(z)f(w)(zw-1)^{2}dzdw\]
	and  
	\[\E\left[F(f)\overline{F(f)}\right] =\frac{1}{4\pi^{2}}\oint_{\mathcal{C}}\oint_{\mathcal{C}}f(z)\overline{f(w)}(z\bar{w}-1)^{-2}dzd\bar{w}.\]
	\label{lem:varianceOneReduction}
\end{lemma}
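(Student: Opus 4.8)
The plan is to prove Lemma~\ref{lem:varianceOneReduction} along the lines sketched in the overview: linearize the product, pass via Cauchy's formula to the resolvent of the linearized matrix, prove a functional CLT for the resolvent-trace process (finite-dimensional convergence together with tightness), and then read off the limiting covariance; since the hypothesis forces $\sigma=\sigma_1\cdots\sigma_m=1$, the statement concerns $P_n$ itself, and carrying this out is in essence the content of the remainder of the paper. First I would replace $P_n$ by an $mn\times mn$ linearized matrix $\mathcal{Y}$ built from the scaled factors $n^{-1/2}X_{n,k}$ arranged cyclically as in \eqref{def:M} (possibly after an innocuous rescaling): by Proposition~\ref{prop:linear} the eigenvalues of $\mathcal{Y}^m$ are those of $P_n$, each with multiplicity $m$, so $\tr f(P_n)=\tfrac{1}{m}\tr\tilde f(\mathcal{Y})$ with $\tilde f(z):=f(z^m)$ analytic near $\{|z|\le(1+\delta)^{1/m}\}$, and the random spectrum of $\mathcal{Y}$ lies near the closed unit disk. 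Next I would truncate, replacing each $\xi_k$ by $\hat\xi_k:=\xi_k\indicator{|\xi_k|\le n^{\gamma}}$ (recentered and renormalized to preserve mean zero and unit variance) for a small $\gamma>0$; using the $4+\tau$ moment of Assumption~\ref{assump:4PlusTau}, Weyl's inequality \eqref{Equ:weyl}, a Hilbert--Schmidt bound on the difference of the two block matrices, and a Cauchy-integral argument, one checks the truncated model has the same limiting fluctuations, so we may assume every entry of $\sqrt n\,X_{n,k}$ is bounded by $n^{\gamma}$. Finally I would show that $E_n$ from \eqref{Def:E_n} --- equivalently, that no eigenvalue of $\mathcal{Y}$ escapes a disk of radius a little larger than $1$ --- holds with probability $1-o(n^{-1})$, via least singular value estimates for $\mathcal{Y}-zI$ uniform over $z$ with $|z|$ bounded away from the spectral radius; by Proposition~\ref{Prop:LargeAndSmallSingVals} this yields $\sup_{z\in\mathcal{C}}\lnorm\mathcal{G}(z)\rnorm=O(1)$ on $E_n$, where $\mathcal{G}(z):=(\mathcal{Y}-zI)^{-1}$ and $\mathcal{C}:=\partial D_\delta$.

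After this reduction, Cauchy's integral formula represents the centered truncated linear statistic (after translating $f$ through the linearization) as
\[
-\frac{1}{2\pi\sqrt{-1}}\oint_{\mathcal{C}} f(z)\Bigl(\tr\mathcal{G}(z)\oindicator{E_n}-\E\bigl[\tr\mathcal{G}(z)\oindicator{E_n}\bigr]\Bigr)\,dz ,
\]
the centering $\mathcal{A}_n(f)$ being (essentially) the expectation of the truncated statistic, which is linear in $f$ and real when $f$ is real on $\mathbb{R}\cap D_\delta$ by construction. Treating $z\mapsto\tr\mathcal{G}(z)\oindicator{E_n}-\E[\tr\mathcal{G}(z)\oindicator{E_n}]$ as a random element of $C(\mathcal{C})$ and applying the continuous mapping theorem to the bounded linear functional $g\mapsto-\tfrac{1}{2\pi\sqrt{-1}}\oint_{\mathcal{C}}f\,g$, it suffices to prove that this process converges in distribution to a mean-zero Gaussian process and to identify its covariance.

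For finite-dimensional convergence I would fix $z_1,\dots,z_p\in\mathcal{C}$ and apply the martingale central limit theorem to the decomposition of $\tr\mathcal{G}(z_j)-\E\tr\mathcal{G}(z_j)$ into $N=mn$ martingale differences, exposing the block matrix one row (or row--column pair) at a time, each increment being estimated with the Sherman--Morrison and Sherman--Morrison--Woodbury identities \eqref{equ:ShermanMorrison1}--\eqref{Equ:ShermanMorrisonWoodbury}, the resolvent identity \eqref{Equ:ResolventIndentity}, and the uniform resolvent bound above. The real work is computing $\lim_n\cov(\tr\mathcal{G}(z),\tr\mathcal{G}(w))$ and $\lim_n\cov(\tr\mathcal{G}(z),\overline{\tr\mathcal{G}(w)})$: because $\mathcal{Y}$ has $m$ cyclic off-diagonal blocks, the single scalar recursion used for $m=1$ in \cite{RiS, OR:CLT} does not close, so I would instead derive a \emph{system} of $m$ coupled recursive equations for the block-traces of $\mathcal{G}$, pass to the $n\to\infty$ limit, and solve the resulting linear system explicitly. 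Transporting the resulting kernel back through $z\mapsto z^m$ and the contour integral is exactly where the dependence on $m$ cancels and one recovers the kernels $-(zw-1)^{-2}$ and $(z\bar w-1)^{-2}$ appearing in \eqref{Equ:FirstReductionVar1}--\eqref{Equ:FirstReductionVar2}. I expect setting up and, above all, solving this $m\times m$ recursive system --- and then verifying the cancellation --- to be the principal obstacle; everything else follows the template of \cite{RiS, OR:CLT}, with the usual technical complications.

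Finally I would prove tightness of $z\mapsto\tr\mathcal{G}(z)-\E\tr\mathcal{G}(z)$ in $C(\mathcal{C})$ via a Kolmogorov-type moment criterion: using the resolvent identity to write a difference of resolvent traces at $z$ and $w$ as $(z-w)\tr(\mathcal{G}(z)\mathcal{G}(w))$ and bounding the variance of this quantity by $C|z-w|^2$ uniformly in $n$, again with the martingale machinery and the $O(1)$ resolvent bound. Combined with the finite-dimensional convergence this gives convergence of the process in $C(\mathcal{C})$; feeding this into the Cauchy-integral representation above, and using that $E_n$ holds with probability $1-o(1)$ so that the indicator does not affect the limit, completes the proof of Lemma~\ref{lem:varianceOneReduction}.
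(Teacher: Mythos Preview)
Your proposal is correct and follows the same overall architecture as the paper: truncate, linearize via Proposition~\ref{prop:linear}, pass to the resolvent process on $\mathcal{C}$ by Cauchy's formula, prove finite-dimensional convergence by a martingale CLT together with a recursive covariance computation, and establish tightness via the Kolmogorov criterion using the resolvent identity. Two points of comparison are worth making.

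First, the order of operations is reversed: the paper truncates first (reducing Theorem~\ref{thm:firstReduction} to Theorem~\ref{Thm:TruncatedProductCLT}) and only then linearizes (reducing to Theorem~\ref{Thm:LinearizedTruncatedCLT}), whereas you linearize first. This is cosmetic.

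Second, and more substantively, your martingale decomposition uses $N=mn$ increments, exposing one column of $\mathcal{Y}$ at a time. The paper instead uses the filtration $\mathcal{F}_k$ of~\eqref{Def:F_{k}} generated by the first $k$ columns of \emph{each} of the $m$ blocks simultaneously, yielding only $n$ increments; each step removes $m$ columns at once and is handled by the Sherman--Morrison--Woodbury formula~\eqref{Equ:ShermanMorrisonWoodbury} with $m\times m$ matrices $U_k,V_k$. This symmetric choice is what makes the covariance recursion (Lemma~\ref{Lem:Iteration}) cycle cleanly through the block-diagonal projections $\mathcal{D}_{i-1},\mathcal{D}_{i-2},\ldots$ and close after exactly $m$ iterations into the single scalar self-consistent equation~\eqref{Equ:TnkInTotal}. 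With a one-column-at-a-time filtration the recursion would be less symmetric and the closure less transparent; your approach can still be made to work, but the paper's filtration is the natural one for the block-cyclic structure of $\mathcal{Y}$.
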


Note that the above lemma is a special case of Theorem \ref{thm:firstReduction} when all variances are equal to 1. It turns out that this is sufficient by a simple rescaling by $\sigma$. We now prove Theorem \ref{thm:firstReduction} assuming Lemma \ref{lem:varianceOneReduction}. 
\begin{proof}[Proof of Theorem \ref{thm:firstReduction}]
	Assume Lemma \ref{lem:varianceOneReduction} holds and let $f(z)$ be any function which is analytic on the disk $D_{\delta}$ and bounded otherwise. Recall that under the general assumptions, $X_{n,k}$ has atom variable $\xi_{k}$ and $\var(\xi_{k})=\sigma_{k}^{2}$ so consider matrices $\frac{1}{\sigma_{k}}X_{n,k}$ for $1\leq k\leq m$. Note that each matrix has been rescaled to have atom variable with variance 1 and note that
	\begin{equation*}
	n^{-m/2}\left(\frac{1}{\sigma_{1}}X_{n,1}\right)\left(\frac{1}{\sigma_{2}}X_{n,2}\right)\cdots \left(\frac{1}{\sigma_{m}}X_{n,m}\right) = \frac{1}{\sigma}P_{n}.
	\end{equation*}
	By assumption, there exists an event 
	\begin{equation}
	E_{n}=\left\{\inf_{|z|> 1+\delta/2}s_{n}\left(\frac{1}{\sigma}P_{n}-zI\right)\geq c\right\}
	\end{equation}
	which holds with probability $1-o(n^{-1})$ as $n\rightarrow\infty$ and 
	\begin{equation*}
	\tr f\left(\frac{1}{\sigma}P_{n}\right)\oindicator{E_{n}}-\E\left[\tr f\left(\frac{1}{\sigma}P_{n}\right)\oindicator{E_{n}}\right]
	\end{equation*}
	converges to a mean zero Gaussian with covariance structure as claimed.
\end{proof}

For the remaining sections, assume that $\sigma_{k} = 1$ for $1\leq k\leq m$.  
\end{Details}

\subsection{Truncation of iid Matrices}
Since $\xi_{k}$ is assumed to have finite $4+\tau$ finite moments for $1\leq k\leq m$, there exists $\varepsilon>0$ such that for all $1\leq k\leq m$,
\begin{equation}
\lim_{n\rightarrow\infty}n^{4\varepsilon}\E\left[\left|\xi_{k}\right|^{4}\indicator{|\xi_{k}|>n^{1/2-\varepsilon}}\right]=0.
\label{Equ:4thMomentToZero}
\end{equation}
\begin{Details} 
Indeed, by the dominated convergence theorem, we can write 
\begin{align*}
n^{4\varepsilon}\E\left[\left|\xi_{k}\right|^{4}\indicator{|\xi_{k}|>n^{1/2-\varepsilon}}\right]&\leq n^{4\varepsilon}\E\left[\frac{\left|\xi_{k}\right|^{4+\tau}}{n^{(1/2-\varepsilon)\tau}}\indicator{|\xi_{x}|>n^{1/2-\varepsilon}}\right]\\
\ifdetail&= n^{4\varepsilon-(1/2-\varepsilon)\tau}\E\left[\left|\xi_{k}\right|^{4+\tau}\indicator{|\xi|_{k}>n^{1/2-\varepsilon}}\right]\\\fi
&= n^{\varepsilon(4+\tau)-\tau/2}\E\left[\left|\xi_{k}\right|^{4+\tau}\indicator{|\xi_{k}|>n^{1/2-\varepsilon}}\right]\\
&=o(1)
\end{align*}
for $\varepsilon$ sufficiently small.
\end{Details}
\begin{Details} 
In the above, the $(4+\tau)$th moment is finite, so the indicator makes it $o(1)$ provided that the power of $n$ can be controlled. This will happen whenever $\varepsilon\leq \frac{\tau}{8+2\tau}$. 
\end{Details}

Next, for a real-valued random variable $\xi$ with mean zero, variance one, and finite $(4+\tau)$th moment, define
\begin{equation}
\tilde{\xi}:=\xi\indicator{|\xi|\leq n^{1/2-\varepsilon}}-\E\left[\xi\indicator{|\xi|\leq n^{1/2-\varepsilon}}\right]\;\;\;\;\text{   and   }\;\;\;\;\hat{\xi} :=\frac{\tilde{\xi}}{\sqrt{\Var(\tilde{\xi})}}.
\label{Equ:TruncateLeveln}
\end{equation}
Note that $\tilde{\xi}$ and $\hat{\xi}$ depend on $n$, but this dependence is not expressed in the notation.
\begin{lemma}Let $\xi$ be a real-valued random variable which satisfies Assumption \ref{assump:4PlusTau} with unit variance and define $\tilde{\xi}$ and $\hat{\xi}$ as in (\ref{Equ:TruncateLeveln}). Then the following statements hold:
	\begin{enumerate}[label=\emph{(\roman*)}]
		\item $|1-\Var(\tilde{\xi})|=o(n^{-1-2\varepsilon})$\label{item:Lem:TruncateLeveln:i}
		\item There exists an $N_{0}>0$ such that for any $n>N_{0}$, $\hat{\xi}$ has zero mean and unit variance, and almost surely
		$$|\hat{\xi}|\leq 4n^{1/2-\varepsilon}.$$\label{item:Lem:TruncateLeveln:ii}
		\item There exists $N_{0}>0$ such that for any $n>N_{0}$, 
		$$\E|\hat{\xi}|^{4}\leq 2^{8}\E|\xi|^{4}.$$\label{item:Lem:TruncateLeveln:iii}
	\end{enumerate}
\label{Lem:TruncateLeveln}
\end{lemma}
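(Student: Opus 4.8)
The plan is to obtain all three items from elementary truncation estimates, with the tail bound \eqref{Equ:4thMomentToZero} doing the real work. Write $\zeta := \xi\indicator{|\xi| \le n^{1/2-\eps}}$, so that $\tilde\xi = \zeta - \E\zeta$, and note that $\E\xi = 0$ gives $\E\zeta = -\,\E[\xi\indicator{|\xi|>n^{1/2-\eps}}]$. The first step is to record, for $j \in \{1,2\}$, the crude estimate
\[ \E\big[|\xi|^{j}\indicator{|\xi|>n^{1/2-\eps}}\big] \le n^{-(4-j)(1/2-\eps)}\,\E\big[|\xi|^{4}\indicator{|\xi|>n^{1/2-\eps}}\big] = n^{-(4-j)(1/2-\eps)}\cdot o\!\left(n^{-4\eps}\right), \]
where the last equality uses \eqref{Equ:4thMomentToZero}. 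In particular $\E[\xi^{2}\indicator{|\xi|>n^{1/2-\eps}}] = o(n^{-1-2\eps})$ and $|\E\zeta| \le \E[|\xi|\indicator{|\xi|>n^{1/2-\eps}}] = o(n^{-3/2-\eps}) = o(1)$.

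For item (i), since $\tilde\xi$ and $\zeta$ differ by a constant, $\Var(\tilde\xi) = \Var(\zeta) = \E[\xi^{2}\indicator{|\xi|\le n^{1/2-\eps}}] - (\E\zeta)^{2}$, and as $\E\xi^{2}=1$,
\[ |1 - \Var(\tilde\xi)| \le \E\big[\xi^{2}\indicator{|\xi|>n^{1/2-\eps}}\big] + (\E\zeta)^{2} = o(n^{-1-2\eps}) + o(n^{-3-2\eps}) = o(n^{-1-2\eps}). \]
For item (ii), $\E\tilde\xi = 0$ by construction, hence $\E\hat\xi = 0$, while $\Var(\hat\xi)=1$ is immediate from the normalization; for the almost sure bound, $|\tilde\xi| \le |\zeta| + |\E\zeta| \le n^{1/2-\eps} + o(1) \le 2n^{1/2-\eps}$ once $n$ exceeds some $N_{0}$, and by (i) we may also assume $\Var(\tilde\xi) \ge 1/2$ there, so $|\hat\xi| \le \sqrt{2}\,|\tilde\xi| \le 4n^{1/2-\eps}$. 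For item (iii), convexity (the $c_{r}$-inequality) gives $\E|\tilde\xi|^{4} \le 8\,\E[|\xi|^{4}\indicator{|\xi|\le n^{1/2-\eps}}] + 8(\E\zeta)^{4} \le 8\,\E|\xi|^{4} + o(1)$; since $\Var(\xi)=1$ forces $\E|\xi|^{4}\ge 1$, for $n$ large the $o(1)$ term is at most $\E|\xi|^{4}$, so $\E|\tilde\xi|^{4} \le 9\,\E|\xi|^{4}$, and dividing by $\Var(\tilde\xi)^{2} \ge 1/4$ (by (i)) yields $\E|\hat\xi|^{4} \le 36\,\E|\xi|^{4} \le 2^{8}\,\E|\xi|^{4}$.

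The argument is routine; the only point requiring care is the bookkeeping of powers of $n$ in the first step, since item (i) must be proved with the sharp exponent $-1-2\eps$ — a weaker rate such as $o(n^{-1})$ would not suffice, because this is precisely the estimate that later makes the replacement of the original entries by their truncations asymptotically negligible in the product $P_n$. The $\eps$ here is the one already fixed in \eqref{Equ:4thMomentToZero}, no further smallness is needed, and $N_{0}$ is taken large enough to absorb the finitely many "for $n$ large" conditions used above.
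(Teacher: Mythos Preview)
Your proof is correct and follows essentially the same route as the paper's: both arguments hinge on the tail estimate \eqref{Equ:4thMomentToZero} to bound $\E[|\xi|^{j}\indicator{|\xi|>n^{1/2-\eps}}]$ for $j=1,2$, derive (i) from $\Var(\tilde\xi)=\E\zeta^{2}-(\E\zeta)^{2}$, use (i) to bound $\Var(\tilde\xi)^{-1/2}$ for (ii), and combine this with a $c_r$-type inequality for (iii). The only differences are cosmetic constant-tracking (you obtain $\Var(\tilde\xi)\ge 1/2$ and $\E|\hat\xi|^4\le 36\,\E|\xi|^4$ where the paper settles for $\ge 1/4$ and $\le 2^8\,\E|\xi|^4$).
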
	
The proof of this lemma is a standard truncation argument and can be found in Appendix \ref{Sec:AppendexTruncation}. 
\begin{remark}
	\label{Remark:ComplexCase1}
	In the case where $\xi$ is complex-valued, this truncation will need to be modified in order to preserve independence between the real and imaginary parts of $\xi$ (see, for example, \cite[Lemma 7.1]{COW}).
\end{remark}
Next, we define various truncated matrices and prove a number of lemmas involving operator norms and Hilbert-Schmidt norms of these truncated matrices. The following lemmas will be used later in the proof.

Let $X$ be an $n\times n$ random matrix filled with iid copies of a random variable $\xi$ which satisfies Assumption \ref{assump:4PlusTau}. Define the $n\times n$ matrices $\dot{X}$, $\tilde{X}$, and $\hat{X}$ to be the matrices with entries given by
\begin{align}
& \dot{X}_{(i,j)}:=X_{(i,j)}\indicator{|X_{(i,j)}|\leq n^{1/2-\varepsilon}}, \label{def:Xdot}\\
& \tilde{X}_{(i,j)}:=X_{(i,j)}\indicator{|X_{(i,j)}|\leq n^{1/2-\varepsilon}}-\E\left[X_{(i,j)}\indicator{|X_{(i,j)}|\leq n^{1/2-\varepsilon}}\right], \label{def:Xtilde}\\
&\hat{X}_{(i,j)}:=\frac{\tilde{X}_{(i,j)}}{\sqrt{\Var(\tilde{X}_{(i,j)})}}\label{def:Xhat}
\end{align}
for $1\leq i,j\leq n$.

\begin{lemma}
	Let $X_{n}$ be an $n\times n$ iid random matrix with atom variable $\xi$ which satisfies Assumption \ref{assump:4PlusTau} with unit variance and let $\hat{X}_{n}$ be the truncated matrix as defined in \eqref{def:Xhat}. Then
	\begin{equation*}
	\E\lnorm\frac{1}{\sqrt{n}}X_{n}-\frac{1}{\sqrt{n}}\hat{X}_{n}\rnorm_{2}^{2}=o\left(n^{-2\varepsilon}\right)\;\;\text{ and }\;\;	\P\left(\lnorm\frac{1}{\sqrt{n}}X_{n}-\frac{1}{\sqrt{n}}\hat{X}_{n}\rnorm>n^{-\varepsilon}\right)=o(1).
	\end{equation*}
	\label{Lem:XcloseXhat}
\end{lemma}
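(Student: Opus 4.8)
The plan is to bound the two quantities separately, using the decomposition $\frac{1}{\sqrt n}X_n - \frac{1}{\sqrt n}\hat X_n = \frac{1}{\sqrt n}(X_n - \dot X_n) + \frac{1}{\sqrt n}(\dot X_n - \tilde X_n) + \frac{1}{\sqrt n}(\tilde X_n - \hat X_n)$, where $\dot X_n$, $\tilde X_n$, $\hat X_n$ are the matrices from \eqref{def:Xdot}--\eqref{def:Xhat}. For the Hilbert--Schmidt expectation, I would first handle $\E\|\frac{1}{\sqrt n}(X_n - \dot X_n)\|_2^2 = \frac{1}{n}\sum_{i,j}\E|X_{(i,j)}|^2\indicator{|X_{(i,j)}|>n^{1/2-\varepsilon}} = n\,\E|\xi|^2\indicator{|\xi|>n^{1/2-\varepsilon}}$, which is $o(n^{-2\varepsilon})$ by \eqref{Equ:4thMomentToZero} after bounding $|\xi|^2\indicator{|\xi|>n^{1/2-\varepsilon}} \le n^{-(1-2\varepsilon)}|\xi|^4\indicator{|\xi|>n^{1/2-\varepsilon}}$, so that $n\,\E|\xi|^2\indicator{\cdots} \le n^{2\varepsilon}\,\E|\xi|^4\indicator{\cdots} = o(n^{-2\varepsilon})$. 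The centering term $\dot X_n - \tilde X_n$ is deterministic with all entries equal to $\E[\xi\indicator{|\xi|\le n^{1/2-\varepsilon}}] = -\E[\xi\indicator{|\xi|>n^{1/2-\varepsilon}}]$ (since $\E\xi = 0$), whose absolute value is at most $\E|\xi|\indicator{|\xi|>n^{1/2-\varepsilon}} \le n^{-3(1/2-\varepsilon)}\E|\xi|^4\indicator{\cdots}$, so $\frac{1}{n}\sum_{i,j}$ of its square is $n\cdot O(n^{-3+6\varepsilon}(o(1))^2) = o(n^{-2+6\varepsilon})$, negligible. Finally, $\frac{1}{\sqrt n}(\tilde X_n - \hat X_n) = \frac{1}{\sqrt n}\tilde X_n(1 - 1/\sqrt{\Var(\tilde\xi)})$, and by Lemma \ref{Lem:TruncateLeveln}\ref{item:Lem:TruncateLeveln:i} the scalar factor is $o(n^{-1-2\varepsilon})$ while $\frac{1}{n}\E\|\tilde X_n\|_2^2 = \E|\tilde\xi|^2 \le 1 = O(1)$, giving $o(n^{-2-4\varepsilon})$. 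Combining the three pieces via $\|a+b+c\|_2^2 \le 3(\|a\|_2^2+\|b\|_2^2+\|c\|_2^2)$ yields the claimed $o(n^{-2\varepsilon})$.

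For the operator-norm probability bound, the natural route is to observe that the dominant contribution is again from $\frac{1}{\sqrt n}(X_n - \dot X_n)$: on the event that no entry of $X_n$ exceeds $n^{1/2-\varepsilon}$ in absolute value, this matrix is identically zero. So $\P(\|\frac{1}{\sqrt n}(X_n-\dot X_n)\|>0) \le \P(\exists\,i,j: |X_{(i,j)}|>n^{1/2-\varepsilon}) \le n^2\,\P(|\xi|>n^{1/2-\varepsilon})$, and by Markov applied to the $4+\tau$ moment, $\P(|\xi|>n^{1/2-\varepsilon}) \le \E|\xi|^{4+\tau} n^{-(1/2-\varepsilon)(4+\tau)}$, so $n^2\,\P(|\xi|>n^{1/2-\varepsilon}) \le \E|\xi|^{4+\tau}\, n^{2 - (1/2-\varepsilon)(4+\tau)}$, which tends to $0$ provided $\varepsilon$ is small enough that $(1/2-\varepsilon)(4+\tau) > 2$, i.e. $\varepsilon < \tau/(8+2\tau)$ — exactly the kind of constraint already imposed when choosing $\varepsilon$ to satisfy \eqref{Equ:4thMomentToZero}, so we may assume it. For the remaining two pieces, $\|\frac{1}{\sqrt n}(\dot X_n - \tilde X_n)\| = \frac{1}{\sqrt n}\cdot n\cdot|\E[\xi\indicator{|\xi|>n^{1/2-\varepsilon}}]| = \sqrt n\cdot O(n^{-3/2+3\varepsilon}o(1)) = o(n^{-1+3\varepsilon})$ deterministically (using that the all-ones $n\times n$ matrix has operator norm $n$), and $\|\frac{1}{\sqrt n}(\tilde X_n - \hat X_n)\| = |1-1/\sqrt{\Var(\tilde\xi)}|\cdot\|\frac{1}{\sqrt n}\tilde X_n\|$; here one controls $\|\frac{1}{\sqrt n}\tilde X_n\|$ by a crude bound such as $\|\frac{1}{\sqrt n}\tilde X_n\| \le \frac{1}{\sqrt n}\|\tilde X_n\|_2 = \sqrt n\cdot(\frac{1}{n}\|\tilde X_n\|_2^2)^{1/2}$, whose expectation is $O(\sqrt n)$, so Markov gives $\P(\|\frac{1}{\sqrt n}\tilde X_n\| > n^{1/2}\log n) = o(1)$, and multiplying by $|1-1/\sqrt{\Var(\tilde\xi)}| = o(n^{-1-2\varepsilon})$ makes this contribution $o(n^{-1/2-2\varepsilon}\log n) = o(n^{-\varepsilon})$ with probability $1-o(1)$. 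A union bound over the three events then gives $\P(\|\frac{1}{\sqrt n}X_n - \frac{1}{\sqrt n}\hat X_n\| > n^{-\varepsilon}) = o(1)$.

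The only mildly delicate point — and the one I would expect to be the main obstacle — is making sure the exponents line up: the single parameter $\varepsilon$ must simultaneously be small enough that \eqref{Equ:4thMomentToZero} holds, that $(1/2-\varepsilon)(4+\tau)>2$ for the entrywise tail bound, and that each of the subordinate error terms ($o(n^{-2+6\varepsilon})$, $o(n^{-2-4\varepsilon})$ on the Hilbert--Schmidt side; $o(n^{-1+3\varepsilon})$, $o(n^{-1/2-2\varepsilon}\log n)$ on the operator-norm side) is genuinely $o$ of the target rate. All of these are satisfied once $\varepsilon \le \tau/(8+2\tau)$, which is precisely the choice already fixed in \eqref{Equ:4thMomentToZero}, so in fact no new constraint is needed; one just has to state this explicitly. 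Everything else is routine: second-moment computations of entrywise variances, the identity $\E[\xi\indicator{|\xi|\le t}] = -\E[\xi\indicator{|\xi|>t}]$ from mean-zero, Lemma \ref{Lem:TruncateLeveln}\ref{item:Lem:TruncateLeveln:i} for the rescaling factor, and $\|M\| \le \|M\|_2$ together with Markov's inequality for the crude norm control. Since the statement itself says the proof is a standard truncation argument, I would relegate the bookkeeping to the appendix and keep the main text to the three-term decomposition and the key estimates above.
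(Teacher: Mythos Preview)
Your Hilbert--Schmidt argument is essentially the paper's, modulo cosmetic differences: the paper uses a two-term split $X_n-\tilde X_n$ and $\tilde X_n-\hat X_n$ rather than your three-term one, absorbing your first two pieces into a single computation. One arithmetic slip: $\frac{1}{n}\E\|\tilde X_n\|_2^2 = n\,\E|\tilde\xi|^2 = O(n)$, not $O(1)$, so the $\tilde X_n-\hat X_n$ contribution is $o(n^{-1-4\varepsilon})$ rather than $o(n^{-2-4\varepsilon})$; this is still $o(n^{-2\varepsilon})$, so nothing breaks.

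Where you genuinely diverge from the paper is the operator-norm bound. The paper does not run a separate three-term analysis at all: it simply writes
\[
\P\Bigl(\bigl\|\tfrac{1}{\sqrt n}X_n-\tfrac{1}{\sqrt n}\hat X_n\bigr\|>n^{-\varepsilon}\Bigr)
\le n^{2\varepsilon}\,\E\bigl\|\tfrac{1}{\sqrt n}X_n-\tfrac{1}{\sqrt n}\hat X_n\bigr\|^2
\le n^{2\varepsilon}\,\E\bigl\|\tfrac{1}{\sqrt n}X_n-\tfrac{1}{\sqrt n}\hat X_n\bigr\|_2^2 = o(1),
\]
using Markov and $\|\cdot\|\le\|\cdot\|_2$, so the probability statement is an immediate corollary of the Hilbert--Schmidt bound you already proved. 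Your route (entrywise union bound for $X_n-\dot X_n$, deterministic control of the rank-one centering, crude Frobenius bound for the rescaling) is correct and self-contained, but it is substantially more work and imports an extra constraint $(1/2-\varepsilon)(4+\tau)>2$ that the paper never needs. The paper's reduction is the cleaner move here.
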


\begin{proof}
	By Markov's inequality,
	\begin{align*}
	\P\left(\lnorm\frac{1}{\sqrt{n}}X_{n}-\frac{1}{\sqrt{n}}\hat{X}_{n}\rnorm>n^{-\varepsilon}\right) & \leq n^{2\varepsilon}\E\lnorm\frac{1}{\sqrt{n}}X_{n}-\frac{1}{\sqrt{n}}\hat{X}_{n}\rnorm^{2}\\
	&\leq n^{2\varepsilon}\E\lnorm\frac{1}{\sqrt{n}}X_{n}-\frac{1}{\sqrt{n}}\hat{X}_{n}\rnorm_{2}^{2}
	\end{align*}
	so it is sufficient to prove $\E\lnorm\frac{1}{\sqrt{n}}X_{n}-\frac{1}{\sqrt{n}}\hat{X}_{n}\rnorm_{2}^{2}=o\left(n^{-2\varepsilon}\right)$. By the triangle inequality, 
	\begin{align*}
	\E\lnorm\frac{1}{\sqrt{n}}X_{n}-\frac{1}{\sqrt{n}}\hat{X}_{n}\rnorm_{2}^{2} & \ll \E\left[\lnorm\frac{1}{\sqrt{n}}X_{n}-\frac{1}{\sqrt{n}}\tilde{X}_{n}\rnorm_{2}^{2}+\lnorm\frac{1}{\sqrt{n}}\tilde{X}_{n}-\frac{1}{\sqrt{n}}\hat{X}_{n}\rnorm_{2}^{2}\right]
	\ifdetail \leq \E\left[\left(\lnorm\frac{1}{\sqrt{n}}X_{n}-\frac{1}{\sqrt{n}}\tilde{X}_{n}\rnorm_{2}+\lnorm\frac{1}{\sqrt{n}}\tilde{X}_{n}-\frac{1}{\sqrt{n}}\hat{X}_{n}\rnorm_{2}\right)^{2}\right]\\\fi
	\end{align*}
	and we may deal with the two terms on the right hand side of the above expression separately. First, since $\left|\E\left[\xi\indicator{|\xi|> n^{1/2-\varepsilon}}\right]\right|=\left|\E[\xi\indicator{|\xi|\leq n^{1/2-\varepsilon}}]\right|$ and by (\ref{Equ:4thMomentToZero}), we have 
	\begin{align*}
	\E\lnorm\frac{1}{\sqrt{n}}X_{n}-\frac{1}{\sqrt{n}}\tilde{X}_{n}\rnorm_{2}^{2} & = \frac{1}{n}\E\lnorm X_{n}-\tilde{X}_{n}\rnorm_{2}^{2}\\
	& \leq \frac{1}{n}\sum_{j,k=1}^{n}\E\left|(X_{n})_{(j,k)}-(\tilde{X}_{n})_{(j,k)}\right|^{2}\\
	\ifdetail & = \frac{1}{n}\sum_{j,k=1}^{n}\E\left|\xi-\left(\xi\indicator{|\xi|\leq n^{1/2-\varepsilon}}-\E[\xi\indicator{|\xi|\leq n^{1/2-\varepsilon}}]\right)\right|^{2}\\\fi
	\ifdetail& = \frac{1}{n}\sum_{j,k=1}^{n}\E\left|\xi\left(1-\indicator{|\xi|\leq n^{1/2-\varepsilon}}\right)+\E[\xi\indicator{|\xi|\leq n^{1/2-\varepsilon}}]\right|^{2}\\\fi 
	\ifdetail & = \frac{1}{n}\sum_{j,k=1}^{n}\E\left|\xi\indicator{|\xi|> n^{1/2-\varepsilon}}+\E[\xi\indicator{|\xi|\leq n^{1/2-\varepsilon}}]\right|^{2}\\\fi 
	\ifdetail & \leq \frac{1}{n}\sum_{j,k=1}^{n}2\E\left[|\xi|^{2}\indicator{|\xi|> n^{1/2-\varepsilon}}\right]+2\E[|\xi|^{2}\indicator{|\xi|\leq n^{1/2-\varepsilon}}]\\\fi
	\ifdetail& = \frac{1}{n}\sum_{j,k=1}^{n}2\E\left[|\xi|^{2}\indicator{|\xi|> n^{1/2-\varepsilon}}\right]+2\E[|\xi|^{2}\indicator{|\xi|> n^{1/2-\varepsilon}}]\\\fi 
	& \leq \frac{4}{n}\sum_{j,k=1}^{n}\E\left[|\xi|^{2}\frac{|\xi|^{2}}{(n^{1/2-\varepsilon})^{2}}\indicator{|\xi|> n^{1/2-\varepsilon}}\right]\\
	\ifdetail& = \frac{4n^{2\varepsilon}}{n^{2}}\frac{1}{n^{4\varepsilon}}\sum_{j,k=1}^{n}n^{4\varepsilon}\E\left[|\xi|^{4}\indicator{|\xi|> n^{1/2-\varepsilon}}\right]\\\fi 
	& \leq  \frac{4n^{2\varepsilon}}{n^{4\varepsilon}}n^{4\varepsilon}\E\left[|\xi|^{4}\indicator{|\xi|> n^{1/2-\varepsilon}}\right]\\
	&=o(n^{-2\varepsilon}).
	\end{align*}
	\begin{Details}
		Next, we consider 
		\begin{equation*}
		\E\lnorm\frac{1}{\sqrt{n}}\tilde{X}_{n}-\frac{1}{\sqrt{n}}\hat{X}_{n}\rnorm_{2}^{2}.
		\end{equation*}
	\end{Details}
	Observe that by Lemma \ref{Lem:TruncateLeveln}, one has
	\begin{align*}
	\E\lnorm\frac{1}{\sqrt{n}}\tilde{X}_{n}-\frac{1}{\sqrt{n}}\hat{X}_{n}\rnorm_{2}^{2}& \leq \frac{1}{n}\sum_{j,k=1}^{n}\E\left|\hat{X}_{n (j,k)}\right|^{2}\left|\sqrt{\Var(\tilde{X}_{n,(j,k)})}-1\right|^{2}\\
	\ifdetail & \leq \frac{1}{n}\sum_{j,k=1}^{n}\E\left|\hat{\xi}\right|^{2}\left|\Var(\tilde{\xi})-1\right|^{2}\\\fi 
	& \leq n \left|\Var(\tilde{\xi})-1\right|^{2}\\
	& =o(n^{-1-4\varepsilon})
	\end{align*}
	which concludes the proof.
\end{proof}


\begin{lemma}
	Let $X_{n}$ be an $n\times n$ iid random matrix with atom variable $\xi$ which satisfies Assumption \ref{assump:4PlusTau} with unit variance. Let $\dot{X}_{n}$ and $\hat{X}_{n}$ be the truncated matrices as defined in \eqref{def:Xdot} and \eqref{def:Xhat} respectively. Then 
	\[\E\lnorm \hat{X}_{n}-\dot{X}_{n}\rnorm_{2}^{2}= o(1).\]
	\label{Lem:XhatcloseXdot}
\end{lemma}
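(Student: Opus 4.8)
The plan is to compare both $\hat{X}_{n}$ and $\dot{X}_{n}$ to the intermediate truncated matrix $\tilde{X}_{n}$ from \eqref{def:Xtilde} and apply the triangle inequality in the Hilbert--Schmidt norm, so that
\[
\E\lnorm \hat{X}_{n}-\dot{X}_{n}\rnorm_{2}^{2}\ll \E\lnorm \hat{X}_{n}-\tilde{X}_{n}\rnorm_{2}^{2}+\E\lnorm \tilde{X}_{n}-\dot{X}_{n}\rnorm_{2}^{2},
\]
and it then suffices to show that each term on the right-hand side is $o(1)$.

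For the first term, observe that $\hat{X}_{n,(j,k)}-\tilde{X}_{n,(j,k)}=\tilde{X}_{n,(j,k)}\big(\Var(\tilde{\xi})^{-1/2}-1\big)$, since every entry of $\tilde{X}_{n}$ has the law of $\tilde{\xi}$. Using $\E|\tilde{X}_{n,(j,k)}|^{2}=\Var(\tilde{\xi})=1+o(1)$ together with the bound $|\Var(\tilde{\xi})^{-1/2}-1|=O(|1-\Var(\tilde{\xi})|)=o(n^{-1-2\varepsilon})$ coming from Lemma \ref{Lem:TruncateLeveln}\ref{item:Lem:TruncateLeveln:i}, and summing over the $n^{2}$ entries, one obtains $\E\lnorm \hat{X}_{n}-\tilde{X}_{n}\rnorm_{2}^{2}=o(n^{-4\varepsilon})=o(1)$. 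Up to the overall factor $1/\sqrt{n}$, this is exactly the estimate already carried out in the proof of Lemma \ref{Lem:XcloseXhat}, so one could alternatively just quote it there.

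For the second term, note that $\tilde{X}_{n}$ and $\dot{X}_{n}$ differ only by subtracting the deterministic constant $\E[\xi\indicator{|\xi|\leq n^{1/2-\varepsilon}}]$ from each entry, so $\lnorm \tilde{X}_{n}-\dot{X}_{n}\rnorm_{2}^{2}=n^{2}\,\big|\E[\xi\indicator{|\xi|\leq n^{1/2-\varepsilon}}]\big|^{2}$ is in fact deterministic. Since $\E\xi=0$, we have $\E[\xi\indicator{|\xi|\leq n^{1/2-\varepsilon}}]=-\E[\xi\indicator{|\xi|> n^{1/2-\varepsilon}}]$, and on the event $\{|\xi|>n^{1/2-\varepsilon}\}$ one has $|\xi|\leq n^{-3(1/2-\varepsilon)}|\xi|^{4}$; hence
\[
\big|\E[\xi\indicator{|\xi|\leq n^{1/2-\varepsilon}}]\big|\leq \E\!\left[|\xi|\indicator{|\xi|>n^{1/2-\varepsilon}}\right]\leq n^{-3(1/2-\varepsilon)}\,\E\!\left[|\xi|^{4}\indicator{|\xi|>n^{1/2-\varepsilon}}\right].
\]
By \eqref{Equ:4thMomentToZero}, $\E[|\xi|^{4}\indicator{|\xi|>n^{1/2-\varepsilon}}]=o(n^{-4\varepsilon})$, so the displayed quantity is $o(n^{-3/2-\varepsilon})$ and therefore $\lnorm \tilde{X}_{n}-\dot{X}_{n}\rnorm_{2}^{2}=n^{2}\cdot o(n^{-3-2\varepsilon})=o(n^{-1-2\varepsilon})=o(1)$. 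Combining the two estimates completes the proof.

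I expect there to be essentially no serious obstacle: the lemma is a routine consequence of the truncation bookkeeping already assembled in \eqref{Equ:TruncateLeveln}--\eqref{def:Xhat} and Lemma \ref{Lem:TruncateLeveln}. The only subtleties are (i) using $\E\xi=0$ to rewrite the centering correction as a tail expectation so that \eqref{Equ:4thMomentToZero} can be invoked, and (ii) checking that the $n^{2}$ factor from the number of matrix entries is comfortably dominated by the gain $n^{-3(1/2-\varepsilon)}$ from the $(4+\tau)$-th moment truncation and by the bound in Lemma \ref{Lem:TruncateLeveln}\ref{item:Lem:TruncateLeveln:i}, which holds for every $\varepsilon>0$.
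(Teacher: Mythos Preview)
Your proof is correct and follows essentially the same approach as the paper: both arguments reduce to controlling the variance-rescaling error via Lemma~\ref{Lem:TruncateLeveln}\ref{item:Lem:TruncateLeveln:i} and the centering error via the tail bound $|\E[\xi\indicator{|\xi|\le n^{1/2-\varepsilon}}]|=|\E[\xi\indicator{|\xi|>n^{1/2-\varepsilon}}]|\le n^{-3(1/2-\varepsilon)}\E[|\xi|^{4}\indicator{|\xi|>n^{1/2-\varepsilon}}]$. The only cosmetic difference is that you pass through $\tilde{X}_{n}$ with the triangle inequality (recycling the estimate from Lemma~\ref{Lem:XcloseXhat}), whereas the paper expands the entrywise difference $\hat{X}_{n,(i,j)}-\dot{X}_{n,(i,j)}$ directly before splitting into the same two pieces.
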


\begin{proof}
	Let $\tilde{\xi}$ be as defined in \eqref{Equ:TruncateLeveln} and observe that by the proof of Lemma \ref{Lem:TruncateLeveln} \ref{item:Lem:TruncateLeveln:ii}, $(\Var(\tilde{\xi}))^{-1/2}\leq 2$ for $n$ sufficiently large. Therefore
	\begin{align*}
	\E\lnorm \hat{X}_{n}-\dot{X}_{n}\rnorm_{2}^{2}&=\E\left[\sum_{i,j=1}^{n}\left|\hat{X}_{n,(i,j)}-\dot{X}_{n,(i,j)}\right|^{2}\right]\\
	&\leq n^{2}\E\left|\frac{\xi\indicator{|\xi|\leq n^{1/2-\varepsilon}}(1-(\Var(\tilde{\xi}))^{1/2})-\E\left[\xi\indicator{|\xi|\leq n^{1/2-\varepsilon}}\right]}{(\Var(\tilde{\xi}))^{1/2}}\right|^{2}\\
	&\ll n^{2}\E\left|\xi\indicator{|\xi|\leq n^{1/2-\varepsilon}}(1-(\Var(\tilde{\xi}))^{1/2})-\E\left[\xi\indicator{|\xi|\leq n^{1/2-\varepsilon}}\right]\right|^{2}\\
	&\ll n^{2}\left|1-\Var(\tilde{\xi})\right|^{2}\E\left[|\xi|^{2}\indicator{|\xi|\leq n^{1/2-\varepsilon}}\right]+n^{2}\left|\E\left[\xi\indicator{|\xi|\leq n^{1/2-\varepsilon}}\right]\right|^{2}
	\end{align*} 
	for $n$ sufficiently large. By Lemma \ref{Lem:TruncateLeveln} \ref{item:Lem:TruncateLeveln:i}, we have \[n^{2}\left|1-\Var(\tilde{\xi})\right|^{2}\E\left[|\xi|^{2}\indicator{|\xi|\leq n^{1/2-\varepsilon}}\right]=o(n^{-4\varepsilon}).\] 
	Next, observe that since $\left|\E\left[\xi\indicator{|\xi|\leq n^{1/2-\varepsilon}}\right]\right|=\left|\E\left[\xi\indicator{|\xi|> n^{1/2-\varepsilon}}\right]\right|$, we have 
	\begin{align*}
	n^{2}\left|\E\left[\xi\indicator{|\xi|\leq n^{1/2-\varepsilon}}\right]\right|^{2}&=n^{2}\left|\E\left[\xi\indicator{|\xi|> n^{1/2-\varepsilon}}\right]\right|^{2}\\
	&\leq n^{-1+6\varepsilon}\left(\E\left[|\xi|^{4}\indicator{|\xi|> n^{1/2-\varepsilon}}\right]\right)^{2}\\
	&=o(1).
	\end{align*} 
\end{proof}

\begin{lemma}
	Let $\hat{X}_{n}$ be an $n\times n$ iid random matrix with atom variable $\hat{\xi}$ which has mean zero, variance one, $\E|\hat{\xi}|^{4}=O(1)$, and satisfies $|\hat{\xi}|\ll n^{1/2-\varepsilon}$ almost surely for some $\varepsilon>0$. Then $\E\lnorm \hat{X}_{n}\rnorm^{2}=O(n)$ where $\lnorm \cdot\rnorm$ denotes the operator norm. 
	\label{Lem:TruncatedOpNormSquaredBounded}
\end{lemma}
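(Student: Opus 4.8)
The plan is to bound the expected operator norm $\E\|\hat{X}_n\|^2$ by $O(n)$ using a standard moment-method (trace) argument, exploiting the fact that the entries of $\hat{X}_n$ are mean-zero, independent, have uniformly bounded fourth moment, and are bounded by $n^{1/2-\eps}$ almost surely. Since $\|\hat X_n\|^2 = \|\hat X_n \hat X_n^\ast\|$ equals the largest eigenvalue of the positive semidefinite matrix $H := \hat X_n \hat X_n^\ast$, I would first observe that for any positive integer $p$,
\[
\|\hat X_n\|^{2p} = \|H\|^p \leq \tr(H^p) = \tr\big( (\hat X_n \hat X_n^\ast)^p \big),
\]
so that $\E\|\hat X_n\|^{2p} \leq \E \tr\big( (\hat X_n \hat X_n^\ast)^p \big)$. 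The right-hand side expands as a sum over closed walks of length $2p$ in a bipartite-type index structure, and the contribution of each walk is a product of expectations of entries of $\hat X_n$.

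First I would set $p = p(n)$ growing slowly (for instance $p \asymp \log n$, or even just a large fixed constant suffices if one only wants $O(n)$ with no sharp constant), and carry out the usual combinatorial count: because the entries are independent and mean zero, any walk that traverses an edge an odd number of times contributes zero, so only walks in which every edge is traversed at least twice survive. The number of distinct vertices visited by such a walk is at most $p+1$, and standard bookkeeping (as in the moment proof of the bound $\|\hat X_n\| = O(\sqrt n)$ for iid matrices — see e.g. Bai--Silverstein or Tao's book) shows the dominant term comes from walks where every edge is traversed exactly twice, giving a contribution of order $n^{p+1}$ times a Catalan-type combinatorial factor $C_p \leq 4^p$. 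For the remaining walks (some edge traversed three or more times), one uses the almost-sure bound $|\hat\xi| \ll n^{1/2-\eps}$ to control the higher moments: $\E|\hat\xi|^k \leq (n^{1/2-\eps})^{k-4}\,\E|\hat\xi|^4 \ll n^{(1/2-\eps)(k-4)}$, which is exactly the estimate needed to show those walks contribute a lower-order term (each "extra" edge-traversal loses a factor of order $n^{-\eps}$ relative to the main term, and there are at most $(2p)^{O(p)}$ walks of each type, which is absorbed since $p$ grows slowly). Collecting terms yields $\E\tr\big( (\hat X_n \hat X_n^\ast)^p \big) \leq n^{p+1} 4^p (1 + o(1))$, hence
\[
\E\|\hat X_n\|^{2p} \leq n^{p+1} 4^p (1+o(1)),
\]
and then by Jensen's inequality $\E\|\hat X_n\|^2 \leq \big(\E\|\hat X_n\|^{2p}\big)^{1/p} \leq n \cdot n^{1/p} 4 (1+o(1))^{1/p}$, which is $O(n)$ once $p \to \infty$ (even $p$ a large fixed integer gives $O(n^{1+1/p})$, and taking $p \to \infty$ or $p \asymp \log n$ gives the clean $O(n)$).

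The main obstacle is the combinatorial/analytic bookkeeping in separating the ``main term'' walks (every edge doubled) from the ``error'' walks and showing the latter are genuinely negligible while $p$ is allowed to grow — this is where the truncation bound $|\hat\xi| \ll n^{1/2-\eps}$ is essential, since without it the higher entry-moments could blow up and the error walks would dominate. Everything else is routine: independence and mean-zero-ness kill odd walks, the fourth-moment bound controls the doubled-edge walks, and Jensen converts the high-moment bound into the desired first-moment estimate. I would remark that this is exactly the standard argument (used e.g.\ in \cite{RiS, OR:CLT} and the references on the circular law) adapted to the truncated entries, so the write-up can cite the relevant combinatorial lemma rather than reproducing the full walk count.
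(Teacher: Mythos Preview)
Your moment-method approach is correct and does yield $\E\|\hat X_n\|^2 = O(n)$, but it takes a genuinely different and more laborious route than the paper. The paper's proof is essentially two lines: split on the event $\{\|\hat X_n\|\le C\sqrt n\}$; on that event the contribution is trivially $O(n)$, while on the complement one bounds the operator norm deterministically by the Frobenius norm, $\|\hat X_n\|^2 \le \|\hat X_n\|_2^2 \le n^2\cdot (n^{1/2-\eps})^2 = n^{3-2\eps}$ using the almost-sure entry bound, and then invokes the known tail estimate $\P(\|\hat X_n\|>C\sqrt n)=O_\alpha(n^{-\alpha})$ from \cite[Theorem~5.9]{BSbook} with $\alpha$ large. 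Your trace/walk expansion with $p\asymp\log n$ is essentially the machinery that proves that very tail bound, so you are in effect re-deriving the black-box result the paper simply cites. The paper's route is shorter and sidesteps exactly the bookkeeping you flagged as the main obstacle (controlling the error walks uniformly as $p$ grows); your route is more self-contained and would be appropriate if one did not want to rely on \cite{BSbook}.
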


\begin{proof}
	Observe that, for any constant $C>0$
	\begin{align*}
	\E\lnorm \hat{X}_{n}\rnorm^{2} &\ll \E\lnorm \hat{X}_{n}\indicator{\lnorm \hat{X}_{n}\rnorm \leq C\sqrt{n}}\rnorm^{2}+\E\lnorm \hat{X}_{n}\indicator{\lnorm \hat{X}_{n}\rnorm > C\sqrt{n}}\rnorm^{2}\\
	&\ll n+n^{3-2\varepsilon}\P\left(\lnorm\hat{X}_{n}\rnorm>C\sqrt{n}\right)
	\end{align*}
	where the power of $n$ came from bounding the operator norm by the Frobenious norm. By \cite[Theorem 5.9]{BSbook}, there exists $C>0$ sufficiently large so that $\P\left(\lnorm\hat{X}_{n}\rnorm>C\sqrt{n}\right)=O_{\alpha}(n^{-\alpha})$ for any $\alpha>0$. By selecting $\alpha$ sufficiently large, we arrive at the desired result.
\end{proof}

\begin{lemma}
	Let $X_{n}$ be an $n\times n$ iid random matrix with atom variable $\xi$ which has mean zero, variance one, and finite $4+\tau$ moment for some $\tau>0$ and define $\dot{X}_{n}$ as in \eqref{def:Xdot}. Then $\E\lnorm \dot{X}_{n}\rnorm^{2}=O(n)$ where $\lnorm \cdot\rnorm$ denotes the operator norm. 
	\label{Lem:OpNormJustTruncatedSquaredBounded}
\end{lemma}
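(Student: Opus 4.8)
The plan is to compare $\dot X_{n}$ with the recentered and rescaled truncation $\hat X_{n}$ from \eqref{def:Xhat}, whose operator norm has already been controlled in Lemma \ref{Lem:TruncatedOpNormSquaredBounded}. The reason for routing through $\hat X_{n}$ rather than arguing directly is that the entries of $\dot X_{n}$ have a small but nonzero mean (namely $\E[\xi\indicator{|\xi|\le n^{1/2-\varepsilon}}]=-\E[\xi\indicator{|\xi|>n^{1/2-\varepsilon}}]$), so the standard spectral norm bounds for mean-zero iid matrices (e.g.\ \cite[Theorem 5.9]{BSbook} as used in Lemma \ref{Lem:TruncatedOpNormSquaredBounded}) do not apply verbatim to $\dot X_{n}$.

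First I would write, via the triangle inequality for the operator norm together with $(a+b)^{2}\le 2a^{2}+2b^{2}$,
\[\E\lnorm\dot X_{n}\rnorm^{2}\le 2\,\E\lnorm\hat X_{n}\rnorm^{2}+2\,\E\lnorm\hat X_{n}-\dot X_{n}\rnorm^{2}.\]
For the first term, I would verify that the atom variable $\hat\xi$ of $\hat X_{n}$ meets the hypotheses of Lemma \ref{Lem:TruncatedOpNormSquaredBounded}: by Lemma \ref{Lem:TruncateLeveln}, for $n$ sufficiently large $\hat\xi$ has mean zero and unit variance, satisfies $|\hat\xi|\le 4n^{1/2-\varepsilon}$ almost surely, and $\E|\hat\xi|^{4}\le 2^{8}\E|\xi|^{4}=O(1)$ (the last using $\E|\xi|^{4+\tau}<\infty$). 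Hence Lemma \ref{Lem:TruncatedOpNormSquaredBounded} gives $\E\lnorm\hat X_{n}\rnorm^{2}=O(n)$.

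For the second term, I would use that the operator norm is dominated by the Hilbert--Schmidt norm, combined with Lemma \ref{Lem:XhatcloseXdot} (which applies since $\xi$ satisfies Assumption \ref{assump:4PlusTau} with unit variance), to obtain $\E\lnorm\hat X_{n}-\dot X_{n}\rnorm^{2}\le\E\lnorm\hat X_{n}-\dot X_{n}\rnorm_{2}^{2}=o(1)$. Putting the two bounds together yields $\E\lnorm\dot X_{n}\rnorm^{2}=O(n)+o(1)=O(n)$, as claimed. There is no substantial obstacle here once Lemmas \ref{Lem:TruncateLeveln}, \ref{Lem:XhatcloseXdot}, and \ref{Lem:TruncatedOpNormSquaredBounded} are in hand; the only point requiring care is the noncentered nature of $\dot X_{n}$, which is exactly why the comparison with $\hat X_{n}$ is made.
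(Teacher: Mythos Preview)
Your proposal is correct and follows essentially the same approach as the paper: split via the triangle inequality into $\E\lnorm\hat X_{n}\rnorm^{2}$ and $\E\lnorm\hat X_{n}-\dot X_{n}\rnorm^{2}$, then invoke Lemmas \ref{Lem:TruncatedOpNormSquaredBounded} and \ref{Lem:XhatcloseXdot} respectively. Your write-up is in fact more detailed than the paper's, which simply cites the two lemmas without spelling out the verification of hypotheses or the operator-norm-by-Hilbert--Schmidt bound.
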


\begin{proof}
	Let $\hat{X}_{n}$ be the truncated $n\times n$ iid random matrix with entries as defined in \eqref{def:Xhat} and observe that 
	\[\E\lnorm \dot{X}_{n}\rnorm^{2}\ll \E\lnorm \dot{X}_{n}-\hat{X}_{n}\rnorm^{2}+\E\lnorm \hat{X}_{n}\rnorm^{2}.\] 
	The proof follows by Lemmas \ref{Lem:TruncatedOpNormSquaredBounded} and \ref{Lem:XhatcloseXdot}.
\end{proof}

\begin{lemma}
	Let $X_{n}$ be an $n\times n$ iid random matrix with atom variable $\xi$ which has mean zero, variance one, and finite $4+\tau$ moment for some $\tau>0$. Then $\E\lnorm X_{n}\rnorm^{2}=O(n)$ where $\lnorm \cdot\rnorm$ denotes the operator norm. 
	\label{Lem:OpNormSquaredBounded}
\end{lemma}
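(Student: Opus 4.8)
The plan is to reduce the bound to the truncated matrix $\dot{X}_n$ of \eqref{def:Xdot}, for which $\E\lnorm\dot X_n\rnorm^2 = O(n)$ is already available as Lemma \ref{Lem:OpNormJustTruncatedSquaredBounded}, and then to show that the discarded ``tail'' part $X_n - \dot X_n$ contributes only $o(n)$ in expectation.

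First I would write $X_n = \dot X_n + (X_n - \dot X_n)$ and use the triangle inequality for the operator norm together with $(a+b)^2 \leq 2a^2 + 2b^2$ to get
\[ \E\lnorm X_n\rnorm^2 \leq 2\,\E\lnorm\dot X_n\rnorm^2 + 2\,\E\lnorm X_n - \dot X_n\rnorm^2. \]
The first term is $O(n)$ by Lemma \ref{Lem:OpNormJustTruncatedSquaredBounded}. For the second, I would bound the operator norm by the Hilbert--Schmidt norm and use that the entries of $X_n - \dot X_n$ are exactly $X_{(i,j)}\indicator{|X_{(i,j)}| > n^{1/2-\varepsilon}}$:
\[ \E\lnorm X_n - \dot X_n\rnorm^2 \leq \E\lnorm X_n - \dot X_n\rnorm_2^2 = \sum_{i,j=1}^n \E\left[|\xi|^2 \indicator{|\xi| > n^{1/2-\varepsilon}}\right] = n^2\,\E\left[|\xi|^2 \indicator{|\xi| > n^{1/2-\varepsilon}}\right]. \]

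It then remains to check that $n^2\,\E[|\xi|^2\indicator{|\xi|>n^{1/2-\varepsilon}}] = o(n)$. On the event $\{|\xi| > n^{1/2-\varepsilon}\}$ one has $|\xi|^2 \leq n^{-(1-2\varepsilon)}|\xi|^4$, so this quantity is at most $n^{1+2\varepsilon}\,\E[|\xi|^4\indicator{|\xi|>n^{1/2-\varepsilon}}]$, and by the choice of $\varepsilon$ in \eqref{Equ:4thMomentToZero} we have $\E[|\xi|^4\indicator{|\xi|>n^{1/2-\varepsilon}}] = o(n^{-4\varepsilon})$, so the bound is $o(n^{1-2\varepsilon}) = o(n)$. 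Combining the two terms yields $\E\lnorm X_n\rnorm^2 = O(n)$.

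There is no substantial obstacle here: the real work has already been done in Lemmas \ref{Lem:TruncatedOpNormSquaredBounded}--\ref{Lem:OpNormJustTruncatedSquaredBounded}, which invoke the operator-norm tail bound from \cite[Theorem 5.9]{BSbook} valid for bounded entries. The only point requiring a little care is that the Frobenius-norm estimate for the tail part is crude but still suffices, precisely because the finiteness of $\E|\xi|^{4+\tau}$ (through \eqref{Equ:4thMomentToZero}) lets the truncated fourth-moment tail beat the factor $n^{1+2\varepsilon}$. One could equivalently route the argument through $\hat X_n$, using Lemma \ref{Lem:XcloseXhat} to control $\E\lnorm X_n - \hat X_n\rnorm_2^2$ and Lemma \ref{Lem:TruncatedOpNormSquaredBounded} for $\E\lnorm\hat X_n\rnorm^2$; the two approaches are interchangeable.
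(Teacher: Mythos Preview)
Your proposal is correct and is essentially the paper's argument: split off a truncated matrix, invoke the existing operator-norm bound for it, and control the remainder via the Hilbert--Schmidt norm. The paper routes through $\hat X_n$ (exactly your stated alternative, using Lemmas \ref{Lem:XcloseXhat} and \ref{Lem:TruncatedOpNormSquaredBounded}), whereas your primary exposition routes through $\dot X_n$; the two are interchangeable, as you note.
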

\begin{proof}
	Let $\hat{X}_{n}$ be the truncated $n\times n$ iid random matrix with entries defined by \eqref{def:Xhat} and observe that by the triangle inequality we have
	\begin{equation}
	\E\lnorm X_{n}\rnorm^{2}\ll\E\lnorm X_{n}-\hat{X}_{n}\rnorm^{2}+\E\lnorm \hat{X}_{n}\rnorm^{2}.
	\label{Equ:Lem:OpNormSquaredBounded:1}
	\end{equation}
	Both terms in the right hand side of \eqref{Equ:Lem:OpNormSquaredBounded:1} are $O(n)$ by Lemmas \ref{Lem:XcloseXhat} and \ref{Lem:TruncatedOpNormSquaredBounded} as desired.
\end{proof}

\begin{lemma}
	Let $X_{n,i}$ be as defined in Theorem \ref{thm:mainCLT} with $\sigma_i = 1$, and for each $1\leq i\leq m$, define $\hat{X}_{n,i}$ as in \eqref{def:Xhat}. Define the product $P_{n}$ as in \eqref{Def:P_n} and define the truncated product
	\begin{equation}
	\hat{P}_{n}=n^{-m/2}\hat{X}_{n,1}\cdots\hat{X}_{n,m}.
	\label{Def:TruncatedProducts}
	\end{equation} 
	Then
	\begin{equation*}
	\E\lnorm P_{n}-\hat{P}_{n}\rnorm_{2}^{2}=o(n^{-2\varepsilon})\; \text{ and }\;
	\P\left(\lnorm P_{n}-\hat{P}_{n}\rnorm > n^{-\varepsilon}\right)=o(1).
	\end{equation*}
	\label{Lem:ProductsCloseToTruncatedProducts}
\end{lemma}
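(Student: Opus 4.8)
The plan is to expand the difference of products as a telescoping sum and then bound each summand using submultiplicativity of the operator and Hilbert--Schmidt norms, together with the moment bounds already established in Lemmas \ref{Lem:XcloseXhat}, \ref{Lem:TruncatedOpNormSquaredBounded}, and \ref{Lem:OpNormSquaredBounded}. First I would write
\[
P_{n}-\hat{P}_{n}=n^{-m/2}\sum_{k=1}^{m}\hat{X}_{n,1}\cdots\hat{X}_{n,k-1}\bigl(X_{n,k}-\hat{X}_{n,k}\bigr)X_{n,k+1}\cdots X_{n,m},
\]
with the convention that empty products denote the identity. Applying the triangle inequality together with the bounds $\lnorm AB\rnorm_{2}\leq\lnorm A\rnorm\lnorm B\rnorm_{2}$ and $\lnorm AB\rnorm_{2}\leq\lnorm A\rnorm_{2}\lnorm B\rnorm$ then gives
\[
\lnorm P_{n}-\hat{P}_{n}\rnorm_{2}\leq n^{-m/2}\sum_{k=1}^{m}\Bigl(\prod_{j<k}\lnorm\hat{X}_{n,j}\rnorm\Bigr)\lnorm X_{n,k}-\hat{X}_{n,k}\rnorm_{2}\Bigl(\prod_{j>k}\lnorm X_{n,j}\rnorm\Bigr).
\]

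Next I would square both sides, use $(\sum_{k=1}^{m}a_{k})^{2}\leq m\sum_{k=1}^{m}a_{k}^{2}$, and take expectations, reducing matters to bounding, for each fixed $k$, the expectation of
\[
n^{-m}\Bigl(\prod_{j<k}\lnorm\hat{X}_{n,j}\rnorm^{2}\Bigr)\lnorm X_{n,k}-\hat{X}_{n,k}\rnorm_{2}^{2}\Bigl(\prod_{j>k}\lnorm X_{n,j}\rnorm^{2}\Bigr).
\]
Since $X_{n,1},\dots,X_{n,m}$ are independent and each $\hat{X}_{n,j}$ is a deterministic function of $X_{n,j}$ alone, the three groups of factors are mutually independent, so this expectation factors as a product of individual expectations. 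By Lemma \ref{Lem:TruncatedOpNormSquaredBounded} each $\E\lnorm\hat{X}_{n,j}\rnorm^{2}=O(n)$, by Lemma \ref{Lem:OpNormSquaredBounded} each $\E\lnorm X_{n,j}\rnorm^{2}=O(n)$, and by Lemma \ref{Lem:XcloseXhat} (after rescaling by $\sqrt{n}$) $\E\lnorm X_{n,k}-\hat{X}_{n,k}\rnorm_{2}^{2}=o(n^{1-2\varepsilon})$. Multiplying, the $k$-th term is $n^{-m}\cdot O(n^{k-1})\cdot o(n^{1-2\varepsilon})\cdot O(n^{m-k})=o(n^{-2\varepsilon})$, and summing over the finitely many values of $k$ yields $\E\lnorm P_{n}-\hat{P}_{n}\rnorm_{2}^{2}=o(n^{-2\varepsilon})$. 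The second assertion then follows from Markov's inequality and $\lnorm\cdot\rnorm\leq\lnorm\cdot\rnorm_{2}$, since $\P(\lnorm P_{n}-\hat{P}_{n}\rnorm>n^{-\varepsilon})\leq n^{2\varepsilon}\E\lnorm P_{n}-\hat{P}_{n}\rnorm_{2}^{2}=o(1)$, exactly as in the proof of Lemma \ref{Lem:XcloseXhat}.

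I do not expect a genuine obstacle here; the argument is essentially bookkeeping. The one point that needs care is to track, in each telescoping summand, which matrices are truncated (so that the bound from Lemma \ref{Lem:TruncatedOpNormSquaredBounded} applies) and which are not (so that Lemma \ref{Lem:OpNormSquaredBounded} applies), and to justify the factorization of the expectation via the independence structure noted above. If one prefers to avoid the mild asymmetry of the telescoping, an alternative is to peel off a single factor at a time and induct on $m$, but the direct telescoping seems cleanest.
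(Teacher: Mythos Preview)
Your proof is correct and follows essentially the same approach as the paper: a telescoping decomposition of the product difference, submultiplicativity to isolate one Hilbert--Schmidt factor and bound the rest in operator norm, independence to factor expectations, and then Lemmas \ref{Lem:XcloseXhat}, \ref{Lem:TruncatedOpNormSquaredBounded}, and \ref{Lem:OpNormSquaredBounded} combined with Markov's inequality. The only cosmetic difference is that the paper reduces the probability statement to the Hilbert--Schmidt bound first, whereas you establish the Hilbert--Schmidt bound first and deduce the probability statement afterward.
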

\begin{proof}
	By Markov's inequality,
	\begin{equation*}
	\P\left(\lnorm P_{n}-\hat{P}_{n}\rnorm>n^{-\varepsilon}\right) \leq{n^{2\varepsilon}}\E\lnorm P_{n}-\hat{P}_{n}\rnorm^{2} \leq n^{2\varepsilon}\E\lnorm P_{n}-\hat{P}_{n}\rnorm_{2}^{2}\\
	\end{equation*}
	so it is sufficient to prove the first bound. To this end, note that by the triangle inequality, independence, and Lemma \ref{Lem:BoundOnTraceByNorms}, we have 
	\begin{align*}
	&\E\lnorm P_{n}-\hat{P}_{n}\rnorm_{2}^{2}=\E\lnorm n^{-m/2}X_{n,1}\cdots X_{n,m}-n^{-m/2}\hat{X}_{n,1}\cdots \hat{X}_{n,m}\rnorm_{2}^{2}\\
	\ifdetail &\leq 2^{m}\E\lnorm n^{-m/2}X_{n,1}X_{n,2}\cdots X_{n,m-1}X_{n,m}-n^{-m/2}\hat{X}_{n,1}X_{n,2}\cdots X_{n,m-1}X_{n,m}\rnorm_{2}^{2}\\\fi 
	\ifdetail&\quad+2^{m}\E\lnorm n^{-m/2}\hat{X}_{n,1}X_{n,2}\cdots X_{n,m-1}X_{n,m}-n^{-m/2}\hat{X}_{n,1}\hat{X}_{n,2}\cdots X_{n,m-1}X_{n,m}\rnorm_{2}\\\fi 
	\ifdetail&\quad\vdots\\\fi 
	\ifdetail&\quad+2^{m}\E\lnorm n^{-m/2}\hat{X}_{n,1}\hat{X}_{n,2}\cdots \hat{X}_{n,m-1}X_{n,m}-n^{-m/2}\hat{X}_{n,1}\hat{X}_{n,2}\cdots \hat{X}_{n,m-1}\hat{X}_{n,m}\rnorm_{2}^{2}\\\fi 
	&\quad\quad\ll n^{-m}\left(\E\lnorm X_{n,1}-\hat{X}_{n,1}\rnorm_{2}^{2}\E\lnorm X_{n,2}\rnorm^{2}\cdots \E \lnorm X_{n,m-1}\rnorm^{2} \E\lnorm X_{n,m}\rnorm^{2}+\dots \right.\\
	&\quad\quad\quad\quad\quad\quad\quad\quad\left.+\E\lnorm \hat{X}_{n,1}\rnorm^{2}\E\lnorm \hat{X}_{n,2}\rnorm^{2}\cdots\E\lnorm  \hat{X}_{n,m-1}\rnorm^{2}\E\lnorm X_{n,m}-\hat{X}_{n,m}\rnorm_{2}^{2}\right).	
	\end{align*}
	By Lemmas \ref{Lem:TruncatedOpNormSquaredBounded} and \ref{Lem:OpNormSquaredBounded}, $\E\lnorm \hat{X}_{n,k}\rnorm^{2} = O(n)$ and $\E\lnorm X_{n,k}\rnorm^{2} = O(n)$ for all $1\leq k\leq m$. Therefore, by this observation and Lemma \ref{Lem:XcloseXhat}, 	
	\begin{align*}
	\E\lnorm P_{n}-\hat{P}_{n}\rnorm_{2}^{2}& \ll n^{-1}\left(\E\lnorm X_{n,1}-\hat{X}_{n,1}\rnorm_{2}^{2}+\dots+\E\lnorm X_{n,m}-\hat{X}_{n,m}\rnorm_{2}^{2}\right)\\	
	& = o(n^{-2\varepsilon}).
	\end{align*}
\end{proof}
\begin{Details}
	Lemma \ref{Lem:ProductsCloseToTruncatedProducts} lets us work with products of matrices with truncated entries, provided we can control the smallest singular value. 	
\begin{lemma}
	Let $X_{n,i}$ be as defined in Theorem \ref{thm:mainCLT}, $\hat{X}_{n,i}$ as defined in \eqref{Equ:TruncateLeveln}, and $\hat{P}_{n}$ as in \eqref{Def:TruncatedProducts}. Then the event $\hat{E}_{n}$ as defined in \eqref{Def:EventTruncatedProductSingVal} holds with overwhelming probability.
\end{lemma}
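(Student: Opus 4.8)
Recall that $\hat{E}_{n}$ asserts, in the spirit of \eqref{Def:E_n}, that $\inf_{|z|>1+\delta/2}s_{n}(\hat{P}_{n}-zI)\geq c$ for a suitably small constant $c>0$. The plan is to split the range of $z$ into a far regime $|z|\geq R$ and a bounded annular regime $1+\delta/2\leq|z|\leq R$; the far regime is disposed of by a crude operator norm bound, while in the annular regime I would replace the supremum over $z$ by a supremum over a polynomially large net, reducing everything to a single pointwise least singular value estimate.

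For the far regime, I would first record that, by \cite[Theorem 5.9]{BSbook} (exactly as used in the proof of Lemma \ref{Lem:TruncatedOpNormSquaredBounded}), there is a constant $C_{0}>0$ with $\lnorm\hat{X}_{n,k}\rnorm\leq C_{0}\sqrt{n}$ for all $1\leq k\leq m$ with overwhelming probability; on this event $\lnorm\hat{P}_{n}\rnorm\leq C_{0}^{m}$ by submultiplicativity of the operator norm and the definition \eqref{Def:TruncatedProducts}. Setting $R:=2C_{0}^{m}$, every $z$ with $|z|\geq R$ then satisfies $s_{n}(\hat{P}_{n}-zI)\geq|z|-\lnorm\hat{P}_{n}\rnorm\geq C_{0}^{m}$, so this regime is harmless provided $c\leq C_{0}^{m}$.

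Next I would treat the annular regime. Fix a net $z_{1},\dots,z_{M}$ of $\{1+\delta/2\leq|z|\leq R\}$ with $M=O(n^{4})$ points and mesh at most $n^{-2}$; since $z\mapsto s_{n}(\hat{P}_{n}-zI)$ is $1$-Lipschitz by Weyl's inequality \eqref{Equ:weyl}, a union bound reduces matters to proving, for each fixed $z$ with $1+\delta/2\leq|z|\leq R$, that $s_{n}(\hat{P}_{n}-zI)\geq 2c$ with overwhelming probability. This pointwise claim is a Hermitization fact: the symmetrized empirical distribution of the singular values of $\hat{P}_{n}-zI$ converges to a deterministic measure $\nu_{z}$ (this convergence is built into the circular-law analysis for products in \cite{ORSV,GTprod,OS}), and since $|z|>1$ lies strictly outside the support of $\mu_{m}$, the measure $\nu_{z}$ is supported in $[a(\delta),\infty)$ for some $a(\delta)>0$. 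Because the truncated entries are deterministically bounded by $n^{1/2-\varepsilon}$, the left edge of this singular-value distribution is rigid: by a standard truncation-and-moment-method argument (see, e.g., \cite{BSbook}), or by the local law for products of \cite{N2,GNT}, one rules out singular values of $\hat{P}_{n}-zI$ in $[0,a(\delta)/2]$ with overwhelming probability, whence $s_{n}(\hat{P}_{n}-zI)\geq a(\delta)/2$; taking $c:=\min\{C_{0}^{m},a(\delta)/4\}$ then suffices for all of $\hat{E}_{n}$.

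As an alternative for the annular regime, and to make contact with the linearized matrix $\hat{\mathcal{Y}}_{n}$ used elsewhere in the paper, one may instead observe that Proposition \ref{prop:linear} together with the factorization $\hat{\mathcal{Y}}_{n}^{m}-zI=\prod_{w^{m}=z}(\hat{\mathcal{Y}}_{n}-wI)$ yields $s_{n}(\hat{P}_{n}-zI)\geq\prod_{w^{m}=z}s_{N}(\hat{\mathcal{Y}}_{n}-wI)$ with $N=mn$, so it would suffice to prove $\inf_{|w|>(1+\delta/2)^{1/m}}s_{N}(\hat{\mathcal{Y}}_{n}-wI)\geq c'$ with overwhelming probability---the same Hermitization statement for the bounded-entry block matrix $\hat{\mathcal{Y}}_{n}$. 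The hard part throughout will be precisely this pointwise edge estimate for the smallest singular value: a ``no outlier eigenvalue'' or spectral-radius bound for $\hat{P}_{n}$, fed into $s_{n}(\hat{P}_{n}-zI)\geq|\det(\hat{P}_{n}-zI)|/\prod_{j<n}s_{j}(\hat{P}_{n}-zI)$, only produces $s_{n}(\hat{P}_{n}-zI)\geq e^{-Cn}$, which is far too weak for the later resolvent estimates, so one genuinely needs rigidity of the least singular value of the Hermitization, and it is exactly at this point that the passage to bounded entries is essential, upgrading the relevant convergence statements from ``in probability'' to ``with overwhelming probability.''
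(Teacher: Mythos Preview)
Your outline is sound, and your alternative route via the linearized matrix is close in spirit to what the paper does, but the paper's execution is more direct. The paper first proves the analogous statement for the linearized matrix $\blmat{Y}{n}$ (this is Lemma~\ref{Lem:Omega_nOverwhelming}, which in turn cites \cite[Lemma~8.1]{COW}, itself based on the local-law techniques of \cite{N,N2}, for a constant-level truncation $\check{\mathcal{Y}}_{n}$, and then transfers to $\blmat{Y}{n}$ by Weyl's inequality). To pass from $\blmat{Y}{n}$ to $\hat{P}_{n}$, rather than your product factorization $\blmat{Y}{n}^{m}-zI=\prod_{w^{m}=z}(\blmat{Y}{n}-wI)$, the paper uses the block-inverse identity
\[
\bigl((\blmat{Y}{n}-zI)^{-1}\bigr)^{[1,1]}=z^{m-1}(\hat{P}_{n}-z^{m}I)^{-1},
\]
so that $\lVert(\hat{P}_{n}-z^{m}I)^{-1}\rVert\le |z|^{1-m}\lVert(\blmat{Y}{n}-zI)^{-1}\rVert\le |z|^{1-m}/c'$ uniformly for $|z|>1+\delta/2$. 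This gives the bound on $s_{n}(\hat{P}_{n}-\cdot\,I)$ immediately, with no far/annular split and no net. Your factorization alternative reaches the same reduction, just via $s_{mn}(\prod_{j}A_{j})\ge\prod_{j}s_{mn}(A_{j})$, which is slightly less sharp but perfectly adequate. Your primary route through direct Hermitization of $\hat{P}_{n}-zI$ plus a net is correct in principle and would also work, but it is the longest path: you end up re-proving for the product precisely the edge rigidity that \cite{COW,N,N2} already establish at the linearized level, and the paper simply inherits that result.
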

With this result in hand, we proceed to the following theorem.
\end{Details}

\begin{lemma}
	Let $X_{n,i}$ be as defined in Theorem \ref{thm:mainCLT} with $\sigma_i = 1$, and for each $1\leq i\leq m$, define $\dot{X}_{n,i}$ and $\hat{X}_{n,i}$ as in \eqref{def:Xdot} and \eqref{def:Xhat} respectively. Define $\hat{P}_{n}$ as in \eqref{Def:TruncatedProducts} and define the product \begin{equation}
	\dot{P}_{n}=n^{-m/2}\dot{X}_{n,1}\cdots\dot{X}_{n,m}.
	\label{Def:DotTruncatedProducts}
	\end{equation} 
	Then \[\E\lnorm \dot{P}_{n}-\hat{P}_{n}\rnorm_{2}^{2}=o(n^{-1}).\]
	\label{Lem:XdotProductsCloseToTruncatedProducts}
\end{lemma}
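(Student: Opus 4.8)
The plan is to follow the same telescoping strategy used in the proof of Lemma~\ref{Lem:ProductsCloseToTruncatedProducts}, swapping one factor at a time from $\hat X_{n,k}$ to $\dot X_{n,k}$ and controlling each resulting term by operator-norm bounds on the untouched factors together with the Hilbert--Schmidt estimate $\E\|\dot X_{n,k}-\hat X_{n,k}\|_2^2 = o(1)$ supplied by Lemma~\ref{Lem:XhatcloseXdot}.

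First I would write the telescoping identity
\[
\dot P_n - \hat P_n = n^{-m/2}\sum_{k=1}^{m} \hat X_{n,1}\cdots \hat X_{n,k-1}\,(\dot X_{n,k}-\hat X_{n,k})\,\dot X_{n,k+1}\cdots \dot X_{n,m},
\]
and then, since $m$ is fixed, apply the triangle inequality followed by Cauchy--Schwarz to obtain
\[
\E\|\dot P_n - \hat P_n\|_2^2 \ll n^{-m}\sum_{k=1}^{m}\E\big\|\hat X_{n,1}\cdots \hat X_{n,k-1}\,(\dot X_{n,k}-\hat X_{n,k})\,\dot X_{n,k+1}\cdots \dot X_{n,m}\big\|_2^2 .
\]
For each summand I would invoke the submultiplicativity property $\|ABC\|_2 \le \|A\|\,\|B\|_2\,\|C\|$ of the Hilbert--Schmidt norm (the same estimate, Lemma~\ref{Lem:BoundOnTraceByNorms}, already used in the proof of Lemma~\ref{Lem:ProductsCloseToTruncatedProducts}) to bound the $k$-th term by $\big(\prod_{j<k}\|\hat X_{n,j}\|^2\big)\,\|\dot X_{n,k}-\hat X_{n,k}\|_2^2\,\big(\prod_{j>k}\|\dot X_{n,j}\|^2\big)$. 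Because $X_{n,1},\dots,X_{n,m}$ are independent and each of $\hat X_{n,j}$, $\dot X_{n,j}$, and $\dot X_{n,k}-\hat X_{n,k}$ is a function of the single matrix $X_{n,j}$, the expectation of this product factors into a product of expectations.

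It then remains to insert the three inputs: $\E\|\hat X_{n,j}\|^2 = O(n)$ by Lemma~\ref{Lem:TruncatedOpNormSquaredBounded} (whose hypotheses are supplied by Lemma~\ref{Lem:TruncateLeveln}); $\E\|\dot X_{n,j}\|^2 = O(n)$ by Lemma~\ref{Lem:OpNormJustTruncatedSquaredBounded}; and $\E\|\dot X_{n,k}-\hat X_{n,k}\|_2^2 = o(1)$ by Lemma~\ref{Lem:XhatcloseXdot}. Each of the $m$ summands is then at most $n^{-m}\cdot O(n^{m-1})\cdot o(1) = o(n^{-1})$, and summing the fixed number $m$ of them yields $\E\|\dot P_n - \hat P_n\|_2^2 = o(n^{-1})$, as claimed.

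I do not anticipate a genuine obstacle: the argument is a direct adaptation of the proof of Lemma~\ref{Lem:ProductsCloseToTruncatedProducts}. The only point requiring a moment's care is the power count --- there are exactly $m-1$ operator-norm factors, each $O(n)$, played against the prefactor $n^{-m}$, so the improvement from $O(n^{-1})$ to $o(n^{-1})$ comes entirely from the fact that Lemma~\ref{Lem:XhatcloseXdot} delivers $o(1)$ rather than merely $O(1)$ for $\E\|\dot X_{n,k}-\hat X_{n,k}\|_2^2$.
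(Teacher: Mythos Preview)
Your proposal is correct and follows essentially the same approach as the paper's proof: a telescoping sum, the Hilbert--Schmidt/operator-norm inequality from Lemma~\ref{Lem:BoundOnTraceByNorms}, independence to factor the expectation, the operator-norm bounds from Lemmas~\ref{Lem:TruncatedOpNormSquaredBounded} and~\ref{Lem:OpNormJustTruncatedSquaredBounded}, and the $o(1)$ input from Lemma~\ref{Lem:XhatcloseXdot}. Your power count and identification of where the $o(n^{-1})$ improvement comes from are exactly right.
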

\begin{proof}
	By the triangle inequality, independence, and Lemma \ref{Lem:BoundOnTraceByNorms}, we have 
	\begin{align*}
	&\E\lnorm \dot{P}_{n}-\hat{P}_{n}\rnorm_{2}^{2}=\E\lnorm n^{-m/2}\dot{X}_{n,1}\cdots \dot{X}_{n,m}-n^{-m/2}\hat{X}_{n,1}\cdots \hat{X}_{n,m}\rnorm_{2}^{2}\\
	&\quad\quad\ll n^{-m}\left(\E\lnorm \dot{X}_{n,1}-\hat{X}_{n,1}\rnorm_{2}^{2}\E\lnorm \dot{X}_{n,2}\rnorm^{2}\cdots \E \lnorm \dot{X}_{n,m-1}\rnorm^{2} \E\lnorm \dot{X}_{n,m}\rnorm^{2}+\dots \right.\\
	&\quad\quad\quad\quad\quad\quad\quad\quad\left.+\E\lnorm \hat{X}_{n,1}\rnorm^{2}\E\lnorm \hat{X}_{n,2}\rnorm^{2}\cdots\E\lnorm  \hat{X}_{n,m-1}\rnorm^{2}\E\lnorm \dot{X}_{n,m}-\hat{X}_{n,m}\rnorm_{2}^{2}\right).	
	\end{align*}
	By Lemmas \ref{Lem:TruncatedOpNormSquaredBounded} and \ref{Lem:OpNormJustTruncatedSquaredBounded}, $\E\lnorm \hat{X}_{n,k}\rnorm^{2} = O(n)$ and $\E\lnorm \dot{X}_{n,k}\rnorm^{2} = O(n)$ for all $1\leq k\leq m$. By this observation and Lemma \ref{Lem:XhatcloseXdot}, 
	\begin{align*}
	\E\lnorm \dot{P}_{n}-\hat{P}_{n}\rnorm_{2}^{2}& \ll n^{-1}\left(\E\lnorm \dot{X}_{n,1}-\hat{X}_{n,1}\rnorm_{2}^{2}+\dots+\E\lnorm \dot{X}_{n,m}-\hat{X}_{n,m}\rnorm_{2}^{2}\right)\\	
	& = o(n^{-1}).
	\end{align*}
\end{proof}

With the preceding norm lemmas complete, we now show it is sufficient to consider a version of Theorem \ref{thm:firstReduction} in which all entries in the matrices are truncated. We now reduce to the case where we can consider the truncated product $\hat{P}_n$.  
\begin{theorem}
	Let $X_{n,i}$ be as defined in Theorem \ref{thm:mainCLT} with $\sigma_i = 1$, $\hat{X}_{n,i}$ as defined in \eqref{Equ:TruncateLeveln}, and $\hat{P}_{n}$ as in \eqref{Def:TruncatedProducts}.  Let $\delta > 0$, and let $f$ be a function which is analytic in some neighborhood containing the disk $D_{\delta}$ and bounded otherwise. Then there exists a constant $c>0$ such that the event 
	\begin{equation}
	\hat{E}_{n}:=\left\{\inf_{|z|>1+\delta/2}s_{n}(\hat{P}_{n}-zI)\geq c\right\}
	\label{Def:EventTruncatedProductSingVal}
	\end{equation}
	holds with overwhelming probability and 
	\begin{equation}
	\tr f(\hat{P}_{n})\oindicator{\hat{E}_{n}}- \E[\tr f(\hat{P}_{n})\oindicator{\hat{E}_{n}}]
	\label{Equ:SingleTruncatedTerm}
	\end{equation}
	converges in distribution to a mean-zero Gaussian random variable $F(f)$ with covariance structure 
	\[\E\left[\left(F(f)\right)^{2}\right]=-\frac{1}{4\pi^{2}}\oint_{\mathcal{C}}\oint_{\mathcal{C}}f(z)f(w)(zw-1)^{-2}dzdw\]
	and  
	\[\E\left[F(f)\overline{F(f)}\right] =\frac{1}{4\pi^{2}}\oint_{\mathcal{C}}\oint_{\mathcal{C}}f(z)\overline{f(w)}(z\bar{w}-1)^{-2}dzd\bar{w}.\]
	\label{Thm:TruncatedProductCLT}
\end{theorem}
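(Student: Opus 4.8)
The plan is to reduce the linear statistic of the truncated product $\hat P_n$ to that of its linearization, and then to prove a functional central limit theorem for the trace of the resolvent of the linearized matrix, following the architecture of \cite{RiS, OR:CLT} but with the extra bookkeeping forced by the block structure. Applying Proposition \ref{prop:linear} with $M_k = n^{-1/2}\hat X_{n,k}$, let $\blmat Y n$ denote the associated $N \times N$ block matrix, $N := mn$. Then the eigenvalues of $\blmat Y n^{\,m}$ are precisely those of $\hat P_n$, each with multiplicity $m$, so for $g(z) := f(z^m)$ one has $\tr f(\hat P_n) = \tfrac1m \tr g(\blmat Y n)$; since $f$ is analytic in a neighbourhood of $D_\delta$, the function $g$ is analytic in a neighbourhood of a disk of radius slightly larger than $1$. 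On the event $\hat E_n$ of \eqref{Def:EventTruncatedProductSingVal}, which holds with overwhelming probability by the least--singular--value bounds for products of truncated iid matrices (established in the appendix), and after transporting this singular-value information from $\hat P_n - z^m I$ to $\blmat Y n - zI$, Proposition \ref{Prop:LargeAndSmallSingVals} supplies a fixed contour $\mathcal C'$ of radius slightly larger than $1$ on which the resolvent $\blmat G n(z) := (\blmat Y n - zI)^{-1}$ is bounded in operator norm. Cauchy's integral formula then rewrites
\[
  \tr f(\hat P_n)\oindicator{\hat E_n} - \E\big[\tr f(\hat P_n)\oindicator{\hat E_n}\big]
  = -\frac{1}{2\pi i\,m}\oint_{\mathcal C'} g(z)\,\Big(\tr \blmat G n(z)\oindicator{\hat E_n} - \E\big[\tr \blmat G n(z)\oindicator{\hat E_n}\big]\Big)\,dz .
\]

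Because $\hat E_n$ holds with overwhelming probability, the indicator does not alter the limiting law, but it is retained so that $\|\blmat G n(z)\| \le 1/c$ uniformly on $\mathcal C'$. By the continuous mapping theorem, it now suffices to show that the centered resolvent process $X_n(z) := \tr \blmat G n(z)\oindicator{\hat E_n} - \E[\tr \blmat G n(z)\oindicator{\hat E_n}]$ converges in distribution, in the space $C(\mathcal C')$ of continuous functions on $\mathcal C'$, to a mean-zero Gaussian process, and then to verify that integrating its covariance twice against $g$ as above reproduces the kernels $(zw-1)^{-2}$ and $(z\bar w - 1)^{-2}$ appearing in the statement. As in \cite{RiS}, the process is complex-valued, so both the covariance $\E[X_n(z)X_n(w)]$ and the pseudo-covariance $\E[X_n(z)\overline{X_n(w)}]$ must be tracked; I would organize this by splitting $g$ into its analytic ``real'' and ``imaginary'' parts in the spirit of the $\mathfrak R f,\mathfrak I f$ decomposition used earlier in the paper.

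For the finite-dimensional distributions, fix $z_1,\dots,z_p\in\mathcal C'$ and run a martingale central limit theorem, exposing the columns of $\blmat Y n$ one at a time (each step is a rank-one perturbation of $\blmat Y n - zI$); since the factor matrices are independent and each has independent entries, the columns of $\blmat Y n$ are mutually independent. Writing the martingale differences via the Sherman--Morrison and Sherman--Morrison--Woodbury identities \eqref{equ:ShermanMorrison1}--\eqref{Equ:ShermanMorrisonWoodbury} and the resolvent identity \eqref{Equ:ResolventIndentity}, each difference becomes an explicit quadratic form in the truncated entries; the almost sure bound $|\hat\xi_k|\le 4n^{1/2-\varepsilon}$ from Lemma \ref{Lem:TruncateLeveln} and the operator-norm control of Lemma \ref{Lem:TruncatedOpNormSquaredBounded} supply the Lindeberg/Lyapunov fourth-moment estimates that kill the large-entry contributions, so the limit is Gaussian and is governed entirely by the limiting conditional covariances. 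Computing those covariances is the \emph{crux} of the proof: the many deterministic zero blocks of $\blmat Y n$ preclude a single self-consistent equation as in \cite{RiS, OR:CLT}, so instead I would derive a coupled system of $m$ recursive equations, one for each block coordinate, for the relevant traces of products such as $\blmat G n(z)\,\blmat Y n\,\blmat G n(w)\,\blmat Y n$ restricted cyclically across the block indices, and solve this linear $m\times m$ system in the limit $n\to\infty$. This yields a limiting variance for $\tr g(\blmat Y n)$ that genuinely depends on $m$ and has an apparently novel form; but after the substitution $g(z)=f(z^m)$ and passing the double contour integral to the variables $u=z^m$, $v=w^m$, the $m$-dependence cancels and one recovers exactly the covariance kernels in the statement. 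Establishing this system and verifying the cancellation is the main obstacle, and I expect it to occupy the bulk of the argument.

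Finally, for tightness of the process in $C(\mathcal C')$ I would prove a Kolmogorov--Chentsov-type increment bound $\E\big|X_n(z)-X_n(w)\big|^{2}\ll |z-w|^{2}$ for $z,w\in\mathcal C'$, using $\blmat G n(z)-\blmat G n(w) = (z-w)\,\blmat G n(z)\,\blmat G n(w)$, the uniform resolvent bound on $\hat E_n$, and moment bounds for traces of products of resolvents obtained from the same martingale decomposition and truncation; together with a bound at a single base point of $\mathcal C'$ this gives tightness. Weak convergence of the resolvent process then follows by combining the finite-dimensional convergence with tightness, and since $\phi\mapsto -\tfrac{1}{2\pi i\,m}\oint_{\mathcal C'} g(z)\phi(z)\,dz$ is a continuous linear functional on $C(\mathcal C')$, the quantity \eqref{Equ:SingleTruncatedTerm} converges in distribution to a mean-zero Gaussian random variable; the explicit covariance computation from the previous paragraph identifies its variance and covariance with the asserted formulas.
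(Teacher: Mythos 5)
Your proposal follows essentially the same route as the paper: linearize via Proposition \ref{prop:linear} with $g(z)=f(z^{m})$, pass to the centered resolvent process of $\blmat{Y}{n}$ by Cauchy's integral formula, prove finite-dimensional convergence by a martingale CLT whose limiting covariance comes from a coupled system of $m$ recursive equations forced by the block structure, prove tightness via the second-moment increment bound, and remove the $m$-dependence by the substitution $u=z^{m}$, $v=w^{m}$. The only notable (and easily repaired) difference is that the paper obtains the needed resolvent bound by proving a least-singular-value estimate for $\blmat{Y}{n}-zI$ directly (the event $\Omega_{n}$, Lemma \ref{Lem:Omega_nOverwhelming}) rather than ``transporting'' it from $\hat{E}_{n}$, which is the cleaner direction since $\hat{E}_{n}$ controls only one of the $m$ cyclic products appearing in the block-diagonal matrix $\blmat{Y}{n}^{m}$.
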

We now prove Theorem \ref{thm:firstReduction} assuming Theorem \ref{Thm:TruncatedProductCLT}.
\begin{proof}[Proof of Theorem \ref{thm:firstReduction}]
Suppose the conclusion of Theorem \ref{Thm:TruncatedProductCLT} holds. We define 
\begin{equation}
\mathcal{A}_{n}(f):=\E\left[\tr f(\hat{P}_{n})\oindicator{\hat{E}_{n}}\right].
\label{Def:A_n}
\end{equation}
There exists $c > 0$ such that $\hat{E}_{n}$ holds with overwhelming probability by Lemma \ref{Lem:E_nHatOverwhelming}, and $E_{n}$ holds with probability $1-o(1)$ by Lemma \ref{Lem:E_nOverwhelming}.  Thus, we may work on the intersection of these events, and in order to show that $\tr f(P_{n})\oindicator{E_{n}}-\mathcal{A}_{n}(f)$ converges to a mean-zero Gaussian random variable with variance as in \eqref{Equ:FirstReductionVar1} and \eqref{Equ:FirstReductionVar2}, it is sufficient to show that for any $\eta>0$,  
	\begin{equation*}
	\P\left(\left|\tr f(\hat{P}_{n})\oindicator{E_{n}\cap\hat{E}_{n}}-\tr f(P_{n})\oindicator{E_{n}\cap\hat{E}_{n}}\right|>\eta\right) =o(1).
	\end{equation*} 
	To this end, define $\dot{X}_{n,k}$ as in \eqref{def:Xdot} for each $1\leq k\leq m$ and $\dot{P}_{n}$ as in \eqref{Def:DotTruncatedProducts}. Observe that for any $1\leq k\leq m$, by \eqref{Equ:4thMomentToZero} 
	\begin{align*}
	\P(X_{n,k}\neq \dot{X}_{n,k})&=\P\left(\bigcup_{i,j}\left\{|(X_{n,k})_{ij}|>n^{1/2-\varepsilon}\right\}\right)\\
	&\leq n^{2}\E\left[\indicator{|\xi_{k}|>n^{1/2-\varepsilon}}\right]\\
	&\leq n^{4\varepsilon}\E\left[|\xi_{k}|^{4}\indicator{|\xi|>n^{1/2-\varepsilon}}\right]\\
	&=o(1).
	\end{align*}
	By a union bound over all $1\leq k\leq m$, we have that $\P\left(P_{n}\neq \dot{P}\right)=o(1)$ as well. Therefore, it is sufficient to show that 
	\begin{equation}
	\P\left(\left|\tr f(\hat{P}_{n})\oindicator{E_{n}\cap\hat{E}_{n}}-\tr f(\dot{P}_{n})\oindicator{E_{n}\cap\hat{E}_{n}}\right|>\eta\right) =o(1).
	\label{Equ:ConvInProb}
	\end{equation}
	Define the event $\dot{E}_{n} :=\{P_{n}=\dot{P}_{n}\}\cap E_{n}$ and observe that $\dot{E}_{n}$ holds with probability $1-o(1)$. By this fact and \eqref{Equ:ConvInProb}, it is sufficient to prove
	\begin{equation}
	\P\left(\left|\tr f(\hat{P}_{n})\oindicator{\dot{E}_{n}\cap\hat{E}_{n}}-\tr f(\dot{P}_{n})\oindicator{\dot{E}_{n}\cap\hat{E}_{n}}\right|>\eta\right) =o(1).
	\label{Equ:ConvInProb2}
	\end{equation}
	 On the events $\dot{E}_n$ and $\hat{E}_n$, the eigenvalues of $\hat{P}_n$ and $\dot{P}_n$ are contained in the interior of the contour $\mathcal{C}$, which is defined as the boundary of the disk $D_\delta$. Therefore, on $\dot{E}_{n}$, for any $z\in\mathcal{C}$, the least singular values of $\dot{P}_{n}-zI$ is bounded away from zero. Thus, by Cauchy's integral formula,
	\begin{align*}
	&\E\left|(\tr f(\hat{P}_{n})-\tr f(\dot{P}_{n}))\oindicator{\dot{E}_{n}\cap\hat{E}_{n}}\right|^{2}\\
	&\quad\quad=\E\left|-\frac{1}{2\pi i}\oint_{\mathcal{C}}f(z)\left(\tr(\hat{P}_{n}-zI)^{-1}-\tr(\dot{P}_{n}-zI)^{-1}\right)\oindicator{\dot{E}_{n}\cap\hat{E}_{n}}dz\right|^{2}\\
	\ifdetail &\quad\quad\leq \frac{\lnorm f\rnorm_{\infty}}{2\pi} \E\left[\oint_{\mathcal{C}}\left|\tr(\hat{P}_{n}-zI)^{-1}\oindicator{E_{n}\cap\hat{E}_{n}}-\tr(P_{n}-zI)^{-1}\oindicator{\dot{E}_{n}\cap\hat{E}_{n}}\right|\cdot|dz|\right]\\\fi 
	\ifdetail &\quad\quad\leq \frac{\lnorm f\rnorm_{\infty}}{2\pi} \E\left[\sup_{z\in\mathcal{C}}\left|\tr(\hat{P}_{n}-zI)^{-1}\oindicator{E_{n}\cap\hat{E}_{n}}-\tr(P_{n}-zI)^{-1}\oindicator{\dot{E}_{n}\cap\hat{E}_{n}}\right|\cdot\oint_{\mathcal{C}}|dz|\right]\\\fi 
	\ifdetail &\quad\quad \leq \lnorm f\rnorm_{\infty}^{2}(1+\delta)^{2} \E\left[\sup_{z\in\mathcal{C}}\left|\tr(\hat{P}_{n}-zI)^{-1}-\tr(P_{n}-zI)^{-1}\right|\oindicator{\dot{E}_{n}\cap\hat{E}_{n}}\right]\fi 
	&\quad\quad \ll_{f} \E\left[\sup_{z\in\mathcal{C}}\left|\tr(\hat{P}_{n}-zI)^{-1}-\tr(\dot{P}_{n}-zI)^{-1}\right|^{2}\oindicator{\dot{E}_{n}\cap\hat{E}_{n}}\right].
	\end{align*}
	Since $f$ is assumed to be analytic on the disk and bounded otherwise, by applying Markov's inequality to the left-hand side of \eqref{Equ:ConvInProb}, it is sufficient to show that 
	\begin{equation*}
	\E\left[\sup_{z\in\mathcal{C}}\left|\tr(\hat{P}_{n}-zI)^{-1}-\tr(\dot{P}_{n}-zI)^{-1}\right|^{2}\oindicator{\dot{E}_{n}\cap\hat{E}_{n}}\right]=o(1).
	\end{equation*}
	
	By the resolvent identity, Lemma \ref{Lem:BoundOnTraceByNorms}, and Lemma \ref{Lem:XdotProductsCloseToTruncatedProducts}, we have
	\begin{align*}
	&\E\left[\sup_{z\in\mathcal{C}}\left|(\tr(\hat{P}_{n}-zI)^{-1}-\tr(\dot{P}_{n}-zI)^{-1})\right|^{2}\oindicator{\dot{E}_{n}\cap\hat{E}_{n}}\right]\\
	\ifdetail&\quad\quad=\E\left|\tr\left((\hat{P}_{n}-zI)^{-1}-(P_{n}-zI)^{-1}\right)\oindicator{\dot{E}_{n}\cap\hat{E}_{n}}\right|\\\fi 
	&\quad\quad=\E\left[\sup_{z\in\mathcal{C}}\left|\tr\left((\hat{P}_{n}-zI)^{-1}(\dot{P}_{n}-\hat{P}_{n})(\dot{P}_{n}-zI)^{-1}\right)\right|^{2}\oindicator{\dot{E}_{n}\cap\hat{E}_{n}}\right]\\
	\ifdetail&\quad\quad \leq \E\lnorm(\hat{P}_{n}-zI)^{-1}(P_{n}-\hat{P}_{n})(P_{n}-zI)^{-1}\oindicator{\dot{E}_{n}\cap\hat{E}_{n}}\rnorm_{2}\\\fi 
	\ifdetail &\quad\quad \leq \E\left[\lnorm(\hat{P}_{n}-zI)^{-1}\oindicator{E_{n}\cap\hat{E}_{n}}\rnorm \lnorm (P_{n}-\hat{P}_{n})(P_{n}-zI)^{-1}\oindicator{\dot{E}_{n}\cap\hat{E}_{n}}\rnorm_{2}\right]\\\fi 
	\ifdetail&\quad\quad \leq C_{1}\E \lnorm (P_{n}-\hat{P}_{n})(P_{n}-zI)^{-1}\oindicator{\dot{E}_{n}\cap\hat{E}_{n}}\rnorm_{2}\\\fi 
	\ifdetail&\quad\quad \ll\E \left[\lnorm (P_{n}-zI)^{-1}\oindicator{\dot{E}_{n}\cap\hat{E}_{n}}\rnorm \lnorm P_{n}-\hat{P}_{n}\rnorm_{2}\right]\\\fi 
	&\quad\quad \ll n\E\lnorm \dot{P}_{n}-\hat{P}_{n}\rnorm_{2}^{2}\\
	&\quad\quad =o(1)
	\end{align*}
	since the spectral norms of the resolvents are bounded uniformly by a constant (Proposition \ref{Prop:LargeAndSmallSingVals}) on their respective events.
\end{proof}


\begin{Details}
Also, since this is convergence in distribution, finding the variance of this truncated random variable will be sufficient since the variance of this will match the variance of the original un-truncated random variable. 
\end{Details} 

\subsection{Linearization of the Product}

We now wish to linearize the product matrix $\hat{P}_n$ so that we can work with an $mn\times mn$ block matrix instead. Define the $mn\times mn$ matrix
\begin{equation}
\blmat{Y}{n} :=n^{-1/2}\left[\begin{array}{ccccc}
0 & \hat{X}_{n,1} & 0 & \cdots & 0\\
0 & 0 & \hat{X}_{n,2} & \dots & 0\\
\vdots & \vdots & \vdots & \ddots & \vdots\\
0 & 0 & 0 & \cdots & \hat{X}_{n,m-1}\\
\hat{X}_{n,m} & 0 & 0 & \dots & 0
\end{array}\right].
\label{Def:Y_n}
\end{equation}
Recall that by Proposition \ref{prop:linear}, $\blmat{Y}{n}^{m}$ has the same eigenvalues as $\hat{P}_{n}=n^{-m/2}\hat{X}_{n,1}\cdots \hat{X}_{n,m}$, each with multiplicity $m$. \ifdetail Also recall that if $\lambda$ is an eigenvalue of $M$, then $\lambda^{m}$ is an eigenvalue of $M^{m}$.\fi Therefore, the eigenvalues of the product $\hat{P}_{n}$ are completely determined by the eigenvalues of the linearized matrix $\blmat{Y}{n}$.

\begin{theorem}
	Let $\blmat{Y}{n}$ be the linearized matrix defined in \eqref{Def:Y_n} where $X_{n,i}$ are under the assumptions of Theorem \ref{thm:mainCLT} with $\sigma_i = 1$ and the entries of $\hat{X}_{n,i}$ are truncated as defined in \eqref{def:Xhat}. For every $\delta > 0$, there exists $c > 0$ such that the following holds.  The event 
	\begin{equation}
	\Omega_{n} :=\left\{\inf_{|z| >1+\delta/2}s_{mn}(\blmat{Y}{n}-zI)\geq c\right\}
	\label{Def:Omega_n}
	\end{equation}
	holds with overwhelming probability, and for any function $g$ which is analytic in a neighborhood of the disk $D_\delta$ and bounded otherwise, the random variable
	\begin{equation*}
	\tr g(\blmat{Y}{n})\oindicator{\Omega_{n}} -\E\left[\tr g(\blmat{Y}{n})\oindicator{\Omega_{n}} \right]
	\end{equation*}
	converges to a mean zero Gaussian random variable $F(g)$ with covariance structure
	\begin{equation}
	\E\left[(F(g))^{2}\right]=-\frac{1}{4\pi^{2}}\oint_{\mathcal{C}}\oint_{\mathcal{C}}g(z)g(w)\frac{m^{2}(zw)^{m-1}}{((zw)^{m}-1)^{2}}dzdw
	\label{Equ:Thm:LinTruncCLT:1}
	\end{equation}
	and  
	\begin{equation}
	\E\left[F(g)\overline{F(g)}\right] =\frac{1}{4\pi^{2}}\oint_{\mathcal{C}}\oint_{\mathcal{C}}g(z)\overline{g(w)}\frac{m^{2}(z\bar{w})^{m-1}}{((z\bar{w})^{m}-1)^{2}}dzd\bar{w},
	\label{Equ:Thm:LinTruncCLT:2}
	\end{equation}
	where $\mathcal{C}$ is the contour around the boundary of $D_\delta$.  
	\begin{Details}
		Then it follows that for any function $f$ which is analytic on the disk $\{z\in \C\;:\;|z|>1+\delta\}$ and bounded otherwise, the random variable 
		\begin{equation*}
		\tr f(\hat{P}_{n})\oindicator{\hat{E}_{n}} -\E\left[\tr f(\hat{P}_{n})\oindicator{\hat{E}_{n}} \right]
		\end{equation*}
		converges to a mean zero Gaussian random variable $F(f)$ with covariance structure
		\[\E\left[\left(F(f)\right)^{2}\right]=-\frac{1}{4\pi^{2}}\oint_{\mathcal{C}}\oint_{\mathcal{C}}f(z)f(w)(zw-1)^{2}dzdw\]
		and  
		\[\E\left[F(f)\overline{F(f)}\right] =\frac{1}{4\pi^{2}}\oint_{\mathcal{C}}\oint_{\mathcal{C}}f(z)\overline{f(w)}(z\bar{w}-1)^{-2}dzd\bar{w}.\]
	\end{Details}
\label{Thm:LinearizedTruncatedCLT}
\end{theorem}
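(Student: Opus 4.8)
The plan is to adapt the resolvent-and-martingale method of \cite{RiS, OR:CLT} to the block matrix $\blmat{Y}{n}$, establishing finite-dimensional convergence in Section \ref{Sec:FiniteDimDist} and tightness in Section \ref{Sec:Tightness}. First I would handle the event $\Omega_n$: by Proposition \ref{prop:linear} the eigenvalues of $\blmat{Y}{n}^m$ are those of $\hat P_n$ (each with multiplicity $m$), so a least-singular-value bound for $\hat P_n - z^m I$ on $|z| > 1+\delta/2$, Weyl's inequality \eqref{Equ:weyl}, and the operator-norm bounds for the truncated factors (Lemmas \ref{Lem:TruncatedOpNormSquaredBounded} and \ref{Lem:OpNormSquaredBounded}) give $s_{mn}(\blmat{Y}{n}-zI) \geq c$ uniformly over $|z| > 1+\delta/2$ with overwhelming probability; these details would be relegated to the appendix, as was done for $\hat E_n$. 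Granting $\Omega_n$, Proposition \ref{Prop:LargeAndSmallSingVals} shows that on $\Omega_n$ every eigenvalue of $\blmat{Y}{n}$ lies strictly inside $\mathcal{C} := \partial D_\delta$ and $\|(\blmat{Y}{n}-zI)^{-1}\| \leq 1/c$ there, so Cauchy's integral formula yields
\[
\tr g(\blmat{Y}{n})\oindicator{\Omega_n} = -\frac{1}{2\pi i}\oint_{\mathcal{C}} g(z)\,\tr(\blmat{Y}{n}-zI)^{-1}\oindicator{\Omega_n}\,dz .
\]
Setting $\mathcal{G}_n(z) := (\blmat{Y}{n}-zI)^{-1}$ and $Z_n(z) := \tr\mathcal{G}_n(z)\oindicator{\Omega_n} - \E[\tr\mathcal{G}_n(z)\oindicator{\Omega_n}]$, it then suffices to prove that $z\mapsto Z_n(z)$ converges in distribution in $C(\mathcal{C})$ to a mean-zero Gaussian process $Z$ with $\E[Z(z)Z(w)] = m^2(zw)^{m-1}((zw)^m-1)^{-2}$ and $\E[Z(z)\overline{Z(w)}] = m^2(z\bar w)^{m-1}((z\bar w)^m-1)^{-2}$. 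The covariances \eqref{Equ:Thm:LinTruncCLT:1} and \eqref{Equ:Thm:LinTruncCLT:2} then follow from the continuous mapping theorem applied to the continuous linear functional $h \mapsto -\frac{1}{2\pi i}\oint_{\mathcal{C}} g(z)h(z)\,dz$ on $C(\mathcal{C})$, together with a double contour integration against these kernels.

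For the finite-dimensional distributions I would fix $z_1,\dots,z_k\in\mathcal{C}$ and prove joint Gaussian convergence of $(Z_n(z_1),\dots,Z_n(z_k))$ by a martingale central limit theorem. After discarding the indicators (legitimate since $\Omega_n$ is overwhelmingly likely and $\tr\mathcal{G}_n(z)$ is $O(n)$ on it), I would write $\tr\mathcal{G}_n(z) - \E\tr\mathcal{G}_n(z)$ as a sum of martingale differences along the filtration obtained by successively revealing the columns of $\hat X_{n,1},\dots,\hat X_{n,m}$, controlling the increments through the Sherman--Morrison and Sherman--Morrison--Woodbury formulas \eqref{equ:ShermanMorrison1}--\eqref{Equ:ShermanMorrisonWoodbury}, the uniform resolvent bound on $\Omega_n$, and the truncation bound $|\hat\xi_k|\ll n^{1/2-\varepsilon}$, which delivers the conditional Lindeberg condition. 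The decisive remaining step is to show that the sum of conditional variances converges to the kernel $m^2(zw)^{m-1}((zw)^m-1)^{-2}$. This is where the block structure of $\blmat{Y}{n}$ bites: because its nonzero blocks occupy $m$ distinct off-diagonals separated by deterministic zero blocks, the single self-consistent equation used in \cite{RiS, OR:CLT} is unavailable, and one must instead track the $m$ partial resolvent traces attached to the $m$ diagonal $n\times n$ sub-blocks (together with the associated bilinear forms) and derive a closed \emph{system of $m$ coupled fixed-point equations} in the limit. Solving that system simultaneously, exploiting the cyclic symmetry among the $m$ levels, produces the rational kernel above, whose dependence on $(zw)^m$ is the origin of the $m$-dependence in \eqref{Equ:Thm:LinTruncCLT:1} and \eqref{Equ:Thm:LinTruncCLT:2}. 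As a consistency check, one has the exact identity $\tr\mathcal{G}_n(z) = m z^{m-1}\tr(\hat P_n - z^m I)^{-1}$, which can later be used to transfer the statement back to $\hat P_n$.

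For tightness of $\{Z_n\}$ in $C(\mathcal{C})$ I would prove a Kolmogorov-type increment estimate: using the resolvent identity \eqref{Equ:ResolventIndentity} in the form $\mathcal{G}_n(z)-\mathcal{G}_n(w) = (z-w)\mathcal{G}_n(z)\mathcal{G}_n(w)$, the uniform bound $\|\mathcal{G}_n(z)\oindicator{\Omega_n}\|\leq 1/c$ from Proposition \ref{Prop:LargeAndSmallSingVals}, and second-moment estimates for traces of resolvent products obtained via the same martingale-difference bookkeeping, one gets $\E|Z_n(z)-Z_n(w)|^2 \leq C|z-w|^2$ (and, after raising to a sufficiently high even power, a bound strong enough for the standard tightness criterion on the compact contour $\mathcal{C}$). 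Finite-dimensional convergence and tightness together give $Z_n\Rightarrow Z$ in $C(\mathcal{C})$, completing the proof.

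The main obstacle, as the overview already signals, is the variance computation of the preceding paragraph: deriving and then explicitly solving the $m\times m$ linear system that governs the limiting covariance of the resolvent traces. The remaining ingredients --- the least-singular-value bound for $\Omega_n$, the verification of the martingale CLT hypotheses, and the resolvent-perturbation tightness estimate --- are lengthy but essentially routine block-matrix adaptations of \cite{RiS, OR:CLT}; by contrast, this system of recursions has no counterpart in the single-matrix setting and is precisely the mechanism by which the parameter $m$ enters the analysis (and, after the change of variables back to $\hat P_n$, ultimately cancels from the final covariance).
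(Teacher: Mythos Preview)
Your proposal is correct and follows essentially the same approach as the paper: reduce to convergence of the resolvent process $\Xi_n(z)=\tr\mathcal{G}_n(z)\oindicator{\Omega_n}-\E[\tr\mathcal{G}_n(z)\oindicator{\Omega_n}]$ on $\mathcal{C}$ via Cauchy's formula and the continuous mapping theorem, then prove finite-dimensional convergence by a martingale CLT (where the key step is deriving and solving the system of $m$ coupled self-consistent equations for the conditional variance) and tightness via the Lipschitz estimate $\E|\Xi_n(z)-\Xi_n(w)|^2\leq C|z-w|^2$. Two small points worth aligning with the paper: the filtration used is $\mathcal{F}_k=\sigma(c_k,c_{n+k},\dots,c_{(m-1)n+k})$, revealing the $k$th column of all $m$ blocks simultaneously (so $n$ martingale steps, not $mn$), which is what makes the $m$-cycle of recursions close after exactly $m$ iterations; and the indicator $\oindicator{\Omega_n}$ is carried throughout rather than discarded, since without it the resolvent need not be defined on $\mathcal{C}$.
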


Note that the convergence in \eqref{Equ:Thm:LinTruncCLT:1} and \eqref{Equ:Thm:LinTruncCLT:2} depend on $m$. We prove Theorem \ref{Thm:TruncatedProductCLT} assuming Theorem \ref{Thm:LinearizedTruncatedCLT}.

\begin{proof}[Proof of Theorem \ref{Thm:TruncatedProductCLT}]
	\begin{Details}
	Assume that for any function $g$ which is analytic on the disk $\{z\in \C\;:\;|z|<(1+\delta)^{1/m}\}$ and bounded otherwise, the random variable
	\begin{equation*}
	\tr g(\blmat{Y}{n})\oindicator{\Omega_{n}} -\E\left[\tr g(\blmat{Y}{n})\oindicator{\Omega_{n}} \right]
	\end{equation*}
	converges to a mean zero Gaussian random variable with covariance  structure
	\[\E\left[(F(g))^{2}\right]=-\frac{1}{4\pi^{2}}\oint_{\mathcal{C}}\oint_{\mathcal{C}}g(z)g(w)\frac{m^{2}(zw)^{m-1}}{((zw)^{m}-1)^{2}}dzdw\]
	and  
	\[\E\left[F(g)\overline{F(g)}\right] =\frac{1}{4\pi^{2}}\oint_{\mathcal{C}}\oint_{\mathcal{C}}g(z)\overline{g(w)}\frac{m^{2}(z\bar{w})^{m-1}}{((z\bar{w})^{m}-1)^{2}}dzd\bar{w}.\]
	\end{Details}
	First begin by observing that by assumption, there exists a $c>0$ such that $\Omega_{n}$ holds with overwhelming probability and by Lemma \ref{Lem:E_nHatOverwhelming}, there exists another constant $c'>0$ such that $\hat{E}_{n}$ holds with overwhelming probability as well. Let $f$ be any function which is analytic on the disk $D_\delta$ and bounded otherwise.  Define the function $g(z):=\frac{1}{m}f(z^{m})$ and note that this function $g$ is analytic on the disk $\{z\in \C\;:\;|z|\leq (1+\delta)^{1/m}\} = D_{\delta'}$ for some $\delta' > 0$ and bounded otherwise.  By Proposition \ref{prop:linear}, 
	\begin{equation*}
	\tr f(\hat{P}_{n}) =\sum_{i=1}^{n}f(\lambda_{i}(\hat{P}_{n}))=\sum_{i=1}^{mn}\frac{1}{m}f(\lambda_{i}(\blmat{Y}{n}^{m}))=\sum_{i=1}^{mn}g(\lambda_{i}(\blmat{Y}{n}))=\tr g(\blmat{Y}{n}).
	\end{equation*}
	\begin{Details}
	\begin{align*}
	\tr f(\hat{P}_{n}) &=\sum_{i=1}^{n}f(\lambda_{i}(\hat{P}_{n}))\\
	&=\sum_{i=1}^{mn}\frac{1}{m}f(\lambda_{i}(\blmat{Y}{n}^{m}))\\
	\ifdetail&=\sum_{i=1}^{mn}\frac{1}{m}f((\lambda_{i}(\blmat{Y}{n}))^{m})\\\fi 
	&=\sum_{i=1}^{mn}g(\lambda_{i}(\blmat{Y}{n}))\\
	&=\tr g(\blmat{Y}{n}).
	\end{align*}
	\end{Details}
	By assumption, $\tr g(\blmat{Y}{n})\oindicator{\Omega_{n}} -\E\left[\tr g(\blmat{Y}{n})\oindicator{\Omega_{n}} \right]$	converges to a mean-zero Gaussian with covariance structure given in \eqref{Equ:Thm:LinTruncCLT:1} and \eqref{Equ:Thm:LinTruncCLT:2}. We will show that 
	\begin{equation*}
	\E\left|\tr g(\blmat{Y}{n})\oindicator{\Omega_{n}} -\E\left[\tr g(\blmat{Y}{n})\oindicator{\Omega_{n}} \right]-\left(\tr f(\hat{P}_{n})\oindicator{\hat{E}_{n}} -\E\left[\tr f(\hat{P}_{n})\oindicator{\hat{E}_{n}} \right]\right)\right|=o(1).
	\end{equation*}
	To this end, observe that 
	\begin{align*}
	&\E\left|\tr g(\blmat{Y}{n})\oindicator{\Omega_{n}} -\E\left[\tr g(\blmat{Y}{n})\oindicator{\Omega_{n}} \right]-\left(\tr f(\hat{P}_{n})\oindicator{\hat{E}_{n}} -\E\left[\tr f(\hat{P}_{n})\oindicator{\hat{E}_{n}} \right]\right)\right|\\
	&\quad\quad= \E\left|\tr g(\blmat{Y}{n})\oindicator{\Omega_{n}}-\tr f(\hat{P}_{n})\oindicator{\hat{E}_{n}} -\E\left[\tr g(\blmat{Y}{n})\oindicator{\Omega_{n}} -\tr f(\hat{P}_{n})\oindicator{\hat{E}_{n}} \right]\right|\\
	\ifdetail&\quad\quad\leq 2\E\left|\tr g(\blmat{Y}{n})\oindicator{\Omega_{n}}-\tr f(\hat{P}_{n})\oindicator{\hat{E}_{n}}\right|\\ \fi
	\ifdetail&\quad\quad= 2\E\left|\tr g(\blmat{Y}{n})\oindicator{\Omega_{n}\cap\hat{E}_{n}}+\tr g(\blmat{Y}{n})\oindicator{\Omega_{n}\cap\hat{E}_{n}^{c}}-\tr f(\hat{P}_{n})\oindicator{\hat{E}_{n}\cap\Omega_{n}}-\tr f(\hat{P}_{n})\oindicator{\hat{E}_{n}\cap\Omega_{n}^{c}}\right|\\\fi 
	&\quad\quad\leq 2\E\left|\left(\tr g(\blmat{Y}{n})-\tr f(\hat{P}_{n})\right)\oindicator{\hat{E}_{n}\cap\Omega_{n}}\right|\\
	&\quad\quad\quad\quad\quad\quad\quad\quad\quad+2\E\left|\tr g(\blmat{Y}{n})\oindicator{\Omega_{n}\cap\hat{E}_{n}^{c}}\right|+2\E\left|\tr f(\hat{P}_{n})\oindicator{\hat{E}_{n}\cap\Omega_{n}^{c}}\right|\\
	&\quad\quad \ll_{f,g}0+ n\P\left(\Omega_{n}\cap\hat{E}_{n}^{c}\right)+n\P\left(\hat{E}_{n}\cap\Omega_{n}^{c}\right)\\
	\ifdetail&\quad\quad \leq \P\left(\hat{E}_{n}^{c}\right)+\P\left(\Omega_{n}^{c}\right)\\\fi
	&\quad\quad=o(1)
	\end{align*}
	since $\Omega_{n}$ and $\hat{E}_{n}$ both hold with overwhelming probability by assumption and Lemma \ref{Lem:E_nHatOverwhelming} respectively. To see that the variance follows as claimed, observe that by letting $z=re^{\theta_{1}\sqrt{-1}}$ and $w=re^{\theta_{2}\sqrt{-1}}$ where $r=1+\delta$, we have
	\begin{align*}
	&\frac{1}{4\pi^{2}}\oint_{\mathcal{C}}\oint_{\mathcal{C}}\frac{1}{m^{2}}f(z^{m})\overline{f(w^{m})}\frac{m^{2}(z\bar{w})^{m-1}}{((z\bar{w})^{m}-1)^{2}}dzd\bar{w}\\
	\ifdetail&\quad\quad=\frac{1}{4\pi^{2}}\oint_{\mathcal{C}}\oint_{\mathcal{C}}f(z^{m})\overline{f(w^{m})}\frac{(z\bar{w})^{m-1}}{((z\bar{w})^{m}-1)^{2}}dzd\bar{w}\\\fi
	\ifdetail&\quad\quad=\frac{1}{4\pi^{2}}\oint_{\mathcal{C}}\oint_{\mathcal{C}}f(r^{m}e^{m\theta_{1}i})\overline{f(r^{m}e^{m\theta_{2}i})}\frac{(r^{2}e^{\theta_{1}i}e^{-\theta_{2}i})^{m-1}}{((r^{2}e^{\theta_{1}i}e^{-\theta_{2}i})^{m}-1)^{2}}ire^{\theta_{1}i}d\theta_{1}(-ire^{-\theta_{2}i}d\theta_{2})\\\fi
	&\quad=\frac{1}{4\pi^{2}}\int_{0}^{2\pi}\int_{0}^{2\pi}f(r^{m}e^{m\theta_{1}\sqrt{-1}})\overline{f(r^{m}e^{m\theta_{2}\sqrt{-1}})}\frac{r^{2m}e^{m\theta_{1}\sqrt{-1}}e^{-m\theta_{2}\sqrt{-1}}}{(r^{2m}e^{m\theta_{1}\sqrt{-1}}e^{-m\theta_{2}\sqrt{-1}}-1)^{2}}d\theta_{1}d\theta_{2}.
	\end{align*}
	Next by the substitution $m\theta_{1}=\tau_{1}$ and $m\theta_{2}=\tau_{2}$, and by noting that this substitution wraps around the contour $m$ times, \ifdetail which is equivalent to integrating the contour once and multiplying by $m$ since everything that is being integrated is periodic with period $2\pi$ \fi
	\begin{align*} 
	&\frac{1}{4\pi^{2}}\int_{0}^{2\pi}\int_{0}^{2\pi}f(r^{m}e^{m\theta_{1}\sqrt{-1}})\overline{f(r^{m}e^{m\theta_{2}\sqrt{-1}})}\frac{r^{2m}e^{m\theta_{1}\sqrt{-1}}e^{-m\theta_{2}\sqrt{-1}}}{(r^{2m}e^{m\theta_{1}\sqrt{-1}}e^{-m\theta_{2}\sqrt{-1}}-1)^{2}}d\theta_{1}d\theta_{2}\\
	\ifdetail&\quad\quad=\frac{m^{2}}{4\pi^{2}}\oint_{\mathcal{C}}\oint_{\mathcal{C}}f(r^{m}e^{\tau_{1}i})\overline{f(r^{m}e^{\tau_{2}i})}\frac{r^{2m}e^{\tau_{1}i}e^{-\tau_{2}i}}{(r^{2m}e^{\tau_{1}i}e^{-\tau_{2}i}-1)^{2}}\frac{1}{m^{2}}d\tau_{1}d\tau_{2}\\\fi 
	&\quad\quad=\frac{1}{4\pi^{2}}\int_{0}^{2\pi}\int_{0}^{2\pi}f(r^{m}e^{\tau_{1}\sqrt{-1}})\overline{f(r^{m}e^{\tau_{2}\sqrt{-1}})}\frac{r^{2m}e^{\tau_{1}\sqrt{-1}}e^{-\tau_{2}\sqrt{-1}}}{(r^{2m}e^{\tau_{1}\sqrt{-1}}e^{-\tau_{2}\sqrt{-1}}-1)^{2}}d\tau_{1}d\tau_{2}
	\end{align*}
	and finally, by letting $z'=r^{m}e^{\tau_{1}\sqrt{-1}}$ and $w'=r^{m}e^{\tau_{2}\sqrt{-1}}$, we have the claimed variance. 
	\begin{Details} 
	\begin{align*}
	&\frac{1}{4\pi^{2}}\oint_{\mathcal{C}}\oint_{\mathcal{C}}f(r^{m}e^{\tau_{1}i})\overline{f(r^{m}e^{\tau_{2}i})}\frac{r^{2m}e^{\tau_{1}i}e^{-\tau_{2}i}}{(r^{2m}e^{\tau_{1}i}e^{-\tau_{2}i}-1)^{2}}d\tau_{1}d\tau_{2}\\
	\ifdetail &\quad=\frac{1}{4\pi^{2}}\oint_{\mathcal{C}}\oint_{\mathcal{C}}f(z')\overline{f(w')}\frac{z'\bar{w}'}{(z'\bar{w}'-1)^{2}}\frac{1}{z'\bar{w}'}dz'd\bar{w}'\\\fi 
	&\quad=\frac{1}{4\pi^{2}}\oint_{\mathcal{C}}\oint_{\mathcal{C}}f(z')\overline{f(w')}(z'\bar{w}'-1)^{-2}dz'd\bar{w}'\\
	\end{align*}
	as claimed. The same calculation shows that 
	\begin{align*}
	-\frac{1}{4\pi^{2}}\oint_{\mathcal{C}}\oint_{\mathcal{C}}\frac{1}{m^{2}}&f(z^{m})f(w^{m})\frac{m^{2}(zw)^{m-1}}{((zw)^{m}-1)^{2}}dzdw\\
	&=-\frac{1}{4\pi^{2}}\oint_{\mathcal{C}}\oint_{\mathcal{C}}f(z')f(w')(z'w'-1)^{-2}dz'dw',
	\end{align*} 
	which concludes the variance calculation.
	\end{Details}
\end{proof}

\begin{Details} 
We have reduced the proof of the main theorem to prove the convergence of 
\begin{equation*}
\tr f(\blmat{Y}{n})\oindicator{\Omega_{n}} -\E\left[\tr f(\blmat{Y}{n})\oindicator{\Omega_{n}} \right].
\end{equation*}
\end{Details}

\section{Proof of Theorem \ref{Thm:LinearizedTruncatedCLT}}
\label{Sec:ProofOfReduction}
\begin{Details}
	At this point, note that Theorem \ref{thm:mainCLT} will follow from proving Theorem \ref{Thm:LinearizedTruncatedCLT}, where we are now working with a linearized truncated $mn\times mn$ matrix on the event $\Omega_{n}$ where the least singular value is bounded away from zero.
\end{Details}

It remains to prove Theorem \ref{Thm:LinearizedTruncatedCLT}. Define the resolvent
\begin{equation}
	\mathcal{G}_{n}(z):=(\blmat{Y}{n}-zI)^{-1}
	\label{Def:BlockResolvent}
\end{equation}
where $\blmat{Y}{n}$ is defined in \eqref{Def:Y_n} and for $z$ not an eigenvalue of $\blmat{Y}{n}$. Also define 
\begin{equation}
\Xi_{n}(z):=\tr \mathcal{G}_{n}(z)\oindicator{\Omega_{n}}-\E\left[\tr \mathcal{G}_{n}(z)\oindicator{\Omega_{n}}\right].
\label{Def:StochasticProcessXi}
\end{equation}

By Lemma \ref{Lem:Omega_nOverwhelming}, $\Omega_{n}$ holds with overwhelming probability. On the event $\Omega_n$, the eigenvalues of $\blmat{Y}{n}$ are contained in the interior of the disk $D_\delta$, and so, by Cauchy's integral formula, we have
\begin{align*}
\tr g(\blmat{Y}{n})\oindicator{\Omega_{n}} -&\E\left[\tr g(\blmat{Y}{n})\oindicator{\Omega_{n}} \right]\\ 
&= \sum_{i=1}^{mn}g(\lambda_{i}(\blmat{Y}{n}))\oindicator{\Omega_{n}}-\E\left[\sum_{i=1}^{mn}g(\lambda_{i}(\blmat{Y}{n}))\oindicator{\Omega_{n}}\right]\\
&= \sum_{i=1}^{mn}-\frac{1}{2\pi i}\oint_{\mathcal{C}}\frac{g(z)\oindicator{\Omega_{n}}}{\lambda_{i}(\blmat{Y}{n})-z}dz-\E\left[\sum_{i=1}^{mn}-\frac{1}{2\pi i}\oint_{\mathcal{C}}\frac{g(z)\oindicator{\Omega_{n}}}{\lambda_{i}(\blmat{Y}{n})-z}dz\right]\\
\ifdetail&= -\frac{1}{2\pi i}\oint_{\mathcal{C}}g(z)\left(\sum_{i=1}^{mn}\frac{\oindicator{\Omega_{n}}}{\lambda_{i}(\blmat{Y}{n})-z}-\E\left[\sum_{i=1}^{mn}\frac{\oindicator{\Omega_{n}}}{\lambda_{i}(\blmat{Y}{n})-z}\right]\right)dz\\\fi 
\ifdetail&= -\frac{1}{2\pi i}\oint_{\mathcal{C}}g(z)\left(\tr(\blmat{Y}{n}-zI)^{-1}\oindicator{\Omega_{n}}-\E\left[\tr(\blmat{Y}{n}-zI)^{-1}\oindicator{\Omega_{n}}\right]\right)dz\\\fi 
&= -\frac{1}{2\pi i}\oint_{\mathcal{C}}g(z)\left(\tr\mathcal{G}_{n}(z)\oindicator{\Omega_{n}}-\E\left[\tr\mathcal{G}_{n}(z)\oindicator{\Omega_{n}}\right]\right)dz\\
&= -\frac{1}{2\pi i}\oint_{\mathcal{C}}g(z)\Xi_{n}(z)dz
\end{align*}
where for the remainder of the paper we let $\mathcal{C}$ denote the contour on the boundary of $D_{\delta}$. 
\begin{Details} 
	It is useful to note that since $\Xi_{n}(z)$ depends on the eigenvalues of $\blmat{Y}{n}$ it is random variable. Since it is also a function of $z$ on the contour $\mathcal{C}$, $\{\Xi_{n}(z)\}_{z\in\mathcal{C}}$ is a sequence of random functions. 
\end{Details}
\begin{Details} 
	In particular, for a fixed $z_{0}$, $\Xi_{n}(z_{0})$ is a random variable taking values in $\C$ for each $n$ and for a fixed realization of the $m$ random matrices of a fixed size, $\Xi_{N_{0}}(z)$ is a continuous function on the contour $\mathcal{C}$. Thus, $\{\Xi_{n}(z)\}_{n=1}^{\infty}$ is a sequence of stochastic processes. 
\end{Details}  

We will reduce the proof of Theorem \ref{Thm:LinearizedTruncatedCLT} to showing the convergence of the resolvent process $(\Xi_n(z))_{z \in \mathcal{C}}$ to the limiting Gaussian process defined in the following lemma.

\begin{lemma}
	Let $\{\Xi(z)\}_{z\in\mathcal{C}}$ denote the mean-zero Gaussian process with covariance structure defined by
	\begin{equation}
	\E\left[\Xi(z)\overline{\Xi(w)}\right]=\frac{m^{2}(z\bar{w})^{m-1}}{((z\bar{w})^{m}-1)^{2}}
	\label{Equ:CovarianceForProcess}
	\end{equation} 
	and with the property that $\overline{\Xi(z)} = \Xi(\bar{z})$ for all $z \in \mathcal{C}$.  
	If $g$ be a function which is analytic on some neighborhood of the disk $D_{\delta}$ and bounded otherwise, then
	\begin{equation} 
	-\frac{1}{2\pi i}\oint_{\mathcal{C}}g(z)\Xi(z)dz
	\label{Equ:IntegralOfProcess}
	\end{equation}
	is a mean-zero Gaussian random variable with covariance structure
	\begin{equation}
	\E\left[\left(\frac{1}{2\pi i}\oint_{\mathcal{C}}g(z)\Xi(z)dz\right)^{2}\right]=-\frac{1}{4\pi^{2}}\oint_{\mathcal{C}}\oint_{\mathcal{C}}g(z)g(w)\frac{m^{2}(zw)^{m-1}}{((zw)^{m}-1)^{2}}dzdw \label{Equ:Lem:IntOfGaussIsGauss:1}
	\end{equation}
	and  
	\begin{equation}\E\left[\frac{1}{2\pi i}\oint_{\mathcal{C}}g(z)\Xi(z)dz\cdot\overline{\frac{1}{2\pi i}\oint_{\mathcal{C}}g(z)\Xi(z)dz}\right] =\frac{1}{4\pi^{2}}\oint_{\mathcal{C}}\oint_{\mathcal{C}}g(z)\overline{g(w)}\frac{m^{2}(z\bar{w})^{m-1}}{((z\bar{w})^{m}-1)^{2}}dzd\bar{w}.\label{Equ:Lem:IntOfGaussIsGauss:2}
	\end{equation}
	\label{Lem:IntOfGaussIsGauss}
\end{lemma}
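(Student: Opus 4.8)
The plan is to realize the random variable in \eqref{Equ:IntegralOfProcess} as an $L^2$-limit of Riemann sums, show that this limit is Gaussian, and then compute its covariances by interchanging expectation with the contour integrals via Fubini. First I would record the elementary fact that, since $\mathcal{C}$ is the circle of radius $1+\delta>1$, we have $|z\bar w|=(1+\delta)^2>1$ for all $z,w\in\mathcal{C}$, so $(z\bar w)^m\neq 1$ and the kernel $(z,w)\mapsto \frac{m^{2}(z\bar w)^{m-1}}{((z\bar w)^{m}-1)^{2}}$ is continuous, hence bounded, on $\mathcal{C}\times\mathcal{C}$. Consequently $z\mapsto\Xi(z)$ is continuous in $L^2$, so for partitions of $\mathcal{C}$ into arcs with sample points $z_k$ the Riemann sums $\frac{1}{2\pi i}\sum_k g(z_k)\Xi(z_k)\,\Delta z_k$ form a Cauchy sequence in $L^2$ as the mesh tends to zero (here I use that $g$ is continuous, hence bounded, on $\mathcal{C}$); write $\Psi$ for the limit, which is independent of the partition by the usual argument. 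Each Riemann sum is a finite $\C$-linear combination of the jointly (complex) Gaussian variables $\Xi(z_k)$ and so is itself a mean-zero complex Gaussian; since $L^2$-limits of Gaussian vectors are Gaussian, $\Psi$ is a mean-zero complex Gaussian random variable, and so is the quantity in \eqref{Equ:IntegralOfProcess}, which differs from $\Psi$ only by a sign.

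Next I would compute $\E[\Psi^{2}]$. Boundedness of $g$ and of the covariance kernel, together with the fact that $\mathcal{C}$ has finite length, justify Fubini, giving
\[
\E[\Psi^{2}]=-\frac{1}{4\pi^{2}}\oint_{\mathcal{C}}\oint_{\mathcal{C}}g(z)g(w)\,\E[\Xi(z)\Xi(w)]\,dz\,dw,
\]
using $\left(\tfrac{1}{2\pi i}\right)^{2}=-\tfrac{1}{4\pi^{2}}$. It remains to evaluate $\E[\Xi(z)\Xi(w)]$, which is where the reflection property is used: applying $\overline{\Xi(\zeta)}=\Xi(\bar\zeta)$ with $\zeta=\bar w$ gives $\Xi(w)=\overline{\Xi(\bar w)}$, so $\E[\Xi(z)\Xi(w)]=\E[\Xi(z)\overline{\Xi(\bar w)}]=\frac{m^{2}(zw)^{m-1}}{((zw)^{m}-1)^{2}}$ by \eqref{Equ:CovarianceForProcess} (note $\overline{\bar w}=w$). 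Substituting this back yields \eqref{Equ:Lem:IntOfGaussIsGauss:1}.

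Finally, for \eqref{Equ:Lem:IntOfGaussIsGauss:2} I would pass the conjugation through the Riemann sums to get $\overline{\Psi}=-\frac{1}{2\pi i}\oint_{\mathcal{C}}\overline{g(w)}\,\Xi(\bar w)\,d\bar w$, using $\overline{1/(2\pi i)}=-1/(2\pi i)$ and $\overline{\Xi(w)}=\Xi(\bar w)$, so that
\[
\E[\Psi\,\overline{\Psi}]=\frac{1}{4\pi^{2}}\oint_{\mathcal{C}}\oint_{\mathcal{C}}g(z)\overline{g(w)}\,\E[\Xi(z)\Xi(\bar w)]\,dz\,d\bar w,
\]
and since $\Xi(\bar w)=\overline{\Xi(w)}$ the reflection property gives $\E[\Xi(z)\Xi(\bar w)]=\E[\Xi(z)\overline{\Xi(w)}]=\frac{m^{2}(z\bar w)^{m-1}}{((z\bar w)^{m}-1)^{2}}$ by \eqref{Equ:CovarianceForProcess}, which is \eqref{Equ:Lem:IntOfGaussIsGauss:2}. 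None of the steps is genuinely hard; the only points demanding care are the justification that the $L^2$-limit of the Riemann sums is Gaussian and well defined (handled by the continuity and boundedness of the kernel) and the bookkeeping of the conjugation and contour orientation in the last display, which I would spell out carefully to make sure the sign and the $d\bar w$ convention are consistent with the statement.
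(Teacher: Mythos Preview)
Your proposal is correct and follows essentially the same route the paper indicates: the paper's proof is only a one-line sketch (``deduce the conclusion by computing moments, with an application of Fubini's theorem; we omit the details''), and your argument fills in precisely this kind of detail, using Fubini for the covariance identities and the reflection property $\overline{\Xi(z)}=\Xi(\bar z)$ to pass between $\E[\Xi(z)\Xi(w)]$ and the given kernel. The only minor difference is that you establish Gaussianity of the integral via $L^2$-convergence of Riemann sums (using continuity and boundedness of the covariance kernel on $\mathcal{C}\times\mathcal{C}$), whereas the paper's phrasing ``computing moments'' hints at the alternative of checking that all moments agree with those of a Gaussian; both are standard and equivalent here.
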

\begin{proof}
	The proof follows by a number of standard techniques.  For instance, one can deduce the conclusion by computing moments (with an application of Fubini's theorem); we omit the details.  
	\begin{Details}
		\begin{align*}
		&\E\left[\frac{1}{2\pi i}\oint_{\mathcal{C}}g(z)\Xi(z)dz\cdot\frac{1}{2\pi i}\oint_{\mathcal{C}}g(w)\Xi(w)dw\right]\\
		\ifdetail&\quad\quad=-\frac{1}{4\pi^{2}}\E\left[\oint_{\mathcal{C}}g(z)\Xi(z)dz\oint_{\mathcal{C}}g(w)\Xi_{n}(w)dw\right]\\\fi
		\ifdetail&\quad\quad=-\frac{1}{4\pi^{2}}\E\left[\oint_{\mathcal{C}}\ oint_{\mathcal{C}}g(z)g(w)\Xi(z)\Xi(w)dzdw\right]\\\fi
		&\quad\quad=-\frac{1}{4\pi^{2}}\oint_{\mathcal{C}}\oint_{\mathcal{C}}g(z)g(w)\E\left[\Xi(z)\Xi(w)\right]dzdw\\
		&\quad\quad=-\frac{1}{4\pi^{2}}\oint_{\mathcal{C}}\oint_{\mathcal{C}}g(z)g(w)\frac{m^{2}(zw)^{m-1}}{((zw)^{m}-1)^{2}}dzdw\\
		\end{align*}
		and 
		\begin{align*}
		&\E\left[\frac{1}{2\pi i}\oint_{\mathcal{C}}g(z)\Xi(z)dz\cdot\overline{\frac{1}{2\pi i}\oint_{\mathcal{C}}g(w)\Xi(w)dw}\right]\\
		\ifdetail&\quad\quad=\frac{1}{4\pi^{2}}\E\left[\oint_{\mathcal{C}}g(z)\Xi(z)dz\overline{\oint_{\mathcal{C}}g(w)\Xi(w)dw}\right]\\\fi
		\ifdetail&\quad\quad=\frac{1}{4\pi^{2}}\E\left[\oint_{\mathcal{C}} \oint_{\mathcal{C}}g(z)\overline{g(w)}\Xi(z)\overline{\Xi(w)}dzd\bar{w}\right]\\\fi
		&\quad\quad=\frac{1}{4\pi^{2}}\oint_{\mathcal{C}}\oint_{\mathcal{C}}g(z)\overline{g(w)}\E\left[\Xi(z)\overline{\Xi(w)}\right]dzd\bar{w}\\
		&\quad\quad=\frac{1}{4\pi^{2}}\oint_{\mathcal{C}}\oint_{\mathcal{C}}g(z) \overline{g(w)}\frac{m^{2}(z\bar{w})^{m-1}}{((z\bar{w})^{m}-1)^{2}}dzd\bar{w}\\
		\end{align*}
	\begin{equation*}
	\E\left[\frac{1}{2\pi i}\oint_{\mathcal{C}}g(z)\Xi(z)dz\cdot\frac{1}{2\pi i}\oint_{\mathcal{C}}g(w)\Xi(w)dw\right]=-\frac{1}{4\pi^{2}}\oint_{\mathcal{C}}\oint_{\mathcal{C}}g(z)g(w)\frac{m^{2}(zw)^{m-1}}{((zw)^{m}-1)^{2}}dzdw
	\end{equation*}
	and 
	\begin{equation*}
	\E\left[\frac{1}{2\pi i}\oint_{\mathcal{C}}g(z)\Xi(z)dz\cdot\overline{\frac{1}{2\pi i}\oint_{\mathcal{C}}g(w)\Xi(w)dw}\right]=\frac{1}{4\pi^{2}}\oint_{\mathcal{C}}\oint_{\mathcal{C}}g(z)\overline{g(w)}\frac{m^{2}(z\bar{w})^{m-1}}{((z\bar{w})^{m}-1)^{2}}dzd\bar{w}
\end{equation*}
	as desired. 
\end{Details}
\end{proof}
\begin{Details} 
	\begin{remark}
		Note that since this is a complex random variable, we need to know both of the above quantities in order to fully characterize the distribution. 
		The second expectation gives the second moment of the real part plus the second moment of the imaginary part. The second expectation gives the covariance between the real and imaginary parts, and allows us to distinguish between the second moment of the real part and the second moment of the imaginary part.
		Indeed, if $X$ and $Y$ are real random variables, and if we define $Z=X+Yi$, then $\E[Z\bar{Z}]=\E[X^{2}]+\E[Y^{2}]$ and $\E[Z^{2}]=\E[X^{2}]-\E[Y^{2}]+2i\E[XY]$.  
	\end{remark}
\end{Details}

The following result shows that $\{\Xi(z)\}_{z \in \mathcal{C}}$ is indeed the limiting distribution of the resolvent process  $\{\Xi_{n}(z)\}_{z\in\mathcal{C}}$.

\begin{theorem}
	Let $\{\Xi_{n}(z)\}_{z\in\mathcal{C}}$ be the sequence of stochastic processes defied in \eqref{Def:StochasticProcessXi} for $z$ on the contour $\mathcal{C}$ around the boundary of the disk $D_{\delta}$. Then $\{\Xi_{n}(z)\}_{z\in\mathcal{C}}$ converges in distribution to the mean-zero Gaussian process $\{\Xi(z)\}_{z\in\mathcal{C}}$ defined in Lemma \ref{Lem:IntOfGaussIsGauss}.  
	\label{Thm:StochasticProcessConv}
\end{theorem}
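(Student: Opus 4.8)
The plan is to prove Theorem~\ref{Thm:StochasticProcessConv} by the classical two-step program for weak convergence of stochastic processes in the space $C(\mathcal{C})$ of continuous functions on the contour: first establish convergence of all finite-dimensional distributions of $\{\Xi_n(z)\}_{z\in\mathcal{C}}$ to those of the Gaussian process $\{\Xi(z)\}_{z\in\mathcal{C}}$, and then prove tightness of the family $\{\Xi_n(z)\}_{z\in\mathcal{C}}$; together with Prokhorov's theorem these give the claimed convergence in distribution. Throughout, since $\Omega_n$ holds with overwhelming probability (Lemma~\ref{Lem:Omega_nOverwhelming}) and $\sup_{z\in\mathcal{C}}\lnorm\mathcal{G}_n(z)\rnorm\leq 1/c$ on $\Omega_n$ by Proposition~\ref{Prop:LargeAndSmallSingVals}, the indicator $\oindicator{\Omega_n}$ only contributes a negligible error and may effectively be ignored. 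I would also note at the outset that, since $\blmat{Y}{n}$ has real entries, $\mathcal{G}_n(\bar z)=\overline{\mathcal{G}_n(z)}$, whence $\overline{\Xi_n(z)}=\Xi_n(\bar z)$; this reality constraint passes to the limit and matches the stated property of $\Xi$.

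For the finite-dimensional distributions, I would fix $z_1,\dots,z_p\in\mathcal{C}$ and use the Cram\'er--Wold device to reduce to a central limit theorem for a single real linear combination of the $\Xi_n(z_\ell)$ and $\overline{\Xi_n(z_\ell)}$. Following \cite{RiS,OR:CLT}, I would then write $\tr\mathcal{G}_n(z)-\E\tr\mathcal{G}_n(z)$ as a sum of martingale differences with respect to the filtration obtained by revealing the columns of the blocks $\hat X_{n,1},\dots,\hat X_{n,m}$ of $\blmat{Y}{n}$ one at a time, each step being a rank-one perturbation treated via the Sherman--Morrison formulas \eqref{equ:ShermanMorrison1}--\eqref{equ:ShermanMorrison2}, the Sherman--Morrison--Woodbury formula \eqref{Equ:ShermanMorrisonWoodbury}, the resolvent identity \eqref{Equ:ResolventIndentity}, and the truncation bounds from Section~\ref{Sec:Reduction}. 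The martingale central limit theorem then requires (a) a Lyapunov condition, which is immediate from $|\hat\xi|\leq 4n^{1/2-\varepsilon}$ and $\E|\hat\xi|^4=O(1)$, and (b) convergence of the sum of conditional variances to the deterministic kernel $\E[\Xi(z_k)\overline{\Xi(z_\ell)}]$ of \eqref{Equ:CovarianceForProcess}. Input (b) is the substantive step: it amounts to identifying $\lim_{n\to\infty}\E[\tr(\mathcal{G}_n(z)\mathcal{G}_n(w))]$ and its conjugate variant. Because of the many deterministic zero blocks of $\blmat{Y}{n}$, there is no single self-consistent equation here; instead I would write $\mathcal{G}_n(z)$ in $m\times m$ block form, apply Schur-complement and rank-one perturbation identities block by block, derive a coupled \emph{system} of $m$ recursive equations for the relevant block traces, and solve this system in the limit to obtain the closed form $\frac{m^{2}(z\bar w)^{m-1}}{((z\bar w)^{m}-1)^{2}}$.

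For tightness in $C(\mathcal{C})$ it suffices, by the standard criterion for processes indexed by a curve, to bound $\sup_n\E|\Xi_n(z_0)|^2<\infty$ at a fixed $z_0\in\mathcal{C}$ together with a uniform modulus-of-continuity estimate $\E|\Xi_n(z)-\Xi_n(w)|^2\leq K|z-w|^2$ for all $z,w\in\mathcal{C}$. Both follow from the resolvent identity $\mathcal{G}_n(z)-\mathcal{G}_n(w)=(z-w)\mathcal{G}_n(z)\mathcal{G}_n(w)$, the deterministic bound $\lnorm\mathcal{G}_n(z)\rnorm\leq 1/c$ on $\Omega_n$, and second-moment bounds for $\tr\mathcal{G}_n(z)$ obtained again from the martingale decomposition and the rank-one perturbation formulas; these are essentially the estimates needed in the $m=1$ case of \cite{OR:CLT}, now carried out in the block setting.

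The hard part is step (b): deriving and solving the system of $m$ coupled recursive equations for the limiting covariance. The many deterministic zero blocks of $\blmat{Y}{n}$ make the resolvent algebra considerably more delicate than in the single-matrix case of \cite{RiS,OR:CLT}, and one must carefully track the contributions of the off-diagonal blocks through the Schur-complement recursions and then verify that the resulting fixed-point system has the advertised solution \eqref{Equ:CovarianceForProcess}. Accordingly, the finite-dimensional convergence is established in Section~\ref{Sec:FiniteDimDist}, while the lengthier but more routine tightness argument is carried out in Section~\ref{Sec:Tightness}.
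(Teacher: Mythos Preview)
Your proposal is correct and follows the same overall two-step architecture as the paper (finite-dimensional convergence via a martingale CLT in Section~\ref{Sec:FiniteDimDist}, tightness via the moment criterion \eqref{Equ:TightnessCharacterization} in Section~\ref{Sec:Tightness}), but a few of your implementation choices differ from the paper's. First, the paper's filtration $\mathcal{F}_k$ (see \eqref{Def:F_{k}}) reveals the $k$th column of \emph{every} block simultaneously, so each martingale increment is a rank-$m$ perturbation handled with the Sherman--Morrison--Woodbury formula \eqref{Equ:ShermanMorrisonWoodbury}; your ``one column at a time, rank-one'' description would give a length-$mn$ filtration and break the cyclic block symmetry that the paper exploits. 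Second, for the conditional-variance limit the paper does not use Schur complements; instead it invokes Vitali's theorem to pass from $(\mathcal{G}_n^{(k)})^2$ to $\mathcal{G}_n^{(k)}$, then repeatedly applies the resolvent expansion \eqref{Equ:ResolventExpansion} through the iteration Lemma~\ref{Lem:Iteration}, cycling the projection $\mathcal{D}_{i-b}$ through the $m$ block positions until the quantity $\mathcal{T}_{n,k}(z,\bar w)$ reappears and yields a single self-consistent relation \eqref{Equ:TnkInTotal} rather than an explicit $m\times m$ linear system. Your Schur-complement route should in principle reach the same answer, but the paper's iteration-plus-Vitali device is what actually closes the computation here.
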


The bulk of the paper is devoted to the proof of Theorem \ref{Thm:StochasticProcessConv}.  Before doing so, let us complete the proof of Theorem \ref{Thm:LinearizedTruncatedCLT} assuming Theorem \ref{Thm:StochasticProcessConv}.

First note that by Lemma \ref{Lem:Omega_nOverwhelming}, there exists $c>0$ such that $\Omega_{n}$ holds with overwhelming probability. Next, observe that $\Xi_{n}(z)$ and $\Xi(z)$ are random elements in the space of continuous functions on the contour $\mathcal{C}$, which is a metric space with respect to the supremum norm. Since the map 
\begin{equation}
\Xi_{n}(z)\mapsto \frac{1}{2\pi i}\oint_{\mathcal{C}}g(z)\Xi_{n}(z)dz
\label{Equ:MapContMappingThm}
\end{equation}
is continuous in this metric space, the continuous mapping theorem (see \cite[Theorem 25.7]{Bill}) and Lemma \ref{Lem:IntOfGaussIsGauss} show that Theorem \ref{Thm:StochasticProcessConv} implies Theorem \ref{Thm:LinearizedTruncatedCLT}.  Indeed, if $\{\Xi_{n}(z)\}_{z\in\mathcal{C}}$ converges in distribution to $\{\Xi(z)\}_{z\in\mathcal{C}}$, then $\frac{1}{2\pi i}\oint_{\mathcal{C}}g(z)\Xi_{n}(z)dz$ converges in distribution to $\frac{1}{2\pi i}\oint_{\mathcal{C}}g(z)\Xi(z)dz$ as desired.

In order to prove Theorem \ref{Thm:StochasticProcessConv}, we will use the following characterization of convergence, which is a result of Theorems 7.5 and 12.3 from \cite{BillOld}.

\begin{theorem}
	Suppose that $\{\Xi(z)\}_{z\in\mathcal{C}}, \{\Xi_{n}(z)\}_{z\in\mathcal{C}}$ for $n\geq 1$ are stochastic processes on the contour $\mathcal{C}=\{z\in\C\;:\;|z|=1+\delta\}$. Suppose $\Xi(z)$, $(\Xi_{n}(z))_{n=1}^{\infty}$ satisfy
	\begin{equation}
	(\Xi_{n}(z_{1}),\Xi_{n}(z_{2}),\ldots,\Xi_{n}(z_{L}))\longrightarrow (\Xi(z_{1}),\Xi(z_{2}),\ldots,\Xi(z_{L}))
	\label{Equ:FinDimConv}
	\end{equation}
	in distribution as $n\rightarrow\infty$ for any fixed positive integer $L$ and any $z_{1},\ldots,z_{L}\in\mathcal{C}$, and suppose that there exists a constant $c>0$ such that
	\begin{equation}
	\sup_{z,w \in \mathcal{C}, z \neq w} \E\left|\frac{\Xi_{n}(z)-\Xi_{n}(w)}{z-w}\right|^{2}\leq c
	\label{Equ:TightnessCharacterization}
	\end{equation}
	 for all $n$. Then $\{\Xi_{n}(z)\}_{z\in\mathcal{C}}$ converges in distribution to $\{\Xi(z)\}_{z\in\mathcal{C}}$ as $n\rightarrow \infty$. 
	\label{Thm:ConvergenceCharacterization}
\end{theorem}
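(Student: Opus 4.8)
The plan is to derive the stated weak convergence in the space $C(\mathcal{C})$ of continuous functions on the circle $\mathcal{C}=\{z\in\C\;:\;|z|=1+\delta\}$ from the two classical ingredients: (i) convergence of all finite-dimensional distributions, which is precisely hypothesis \eqref{Equ:FinDimConv}, and (ii) tightness of the sequence $\{\Xi_n\}$ viewed as random elements of $C(\mathcal{C})$. Granting both, \cite[Theorem 7.5]{BillOld} yields $\Xi_n\Rightarrow\Xi$: by Prokhorov's theorem tightness gives relative compactness, while \eqref{Equ:FinDimConv} --- assumed for \emph{every} finite tuple $z_1,\dots,z_L\in\mathcal{C}$ --- identifies the finite-dimensional distributions of any subsequential limit and forces it to be the law of $\Xi$ (this en route also shows $\Xi$ has a continuous version; alternatively, since $|z\bar w|=(1+\delta)^2>1$ everywhere on $\mathcal{C}\times\mathcal{C}$, the covariance in \eqref{Equ:CovarianceForProcess} is real-analytic there, so Kolmogorov's continuity theorem applies to $\Xi$ directly). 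Thus the only substantive step is tightness, and the estimate \eqref{Equ:TightnessCharacterization} is tailored exactly for it.

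To establish tightness I would first transport the problem from the circle to an interval. Parametrize $\mathcal{C}$ by $z(\theta):=(1+\delta)e^{\sqrt{-1}\theta}$, $\theta\in[0,2\pi]$, and set $\tilde\Xi_n(\theta):=\Xi_n(z(\theta))$. Since $|z(\theta)-z(\phi)|=2(1+\delta)\bigl|\sin\tfrac{\theta-\phi}{2}\bigr|\le(1+\delta)|\theta-\phi|$, the hypothesis \eqref{Equ:TightnessCharacterization} becomes the Kolmogorov--Chentsov-type increment bound
\[
\E\bigl|\tilde\Xi_n(\theta)-\tilde\Xi_n(\phi)\bigr|^2\le c(1+\delta)^2|\theta-\phi|^2
\]
uniformly in $n$ and in $\theta,\phi$, i.e.\ with moment exponent $2$ and H\"older exponent $1+\beta$ where $\beta=1>0$. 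Combined with tightness of the single marginal $\{\tilde\Xi_n(0)\}=\{\Xi_n(z(0))\}$ --- which is immediate from \eqref{Equ:FinDimConv} with $L=1$, since a distributionally convergent sequence is tight --- the moment criterion for tightness in $C[0,2\pi]$ (\cite[Theorem 12.3]{BillOld}) shows that $\{\tilde\Xi_n\}$ is tight there.

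It remains to transfer tightness back to $C(\mathcal{C})$. Each $\Xi_n$ is genuinely a function on the circle, so $\tilde\Xi_n(0)=\tilde\Xi_n(2\pi)$ identically and the tight family already consists of loops; the oscillation across the identified endpoint $\theta=0\equiv2\pi$ is controlled by the same bound \eqref{Equ:TightnessCharacterization} interpreted with the arc-length distance on $\mathcal{C}$, so the modulus-of-continuity estimates underlying tightness hold uniformly around the whole of $\mathcal{C}$, giving tightness of $\{\Xi_n\}$ in $C(\mathcal{C})$. With (i) and (ii) in hand, \cite[Theorem 7.5]{BillOld} completes the proof. There is no deep obstacle here --- the statement is essentially a repackaging of standard results --- and the only point I would write out with care is exactly this passage between $[0,2\pi]$ and the circle: confirming that the interval version of the moment criterion indeed yields tightness on $\mathcal{C}$, which reduces to noting that the single seam introduces nothing new because \eqref{Equ:TightnessCharacterization} already controls increments there.
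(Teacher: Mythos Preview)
Your proposal is correct and follows exactly the approach the paper takes: the paper simply states that this theorem ``is a result of Theorems 7.5 and 12.3 from \cite{BillOld},'' and you have supplied precisely those two ingredients --- finite-dimensional convergence for Theorem~7.5, and the moment-increment criterion \eqref{Equ:TightnessCharacterization} (via the parametrization $z(\theta)=(1+\delta)e^{\sqrt{-1}\theta}$ and Markov's inequality) for Theorem~12.3. Your write-up is in fact more careful than the paper's, which leaves the parametrization and the seam issue implicit.
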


\begin{Details}
	\begin{theorem}[Theorem 12.3 from \cite{BillOld}]
		The sequence $\{X_{n}\}$ of random elements of the space of continuous functions on $[0,1]$ is tight if it satisfies the following two conditions.
		\begin{enumerate}[label=\emph{(\roman*)}]
			\item The sequence $\{X_{n}(0)\}$ is tight
	\item There exist constants $\gamma\geq 0$ and $\alpha>1$, and a nondecreasing function $F$ on $[0,1]$ such that
	\begin{equation*}
	\P(|X_{n}(t_{2})-X_{n}(t_{1})|\geq \lambda)\leq \frac{1}{\lambda^{\gamma}}|F(t_{2})-F(t_{1})|^{\alpha}
	\end{equation*}
	holds for all $t_{1},t_{2}$ and $n$, and for all positive $\lambda$.
\end{enumerate}
\end{theorem}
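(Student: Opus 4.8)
The plan is to reduce the statement to the Arzel\`a--Ascoli characterization of relatively compact subsets of $C[0,1]$ and then establish the resulting modulus-of-continuity bound by a Kolmogorov--Chentsov style dyadic chaining argument, following the classical route in \cite{BillOld}. Write $w_x(\delta):=\sup\{|x(s)-x(t)|:s,t\in[0,1],\ |s-t|\le\delta\}$. By the Arzel\`a--Ascoli-based tightness criterion in $C[0,1]$ (\cite[Theorem 7.5]{BillOld}), $\{X_n\}$ is tight provided (a) $\{X_n(0)\}$ is tight and (b) for each $\varepsilon>0$, $\lim_{\delta\downarrow0}\limsup_{n}\Prob(w_{X_n}(\delta)\ge\varepsilon)=0$. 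Hypothesis (i) is exactly (a), so the whole problem is to deduce (b) from hypothesis (ii); the argument in fact yields (b) uniformly in $n$.

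First I would reduce to the case $F(t)=t$. One may assume $F$ is continuous (this is how the statement is phrased in \cite{BillOld}; a jump of $F$ only weakens the right-hand side of (ii) and is handled by a routine modification), and then, after replacing $F(t)$ by $(1-\rho)F(t)+\rho t$ for small fixed $\rho\in(0,1)$ — which inflates the right side of (ii) by at most the factor $(1-\rho)^{-\alpha}$ — one may assume $F$ is a continuous strictly increasing bijection, normalized so that $F:[0,1]\to[0,1]$. Setting $Y_n:=X_n\circ F^{-1}\in C[0,1]$, hypothesis (ii) becomes $\Prob(|Y_n(u)-Y_n(v)|\ge\lambda)\le C\lambda^{-\gamma}|u-v|^{\alpha}$ for all $u,v,\lambda,n$, with $C:=(1-\rho)^{-\alpha}$, while $|s-t|\le\delta\Rightarrow|F(s)-F(t)|\le\omega_F(\delta)$ (the modulus of continuity of $F$, which vanishes as $\delta\downarrow0$) gives $w_{X_n}(\delta)\le w_{Y_n}(\omega_F(\delta))$. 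So it is enough to establish (b) for a process satisfying
\[
\Prob\bigl(|X_n(s)-X_n(t)|\ge\lambda\bigr)\ \le\ C\,\lambda^{-\gamma}\,|s-t|^{\alpha}\qquad(s,t\in[0,1],\ \lambda>0,\ n\ge1).
\]

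The key step is the oscillation estimate: for every subinterval $I\subseteq[0,1]$ of length $\ell$ and every $\rho>0$,
\[
\Prob\Bigl(\sup_{s,t\in I}|X_n(s)-X_n(t)|\ge\rho\Bigr)\ \le\ C'\,\rho^{-\gamma}\,\ell^{\alpha},
\]
$C'$ depending only on $C,\gamma,\alpha$. To prove this I would chain over dyadic points of $I$: letting $\Delta_k$ be the largest increment of $X_n$ across the $2^k$ level-$k$ dyadic subintervals of $I$, telescoping along binary expansions plus continuity of $X_n$ gives $\sup_{s,t\in I}|X_n(s)-X_n(t)|\le 2\sum_{k\ge1}\Delta_k$ a.s. Picking $\theta\in(0,1]$ with $\theta\gamma+1-\alpha<0$ (possible since $\alpha>1$; take $\theta=1$ if $\gamma=0$) and the budget $\lambda_k:=\tfrac{\rho}{2}(2^\theta-1)2^{-k\theta}$, which sums to $\rho/2$, a union bound gives $\Prob(\Delta_k\ge\lambda_k)\le 2^k\,C\lambda_k^{-\gamma}(\ell2^{-k})^{\alpha}=C\ell^{\alpha}\lambda_k^{-\gamma}2^{k(1-\alpha)}$, and $\sum_k\Prob(\Delta_k\ge\lambda_k)$ is a convergent geometric series in $2^{k(\theta\gamma+1-\alpha)}$ of total $\le C'\rho^{-\gamma}\ell^{\alpha}$; since the event in question is contained in $\{\sum_k\Delta_k\ge\rho/2\}\subseteq\bigcup_k\{\Delta_k\ge\lambda_k\}$, the estimate follows.

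Finally, given $\varepsilon>0$ and $\delta\le1/2$, I would partition $[0,1]$ into $v\le2/\delta$ consecutive intervals $I_1,\dots,I_v$ of lengths in $[\delta,2\delta]$; then any $s,t$ with $|s-t|\le\delta$ lie in one interval or two adjacent ones, so $w_{X_n}(\delta)\le 2\max_i\sup_{u,v\in I_i}|X_n(u)-X_n(v)|$, and a union bound together with the oscillation estimate gives
\[
\Prob\bigl(w_{X_n}(\delta)\ge\varepsilon\bigr)\ \le\ \sum_{i=1}^{v}\Prob\Bigl(\sup_{u,v\in I_i}|X_n(u)-X_n(v)|\ge\tfrac{\varepsilon}{2}\Bigr)\ \le\ \frac{2}{\delta}\,C'\Bigl(\frac{\varepsilon}{2}\Bigr)^{-\gamma}(2\delta)^{\alpha}\ =\ C''\,\varepsilon^{-\gamma}\,\delta^{\alpha-1},
\]
uniformly in $n$. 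Since $\alpha>1$ this $\to0$ as $\delta\downarrow0$ (applying it to $Y_n$ with $\delta$ replaced by $\omega_F(\delta)$ returns the bound for $X_n$), which is (b); with (i) $=$ (a), the Arzel\`a--Ascoli criterion yields tightness of $\{X_n\}$. I expect the only genuinely delicate point to be the chaining in the oscillation estimate — specifically the choice of a summable budget $(\lambda_k)$, which is exactly where $\alpha>1$ enters; the remaining steps are routine bookkeeping.
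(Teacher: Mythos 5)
The paper never proves this statement: it is quoted verbatim (inside a hidden \texttt{Details} block) as Theorem 12.3 of Billingsley's 1968 book and used purely as an imported tool, so there is no internal proof to compare yours against. Judged on its own, your argument is essentially the classical one and is sound: reducing to the Arzel\`a--Ascoli/modulus characterization of tightness in $C[0,1]$, reparametrizing so that the bound reads $C\lambda^{-\gamma}|s-t|^{\alpha}$, proving the oscillation estimate over an interval of length $\ell$ by dyadic chaining with the geometric budget $\lambda_k\asymp\rho 2^{-k\theta}$, $\theta\gamma+1-\alpha<0$ (this is exactly where $\alpha>1$ enters), and then covering $[0,1]$ by $O(1/\delta)$ intervals to get $\P(w_{X_n}(\delta)\ge\varepsilon)\ll\varepsilon^{-\gamma}\delta^{\alpha-1}$ uniformly in $n$. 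Billingsley's own proof runs through his fluctuation/maximal inequality (Theorems 12.1--12.2) rather than dyadic chaining, but the two are close in spirit and yours is complete at the level of detail given. Two caveats. First, the statement as transcribed in the paper omits the hypothesis, present in Billingsley, that $F$ be \emph{continuous}; your proof genuinely uses this (via $\omega_F(\delta)\to0$ and the change of variables $Y_n=X_n\circ F^{-1}$), and your parenthetical dismissal of the discontinuous case is not right as phrased: a jump of $F$ \emph{enlarges} the right-hand side of (ii), i.e.\ weakens the hypothesis, so it cannot be waved away as a harmless strengthening -- handling it would require splitting $[0,1]$ at the (countably many, finitely many above any threshold) jump points and exploiting the continuity of the paths to glue, which is not what you wrote. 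Since the theorem you are asked to reproduce is Billingsley's, where continuity of $F$ is assumed, this is a defect of the paper's transcription more than of your proof, but it should be stated as a hypothesis rather than ``assumed.'' Second, a citation quibble: the Arzel\`a--Ascoli-based tightness criterion you invoke is Theorem 8.2 of the 1968 edition (Theorem 7.3 of the second edition), not Theorem 7.5, which is the finite-dimensional-convergence-plus-tightness statement; using it here is not circular, but the reference should be corrected.
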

\end{Details}

\begin{Details} 
\begin{theorem}[Theorem 7.5 from \cite{BillOld}]
	Suppose that $X, X^{1},X^{2},\ldots$ are random functions. If 
	\begin{equation}
	(X_{t_{1}}^{n},X_{t_{2}}^{n},...,X_{y_{k}}^{n})\rightarrow (X_{t_{1}},X_{t_{2}},...,X_{t_{k}})
	\label{Equ:SecondBillingsleyConditionFinDim}
	\end{equation}
	for any fixed positive integer $k$ and any $t_{1},...,t_{k}$, and if
	\begin{equation}
	\lim_{\delta\rightarrow 0}\limsup_{n\rightarrow\infty}\P\left(\sup_{|s-t|<\delta}\left|X_{t}^{n}-X_{s}^{n}\right|>\varepsilon \right)=0
	\label{Equ:SecondBillingsleyCondition}
	\end{equation}
	for each positive $\varepsilon$, then $X_{n}$ converges in distribution to $X$ as $n\rightarrow \infty$. 
\end{theorem}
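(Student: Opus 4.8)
The plan is to deduce weak convergence in $C[0,1]$ (the argument is identical on the space of continuous functions over any compact interval, e.g.\ over a contour) from the two classical pillars of the theory: \emph{tightness} of the sequence of laws, obtained from the modulus-of-continuity hypothesis \eqref{Equ:SecondBillingsleyCondition} via the Arzel\`a--Ascoli characterization of compact subsets of $C[0,1]$; and \emph{identification of the limit}, obtained from the finite-dimensional convergence \eqref{Equ:SecondBillingsleyConditionFinDim} together with the fact that a Borel probability measure on $C[0,1]$ is determined by its finite-dimensional distributions. Prohorov's theorem bridges the two.

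First I would establish tightness of $\{X_{n}\}$. For $x\in C[0,1]$ let $w_{x}(\delta):=\sup_{|s-t|\le\delta}|x(s)-x(t)|$; this is nondecreasing in $\delta$, and by Arzel\`a--Ascoli a set $K\subseteq C[0,1]$ is relatively compact iff $\sup_{x\in K}|x(0)|<\infty$ and $\sup_{x\in K}w_{x}(\delta)\to0$ as $\delta\to0$. Fix $\eta>0$. Taking $k=1$, $t_{1}=0$ in \eqref{Equ:SecondBillingsleyConditionFinDim}, the real random variables $X^{n}_{0}$ converge in distribution, hence form a tight family, so there is $M>0$ with $\sup_{n}\P(|X^{n}_{0}|>M)\le\eta/2$. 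For each integer $j\ge1$, hypothesis \eqref{Equ:SecondBillingsleyCondition} yields $\delta_{j}>0$ with $\limsup_{n}\P(w_{X^{n}}(\delta_{j})>1/j)\le\eta 2^{-j-2}$, hence some $N_{j}$ with $\P(w_{X^{n}}(\delta_{j})>1/j)\le\eta 2^{-j-1}$ for all $n\ge N_{j}$; since the finitely many $X^{n}$ with $n<N_{j}$ have a.s.\ continuous paths, $w_{X^{n}}(\delta)\to0$ a.s.\ as $\delta\to0$, and using monotonicity of $w$ we may shrink $\delta_{j}$ (which does not spoil the bound for $n\ge N_{j}$) so that $\P(w_{X^{n}}(\delta_{j})>1/j)\le\eta 2^{-j-1}$ holds for \emph{all} $n$. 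Then $K:=\{x\in C[0,1]:|x(0)|\le M,\ w_{x}(\delta_{j})\le 1/j\ \forall j\ge1\}$ is compact by Arzel\`a--Ascoli, and a union bound gives $\P(X_{n}\notin K)\le\eta/2+\sum_{j\ge1}\eta 2^{-j-1}=\eta$ for every $n$.

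With tightness in hand, Prohorov's theorem makes the family of laws $\mathcal{L}(X_{n})$ relatively compact in the weak topology. Let $\mu$ be the weak limit of some subsequence $X_{n_{\ell}}$. For each $k$ and $t_{1},\dots,t_{k}\in[0,1]$, the map $x\mapsto(x(t_{1}),\dots,x(t_{k}))$ is continuous from $C[0,1]$ to $\R^{k}$, so the pushforward of $\mathcal{L}(X_{n_{\ell}})$ under it converges weakly to that of $\mu$; but by \eqref{Equ:SecondBillingsleyConditionFinDim} it also converges to $\mathcal{L}(X_{t_{1}},\dots,X_{t_{k}})$, and uniqueness of weak limits in $\R^{k}$ identifies the finite-dimensional distributions of $\mu$ with those of $X$. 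Because the finite-dimensional cylinder sets form a $\pi$-system generating the Borel $\sigma$-algebra of $C[0,1]$ (separability lets the sup-norm, and hence every open ball, be recovered from countably many rational coordinates), a $\pi$--$\lambda$ argument shows a Borel measure on $C[0,1]$ is determined by its finite-dimensional distributions, whence $\mu=\mathcal{L}(X)$. Thus every subsequence of $\{X_{n}\}$ has a further subsequence converging weakly to $\mathcal{L}(X)$; since weak convergence of probability measures on the Polish space $C[0,1]$ is metrizable, this forces $X_{n}\Rightarrow X$.

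The step I expect to be most delicate is the tightness construction, in particular converting the $\limsup_{n}$ appearing in \eqref{Equ:SecondBillingsleyCondition} into a bound uniform in $n$ (which is where a.s.\ path continuity and monotonicity of the modulus of continuity are used to absorb the finitely many small indices), together with the explicit verification via Arzel\`a--Ascoli that the candidate set $K$ is compact. The remaining ingredients---Prohorov's theorem, continuity of the coordinate projections, uniqueness of a measure given its finite-dimensional distributions, and the subsequence principle for convergence in a metric space---are entirely standard.
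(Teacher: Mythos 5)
Your proposal is correct, and it is essentially the classical argument: the paper does not prove this statement at all but simply quotes it as Theorem 7.5 from Billingsley's \emph{Convergence of Probability Measures}, and your route (Arzel\`a--Ascoli tightness from the modulus-of-continuity condition, with the $\limsup$ upgraded to a uniform-in-$n$ bound using path continuity for the finitely many small indices, then Prohorov, identification of subsequential limits via finite-dimensional distributions, and the metrizability/subsequence principle) is exactly the standard proof given in that reference. No gaps.
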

 The first condition states that finite dimensional distributions converge, and the second condition is a tightness condition. In fact, the following theorem characterizes \eqref{Equ:SecondBillingsleyCondition} in a slightly different way.

 \begin{theorem}[Theorem 12.3 from \cite{BillOld}]
 	The sequence $\{X_{n}\}$ of random elements of the space of continuous functions on $[0,1]$ is tight if it satisfies the following two conditions:
 	\begin{enumerate}[label=\emph{(\roman*)}]
 		\item The sequence $\{X_{n}(0)\}$ is tight
 		\item There exist constants $\gamma\geq 0$ and $\alpha>1$, and a nondecreasing function $F$ on $[0,1]$ such that
 		\begin{equation*}
 		\P(|X_{n}(t_{2})-X_{n}(t_{1})|\geq \lambda)\leq \frac{1}{\lambda^{\gamma}}|F(t_{2})-F(t_{1})|^{\alpha}
 		\end{equation*}
 		holds for all $t_{1},t_{2}$ and $n$, and for all positive $\lambda$.
 	\end{enumerate}
 \label{Thm:TightnessCharacterization}
 \end{theorem}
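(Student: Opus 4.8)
The statement is the classical Arzel\`a--Ascoli--Prokhorov tightness criterion in $C[0,1]$, and the plan is to reconstruct its proof in three ingredients: the standard characterization of tightness in $C[0,1]$, a maximal (chaining) inequality converting the hypothesized increment bound into a bound on the oscillation of $X_n$ over a subinterval, and a partition of $[0,1]$ into finitely many blocks on which $F$ grows by a controlled amount. For the first ingredient, recall that a sequence $\{X_n\}$ of random elements of $C[0,1]$ is tight if and only if $\{X_n(0)\}$ is tight and, for every $\eps>0$,
\[
\lim_{\rho\downarrow 0}\ \limsup_{n\to\infty}\ \P\big(w_{X_n}(\rho)\ge\eps\big)=0,
\qquad w_x(\rho):=\sup\{|x(s)-x(t)|:\ s,t\in[0,1],\ |s-t|\le\rho\}.
\]
Hypothesis (i) is exactly the first condition, so it suffices to derive the modulus-of-continuity estimate from hypothesis (ii).

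\textbf{Second step: the maximal inequality.} Fix a finite ordered grid $0\le t_0<t_1<\dots<t_N\le 1$, put $u_i:=F(t_i)-F(t_{i-1})\ge 0$ and $u:=\sum_{i=1}^N u_i$. Hypothesis (ii) gives, for $i\le j$, $\P(|X_n(t_j)-X_n(t_i)|\ge\lambda)\le\lambda^{-\gamma}\big(\sum_{i<l\le j}u_l\big)^{\alpha}$. The claim to establish is that there is a constant $K=K_{\gamma,\alpha}$, independent of $n$, of $N$, and of the grid, such that
\[
\P\Big(\max_{0\le j\le N}|X_n(t_j)-X_n(t_0)|\ge\lambda\Big)\le\frac{K}{\lambda^{\gamma}}\,u^{\alpha}.
\]
This is proved by repeated bisection: split the grid at an index where the cumulative $F$-mass first reaches $u/2$ (if a single gap carries more than half the mass, that gap is bounded directly by (ii)); bound the maximal deviation on each half recursively, and combine the two halves through the triangle inequality and a union bound. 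At the $r$-th level of the recursion there are $2^r$ subintervals, each of $F$-mass at most $u\,2^{-r}$, contributing a term of order $\lambda^{-\gamma}(u\,2^{-r})^{\alpha}$; summing over $r$ produces the geometric series $\sum_{r\ge0}2^{r}(2^{-r})^{\alpha}=\sum_{r\ge0}2^{(1-\alpha)r}$, which converges precisely because $\alpha>1$, yielding the finite constant $K$.

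\textbf{Third step: passage to the continuum, partition, and conclusion.} Since each $X_n$ has continuous sample paths, for any subinterval $[a,b]\subseteq[0,1]$ the oscillation $\sup_{a\le s,t\le b}|X_n(s)-X_n(t)|$ is the increasing limit of the maxima over finite grids in $[a,b]$; by monotone convergence the bound of the second step holds with $t_0=a$, $t_N=b$, $u=F(b)-F(a)$, giving $\P\big(\sup_{a\le s,t\le b}|X_n(s)-X_n(t)|\ge\lambda\big)\le K'\lambda^{-\gamma}(F(b)-F(a))^{\alpha}$ for a constant $K'$ (a factor $2$ enters in converting a one-sided deviation to a two-sided oscillation). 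Now, given $\eta>0$, choose $0=\tau_0<\tau_1<\dots<\tau_v=1$ with $F(\tau_i-)-F(\tau_{i-1})<\eta$ for every $i$, which is possible with $v\le\lfloor (F(1)-F(0))/\eta\rfloor+1$ (points where $F$ jumps by at least $\eta$ are taken as partition points, and working with left limits handles the jumps cleanly). For $\rho<\min_i(\tau_i-\tau_{i-1})$ any interval of length $\le\rho$ meets at most two consecutive blocks, so $w_{X_n}(\rho)\le 3\max_{1\le i\le v}\sup_{\tau_{i-1}\le s,t\le\tau_i}|X_n(s)-X_n(t)|$; combining this with the oscillation bound and a union bound over the $v$ blocks,
\[
\P\big(w_{X_n}(\rho)\ge\eps\big)\ \le\ v\cdot\frac{K'\,3^{\gamma}}{\eps^{\gamma}}\,\eta^{\alpha}\ \le\ \frac{K'\,3^{\gamma}\,(F(1)-F(0)+1)}{\eps^{\gamma}}\,\eta^{\alpha-1},
\]
uniformly in $n$. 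Since $\alpha>1$, the right-hand side tends to $0$ as $\eta\downarrow 0$; letting $\rho\downarrow 0$ (forcing $\eta\downarrow 0$) yields the required equicontinuity estimate, and with hypothesis (i) this gives tightness.

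\textbf{Main obstacle.} The delicate part is the maximal inequality of the second step: the bisection recursion must be organized so that the accumulated error over all dyadic levels is summable, and this is exactly where $\alpha>1$ is used; getting the bookkeeping and the constant $K_{\gamma,\alpha}$ right, uniformly in the grid and in $n$, is the crux. A secondary technical point is the partition step, where one must use the left-continuous version of $F$ so that no block carries $F$-mass exceeding $\eta$ even when $F$ has jumps (the hypothesis imposes no continuity on $F$, and $X_n\in C[0,1]$ makes the bound at jump points automatically valid but not otherwise useful).
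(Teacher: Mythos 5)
The paper offers no proof of this statement to compare against: it is quoted as Theorem 12.3 of Billingsley's 1968 book and used as a black box (in the application, via Theorem \ref{Thm:ConvergenceCharacterization}, with $\gamma=\alpha=2$ and $F$ the identity). Your reconstruction is essentially Billingsley's own argument — the characterization of tightness in $C[0,1]$ through $\{X_n(0)\}$ and the modulus of continuity, the bisection maximal inequality (his Theorem 12.2), and the partition of $[0,1]$ into blocks of small $F$-mass followed by a union bound — and it is correct in outline.

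Two details need tightening. First, in the maximal inequality, a union bound with the \emph{same} threshold $\lambda$ at every dyadic level does not control the deviation, which is a sum of increments along a chain; you must allot the threshold across levels in fixed geometric proportions (say $\lambda_r\propto\theta^{r}\lambda$ with $\theta^{\gamma}>2^{1-\alpha}$, or run Billingsley's induction, which splits $\lambda=\lambda_1+\lambda_2$ at each stage). The resulting series is $\sum_r\big(2^{1-\alpha}\theta^{-\gamma}\big)^{r}$ rather than $\sum_r 2^{(1-\alpha)r}$, and its convergence still rests only on $\alpha>1$, so your conclusion and constant $K_{\gamma,\alpha}$ stand. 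Second, your partition requirement $F(\tau_i-)-F(\tau_{i-1})<\eta$ cannot always be met when $F$ is allowed to be discontinuous: if $F(t^\ast+)-F(t^\ast)\ge\eta$ at some $t^\ast$ (e.g.\ $F=0$ on $[0,1/2]$, $F=1$ on $(1/2,1]$), every block having $t^\ast$ as left endpoint or interior point violates it. The correct bookkeeping requires only the open-block mass $F(\tau_i-)-F(\tau_{i-1}+)\le\eta$, which a finite partition always achieves, and then path continuity at \emph{both} endpoints (not just the left one) reduces the oscillation over the closed block to the open block. Billingsley's original hypothesis includes continuity of $F$, which the paper's transcription drops; for the paper's application ($F$ the identity) the point is moot, but as you chose to prove the more general statement, this is the step to repair.
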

\end{Details}

\begin{Details}
 \begin{remark}
Note we are considering a stochastic process which is a function of complex numbers of the contour $\mathcal{C}=\{z\in\C\;:\;|z|=1+\delta\}$, but by the parameterization $z=(1+\delta)e^{2\pi\theta\sqrt{-1}}$, we can consider our stochastic process to be a sequence of random functions on the interval $[0,1]$.
\end{remark}
\end{Details}

The proof of Theorem \ref{Thm:StochasticProcessConv} reduces to showing that the two conditions from Theorem \ref{Thm:ConvergenceCharacterization} are satisfied.  In Section \ref{Sec:FiniteDimDist} we prove the convergence of the finite dimensional distributions \eqref{Equ:FinDimConv}. Section \ref{Sec:Tightness} contains the proof of the tightness of the sequence of stochastic processes \eqref{Equ:TightnessCharacterization}.

\section{Convergence of Finite Dimensional Distributions}
\label{Sec:FiniteDimDist}
This section is devoted to proving the convergence of the finite dimensional distributions of the stochastic process $\{\Xi_{n}(z)\}_{z\in\mathcal{C}}$. In particular, this section will be devoted to the proof of the following theorem.

\begin{theorem}
	For a fixed positive integer $L$ and any collection $(z_{1},z_{2},\dots,z_{L})$ such that $|z_{i}|=1+\delta$ for $1\leq i\leq L$, the random vector $\left(\Xi_{n}(z_{1}),\Xi_{n}(z_{2}),\dots,\Xi_{n}(z_{L})\right)$ converges in distribution to the random vector $	\left(\Xi(z_{1}),\Xi(z_{2}),\dots,\Xi(z_{L})\right)$ where $\{\Xi(z)\}_{z\in\mathcal{C}}$ is defined in Lemma \ref{Lem:IntOfGaussIsGauss}.
	\label{Thm:FiniteDimDist}
\end{theorem}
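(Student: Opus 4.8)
The plan is to prove asymptotic normality of an arbitrary linear combination of the $\Xi_n(z_j)$ and then invoke the Cramér–Wold device. Since $\blmat{Y}{n}$ has real entries, $\overline{\tr \mathcal{G}_n(z)}=\tr\mathcal{G}_n(\bar z)$, hence $\overline{\Xi_n(z)}=\Xi_n(\bar z)$; after adjoining $\bar z_1,\dots,\bar z_L$ to the collection of points it therefore suffices to show that $\Re\big(\sum_{j}c_j\Xi_n(z_j)\big)$ converges to a centered real Gaussian of the variance predicted by \eqref{Equ:CovarianceForProcess} and the relation $\overline{\Xi(z)}=\Xi(\bar z)$, for every choice of complex constants $c_1,\dots,c_L$. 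On the event $\Omega_n$ of \eqref{Def:Omega_n}, which holds with overwhelming probability (Lemma \ref{Lem:Omega_nOverwhelming}), Proposition \ref{Prop:LargeAndSmallSingVals} gives $\sup_{z\in\mathcal{C}}\lnorm\mathcal{G}_n(z)\rnorm\le 1/c$; exactly as in \cite{RiS,OR:CLT} one first removes the indicator $\oindicator{\Omega_n}$ by passing to a high-probability event on which the resolvent is deterministically controlled, reducing matters to the centered trace of a resolvent bounded with overwhelming probability, after which every manipulation below is valid up to errors that are $o(1)$ in probability.

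The engine of the argument is a martingale central limit theorem. Order the $mn$ coordinates and let $\mathcal{F}_k$ be the $\sigma$-algebra generated by the first $k$ rows and first $k$ columns of $\blmat{Y}{n}$ (equivalently, reveal the rows and columns of the block matrix one index at a time), and set $\E_k[\cdot]:=\E[\cdot\mid\mathcal{F}_k]$. Then $\Xi_n(z)=\sum_{k=1}^{mn}\gamma_k(z)$ with $\gamma_k(z):=(\E_k-\E_{k-1})\tr\mathcal{G}_n(z)$ a martingale difference array in $k$; writing $\blmat{Y}{n}-zI$ as a rank-one update of the matrix with its $k$-th column zeroed and applying the Sherman–Morrison formula \eqref{equ:ShermanMorrison1}, each increment $\gamma_k(z)$ becomes a ratio whose numerator is a quadratic form $a^{*}Ba$ in the revealed row/column vector $a$ of a single $\hat X_{n,j}$, with $B$ built from a deleted resolvent independent of $a$. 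Feeding $\sum_k\Re(\tilde c\,\gamma_k)$ into a martingale CLT of the type used in \cite[Section 7]{RiS} and \cite{OR:CLT}, the conclusion reduces to two statements: (i) a Lyapunov bound $\sum_{k=1}^{mn}\E|\gamma_k(z)|^{4}=o(1)$, uniformly for $z\in\mathcal{C}$; and (ii) convergence in probability of the conditional covariances $\sum_{k=1}^{mn}\E_{k-1}[\gamma_k(z)\gamma_k(w)]$ and $\sum_{k=1}^{mn}\E_{k-1}[\gamma_k(z)\overline{\gamma_k(w)}]$ to the deterministic limits dictated by \eqref{Equ:CovarianceForProcess} together with $\overline{\Xi(w)}=\Xi(\bar w)$.

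Statement (i) is routine: on the controlled event each $\gamma_k(z)$ has size $O(n^{-1/2})$ up to logarithmic factors, since the deleted resolvents are bounded in operator norm, the entries of $\hat X_{n,j}$ are centered with $|\hat\xi_j|\le 4n^{1/2-\eps}$ and $\E|\hat\xi_j|^4=O(1)$ (Lemma \ref{Lem:TruncateLeveln}), and quadratic forms $a^{*}Ba$ in independent bounded coordinates concentrate about $\tr B$; hence $\sum_{k}\E|\gamma_k|^4=O(mn\cdot n^{-2})=o(1)$. Statement (ii) is the genuine difficulty, and it is here that the block structure of $\blmat{Y}{n}$ forces something new. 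Using the resolvent identity \eqref{Equ:ResolventIndentity} and \eqref{equ:ShermanMorrison1}–\eqref{equ:ShermanMorrison2} to rewrite each conditional covariance in terms of traces of products of the $n\times n$ blocks of $\mathcal{G}_n(z)$, and isolating the leading-order term, one would like to close a recursion in $n$ as in \cite{RiS,OR:CLT}; but because $\blmat{Y}{n}$ is cyclic with $m$ nonzero off-diagonal blocks (and all other blocks deterministically zero), a single scalar recursion is unavailable. Instead one tracks $m$ coupled quantities — one for each block-position of the resolvent, corresponding to which of the factor matrices $\hat X_{n,1},\dots,\hat X_{n,m}$ the increment $\gamma_k$ is probing — and obtains a linear system of $m$ recursive equations whose matrix structure is governed by the cyclic pattern of $\blmat{Y}{n}$. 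Solving this system and passing to the limit $n\to\infty$ yields $\dfrac{m^{2}(z\bar w)^{m-1}}{((z\bar w)^{m}-1)^{2}}$ for $\sum_k\E_{k-1}[\gamma_k(z)\overline{\gamma_k(w)}]$, and the same computation with $w$ replaced by $\bar w$ gives $\dfrac{m^{2}(zw)^{m-1}}{((zw)^{m}-1)^{2}}$ for $\sum_k\E_{k-1}[\gamma_k(z)\gamma_k(w)]$; one also checks that the random conditional sums concentrate about these means by a second-moment estimate. Combining the martingale CLT with bookkeeping of the constants $c_j$ identifies the limiting Gaussian vector as the one with covariance \eqref{Equ:CovarianceForProcess}, proving Theorem \ref{Thm:FiniteDimDist}.

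I expect the derivation and solution of the $m\times m$ system of recursive equations for the conditional covariances — carrying along the numerous deterministic zero blocks of $\blmat{Y}{n}$, the error terms generated by each Sherman–Morrison step, and the concentration of the conditional sums — to be the main obstacle; the remaining ingredients are adaptations of the single-matrix arguments of \cite{RiS,OR:CLT}.
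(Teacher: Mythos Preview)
Your outline is broadly correct and matches the paper's strategy (Cramér--Wold, martingale decomposition, martingale CLT, Lyapunov/Lindeberg bound, then a recursive computation of the conditional covariance), but the implementation differs in a way worth noting.  The paper does \emph{not} use the filtration you describe.  Instead of $mn$ steps revealing one column (or row-and-column) of $\blmat{Y}{n}$ at a time, the paper takes $n$ steps, with $\mathcal{F}_k$ generated by the first $k$ columns of \emph{each} of the $m$ blocks simultaneously; the $k$-th increment thus corresponds to a rank-$m$ perturbation handled by the Sherman--Morrison--Woodbury formula \eqref{Equ:ShermanMorrisonWoodbury} rather than a rank-one Sherman--Morrison.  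Your filtration, as stated, is also internally inconsistent: revealing both the $k$-th row and $k$-th column of $\blmat{Y}{n}$ is a rank-two change (the diagonal blocks are zero, so row $k$ and column $k$ carry disjoint entries from two different factor matrices), not the rank-one column deletion you invoke.

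The payoff of the paper's choice is that every martingale step treats all $m$ blocks symmetrically, which makes the covariance computation genuinely tractable.  Rather than setting up and solving an $m\times m$ linear system as you propose, the paper first passes (via Vitali's theorem) to the antiderivative in $z$ and $\bar w$, so that the target is $\sum_k \mathcal{T}_{n,k}(z,\bar w)$ with $\mathcal{T}_{n,k}$ built from $\mathcal{G}_n^{(k)}$ rather than $(\mathcal{G}_n^{(k)})^2$.  A single resolvent-expansion step (Lemma \ref{Lem:Iteration}) then shows that iterating once shifts the block-projection index $\mathcal{D}_{i-b}\mapsto\mathcal{D}_{i-b-1}$ and picks up a factor $\tfrac{1}{z\bar w}\tfrac{k-1}{n}$; after exactly $m$ iterations one returns to the starting expression plus an additive $\tfrac{m}{(z\bar w)^m}(\tfrac{k-1}{n})^{m-1}$, giving a scalar self-consistent equation whose solution is a Riemann sum for $-\ln(1-(z\bar w)^{-m})$.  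Differentiating back recovers $m^2(z\bar w)^{m-1}/((z\bar w)^m-1)^2$.  This Vitali-plus-iteration device is the concrete mechanism behind the ``system of $m$ recursive equations'' you anticipate, and it is precisely where the paper's filtration choice pays off; with your one-column-at-a-time filtration the same cyclic cancellation is harder to expose.
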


To prove Theorem \ref{Thm:FiniteDimDist}, we first make a sequence of reductions inspired by the proofs in \cite{RiS,OR:CLT}. First, recall that by the Cramer--Wold theorem, it is sufficient to prove the convergence of an arbitrary linear combination of the components of the vector in question. Ergo, by the Cramer--Wold theorem, it is sufficient to show that 
\begin{equation}
\sum_{l=1}^{L}(\alpha_{l}\Xi_{n}(z_{l})+\beta_{l}\overline{\Xi_{n}(z_{l})})
\label{Equ:LinearCombo}
\end{equation}
converges in distribution to 
\begin{equation}
\sum_{l=1}^{L}(\alpha_{l}\Xi(z_{l})+\beta_{l}\overline{\Xi(z_{l})})
\label{Equ:LinearComboLimit}
\end{equation}
for $\alpha_{l},\beta_{l}\in\C$ such that \eqref{Equ:LinearCombo} is real. As stated in Lemma \ref{Lem:IntOfGaussIsGauss}, since $\overline{\Xi(z)}=\Xi(\overline{z})$, it is sufficient to compute $\E[\Xi(z_{i})\overline{\Xi(z_{j})}]$ in order to characterize the covariance structure of $	\left(\Xi(z_{1}),\Xi(z_{2}),\dots,\Xi(z_{L})\right)$.
\begin{Details} 
\[\E\left[\Xi_{n}(z_{i})\Xi_{n}(z_{i})\right]\text{ and }\E\left[\Xi_{n}(z_{i})\overline{\Xi_{n}(z_{i})}\right]\]
in order to fully characterize the variance of this component. However, since the entries in the original matrices are assumed to be real, $\overline{\Xi_{n}(z_{i})}=\Xi_{n}(\overline{z_{i}})$. In addition, since $z_{i}$ is a point on the contour $\mathcal{C}$ around the boundary of the disk or radius $1+\delta$, $\overline{z_{i}}$ is just another point on the contour. Therefore, computing $\E\left[\Xi_{n}(z_{i})\overline{\Xi_{n}(z_{i})}\right]$ is equivalent to computing $\E\left[\Xi_{n}(z_{i})\Xi_{n}(z_{j})\right]$. Finally, since setting $i=j$ covers the first case as well, it is sufficient to calculate only $\E\left[\Xi_{n}(z_{i})\Xi_{n}(z_{j})\right]$ in order to characterize the distribution of $\Xi_{n}(z_{i})$. To characterize the distribution of the entire vector, we also must calculate the covariance between two components, which comes down to computing
\[\E\left[\Xi_{n}(z_{i})\Xi_{n}(z_{j})\right]\text{ and }\E\left[\Xi_{n}(z_{i})\overline{\Xi_{n}(z_{j})}\right]\]
for some fixed $1\leq i,j\leq L$. However, by the same argument as before, this reduces again to computing only $\E\left[\Xi_{n}(z_{i})\Xi_{n}(z_{j})\right]$.
\end{Details}

\begin{remark}
	\label{Remark:ComplexCase2}
	In the case where the atom random variables are complex-valued, we would need to compute $\E[\Xi(z_{i})\Xi(z_{j})],$ and $\E[\Xi(z_{i})\overline{\Xi(z_{j})}]$ in order to characterize the covariance. 
\end{remark}
In order to prove Theorem \ref{Thm:FiniteDimDist}, we will express the sum in \eqref{Equ:LinearCombo} as a martingale difference sequence. Let $c_{k}$ denote the $k$th column of $\blmat{Y}{n}$ and define the $\sigma$-algebras
\begin{equation}
\mathcal{F}_{k}:=\sigma(c_{1},\dots,c_{k},c_{n+1},\dots,c_{n+k},\dots,c_{(m-1)n+1},\dots,c_{(m-1)n+k})
\label{Def:F_{k}}
\end{equation}
 for $1\leq k\leq n$. Note that $\mathcal{F}_{k}$ is the $\sigma$-algebra generated by the first $k$ columns of each of the $n\times n$ blocks of $\blmat{Y}{n}$. 
 \begin{Details} 
 	Note that each subsequent $\sigma$-algebra is generated with an additional $m$ columns than the previous one.
 \end{Details}
Define $\mathcal{F}_{0}$ to be the trivial $\sigma$-algebra and note that $\mathcal{F}_{0}\subseteq \mathcal{F}_{1}\subseteq \dots \subseteq \mathcal{F}_{n}$. Then define the conditional expectation $\E_{k}[\;\cdot\;]:=\E[\;\cdot\;|\mathcal{F}_{k}]$ and observe that by definition of the $\sigma$-algebras, $\E_{0}[\;\cdot\;]=\E[\;\cdot\;]$ and \ifdetail $\E_{n}[\;\cdot\;]$ is the conditional expectation given all columns, and hence\fi  $\E_{n}[\blmat{Y}{n}]=\blmat{Y}{n}$.

Also define $\blmat{Y}{n}^{(k)}$ to be the matrix $\blmat{Y}{n}$ with the columns $c_{k},c_{n+k},c_{2n+k},\dots, c_{(m-1)n+k}$ replaced with zeros. Note that $\blmat{Y}{n}^{(k)}$ can be viewed as the matrix $\blmat{Y}{n}$ with the $k$th column in each block replaced by zeros. \ifdetail and hence has $m$ columns that are filled with zeros\fi By Corollary \ref{Cor:Omega_nkOverwhelming}, for every $\delta>0$, there exists some $c>0$ such that the event 
\begin{equation}
\Omega_{n,k}:=\left\{\inf_{|z|> 1+\delta/2}s_{mn}\left(\blmat{Y}{n}^{(k)}-zI\right)\geq c\right\}
\label{Def:Omega_{n,k}}
\end{equation} 
holds with overwhelming probability. Finally, define the resolvent
\begin{equation}
\mathcal{G}_{n}^{(k)}(z):=\left(\blmat{Y}{n}^{(k)}-zI\right)^{-1}.
\label{Def:G^{(k)}_{n}}
\end{equation} 

The following lemma follows from an application of Proposition \ref{Prop:LargeAndSmallSingVals}.

\begin{lemma}
	Define the events $\Omega_{n}$ and $\Omega_{n,k}$ as in \eqref{Def:Omega_n} and \eqref{Def:Omega_{n,k}} respectively. Then there exist a constant $C>0$ such that, for all $z\in\mathcal{C}$, $\lnorm \mathcal{G}_{n}(z)\rnorm \leq C$  surely on $\Omega_{n}$ and $\lnorm \mathcal{G}^{(k)}_{n}(z)\rnorm \leq C$ surely on $\Omega_{n,k}$. 
	\label{Lem:G_nkBounded}
	\label{Lem:G_nBounded}
\end{lemma}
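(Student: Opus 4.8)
The plan is to obtain both bounds as immediate consequences of Proposition \ref{Prop:LargeAndSmallSingVals}. The one geometric fact that makes everything work is that the contour $\mathcal{C}$, being the boundary of $D_\delta = \{z \in \C : |z| \leq 1+\delta\}$, is exactly the circle $\{z \in \C : |z| = 1+\delta\}$; since $1+\delta > 1+\delta/2$, we have the inclusion $\mathcal{C} \subset \{z \in \C : |z| > 1+\delta/2\}$. Thus the region on which the least singular value of $\blmat{Y}{n} - zI$ (resp. $\blmat{Y}{n}^{(k)} - zI$) is controlled inside the event $\Omega_n$ (resp. $\Omega_{n,k}$) contains the entire contour $\mathcal{C}$.

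Concretely, I would first handle $\mathcal{G}_n(z)$. Fix a realization in $\Omega_n$. By the definition \eqref{Def:Omega_n} of $\Omega_n$ and the inclusion above, $s_{mn}(\blmat{Y}{n} - zI) \geq c$ for every $z \in \mathcal{C}$. Applying Proposition \ref{Prop:LargeAndSmallSingVals} with $M = \blmat{Y}{n}$ (an $mn \times mn$ matrix, so that its smallest singular value is $s_{mn}$) and $E = \mathcal{C}$ then yields $\|\mathcal{G}_n(z)\| = \|(\blmat{Y}{n} - zI)^{-1}\| \leq 1/c$ for all $z \in \mathcal{C}$, and this holds for every realization in $\Omega_n$, i.e. surely on $\Omega_n$. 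The argument for $\mathcal{G}_n^{(k)}(z) = (\blmat{Y}{n}^{(k)} - zI)^{-1}$ is verbatim the same, with $\blmat{Y}{n}$ and $\Omega_n$ replaced by $\blmat{Y}{n}^{(k)}$ and $\Omega_{n,k}$, using the least singular value bound built into the definition \eqref{Def:Omega_{n,k}}.

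It only remains to produce a single constant $C$ that serves for all of these bounds. Since Corollary \ref{Cor:Omega_nkOverwhelming} (together with the analogous statement for $\Omega_n$) lets us take the same constant $c > 0$ in the definitions of $\Omega_n$ and of all the $\Omega_{n,k}$, $1 \leq k \leq n$, we may simply set $C := 1/c$. There is no real obstacle in this argument; the only point requiring a moment's care is that Proposition \ref{Prop:LargeAndSmallSingVals} is a deterministic statement, so the resulting resolvent bounds hold surely (not merely with overwhelming probability) on the respective events, which is exactly what is asserted in the lemma.
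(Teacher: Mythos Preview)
Your proposal is correct and follows exactly the approach indicated in the paper, which simply states that the lemma ``follows from an application of Proposition~\ref{Prop:LargeAndSmallSingVals}.'' Your write-up makes explicit the one geometric observation needed, namely $\mathcal{C}\subset\{|z|>1+\delta/2\}$, and then applies the proposition deterministically on the events $\Omega_n$ and $\Omega_{n,k}$; there is nothing to add.
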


With these definitions, we may write 
\begin{align*}
\Xi_{n}(z) &= \tr \mathcal{G}_{n}(z)\oindicator{\Omega_{n}}-\E\left[\tr \mathcal{G}_{n}(z)\oindicator{\Omega_{n}}\right]\\
\ifdetail&=\E_{n}[\tr \mathcal{G}_{n}(z)\oindicator{\Omega_{n}}]-\E_{n-1}[\tr \mathcal{G}_{n}(z)\oindicator{\Omega_{n}}]\\\fi 
\ifdetail&\quad+\E_{n-1}[\tr \mathcal{G}_{n}(z)\oindicator{\Omega_{n}}]-\E_{n-2}[\tr \mathcal{G}_{n}(z)\oindicator{\Omega_{n}}]\\\fi 
\ifdetail&\quad+\dots\\\fi 
\ifdetail&\quad+\E_{1}[\tr \mathcal{G}_{n}(z)\oindicator{\Omega_{n}}]-\E_{0}[\tr \mathcal{G}_{n}(z)\oindicator{\Omega_{n}}]\\\fi 
&=\sum_{k=1}^{n}\left(\E_{k}[\tr \mathcal{G}_{n}(z)\oindicator{\Omega_{n}}]-\E_{k-1}[\tr \mathcal{G}_{n}(z)\oindicator{\Omega_{n}}]\right)\\
\ifdetail &=\sum_{k=1}^{n}(\E_{k}-\E_{k-1})[\tr \mathcal{G}_{n}(z)\oindicator{\Omega_{n}}]\\\fi
&=\sum_{k=1}^{n}Z_{n,k}(z)
\end{align*}
where we define
\begin{equation}
Z_{n,k}(z):=(\E_{k}-\E_{k-1})[\tr \mathcal{G}_{n}(z)\oindicator{\Omega_{n}}].
\label{Def:Z_{n,k}}
\end{equation}
With this notation, we can rewrite the linear combination from \eqref{Equ:LinearCombo} as 
\begin{Details}
\begin{align*}
\sum_{l=1}^{L}(\alpha_{l}\Xi_{n}(z_{l})+\beta_{l}\overline{\Xi_{n}(z_{l})})
&=\sum_{l=1}^{L}\left(\alpha_{l}\sum_{k=1}^{n}Z_{n,k}(z_{l})+\beta_{l}\overline{\sum_{k=1}^{n}Z_{n,k}(z_{l})}\right)\\
\ifdetail &=\sum_{l=1}^{L}\sum_{k=1}^{n}\left(\alpha_{l}Z_{n,k}(z_{l})+\beta_{l}\overline{Z_{n,k}(z_{l})}\right)\\ \fi
&=\sum_{k=1}^{n}\sum_{l=1}^{L}\left(\alpha_{l}Z_{n,k}(z_{l})+\beta_{l}\overline{Z_{n,k}(z_{l})}\right).
\end{align*}
\end{Details} 
\begin{equation*}
\sum_{l=1}^{L}(\alpha_{l}\Xi_{n}(z_{l})+\beta_{l}\overline{\Xi_{n}(z_{l})})=\sum_{k=1}^{n}\sum_{l=1}^{L}\left(\alpha_{l}Z_{n,k}(z_{l})+\beta_{l}\overline{Z_{n,k}(z_{l})}\right).
\end{equation*}
Let $M_{n,k}:=\sum_{l=1}^{L}\left(\alpha_{l}Z_{n,k}(z_{l})+\beta_{l}\overline{Z_{n,k}(z_{l})}\right)$ for any fixed integer $L>0$, and $z_{i}\in\mathcal{C}$, and any $\alpha_{i},\beta_{i}\in\C$ such that $M_{n,k}$ is real and denote
\begin{equation} 
M_{n}:=\sum_{k=1}^{n}M_{n,k}.
\label{Def:M_{n}}
\end{equation} 
\begin{Details} 
	Notice that in fact $M_{n}$ is the linear combination of the finite dimensional distributions, so the goal of this section is to prove that $M_{n}$ converges to a mean-zero Gaussian with appropriate variance. 
\end{Details}
In order to simplify computations, it will be beneficial to work with a slightly different expression in which some reductions are made.

\begin{lemma}
	Define $M_{n}$ as in \eqref{Def:M_{n}}, define $U_{k}$ to be the $mn\times m$ matrix which contains as its columns $c_{k},c_{n+k},\dots,c_{(m-1)n+k}$, and define $V_{k}$ to be the $mn\times m$ matrix which contains as its columns $e_{k},e_{n+k},\dots, e_{(m-1)n+k}$ where $e_{1},\dots ,e_{mn}$ denote the standard basis elements of $\C^{mn}$. Define the martingale difference sequence 
	\begin{equation*}
	\breve{M}_{n}:=\sum_{k=1}^{n}\breve{M}_{n,k}=\sum_{k=1}^{n}\left(\sum_{l=1}^{L}\alpha_{l}\breve{Z}_{n,k}(z_{l})+\beta_{l}\overline{\breve{Z}_{n,k}(z_{l})}\right),
	\end{equation*}
	and
	\begin{equation}
	\breve{Z}_{n,k}(z):=-\E_{k}\left[\tr\left(V_{k}^{T}(\mathcal{G}_{n}^{(k)}(z))^{2}U_{k}\right)\oindicator{\Omega_{n,k}}\right].
	\label{Def:breveZ}
	\end{equation}
	Then, as $n\rightarrow\infty$, if $\breve{M}_{n}$ converges in distribution, then $M_{n}$ also converges to the same distributional limit. 
	\label{Lem:ReductionZ}
\end{lemma}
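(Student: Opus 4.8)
The plan is to prove that $M_n-\breve{M}_n\to 0$ in $L^2$; since $L^2$‑convergence implies convergence in probability, the conclusion then follows from Slutsky's theorem. The first step is to note that both $\{Z_{n,k}(z)\}_{k=1}^n$ and $\{\breve{Z}_{n,k}(z)\}_{k=1}^n$ are martingale difference sequences relative to $\{\mathcal{F}_k\}$. For $Z_{n,k}(z)=(\E_k-\E_{k-1})[\,\cdot\,]$ this is automatic; for $\breve{Z}_{n,k}(z)$ it holds because $U_k$ (whose columns are the $k$‑th columns of the blocks of $\mathcal{Y}_n$) has mean zero and is independent of $\mathcal{F}_{k-1}$ and of the pair $(\mathcal{G}_n^{(k)}(z),\Omega_{n,k})$, so that $\E_{k-1}[\tr(V_k^T(\mathcal{G}_n^{(k)}(z))^2U_k)\oindicator{\Omega_{n,k}}]=0$ and hence $\breve{Z}_{n,k}(z)=-(\E_k-\E_{k-1})[\tr(V_k^T(\mathcal{G}_n^{(k)}(z))^2U_k)\oindicator{\Omega_{n,k}}]$. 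Then $M_{n,k}-\breve{M}_{n,k}$ is again a martingale difference, so by $L^2$‑orthogonality $\E|M_n-\breve{M}_n|^2=\sum_{k=1}^n\E|M_{n,k}-\breve{M}_{n,k}|^2\ll_L\sum_{l}(|\alpha_l|^2+|\beta_l|^2)\sum_{k=1}^n\E|Z_{n,k}(z_l)-\breve{Z}_{n,k}(z_l)|^2$, and it suffices to show $\sum_{k=1}^n\E|Z_{n,k}(z)-\breve{Z}_{n,k}(z)|^2=o(1)$ for each fixed $z\in\mathcal{C}$.

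To estimate a single summand, fix $z,k$ and write $\mathcal{Y}_n-zI=(\mathcal{Y}_n^{(k)}-zI)+U_kV_k^T$. I would introduce the event $\mathcal{U}_{n,k}:=\{\|U_k\|_2\le\sqrt{2m}\}$ and set $\mathcal{E}_k:=\Omega_n\cap\Omega_{n,k}\cap\mathcal{U}_{n,k}$; $\mathcal{U}_{n,k}$ holds with overwhelming probability since $\|U_k\|_2^2$ is a sum of $m$ normalized sums of $n$ independent mean‑one terms that are bounded by $16n^{1-2\eps}$ (by the truncation bound $|\hat\xi|\le 4n^{1/2-\eps}$ of Lemma \ref{Lem:TruncateLeveln}) and have variance $O(1)$ (by $\E|\hat\xi|^4=O(1)$), so Bernstein's inequality bounds its complement by $\exp(-\Omega(n^{2\eps}))$, uniformly in $k$. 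On $\mathcal{E}_k$ the Sherman–Morrison–Woodbury formula \eqref{Equ:ShermanMorrisonWoodbury} (with Lemma \ref{Lem:G_nBounded} and cyclicity of the trace) gives
\[ \tr\mathcal{G}_n(z)=\tr\mathcal{G}_n^{(k)}(z)-\tr\bigl((I_m+B_k)^{-1}C_k\bigr),\qquad (I_m+B_k)^{-1}=I_m-V_k^T\mathcal{G}_n(z)U_k, \]
where $B_k:=V_k^T\mathcal{G}_n^{(k)}(z)U_k$ and $C_k:=V_k^T(\mathcal{G}_n^{(k)}(z))^2U_k$; in particular $\|(I_m+B_k)^{-1}\|\le 1+\|\mathcal{G}_n(z)\|\,\|U_k\|_2^2=O(1)$ on $\mathcal{E}_k$.

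The matrix $\breve{Z}_{n,k}$ is built precisely so that, modulo replacing $(I_m+B_k)^{-1}$ by $I_m$, the difference $Z_{n,k}-\breve{Z}_{n,k}$ equals the error in the display above: using $(I_m+B_k)^{-1}C_k-C_k=-(I_m+B_k)^{-1}B_kC_k$ and the fact that $\tr\mathcal{G}_n^{(k)}(z)\oindicator{\Omega_{n,k}}$ is annihilated by $\E_k-\E_{k-1}$ (it does not depend on the $k$‑th column of any block), one gets $Z_{n,k}(z)-\breve{Z}_{n,k}(z)=(\E_k-\E_{k-1})[\tr((I_m+B_k)^{-1}B_kC_k)\oindicator{\mathcal{E}_k}]+\mathrm{Err}_k$, where $\mathrm{Err}_k$ collects the errors from replacing $\oindicator{\Omega_n}$ and $\oindicator{\Omega_{n,k}}$ by the common indicator $\oindicator{\mathcal{E}_k}$. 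Since the traces occurring in $Z_{n,k}$ and in $\breve{Z}_{n,k}$ are $O(n)$ on $\Omega_n$ and $\Omega_{n,k}$ respectively, $\mathcal{E}_k\subseteq\Omega_n\cap\Omega_{n,k}$, and $\Omega_n\setminus\mathcal{E}_k$, $\Omega_{n,k}\setminus\mathcal{E}_k$ have probability $O(n^{-C})$ for every $C$, one has $\E|\mathrm{Err}_k|^2=o(1/n)$ uniformly in $k$. Then, by $L^2$‑contractivity of $\E_k-\E_{k-1}$, the $O(1)$ bound on $\|(I_m+B_k)^{-1}\|$, $\mathcal{E}_k\subseteq\Omega_{n,k}$, Cauchy–Schwarz, and $m$ fixed,
\[ \E|Z_{n,k}(z)-\breve{Z}_{n,k}(z)|^2\ll \bigl(\E[\|B_k\|^4\oindicator{\Omega_{n,k}}]\bigr)^{1/2}\bigl(\E[\|C_k\|^4\oindicator{\Omega_{n,k}}]\bigr)^{1/2}+o(1/n). \]

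Finally, each entry of $B_k$ is a linear form $e_i^T\mathcal{G}_n^{(k)}(z)c_j$ in a column $c_j$ with i.i.d.\ entries of mean zero, variance $1/n$, and fourth moment $O(n^{-2})$, independent of $\mathcal{G}_n^{(k)}(z)$ and of $\oindicator{\Omega_{n,k}}$; conditioning on $\mathcal{Y}_n^{(k)}$, the elementary fourth‑moment bound for linear forms together with $\|\mathcal{G}_n^{(k)}(z)\|\le C$ on $\Omega_{n,k}$ gives $\E[|(B_k)_{ab}|^4\oindicator{\Omega_{n,k}}]=O(n^{-2})$, hence $\E[\|B_k\|^4\oindicator{\Omega_{n,k}}]=O(n^{-2})$, and likewise for $C_k$ (with $(\mathcal{G}_n^{(k)}(z))^2$ in place of $\mathcal{G}_n^{(k)}(z)$). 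Thus $\E|Z_{n,k}(z)-\breve{Z}_{n,k}(z)|^2=O(n^{-2})+o(1/n)$ uniformly in $k$, and summing over $1\le k\le n$ yields $\sum_k\E|Z_{n,k}(z)-\breve{Z}_{n,k}(z)|^2=o(1)$, as required. The main obstacle is the bookkeeping with the indicator functions — simultaneously passing among the events $\Omega_n$, $\Omega_{n,k}$, and $\mathcal{U}_{n,k}$ while preserving the martingale‑difference structure — together with securing the $O(1)$ bound on $\|(I_m+B_k)^{-1}\|$, which is exactly what forces the introduction of $\mathcal{U}_{n,k}$.
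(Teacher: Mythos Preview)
Your proof is correct and follows the same overall architecture as the paper: reduce to $\sum_k\E|Z_{n,k}-\breve Z_{n,k}|^2=o(1)$ via the martingale structure, subtract $\tr\mathcal{G}_n^{(k)}\oindicator{\Omega_{n,k}}$, apply Sherman--Morrison--Woodbury to reach $-\tr\bigl((I_m+B_k)^{-1}C_k\bigr)$, and then replace $(I_m+B_k)^{-1}$ by $I_m$. The difference is tactical but worth recording. The paper controls $\|(I_m+B_k)^{-1}\|$ by introducing the event $Q_{n,k}=\{\|B_k\|\le 1/2\}$ and proving it holds with overwhelming probability via high moments of $\|B_k\|$ (Lemmas~\ref{Lem:QOverwhelming} and~\ref{Lem:BoundingVGU}); it then iterates the expansion \eqref{Equ:ExpansionOfMatrixErrorTerm} twice. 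You instead use the exact identity $(I_m+B_k)^{-1}=I_m-V_k^T\mathcal{G}_n(z)U_k$ and bound the right side on the simpler event $\mathcal{U}_{n,k}=\{\|U_k\|_2\le\sqrt{2m}\}$, which needs only Bernstein's inequality; a single iteration plus the elementary fourth-moment bound for the entries of $B_k$ and $C_k$ then suffices. Your route is slightly more self-contained (no high-$p$ moment bounds on $B_k$), while the paper's choice of $Q_{n,k}$ pays off later because the same estimates on $\|V_k^T\mathcal{G}_n^{(k)}U_k\|$ are reused in the variance and tightness arguments. One cosmetic point: your displayed bound $1+\|\mathcal{G}_n(z)\|\,\|U_k\|_2^2$ should read $1+\|\mathcal{G}_n(z)\|\,\|U_k\|_2$ (since $\|V_k\|=1$), but the $O(1)$ conclusion on $\mathcal{E}_k$ is unaffected.
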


\begin{Details}
	Before proving this lemma, note that the difference between $M_{n}$ and $\breve{M}_{n}$ relies completely on the difference between $Z_{n,k}(z)$ and $\breve{Z}_{n,k}(z)$. If one can show that the difference of $M_{n}$ and $\breve{M}_{n}$ converges to zero in probability, then these two random variables will converge in distribution to the same limit. 
	It is sufficient to prove that \[\E\left|M_{n}-\breve{M}_{n}\right|^{2}=o(1).\] 
	The purpose of Lemma \ref{Lem:ReductionZ} is to introduce independence in each term. Namely, observe that $\mathcal{G}_{n}^{(k)}(z)\oindicator{\Omega_{n,k}}$ is independent of the columns $c_{k},c_{n+k},\dots, c_{(m-1)n+k}$. This independence will be helpful in proving Theorem \ref{Thm:FiniteDimDist}. 
\end{Details}

We first develop some results we will need in the proof of Lemma \ref{Lem:ReductionZ}. Define the event
\begin{equation}
Q_{n,k}(z):=\left\{\lnorm V_{k}^{T}\mathcal{G}_{n}^{(k)}(z)U_{k}\oindicator{\Omega_{n,k}} \rnorm\leq 1/2\right\}.
\label{Def:EventQ}
\end{equation}
We will also need the following lemma. 
\begin{lemma}
	\label{Lem:QOverwhelming}
	Define the event $Q_{n,k}(z)$ as in \eqref{Def:EventQ}. Then, uniformly for any $z\in \mathcal{C}$, $Q_{n,k}$ holds with overwhelming probability 
\end{lemma}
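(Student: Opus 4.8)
\textbf{Proof proposal for Lemma \ref{Lem:QOverwhelming}.}
The plan is to reduce the operator-norm estimate to an entrywise estimate on the $m\times m$ matrix $V_k^T\mathcal{G}_n^{(k)}(z)U_k$ and then exploit the independence of $\mathcal{G}_n^{(k)}(z)$ from the columns that make up $U_k$. First, off the event $\Omega_{n,k}$ the matrix $V_k^T\mathcal{G}_n^{(k)}(z)U_k\oindicator{\Omega_{n,k}}$ is identically zero, so $Q_{n,k}(z)$ holds trivially there; hence it suffices to bound $\P\big(\Omega_{n,k}\cap\{\lnorm V_k^T\mathcal{G}_n^{(k)}(z)U_k\rnorm>1/2\}\big)$. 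Since $m$ is fixed, for any $m\times m$ matrix $A$ one has $\lnorm A\rnorm\le \lnorm A\rnorm_2\le m\max_{a,b}|A_{ab}|$, so it is enough to show that for each pair $(a,b)$,
\[
\P\!\left(\Omega_{n,k}\cap\Big\{\big|\big(V_k^T\mathcal{G}_n^{(k)}(z)U_k\big)_{ab}\big|>\tfrac{1}{2m}\Big\}\right)\le C\exp(-cn^{\varepsilon}),
\]
uniformly in $z\in\mathcal{C}$, and then take a union bound over the $m^2$ pairs $(a,b)$.

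Next I would identify the entry. By the block structure of $\blmat{Y}{n}$, the column of $\blmat{Y}{n}$ with index $(b-1)n+k$ is supported on a single block of $n$ coordinates, where it equals $n^{-1/2}$ times the $k$-th column of one of the factors $\hat{X}_{n,j}$, whose entries are iid copies of a truncated atom variable $\hat{\xi}_j$; by Lemma \ref{Lem:TruncateLeveln} these have mean zero, unit variance, and satisfy $|\hat{\xi}_j|\ll n^{1/2-\varepsilon}$ almost surely. Moreover $\blmat{Y}{n}^{(k)}$ is obtained from $\blmat{Y}{n}$ by zeroing exactly the columns $c_k,c_{n+k},\dots,c_{(m-1)n+k}$, i.e.\ the $k$-th column of each $\hat{X}_{n,j}$; the remaining entries of $\blmat{Y}{n}$ are independent of these columns, so $\mathcal{G}_n^{(k)}(z)$ (and the event $\Omega_{n,k}$, which is $\mathcal{G}_n^{(k)}(z)$-measurable) is independent of $U_k$. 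Writing $\gamma^{(a)}$ for the row of $\mathcal{G}_n^{(k)}(z)$ with index $(a-1)n+k$, one then has $\big(V_k^T\mathcal{G}_n^{(k)}(z)U_k\big)_{ab}=n^{-1/2}\sum_{\ell=1}^{n}\gamma^{(a)}_{\ell}\hat{x}_\ell$ for appropriate coordinates, where the $\hat{x}_\ell$ are iid mean-zero, unit-variance, $\ll n^{1/2-\varepsilon}$-bounded and independent of $\gamma^{(a)}$.

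Now condition on $\mathcal{G}_n^{(k)}(z)$. On $\Omega_{n,k}$, Lemma \ref{Lem:G_nkBounded} gives $\lnorm\mathcal{G}_n^{(k)}(z)\rnorm\le C$ for all $z\in\mathcal{C}$, hence $\sum_\ell|\gamma^{(a)}_\ell|^2\le C^2$. Thus, conditionally, $\big(V_k^T\mathcal{G}_n^{(k)}(z)U_k\big)_{ab}$ is a sum of independent mean-zero random variables with total conditional variance at most $C^2/n$ and with each summand bounded in modulus by $n^{-1/2}C\cdot O(n^{1/2-\varepsilon})=O(n^{-\varepsilon})$. Applying Bernstein's inequality to the real and imaginary parts (or, equivalently, bounding a high even conditional moment via Rosenthal's inequality and using the truncation bound $|\hat{\xi}_j|\ll n^{1/2-\varepsilon}$ to control the higher-moment term, then Markov) yields, on $\Omega_{n,k}$,
\[
\P\!\left(\big|\big(V_k^T\mathcal{G}_n^{(k)}(z)U_k\big)_{ab}\big|>\tfrac{1}{2m}\relmiddle|\mathcal{G}_n^{(k)}(z)\right)\le C\exp(-cn^{\varepsilon}).
\]
Taking expectations against $\oindicator{\Omega_{n,k}}$, summing over $(a,b)$, and recalling the reduction of the first paragraph gives $\P\big(Q_{n,k}(z)^c\big)\le C\exp(-cn^{\varepsilon})=O_{C'}(n^{-C'})$ for every $C'>0$; all estimates are uniform in $z\in\mathcal{C}$ since the resolvent bound on $\Omega_{n,k}$ is. The main technical point is to use a concentration inequality that exploits the $O(1/n)$ conditional variance together with the truncation level $n^{1/2-\varepsilon}$ (Bernstein, or a high-moment bound), rather than Markov's inequality with the second moment alone, which would only yield probability $1-o(1)$ instead of overwhelming probability.
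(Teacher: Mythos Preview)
Your proposal is correct. The reduction to entrywise bounds, the independence of $\mathcal{G}_n^{(k)}(z)\oindicator{\Omega_{n,k}}$ from $U_k$, and the use of the resolvent norm bound on $\Omega_{n,k}$ to control the conditional variance are exactly the right ingredients.

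The paper's proof uses the same reduction but packages the concentration step differently: rather than conditioning and applying Bernstein, it bounds the $2p$-th moment $\E\lnorm V_k^T\mathcal{G}_n^{(k)}(z)U_k\oindicator{\Omega_{n,k}}\rnorm^{2p}\ll_p n^{-2\varepsilon p+4\varepsilon-2}$ (this is Lemma~\ref{Lem:BoundingVGU}, proved by writing $|e^T\mathcal{G}_n^{(k)}c|^{2p}=|c^*(\mathcal{G}_n^{(k)})^*ee^T\mathcal{G}_n^{(k)}c|^p$ and invoking the quadratic-form moment bound of Lemma~\ref{Lem:conjLessThanConstant}) and then applies Markov with $p$ large. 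This is your parenthetical ``Rosenthal + Markov'' alternative. Your Bernstein argument is more direct and even gives a sub-polynomial tail $\exp(-cn^{\varepsilon})$; the paper's route has the advantage that the moment bound in Lemma~\ref{Lem:BoundingVGU} is stated separately and reused elsewhere (e.g.\ in the proof of Lemma~\ref{Lem:ReductionZ} and in the tightness argument of Section~\ref{Sec:Tightness}).
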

\begin{proof}
	Let $\alpha>0$ be arbitrary. We will prove that the complementary event holds with probability at most $O_{\alpha}(n^{-\alpha})$ uniformly for any $z\in\mathcal{C}$. By Markov's inequality and the forthcoming Lemma \ref{Lem:BoundingVGU}, uniformly for any $z\in\mathcal{C}$ and for any $p\geq 2$, 
	\begin{align*}
	\P\left(\lnorm V_{k}^{T}\mathcal{G}_{n}^{(k)}(z)U_{k}\oindicator{\Omega_{n,k}}\rnorm \geq 1/2\right)&\leq \frac{\E\lnorm V_{k}^{T}\mathcal{G}_{n}^{(k)}(z)U_{k}\oindicator{\Omega_{n,k}}\rnorm^{2p}}{(1/2)^{2p}}\\
	&\ll_{p} n^{-2\varepsilon p+4\varepsilon-2}.
	\end{align*}
	Selecting $p$ sufficiently large completes the proof. 
\end{proof}


\begin{lemma} \label{Lem:conjLessThanConstant}
	Let $A$ be an $mn\times mn$ Hermitian positive semidefinite matrix with rank at most $d$ for some positive constant $d$. Suppose that $\xi$ is a complex-valued random variable with mean zero, unit variance, $\E|\xi|^{4}=O(1)$, and which satisfies $|\xi| \leq n^{1/2-\varepsilon}$ almost surely for some constant $0<\varepsilon<1/2$. Let $S\subseteq [mn]$, and let $w = (w_i)_{i=1}^{mn}$ be a vector with the following properties:  
	\begin{enumerate}[label=(\roman*)]
		\item $\{w_i : i \in S \}$ is a collection of iid copies of $\xi$, 
		\item $w_{i}=0$ for $i\not\in S$.
	\end{enumerate} 
	Then for any $p\geq 2$,
	\begin{equation}
	\E\left|w^{*}Aw\right|^{p} \ll_{d,p}n^{(1-2\varepsilon)p+4\varepsilon-2}\lnorm A\rnorm ^{p}.
	\end{equation} 
\end{lemma}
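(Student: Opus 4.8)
The plan is to diagonalize $A$ and thereby reduce the bound to a moment estimate for a single linear form in the sparse vector $w$. Since $A$ is Hermitian, positive semidefinite, and of rank at most $d$, I would write $A=\sum_{l=1}^{d}\mu_{l}u_{l}u_{l}^{*}$ with $0\le\mu_{l}\le\lnorm A\rnorm$ and $u_{1},\dots,u_{d}$ orthonormal vectors in $\C^{mn}$. Then $w^{*}Aw=\sum_{l=1}^{d}\mu_{l}|u_{l}^{*}w|^{2}\le\lnorm A\rnorm\sum_{l=1}^{d}|u_{l}^{*}w|^{2}$, so by the convexity of $t\mapsto t^{p}$ (power mean inequality),
\[
\E|w^{*}Aw|^{p}\le\lnorm A\rnorm^{p}\,\E\Bigl(\sum_{l=1}^{d}|u_{l}^{*}w|^{2}\Bigr)^{p}\le d^{p-1}\lnorm A\rnorm^{p}\sum_{l=1}^{d}\E\bigl|u_{l}^{*}w\bigr|^{2p}.
\]
Hence it suffices to prove $\E|u^{*}w|^{2p}\ll_{p}n^{(1-2\varepsilon)p+4\varepsilon-2}$ for an arbitrary fixed unit vector $u=(u_{i})_{i=1}^{mn}$; note that $\sum_{i}|u_{i}|^{2}=1$ forces $|u_{i}|\le1$ for every $i$.

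Next I would estimate $\E|u^{*}w|^{2p}$. Writing $u^{*}w=\sum_{i\in S}\overline{u_{i}}\,w_{i}$, this is a sum of independent mean-zero random variables, so Rosenthal's inequality (applied to real and imaginary parts, or in its complex form) gives
\[
\E|u^{*}w|^{2p}\ll_{p}\Bigl(\sum_{i\in S}|u_{i}|^{2}\,\E|w_{i}|^{2}\Bigr)^{p}+\sum_{i\in S}|u_{i}|^{2p}\,\E|w_{i}|^{2p}.
\]
(Alternatively one avoids citing Rosenthal and obtains the same two terms by expanding $\E\bigl((u^{*}w)^{p}\,\overline{(u^{*}w)}^{\,p}\bigr)$ and discarding every summand in which some index appears exactly once, each of which vanishes by independence and $\E\xi=0$.) For the first term, $\E|w_{i}|^{2}=1$ and $\sum_{i\in S}|u_{i}|^{2}\le1$, so it is $O_{p}(1)$. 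For the second term, $\E|w_{i}|^{2p}=\E|\xi|^{2p}$, and using $|\xi|\le n^{1/2-\varepsilon}$ almost surely together with $\E|\xi|^{4}=O(1)$,
\[
\E|\xi|^{2p}=\E\bigl[|\xi|^{4}\,|\xi|^{2p-4}\bigr]\le\bigl(n^{1/2-\varepsilon}\bigr)^{2p-4}\,\E|\xi|^{4}\ll n^{(1-2\varepsilon)(p-2)}=n^{(1-2\varepsilon)p+4\varepsilon-2},
\]
while $\sum_{i\in S}|u_{i}|^{2p}\le\sum_{i\in S}|u_{i}|^{2}\le1$ since $|u_{i}|\le1$. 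Thus $\E|u^{*}w|^{2p}\ll_{p}1+n^{(1-2\varepsilon)p+4\varepsilon-2}$.

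Finally, for $p\ge2$ and $0<\varepsilon<1/2$ the exponent $(1-2\varepsilon)p+4\varepsilon-2$ is nonnegative: it equals $0$ at $p=2$ and is increasing in $p$ with slope $1-2\varepsilon>0$. Hence $1\le n^{(1-2\varepsilon)p+4\varepsilon-2}$ and $\E|u^{*}w|^{2p}\ll_{p}n^{(1-2\varepsilon)p+4\varepsilon-2}$; plugging this into the first display and absorbing the fixed factor $d^{p-1}$ yields $\E|w^{*}Aw|^{p}\ll_{d,p}n^{(1-2\varepsilon)p+4\varepsilon-2}\lnorm A\rnorm^{p}$, as claimed. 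The argument is essentially soft: the only point requiring care is the moment estimate for the linear form — justifying the Rosenthal-type split cleanly (or carrying out the combinatorial expansion and the vanishing of single-index terms) — together with the bookkeeping needed to recognize $(1-2\varepsilon)(p-2)$ as the target exponent and to confirm it dominates the constant term for all $p\ge2$.
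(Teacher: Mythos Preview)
Your proof is correct but takes a somewhat different route from the paper's. The paper restricts to the submatrix $A_{S\times S}$ and applies the Bai--Silverstein quadratic-form moment bound (Lemma~\ref{Lem:BilinearForms}) directly to $w_{S}^{*}A_{S\times S}w_{S}$, obtaining $\E|w^{*}Aw|^{p}\ll_{p}(\tr A_{S\times S})^{p}+\E|\xi|^{2p}\tr A_{S\times S}^{p}$; it then uses the rank hypothesis only at the end, via $\tr A_{S\times S}\ll_{d}\|A\|$ and $\tr A_{S\times S}^{p}\ll_{d}\|A\|^{p}$. You instead exploit the low rank at the outset through the spectral decomposition, reducing to $d$ linear forms $u_{l}^{*}w$ and handling each with Rosenthal's inequality. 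Both arguments arrive at the same moment estimate $\E|\xi|^{2p}\ll n^{(1-2\varepsilon)(p-2)}$ from the truncation and fourth-moment bound. Your approach is slightly more elementary in that it avoids the quadratic-form lemma and makes the role of the rank more transparent, at the cost of an extra diagonalization step; the paper's is shorter once that lemma is available.
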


\begin{proof}
	Let $w_{S}$ denote the $|S|$-vector which contains entries $w_{i}$ for $i\in S$, and let $A_{S\times S}$ denote the $|S|\times|S|$ matrix which has entries $A_{(i,j)}$ for $i,j\in S$. Then we observe
	\begin{equation*}
	w^{*}Aw = \sum_{i,j}\bar{w}_{i}A_{(i,j)}w_{j}=  w_{S}^{*}A_{S\times S}w_{S}.
	\end{equation*}
	By Lemma \ref{Lem:BilinearForms}, we get
	\begin{align*}
	\E\left|w^{*}Aw\right|^{p} &\ll_{p}\left( \tr A_{S\times S}\right)^{p}+\E|\xi|^{2p}\tr A^{p}_{S\times S}\\ 
	&= \left(\tr A_{S \times S} \right)^{p}+\E\left[|\xi|^{4}|\xi|^{2p-4}\right]\text{tr}A_{S \times S}^{p}\\
	&\leq \left(\tr A_{S \times S} \right)^{p}+n^{(1-2\varepsilon)p+4\varepsilon-2}\E|\xi|^{4}\text{tr}A_{S \times S}^{p}.
	\end{align*}
	Since the rank of $A_{S \times S}$ is at most $d$, $\tr A_{S \times S} \ll_{d} \| A \|$ and $\tr A_{S \times S}^p\ll_{d} \|A \|^p,$	where we used the fact that the operator norm of a matrix bounds the operator norm of any sub-matrix. We conclude that 
	\begin{equation*}
	\E\left|w^{*}Aw\right|^{p} \ll_{d,p} \lnorm A\rnorm^{p}+n^{(1-2\varepsilon)p+4\varepsilon-2}\E|\xi|^{4} \lnorm A\rnorm^p  \ll_{d,p} n^{(1-2\varepsilon)p+4\varepsilon-2}\E|\xi|^{4}\lnorm A\rnorm ^{p},
	\end{equation*}
	as desired.
\end{proof}


\begin{lemma} \label{lem:BilinearFormWithDifferentVectors}
	Let $A$ be a deterministic complex $mn\times mn$ matrix for some fixed $m>0$. Suppose that $\xi$ is a complex-valued random variable with mean zero, unit variance, finite moments of all orders. Let $S,R\subseteq [mn]$, and let $w = (w_i)_{i=1}^{mn}$ and $t = (t_i)_{i=1}^{mn}$ be independent vectors with the following properties:  
	\begin{enumerate}[label=(\roman*)]
		\item $\{w_i : i \in S \}$ and $\{t_j : j \in R \}$ are independent collections of iid copies of $\xi$, 
		\item $w_{i}=0$ for $i\not\in S$, and  $t_{j}=0$ for $j\not\in R$.
	\end{enumerate} 
	Then for any $p\geq 1$,
	\begin{equation}
	\E\left|w^{*}At\right|^{2p}\ll_{p}\E|\xi|^{4p}(\tr(A^{*}A))^{p}.
	\end{equation}
\end{lemma}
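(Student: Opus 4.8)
The plan is to bound $\E|w^{*}At|^{2p}$ by a two-stage conditioning argument. First I would condition on the vector $t$, so that $w^{*}At$ becomes a \emph{linear} form in the independent entries $\{w_i : i \in S\}$; then I would take expectation over $t$, where what remains is a \emph{quadratic} form in $t$ that is handled by the quadratic-form moment bound already proved in Lemma \ref{Lem:BilinearForms}.

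For the first stage, write $b_i := (At)_i$ and note that, since $w_i = 0$ for $i \notin S$, one has $w^{*}At = \sum_{i \in S}\bar w_i b_i$. With $t$ frozen, the $\{w_i : i\in S\}$ are independent, mean zero, with $\E|w_i|^2 = 1$ and $\E|w_i|^{2p} = \E|\xi|^{2p}$, so a Rosenthal/Marcinkiewicz--Zygmund inequality (applied to real and imaginary parts) gives
\begin{equation*}
\E_w|w^{*}At|^{2p}\ll_p\Big(\sum_{i\in S}|b_i|^2\Big)^{p}+\E|\xi|^{2p}\sum_{i\in S}|b_i|^{2p}.
\end{equation*}
Since $\sum_{i\in S}|b_i|^2\le\sum_{i=1}^{mn}|(At)_i|^2=t^{*}A^{*}At$, since $\sum_{i\in S}|b_i|^{2p}\le(\sum_{i\in S}|b_i|^2)^{p}$, and since $\E|\xi|^{2p}\ge(\E|\xi|^2)^p=1$ by Jensen, this collapses to $\E_w|w^{*}At|^{2p}\ll_p\E|\xi|^{2p}(t^{*}A^{*}At)^{p}$.

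For the second stage I would take expectation in $t$ and apply Lemma \ref{Lem:BilinearForms} to $B:=A^{*}A$, which is Hermitian positive semidefinite; since $t$ is supported on $R$, the relevant object is the principal submatrix $B_{R\times R}$, which is also positive semidefinite. This yields $\E_t[(t^{*}A^{*}At)^p]\ll_p(\tr B_{R\times R})^p+\E|\xi|^{2p}\tr(B_{R\times R}^p)$. Using that the diagonal entries and eigenvalues of a positive semidefinite matrix are nonnegative, one has $\tr B_{R\times R}\le\tr B=\tr(A^{*}A)$ and $\tr(B_{R\times R}^p)=\sum_i\lambda_i^p\le(\sum_i\lambda_i)^p\le(\tr B)^p$ for $p\ge1$, and $\E|\xi|^{2p}\ge1$ again, so $\E_t[(t^{*}A^{*}At)^p]\ll_p\E|\xi|^{2p}(\tr(A^{*}A))^p$. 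Combining the two stages gives $\E|w^{*}At|^{2p}\ll_p(\E|\xi|^{2p})^2(\tr(A^{*}A))^p$, and $(\E|\xi|^{2p})^2\le\E|\xi|^{4p}$ by Jensen for $x\mapsto x^2$, which is the claim.

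I expect this to be essentially routine; there is no real obstacle. The only points needing care are the bookkeeping of the supports $S$ and $R$ — every inequality of the form ``sum restricted to a sub-index-set $\le$ full norm or trace'' is harmless — and checking that the invoked moment inequalities are valid for the real exponent $2p\ge2$, which they are (for non-integer $p$ one uses the general real-exponent forms of Rosenthal's inequality and of Lemma \ref{Lem:BilinearForms}).
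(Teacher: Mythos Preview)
Your proof is correct and follows essentially the same two-stage conditioning scheme as the paper: first freeze $t$ and bound the moment in $w$, then take expectation over $t$ using Lemma~\ref{Lem:BilinearForms} applied to the positive semidefinite matrix $A^{*}A$, and finish with Jensen to convert $(\E|\xi|^{2p})^{2}$ into $\E|\xi|^{4p}$. The only cosmetic difference is in the first stage: the paper writes $|w^{*}At|^{2}=w^{*}(Att^{*}A^{*})w$ and applies Lemma~\ref{Lem:BilinearForms} directly to this Hermitian positive semidefinite matrix (conditional on $t$), whereas you treat $w^{*}At$ as a linear form and invoke Rosenthal; both routes land at $\E_{w}|w^{*}At|^{2p}\ll_{p}\E|\xi|^{2p}(t^{*}A^{*}At)^{p}$ and proceed identically from there.
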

\begin{proof}
	Let $w_{S}$ denote the $|S|$-vector which contains entries $w_{i}$ for $i\in S$, and let $t_{R}$ denote the $|R|$-vector which contains entries $t_{j}$ for $j\in R$.  For an $N \times N$ matrix $B$, we let $B_{S \times S}$ denote the $|S| \times |S|$ matrix with entries $B_{(i,j)}$ for $i,j \in S$.  Similarly, we let $B_{R \times R}$ denote the $|R| \times |R|$ matrix with entries $B_{(i,j)}$ for $i,j \in R$.  
	
	Since $w$ is independent of $t$, Lemma \ref{Lem:BilinearForms} implies that
	\begin{align*}
	\E|w^{*}At|^{2p} &= \E|w^{*}Att^{*}A^{*}w|^{p}\\
	&= \E\left|w^{*}_{S}(Att^{*}A^{*})_{S\times S}w_{S}\right|^{p}\\
	&\ll_{p}\E\left[\left(\tr(Att^{*}A^{*})_{S\times S}\right)^{p}+\E|\xi|^{2p}\tr(Att^{*}A^{*})_{S\times S}^{p}\right].
	\end{align*}
	Recall that for any matrix $B$, $\tr(B^{*}B)^{p}\leq (\tr(B^{*}B))^{p}$. By this and the fact that for a Hermitian positive semidefinite matrix, the partial trace is less than or equal to the full trace, we observe that 
	\begin{equation*} 
	\E\left[\left(\tr(Att^{*}A^{*})_{S\times S}\right)^{p}+\E|\xi|^{2p}\tr(Att^{*}A^{*})_{S\times S}^{p}\right]\ll_{p}\E|\xi|^{2p}\E\left[(\tr(Att^{*}A^{*}))^{p}\right].
	\end{equation*}
	By a cyclic permutation of the trace, we have 
	\begin{equation*}
	\E\left[(\tr(Att^{*}A^{*}))^{p}\right] = \E\left[(t^{*}A^{*}At)^{p}\right]\leq\E\left|t^{*}A^{*}At\right|^{p}.
	\end{equation*}
	By Lemma \ref{Lem:BilinearForms}, and a similar argument as above, we have
	\begin{align*}
	\E\left|t^{*}A^{*}At\right|^{p}&=\E\left|t_{R}^{*}(A^{*}A)_{R\times R}t_{R}\right|^{p}\\
	&\ll_{p}(\tr(A^{*}A)_{R\times R})^{p}+\E|\xi|^{2p}\tr(A^{*}A)_{R\times R}^{p}\\
	&\ll_{p}\E|\xi|^{2p}(\tr(A^{*}A))^{p}, 
	\end{align*}
	and thus by Jensen's inequality, we have 
	\[\E|w^{*}At|^{2p}\ll_{p}\E|\xi|^{2p}\E\left[(\tr(Att^{*}A^{*}))^{p}\right]\ll_{p}\E|\xi|^{4p}(\tr(A^{*}A))^{p} \]
	completing the proof.
\end{proof}
\begin{remark}
	\label{Rem:lem:BilinearFormWithDifferentVectors}
	Note that if $p\geq 1$ and we also assume that $\E|\xi|^{4}=O(1)$ and $|\xi|<n^{1/2-\varepsilon}$ surely for some $\varepsilon>0$, then we may write 
	\begin{align*}
	\E|w^{*}At|^{2p}&\ll_{p}\E|\xi|^{4p}(\tr(A^{*}A))^{p}\\
	&=\E\left[|\xi|^{4}|\xi|^{4p-4}\right](\tr(A^{*}A))^{p}\\
	&\ll n^{(2-4\varepsilon)p+4\varepsilon-2}\E|\xi|^{4}(\tr(A^{*}A))^{p}.
	\end{align*}
\end{remark}

\begin{lemma}
	Let $U_{k}$ be the $mn\times m$ matrix which contains as its columns the columns  $c_{k},c_{n+k},\dots,c_{(m-1)n+k}$ of $\blmat{Y}{n}$ and define $V_{k}$ to be the $mn\times m$ matrix which contains as its columns $e_{k},e_{n+k},\dots, e_{(m-1)n+k}$ where $e_{1},\dots ,e_{mn}$ denote the standard basis elements of $\C^{mn}$. Let $\mathcal{G}_{n}^{(k)}(z)$ be defined as in \eqref{Def:G^{(k)}_{n}}. Then 
	\begin{equation*}
	\E\lnorm V_{k}^{T}\mathcal{G}_{n}^{(k)}(z)U_{k}\oindicator{\Omega_{n,k}}\rnorm^{2} \ll n^{-1}
	\end{equation*}
	and for any $p\geq2$, 
	\begin{equation*}
	\E\lnorm V_{k}^{T}\mathcal{G}_{n}^{(k)}(z)U_{k}\oindicator{\Omega_{n,k}}\rnorm^{2p} \ll_{p} n^{-2\varepsilon p+4\varepsilon-2}.
	\end{equation*}
	\label{Lem:BoundingVGU}
\end{lemma}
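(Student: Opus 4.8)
The plan is to exploit the independence built into the construction: $\mathcal{G}_n^{(k)}(z)$ and the event $\Omega_{n,k}$ are measurable with respect to $\blmat{Y}{n}^{(k)}$, which is $\blmat{Y}{n}$ with the $k$-th column of each factor $\hat{X}_{n,j}$ deleted; since all entries of all factor matrices are independent, $\mathcal{G}_n^{(k)}(z)$ and $\oindicator{\Omega_{n,k}}$ are independent of the $m$ columns $c_k, c_{n+k}, \ldots, c_{(m-1)n+k}$ that form $U_k$. First I would observe that $V_k^T \mathcal{G}_n^{(k)}(z) U_k$ is an $m\times m$ matrix and $m$ is fixed, so its operator norm to the power $2p$ is, up to an $m$-dependent constant, at most the sum of the $2p$-th powers of the absolute values of its $m^2$ entries; thus it suffices to control one entry $(V_k^T \mathcal{G}_n^{(k)}(z) U_k)_{a,b} = r_a^T c_{(b-1)n+k}$, where $r_a := (\mathcal{G}_n^{(k)}(z))^T e_{(a-1)n+k}$ is the transpose of the $((a-1)n+k)$-th row of $\mathcal{G}_n^{(k)}(z)$ (hence $\blmat{Y}{n}^{(k)}$-measurable with $\|r_a\| \le \|\mathcal{G}_n^{(k)}(z)\|$), and $c_{(b-1)n+k} = n^{-1/2}\hat{v}$ with $\hat{v}$ supported on a fixed set $S$ of $n$ coordinates carrying iid copies of one of the truncated atom variables $\hat\xi_j$.

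Next I would condition on $\blmat{Y}{n}^{(k)}$ (making $r_a$ and $\oindicator{\Omega_{n,k}}$ deterministic) and repackage the linear form as a quadratic one: $|r_a^T c_{(b-1)n+k}|^2 = n^{-1}\hat{v}^* B_a \hat{v}$ where $B_a := \overline{r_a}\, r_a^T$ is Hermitian, positive semidefinite, of rank at most one, with $\|B_a\| = \|r_a\|^2$. Hence $|(V_k^T \mathcal{G}_n^{(k)}(z) U_k)_{a,b}|^{2p} = n^{-p}(\hat{v}^* B_a \hat{v})^p$, and for $p\ge 2$ I would apply Lemma \ref{Lem:conjLessThanConstant} with $A = B_a$, $d=1$, and $\xi = \hat\xi_j$ (whose hypotheses hold by Lemma \ref{Lem:TruncateLeveln}, up to harmless constants), obtaining a conditional bound of order $n^{-p}\cdot n^{(1-2\varepsilon)p + 4\varepsilon - 2}\|r_a\|^{2p} = n^{-2\varepsilon p + 4\varepsilon - 2}\|r_a\|^{2p}$. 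Multiplying by $\oindicator{\Omega_{n,k}}$, bounding $\|r_a\| \le \|\mathcal{G}_n^{(k)}(z)\| \le C$ on $\Omega_{n,k}$ via Lemma \ref{Lem:G_nBounded}, taking expectations, and summing the $m^2$ entries would give the second bound; it is uniform in $z\in\mathcal{C}$ since neither $C$ nor the constant from Lemma \ref{Lem:conjLessThanConstant} depends on $z$.

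For the first bound I would handle the case $2p=2$ directly, since Lemma \ref{Lem:conjLessThanConstant} is stated only for $p\ge 2$: here $\E[\hat{v}^* B_a \hat{v}\mid \blmat{Y}{n}^{(k)}] = \sum_{i\in S}(B_a)_{i,i} \le \tr B_a = \|r_a\|^2$, so the conditional second moment of each entry is at most $n^{-1}\|r_a\|^2 \le C^2 n^{-1}$ on $\Omega_{n,k}$; summing the $m^2$ entries yields $\E\lnorm V_k^T \mathcal{G}_n^{(k)}(z) U_k \oindicator{\Omega_{n,k}}\rnorm^2 \ll n^{-1}$.

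I expect the main obstacle to be organizational rather than analytic: carefully verifying the independence structure — precisely, that $\blmat{Y}{n}^{(k)}$ involves none of the $k$-th columns of the factor matrices, so that $\mathcal{G}_n^{(k)}(z)$ and $\Omega_{n,k}$ are independent of every column of $U_k$ — and recognizing that the linear form $r_a^T c_{(b-1)n+k}$ should be recast as the rank-one quadratic form $n^{-1}\hat{v}^* B_a \hat{v}$ so that the already-established Lemma \ref{Lem:conjLessThanConstant} produces exactly the two claimed exponents. The remaining steps (the $m\times m$ norm comparison, the moment estimate for $\hat\xi_j$ hidden inside Lemma \ref{Lem:conjLessThanConstant}, and the bookkeeping with $\oindicator{\Omega_{n,k}}$) are routine.
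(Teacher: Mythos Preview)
Your proposal is correct and follows essentially the same route as the paper: reduce the $m\times m$ norm to a maximum over entries, write each entry $e_{(a-1)n+k}^T\mathcal{G}_n^{(k)}(z)c_{(b-1)n+k}$ as (the square root of) a rank-one quadratic form in the column $c_{(b-1)n+k}$, invoke Lemma~\ref{Lem:conjLessThanConstant} for $p\ge 2$ after conditioning on $\blmat{Y}{n}^{(k)}$, and handle the case $p=1$ by the direct trace bound on the conditional expectation. Your explicit emphasis on the conditioning step and the independence of $\mathcal{G}_n^{(k)}(z),\oindicator{\Omega_{n,k}}$ from $U_k$ is a welcome clarification, but the argument is otherwise the paper's.
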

\begin{proof}
	Begin by observing that 
	\begin{align*}
	&\E\lnorm V_{k}^{T}\mathcal{G}_{n}(z)U_{k}\oindicator{\Omega_{n,k}}\rnorm^{2p}\\
	&\quad\quad \ll \max_{1\leq i,j\leq m} \E\left| (V_{k}^{T}\mathcal{G}_{n}^{(k)}(z)U_{k})_{(i,j)}\oindicator{\Omega_{n,k}}\right|^{2p}\\
	&\quad\quad=\max_{1\leq i,j\leq m}
	\E\left|e_{(i-1)n+k}\mathcal{G}_{n}^{(k)}(z)c_{(j-1)n+k}\oindicator{\Omega_{n,k}}\right|^{2p}\\
	\end{align*}
	In the case when $p=1$, since the rank of $(\mathcal{G}_{n}^{(k)}(z))^{*}e_{(i-1)n+k}e_{(i-1)n+k}^{T}\mathcal{G}_{n}^{(k)}(z)$ is at most 1, for any $1\leq j\leq m$ we have 
	\begin{align*}
	&\max_{1\leq i,j\leq m}
	\E\left|e_{(i-1)n+k}\mathcal{G}_{n}^{(k)}(z)c_{(j-1)n+k}\oindicator{\Omega_{n,k}}\right|^{2p}\\
	&\quad\quad\ll\E\left[c_{(j-1)n+k}^{*}(\mathcal{G}_{n}^{(k)}(z))^{*}e_{(i-1)n+k}e_{(i-1)n+k}^{T}\mathcal{G}_{n}(z)c_{(j-1)n+k}\oindicator{\Omega_{n,k}}\right]\\
	&\quad\quad\ll n^{-1}\lnorm(\mathcal{G}_{n}^{(k)}(z))^{*}e_{(i-1)n+k}e_{(i-1)n+k}^{T}\mathcal{G}_{n}(z)\oindicator{\Omega_{n,k}}\rnorm\\
	&\quad\quad \ll n^{-1}
	\end{align*}
	by Lemma \ref{Lem:G_nBounded}.
	\ifdetail Thus,
	\[\E\lnorm V_{k}^{T}\mathcal{G}_{n}(z)U_{k}\oindicator{\Omega_{n,k}}\rnorm^{2}\ll n^{-1}\]
	as advertised.\fi 
	In the case where $p\geq 2$, we have 
	\begin{align*}
	&\max_{1\leq i,j\leq m}
	\E\left|e_{(i-1)n+k}\mathcal{G}_{n}^{(k)}(z)c_{(j-1)n+k}\oindicator{\Omega_{n,k}}\right|^{2p}\\
	&\quad\quad = \max_{1\leq i,j\leq m} \E\left|c_{(j-1)n+k}^{*}(\mathcal{G}_{n}^{(k)}(z))^{*}e_{(i-1)n+k}e_{(i-1)n+k}^{T}\mathcal{G}^{(k)}_{n}(z)c_{(j-1)n+k}\oindicator{\Omega_{n,k}}\right|^{p}.
	\end{align*}
	Note that by definition of $\blmat{Y}{n}$ in \eqref{Def:Y_n}, each entry in $c_{(j-1)n+k}$ has been scaled by $n^{-1/2}$. By this observation, Lemma \ref{Lem:G_nBounded}, and Lemma \ref{Lem:conjLessThanConstant},
	\begin{align*}
	&\E\left|c_{(j-1)n+k}^{*}(\mathcal{G}_{n}^{(k)}(z))^{*}e_{(i-1)n+k}e_{(i-1)n+k}^{T}\mathcal{G}^{(k)}_{n}(z)c_{(j-1)n+k}\oindicator{\Omega_{n,k}}\right|^{p}\\
	&\quad\quad\ll_{p} n^{-p} n^{(1-2\varepsilon)p+4\varepsilon-2}\lnorm(\mathcal{G}_{n}^{(k)}(z))^{*}e_{(i-1)n+k}e_{(i-1)n+k}^{T}\mathcal{G}^{(k)}_{n}(z)\oindicator{\Omega_{n,k}}\rnorm^{p}\\
	&\quad\quad\ll n^{-2\varepsilon p+4\varepsilon-2}
	\end{align*}
	for any $1\leq j\leq m$ since the rank of $(\mathcal{G}_{n}^{(k)}(z))^{*}e_{(i-1)n+k}e_{(i-1)n+k}^{T}\mathcal{G}_{n}^{(k)}(z)$ is at most 1.
	\ifdetail Thus,
	\[\E\lnorm V_{k}^{T}\mathcal{G}^{(k)}_{n}(z)U_{k}\oindicator{\Omega_{n,k}}\rnorm^{2p}\ll_{p}n^{-2\varepsilon p+4\varepsilon-2}\]
	as advertised.\fi 
\end{proof}
\begin{remark}
	The same argument as in Lemma \ref{Lem:BoundingVGU} also shows that\\ $\E\lnorm V_{k}^{T}(\mathcal{G}_{n}^{(k)}(z))^{2}U_{k}\oindicator{\Omega_{n,k}}\rnorm ^{2}\ll n^{-1}$ and $\E\lnorm V_{k}^{T}(\mathcal{G}_{n}^{(k)}(z))^{2}U_{k}\oindicator{\Omega_{n,k}}\rnorm ^{2p}\ll_{p} n^{-2\varepsilon p+4\varepsilon-2}$. 
	\ifdetail Full details of these arguments can be found in Lemmas \ref{Lem:BoundingVGU2} and \ref{Lem:BoundingVG^2U2} in Appendix \ref{sec:ProofRemark}.\fi
	\label{Remark:ImprovementOnVGU}
\end{remark}

We now proceed with the proof of Lemma \ref{Lem:ReductionZ}.
\begin{proof}[Proof of Lemma \ref{Lem:ReductionZ}]
	To begin, note that the result will follow if we prove that $\E| M_{n}-\breve{M}_{n}|^{2} =o(1).$ Since the only difference between these two expressions is the difference between $Z_{n,k}(z)$ and $\breve{Z}_{n,k}(z)$, it will be sufficient to prove that for any $z$ on the contour, $\E\left|\sum_{k=1}^{n}\left(Z_{n,k}(z)-\breve{Z}_{n,k}(z)\right)\right|^{2}=o(1).$ Since $Z_{n,k}$ and $\breve{Z}_{n,k}$ are martingale difference sequences, we will prove 
	\[\E\left|Z_{n,k}(z)-\breve{Z}_{n,k}(z)\right|^{2}=o\left(n^{-1}\right)\] 
	uniformly for any $0<k\leq n$ and any $z$ on the contour $\mathcal{C}$. To do so, we will make a sequence of comparisons, each of which differs from the previous expression by error terms which is $o(n^{-1})$. To begin, observe that since $\blmat{Y}{n}^{(k)}$ has columns $k,n+k,\dots, (m-1)n+k$ replaced with zeros, we have $(\E_{k}-\E_{k-1})[\tr\mathcal{G}_{n}^{(k)}(z)\oindicator{\Omega_{n,k}}]=0.$ Thus, we can rewrite
	\begin{align*}
	Z_{n,k}(z) &= (\E_{k}-\E_{k-1})[\tr\mathcal{G}_{n}(z)\oindicator{\Omega_{n}}]\\
	\ifdetail&= (\E_{k}-\E_{k-1})[\tr\mathcal{G}_{n}(z)\oindicator{\Omega_{n}}]-(\E_{k}-\E_{k-1})[\tr\mathcal{G}_{n}^{(k)}(z)\oindicator{\Omega_{n,k}}]\\\fi
	&= (\E_{k}-\E_{k-1})[\tr\mathcal{G}_{n}(z)\oindicator{\Omega_{n}}-\tr\mathcal{G}_{n}^{(k)}(z)\oindicator{\Omega_{n,k}}]\\
	\ifdetail&= (\E_{k}-\E_{k-1})[\tr\mathcal{G}_{n}(z)\oindicator{\Omega_{n}\cap\Omega_{n,k}}+\tr\mathcal{G}_{n}(z)\oindicator{\Omega_{n}\cap\Omega_{n,k}^{c}}\\\fi 
	\ifdetail&\quad\quad-\tr\mathcal{G}_{n}^{(k)}(z)\oindicator{\Omega_{n,k}\cap\Omega_{n}}-\tr\mathcal{G}_{n}^{(k)}(z)\oindicator{\Omega_{n,k}\cap\Omega_{n}^{c}}]\\\fi 
	&= (\E_{k}-\E_{k-1})[(\tr\mathcal{G}_{n}(z)-\tr\mathcal{G}_{n}^{(k)}(z))\oindicator{\Omega_{n}\cap\Omega_{n,k}}]\\
	&\quad\quad+(\E_{k}-\E_{k-1})[\tr\mathcal{G}_{n}(z)\oindicator{\Omega_{n}\cap\Omega_{n,k}^{c}}]\\
	&\quad\quad-(\E_{k}-\E_{k-1})[\tr\mathcal{G}_{n}^{(k)}(z)\oindicator{\Omega_{n,k}\cap\Omega_{n}^{c}}].
	\end{align*}
	Note that, uniformly for $z$ with $|z|=1+\delta$, by Lemma \ref{Lem:G_nBounded} and since $\Omega_{n,k}$ holds with overwhelming probability,
	\begin{align*}
	\E\left|(\E_{k}-\E_{k-1})[\tr\mathcal{G}_{n}(z)\oindicator{\Omega_{n}\cap\Omega_{n,k}^{c}}]\right|^{2}&\ll \E\left|\tr\mathcal{G}_{n}(z)\oindicator{\Omega_{n}\cap\Omega_{n,k}^{c}}\right|^{2}\\
	&\ll n^{2}\E\left[\lnorm\mathcal{G}_{n}(z)\rnorm^{2}\oindicator{\Omega_{n}\cap\Omega_{n,k}^{c}}\right]\\
	\ifdetail&\ll  n^{2}\P(\Omega_{n,k}^{c})\\\fi
	&\ll_{\alpha} n^{2-\alpha}
	\end{align*}
	for any $\alpha>0$. Since $\Omega_{n,k}$ holds with overwhelming probability, the same argument shows that $\E\left|(\E_{k}-\E_{k-1})[\tr\mathcal{G}_{n}^{(k)}(z)\oindicator{\Omega_{n,k}\cap\Omega_{n}^{c}}]\right|^{2}\ll_{\alpha} n^{2-\alpha}$ for any $\alpha>0$. \ifdetail Thus, by choice of $\alpha>3$, we can ensure that this term is $o(n^{-1})$. Similarly, by Lemma \ref{Lem:G_nkBounded}
	\begin{align*}
	\E\left|-(\E_{k}-\E_{k-1})[\tr\mathcal{G}_{n}^{(k)}(z)\oindicator{\Omega_{n,k}\cap\Omega_{n}^{c}}]\right|^{2}&\leq 2\E\left|\tr\mathcal{G}_{n}^{(k)}(z)\oindicator{\Omega_{n,k}\cap\Omega_{n}^{c}}\right|^{2}\\
	&\leq 2n^2\E\left[\lnorm\mathcal{G}_{n}^{(k)}(z)\rnorm^2\oindicator{\Omega_{n,k}\cap\Omega_{n}^{c}}\right]\\
	\ifdetail&\leq 2n^2\P(\Omega_{n}^{c})\\\fi
	&\ll n^{2-\alpha}
	\end{align*}
	for any $\alpha>0$ and any $z$ with $|z|=1+\delta$. Thus, by choice of $\alpha>3$, we can ensure that this term is $o(n^{-1})$. Thus
	\begin{align*}
	&\E\left|\sum_{k=1}^{n}Z_{n,k}(z)-(\E_{k}-\E_{k-1})[(\tr\mathcal{G}_{n}(z)-\tr\mathcal{G}_{n}^{(k)}(z))\oindicator{\Omega_{n}\cap\Omega_{n,k}}]\right|^2\\
	&\quad\quad\leq\sum_{k=1}^{n}\E\left|Z_{n,k}(z)-(\E_{k}-\E_{k-1})[(\tr\mathcal{G}_{n}(z)-\tr\mathcal{G}_{n}^{(k)}(z))\oindicator{\Omega_{n}\cap\Omega_{n,k}}]\right|^2\\
	\ifdetail&\quad\quad\leq\sum_{k=1}^{n}o(n^{-1})\\\fi
	&\quad \quad=o(1).
	\end{align*}\fi
	Ergo, we have reduced from working with $Z_{n,k}(z)$ to working with
	$(\E_{k}-\E_{k-1})[(\tr\mathcal{G}_{n}(z)-\tr\mathcal{G}_{n}^{(k)}(z))\oindicator{\Omega_{n}\cap\Omega_{n,k}}]$. Next, observe that by linearity and cyclic permutation of the trace, and by the resolvent identity \eqref{Equ:ResolventIndentity},
	\begin{align}
	\tr\mathcal{G}_{n}(z)-\tr\mathcal{G}_{n}^{(k)}(z)&=\tr\left(\mathcal{G}_{n}(z)\left(\blmat{Y}{n}^{(k)}-\blmat{Y}{n}\right)\mathcal{G}_{n}^{(k)}(z)\right)\notag\\ 
	\ifdetail&= \tr(\mathcal{G}_{n}(z)-\mathcal{G}_{n}^{(k)}(z))\notag\\\fi
	&=-\tr\left(\mathcal{G}_{n}(z)U_{k}V_{k}^{T}\mathcal{G}_{n}^{(k)}(z)\right)\notag\\
	&=-\tr\left(V_{k}^{T}\mathcal{G}_{n}^{(k)}(z)\mathcal{G}_{n}(z)U_{k}\right).\label{Equ:ReductionZStep}
	\end{align}
	 To guarantee that $I_{m}+V_{k}^{T}\mathcal{G}_{n}^{(k)}U_{k}$ is invertible, we wish to work on the event $Q_{n,k}$ defined in \eqref{Def:EventQ}. \ifdetail which implies invertibility by the reverse triangle inequality.\fi By Lemma \ref{Lem:QOverwhelming}, $Q_{n,k}$ hold with overwhelming probability so that by Lemma \ref{Lem:G_nBounded}, the Cauchy--Schwarz inequality, and bounding the spectral norm by the Frobenius norm, we have 
	\begin{align*}
	&\E\left|(\E_{k}-\E_{k-1})[\tr(V_{k}^{T}\mathcal{G}_{n}^{(k)}(z)\mathcal{G}_{n}(z)U_{k})\oindicator{\Omega_{n}\cap\Omega_{n,k}}]\right.\\
	&\quad\quad\quad\quad\quad\quad\quad\left.-(\E_{k}-\E_{k-1})[\tr(V_{k}^{T}\mathcal{G}_{n}^{(k)}(z)\mathcal{G}_{n}(z)U_{k})\oindicator{\Omega_{n}\cap\Omega_{n,k}\cap Q_{n,k}}]\right|^2\\
	\ifdetail &\quad\quad\ll \E\left|\tr(V_{k}^{T}\mathcal{G}_{n}^{(k)}(z)\mathcal{G}_{n}(z)U_{k})\oindicator{\Omega_{n}\cap\Omega_{n,k}}(1-\oindicator{Q_{n,k}})\right|^2\\\fi
	&\quad\quad\ll \E\left[\lnorm V_{k}^{T}\mathcal{G}_{n}^{(k)}(z)\mathcal{G}_{n}(z)U_{k}\oindicator{\Omega_{n}\cap\Omega_{n,k}}\rnorm^{2}\oindicator{Q_{n,k}^{c}}\right]\\
	&\quad\quad\ll n^4\E\left[\lnorm U_{k}\rnorm^{2}\oindicator{Q_{n,k}^{c}}\right]\\
	\ifdetail&\quad\quad\ll n^4\left(\E\left[\sum_{i=1}^{mn}\sum_{j=0}^{m-1}\left|(c_{jn+k})_{i}\right|^{2}\right]\E\left[\oindicator{Q_{n,k}^{c}}\right]\right)^{1/2}\\\fi
	&\quad\quad\ll_{\alpha} n^{6-\alpha}
	\end{align*}
	for any $\alpha>0$. By selecting $\alpha$ sufficiently large, we can justify working with
	\[-(\E_{k}-\E_{k-1})[\tr(V_{k}^{T}\mathcal{G}_{n}^{(k)}(z)\mathcal{G}_{n}(z)U_{k})\oindicator{\Omega_{n}\cap\Omega_{n,k}\cap Q_{n,k}}]\] 
	instead of $Z_{n,k}(z)$. By the Sherman--Morrison--Woodbury formula \eqref{Equ:ShermanMorrisonWoodbury}, we have
	\begin{align*}
	&-(\E_{k}-\E_{k-1})[\tr(V_{k}^{T}\mathcal{G}_{n}^{(k)}(z)\mathcal{G}_{n}(z)U_{k})\oindicator{\Omega_{n}\cap\Omega_{n,k}\cap Q_{n,k}}]\notag \\ \ifdetail&\quad\quad=-(\E_{k}-\E_{k-1})[\tr(V_{k}^{T}\mathcal{G}_{n}^{(k)}(z)\mathcal{G}_{n}^{(k)}(z)U_{k}(I_{m}+V_{k}^{T}\mathcal{G}_{n}^{(k)}U_{k})^{-1})\oindicator{\Omega_{n}\cap\Omega_{n,k}\cap Q_{n,k}}]\\\fi
	&\quad\quad=-(\E_{k}-\E_{k-1})[\tr(V_{k}^{T}(\mathcal{G}_{n}^{(k)}(z))^{2}U_{k}(I_{m}+V_{k}^{T}\mathcal{G}_{n}^{(k)}U_{k})^{-1})\oindicator{\Omega_{n}\cap\Omega_{n,k}\cap Q_{n,k}}].
	\end{align*}
	Since $\mathcal{G}_{n}(z)$ is no longer present, we may drop the event $\Omega_{n}$ gaining a sufficiently small error, and the same argument justifies working with \[-(\E_{k}-\E_{k-1})[\tr(V_{k}^{T}(\mathcal{G}_{n}^{(k)}(z))^{2}U_{k}(I_{m}+V_{k}^{T}\mathcal{G}_{n}^{(k)}U_{k})^{-1})\oindicator{\Omega_{n,k}\cap Q_{n,k}}]\] 
	instead of $Z_{n,k}(z)$. At this point, we wish to replace $(I_{m}+V_{k}^{T}\mathcal{G}_{n}^{(k)}U_{k})^{-1}$ with $I_{m}$. To justify this, observe that
	\begin{align}
	&\E\left|(\E_{k}-\E_{k-1})[\tr(V_{k}^{T}(\mathcal{G}_{n}^{(k)}(z))^{2}U_{k}(I_{m}+V_{k}^{T}\mathcal{G}_{n}^{(k)}U_{k})^{-1})\oindicator{\Omega_{n,k}\cap Q_{n,k}}]\right.\notag \\
	&\quad\quad\quad\quad\quad\quad\quad\quad\quad\left.-(\E_{k}-\E_{k-1})[\tr(V_{k}^{T}(\mathcal{G}_{n}^{(k)}(z))^{2}U_{k}I_{m})\oindicator{\Omega_{n,k}\cap Q_{n,k}}]\right|^{2}\notag\\
	\ifdetail&\quad\quad =\E\left|(\E_{k}-\E_{k-1})[\tr(V_{k}^{T}(\mathcal{G}_{n}^{(k)}(z))^{2}U_{k}(I_{m}+V_{k}^{T}\mathcal{G}_{n}^{(k)}U_{k})^{-1}\right.\\\fi 
	\ifdetail&\quad\quad\quad\quad\quad\quad\quad\quad\quad\left.-V_{k}^{T}(\mathcal{G}_{n}^{(k)}(z))^{2}U_{k}I_{m})\oindicator{\Omega_{n,k}\cap Q_{n,k}}]\right|^{2}\notag \\\fi
	\ifdetail &\quad\quad \leq 4\E\left|\tr(V_{k}^{T}(\mathcal{G}_{n}^{(k)}(z))^{2}U_{k}(I_{m}+V_{k}^{T}\mathcal{G}_{n}^{(k)}U_{k})^{-1}-V_{k}^{T}(\mathcal{G}_{n}^{(k)}(z))^{2}U_{k}I_{m})\oindicator{\Omega_{n,k}\cap Q_{n,k}}\right|^{2}\notag\\\fi
	\ifdetail&\quad\quad \leq 4\E\left|\tr\left( V_{k}^{T}(\mathcal{G}_{n}^{(k)}(z))^{2}U_{k}((I_{m}+V_{k}^{T}\mathcal{G}_{n}^{(k)}U_{k})^{-1}-I_{m})\right)\oindicator{\Omega_{n,k}\cap Q_{n,k}}\right|^{2}\\\fi 
	&\quad\quad \ll \E\lnorm  V_{k}^{T}(\mathcal{G}_{n}^{(k)}(z))^{2}U_{k}((I_{m}+V_{k}^{T}\mathcal{G}_{n}^{(k)}U_{k})^{-1}-I_{m})\oindicator{\Omega_{n,k}\cap Q_{n,k}}\rnorm^{2}.\label{Equ:ReductionZStep3}
	\end{align}
	Note that by the resolvent identity \eqref{Equ:ResolventIndentity}, 
	\begin{Details} 
		\[(I_{m}+V_{k}^{T}\mathcal{G}_{n}^{(k)}U_{k})^{-1}-I_{m}=-(I_{m}+V_{k}^{T}\mathcal{G}_{n}^{(k)}U_{k})^{-1}V_{k}^{T}\mathcal{G}_{n}^{(k)}(z)U_{k}\]
	which can be rearranged to see
	\end{Details}
	\begin{equation}(I_{m}+V_{k}^{T}\mathcal{G}_{n}^{(k)}U_{k})^{-1}=I_{m}-(I_{m}+V_{k}^{T}\mathcal{G}_{n}^{(k)}U_{k})^{-1}V_{k}^{T}\mathcal{G}_{n}^{(k)}(z)U_{k}.\label{Equ:ExpansionOfMatrixErrorTerm}
	\end{equation}
	By iterating this twice, we have
	\[(I_{m}+V_{k}^{T}\mathcal{G}_{n}^{(k)}U_{k})^{-1}-I_{m}=-V_{k}^{T}\mathcal{G}_{n}^{(k)}(z)U_{k}+(I_{m}+V_{k}^{T}\mathcal{G}_{n}^{(k)}U_{k})^{-1}(V_{k}^{T}\mathcal{G}_{n}^{(k)}(z)U_{k})^{2}.\]
	Inserting this into the last line of \eqref{Equ:ReductionZStep3}, we get
	\begin{align}
	& \E\lnorm V_{k}^{T}(\mathcal{G}_{n}^{(k)}(z))^{2}U_{k}((I_{m}+V_{k}^{T}\mathcal{G}_{n}^{(k)}U_{k})^{-1}-I_{m})\oindicator{\Omega_{n,k}\cap Q_{n,k}}\rnorm^{2}\notag\\
	\ifdetail&\quad\quad =\E\lnorm V_{k}^{T}(\mathcal{G}_{n}^{(k)}(z))^{2}U_{k}(-V_{k}^{T}\mathcal{G}_{n}^{(k)}(z)U_{k}\right.\notag\\\fi 
	\ifdetail &\quad\quad\quad\quad\left.+(I_{m}+V_{k}^{T}\mathcal{G}_{n}^{(k)}U_{k})^{-1}(V_{k}^{T}\mathcal{G}_{n}^{(k)}(z))^{2}U_{k})\oindicator{\Omega_{n,k}\cap Q_{n,k}}\rnorm^{2}\notag\\\fi 
	&\quad\quad \ll \E\lnorm V_{k}^{T}(\mathcal{G}_{n}^{(k)}(z))^{2}U_{k}(V_{k}^{T}\mathcal{G}_{n}^{(k)}(z)U_{k})\oindicator{\Omega_{n,k}}\rnorm^{2}\label{Equ:ReductionZStep4}\\
	&\quad\quad\quad\quad+\E\lnorm V_{k}^{T}(\mathcal{G}_{n}^{(k)}(z))^{2}U_{k}(I_{m}+V_{k}^{T}\mathcal{G}_{n}^{(k)}U_{k})^{-1}(V_{k}^{T}\mathcal{G}_{n}^{(k)}(z)U_{k})\oindicator{\Omega_{n,k}\cap Q_{n,k}}\rnorm^{2}.\label{Equ:ReductionZStep5}
	\end{align}
	We will bound each of the above terms separately. First, we begin with term \eqref{Equ:ReductionZStep4}. Note that by Cauchy--Schwarz inequality, Lemma \ref{Lem:BoundingVGU}, and Remark \ref{Remark:ImprovementOnVGU}, we have 
	\begin{align*}
	&\E\lnorm V_{k}^{T}(\mathcal{G}_{n}^{(k)}(z))^{2}U_{k}(V_{k}^{T}\mathcal{G}_{n}^{(k)}(z)U_{k})\oindicator{\Omega_{n,k}}\rnorm^{2}\\
	\ifdetail&\quad\quad \ll \E\left[\lnorm V_{k}^{T}(\mathcal{G}_{n}^{(k)}(z))^{2}U_{k}\oindicator{\Omega_{n,k}}\rnorm^{2}\lnorm V_{k}^{T}\mathcal{G}_{n}^{(k)}(z)U_{k}\oindicator{\Omega_{n,k}}\rnorm^{2}\right]\\\fi 
	&\quad\quad \ll\left(\E\lnorm V_{k}^{T}(\mathcal{G}_{n}^{(k)}(z))^{2}U_{k}\oindicator{\Omega_{n,k}}\rnorm^{4}\E\lnorm V_{k}^{T}\mathcal{G}_{n}^{(k)}(z)U_{k}\oindicator{\Omega_{n,k}}\rnorm^{4}\right)^{1/2}\\
	\ifdetail&\quad\quad \ll\left(n^{-2}\cdot n^{-2}\right)^{1/2}\\\fi
	\ifdetail& \quad \quad \ll_{m}n^{-2}\\\fi
	&\quad\quad =o(n^{-1}).
	\end{align*}
	\ifdetail Thus, term \eqref{Equ:ReductionZStep4} is $o(n^{-1})$.\fi It remains to show that term \eqref{Equ:ReductionZStep5} is also $o(n^{-1})$. To this end, observe that by the Cauchy--Schwarz inequality, Lemma \ref{Lem:BoundingVGU}, and Remark \ref{Remark:ImprovementOnVGU},
	\begin{align*}
	&\E\lnorm V_{k}^{T}(\mathcal{G}_{n}^{(k)}(z))^{2}U_{k}(I_{m}+V_{k}^{T}\mathcal{G}_{n}^{(k)}U_{k})^{-1}(V_{k}^{T}\mathcal{G}_{n}^{(k)}(z)U_{k})^{2}\oindicator{\Omega_{n,k}\cap Q_{n,k}}\rnorm^{2}\\
	&\quad\quad \leq \left(\E\lnorm V_{k}^{T}(\mathcal{G}_{n}^{(k)}(z))^{2}U_{k}\oindicator{\Omega_{n,k}\cap Q_{n,k}}\rnorm^{8}\E\lnorm(I_{m}+V_{k}^{T}\mathcal{G}_{n}^{(k)}U_{k})^{-1}\oindicator{\Omega_{n,k}\cap Q_{n,k}}\rnorm^{8}\right.\\
	&\quad\quad\quad\quad\quad\quad\quad\quad\quad\quad\quad\quad\quad\quad\quad\quad\quad\quad \times \left.\E\lnorm(V_{k}^{T}\mathcal{G}_{n}^{(k)}(z)U_{k})^{2}\oindicator{\Omega_{n,k}\cap Q_{n,k}}\rnorm^{8}\right)^{1/4}\\
	\ifdetail&\quad\quad\ll \E\left[\lnorm V_{k}^{T}(\mathcal{G}_{n}^{(k)}(z))^{2}U_{k}\oindicator{\Omega_{n,k}}\rnorm^{2}
	\lnorm (V_{k}^{T}\mathcal{G}_{n}^{(k)}(z)U_{k})^{2}\oindicator{\Omega_{n,k}} \rnorm^{2}\right]\\\fi 
	&\quad\quad\ll \left(\E\lnorm V_{k}^{T}(\mathcal{G}_{n}^{(k)}(z))^{2}U_{k}\oindicator{\Omega_{n,k}}\rnorm^{8}
	\E \lnorm (V_{k}^{T}\mathcal{G}_{n}^{(k)}(z)U_{k})^{2}\oindicator{\Omega_{n,k}} \rnorm^{8}\right)^{1/4}\\
	&\quad\quad\ll \left(\E\lnorm V_{k}^{T}(\mathcal{G}_{n}^{(k)}(z))^{2}U_{k}\oindicator{\Omega_{n,k}}\rnorm^{8}
	\E \lnorm V_{k}^{T}\mathcal{G}_{n}^{(k)}(z)U_{k}\oindicator{\Omega_{n,k}} \rnorm^{16}\right)^{1/4}\\
	&\quad\quad \ll \left(n^{-2\varepsilon\cdot 4+4\varepsilon-2}\cdot n^{-2\varepsilon\cdot 8+4\varepsilon-2}\right)^{1/4}\\
	&\quad\quad= \left(n^{-16\varepsilon-4}\right)^{1/4}\\
	&\quad\quad \ll n^{-4\varepsilon-1}.
	\end{align*}
	\begin{Details} 
		Therefore
		\begin{align*}
		&\sum_{k=1}^{n}\E\left|(\E_{k}-\E_{k-1})[\tr(V_{k}^{T}(\mathcal{G}_{n}^{(k)}(z))^{2}U_{k}(I_{m}+V_{k}^{T}\mathcal{G}_{n}^{(k)}U_{k})^{-1})\oindicator{\Omega_{n,k}\cap Q_{n,k}}]\right.\\
		&\quad\quad\quad\quad\quad\quad\quad\quad\quad\quad\quad\quad\quad\left.-(\E_{k}-\E_{k-1})[\tr(V_{k}^{T}(\mathcal{G}_{n}^{(k)}(z))^{2}U_{k}I_{m})\oindicator{\Omega_{n,k}\cap Q_{n,k}}]\right|\\
		&\quad\quad\leq \sum_{k=1}^{2} Cn^{-2}\\
		&\quad\quad =o(1).
		\end{align*}
	\end{Details}
	Since the above term is also $o(n^{-1})$, we may proceed working with the term
	\[-(\E_{k}-\E_{k-1})[\tr(V_{k}^{T}(\mathcal{G}_{n}^{(k)}(z))^{2}U_{k})\oindicator{\Omega_{n,k}\cap Q_{n,k}}].\]
	Next, we will justify removing the event $Q_{n,k}$. Observe that by Remark \ref{Remark:ImprovementOnVGU} and repeating the same argument as above, 
	\begin{Details}
		\begin{align*}
		&\E\left|(\E_{k}-\E_{k-1})[\tr(V_{k}^{T}(\mathcal{G}_{n}^{(k)}(z))^{2}U_{k})\oindicator{\Omega_{n,k}}]\right.\\
		&\quad\quad\quad\quad\left.-(\E_{k}-\E_{k-1})[\tr(V_{k}^{T}(\mathcal{G}_{n}^{(k)}(z))^{2}U_{k})\oindicator{\Omega_{n,k}\cap Q_{n,k}}]\right|^{2}\\
		\ifdetail&\quad\quad =  \E\left|(\E_{k}-\E_{k-1})[\tr(V_{k}^{T}(\mathcal{G}_{n}^{(k)}(z))^{2}U_{k})\oindicator{\Omega_{n,k}}\right.\\\fi
		\ifdetail&\quad\quad\quad\quad\left.-\tr(V_{k}^{T}(\mathcal{G}_{n}^{(k)}(z))^{2}U_{k})\oindicator{\Omega_{n,k}\cap Q_{n,k}}]\right|^{2}\\\fi
		\ifdetail&\quad\quad =  \E\left|(\E_{k}-\E_{k-1})[\tr(V_{k}^{T}(\mathcal{G}_{n}^{(k)}(z))^{2}U_{k})\oindicator{\Omega_{n,k}}(1-\oindicator{Q_{n,k}})]\right|^{2}\\\fi 
		&\quad\quad \ll \E\left|\tr(V_{k}^{T}(\mathcal{G}_{n}^{(k)}(z))^{2}U_{k})\oindicator{\Omega_{n,k}}(\oindicator{Q_{n,k}^{c}})\right|^{2}\\
		\ifdetail&\quad\quad \ll m^{2}\E\lnorm V_{k}^{T}(\mathcal{G}_{n}^{(k)}(z))^{2}U_{k}\oindicator{\Omega_{n,k}}(\oindicator{Q_{n,k}^{c}})\rnorm^{2}\\\fi 
		\ifdetail&\quad\quad \ll \left(\E\lnorm V_{k}^{T}(\mathcal{G}_{n}^{(k)}(z))^{2}U_{k}\oindicator{\Omega_{n,k}}\rnorm^{2}\E\left[(\oindicator{Q_{n,k}^{c}})^{2}\right]\right)^{1/2}\\\fi
		\ifdetail&\quad\quad \ll \left(n^{-2}\P(Q_{n,k}^{c}))\right)^{1/2}\\\fi
		&\quad\quad \ll n^{(-\alpha-1)/2}.
		\end{align*}
	\end{Details}
	\begin{align*}
	&\E\left|(\E_{k}-\E_{k-1})[\tr(V_{k}^{T}(\mathcal{G}_{n}^{(k)}(z))^{2}U_{k})\oindicator{\Omega_{n,k}}]\right.\\
	&\quad\quad\quad\quad\left.-(\E_{k}-\E_{k-1})[\tr(V_{k}^{T}(\mathcal{G}_{n}^{(k)}(z))^{2}U_{k})\oindicator{\Omega_{n,k}\cap Q_{n,k}}]\right|^{2} \ll_{\alpha} n^{-1-\alpha/2}.
	\end{align*}
	By selecting $\alpha$ sufficiently large in the above expression, we can proceed with 
	\[-(\E_{k}-\E_{k-1})[\tr(V_{k}^{T}(\mathcal{G}_{n}^{(k)}(z))^{2}U_{k})\oindicator{\Omega_{n,k}}].\]
	Finally, note that $U_{k}$ is independent of $\mathcal{G}_{n}^{(k)}(z)$ and $\Omega_{n,k}$, so that
	\begin{align*}
	&\E_{k-1}[\tr(V_{k}^{T}(\mathcal{G}_{n}^{(k)}(z))^{2}U_{k})\oindicator{\Omega_{n,k}}]\\
	\ifdetail&\quad\quad = \E_{k-1}\left[\sum_{i=1}^{m}\sum_{a,b=1}^{mn}(V_{k}^{T})_{i,a}(\mathcal{G}_{n}^{(k)}(z))_{a,b}^{2}(U_{k})_{b,i}\oindicator{\Omega_{n,k}}\right]\\\fi 
	\ifdetail&\quad\quad = \sum_{i=1}^{m}\sum_{a,b=1}^{mn}(V_{k}^{T})_{i,a}\E_{k-1}\left[(\mathcal{G}_{n}^{(k)}(z))_{a,b}^{2}(U_{k})_{b,i}\oindicator{\Omega_{n,k}}\right]\\\fi
	&\quad\quad = \sum_{i=1}^{m}\sum_{a,b=1}^{mn}(V_{k}^{T})_{i,a}\E_{k-1}\left[(\mathcal{G}_{n}^{(k)}(z))_{a,b}^{2}\oindicator{\Omega_{n,k}}\right]\E_{k-1}[(U_{k})_{b,i}]\\
	\ifdetail&\quad\quad = \sum_{i=1}^{m}\sum_{a,b=1}^{mn}(V_{k}^{T})_{i,a}\E_{k-1}\left[(\mathcal{G}_{n}^{(k)}(z))_{a,b}^{2}\oindicator{\Omega_{n,k}}\right]\E[(U_{k})_{b,i}]\\\fi 
	\ifdetail &\quad\quad = \sum_{i=1}^{m}\sum_{a,b=1}^{mn}(V_{k}^{T})_{i,a}\E_{k-1}\left[(\mathcal{G}_{n}^{(k)}(z))_{a,b}^{2}\oindicator{\Omega_{n,k}}\right]\cdot 0\\ \fi 
	&\quad\quad = 0.
	\end{align*}
	This completes the proof.
	\begin{Details}
		 Therefore, we can work with 
		 \[-\E_{k}[\tr(V_{k}^{T}(\mathcal{G}_{n}^{(k)}(z))^{2}U_{k})\oindicator{\Omega_{n,k}}]\]
		 instead of $Z_{n,k}(z)$, completing the proof.
	\end{Details}
\end{proof}

To prove that $\breve{M}_{n}$ converges to a mean-zero Gaussian, we will use the following martingale difference sequence central limit theorem.

\begin{theorem}[Theorem 35.12 of \cite{Bill}]
	For each $N$, suppose $Z_{N_{1}},Z_{N_{2}},\dots,Z_{N_{r_{N}}}$ is a real martingale difference sequence with respect to the increasing $\sigma$-field $\{\mathcal{F}_{N_{j}}\}$ having second moments. Suppose, for any $\eta>0$ and a positive constant $\nu^{2}$, 
	\begin{equation}
	\lim_{N\rightarrow\infty}\P\left(\left|\sum_{j=1}^{r_{N}}\E\left(Z_{N_{j}}^{2}|\mathcal{F}_{N_{j-1}}\right)-\nu^{2}\right|>\eta\right)=0
	\label{Equ:MDSVarianceCondition}
	\end{equation}
	and 
	\begin{equation}
	\lim_{N\rightarrow\infty}\sum_{j=1}^{r_{N}}\E\left(Z_{N_{j}}^{2}\indicator{|Z_{N_{j}}|\geq\eta}\right)=0.
	\label{Equ:MDSLindinbergCondition}
	\end{equation}
	Then as $N\rightarrow\infty$, the distribution of $\sum_{j=1}^{r_{N}}Z_{N_{j}}$ converges weakly to a Gaussian distribution with mean zero and variance $\nu^{2}$.
	\label{Thm:MDS_CLT}
\end{theorem}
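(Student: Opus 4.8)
The plan is to prove this martingale-array central limit theorem by the classical characteristic-function method. By L\'evy's continuity theorem it suffices to fix $t\in\R$ and show $\E\exp(\sqrt{-1}\,t S_N)\to\exp(-t^2\nu^2/2)$, where $S_N:=\sum_{j=1}^{r_N}Z_{Nj}$. Throughout I write $\sigma_{Nj}^2:=\E[Z_{Nj}^2\mid\mathcal F_{Nj-1}]$ and $T_{Nj}:=\sum_{k=1}^{j}\sigma_{Nk}^2$, so that \eqref{Equ:MDSVarianceCondition} reads $T_{Nr_N}\to\nu^2$ in probability and $\E T_{Nr_N}=\sum_j\E Z_{Nj}^2=\E S_N^2$. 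The first step is a purely technical reduction making $T_{Nr_N}$ deterministically bounded: replace each $Z_{Nj}$ by $Z_{Nj}\mathbf{1}_{\{|Z_{Nj}|\le 1\}}$ minus its conditional mean --- this preserves the martingale-difference property and, by \eqref{Equ:MDSLindinbergCondition} applied with the fixed value $\eta=1$, changes $S_N$ by $o(1)$ in $L^2$ and changes each $\sigma_{Nj}^2$ by an amount summing to $o(1)$ --- and then, for a fixed constant $L>\nu^2$, stop the resulting array at the $\{\mathcal F_{Nj}\}$-stopping time $\tau_N:=\min\{j:T_{Nj}>L\}\wedge r_N$, noting $\P(\tau_N<r_N)=\P(T_{Nr_N}>L)\to 0$ so $S_N$ is again unchanged with probability $1-o(1)$. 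After this, $|Z_{Nj}|\le 2$, $T_{Nr_N}\le L+4$, and it is enough to prove the claim for the reduced array.

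Next I would record two consequences of \eqref{Equ:MDSLindinbergCondition} for the reduced array. First, $\max_{1\le j\le r_N}\sigma_{Nj}^2\to 0$ in $L^1$, hence in probability: for any $\eta>0$ one has $\sigma_{Nj}^2\le\eta^2+\E[Z_{Nj}^2\mathbf{1}_{\{|Z_{Nj}|\ge\eta\}}\mid\mathcal F_{Nj-1}]$, and $\E\max_j\E[Z_{Nj}^2\mathbf{1}_{\{|Z_{Nj}|\ge\eta\}}\mid\mathcal F_{Nj-1}]\le\sum_j\E[Z_{Nj}^2\mathbf{1}_{\{|Z_{Nj}|\ge\eta\}}]\to 0$. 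Second, a conditional second-order Taylor estimate: using $|e^{\sqrt{-1}x}-1-\sqrt{-1}x+\tfrac{x^2}{2}|\le\min(|x|^3,x^2)$, splitting on $\{|Z_{Nj}|\le\eta\}$, and using $\E[Z_{Nj}\mid\mathcal F_{Nj-1}]=0$, one gets $|\E[\exp(\sqrt{-1}\,t Z_{Nj})\mid\mathcal F_{Nj-1}]-1+\tfrac{t^2}{2}\sigma_{Nj}^2|\le R_{Nj}$ with $\sum_j\E R_{Nj}\le|t|^3\eta\,\E T_{Nr_N}+t^2\sum_j\E[Z_{Nj}^2\mathbf{1}_{\{|Z_{Nj}|\ge\eta\}}]$; since $\E T_{Nr_N}$ is bounded, letting $\eta\downarrow 0$ gives $\sum_j\E R_{Nj}\to 0$.

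The core of the argument is then a telescoping estimate. Set $V_j:=\exp(\sqrt{-1}\,t S_j+\tfrac{t^2}{2}T_{Nj})$, $V_0=1$, so that $V_j=V_{j-1}\exp(\sqrt{-1}\,t Z_{Nj})\exp(\tfrac{t^2}{2}\sigma_{Nj}^2)$ and $|V_j|\le\exp(\tfrac{t^2}{2}(L+4))$ deterministically. Combining the Taylor estimate above with $\exp(\tfrac{t^2}{2}\sigma_{Nj}^2)=1+\tfrac{t^2}{2}\sigma_{Nj}^2+O(\sigma_{Nj}^4)$ yields $\E[V_j\mid\mathcal F_{Nj-1}]=V_{j-1}(1+\rho_{Nj})$ with $|\rho_{Nj}|\ll R_{Nj}+\sigma_{Nj}^4$, whence $\sum_j\E|\rho_{Nj}|\ll\sum_j\E R_{Nj}+\bigl(\max_j\sigma_{Nj}^2\bigr)\E T_{Nr_N}\to 0$. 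Telescoping, $\E V_{r_N}-1=\sum_{j=1}^{r_N}\E[V_{j-1}\rho_{Nj}]$, whose modulus is at most $\exp(\tfrac{t^2}{2}(L+4))\sum_j\E|\rho_{Nj}|\to 0$. Thus $\E[\exp(\sqrt{-1}\,t S_N)\exp(\tfrac{t^2}{2}T_{Nr_N})]\to 1$; since $\exp(\tfrac{t^2}{2}T_{Nr_N})$ is bounded and converges in probability to $\exp(\tfrac{t^2}{2}\nu^2)$ while $|\exp(\sqrt{-1}\,t S_N)|=1$, bounded convergence lets me replace $T_{Nr_N}$ by $\nu^2$, giving $\exp(\tfrac{t^2}{2}\nu^2)\E\exp(\sqrt{-1}\,t S_N)\to 1$; undoing the reduction of the first step (the two arrays differ only on an event of probability $o(1)$) gives $\E\exp(\sqrt{-1}\,t S_N)\to\exp(-t^2\nu^2/2)$ for the original array, which by L\'evy's theorem proves the result.

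I expect the main obstacle to be the first reduction step rather than the telescoping computation: the telescoping bound closes only when $|V_j|$ is uniformly bounded, and this is not implied by the hypotheses since $\E S_N^2=\E T_{Nr_N}$ need not be bounded. One therefore has to localize with a stopping time --- after a preliminary fixed-level truncation to bound the increments --- and check carefully that the localization preserves the martingale-difference structure, does not disturb \eqref{Equ:MDSVarianceCondition} or \eqref{Equ:MDSLindinbergCondition}, and changes $S_N$ only on an event of vanishing probability. The remaining ingredients, namely the conditional Taylor expansions and the telescoping bookkeeping, are routine.
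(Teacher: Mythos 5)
This statement is imported verbatim from Billingsley (Theorem 35.12) and the paper gives no proof of it, so there is nothing internal to compare against; your argument is, in substance, the classical proof of that theorem: fixed-level truncation plus a predictable stopping time to make the cumulative conditional variance boundedly close to $\nu^{2}$, a conditional second-order expansion controlled by the Lindeberg condition, and the telescoping of $\exp(\sqrt{-1}\,tS_{j}+\tfrac{t^{2}}{2}T_{Nj})$ followed by L\'evy continuity. The argument is correct as outlined, with the only remaining work being the routine checks you already flag (that truncation and stopping preserve the martingale-difference property and conditions \eqref{Equ:MDSVarianceCondition}--\eqref{Equ:MDSLindinbergCondition}).
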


We will apply this result to $\{\breve{M}_{n,k}\}_{k=1}^{n}$ and the corresponding $\sigma$-algebras are $\{\mathcal{F}_{k}\}$. 
\begin{Details} 
To utilize this theorem, first note that $\{\breve{M}_{n,k}\}$ is a real martingale difference sequence by choice of $\alpha_{l}$ and $\beta_{l}$, and it has finite second moments. 
The calculation for the mean follows easily. Indeed,
\begin{align*}
\E\left[\breve{M}_{n,k}\right] &= \E\left[\sum_{l=1}^{L}\alpha_{l}\breve{Z}_{n,k}(z_{l})+\beta_{l}\overline{\breve{Z}_{n,k}(z_{l})}\right]\\
\ifdetail&=-\sum_{l=1}^{L}\alpha_{l}\E\left[\E_{k}\left[\tr\left(V_{k}^{T}(\mathcal{G}_{n}^{(k)}(z_{l}))^{2}U_{k}\right)\oindicator{\Omega_{n,k}}\right]\right]\\\fi 
\ifdetail&\quad\quad\quad+\beta_{l}\E\left[\overline{\E_{k}\left[\tr\left(V_{k}^{T}(\mathcal{G}_{n}^{(k)}(z_{l}))^{2}U_{k}\right)\oindicator{\Omega_{n,k}}\right]}\right]\\\fi 
&=-\sum_{l=1}^{L}\alpha_{l}\E\left[\tr\left(V_{k}^{T}(\mathcal{G}_{n}^{(k)}(z_{l}))^{2}U_{k}\right)\oindicator{\Omega_{n,k}}\right]\\
&\quad\quad\quad+\beta_{l}\E\left[\overline{\tr\left(V_{k}^{T}(\mathcal{G}_{n}^{(k)}(z_{l}))^{2}U_{k}\right)\oindicator{\Omega_{n,k}}}\right]\\
\ifdetail&=-\sum_{l=1}^{L}\alpha_{l}\E\left[\sum_{i=1}^{m}\sum_{a,b=1}^{mn}(V_{k}^{T})_{i,a}(\mathcal{G}_{n}^{(k)}(z_{l}))_{a,b}^{2}(U_{k})_{b,i}\oindicator{\Omega_{n,k}}\right]\\\fi 
\ifdetail&\quad\quad\quad+\beta_{l}\E\left[\overline{\sum_{i=1}^{m}\sum_{a,b=1}^{mn}(V_{k}^{T})_{i,a}(\mathcal{G}_{n}^{(k)}(z_{l}))_{a,b}^{2}(U_{k})_{b,i}\oindicator{\Omega_{n,k}}}\right]\\\fi 
\ifdetail&=-\sum_{l=1}^{L}\alpha_{l}\sum_{i=1}^{m}\sum_{a,b=1}^{mn}(V_{k}^{T})_{i,a}\E\left[(\mathcal{G}_{n}^{(k)}(z_{l}))_{a,b}^{2}(U_{k})_{b,i}\oindicator{\Omega_{n,k}}\right]\\\fi 
\ifdetail&\quad\quad\quad+\beta_{l}\sum_{i=1}^{m}\sum_{a,b=1}^{mn}\E\left[(U_{k}^{*})_{i,b}(\mathcal{G}_{n}^{(k)}(z_{l}))_{b,a}^{2*}\oindicator{\Omega_{n,k}}\right](V_{k})_{a,i}\\\fi 
&=-\sum_{l=1}^{L}\alpha_{l}\sum_{i=1}^{m}\sum_{a,b=1}^{mn}(V_{k}^{T})_{(i,a)}\E\left[(\mathcal{G}_{n}^{(k)}(z_{l}))_{(a,b)}^{2}\oindicator{\Omega_{n,k}}\right]\E_{k}[(U_{k})_{(b,i)}]\\
&\quad\quad\quad+\beta_{l}\sum_{i=1}^{m}\sum_{a,b=1}^{mn}\E[(U_{k}^{*})_{(i,b)}]\E\left[(\mathcal{G}_{n}^{(k_{l})}(z))_{(b,a)}^{2*}\oindicator{\Omega_{n,k}}\right](V_{k})_{(a,i)}\\
&=-\sum_{l=1}^{L}\alpha_{l}\sum_{i=1}^{m}\sum_{a,b=1}^{mn}(V_{k}^{T})_{(i,a)}\E\left[(\mathcal{G}_{n}^{(k)}(z_{l}))_{(a,b)}^{2}\oindicator{\Omega_{n,k}}\right]\cdot 0\\
&\quad\quad\quad+\beta_{l}\sum_{i=1}^{m}\sum_{a,b=1}^{mn}0\cdot\E\left[(\mathcal{G}_{n}^{(k)}(z_{l}))_{(b,a)}^{2*}\oindicator{\Omega_{n,k}}\right](V_{k})_{(a,i)}\\
&=0.
\end{align*}
For finite variance, let $\kappa:=\max_{1 \leq l\leq L} \{|\alpha_{l}|,|\beta_{l}|\}$ and observe that
\begin{align*}
\E\left|\breve{M}_{n,k}\right|^{2}&=\E\left|\sum_{l=1}^{L}\alpha_{l}\breve{Z}_{n,k}(z_{l})+\beta_{l}\overline{\breve{Z}_{n,k}(z_{l})}\right|^{2}\\
&\ll_{L}\sum_{l=1}^{L}\E\left|\alpha_{l}\breve{Z}_{n,k}(z_{l})+\beta_{l}\overline{\breve{Z}_{n,k}(z_{l})}\right|^{2}\\
&\ll_{\kappa,L}\sum_{l=1}^{L}\E\left|\breve{Z}_{n,k}(z_{l})\right|^{2}\\
\ifdetail&\ll_{\kappa,L}\sum_{l=1}^{L}\E\left|\E_{k}\left[\tr\left(V_{k}^{T}(\mathcal{G}_{n}^{(k)}(z_{l}))^{2}U_{k}\right)\oindicator{\Omega_{n,k}}\right]\right|^{2}\\ \fi
&\ll_{\kappa,L}\sum_{l=1}^{L}\E\lnorm V_{k}^{T}(\mathcal{G}_{n}^{(k)}(z_{l}))^{2}U_{k}\oindicator{\Omega_{n,k}}\rnorm^{2}\\&\ll_{\kappa,L}\sum_{l=1}^{L}n^{-1}
\end{align*}
by independence of $U_{k}$ from $\mathcal{G}_{n}^{(k)}(z)$ and by Remark \ref{Remark:ImprovementOnVGU}.
To apply Theorem \ref{Thm:MDS_CLT}, we must verify that the hypotheses \eqref{Equ:MDSVarianceCondition} and \eqref{Equ:MDSLindinbergCondition} hold. 
\end{Details} 
Verifying \eqref{Equ:MDSVarianceCondition} for $\{\breve{M}_{n,k}\}_{k=1}^{n}$ is lengthy and will require new notation, so we begin with verifying \eqref{Equ:MDSLindinbergCondition} for $\{\breve{M}_{n,k}\}_{k=1}^{n}$. Let $\eta>0$ and observe that by Remark \ref{Remark:ImprovementOnVGU}, we have
\begin{align*}
\sum_{k=1}^{n}\E\left[\breve{M}_{n,k}^{2}\indicator{|\breve{M}_{n,k}|>\eta}\right]& \ll \sum_{k=1}^{n}\E\left[\frac{\breve{M}_{n,k}^{4}}{\eta^{2}}\indicator{|\breve{M}_{n,k}|>\eta}\right]\\
\ifdetail& \ll_{\eta} \sum_{k=1}^{n}\E\left[\breve{M}_{n,k}^{4}\right]\\\fi
\ifdetail& \ll_{\eta} \sum_{k=1}^{n}\E\left[\left(\sum_{l=1}^{L}\alpha_{l}\breve{Z}_{n,k}(z_{l})+\beta_{l}\overline{\breve{Z}_{n,k}(z_{l})}\right)^{4}\right]\\\fi
\ifdetail&\ll_{\eta,L}\sum_{k=1}^{n}\E\left[\sum_{l=1}^{L}\alpha_{l}^{4}\breve{Z}_{n,k}(z_{l})^{4}+\beta_{l}^{4}\overline{\breve{Z}_{n,k}(z_{l})}^{4}\right]\\\fi
\ifdetail&\ll_{\eta,L}\sum_{k=1}^{n}\sum_{l=1}^{L}\E\left[\left|\breve{Z}_{n,k}(z_{l})\right|^{4}\right]+\E\left[\left|\overline{\breve{Z}_{n,k}(z_{l})}\right|^{4}\right]\\\fi
\ifdetail &\ll_{\eta,L}\sum_{k=1}^{n}\sum_{l=1}^{L}\E\left|\breve{Z}_{n,k}(z_{l})\right|^{4}\\\fi 
\ifdetail&\ll_{\eta,L}\sum_{k=1}^{n}\sum_{l=1}^{L}\E\left|\E_{k}\left[\tr\left(V_{k}^{T}(\mathcal{G}_{n}^{(k)}(z_{l}))^{2}U_{k}\right)\oindicator{\Omega_{n,k}}\right]\right|^{4}\\\fi
&\ll_{\eta,L}\sum_{k=1}^{n}\sum_{l=1}^{L}\E\lnorm V_{k}^{T}(\mathcal{G}_{n}^{(k)}(z_{l}))^{2}U_{k}\oindicator{\Omega_{n,k}}\rnorm^{4}\\
\ifdetail&\ll_{\eta,L}\sum_{k=1}^{n}\sum_{l=1}^{L}n^{-4}\\\fi
&\ll_{\eta,L}n^{-1}.
\end{align*} 

\ifdetail This verifies condition \eqref{Equ:MDSLindinbergCondition} of Theorem \ref{Thm:MDS_CLT}.\fi 
Condition \eqref{Equ:MDSVarianceCondition} will follow from the following lemma. 
\begin{lemma}
	The martingale difference sequence 
	\begin{equation*}
	\{\breve{M}_{n,k}\}=\left\{\sum_{l=1}^{L}\alpha_{l}\breve{Z}_{n,k}(z_{l})+\beta_{l}\overline{\breve{Z}_{n,k}(z_{l})}\right\}
	\end{equation*}
	\ifdetail satisfies condition \eqref{Equ:MDSVarianceCondition} of the martingale difference sequence central limit theorem, Theorem \ref{Thm:MDS_CLT}. In particular, the limiting covariances are due to the fact that \fi
	has finite second moments and satisfies
	\begin{align}
	\sum_{k=1}^{n}\E_{k-1}[\breve{M}_{n,k}^{2}]\rightarrow&\sum_{1\leq i,j\leq L}\alpha_{i}\alpha_{j}\frac{m^{2}(z_{i}{z_{j}})^{m-1}}{((z_{i}{z_{j}})^{m}-1)^{2}}+\alpha_{i}\beta_{j}\frac{m^{2}(z_{i}\bar{z_{j}})^{m-1}}{((z_{i}\bar{z_{j}})^{m}-1)^{2}}\notag\\
	&\quad\quad\quad+\beta_{i}\alpha_{j}\frac{m^{2}(\bar{z_{i}}z_{j})^{m-1}}{((\bar{z_{i}}{z_{j}})^{m}-1)^{2}}+\beta_{i}\beta_{j}\frac{m^{2}(\bar{z_{i}}\bar{z_{j}})^{m-1}}{((\bar{z_{i}}\bar{z_{j}})^{m}-1)^{2}}\label{Equ:Lem:MDSReductionVariance}
	\end{align}
	in probability as $n\rightarrow\infty$.
	\label{Lem:MDSReduction}
\end{lemma}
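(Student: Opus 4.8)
The plan is to first dispose of the easy assertions and reduce \eqref{Equ:Lem:MDSReductionVariance} to a single bilinear identity. Finite second moments are immediate: by Jensen's inequality and Remark \ref{Remark:ImprovementOnVGU}, $\E|\breve{Z}_{n,k}(z)|^{2}\leq \E\lnorm V_{k}^{T}(\mathcal{G}_{n}^{(k)}(z))^{2}U_{k}\oindicator{\Omega_{n,k}}\rnorm^{2}\ll n^{-1}$ uniformly for $z\in\mathcal{C}$, so $\E|\breve{M}_{n,k}|^{2}\ll_{L,\kappa}n^{-1}$ with $\kappa:=\max_{l}\{|\alpha_{l}|,|\beta_{l}|\}$, and hence $\sum_{k=1}^{n}\E|\breve{M}_{n,k}|^{2}=O(1)$. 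For the limit, note that since $\blmat{Y}{n}^{(k)}$, $U_{k}$, $V_{k}$, and the event $\Omega_{n,k}$ are all real, complex conjugation of \eqref{Def:breveZ} gives $\overline{\breve{Z}_{n,k}(z)}=\breve{Z}_{n,k}(\bar z)$ for every $z\in\mathcal{C}$, and $\bar z\in\mathcal{C}$ because $\mathcal{C}$ is centered at the origin. Expanding $\breve{M}_{n,k}^{2}$ and applying $\sum_{k}\E_{k-1}$, it therefore suffices to prove that for each fixed pair $z,w\in\mathcal{C}$,
\begin{equation*}
\Gamma_{n}(z,w):=\sum_{k=1}^{n}\E_{k-1}\left[\breve{Z}_{n,k}(z)\,\breve{Z}_{n,k}(w)\right]\longrightarrow \frac{m^{2}(zw)^{m-1}}{((zw)^{m}-1)^{2}}
\end{equation*}
in probability; substituting $(z,w)\in\{(z_{i},z_{j}),(z_{i},\bar z_{j}),(\bar z_{i},z_{j}),(\bar z_{i},\bar z_{j})\}$ and summing against $\alpha_{i}\alpha_{j}$, $\alpha_{i}\beta_{j}$, $\beta_{i}\alpha_{j}$, $\beta_{i}\beta_{j}$ then reproduces the right-hand side of \eqref{Equ:Lem:MDSReductionVariance}.

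Next I would record an exact formula for the conditional covariance. Put $H_{k}(z):=\E_{k}[(\mathcal{G}_{n}^{(k)}(z))^{2}\oindicator{\Omega_{n,k}}]$. Since the $k$-th column of each of the $m$ blocks of $\blmat{Y}{n}$ is replaced by zeros in $\blmat{Y}{n}^{(k)}$, the matrix $(\mathcal{G}_{n}^{(k)}(z))^{2}\oindicator{\Omega_{n,k}}$ does not involve the columns $c_{k},c_{n+k},\dots,c_{(m-1)n+k}$; hence $H_{k}(z)$ is in fact $\mathcal{F}_{k-1}$-measurable, and pulling the $\mathcal{F}_{k}$-measurable columns out of $\E_{k}$ yields $\breve{Z}_{n,k}(z)=-\sum_{p=1}^{m}e_{(p-1)n+k}^{T}H_{k}(z)c_{(p-1)n+k}$. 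The columns $c_{(p-1)n+k}$, $1\leq p\leq m$, are mutually independent, independent of $\mathcal{F}_{k-1}$, have mean zero, and satisfy $\E[c_{(p-1)n+k}c_{(p-1)n+k}^{T}]=\tfrac{1}{n}\Pi_{p}$, where $\Pi_{p}$ is the coordinate projection onto the $n$ rows supporting that column (the $\Pi_{1},\dots,\Pi_{m}$ exhausting the $m$ diagonal blocks). Consequently the $p\neq q$ cross terms vanish and
\begin{equation*}
\E_{k-1}\left[\breve{Z}_{n,k}(z)\breve{Z}_{n,k}(w)\right]=\frac{1}{n}\sum_{p=1}^{m}e_{(p-1)n+k}^{T}\,H_{k}(z)\,\Pi_{p}\,H_{k}(w)^{T}\,e_{(p-1)n+k},
\end{equation*}
so that $\Gamma_{n}(z,w)=\tfrac{1}{n}\sum_{k=1}^{n}\sum_{p=1}^{m}e_{(p-1)n+k}^{T}H_{k}(z)\Pi_{p}H_{k}(w)^{T}e_{(p-1)n+k}$, a double sum of products of entries of $H_{k}(z)$ and $H_{k}(w)$.

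The heart of the proof is the evaluation of this sum. Because $\Omega_{n,k}$ holds with overwhelming probability and the resolvents are uniformly bounded there (Lemma \ref{Lem:G_nBounded}), the indicators can be dropped at a cost of $o(1)$. Then, following the scheme of \cite{RiS, OR:CLT}, I would process each summand with the Sherman--Morrison and Sherman--Morrison--Woodbury identities and the resolvent identity \eqref{Equ:ResolventIndentity} to peel the $k$-th column off each of the $m$ blocks, using the quadratic-form concentration estimates (Lemmas \ref{Lem:conjLessThanConstant} and \ref{lem:BilinearFormWithDifferentVectors}) to decouple $\mathcal{G}_{n}^{(k)}(z)$ from $\mathcal{G}_{n}^{(k)}(w)$ and to replace randomized traces by their means, all up to $o(1)$ in probability. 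Because of the block (cyclic) structure of $\blmat{Y}{n}$ a single scalar recursion is not available; instead this procedure yields, for the $m$ block-resolved contributions $\gamma_{n}^{(1)}(z,w),\dots,\gamma_{n}^{(m)}(z,w)$ with $\Gamma_{n}=\sum_{p=1}^{m}\gamma_{n}^{(p)}$, a closed self-consistent linear system of $m$ equations whose coefficients are explicit rational functions of $z$ and $w$; solving this $m\times m$ system in the limit $n\to\infty$ produces $\Gamma_{n}(z,w)\to \frac{m^{2}(zw)^{m-1}}{((zw)^{m}-1)^{2}}$. Finally, a further martingale decomposition of $\Gamma_{n}(z,w)$ over the remaining columns, again with Lemma \ref{lem:BilinearFormWithDifferentVectors}, gives $\var\Gamma_{n}(z,w)=o(1)$, upgrading convergence of the mean to convergence in probability.

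The main obstacle, as flagged in the overview, is precisely the derivation and simultaneous solution of this system of $m$ recursive equations: correctly tracking the $m$ blocks through the column-by-column Sherman--Morrison expansions, identifying the rational coefficients, and verifying that the closed form is exactly $\frac{m^{2}(zw)^{m-1}}{((zw)^{m}-1)^{2}}$ (which for $m=1$ reduces to the $\frac{1}{(zw-1)^{2}}$ appearing in \cite{RiS, OR:CLT}). The remaining ingredients — removing the indicators, replacing $\mathcal{G}_{n}^{(k)}$ by $\mathcal{G}_{n}$, and the quadratic-form concentration — are routine block-matrix adaptations of arguments already present in \cite{RiS, OR:CLT}.
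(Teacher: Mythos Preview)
Your setup through the exact formula for $\E_{k-1}[\breve{Z}_{n,k}(z)\breve{Z}_{n,k}(w)]$ is correct and matches the paper (your $\Pi_{p}$ is the paper's $\mathcal{D}_{p-1}$, and your observation that $H_{k}(z)$ is $\mathcal{F}_{k-1}$-measurable is exactly what the paper uses implicitly). But the ``heart of the proof'' paragraph misses two concrete ingredients without which the plan, as written, does not go through. First, the paper does \emph{not} iterate on the squared resolvent $H_{k}(z)=\E_{k}[(\mathcal{G}_{n}^{(k)}(z))^{2}\oindicator{\Omega_{n,k}}]$. Instead it invokes Vitali's theorem to pass to the antiderivative, replacing $(\mathcal{G}_{n}^{(k)})^{2}$ by $\mathcal{G}_{n}^{(k)}$; the iteration lemma (Lemma \ref{Lem:Iteration}) is stated and proved for first powers only, using the expansion $\mathcal{G}_{n}^{(k)}(z)=-\tfrac{1}{z}I+\tfrac{1}{z}\sum_{t\neq *n+k}\mathcal{G}_{n}^{(k)}(z)c_{t}e_{t}^{T}$, and at the end one differentiates $-\ln(1-(zw)^{-m})$ back to $\tfrac{m^{2}(zw)^{m-1}}{((zw)^{m}-1)^{2}}$. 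Working directly with the square would require a substantially messier recursion that you do not describe.

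Second, and more seriously, your assertion that the self-consistent system has ``coefficients that are explicit rational functions of $z$ and $w$'' is wrong, and this misidentification hides the actual mechanism. Each application of Lemma \ref{Lem:Iteration} removes a column $c_{s}$ (not the already-removed $k$-th column), and the crucial step is that $\E_{k}[c_{s}]=c_{s}$ only for the first $k-1$ columns in each block and vanishes otherwise; this produces a factor $\tfrac{k-1}{n}\cdot\tfrac{1}{zw}$ per iteration. After $m$ iterations one obtains the \emph{scalar} relation
\[
\mathcal{T}_{n,k}(z,w)=\frac{m}{(zw)^{m}}\Bigl(\tfrac{k-1}{n}\Bigr)^{m-1}+\Bigl(\tfrac{k-1}{n}\cdot\tfrac{1}{zw}\Bigr)^{m}\mathcal{T}_{n,k}(z,w)+\mathcal{E}_{n,k},
\]
and the limit is recovered only after solving for $\mathcal{T}_{n,k}$ and recognizing $\tfrac{1}{n}\sum_{k}\mathcal{T}_{n,k}$ as a Riemann sum for $\int_{0}^{1}\tfrac{mx^{m-1}}{(zw)^{m}-x^{m}}\,dx$. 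The $(k-1)/n$ dependence, arising from the interaction of the martingale filtration with the column expansion, is the entire point; without it there is no closed system to solve. Your separate ``$\var\Gamma_{n}=o(1)$'' step is also unnecessary: in the paper all errors are controlled in $L^{2}$ throughout the iteration, so convergence in probability is automatic.
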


To prove Lemma \ref{Lem:MDSReduction}, we will need some definitions and results. We develop these now before proceeding to the proof.\\

Define $\blmat{Y}{n}^{(k,s)}$ to be the matrix $\blmat{Y}{n}$ with columns $c_{k},c_{n+k},\dots, c_{(m-1)n+k}$, and $c_{s}$ filled with zeros and define the resolvent $\mathcal{G}_{n}^{(k,s)}(z):=\left(\blmat{Y}{n}^{(k,s)}-zI\right)^{-1}$. By Corollary \ref{Cor:Omega_nksOverwhelming}, for any $\delta>0$ there exists a constant $c>0$ depending only on $\delta$ such that the event
\begin{equation}
\Omega_{n,k,s}:=\left\{\inf_{|z|> 1+\delta/2}s_{mn}\left(\blmat{Y}{n}^{(k,s)}-zI\right)\geq c\right\}
\label{Def:Omega_{n,k,s}}
\end{equation}
holds with overwhelming probability. By the Sherman-Morrison formula \eqref{equ:ShermanMorrison2}, provided $1+e_{s}^{T}\mathcal{G}_{n}^{(k,s)}(z)c_{s}$ is not zero, we may write 
\begin{equation} \mathcal{G}_{n}^{(k)}(z)c_{s}=\frac{\left(\blmat{Y}{n}^{(k,s)}-zI\right)^{-1}c_{s}}{1+e_{s}^{T}\left(\blmat{Y}{n}^{(k,s)}-zI\right)^{-1}c_{s}}=\mathcal{G}_{n}^{(k,s)}(z)c_{s}\delta_{k,s}(z)\label{Equ:RemovingColS}
\end{equation}
\begin{Details}
	\begin{align}
	\mathcal{G}_{n}^{(k)}(z)c_{s}&=\left(\blmat{Y}{n}^{(k)}-zI\right)^{-1}c_{s}\notag\\
	&=\left(\blmat{Y}{n}^{(k,s)}+c_{s}e_{s}^{T}-zI\right)^{-1}c_{s}\notag\\
	&=\frac{\left(\blmat{Y}{n}^{(k,s)}-zI\right)^{-1}c_{s}}{1+e_{s}^{T}\left(\blmat{Y}{n}^{(k,s)}-zI\right)^{-1}c_{s}}\notag\\
	&=\mathcal{G}_{n}^{(k,s)}(z)c_{s}\delta_{k,s}(z)
	\end{align}
\end{Details}
where 
\begin{equation} 
\delta_{k,s}(z):=(1+e_{s}^{T}\mathcal{G}_{n}^{(k,s)}c_{s})^{-1}.
\label{Def:delta_{n,k}}
\end{equation}
By the same formula,
\begin{equation}
c_{s}^{*}(\mathcal{G}_{n}^{(k)}(w))^{*}=(\delta_{k,s}(w))^{*}c_{s}^{*}(\mathcal{G}_{n}^{(k,s)}(w))^{*}\label{Equ:RemovingColS2}.
\end{equation}
To ensure that these quantities exist, we introduce the event 
\begin{equation}
Q'_{n,k,s}(z):=\left\{\left|e_{s}^{T}\mathcal{G}_{n}^{(k,s)}(z)c_{s}\oindicator{\Omega_{n,k,s}}\right|\leq 1/2\right\}.
\label{Def:Q'}
\end{equation}

\begin{lemma}
	\label{Lem:Q'Overwhelming}
	Define the event $Q'_{n,k,s}(z)$ as in \eqref{Def:Q'}. Then uniformly for any $z\in\mathcal{C}$, $Q'_{n,k,s}(z)$ holds with overwhelming probability 
\end{lemma}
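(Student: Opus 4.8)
The plan is to follow the strategy of the proof of Lemma~\ref{Lem:QOverwhelming}: bound a high moment of $e_{s}^{T}\mathcal{G}_{n}^{(k,s)}(z)c_{s}\oindicator{\Omega_{n,k,s}}$ and then apply Markov's inequality. The key structural observation is that $\blmat{Y}{n}^{(k,s)}$ is obtained from $\blmat{Y}{n}$ by (among other things) replacing the column $c_{s}$ with zeros, so both $\mathcal{G}_{n}^{(k,s)}(z)=(\blmat{Y}{n}^{(k,s)}-zI)^{-1}$ and the event $\Omega_{n,k,s}$ are independent of $c_{s}$. Moreover, by the block structure in \eqref{Def:Y_n}, the nonzero entries of $c_{s}$ are $n^{-1/2}$ times an iid collection of copies of a common truncated atom variable $\hat{\xi}$, which by Lemma~\ref{Lem:TruncateLeveln} has mean zero, unit variance, $\E|\hat{\xi}|^{4}=O(1)$, and satisfies $|\hat{\xi}|\leq 4n^{1/2-\varepsilon}$ almost surely; writing $c_{s}=n^{-1/2}\tilde{c}_{s}$, the nonzero coordinates of $\tilde{c}_{s}$ are iid copies of $\hat{\xi}$ supported on some index set $S\subseteq[mn]$, and $\tilde{c}_{s}$ is independent of $\mathcal{G}_{n}^{(k,s)}(z)$ and $\Omega_{n,k,s}$.

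Fix $\alpha>0$ and let $p\geq 2$ be a large integer to be chosen at the end. Put $A:=\mathcal{G}_{n}^{(k,s)}(z)\oindicator{\Omega_{n,k,s}}$; exactly as in the proof of Lemma~\ref{Lem:G_nBounded}, Proposition~\ref{Prop:LargeAndSmallSingVals} applied to $\blmat{Y}{n}^{(k,s)}$ on $\Omega_{n,k,s}$ yields a constant $C>0$, uniform in $z\in\mathcal{C}$, with $\lnorm A\rnorm\leq C$ surely, and $A$ is independent of $\tilde{c}_{s}$. Since $e_{s}^{T}Ac_{s}=e_{s}^{T}\mathcal{G}_{n}^{(k,s)}(z)c_{s}\oindicator{\Omega_{n,k,s}}$ and $\left|e_{s}^{T}Ac_{s}\right|^{2}=n^{-1}\tilde{c}_{s}^{*}\left(A^{*}e_{s}e_{s}^{T}A\right)\tilde{c}_{s}$, and the matrix $A^{*}e_{s}e_{s}^{T}A$ is Hermitian positive semidefinite of rank at most $1$ and independent of $\tilde{c}_{s}$, we may condition on $A$ and invoke Lemma~\ref{Lem:conjLessThanConstant} (with $d=1$), using $\lnorm A^{*}e_{s}e_{s}^{T}A\rnorm\leq\lnorm A\rnorm^{2}$, to obtain
\begin{align*}
\E\left|e_{s}^{T}\mathcal{G}_{n}^{(k,s)}(z)c_{s}\oindicator{\Omega_{n,k,s}}\right|^{2p}
&=n^{-p}\,\E\left|\tilde{c}_{s}^{*}\left(A^{*}e_{s}e_{s}^{T}A\right)\tilde{c}_{s}\right|^{p}\\
&\ll_{p} n^{-p}\,n^{(1-2\varepsilon)p+4\varepsilon-2}\,\E\lnorm A\rnorm^{2p}\\
&\ll_{p} n^{-2\varepsilon p+4\varepsilon-2},
\end{align*}
uniformly for $z\in\mathcal{C}$.

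Finally, by Markov's inequality,
\begin{equation*}
\P\left(\left|e_{s}^{T}\mathcal{G}_{n}^{(k,s)}(z)c_{s}\oindicator{\Omega_{n,k,s}}\right|\geq 1/2\right)\leq 2^{2p}\,\E\left|e_{s}^{T}\mathcal{G}_{n}^{(k,s)}(z)c_{s}\oindicator{\Omega_{n,k,s}}\right|^{2p}\ll_{p} n^{-2\varepsilon p+4\varepsilon-2},
\end{equation*}
again uniformly for $z\in\mathcal{C}$. Choosing $p$ large enough that $2\varepsilon p-4\varepsilon+2\geq\alpha$, and recalling that $\alpha>0$ was arbitrary, shows that $Q'_{n,k,s}(z)$ holds with overwhelming probability, uniformly for $z\in\mathcal{C}$. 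The argument is essentially routine; the only points requiring care are the independence of $\mathcal{G}_{n}^{(k,s)}(z)\oindicator{\Omega_{n,k,s}}$ from $c_{s}$ (which is precisely why the column $c_{s}$ is zeroed out in $\blmat{Y}{n}^{(k,s)}$) and the uniformity in $z$ of the resolvent bound, and no genuinely new obstacle appears beyond those already dispatched in Lemma~\ref{Lem:QOverwhelming}.
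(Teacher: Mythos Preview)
Your proof is correct and follows essentially the same approach as the paper: both rewrite $|e_{s}^{T}\mathcal{G}_{n}^{(k,s)}(z)c_{s}\oindicator{\Omega_{n,k,s}}|^{2p}$ as $|c_{s}^{*}(\mathcal{G}_{n}^{(k,s)}(z))^{*}e_{s}e_{s}^{T}\mathcal{G}_{n}^{(k,s)}(z)c_{s}\oindicator{\Omega_{n,k,s}}|^{p}$, apply Lemma~\ref{Lem:conjLessThanConstant} to the rank-one Hermitian positive semidefinite matrix, and finish with Markov's inequality and a choice of large $p$. Your version simply spells out a few details (the independence of $c_{s}$ from the resolvent and the $n^{-1/2}$ scaling) that the paper leaves implicit.
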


\begin{proof}
	Let $\alpha>0$ be arbitrary. We will show the complement event holds with probability at most $O_{\alpha}(n^{-\alpha})$ uniformly for any $z\in\mathcal{C}$. Observe that by Markov's inequality and by Lemma \ref{Lem:conjLessThanConstant}, uniformly for any $z\in\mathcal{C}$ and for any $p\geq 2$, 
	\begin{align*}
	&\P\left(\left|e_{s}^{T}\mathcal{G}_{n}^{(k,s)}(z)c_{s}\oindicator{\Omega_{n,k,s}}\right|\geq 1/2\right)\\
	&\quad\quad \ll_{p} \E\left|e_{s}^{T}\mathcal{G}_{n}^{(k,s)}(z)c_{s}\oindicator{\Omega_{n,k,s}}\right|^{2p}\\
	&\quad\quad= \E\left|c_{s}^{*}(\mathcal{G}_{n}^{(k,s)}(z))^{*}e_{s}e_{s}^{T}\mathcal{G}_{n}^{(k,s)}(z)c_{s}\oindicator{\Omega_{n,k,s}}\right|^{p}\\
	\ifdetail &\quad\quad\ll_{\alpha}  \frac{n^{-\varepsilon((\alpha+4\varepsilon-2)/(2\varepsilon)-4)-2}\E\lnorm(\mathcal{G}_{n}^{(k,s)}(z))^{*}e_{s}e_{s}^{T}\mathcal{G}_{n}^{(k,s)}(z)\oindicator{\Omega_{n,k,s}}\rnorm^{(\alpha+4\varepsilon-2)/(2\varepsilon)}}{(1/2)^{(\alpha+4\varepsilon-2)/\varepsilon}}\\\fi
	&\quad\quad\ll_{p,\alpha} n^{-2\varepsilon p+4\varepsilon-2}.
	\end{align*} 
	Selecting $p$ sufficiently large concludes the proof.
\end{proof}
The next Lemma follows by an application of Proposition \ref{Prop:LargeAndSmallSingVals}.
\begin{lemma}
	On the event $\Omega_{n,k,s}$, there exists a constant $C>0$ such that  $\lnorm\mathcal{G}_{n}^{(k,s)}(z)\rnorm\leq C$ almost surely uniformly for any $z$ on the contour $\mathcal{C}$. There exists a constant $C>0$ such that $\left|\delta_{k,s}(z)\oindicator{Q'_{n,k,s}}\right|\leq C$ almost surely uniformly for any $z$ on the contour $\mathcal{C}$.
	\label{Lem:DeltaBoundedOnEvent}
	\label{Lem:G_nksBounded}
\end{lemma}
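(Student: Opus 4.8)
The plan is to read off both assertions directly from Proposition~\ref{Prop:LargeAndSmallSingVals}, together with the reverse triangle inequality; no new estimates are required, which is consistent with the remark preceding the statement.

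\textbf{First assertion (resolvent bound).} I would apply Proposition~\ref{Prop:LargeAndSmallSingVals} with $M=\blmat{Y}{n}^{(k,s)}$ and $E=\mathcal{C}$, the boundary of the disk $D_\delta$. Every point $z\in\mathcal{C}$ satisfies $|z|=1+\delta>1+\delta/2$, so $\mathcal{C}\subseteq\{z\in\C:|z|>1+\delta/2\}$; hence, on the event $\Omega_{n,k,s}$ defined in \eqref{Def:Omega_{n,k,s}}, one has $\inf_{z\in\mathcal{C}}s_{mn}(\blmat{Y}{n}^{(k,s)}-zI)\geq c$. Proposition~\ref{Prop:LargeAndSmallSingVals} then guarantees both that no eigenvalue of $\blmat{Y}{n}^{(k,s)}$ lies on $\mathcal{C}$ (so $\mathcal{G}_{n}^{(k,s)}(z)$ is well defined there) and that $\sup_{z\in\mathcal{C}}\lnorm\mathcal{G}_{n}^{(k,s)}(z)\rnorm\leq 1/c$. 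Taking $C:=1/c$ proves the claim.

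\textbf{Second assertion (bound on $\delta_{k,s}$).} Recall from \eqref{Def:delta_{n,k}} that $\delta_{k,s}(z)=(1+e_{s}^{T}\mathcal{G}_{n}^{(k,s)}(z)c_{s})^{-1}$, and that the event $Q'_{n,k,s}(z)$ in \eqref{Def:Q'} enforces $|e_{s}^{T}\mathcal{G}_{n}^{(k,s)}(z)c_{s}\,\oindicator{\Omega_{n,k,s}}|\leq 1/2$. Intersecting with $\Omega_{n,k,s}$ (the regime in which the quantity is actually used, since $\delta_{k,s}(z)$ is always multiplied by $\oindicator{\Omega_{n,k,s}}$ in the sequel), the indicator is present, so $|e_{s}^{T}\mathcal{G}_{n}^{(k,s)}(z)c_{s}|\leq 1/2$, and the reverse triangle inequality gives $|1+e_{s}^{T}\mathcal{G}_{n}^{(k,s)}(z)c_{s}|\geq 1/2$; hence $|\delta_{k,s}(z)|\leq 2$. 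Because the threshold $1/2$ in the definition of $Q'_{n,k,s}(z)$ does not depend on $z$, this bound is uniform over $z\in\mathcal{C}$, and one may take $C:=2$ (or $C:=\max\{1/c,2\}$ to cover both statements simultaneously).

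There is essentially no obstacle here: both parts are one-line consequences of Proposition~\ref{Prop:LargeAndSmallSingVals} and the reverse triangle inequality. The only point requiring care is the bookkeeping of the indicator $\oindicator{\Omega_{n,k,s}}$ sitting inside the definition of $Q'_{n,k,s}(z)$: since $Q'_{n,k,s}(z)$ trivially contains $\Omega_{n,k,s}^{c}$, the displayed bound on $\delta_{k,s}(z)\oindicator{Q'_{n,k,s}}$ is to be understood as holding on $Q'_{n,k,s}(z)\cap\Omega_{n,k,s}$, which is precisely how the expression $\delta_{k,s}(z)\oindicator{\Omega_{n,k,s}}\oindicator{Q'_{n,k,s}}$ enters the later arguments.
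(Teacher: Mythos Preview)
Your proposal is correct and matches the paper's own treatment exactly: the paper does not give a written proof, stating only that the lemma follows by an application of Proposition~\ref{Prop:LargeAndSmallSingVals}, and your two paragraphs spell out precisely that application together with the reverse triangle inequality for the $\delta_{k,s}$ bound. Your remark about the indicator $\oindicator{\Omega_{n,k,s}}$ inside $Q'_{n,k,s}(z)$ is a valid clarification of how the bound is actually used downstream.
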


With these definitions and results in hand, we proceed with the proof of Lemma \ref{Lem:MDSReduction}. In the proof of Lemma \ref{Lem:MDSReduction}, we make some reductions, each of which produces error terms which are sufficiently small in $L^{2}$-norm. In particular, the proof of Lemma \ref{Lem:MDSReduction} uses techniques of expanding using a resolvent identity and invoking Vitali's Theorem to get a self consistent equation, allowing us to solve for the variance. Unlike the proof for a single matrix (see \cite[Lemma 3.2]{RiS} or \cite [Theorem 5.2]{OR:CLT}), we iterate the process $m$ times before recovering a system of self consistent equations.

\begin{proof}[Proof of Lemma \ref{Lem:MDSReduction}]
	We may begin by expanding 
	\begin{align*}
	\E_{k-1}[\breve{M}_{n,k}^{2}]&=\E_{k-1}\left[\left(\sum_{l=1}^{L}\alpha_{l}\breve{Z}_{n,k}(z_{l})+\beta_{l}\overline{\breve{Z}_{n,k}(z_{l})}\right)^{2}\right]\\
	\ifdetail&=\E_{k-1}\left[\sum_{i,j=1}^{L}\left(\alpha_{i}\breve{Z}_{n,k}(z_{i})+\beta_{i}\overline{\breve{Z}_{n,k}(z_{i})}\right)\left(\alpha_{j}\breve{Z}_{n,k}(z_{j})+\beta_{j}\overline{\breve{Z}_{n,k}(z_{j})}\right)\right]\\\fi
	&=\E_{k-1}\left[\sum_{i,j=1}^{L}\alpha_{i}\alpha_{j}\breve{Z}_{n,k}(z_{i})\breve{Z}_{n,k}(z_{j})\right]+\E_{k-1}\left[\sum_{i,j=1}^{L}\alpha_{i}\beta_{j}\breve{Z}_{n,k}(z_{i})\overline{\breve{Z}_{n,k}(z_{j})}\right]\\
	&\quad+\E_{k-1}\left[\sum_{i,j=1}^{L}\beta_{i}\alpha_{j}\overline{\breve{Z}_{n,k}(z_{i})}\breve{Z}_{n,k}(z_{j})\right]+\E_{k-1}\left[\sum_{i,j=1}^{L}\beta_{i}\beta_{j}\overline{\breve{Z}_{n,k}(z_{i})}\overline{\breve{Z}_{n,k}(z_{j})}\right]
	\end{align*}
	where $\breve{Z}_{n,k}(z)$ was defined in \eqref{Def:breveZ}, and therefore 
	\begin{align}
	&\sum_{k=1}^{n}\E_{k-1}\left[\breve{M}_{n,k}^{2}\right]\notag\\
	&\quad=\sum_{k=1}^{n}\sum_{i,j=1}^{L}\alpha_{i}\alpha_{j}\E_{k-1}\left[\breve{Z}_{n,k}(z_{i})\breve{Z}_{n,k}(z_{j})\right]\label{Equ:VarExpansion1}\\
	&\quad\quad+\sum_{k=1}^{n}\sum_{i,j=1}^{L}\alpha_{i}\beta_{j}\E_{k-1}\left[\breve{Z}_{n,k}(z_{i})\overline{\breve{Z}_{n,k}(z_{j})}\right]\label{Equ:VarExpansion2}\\
	&\quad\quad+\sum_{k=1}^{n}\sum_{i,j=1}^{L}\beta_{i}\alpha_{j}\E_{k-1}\left[\overline{\breve{Z}_{n,k}(z_{i})}\breve{Z}_{n,k}(z_{j})\right]\label{Equ:VarExpansion3}\\
	&\quad\quad+\sum_{k=1}^{n}\sum_{i,j=1}^{L}\beta_{i}\beta_{j}\E_{k-1}\left[\overline{\breve{Z}_{n,k}(z_{i})}\overline{\breve{Z}_{n,k}(z_{j})}\right].\label{Equ:VarExpansion4}
	\end{align}
	We analyze each of these terms separately. Note that since the entries in the matrix $\blmat{Y}{n}$ are real, $\overline{\breve{Z}_{n,k}(z_{j})}=\breve{Z}_{n,k}(\overline{z_{j}})$ so the calculations for all terms will be the same. 
	\begin{Details} We show the calculation for a single term and the remaining terms follow in a similar manner. Looking at term \eqref{Equ:VarExpansion2}, we have
	\begin{align*}
	&\sum_{k=1}^{n}\sum_{i,j=1}^{L}\alpha_{i}\beta_{j}\E_{k-1}\left[\breve{Z}_{n,k}(z_{i})\overline{\breve{Z}_{n,k}(z_{j})}\right]\\
	&\quad=\sum_{i,j=1}^{L}\alpha_{i}\beta_{j}\sum_{k=1}^{n}\E_{k-1}\left[\E_{k}\left[\tr\left(V_{k}^{T}(\mathcal{G}_{n}^{(k)}(z_{i}))^{2}U_{k}\right)\oindicator{\Omega_{n,k}}\right]\right.\\
	&\quad\quad\quad\quad\quad\quad\quad\quad\quad\quad\quad\quad\left.\times\overline{\E_{k}\left[\tr\left(V_{k}^{T}(\mathcal{G}_{n}^{(k)}(z_{j}))^{2}U_{k}\right)\oindicator{\Omega_{n,k}}\right]}\right].
	\end{align*}
\end{Details}
	Therefore it suffices to show that 
	\begin{align*}
	\sum_{k=1}^{n}\sum_{i,j=1}^{L}\alpha_{i}\beta_{j}\E_{k-1}\left[\breve{Z}_{n,k}(z)\overline{\breve{Z}_{n,k}(w)}\right]\rightarrow \sum_{1\leq i,j,\leq L}\alpha_{i}\beta_{j}\frac{m^{2}(z\bar{w})^{m-1}}{((z\bar{w})^{m}-1)^{2}}
	\end{align*}
	in probability for fixed $z,w\in\mathcal{C}$. For now, we focus on the sum over $k$. \ifdetail For ease of notation, throughout this computation we will use $z_{i}=z$ and $z_{j}=w$.\fi Observe that
	\begin{align}
	&\sum_{k=1}^{n}\E_{k-1}\left[\breve{Z}_{n,k}(z)\overline{\breve{Z}_{n,k}(w)}\right]\\
	\ifdetail&\quad\quad=\sum_{k=1}^{n}\E_{k-1}\left[\E_{k}\left[\tr\left(V_{k}^{T}(\mathcal{G}_{n}^{(k)}(z))^{2}U_{k}\right)\oindicator{\Omega_{n,k}}\right]\overline{\E_{k}\left[\tr\left(V_{k}^{T}(\mathcal{G}_{n}^{(k)}(w))^{2}U_{k}\right)\oindicator{\Omega_{n,k}}\right]}\right]\notag\\\fi 
	\ifdetail&\quad\quad=\sum_{k=1}^{n}\E_{k-1}\left[\E_{k}\left[\sum_{i=1}^{m}\sum_{a,b=1}^{mn}(V_{k}^{T})_{i,a}(\mathcal{G}_{n}^{(k)}(z))^{2}_{a,b}(U_{k})_{b,i}\oindicator{\Omega_{n,k}}\right]\right.\notag\\\fi 
	\ifdetail&\quad\quad\quad\quad\quad\quad\quad\quad\quad\times\left.\overline{\E_{k}\left[\sum_{j=1}^{m}\sum_{c,d=1}^{mn}(V_{k}^{T})_{j,c}(\mathcal{G}_{n}^{(k)}(w))_{c,d}^{2}(U_{k})_{d,j}\oindicator{\Omega_{n,k}}\right]}\right]\notag\\\fi 
	&\quad\quad =\sum_{k=1}^{n}\E_{k-1}\left[\E_{k}\left[\sum_{i=1}^{m}\sum_{a,b=1}^{mn}(V_{k}^{T})_{(i,a)}(\mathcal{G}_{n}^{(k)}(z))^{2}_{(a,b)}(U_{k})_{(b,i)}\oindicator{\Omega_{n,k}}\right]\right.\notag\\
	&\quad\quad\quad\quad\quad\quad\quad\quad\quad\times\left.\E_{k}\left[\sum_{j=1}^{m}\sum_{c,d=1}^{mn}(U_{k}^{*})_{(j,d)}(\mathcal{G}_{n}^{(k)}(w))_{(d,c)}^{2*}(V_{k})_{(c,j)}\oindicator{\Omega_{n,k}}\right]\right]\notag\\
	\ifdetail&\quad\quad=\sum_{k=1}^{n}\sum_{i,j=1}^{m}\sum_{a,b,c,d=1}^{mn}\E_{k-1}\left[\E_{k}\left[(V_{k}^{T})_{(i,a)}(\mathcal{G}_{n}^{(k)}(z))^{2}_{(a,b)}(U_{k})_{(b,i)}\oindicator{\Omega_{n,k}}\right]\right.\notag\\\fi 
	\ifdetail&\quad\quad\quad\quad\quad\quad\quad\quad\quad\quad\quad\quad\quad\quad\left.\times\E_{k}\left[(U_{k}^{*})_{(j,d)}(\mathcal{G}_{n}^{(k)}(w))_{(d,c)}^{2*}(V_{k})_{(c,j)}\oindicator{\Omega_{n,k}}\right]\right]\notag\\\fi 
	\ifdetail&\quad\quad =\sum_{k=1}^{n}\sum_{i,j=1}^{m}\sum_{a,b,c,d=1}^{mn}\E_{k-1}\left[(V_{k}^{T})_{(i,a)}\E_{k}\left[(\mathcal{G}_{n}^{(k)}(z))^{2}_{(a,b)}\oindicator{\Omega_{n,k}}\right]\E_{k}[(U_{k})_{(b,i)}]\right.\notag\\\fi 
	\ifdetail&\quad\quad\quad\quad\quad\quad\quad\quad\quad\quad\quad\quad\quad\quad\quad\times\left.\E_{k}\left[(U_{k}^{*})_{(j,d)}\right]\E_{k}\left[(\mathcal{G}_{n}^{(k)}(w))_{(d,c)}^{2*}\oindicator{\Omega_{n,k}}\right](V_{k})_{(c,j)}\right]\notag\\\fi
	&\quad\quad =\sum_{k=1}^{n}\sum_{i,j=1}^{m}\sum_{a,b,c,d=1}^{mn}\E_{k-1}\left[(V_{k}^{T})_{(i,a)}\E_{k}\left[(\mathcal{G}_{n}^{(k)}(z))^{2}_{(a,b)}\oindicator{\Omega_{n,k}}\right](U_{k})_{(b,i)}\right.\notag\\
	&\quad\quad\quad\quad\quad\quad\quad\quad\quad\quad\quad\quad\quad\quad\times\left.(U_{k}^{*})_{(j,d)}\E_{k}\left[(\mathcal{G}_{n}^{(k)}(w))_{(d,c)}^{2*}\oindicator{\Omega_{n,k}}\right](V_{k})_{(c,j)}\right].\label{Equ:VarExpansion5}
	\end{align}
	At this point, we may exploit the block structure of these matrices in order to reduce the number of terms in the above sums. \ifdetail Many of these terms will be zero.\fi First, since $V_{k}$ is a $mn\times m$ matrix which contains columns $e_{k},e_{n+k}\dots e_{(m-1)n+k}$, $(V_{k}^{T})_{(i,a)}=0$ unless $a=(i-1)n+k$. The same argument shows that $(V_{k})_{(c,j)}=0$ unless $c=(j-1)n+k$. Since $U_{k}$ is independent of $\mathcal{G}_{n}^{(k)}(z)$, we can factor this out of the expectation and rewrite \eqref{Equ:VarExpansion5} as 
	\begin{Details}
	\begin{align*}
	&\sum_{k=1}^{n}\sum_{i,j=1}^{m}\sum_{a,b,c,d=1}^{mn}\E_{k-1}\left[(V_{k}^{T})_{(i,a)}\E_{k}\left[(\mathcal{G}_{n}^{(k)}(z))^{2}_{(a,b)}\oindicator{\Omega_{n,k}}\right](U_{k})_{(b,i)}\right.\\
	&\quad\quad\quad\quad\quad\quad\quad\quad\quad\quad\quad\times\left.(U_{k}^{*})_{(j,d)}\E_{k}\left[(\mathcal{G}_{n}^{(k)}(w))_{(d,c)}^{2*}\oindicator{\Omega_{n,k}}\right](V_{k})_{(c,j)}\right]\\
	&=\sum_{k=1}^{n}\sum_{i,j=1}^{m}\sum_{b,d=1}^{mn}\E_{k-1}\left[(V_{k}^{T})_{(i,(i-1)n+k)}\E_{k}\left[(\mathcal{G}_{n}^{(k)}(z))^{2}_{((i-1)n+k,b)}\oindicator{\Omega_{n,k}}\right](U_{k})_{(b,i)}\right.\\
	&\quad\quad\quad\quad\quad\quad\quad\quad\quad\quad\quad\times\left.(U_{k}^{*})_{(j,d)}\E_{k}\left[(\mathcal{G}_{n}^{(k)}(w))_{(d,(j-1)n+k)}^{2*}\oindicator{\Omega_{n,k}}\right](V_{k})_{((j-1)n+k,j)}\right]\\
	&=\sum_{k=1}^{n}\sum_{i,j=1}^{m}\sum_{b,d=1}^{mn}\E_{k-1}\left[\E_{k}\left[(\mathcal{G}_{n}^{(k)}(z))^{2}_{((i-1)n+k,b)}\oindicator{\Omega_{n,k}}\right](U_{k})_{(b,i)}\right.\\
	&\quad\quad\quad\quad\quad\quad\quad\quad\quad\quad\quad\times\left.(U_{k}^{*})_{(j,d)}\E_{k}\left[(\mathcal{G}_{n}^{(k)}(w))_{(d,(j-1)n+k)}^{2*}\oindicator{\Omega_{n,k}}\right]\right]\\
	&=\sum_{k=1}^{n}\sum_{i,j=1}^{m}\sum_{b,d=1}^{mn}\E_{k-1}\left[\E_{k}\left[(\mathcal{G}_{n}^{(k)}(z))^{2}_{((i-1)n+k,b)}\oindicator{\Omega_{n,k}}\right]\right.\\
	&\quad\quad\quad\quad\quad\quad\quad\quad\quad\quad\quad\times\left.\E_{k}\left[(\mathcal{G}_{n}^{(k)}(w))_{(d,(j-1)n+k)}^{2*}\oindicator{\Omega_{n,k}}\right]\right]\E[(U_{k})_{(b,i)}(U_{k}^{*})_{(j,d)}].
	\end{align*}	
	\end{Details}
	\begin{align*}
	&\sum_{k=1}^{n}\sum_{i,j=1}^{m}\sum_{b,d=1}^{mn}\E_{k-1}\left[\E_{k}\left[(\mathcal{G}_{n}^{(k)}(z))^{2}_{((i-1)n+k,b)}\oindicator{\Omega_{n,k}}\right]\right.\\
	&\quad\quad\quad\quad\quad\quad\quad\quad\quad\quad\quad\times\left.\E_{k}\left[(\mathcal{G}_{n}^{(k)}(w))_{(d,(j-1)n+k)}^{2*}\oindicator{\Omega_{n,k}}\right]\right]\E[(U_{k})_{(b,i)}(U_{k}^{*})_{(j,d)}].
	\end{align*}
	Now, if $i\neq j$ or $b\neq d$, then $(U_{k})_{(b,i)}$ and $(U_{k}^{*})_{(j,d)}$ come from different columns or are different entries in the same column of $\blmat{Y}{n}$ and hence are independent. \ifdetail Since $E_{k-1}$ is conditioning on the first $k-1$ columns in each block and $U_{k}$ contains the $k$th column from each block, this becomes a standard expected value. Since the expected value of any entry in these columns is zero,\fi Therefore, the only non-zero terms are those in which $i=j$ and $b=d$. Thus the sum in \eqref{Equ:VarExpansion5} can be further reduced to 
	\begin{equation*}
	\sum_{k=1}^{n}\sum_{i=1}^{m}\sum_{b=1}^{mn}\E_{k}\left[(\mathcal{G}_{n}^{(k)}(z))^{2}_{((i-1)n+k,b)}\oindicator{\Omega_{n,k}}\right]\E_{k}\left[(\mathcal{G}_{n}^{(k)}(w))_{(b,(i-1)n+k)}^{2*}\oindicator{\Omega_{n,k}}\right]\E\left|(U_{k})_{(b,i)}\right|^{2}.
	\end{equation*}
	Now, since $U_{k}$ is filled with columns $c_{k},c_{n+k},\dots c_{(m-1)n+k}$, we may analyze the structure of these columns to evaluate $\E\left|(U_{k})_{(b,i)}\right|^{2}$. Since column $c_{(i-1)n+k}$ comes from the $i$th block of $\blmat{Y}{n}$, $(U_{k})_{(b,i)}=0$ unless $(i-2)n-1\leq b\leq (i-1)n$ where these subscripts are reduced modulo $m$ and we use the convention that $-1n\equiv (m-1)n$ and $0n\equiv mn$. For $b$ in such a range, we have $\E|U_{(b,i)}|^{2}=\frac{1}{n}$. Therefore we can simplify the sum in \eqref{Equ:VarExpansion5} further as 
	\begin{equation*}
	\frac{1}{n}\sum_{k=1}^{n}\sum_{i=1}^{m}\sum_{b=(i-2)n+1}^{(i-1)n}\E_{k}\left[(\mathcal{G}_{n}^{(k)}(z))^{2}_{((i-1)n+k,b)}\oindicator{\Omega_{n,k}}\right]\E_{k}\left[(\mathcal{G}_{n}^{(k)}(w))_{(b,(i-1)n+k)}^{2*}\oindicator{\Omega_{n,k}}\right].\\
	\end{equation*}
    Define the diagonal $mn\times mn$ matrix $\mathcal{D}_{p}$ with entries  
	\begin{equation}
	\left(\mathcal{D}_{p}\right)_{(i,j)}:=\begin{cases}
	1 & \text{ if } i=j,\;(p-1)n+1\leq i \leq pn\\
	0 & \text{ otherwise}
	\end{cases}
	\label{Def:ProjectionMatrix}
	\end{equation}
	for $1\leq p\leq m$ and $1\leq i,j\leq mn$. Note that $\mathcal{D}_{p}$ is nonzero only on the diagonal of the $p$th block. 
	Then we have 
	\begin{align}
	&\frac{1}{n}\sum_{k=1}^{n}\sum_{i=1}^{m}\sum_{b=(i-2)n+1}^{(i-1)n}\E_{k}\left[(\mathcal{G}_{n}^{(k)}(z))^{2}_{((i-1)n+k,b)}\oindicator{\Omega_{n,k}}\right]\E_{k}\left[(\mathcal{G}_{n}^{(k)}(w))_{(b,(i-1)n+k)}^{2*}\oindicator{\Omega_{n,k}}\right]\notag\\
	&=\frac{1}{n}\sum_{k=1}^{n}\sum_{i=1}^{m}e_{(i-1)n+k}^{T}\E_{k}\left[(\mathcal{G}_{n}^{(k)}(z))^{2}\oindicator{\Omega_{n,k}}\right]\mathcal{D}_{i-1}\E_{k}\left[(\mathcal{G}_{n}^{(k)}(w))^{2*}\oindicator{\Omega_{n,k}}\right]e_{(i-1)n+k}\label{Equ:Lem:FinDimDist:1}
	\end{align}
	where the subscript on $\mathcal{D}_{i-1}$ is reduced modulo $m$ and in the range $\{1,\dots,m\}$.
	 \begin{Details}
	 	 Next, instead of proving the convergence of \eqref{Equ:Lem:FinDimDist:1}, we move to proving the convergence of its anti-derivative. By Vitali's theorem (see for instance \cite[Lemma 2.14]{BSbook}) convergence of \eqref{Equ:Lem:FinDimDist:1} will follow provided that the primitive satisfies the hypothesis of Vitali's theorem.
	 	If $\{f_{N}(z)\}$ is a sequence of uniformly bounded analytic functions on a domain $\mathbb{D}\subset\mathcal{C}$ converging at a collection $\{z\}$ containing a limit point in $\mathbb{D}$, then there exists an analytic function $f(z)$ such that $f_{N}(z)\rightarrow f(z)$ and $f'_{N}(z)\rightarrow f'(z)$ throughout $\mathbb{D}$.
	 	if we show that \[e_{(i-1)n+k}^{T}\E_{k}[(\mathcal{G}_{n}^{(k)}(z))^{2}\oindicator{\Omega_{n,k}}]\mathcal{D}_{i-1}\E_{k}[(\mathcal{G}_{n}^{(k)}(w))^{2*}\oindicator{\Omega_{n,k}}]e_{(i-1)n+k}.\] 
	 	Provided this primitive function satisfies the hypothesis of Vitali's theorem, then the convergence of the original function will follow.
	 \end{Details}  
	 Next, if we can show that
	 \begin{align}
	 &\frac{1}{n}\sum_{k=1}^{n}\sum_{i=1}^{m}e_{(i-1)n+k}^{T}\E_{k}\left[\mathcal{G}_{n}^{(k)}(z)\oindicator{\Omega_{n,k}}\right]\mathcal{D}_{i-1}\E_{k}\left[(\mathcal{G}_{n}^{(k)}(w))^{*}\oindicator{\Omega_{n,k}}\right]e_{(i-1)n+k} \label{Equ:MDSReductionAntiderivative}\\
	 &\quad\quad\quad\quad\quad\quad\quad\quad\quad\longrightarrow-\ln\left(1-\frac{1}{(z\bar{w})^{m}}\right),\notag
	\end{align}
	in probability as $n\rightarrow\infty$, then by Vitali's theorem (see for instance \cite[Lemma 2.14]{BSbook}), it will follow that 
	\begin{align}
	&\frac{1}{n}\sum_{k=1}^{n}\sum_{i=1}^{m}e_{(i-1)n+k}^{T}\E_{k}\left[(\mathcal{G}_{n}^{(k)}(z))^{2}\oindicator{\Omega_{n,k}}\right]\mathcal{D}_{i-1}\E_{k}\left[(\mathcal{G}_{n}^{(k)}(w))^{2*}\oindicator{\Omega_{n,k}}\right]e_{(i-1)n+k}\notag\\
	&\quad\quad\quad\quad\quad\quad\quad\quad\quad\longrightarrow \frac{m^{2}(z\bar{w})^{m-1}}{((z\bar{w})^{m}-1)^{2}}.\label{Equ:ConvergenceOfOrigVitali}
	\end{align}
	Vitali's theorem is justified because \eqref{Equ:MDSReductionAntiderivative} is bounded and analytic in the region where $|z|,|w|>1+\delta/2$ and this region has an accumulation point. Note that here we apply Vitali's theorem twice, once in the variable $z$ and once in the variable $\bar{w}$. 
	\begin{Details}
		satisfies the assumptions of Vitali's theorem. To this end, note that \eqref{Equ:MDSReductionAntiderivative} is analytic in the variable $z$ over the region $\{z\in\C\;:\;|z|>1+\delta/2\}$ for any fixed $|\overline{w}|>1+\delta/2$. \eqref{Equ:MDSReductionAntiderivative} is also analytic in the variable $\bar{w}$ over the region $\{\bar{w}\in\C\;:\;|w|>1+\delta/2\}$ for any fixed $|z|>1+\delta/2$, and is uniformly bounded. Finally, the collection $\{z,\overline{w}\in\C\;:\;|z|,|\overline{w}|>1+\delta/2\}$ contains an accumulation point, which verifies the hypothesis of Vitali's theorem. With this, we are justified in the application of Vitali's theorem twice, once in $z$ and then once in $\overline{w}$. After doing so, we arrive at the conclusion that it suffices to show 
		\begin{align}
		&\frac{1}{n}\sum_{k=1}^{n}\sum_{i=1}^{m}e_{(i-1)n+k}^{T}\E_{k}\left[\mathcal{G}_{n}^{(k)}(z)\oindicator{\Omega_{n,k}}\right]\mathcal{D}_{i-1}\E_{k}\left[(\mathcal{G}_{n}^{(k)}(w))^{*}\oindicator{\Omega_{n,k}}\right]e_{(i-1)n+k}\notag\\
		&\quad\quad\quad\quad\quad\quad\quad\rightarrow -\ln\left(1-\frac{1}{(z\bar{w})^{m}}\right)
		\label{Equ:ConvergenceByVitali}
		\end{align}
		in probability
		\begin{remark}
			Note that the goal of applying Vitali's theorem was to work with the resolvents $\mathcal{G}_{n}^{(k)}(z)$ and $\left(\mathcal{G}_{n}^{(k)}(w)\right)^{*}$ instead of their squares $\left(\mathcal{G}_{n}^{(k)}(z)\right)^{2}$ and $\left(\mathcal{G}_{n}^{(k)}(w)\right)^{2*}$. 
		\end{remark}
	\end{Details}

	 To analyze the limit of \eqref{Equ:MDSReductionAntiderivative}, we will focus on a fixed term in the sum. Define 
	 \begin{equation}
	 \mathcal{T}_{n,k}(z,\overline{w}) := \sum_{i=1}^{m}e_{(i-1)n+k}^{T}\E_{k}\left[\mathcal{G}_{n}^{(k)}(z)\oindicator{\Omega_{n,k}}\right]\mathcal{D}_{i-1}\E_{k}\left[(\mathcal{G}_{n}^{(k)}(w))^{*}\oindicator{\Omega_{n,k}}\right]e_{(i-1)n+k}.\label{Def:T_{n,k}}
	 \end{equation}
	 
	 Provided the resolvent is defined, we have the matrix identity (see for example, \cite[Equation (3.15)]{RiS}),
	 \begin{equation}
	 \mathcal{G}_{n}^{(k)}(z)=-\frac{1}{z}I+\frac{1}{z}\sum_{t\neq* n+k}\mathcal{G}_{n}^{(k)}(z)c_{t}e_{t}^{T}
	 \label{Equ:ResolventExpansion}
	 \end{equation}
	 where the notation $t\neq* n+k$ indicates that the sum is over all $1\leq t\leq mn$ such that $t\neq k, n+k, \dots, (m-1)n+k$. We may use this to expand the term $\mathcal{T}_{n,k}(z,\overline{w})$. If we expand a generic term in the sum \eqref{Def:T_{n,k}}, we can show the following lemma holds.
	 
	 	 \begin{lemma}
	 	Define all quantities as in Lemma \ref{Lem:MDSReduction}. \ifdetail Let $e_{1},\dots e_{mn}$ denote the standard basis elements of $\C^{mn}$. Define $\blmat{Y}{n}$ as in \eqref{Def:Y_n} and let $c_{k}$ denote the $k$th column of $\blmat{Y}{n}$. Let $\blmat{Y}{n}^{(k)}$ denote the matrix $\blmat{Y}{n}$ with columns $c_{k},c_{n+k},c_{2n+k},\dots c_{(m-1)n+k}$ replaced with zeros, and define $\mathcal{G}_{n}^{(k)}(z)$ as in \eqref{Def:G^{(k)}_{n}}. Define $\mathcal{D}_{p}$ as in \eqref{Def:ProjectionMatrix} and define the event $\Omega_{n,k}$ as in \eqref{Def:Omega_{n,k}}.\fi Let $b$ be fixed with $1\leq b< m$. Then under the assumptions of Lemma \ref{Lem:MDSReduction},
	 	\begin{align*}
	 	&\E\left|\frac{1}{n}\sum_{k=1}^{n}\sum_{i=1}^{m}\left(e_{(i-1)n+k}^{T}\E_{k}\left[\mathcal{G}_{n}^{(k)}(z)\oindicator{\Omega_{n,k}}\right]\mathcal{D}_{i-b}\E_{k}\left[(\mathcal{G}_{n}^{(k)}(w))^{*}\oindicator{\Omega_{n,k}}\right]e_{(i-1)n+k}\right.\right.\\
	 	&\quad\quad-\frac{1}{z\bar{w}}\frac{k-1}{n}e_{(i-1)n+k}^{T}\E_{k}\left[\mathcal{G}_{n}^{(k)}(z)\oindicator{\Omega_{n,k}}\right]\mathcal{D}_{i-b-1}\\
	 	&\quad\quad\quad\quad\quad\quad\quad\quad\quad\quad\quad\quad\left.\left.\times\E_{k}\left[(\mathcal{G}_{n}^{(k)}(w))^{*}\oindicator{\Omega_{n,k}}\right]e_{(i-1)n+k}\right)\right|^{2}=o(1)
	 	\end{align*}
	 	where $i-b$ and $i-b-1$ are reduced modulo $m$ with representatives in $\{1,\dots,m\}$.
	 	\label{Lem:Iteration}
	 \end{lemma}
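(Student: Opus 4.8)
The plan is to expand a generic term in the sum defining $\mathcal{T}_{n,k}(z,\overline{w})$ using the resolvent expansion \eqref{Equ:ResolventExpansion} applied to the right-most factor $\mathcal{G}_n^{(k)}(z)$. Writing $\mathcal{G}_n^{(k)}(z) = -\frac{1}{z}I + \frac{1}{z}\sum_{t\neq *n+k}\mathcal{G}_n^{(k)}(z)c_t e_t^T$ and inserting this into $e_{(i-1)n+k}^T\E_k[\mathcal{G}_n^{(k)}(z)\oindicator{\Omega_{n,k}}]\mathcal{D}_{i-b}\E_k[(\mathcal{G}_n^{(k)}(w))^*\oindicator{\Omega_{n,k}}]e_{(i-1)n+k}$, the first term $-\frac{1}{z}I$ produces $-\frac{1}{z}e_{(i-1)n+k}^T\mathcal{D}_{i-b}\E_k[(\mathcal{G}_n^{(k)}(w))^*]e_{(i-1)n+k}$, which vanishes since $e_{(i-1)n+k}$ lies in the $i$th block and $\mathcal{D}_{i-b}$ is supported on the $(i-b)$th block (and $1\le b<m$ means $i-b\not\equiv i$). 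So only the sum over $t$ survives. Since $\blmat{Y}{n}$ has its nonzero blocks placed so that column $t$ in the $(i-1)$st block-row position couples to the $(i-1)$st block, one tracks which indices $t$ give a nonzero contribution: the relevant $t$ are exactly the column indices in the appropriate adjacent block, i.e. $t = (i-2)n+r$ for $r$ ranging over the non-removed columns of that block. The key is that after this expansion the factor $c_t$ pairs against the left $\mathcal{G}_n^{(k)}(z)$ and against a corresponding $c_s$-type factor coming from the block structure, and taking conditional expectation $\E_k$ of the quadratic form in the entries of these columns produces, up to lower-order fluctuations, a normalized trace over the block $\mathcal{D}_{i-b-1}$, with a factor of $\frac{1}{z}\cdot\frac{1}{n}\cdot(\text{number of non-removed columns}) = \frac{1}{z\bar w}\cdot\frac{k-1}{n}$ once the $\overline{w}$-side factor is also accounted for via $c_t^*(\mathcal{G}_n^{(k)}(w))^*$.

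Concretely, the key steps in order are: (1) substitute \eqref{Equ:ResolventExpansion} into the expression and discard the $-\frac{1}{z}I$ term by block-orthogonality of $\mathcal{D}_{i-b}$ against $e_{(i-1)n+k}$; (2) identify the surviving range of the summation index $t$ using the block structure of $\blmat{Y}{n}$ — these will be exactly the columns in the block one step "upstream," and the removed columns (those with index $\equiv k$) are precisely the ones not yet integrated, hence the count $n-(k-1)$ of non-removed... no, wait — the columns $c_1,\dots,c_{k-1}$ from each block are $\mathcal{F}_{k-1}$-measurable and get "used up" in the iteration, producing the weight $\tfrac{k-1}{n}$; (3) for each such $t$, use the Sherman–Morrison relation \eqref{Equ:RemovingColS} to peel the column $c_t$ out of $\mathcal{G}_n^{(k)}(z)$, replacing it with $\mathcal{G}_n^{(k,t)}(z)c_t\,\delta_{k,t}(z)$, and similarly on the $\overline{w}$-side with \eqref{Equ:RemovingColS2}; (4) replace $\delta_{k,t}(z)$ and $\delta_{k,t}(w)^*$ by $1$ up to a controllable error (justified on the event $Q'_{n,k,t}$ via Lemma \ref{Lem:Q'Overwhelming} and Lemma \ref{Lem:DeltaBoundedOnEvent}); (5) invoke the concentration estimate for bilinear forms (Lemma \ref{Lem:conjLessThanConstant} together with Lemma \ref{lem:BilinearFormWithDifferentVectors} and Remark \ref{Rem:lem:BilinearFormWithDifferentVectors}) and the fact that $\E|\xi|^2 = 1$, $\E|(c_t)_{i}|^2 = \tfrac{1}{n}$, to replace the random quadratic forms $c_t^*(\cdot)c_t$ by $\tfrac{1}{n}\tr(\cdot)$ in $L^2$, with error $o(1)$ after summing over $k$ and the $O(n)$ choices of $t$; (6) recognize the resulting normalized trace over the block as exactly $\mathcal{D}_{i-b-1}$ sandwiched between the two resolvents, and replace $\mathcal{G}_n^{(k,t)}$ back by $\mathcal{G}_n^{(k)}$ (another resolvent-identity error, $o(1)$ in $L^2$, since the two differ by a rank-$m$ perturbation); (7) collect the weight: the $\tfrac1z$ from \eqref{Equ:ResolventExpansion}, the $\tfrac1{\bar w}$ from the conjugate side, and $\tfrac1n$ times the number of surviving $t$'s (which is $k-1$) combine to $\tfrac{1}{z\bar w}\cdot\tfrac{k-1}{n}$.

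The main obstacle will be step (2)/(5): correctly bookkeeping the block structure to see which columns $t$ survive and carefully controlling the accumulated error when summing $o(n^{-1})$-type per-term bounds over $k=1,\dots,n$ and over the $O(n)$ relevant indices $t$ simultaneously — the naive bound would give $O(1)$ rather than $o(1)$, so one must use the additional $n^{-\varepsilon}$ savings from the truncation (Lemma \ref{Lem:conjLessThanConstant}, Remark \ref{Rem:lem:BilinearFormWithDifferentVectors}) and the martingale-difference orthogonality to close. A secondary subtlety is that several of the intermediate quantities ($\delta_{k,t}$, the resolvents $\mathcal{G}_n^{(k,t)}$) are only well-behaved on overwhelming-probability events ($\Omega_{n,k,t}$, $Q'_{n,k,t}$), so each replacement must be accompanied by an estimate showing the off-event contribution is negligible, exactly as in the proof of Lemma \ref{Lem:ReductionZ}; this is routine but must be done uniformly in $z,w$ on the contour $\mathcal{C}$. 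Once Lemma \ref{Lem:Iteration} is established, iterating it $m$ times in $b$ telescopes $\mathcal{D}_{i-1}$ down to $\mathcal{D}_i$ with accumulated weight $\big(\tfrac{1}{z\bar w}\tfrac{k-1}{n}\big)^{\!m}\!\!/(\cdots)$, and summing the geometric-type series over the iterations yields the claimed limit $-\ln\!\big(1 - (z\bar w)^{-m}\big)$ for \eqref{Equ:MDSReductionAntiderivative}, after which Vitali's theorem gives \eqref{Equ:ConvergenceOfOrigVitali}.
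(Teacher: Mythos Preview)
Your overall strategy matches the paper's: expand via the resolvent identity \eqref{Equ:ResolventExpansion}, use the block structure of $\mathcal{D}_{i-b}$ to kill most terms, peel out the column $c_s$ with Sherman--Morrison, replace $\delta_{k,s}$ by $1$, replace $c_sc_s^*$ by its expectation $\tfrac{1}{n}\mathcal{D}_{i-b-1}$ (martingale in $s$), and reinsert the column. The supporting lemmas you cite are the right ones.

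There is, however, a real gap in your step (1). You say you expand only the factor $\mathcal{G}_n^{(k)}(z)$ via \eqref{Equ:ResolventExpansion}. That one-sided expansion gives
\[
\frac{1}{z}\sum_{t\in\text{block }i-b}e_{(i-1)n+k}^T\E_k\bigl[\mathcal{G}_n^{(k)}(z)c_t\bigr]\;e_t^T\,\E_k\bigl[(\mathcal{G}_n^{(k)}(w))^*\bigr]e_{(i-1)n+k},
\]
which carries only the prefactor $\tfrac{1}{z}$ (not $\tfrac{1}{z\bar w}$) and, more importantly, has \emph{no} $c_t$ on the $\bar w$-side. Without that, \eqref{Equ:RemovingColS2} does not apply, and there is no $c_sc_s^*$ to concentrate in your step (5). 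The phrase ``a corresponding $c_s$-type factor coming from the block structure'' is not justified by anything you have written. What the paper does---and what you must do---is expand \emph{both} resolvents simultaneously: $\mathcal{G}_n^{(k)}(z)$ and $(\mathcal{G}_n^{(k)}(w))^*$. This produces four terms; three vanish by block orthogonality (since $1\le b<m$), and the surviving cross-term is
\[
\frac{1}{z\bar w}\sum_{s,t}\,e_{(i-1)n+k}^T\E_k\bigl[\mathcal{G}_n^{(k)}(z)c_t\bigr]\,e_t^T\mathcal{D}_{i-b}e_s\,\E_k\bigl[c_s^*(\mathcal{G}_n^{(k)}(w))^*\bigr]e_{(i-1)n+k}.
\]
The diagonal constraint $e_t^T\mathcal{D}_{i-b}e_s$ then forces $t=s$ with $s$ in block $i-b$, giving the pairing $\E_k[\mathcal{G}_n^{(k)}(z)c_s]\,\E_k[c_s^*(\mathcal{G}_n^{(k)}(w))^*]$ that your steps (3)--(6) actually require.

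Your step (2) is also muddled (the ``no, wait'' is telling). The mechanism is clean once you see it: after removing column $s$ from both resolvents and dropping the $\delta$'s, independence lets you factor $\E_k[\mathcal{G}_n^{(k,s)}(z)\oindicator{\Omega_{n,k,s}}]\,\E_k[c_s]\,\E_k[c_s^*]\,\E_k[(\mathcal{G}_n^{(k,s)}(w))^*\oindicator{\Omega_{n,k,s}}]$. Since $\E_k$ conditions on the first $k$ columns of each block, $\E_k[c_s]=c_s$ precisely when $s$ is among those columns and $\E_k[c_s]=0$ otherwise; with $s$ ranging over block $i-b$ and $s\neq(i-b-1)n+k$ excluded, exactly the $k-1$ indices $s=(i-b-1)n+1,\dots,(i-b-1)n+k-1$ survive. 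That is the source of the weight $\tfrac{k-1}{n}$, not a count of ``non-removed'' columns. After replacing $c_sc_s^*$ by $\tfrac{1}{n}\mathcal{D}_{i-b-1}$ and reinserting the column, the summand no longer depends on $s$, so the sum collapses to the factor $k-1$.
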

 The use of Lemma \ref{Lem:Iteration} is one of the main technical components of this paper. This lemma allows us to iterate the techniques used in previous results (see for example \cite{OR:CLT,RiS}) which results in a system of self consistent equations. Due to the iterative process used in the proof, the two terms in the difference in Lemma \ref{Lem:Iteration} differ from $\mathcal{T}_{n,k}(z,\overline{w})$ defined in \eqref{Def:T_{n,k}} because of the subscripts on $\mathcal{D}_{p}$. We prove Lemma \ref{Lem:Iteration} now, but several of the technical calculations are done separately for clarity. These calculations are presented in lemmas at the end of Section \ref{Sec:FiniteDimDist}.  
 
 \begin{proof}[Proof of Lemma \ref{Lem:Iteration}]
 	To begin, we may use the matrix identity from equation \eqref{Equ:ResolventExpansion} to expand $e^{T}_{(i-1)n+k}\E_{k}\left[\mathcal{G}_{n}^{(k)}(z)\oindicator{\Omega_{n,k}}\right]\mathcal{D}_{i-b}\E_{k}\left[(\mathcal{G}_{n}^{(k)}(w))^{*}\oindicator{\Omega_{n,k}}\right]e_{(i-1)n+k}.$ Doing so, we have that
 	\begin{Details}
 		\begin{align*}
 		&\E_{k}[\mathcal{G}_{n}^{(k)}(z)\oindicator{\Omega_{n,k}}]\mathcal{D}_{i-b}\E_{k}[(\mathcal{G}_{n}^{(k)}(w))^{*}\oindicator{\Omega_{n,k}}]\\
 		\ifdetail&= \E_{k}\left[\left(-\frac{1}{z}I+\frac{1}{z}\sum_{t\neq * n+k}\mathcal{G}_{n}^{(k)}(z)c_{t}e_{t}^{T}\right)\oindicator{\Omega_{n,k}}\right]\mathcal{D}_{i-b}\\\fi 
 		\ifdetail &\quad\quad\quad\quad\quad\times\E_{k}\left[\left(-\frac{1}{\overline{w}}I+\frac{1}{\bar{w}}\sum_{s\neq * n+k}e_{s}c_{s}^{*}(\mathcal{G}_{n}^{(k)}(w))^{*}\right)\oindicator{\Omega_{n,k}}\right]\\\fi
 		&=\frac{1}{z\bar{w}}\mathcal{D}_{i-b}(\P_{k}(\Omega_{n,k}))^{2}-\frac{1}{z\overline{w}}\sum_{s\neq * n+ k}\mathcal{D}_{i-1}\E_{k}\left[e_{s}c_{s}^{*}(\mathcal{G}_{n}^{(k)}(w))^{*}\oindicator{\Omega_{n,k}}\right]\P_{k}(\Omega_{n,k})\\
 		&\quad-\frac{1}{z\overline{w}}\sum_{t\neq * n+k}\E_{k}\left[\mathcal{G}_{n}^{(k)}(z)c_{t}e_{t}^{T}\oindicator{\Omega_{n,k}}\right]\mathcal{D}_{i-b}\P_{k}(\Omega_{n,k})\\
 		&\quad+\frac{1}{z\bar{w}}\sum_{s\neq * n+k}\sum_{t\neq * n+k}\E_{k}\left[\mathcal{G}_{n}^{(k)}(z)c_{t}e_{t}^{T}\oindicator{\Omega_{n,k}}\right]\mathcal{D}_{i-b}\E_{k}\left[e_{s}c_{s}^{*}(\mathcal{G}_{n}^{(k)}(w))^{*}\oindicator{\Omega_{n,k}}\right]
 		\end{align*}
 	\end{Details}
 	\begin{align}
 	&e_{(i-1)n+k}^{T}\E_{k}[\mathcal{G}_{n}^{(k)}(z)\oindicator{\Omega_{n,k}}]\mathcal{D}_{i-b}\E_{k}[(\mathcal{G}_{n}^{(k)}(w))^{*}\oindicator{\Omega_{n,k}}]e_{(i-1)n+k}\notag\\
 	&=e_{(i-1)n+k}^{T}\mathcal{D}_{i-b}\frac{1}{z\bar{w}}(\P_{k}(\Omega_{n,k}))^{2}e_{(i-1)n+k}\label{Equ:ExpansionOfSingleTerm1}\\
 	&\quad-e_{(i-1)n+k}^{T}\frac{1}{z\overline{w}}\sum_{s\neq * n+ k}\mathcal{D}_{i-b}\E_{k}\left[e_{s}c_{s}^{*}(\mathcal{G}_{n}^{(k)}(w))^{*}\oindicator{\Omega_{n,k}}\right]\P_{k}(\Omega_{n,k})e_{(i-1)n+k}\label{Equ:ExpansionOfSingleTerm2}\\
 	&\quad-e_{(i-1)n+k}^{T}\frac{1}{z\overline{w}}\sum_{t\neq * n+k}\E_{k}\left[\mathcal{G}_{n}^{(k)}(z)c_{t}e_{t}^{T}\oindicator{\Omega_{n,k}}\right]\mathcal{D}_{i-b}\P_{k}(\Omega_{n,k})e_{(i-1)n+k}\label{Equ:ExpansionOfSingleTerm3}\\
 	&\quad+e_{(i-1)n+k}^{T}\frac{1}{z\bar{w}}\sum_{s\neq * n+ k}\sum_{t\neq * n+ k}\E_{k}\left[\mathcal{G}_{n}^{(k)}(z)c_{t}e_{t}^{T}\oindicator{\Omega_{n,k}}\right]\mathcal{D}_{i-b}\notag\\
 	&\quad\quad\quad\quad\quad\quad\quad\quad\quad\quad\quad\quad\times\E_{k}\left[e_{s}c_{s}^{*}(\mathcal{G}_{n}^{(k)}(w))^{*}\oindicator{\Omega_{n,k}}\right]e_{(i-1)n+k}.\label{Equ:ExpansionOfSingleTerm4}
 	\end{align}
 	where $\P_{k}$ denotes the conditional probability with respect to $\mathcal{F}_{k}$ and we assume that the subscript on $\mathcal{D}_{i-b}$ is reduced modulo $m$. We simplify each term separately. For any $1\leq i\leq m$, assuming $b\leq m$, term \eqref{Equ:ExpansionOfSingleTerm1} equals zero since the only nonzero elements in $\mathcal{D}_{i-b}$ are in block $i-b$, and \eqref{Equ:ExpansionOfSingleTerm1} selects an element from the $i$th block.
 	\begin{Details} 
 		\begin{align*}
 		&e_{(i-1)n+k}^{T}\mathcal{D}_{i-b}\frac{1}{z\bar{w}}(\P_{k}(\Omega_{n,k}))^{2}e_{(i-1)n+k}=0\\
 		\ifdetail&\quad\quad =e_{(i-1)n+k}^{T}\mathcal{D}_{i-1}e_{(i-1)n+k}\frac{1}{z\bar{w}}(\P_{k}(\Omega_{n,k}))^{2}\\\fi
 		\ifdetail&\quad\quad=(\mathcal{D}_{i-1})_{(i-1)n+k,(i-1)n+k}\frac{1}{z\bar{w}}(\P_{k}(\Omega_{n,k}))^{2}\\\fi
 		\end{align*}
 		since the only nonzero elements in $\mathcal{D}_{i-b}$ are in block $i-b$, and the above product selects and element from the $i$th block.
 	\end{Details} 
 	For terms \eqref{Equ:ExpansionOfSingleTerm2} and \eqref{Equ:ExpansionOfSingleTerm3}, since $s\neq jn+k$ for any $0\leq j\leq m-1$, we have the expression $e_{(i-1)n+k}^{T}\mathcal{D}_{i-b}e_{s}$, which results in an off diagonal element of $\mathcal{D}_{i-b}$. Ergo, 
 	\begin{align*}
 	&-e_{(i-1)n+k}^{T}\frac{1}{z\overline{w}}\sum_{s\neq * n+ k}\mathcal{D}_{i-b}\E_{k}\left[e_{s}c_{s}^{*}(\mathcal{G}_{n}^{(k)}(w))^{*}\oindicator{\Omega_{n,k}}\right]\P_{k}(\Omega_{n,k})e_{(i-1)n+k}\\
 	&\quad\quad=-\frac{1}{z\overline{w}}\sum_{s\neq *  n+ k}e_{(i-1)n+k}^{T}\mathcal{D}_{i-b}e_{s}\E_{k}\left[c_{s}^{*}(\mathcal{G}_{n}^{(k)}(w))^{*}\oindicator{\Omega_{n,k}}\right]\P_{k}(\Omega_{n,k})e_{(i-1)n+k}\\
 	\ifdetail&\quad\quad=\frac{1}{z\overline{w}}\sum_{s\neq jn+ k}0\cdot\E_{k}\left[c_{s}^{*}(\mathcal{G}_{n}^{(k)}(w))^{*}\oindicator{\Omega_{n,k}}\right]\P_{k}(\Omega_{n,k})e_{(i-1)n+k}\\\fi
 	&\quad\quad=0
 	\end{align*}  
 	since $(\mathcal{D}_{i-b})_{f,g}=0$ unless $(i-b-1)n+1\leq f,g\leq (i-b)n$ and $f=g$. Similarly, since $t\neq jn+k$ for any $1\leq j\leq m$, we have $e_{t}^{T}\mathcal{D}_{i-b}e_{(i-1)n+k}=0$. Thus terms \eqref{Equ:ExpansionOfSingleTerm2} and \eqref{Equ:ExpansionOfSingleTerm3} are zero. Note that $e_{t}^{T}\mathcal{D}_{i-b}e_{s}=(\mathcal{D}_{i-b})_{(t,s)}.$ This is zero unless $t=s$ and $(i-b-1)n+1\leq s\leq (i-b)n$. Therefore term \eqref{Equ:ExpansionOfSingleTerm4} can be simplified to
 	\begin{align*}
 	&e_{(i-1)n+k}^{T}\frac{1}{z\bar{w}}\sum_{s\neq * n+k}\sum_{t\neq * n+k}\E_{k}\left[\mathcal{G}_{n}^{(k)}(z)c_{t}e_{t}^{T}\oindicator{\Omega_{n,k}}\right]\mathcal{D}_{i-b}\E_{k}\left[e_{s}c_{s}^{*}(\mathcal{G}_{n}^{(k)}(w))^{*}\oindicator{\Omega_{n,k}}\right]e_{(i-1)n+k}\\
 	\ifdetail&\quad=e_{(i-1)n+k}^{T}\frac{1}{z\bar{w}}\sum_{s\neq jn+k}\sum_{t\neq jn+k}\E_{k}\left[\mathcal{G}_{n}^{(k)}(z)c_{t}\oindicator{\Omega_{n,k}}\right]e_{t}^{T}\mathcal{D}_{i-b}e_{s}\E_{k}\left[c_{s}^{*}(\mathcal{G}_{n}^{(k)}(w))^{*}\oindicator{\Omega_{n,k}}\right]e_{(i-1)n+k}\\\fi
 	&\quad=e_{(i-1)n+k}^{T}\frac{1}{z\bar{w}}\sum_{\substack{ s=(i-b-1)n+1\\ s\neq * n+k}}^{(i-b)n}\E_{k}\left[\mathcal{G}_{n}^{(k)}(z)c_{s}\oindicator{\Omega_{n,k}}\right]\E_{k}\left[c_{s}^{*}(\mathcal{G}_{n}^{(k)}(w))^{*}\oindicator{\Omega_{n,k}}\right]e_{(i-1)n+k}.
 	\end{align*}
 	Ergo, we have 
 	\begin{align}
 	&e_{(i-1)n+k}^{T}\E_{k}[\mathcal{G}_{n}^{(k)}(z)\oindicator{\Omega_{n,k}}]\mathcal{D}_{i-b}\E_{k}[(\mathcal{G}_{n}^{(k)}(w))^{*}\oindicator{\Omega_{n,k}}]e_{(i-1)n+k} \notag\\
 	&=\frac{1}{z\bar{w}}\sum_{\substack{ s=(i-b-1)n+1,\\ s\neq * n+k}}^{(i-b)n}e_{(i-1)n+k}^{T}\E_{k}\left[\mathcal{G}_{n}^{(k)}(z)c_{s}\oindicator{\Omega_{n,k}}\right]\E_{k}\left[c_{s}^{*}(\mathcal{G}_{n}^{(k)}(w))^{*}\oindicator{\Omega_{n,k}}\right]e_{(i-1)n+k}.\label{Equ:T_nk}
 	\end{align}
 	We now remove the $s$th column from the resolvent. \ifdetail This produces independence between the resolvent and the column $c_{s}$ in the right hand side of \eqref{Equ:T_nk}, which allows us to factor.\fi In order to remove this column, we need to work on the appropriate events. Since $\Omega_{n,k,s}$ defined in \eqref{Def:Omega_{n,k,s}} holds with overwhelming probability by Corollary \ref{Cor:Omega_nksOverwhelming}, we may insert the event with as sufficiently small $L^{2}$ norm error. This is verified in Lemma \ref{Lem:FiniteDimError1}. Since Lemma \ref{Lem:Q'Overwhelming} proves that $Q'_{n,k,s}(z)$ (defined in $\eqref{Def:Q'}$) also holds with overwhelming probability, a very similar argument shows this event can be inserted as well. For ease of notation, we will drop the dependence on $z$ in $Q'_{n,k,s}(z)$ and write $Q'_{n,k,s}$. We proceed with 
 	\begin{align*}
 	&e_{(i-1)n+k}^{T}\E_{k}\left[\mathcal{G}_{n}^{(k)}(z)c_{s}\oindicator{\Omega_{n,k}\cap\Omega_{n,k,s}\cap Q'_{n,k,s}}\right]\\
 	&\quad\quad\quad\quad\quad\quad\quad\times\E_{k}\left[c_{s}^{*}(\mathcal{G}_{n}^{(k)}(w))^{*}\oindicator{\Omega_{n,k}\cap\Omega_{n,k,s}\cap Q'_{n,k,s}}\right]e_{(i-1)n+k}.
 	\end{align*}
 	Then by \eqref{Equ:RemovingColS} and \eqref{Equ:RemovingColS2}, we have
 	\begin{align*}
 	&e_{(i-1)n+k}^{T}\E_{k}\left[\mathcal{G}_{n}^{(k)}(z)c_{s}\oindicator{\Omega_{n,k}\cap\Omega_{n,k,s}\cap Q'_{n,k,s}}\right]\\
 	&\quad\quad\quad\quad\quad\quad\times\E_{k}\left[c_{s}^{*}(\mathcal{G}_{n}^{(k)}(w))^{*}\oindicator{\Omega_{n,k}\cap\Omega_{n,k,s}\cap Q'_{n,k,s}}\right]e_{(i-1)n+k}\\
 	&=e_{(i-1)n+k}^{T}\E_{k}\left[\mathcal{G}_{n}^{(k,s)}(z)c_{s}\delta_{k,s}(z)\oindicator{\Omega_{n,k}\cap\Omega_{n,k,s}\cap Q'_{n,k,s}}\right]\\
 	&\quad\quad\quad\quad\quad\quad\times\E_{k}\left[(\delta_{k,s}(w))^{*}c_{s}^{*}(\mathcal{G}_{n}^{(k,s)}(w))^{*}\oindicator{\Omega_{n,k}\cap\Omega_{n,k,s}\cap Q'_{n,k,s}}\right]e_{(i-1)n+k}.
 	\end{align*}
 	Since $G_{n}^{(k)}(z)$ is no longer present in the expression, the same argument as above shows that we can now remove the event $\Omega_{n,k}$ and work with
 	\begin{Details} 
 		\begin{align*}
 		&e_{(i-1)n+k}^{T}\E_{k}\left[\mathcal{G}_{n}^{(k,s)}(z)c_{s}\delta_{k,s}(z)\oindicator{\Omega_{n,k}\cap\Omega_{n,k,s}\cap Q'_{n,k,s}}\right]\\
 		&\quad\quad\quad\quad\quad\quad\cdot\E_{k}\left[(\delta_{k,s}(w))^{*}c_{s}^{*}(\mathcal{G}_{n}^{(k,s)}(w))^{*}\oindicator{\Omega_{n,k}\cap\Omega_{n,k,s}\cap Q'_{n,k,s}}\right]e_{(i-1)n+k}\\
 		& = e_{(i-1)n+k}^{T}\E_{k}\left[\mathcal{G}_{n}^{(k,s)}(z)c_{s}\delta_{k,s}(z)\oindicator{\Omega_{n,k,s}\cap Q'_{n,k,s}}\right]\\
 		&\quad\quad\quad\quad\quad\quad\cdot\E_{k}\left[(\delta_{k,s}(w))^{*}c_{s}^{*}(\mathcal{G}_{n}^{(k,s)}(w))^{*}\oindicator{\Omega_{n,k,s}\cap Q'_{n,k,s}}\right]e_{(i-1)n+k}\\
 		&\quad -e_{(i-1)n+k}^{T}\E_{k}\left[\mathcal{G}_{n}^{(k,s)}(z)c_{s}\delta_{k,s}(z)\oindicator{\Omega_{n,k,s}\cap Q'_{n,k,s}}\right]\\
 		&\quad\quad\quad\quad\quad\quad\quad\cdot\E_{k}\left[(\delta_{k,s}(w))^{*}c_{s}^{*}(\mathcal{G}_{n}^{(k,s)}(w))^{*}\oindicator{\Omega_{n,k}^{c}\cap\Omega_{n,k,s}\cap Q'_{n,k,s}}\right]e_{(i-1)n+k}\\
 		&\quad -e_{(i-1)n+k}^{T}\E_{k}\left[\mathcal{G}_{n}^{(k,s)}(z)c_{s}\delta_{k,s}(z)\oindicator{\Omega_{n,k}^{c}\cap\Omega_{n,k,s}\cap Q'_{n,k,s}}\right]\\
 		&\quad\quad\quad\quad\quad\quad\quad\cdot\E_{k}\left[(\delta_{k,s}(w))^{*}c_{s}^{*}(\mathcal{G}_{n}^{(k,s)}(w))^{*}\oindicator{\Omega_{n,k,s}\cap Q'_{n,k,s}}\right]e_{(i-1)n+k}\\
 		&\quad+e_{(i-1)n+k}^{T}\E_{k}\left[\mathcal{G}_{n}^{(k,s)}(z)c_{s}\delta_{k,s}(z)\oindicator{\Omega_{n,k}^{c}\cap\Omega_{n,k,s}\cap Q'_{n,k,s}}\right]\\
 		&\quad\quad\quad\quad\quad\quad\quad\cdot\E_{k}\left[(\delta_{k,s}(w))^{*}c_{s}^{*}(\mathcal{G}_{n}^{(k,s)}(w))^{*}\oindicator{\Omega_{n,k}^{c}\cap\Omega_{n,k,s}\cap Q'_{n,k,s}}\right]e_{(i-1)n+k}\\
 		&=e_{(i-1)n+k}^{T}\E_{k}\left[\mathcal{G}_{n}^{(k,s)}(z)c_{s}\delta_{k,s}(z)\oindicator{\Omega_{n,k,s}\cap Q'_{n,k,s}}\right]\\
 		&\quad\quad\quad\quad\quad\quad\quad\cdot\E_{k}\left[(\delta_{k,s}(w))^{*}c_{s}^{*}(\mathcal{G}_{n}^{(k,s)}(w))^{*}\oindicator{\Omega_{n,k,s}\cap Q'}\right]e_{(i-1)n+k}+o(n^{-1}).
 		\end{align*}
 	\end{Details}
 	\begin{align*}
 	&e_{(i-1)n+k}^{T}\E_{k}\left[\mathcal{G}_{n}^{(k,s)}(z)c_{s}\delta_{k,s}(z)\oindicator{\Omega_{n,k,s}\cap Q'_{n,k,s}}\right]\\
 	&\quad\quad\quad\quad\quad\quad\quad\times\E_{k}\left[(\delta_{k,s}(w))^{*}c_{s}^{*}(\mathcal{G}_{n}^{(k,s)}(w))^{*}\oindicator{\Omega_{n,k,s}\cap Q'_{n,k,s}}\right]e_{(i-1)n+k}
 	\end{align*}
 	with a sufficiently small $L^{2}$-norm error. Next, we wish to replace $\delta_{k,s}(z)$ and $(\delta_{k,s}(w))^{*}$ with 1. Observe that 
 	\begin{align*}
 	&\E\Bigg|\frac{1}{n}\sum_{k=1}^{n}\frac{1}{z\bar{w}}\sum_{i=1}^{m}\sum_{\substack{ s=(i-b-1)n+1,\\ s\neq * n+k}}^{(i-b)n}e_{(i-1)n+k}^{T}\left(\E_{k}\left[\mathcal{G}_{n}^{(k,s)}(z)c_{s}\delta_{k,s}(z)\oindicator{\Omega_{n,k,s}\cap Q'_{n,k,s}}\right]\right.\\
 	&\quad\quad\quad\quad\quad\quad\quad\quad\quad\quad\times\left.\left.\E_{k}\left[(\delta_{k,s}(w))^{*}c_{s}^{*}(\mathcal{G}_{n}^{(k,s)}(w))^{*}\oindicator{\Omega_{n,k,s}\cap Q'_{n,k,s}}\right]e_{(i-1)n+k}\right.\right.\\
 	&\quad - \left.\E_{k}\left[\mathcal{G}_{n}^{(k,s)}(z)c_{s}\oindicator{\Omega_{n,k,s}\cap Q'_{n,k,s}}\right]\E_{k}\left[c_{s}^{*}(\mathcal{G}_{n}^{(k,s)}(w))^{*}\oindicator{\Omega_{n,k,s}\cap Q'_{n,k,s}}\right]\right)e_{(i-1)n+k} \Bigg|^{2}\\	
 	&\ll \max_{\substack{1\leq k\leq n\\
 			1\leq i\leq m}}\E\Bigg|\sum_{\substack{ s=(i-b-1)n+1,\\ s\neq * n+k}}^{(i-b)n}e_{(i-1)n+k}^{T}\left(\E_{k}\left[\mathcal{G}_{n}^{(k,s)}(z)c_{s}\delta_{k,s}(z)\oindicator{\Omega_{n,k,s}\cap Q'_{n,k,s}}\right]\right.\\
 	&\quad\quad\quad\quad\quad\quad\quad\quad\quad\quad\times\left.\E_{k}\left[(\delta_{k,s}(w))^{*}c_{s}^{*}(\mathcal{G}_{n}^{(k,s)}(w))^{*}\oindicator{\Omega_{n,k,s}\cap Q'_{n,k,s}}\right]e_{(i-1)n+k}\right.\\
 	&\quad - \left.\E_{k}\left[\mathcal{G}_{n}^{(k,s)}(z)c_{s}\oindicator{\Omega_{n,k,s}\cap Q'_{n,k,s}}\right]\E_{k}\left[c_{s}^{*}(\mathcal{G}_{n}^{(k,s)}(w))^{*}\oindicator{\Omega_{n,k,s}\cap Q'_{n,k,s}}\right]\right)e_{(i-1)n+k} \Bigg|^{2}.	
 	\end{align*} 
 	Therefore, it is sufficient to show that
 	\begin{align*}
 	&\E\left|e_{(i-1)n+k}^{T}\E_{k}\left[\mathcal{G}_{n}^{(k,s)}(z)c_{s}\delta_{k,s}(z)\oindicator{\Omega_{n,k,s}\cap Q'_{n,k,s}}\right]\right.\\
 	&\quad\quad\quad\quad\quad\quad\quad\quad\quad\quad\quad\quad\left.\times\E_{k}\left[(\delta_{k,s}(w))^{*}c_{s}^{*}(\mathcal{G}_{n}^{(k,s)}(w))^{*}\oindicator{\Omega_{n,k,s}\cap Q'_{n,k,s}}\right]e_{(i-1)n+k}\right.\\
 	&\quad\quad\left.-e_{(i-1)n+k}^{T}\E_{k}\left[\mathcal{G}_{n}^{(k,s)}(z)c_{s}\oindicator{\Omega_{n,k,s}\cap Q'_{n,k,s}}\right]\right.\\
 	&\quad\quad\quad\quad\quad\quad\quad\quad\quad\quad\quad\quad\left.\times\E_{k}\left[c_{s}^{*}(\mathcal{G}_{n}^{(k,s)}(w))^{*}\oindicator{\Omega_{n,k,s}\cap Q'_{n,k,s}}\right]e_{(i-1)n+k}]\right|^{2}=o(n^{-2})
 	\end{align*}
 	uniformly in $i$, $k$, and $s$. This is done in Lemma \ref{Lem:FiniteDimError2}. Since $\delta_{n,k}$ is no longer present, we can justify dropping the event $Q'_{n,k,s}$ by an argument similar to Lemma \ref{Lem:FiniteDimError1}. Thus, we can continue from here working with 
 	\[\frac{1}{z\bar{w}}\sum_{i=1}^{m}\sum_{\substack{s=(i-b-1)n+1,\\ s\neq * n+k}}^{(i-b)n}e_{(i-1)n+k}^{T}\E_{k}\left[\mathcal{G}_{n}^{(k,s)}(z)c_{s}\oindicator{\Omega_{n,k,s}}\right]\E_{k}\left[c_{s}^{*}(\mathcal{G}_{n}^{(k,s)}(w))^{*}\oindicator{\Omega_{n,k,s}}\right]e_{(i-1)n+k}.\]
 	Next, for any $1\leq i\leq m$, by independence of $c_{s}$ from $\mathcal{G}_{n}^{(k,s)}(z)\oindicator{\Omega_{n,k,s}}$, we can factor the above as
 	\begin{align}
 	&\sum_{\substack{s=(i-b-1)n+1,\\ s\neq * n+k}}^{(i-b)n}e_{(i-1)n+k}^{T}\E_{k}\left[\mathcal{G}_{n}^{(k,s)}(z)c_{s}\oindicator{\Omega_{n,k,s}}\right]\E_{k}\left[c_{s}^{*}(\mathcal{G}_{n}^{(k,s)}(w))^{*}\oindicator{\Omega_{n,k,s}}\right]e_{(i-1)n+k}\notag\\
 	&\quad=\sum_{\substack{s=(i-b-1)n+1,\\ s\neq * n+k}}^{(i-b)n}e_{(i-1)n+k}^{T}\E_{k}\left[\mathcal{G}_{n}^{(k,s)}(z)\oindicator{\Omega_{n,k,s}}\right]\E_{k}[c_{s}]\notag\\
 	&\quad\quad\quad\quad\quad\quad\quad\quad\quad\quad\quad\quad\times\E_{k}[c_{s}^{*}]\E_{k}\left[(\mathcal{G}_{n}^{(k,s)}(w))^{*}\oindicator{\Omega_{n,k,s}}\right]e_{(i-1)n+k}.\label{Equ:Lem:FinDimDist:Ref1}
 	\end{align}
 	Observe that the value of $\E_{k}[c_{s}]$ depends on whether or not the column $c_{s}$ has been conditioned on. If the column has been conditioned on, then the expectation returns the column itself and otherwise the expectation is zero. Since $\E_{k}[\cdot]$ conditions on the first $k$ columns in each block, we can simplify \eqref{Equ:Lem:FinDimDist:Ref1} to
 	\ifdetail Recall that $\E_{k}$ is the conditional expectation with respect to the first $k$ columns from each of the $m$ blocks, so the value of $\E_{k}[c_{s}]$ depends on whether or not the column $c_{s}$ has been conditioned on. If it has been conditioned on, then the expectation is the column itself. If it hasn't been conditioned on, then the conditional expectation is the usual expectation, and the term zeros out. Consider a fixed $i$ between 1 and $m$ for the above sum. Then the other sum runs from $s=(i-2)n+1$ to $s=(i-1)n$, so we are looking at columns $c_{s}$ from the $(i-1)$th block. In this block, the columns we have conditioned on are $c_{(i-2)n+1},c_{(i-2)n+2},c_{(i-2)n+2},\dots,c_{(i-2)n+k}$. These are the only columns in this sum that are conditioned on since the other columns conditioned on come from other blocks. Therefore $\E_{k}[c_{s}]=0$ for all $s> (i-2)n+k$. However, we exclude the $k$th column from the sum, so the sum simplifies to \fi 
 	\begin{equation*}
 	\sum_{s=(i-b-1)n+1}^{(i-b-1)n+k-1}e_{(i-1)n+k}^{T}\E_{k}\left[\mathcal{G}_{n}^{(k,s)}(z)\oindicator{\Omega_{n,k,s}}\right]c_{s}c_{s}^{*}\E_{k}\left[(\mathcal{G}_{n}^{(k,s)}(w))^{*}\oindicator{\Omega_{n,k,s}}\right]e_{(i-1)n+k}.
 	\end{equation*}
 	Now, consider $c_{s}c_{s}^{*}$. Based on the block structure of $\blmat{Y}{n}$, if $(i-b-1)n+1\leq s < (i-b-1)n+k$, then we have 
 	\ifdetail When $(i-b-1)n+1\leq s < (i-b-1)n+k$, the only nonzero elements in $c_{s}$ are entries $(i-b-2)n+1$ to $(i-b-1)n$. Therefore, $c_{s}c_{s}^{*}$ will be an $mn\times mn$ matrix. Consider this matrix as having $m$ blocks along the diagonal, each of which is $n\times n$. Then the only nonzero block in $c_{s}c_{s}^{*}$ will be the $(i-b-1)$th block. More precisely,\fi  
 	\[\E[(c_{s}c_{s}^{*})_{(f,g)}]=\begin{cases}
 	\frac{1}{n} &\text{ if }(i-b-2)n+1\leq f=g\leq (i-b-1)n\\ 
 	0 & \text{ otherwise }\\
 	\end{cases}.\]
 	Therefore, for a fixed term $i$ with $1\leq i\leq m$ and since $(i-b-1)n+1\leq s<(i-b-1)n+k$, we have $\E\left[c_{s}c_{s}^{*}\right]=\frac{1}{n}\mathcal{D}_{i-b-1}.$ We wish now to replace $c_{s}c_{s}^{*}$ with its expectation. Observe that the terms 
 	\begin{align*}
 	&e_{(i-1)n+k}^{T}\E_{k}\left[\mathcal{G}_{n}^{(k,s)}(z)\oindicator{\Omega_{n,k,s}}\right]\left(c_{s}c_{s}^{*}-\frac{1}{n}\mathcal{D}_{i-b-1}\right)\times\E_{k}\left[(\mathcal{G}_{n}^{(k,s)}(w))^{*}\oindicator{\Omega_{n,k,s}}\right]e_{(i-1)n+k}
 	\end{align*}
 	satisfy the conditions of a martingale difference sequence in $s$. Therefore we have 
 	\begin{align*}
 	&\E\left|\frac{1}{n}\sum_{k=1}^{n}\frac{1}{z\overline{w}}\sum_{i=1}^{m}\sum_{s=(i-b-1)n+1}^{(i-b-1)n+k-1}e_{(i-1)n+k}^{T}\E_{k}\left[\mathcal{G}_{n}^{(k,s)}(z)\oindicator{\Omega_{n,k,s}}\right]\right.\\
 	&\quad\quad\quad\quad\quad\quad\quad\quad\quad\quad\quad\left.\times\left(c_{s}c_{s}^{*}-\frac{1}{n}\mathcal{D}_{i-b-1}\right)\E_{k}\left[(\mathcal{G}_{n}^{(k,s)}(w))^{*}\oindicator{\Omega_{n,k,s}}\right]e_{(i-1)n+k}\right|^{2}\\
 	&\ll\max_{\substack{1\leq k\leq n\\	1\leq i\leq m}} \sum_{s=(i-b-1)n+1}^{(i-b-1)n+k-1}\E\left|e_{(i-1)n+k}^{T}\E_{k}\left[\mathcal{G}_{n}^{(k,s)}(z)\oindicator{\Omega_{n,k,s}}\right]\right.\\
 	&\quad\quad\quad\quad\quad\quad\quad\quad\quad\quad\quad\left.\times\left(c_{s}c_{s}^{*}-\frac{1}{n}\mathcal{D}_{i-b-1}\right)\E_{k}\left[(\mathcal{G}_{n}^{(k,s)}(w))^{*}\oindicator{\Omega_{n,k,s}}\right]e_{(i-1)n+k}\right|^{2}.
 	\end{align*}
 	By Lemma \ref{Lem:ReplaceColumnsWithExpectation},
 	\begin{align*} 
 	&\E\left|e_{(i-1)n+k}^{T}\E_{k}\left[\mathcal{G}_{n}^{(k,s)}(z)\oindicator{\Omega_{n,k,s}}\right]\left(c_{s}c_{s}^{*}-\frac{1}{n}\mathcal{D}_{i-b-1}\right)\right.\\
 	&\quad\quad\quad\quad\quad\quad\quad\quad\quad\quad\quad\quad\times\left.\E_{k}\left[(\mathcal{G}_{n}^{(k,s)}(w))^{*}\oindicator{\Omega_{n,k,s}}\right]e_{(i-1)n+k}\right|^{2}=o(n^{-1})
 	\end{align*}
 	uniformly in $i$, $k$, and $s$, and therefore we may proceed with 
 	\[\sum_{s=(i-b-1)n+1}^{(i-b-1)n+k-1}e_{(i-1)n+k}^{T}\E_{k}\left[\mathcal{G}_{n}^{(k,s)}(z)\oindicator{\Omega_{n,k,s}}\right]\frac{1}{n}\mathcal{D}_{i-b-1}\E_{k}\left[(\mathcal{G}_{n}^{(k,s)}(w))^{*}\oindicator{\Omega_{n,k,s}}\right]e_{(i-1)n+k}.\]
 	Next, we wish to add back in column $c_{s}$ to $\mathcal{G}_{n}^{(k,s)}$ and $\Omega_{n,k,s}$. Since the events $\Omega_{n,k}$ and $\Omega_{n,k,s}$ both hold with overwhelming probability, an argument similar to Lemma \ref{Lem:FiniteDimError1} shows that we can insert or drop these events with a sufficiently small $L^{2}$-norm error. This is achieved by showing 
 	\begin{align*}
 	&\E\Bigg|\frac{1}{n}\sum_{s=(i-b-1)n+1}^{(i-b-1)n+k-1}e_{(i-1)n+k}^{T}\left(\E_{k}\left[\mathcal{G}_{n}^{(k,s)}(z)\oindicator{\Omega_{n,k,s}}\right]\mathcal{D}_{i-b-1}\E_{k}\left[(\mathcal{G}_{n}^{(k,s)}(w))^{*}\oindicator{\Omega_{n,k,s}}\right]\right.\\
 	&\quad\quad\quad \quad \left.-\E_{k}\left[\mathcal{G}_{n}^{(k)}(z)\oindicator{\Omega_{n,k}}\right]\mathcal{D}_{i-b-1}\E_{k}\left[(\mathcal{G}_{n}^{(k)}(w))^{*}\oindicator{\Omega_{n,k}}\right]\right)e_{(i-1)n+k}\Bigg|^{2}=o(1),
 	\end{align*}
 	which is done in Lemma \ref{Lem:ReplaceColS}. Since 
 	\begin{align*}
 	&\E\Bigg|\frac{1}{n}\sum_{k=1}^{n}\sum_{i=1}^{m}\frac{1}{n}\sum_{s=(i-b-1)n+1}^{(i-b-1)n+k-1}\left(e_{(i-1)n+k}^{T}\left(\E_{k}\left[\mathcal{G}_{n}^{(k,s)}(z)\oindicator{\Omega_{n,k,s}}\right]\mathcal{D}_{i-b-1}\E_{k}\left[(\mathcal{G}_{n}^{(k,s)}(w))^{*}\oindicator{\Omega_{n,k,s}}\right]\right.\right.\\
 	&\quad\quad\quad \quad \left.\left.-\E_{k}\left[\mathcal{G}_{n}^{(k)}(z)\oindicator{\Omega_{n,k}}\right]\mathcal{D}_{i-b-1}\E_{k}\left[(\mathcal{G}_{n}^{(k)}(w))^{*}\oindicator{\Omega_{n,k}}\right]\right)e_{(i-1)n+k}\right)\Bigg|^{2}=o(1),
 	\end{align*}
 	column $c_{s}$ may be reinserted. With this column replaced, in each term we now have 
 	\begin{align*}
 	&\frac{1}{n}\sum_{s=(i-b-1)n+1}^{(i-b-1)n+k-1}e_{(i-1)n+k}^{T}\E_{k}\left[\mathcal{G}_{n}^{(k)}(z)\oindicator{\Omega_{n,k}}\right]\mathcal{D}_{i-b-1}\\
 	&\quad\quad\quad\quad\quad\quad\quad\quad\quad\quad\quad\quad\quad\times\E_{k}\left[(\mathcal{G}_{n}^{(k)}(w))^{*}\oindicator{\Omega_{n,k}}\right]e_{(i-1)n+k}\\
 	&=\frac{k-1}{n}e_{(i-1)n+k}^{T}\E_{k}\left[\mathcal{G}_{n}^{(k)}(z)\oindicator{\Omega_{n,k}}\right]\mathcal{D}_{i-b-1}\E_{k}\left[(\mathcal{G}_{n}^{(k)}(w))^{*}\oindicator{\Omega_{n,k}}\right]e_{(i-1)n+k}
 	\end{align*}
 	since there were $k-1$ terms in the above sum, and none of them depended on $s$. This concludes the proof of Lemma \ref{Lem:Iteration}.
 	\begin{Details}
 		This proof follows by the same arguments as the first expansion in the proof of Lemma \ref{Lem:MDSReduction}. However, now the index on $\mathcal{D}_{i-b}$ is arbitrary, assuming $b\neq m$. We leave out details which are identical to the arguments in Lemma \ref{Lem:MDSReduction}. Begin by observing that provided the resolvent is defined, we can expand using the resolvent expansion \eqref{Equ:ResolventExpansion} to get  
 		\begin{align}
 		&e_{(i-1)n+k}^{T}\E_{k}\left[\mathcal{G}_{n}^{(k)}(z)\oindicator{\Omega_{n,k}}\right]\mathcal{D}_{i-b}\E_{k}\left[(\mathcal{G}_{n}^{(k)}(w))^{*}\oindicator{\Omega_{n,k}}\right]e_{(i-1)n+k}\notag\\
 		&= e_{(i-1)n+k}^{T}\mathcal{D}_{i-b}\frac{1}{z\bar{w}}(\P_{k}(\Omega_{n,k}))^{2}e_{(i-1)n+k}\label{Equ:IterationsExpansion1}\\
 		&\quad-e_{(i-1)n+k}^{T}\frac{1}{z\overline{w}}\sum_{s\neq * n+k}\mathcal{D}_{i-b}\E_{k}\left[e_{s}c_{s}^{*}(\mathcal{G}_{n}^{(k)}(w))^{*}\oindicator{\Omega_{n,k}}\right]\P_{k}(\Omega_{n,k})e_{(i-1)n+k}\label{Equ:IterationsExpansion2}\\
 		&\quad-e_{(i-1)n+k}^{T}\frac{1}{z\overline{w}}\sum_{t\neq * n+k}\E_{k}\left[\mathcal{G}_{n}^{(k)}(z)c_{t}e_{t}^{T}\oindicator{\Omega_{n,k}}\right]\mathcal{D}_{i-b}\P_{k}(\Omega_{n,k})e_{(i-1)n+k}\label{Equ:IterationsExpansion3}\\
 		&\quad+e_{(i-1)n+k}^{T}\frac{1}{z\bar{w}}\sum_{s\neq * n+ k}\sum_{t\neq * n+k}\E_{k}\left[\mathcal{G}_{n}^{(k)}(z)c_{t}e_{t}^{T}\oindicator{\Omega_{n,k}}\right]\mathcal{D}_{i-b}\notag\\
 		&\quad\quad\quad\quad\quad\quad\quad\quad\quad\quad\quad\times\E_{k}\left[e_{s}c_{s}^{*}(\mathcal{G}_{n}^{(k)}(w))^{*}\oindicator{\Omega_{n,k}}\right]e_{(i-1)n+k}\label{Equ:IterationsExpansion4}
 		\end{align}
 		where the notation $t\neq* n+k$ indicates that the sum is over all $1\leq t\leq mn$ such that $t\neq k, n+k, \dots (m-1)n+k$. We will handle each term individually. For term \eqref{Equ:IterationsExpansion1}, observe that 
 		\begin{equation*}
 		e_{(i-1)n+k}^{T}\mathcal{D}_{i-b}\frac{1}{z\bar{w}}(\P_{k}(\Omega_{n,k}))^{2}e_{(i-1)n+k}=0\\
 		\end{equation*}
 		since the only nonzero elements in $\mathcal{D}_{i-b}$ are in block $i-b$, and the above product selects an element from the $i$th block. This is zero unless $b=m$ since we are reducing mod $m$.
 		
 		For terms \eqref{Equ:IterationsExpansion2} and \eqref{Equ:IterationsExpansion3}, note that since any off diagonal element of $\mathcal{D}_{i-b}$ will be zero,
 		\begin{equation*}
 		e_{(i-1)n+k}^{T}\frac{1}{z\overline{w}}\sum_{s\neq * n+k}\mathcal{D}_{i-b}\E_{k}\left[e_{s}c_{s}^{*}(\mathcal{G}_{n}^{(k)}(w))^{*}\oindicator{\Omega_{n,k}}\right]\P_{k}(\Omega_{n,k})e_{(i-1)n+k}=0
 		\end{equation*}
 		and 
 		\begin{equation*}
 		e_{(i-1)n+k}^{T}\frac{1}{z\overline{w}}\sum_{t\neq * n+k}\E_{k}\left[\mathcal{G}_{n}^{(k)}(z)c_{t}e_{t}^{T}\oindicator{\Omega_{n,k}}\right]\mathcal{D}_{i-b}\P_{k}(\Omega_{n,k})e_{(i-1)n+k}=0.
 		\end{equation*}
 		Therefore, 
 		\begin{align*}
 		&e_{(i-1)n+k}^{T}\E_{k}\left[\mathcal{G}_{n}^{(k)}(z)\oindicator{\Omega_{n,k}}\right]\mathcal{D}_{i-b}\E_{k}\left[(\mathcal{G}_{n}^{(k)}(w))^{*}\oindicator{\Omega_{n,k}}\right]e_{(i-1)n+k}\\
 		\ifdetail&\quad\quad =e_{(i-1)n+k}^{T}\frac{1}{z\bar{w}}\sum_{s\neq jn+k}\sum_{t\neq jn+k}\E_{k}\left[\mathcal{G}_{n}^{(k)}(z)c_{t}e_{t}^{T}\oindicator{\Omega_{n,k}}\right]\mathcal{D}_{i-b}\\\fi 
 		\ifdetail&\quad\quad\quad\quad\quad\quad\quad\quad\quad\quad\quad\quad\quad\times\E_{k}\left[e_{s}c_{s}^{*}(\mathcal{G}_{n}^{(k)}(w))^{*}\oindicator{\Omega_{n,k}}\right]e_{(i-1)n+k}\\\fi 
 		\ifdetail&\quad\quad =e_{(i-1)n+k}^{T}\frac{1}{z\bar{w}}\sum_{s\neq jn+k}\sum_{t\neq jn+k}\E_{k}\left[\mathcal{G}_{n}^{(k)}(z)c_{t}\oindicator{\Omega_{n,k}}\right]e_{t}^{T}\mathcal{D}_{i-b}e_{s}\\\fi 
 		\ifdetail&\quad\quad\quad\quad\quad\quad\quad\quad\quad\quad\quad\quad\quad\times \E_{k}\left[c_{s}^{*}(\mathcal{G}_{n}^{(k)}(w))^{*}\oindicator{\Omega_{n,k}}\right]e_{(i-1)n+k}\\\fi 
 		\ifdetail &\quad\quad =e_{(i-1)n+k}^{T}\frac{1}{z\bar{w}}\sum_{s\neq jn+k}\sum_{t\neq jn+k}\E_{k}\left[\mathcal{G}_{n}^{(k)}(z)c_{t}\oindicator{\Omega_{n,k}}\right]\left(\mathcal{D}_{i-b}\right)_{(t,s)}\\\fi 
 		\ifdetail&\quad\quad\quad\quad\quad\quad\quad\quad\quad\quad\quad\quad\quad\times\E_{k}\left[c_{s}^{*}(\mathcal{G}_{n}^{(k)}(w))^{*}\oindicator{\Omega_{n,k}}\right]e_{(i-1)n+k}\\\fi
 		&\quad\quad =e_{(i-1)n+k}^{T}\frac{1}{z\bar{w}}\sum_{\substack{s=(i-b-1)n+1\\ s\neq * n+k}}^{(i-b)n}\E_{k}\left[\mathcal{G}_{n}^{(k)}(z)c_{s}\oindicator{\Omega_{n,k}}\right]\\
 		&\quad\quad\quad\quad\quad\quad\quad\quad\quad\quad\quad\quad\quad\quad\quad\times\E_{k}\left[c_{s}^{*}(\mathcal{G}_{n}^{(k)}(w))^{*}\oindicator{\Omega_{n,k}}\right]e_{(i-1)n+k}.
 		\end{align*}
 		Again, from here we wish to remove column $c_{s}$ from $\mathcal{G}_{n}^{(k)}(z)$ and $(\mathcal{G}_{n}^{(k)}(w))^{*}$. By arguments similar to Lemma \ref{Lem:FiniteDimError1}, we may insert events $\Omega_{n,k,s}$ and $Q'_{n,k,s}$ since they both hold with overwhelming probability by Corollary \ref{Cor:Omega_nksOverwhelming} and Lemma \ref{Lem:Q'Overwhelming}. By equations \eqref{Equ:RemovingColS} and \eqref{Equ:RemovingColS2}, we can write
 		\begin{align*}
 		&e_{(i-1)n+k}^{T}\frac{1}{z\bar{w}}\sum_{\substack{s=(i-b-1)n+1\\ s\neq * n+k}}^{(i-b)n}\E_{k}\left[\mathcal{G}_{n}^{(k)}(z)c_{s}\oindicator{\Omega_{n,k}\cap\Omega_{n,k,s}\cap Q'_{n,k,s}}\right]\E_{k}\left[c_{s}^{*}(\mathcal{G}_{n}^{(k)}(w))^{*}\oindicator{\Omega_{n,k}\cap\Omega_{n,k,s}\cap Q'_{n,k,s}}\right]e_{(i-1)n+k}\\
 		&\quad\quad = e_{(i-1)n+k}^{T}\frac{1}{z\bar{w}}\sum_{\substack{s=(i-b-1)n+1\\ s\neq * n+k}}^{(i-b)n}\E_{k}\left[\mathcal{G}_{n}^{(k,s)}(z)c_{s}\delta_{k,s}(z)\oindicator{\Omega_{n,k}\cap\Omega_{n,k,s}\cap Q'_{n,k,s}}\right]\\
 		&\quad\quad\quad\quad\quad\quad\times\E_{k}\left[(\delta_{k,s}(w))^{*}c_{s}^{*}(\mathcal{G}_{n}^{(k,s)}(w))^{*}\oindicator{\Omega_{n,k}\cap\Omega_{n,k,s}\cap Q'_{n,k,s}}\right]e_{(i-1)n+k}.
 		\end{align*}
 		By Lemma \ref{Lem:FiniteDimError2}, we can replace $\delta_{k,s}(z)$ and $(\delta_{k,s}(w))^{*}$ with $1$ with a sufficiently small error, so we can proceed with 
 		\begin{align*}
 		&\quad\quad e_{(i-1)n+k}^{T}\frac{1}{z\bar{w}}\sum_{\substack{s=(i-b-1)n+1\\ s\neq * n+k}}^{(i-b)n}\E_{k}\left[\mathcal{G}_{n}^{(k,s)}(z)c_{s}\oindicator{\Omega_{n,k}\cap\Omega_{n,k,s}\cap Q'_{n,k,s}}\right]\\
 		&\quad\quad\quad\quad\quad\quad\quad\quad\quad\times\E_{k}\left[c_{s}^{*}(\mathcal{G}_{n}^{(k,s)}(w))^{*}\oindicator{\Omega_{n,k}\cap\Omega_{n,k,s}\cap Q'_{n,k,s}}\right]e_{(i-1)n+k}.
 		\end{align*}
 		We can now drop the events $\Omega_{n,k}$ and $Q'_{n,k,s}$ since they both hold with overwhelming probability. Now, by independence we have
 		\begin{align*}
 		&e_{(i-1)n+k}^{T}\frac{1}{z\bar{w}}\sum_{\substack{s=(i-b-1)n+1\\ s\neq * n+k}}^{(i-b)n}\E_{k}\left[\mathcal{G}_{n}^{(k,s)}(z)c_{s}\oindicator{\Omega_{n,k,s}}\right]\\
 		&\quad\quad\quad\quad\quad\quad\quad\quad\quad\quad\quad\quad\times\E_{k}\left[c_{s}^{*}(\mathcal{G}_{n}^{(k,s)}(w))^{*}\oindicator{\Omega_{n,k,s}}\right]e_{(i-1)n+k}\\
 		&\quad\quad=e_{(i-1)n+k}^{T}\frac{1}{z\bar{w}}\sum_{\substack{s=(i-b-1)n+1\\ s\neq * n+k}}^{(i-b)n}\E_{k}\left[\mathcal{G}_{n}^{(k,s)}(z)\oindicator{\Omega_{n,k,s}}\right]\E_{k}[c_{s}]\E_{k}[c_{s}^{*}]\\
 		&\quad\quad\quad\quad\quad\quad\quad\quad\quad\quad\quad\quad\quad\quad\quad\times\E_{k}\left[(\mathcal{G}_{n}^{(k,s)}(w))^{*}\oindicator{\Omega_{n,k,s}}\right]e_{(i-1)n+k}\\
 		&\quad\quad=e_{(i-1)n+k}^{T}\frac{1}{z\bar{w}}\sum_{s=(i-b-1)n+1}^{(i-b)n+k-1}\E_{k}\left[\mathcal{G}_{n}^{(k,s)}(z)\oindicator{\Omega_{n,k,s}}\right]c_{s}c_{s}^{*}\\
 		&\quad\quad\quad\quad\quad\quad\quad\quad\quad\quad\quad\quad\quad\quad\quad\times\E_{k}\left[(\mathcal{G}_{n}^{(k,s)}(w))^{*}\oindicator{\Omega_{n,k,s}}\right]e_{(i-1)n+k}
 		\end{align*}
 		and since the terms in the above sum are a martingale difference sequence in $s$, by Lemma \ref{Lem:ReplaceColumnsWithExpectation} we have
 		\begin{align*}
 		&\E\Bigg|\frac{1}{n}\frac{1}{z\bar{w}}\sum_{k=1}^{n}\sum_{i=1}^{m}\sum_{s=(i-b-1)n+1}^{(i-b)n+k-1}\left(e_{(i-1)n+k}^{T}\E_{k}\left[\mathcal{G}_{n}^{(k,s)}(z)\oindicator{\Omega_{n,k,s}}\right]c_{s}c_{s}^{*}\right.\\
 		&\quad\quad\quad\quad\quad\quad\quad\quad\quad\quad\quad\quad\quad\quad\quad\times\E_{k}\left[(\mathcal{G}_{n}^{(k,s)}(w))^{*}\oindicator{\Omega_{n,k,s}}\right]e_{(i-1)n+k}\\
 		&\quad\quad-\frac{1}{n}e_{(i-1)n+k}^{T}\E_{k}\left[\mathcal{G}_{n}^{(k,s)}(z)\oindicator{\Omega_{n,k,s}}\right]\mathcal{D}_{i-(b+1)}\\
 		&\quad\quad\quad\quad\quad\quad\quad\quad\quad\quad\quad\quad\quad\quad\quad\left.\times\E_{k}\left[(\mathcal{G}_{n}^{(k,s)}(w))^{*}\oindicator{\Omega_{n,k,s}}\right]e_{(i-1)n+k}\right)\Bigg|^{2}=o(1).
 		\end{align*}
 		We may reinsert the event $\Omega_{n,k}$ since it holds with overwhelming probability, and thus by Lemma \ref{Lem:ReplaceColS} we can replace column $c_{s}$ and we have
 		\begin{align*}
 		&\E\Bigg|\frac{1}{n^{2}}\frac{1}{z\bar{w}}\sum_{k=1}^{n}\sum_{i=1}^{m}\sum_{s=(i-b-1)n+1}^{(i-b)n+k-1}\left(e_{(i-1)n+k}^{T}\E_{k}\left[\mathcal{G}_{n}^{(k,s)}(z)\oindicator{\Omega_{n,k}\cap\Omega_{n,k,s}}\right]\mathcal{D}_{i-(b+1)}\right.\\
 		&\quad\quad\quad\quad\quad\quad\quad\quad\quad\quad\quad\quad\quad\times\E_{k}\left[(\mathcal{G}_{n}^{(k,s)}(w))^{*}\oindicator{\Omega_{n,k}\cap\Omega_{n,k,s}}\right]e_{(i-1)n+k}\\
 		&\quad\quad-e_{(i-1)n+k}^{T}\E_{k}\left[\mathcal{G}_{n}^{(k)}(z)\oindicator{\Omega_{n,k}\cap\Omega_{n,k,s}}\right]\mathcal{D}_{i-(b+1)}\\
 		&\quad\quad\quad\quad\quad\quad\quad\quad\quad\quad\quad\quad\quad\left.\times\E_{k}\left[(\mathcal{G}_{n}^{(k)}(w))^{*}\oindicator{\Omega_{n,k}\cap\Omega_{n,k,s}}\right]e_{(i-1)n+k}\right)\Bigg|^{2}=o(1).
 		\end{align*}
 		Dropping the event $\Omega_{n,k,s}$ is justified since it holds with overwhelming probability, and after doing so, we see that the terms 
 		\[e_{(i-1)n+k}^{T}\E_{k}\left[\mathcal{G}_{n}^{(k)}(z)\oindicator{\Omega_{n,k}}\right]\mathcal{D}_{i-(b+1)}\E_{k}\left[(\mathcal{G}_{n}^{(k)}(w))^{*}\oindicator{\Omega_{n,k}}\right]e_{(i-1)n+k}\]
 		no longer depend on $s$. Summing over $s$ results in a factor of $k-1$, which concludes the proof of the lemma.
 		By Corollary \ref{Cor:Omega_nkOverwhelming}, we have 
 		\begin{align*}
 		&e_{(i-1)n+k}^{T}\frac{1}{z\bar{w}}\sum_{\substack{s=(i-b-1)n+1\\ s\neq jn+k}}^{(i-b)n}\E_{k}\left[\mathcal{G}_{n}^{(k)}(z)c_{s}\oindicator{\Omega_{n,k}}\right]\\
 		&\quad\quad\quad\quad\quad\quad\quad\quad\quad\quad\quad\quad\times\E_{k}\left[c_{s}^{*}(\mathcal{G}_{n}^{(k)}(w))^{*}\oindicator{\Omega_{n,k}}\right]e_{(i-1)n+k}\\
 		&\quad\quad = e_{(i-1)n+k}^{T}\frac{1}{z\bar{w}}\sum_{\substack{s=(i-b-1)n+1\\ s\neq jn+k}}^{(i-b)n}\E_{k}\left[\mathcal{G}_{n}^{(k,s)}(z)c_{s}\delta_{k,s}(z)\oindicator{\Omega_{n,k}\cap\Omega_{n,k,s}\cap Q'_{n,k,s}}\right]\\
 		&\quad\quad\quad\quad\quad\quad\quad\quad\quad\quad\times\E_{k}\left[(\delta_{k,s}(w))^{*}c_{s}^{*}(\mathcal{G}_{n}^{(k,s)}(w))^{*}\oindicator{\Omega_{n,k}\cap\Omega_{n,k,s}\cap Q'_{n,k,s}}\right]e_{(i-1)n+k}+o(n^{-\alpha})\\
 		\ifdetail&\quad\quad = e_{(i-1)n+k}^{T}\frac{1}{z\bar{w}}\sum_{\substack{s=(i-b-1)n+1\\ s\neq jn+k}}^{(i-b)n}\E_{k}\left[\mathcal{G}_{n}^{(k,s)}(z)c_{s}\delta_{k,s}(z)\oindicator{\Omega_{n,k,s}\cap Q'_{n,k,s}}\right]\\\fi 
 		\ifdetail&\quad\quad\quad\quad\quad\quad\quad\quad\quad\times\E_{k}\left[(\delta_{k,s}(w))^{*}c_{s}^{*}(\mathcal{G}_{n}^{(k,s)}(w))^{*}\oindicator{\Omega_{n,k,s}\cap Q'_{n,k,s}}\right]e_{(i-1)n+k}+o(n^{-1})\\\fi 
 		&\quad\quad = e_{(i-1)n+k}^{T}\frac{1}{z\bar{w}}\sum_{\substack{s=(i-b-1)n+1\\ s\neq jn+k}}^{(i-b)n}\E_{k}\left[\mathcal{G}_{n}^{(k,s)}(z)c_{s}\oindicator{\Omega_{n,k,s}\cap Q'_{n,k,s}}\right]\\
 		&\quad\quad\quad\quad\quad\quad\quad\quad\quad\times\E_{k}\left[c_{s}^{*}(\mathcal{G}_{n}^{(k,s)}(w))^{*}\oindicator{\Omega_{n,k,s}\cap Q'_{n,k,s}}\right]e_{(i-1)n+k}+o(n^{-1})\\
 		&\quad\quad = e_{(i-1)n+k}^{T}\frac{1}{z\bar{w}}\sum_{\substack{s=(i-b-1)n+1\\ s\neq jn+k}}^{(i-b)n}\E_{k}\left[\mathcal{G}_{n}^{(k,s)}(z)c_{s}\oindicator{\Omega_{n,k,s}}\right]\\
 		&\quad\quad\quad\quad\quad\quad\quad\quad\quad\times\E_{k}\left[c_{s}^{*}(\mathcal{G}_{n}^{(k,s)}(w))^{*}\oindicator{\Omega_{n,k,s}}\right]e_{(i-1)n+k}+o(n^{-1})
 		\end{align*}
 		with probability $1-o(1)$. Now, by independence and Lemma \ref{Lem:ReplaceColumnsWithExpectation}, we have
 		\begin{align*}
 		&e_{(i-1)n+k}^{T}\frac{1}{z\bar{w}}\sum_{\substack{s=(i-b-1)n+1\\ s\neq jn+k}}^{(i-b)n}\E_{k}\left[\mathcal{G}_{n}^{(k,s)}(z)c_{s}\oindicator{\Omega_{n,k,s}}\right]\\
 		&\quad\quad\quad\quad\quad\quad\quad\quad\quad\quad\quad\quad\times\E_{k}\left[c_{s}^{*}(\mathcal{G}_{n}^{(k,s)}(w))^{*}\oindicator{\Omega_{n,k,s}}\right]e_{(i-1)n+k}\\
 		&\quad\quad=e_{(i-1)n+k}^{T}\frac{1}{z\bar{w}}\sum_{\substack{s=(i-b-1)n+1\\ s\neq jn+k}}^{(i-b)n}\E_{k}\left[\mathcal{G}_{n}^{(k,s)}(z)\oindicator{\Omega_{n,k,s}}\right]\E_{k}[c_{s}]\E_{k}[c_{s}^{*}]\\
 		&\quad\quad\quad\quad\quad\quad\quad\quad\quad\quad\quad\quad\quad\quad\quad\times\E_{k}\left[(\mathcal{G}_{n}^{(k,s)}(w))^{*}\oindicator{\Omega_{n,k,s}}\right]e_{(i-1)n+k}\\
 		\ifdetail&\quad\quad=e_{(i-1)n+k}^{T}\frac{1}{z\bar{w}}\sum_{s=(i-b-1)n+1 }^{(i-b-1)n+k-1}\E_{k}\left[\mathcal{G}_{n}^{(k,s)}(z)\oindicator{\Omega_{n,k,s}}\right]c_{s}c_{s}^{*}\\\fi 
 		\ifdetail&\quad\quad\quad\quad\quad\quad\quad\quad\quad\quad\quad\quad\quad\quad\quad\times\E_{k}\left[(\mathcal{G}_{n}^{(k,s)}(w))^{*}\oindicator{\Omega_{n,k,s}}\right]e_{(i-1)n+k}\\\fi 
 		&\quad\quad=\frac{1}{n}e_{(i-1)n+k}^{T}\frac{1}{z\bar{w}}\sum_{s=(i-b-1)n+1 }^{(i-b-1)n+k-1}\E_{k}\left[\mathcal{G}_{n}^{(k,s)}(z)\oindicator{\Omega_{n,k,s}}\right]\mathcal{D}_{i-b-1}\\
 		&\quad\quad\quad\quad\quad\quad\quad\quad\quad\quad\quad\quad\quad\quad\quad\quad\times\E_{k}\left[(\mathcal{G}_{n}^{(k,s)}(w))^{*}\oindicator{\Omega_{n,k,s}}\right]e_{(i-1)n+k}+o(1)
 		\end{align*}
 		with probability $1-o(1)$. By Lemma \ref{Lem:ReplaceColS} we have
 		\begin{align*}
 		&\frac{1}{n}e_{(i-1)n+k}^{T}\frac{1}{z\bar{w}}\sum_{s=(i-b-1)n+1 }^{(i-b-1)n+k-1}\E_{k}\left[\mathcal{G}_{n}^{(k,s)}(z)\oindicator{\Omega_{n,k,s}}\right]\mathcal{D}_{i-b-1}\\
 		&\quad\quad\quad\quad\quad\quad\quad\quad\quad\quad\quad\quad\quad\times\E_{k}\left[(\mathcal{G}_{n}^{(k,s)}(w))^{*}\oindicator{\Omega_{n,k,s}}\right]e_{(i-1)n+k}\\
 		\ifdetail&\quad\quad=\frac{1}{n}e_{(i-1)n+k}^{T}\frac{1}{z\bar{w}}\sum_{s=(i-b-1)n+1 }^{(i-b-1)n+k-1}\E_{k}\left[\mathcal{G}_{n}^{(k)}(z)\oindicator{\Omega_{n,k}}\right]\mathcal{D}_{i-b-1}\\\fi 
 		\ifdetail&\quad\quad\quad\quad\quad\quad\quad\quad\quad\quad\quad\quad\quad\quad\quad\quad\times\E_{k}\left[(\mathcal{G}_{n}^{(k)}(w))^{*}\oindicator{\Omega_{n,k}}\right]e_{(i-1)n+k}+o(n^{-1})\\\fi 
 		&\quad\quad=\frac{k-1}{n}e_{(i-1)n+k}^{T}\frac{1}{z\bar{w}}\E_{k}\left[\mathcal{G}_{n}^{(k)}(z)\oindicator{\Omega_{n,k}}\right]\mathcal{D}_{i-b-1}\\
 		&\quad\quad\quad\quad\quad\quad\quad\quad\quad\quad\quad\quad\quad\times\E_{k}\left[(\mathcal{G}_{n}^{(k)}(w))^{*}\oindicator{\Omega_{n,k}}\right]e_{(i-1)n+k}+o(1)
 		\end{align*}
 		with probability $1-o(1)$. This concludes the proof of the lemma.
 	\end{Details}
 \end{proof}
 
 With the proof of Lemma \ref{Lem:Iteration} complete, we continue with the proof of Lemma \ref{Lem:MDSReduction}. Applying Lemma \ref{Lem:Iteration} in the base when $b=1$ gives 
	 \begin{align*}
	 &\E\left|\frac{1}{n}\sum_{k=1}^{n}\Bigg(\mathcal{T}_{n,k}(z,\overline{w})-\frac{1}{z\bar{w}}\frac{k-1}{n}\sum_{i=1}^{m}e_{(i-1)n+k}^{T}\E_{k}\left[\mathcal{G}_{n}^{(k)}(z)\oindicator{\Omega_{n,k}}\right]\mathcal{D}_{i-2}\right.\\
	 &\left.\quad\quad\quad\quad\quad\quad\quad\quad\quad\quad\quad\quad\quad\quad\quad\quad\times\E_{k}\left[(\mathcal{G}_{n}^{(k)}(w))^{*}\oindicator{\Omega_{n,k}}\right]e_{(i-1)n+k}\Bigg)\right|^{2}=o(1).
	 \end{align*}
	 The goal is to iterate this process until $\mathcal{T}_{n,k}(z,\overline{w})$ reappears. Lemma \ref{Lem:Iteration} verifies that, for any $b\neq m$,  
	 \begin{align*}
	 &\E\left|\frac{1}{n}\sum_{k=1}^{n}\sum_{i=1}^{m}\Bigg(e_{(i-1)n+k}^{T}\E_{k}\left[\mathcal{G}_{n}^{(k)}(z)\oindicator{\Omega_{n,k}}\right]\mathcal{D}_{i-b}\E_{k}\left[(\mathcal{G}_{n}^{(k)}(w))^{*}\oindicator{\Omega_{n,k}}\right]e_{(i-1)n+k}\right.\\
	 &\quad\quad-\frac{1}{z\bar{w}}\frac{k-1}{n}e_{(i-1)n+k}^{T}\E_{k}\left[\mathcal{G}_{n}^{(k)}(z)\oindicator{\Omega_{n,k}}\right]\mathcal{D}_{i-(b+1)}\\
	 &\quad\quad\quad\quad\quad\quad\quad\quad\quad\quad\quad\quad\times\left.\E_{k}\left[(\mathcal{G}_{n}^{(k)}(w))^{*}\oindicator{\Omega_{n,k}}\right]e_{(i-1)n+k}\Bigg)\right|^{2}=o(1)
	 \end{align*} 
	 where $i-b$ is reduced modulo $m$. After iterating twice, we have  

	 \begin{align*}
	 &\E\left|\frac{1}{n}\sum_{k=1}^{n}\Bigg(\mathcal{T}_{n,k}(z,\overline{w}) -\left(\frac{1}{z\bar{w}}\frac{k-1}{n}\right)^{2}\sum_{i=1}^{m} e_{(i-1)n+k}^{T}\E_{k}\left[\mathcal{G}_{n}^{(k)}(z)\oindicator{\Omega_{n,k}}\right]\mathcal{D}_{i-3}\right.\\
	 &\quad\quad\quad\quad\quad\quad\quad\quad\quad\quad\quad\quad\quad\quad\quad\times\left.\E_{k}\left[(\mathcal{G}_{n}^{(k)}(w))^{*}\oindicator{\Omega_{n,k}}\right]e_{(i-1)n+k}\Bigg)\right|^{2}=o(1).
	 \end{align*}
	 After iterating $m-1$ times, we have 
	 \begin{align*}
	 &\E\left|\frac{1}{n}\sum_{k=1}^{n}\Bigg(\mathcal{T}_{n,k}(z,\overline{w}) -\left(\frac{1}{z\bar{w}}\frac{k-1}{n}\right)^{m-1}\sum_{i=1}^{m} e_{(i-1)n+k}^{T}\E_{k}\left[\mathcal{G}_{n}^{(k)}(z)\oindicator{\Omega_{n,k}}\right]\mathcal{D}_{i-m}\right.\\
	 &\quad\quad\quad\quad\quad\quad\quad\quad\quad\quad\quad\quad\quad\quad\quad\times\left.\E_{k}\left[(\mathcal{G}_{n}^{(k)}(w))^{*}\oindicator{\Omega_{n,k}}\right]e_{(i-1)n+k}\Bigg)\right|^{2}=o(1).
	 \end{align*}
	 To recover $\mathcal{T}_{n,k}(z,\overline{w})$, we will iterate one final time. Due to the block structure of $\mathcal{D}_{p}$ for $1\leq p\leq m$, the $m$th iteration will result in less cancellation than in previous iterations. Consider the expansion due to \eqref{Equ:ResolventExpansion},
	 \begin{align*}
	 &e_{(i-1)n+k}^{T}\E_{k}\left[\mathcal{G}_{n}^{(k)}(z)\oindicator{\Omega_{n,k}}\right]\mathcal{D}_{i-m}\E_{k}\left[(\mathcal{G}_{n}^{(k)}(w))^{*}\oindicator{\Omega_{n,k}}\right]e_{(i-1)n+k}\\
	 &=e_{(i-1)n+k}^{T}\mathcal{D}_{i-m}\frac{1}{z\bar{w}}(\P_{k}(\Omega_{n,k}))^{2}e_{(i-1)n+k}\\
	 &\quad-e_{(i-1)n+k}^{T}\frac{1}{z\overline{w}}\sum_{s\neq * n+k}\mathcal{D}_{i-m}\E_{k}\left[e_{s}c_{s}^{*}(\mathcal{G}_{n}^{(k)}(w))^{*}\oindicator{\Omega_{n,k}}\right]\P_{k}(\Omega_{n,k})e_{(i-1)n+k}\\
	 &\quad-e_{(i-1)n+k}^{T}\frac{1}{z\overline{w}}\sum_{t\neq * n+k}\E_{k}\left[\mathcal{G}_{n}^{(k)}(z)c_{t}e_{t}^{T}\oindicator{\Omega_{n,k}}\right]\mathcal{D}_{i-m}\P_{k}(\Omega_{n,k})e_{(i-1)n+k}\\
	 &\quad+e_{(i-1)n+k}^{T}\frac{1}{z\bar{w}}\sum_{s\neq * n+k}\sum_{t\neq * n+k}\E_{k}\left[\mathcal{G}_{n}^{(k)}(z)c_{t}e_{t}^{T}\oindicator{\Omega_{n,k}}\right]\mathcal{D}_{i-m}\\
	 &\quad\quad\quad\quad\quad\quad\quad\quad\quad\quad\quad\quad\quad\quad\quad\quad\quad\times\E_{k}\left[e_{s}c_{s}^{*}(\mathcal{G}_{n}^{(k)}(w))^{*}\oindicator{\Omega_{n,k}}\right]e_{(i-1)n+k}
	 \end{align*}
	 where the notation $t\neq * n+k$ indicates that the sum is over all $1\leq t\leq mn$ such that $t\neq k, n+k, \dots, (m-1)n+k$ and $\mathbb{P}_{k}$ denotes the conditional probability on the $\sigma$-algebra $\mathcal{F}_{k}$. We have 
	 \[e_{(i-1)n+k}^{T}\mathcal{D}_{i-m}\frac{1}{z\bar{w}}(\P_{k}(\Omega_{n,k}))^{2}e_{(i-1)n+k}=(\mathcal{D}_{i-m})_{((i-1)n+k,(i-1)n+k)}\frac{1}{z\bar{w}}(\P_{k}(\Omega_{n,k}))^{2}.\]
	 Note that after reducing modulo $m$, $\mathcal{D}_{i-m}$ is nonzero in the $(i-1)$st block. Therefore 
	 \begin{equation*}
	 e_{(i-1)n+k}^{T}\mathcal{D}_{i-m}\frac{1}{z\bar{w}}(\P_{k}(\Omega_{n,k}))^{2}e_{(i-1)n+k}=\frac{1}{z\bar{w}}(\P_{k}(\Omega_{n,k}))^{2}.
	 \end{equation*}
	 By arguments similar to those used in the proof of Lemma \ref{Lem:Iteration}, we can calculate that 
	 \[e_{(i-1)n+k}^{T}\frac{1}{z\overline{w}}\sum_{s\neq * n+k}\mathcal{D}_{i-m}\E_{k}\left[e_{s}c_{s}^{*}(\mathcal{G}_{n}^{(k)}(w))^{*}\oindicator{\Omega_{n,k}}\right]\P_{k}(\Omega_{n,k})e_{(i-1)n+k}=0\]
	 and
	 \[e_{(i-1)n+k}^{T}\frac{1}{z\overline{w}}\sum_{t\neq * n+k}\E_{k}\left[\mathcal{G}_{n}^{(k)}(z)c_{t}e_{t}^{T}\oindicator{\Omega_{n,k}}\right]\mathcal{D}_{i-m}\P_{k}(\Omega_{n,k})e_{(i-1)n+k}=0.\]
	 By the forthcoming Lemma \ref{Lem:FiniteDimError3}, we can see that $\E\left|(z\bar{w})^{-1}(1-\P_{k}(\Omega_{n,k})^{2})\right|=o_{\alpha}(n^{-\alpha})$ for any $\alpha>0$, and therefore we have 
	 \begin{align*}
	 &\E\left|e_{(i-1)n+k}^{T}\E_{k}\left[\mathcal{G}_{n}^{(k)}(z)\oindicator{\Omega_{n,k}}\right]\mathcal{D}_{i-m}\E_{k}\left[(\mathcal{G}_{n}^{(k)}(w))^{*}\oindicator{\Omega_{n,k}}\right]e_{(i-1)n+k}\right.\\
	 &\quad\quad\quad\quad-\frac{1}{z\bar{w}}-\frac{1}{z\bar{w}}e_{(i-1)n+k}^{T}\sum_{\substack{s= (i-1)n+1,\\ s\neq * n+k}}^{in}\E_{k}\left[\mathcal{G}_{n}^{(k)}(z)c_{s}\oindicator{\Omega_{n,k}}\right]\\
	 &\quad\quad\quad\quad\quad\quad\quad\quad\quad\quad\quad\quad\quad\quad\quad\quad\left.\times\E_{k}\left[c_{s}^{*}(\mathcal{G}_{n}^{(k)}(w))^{*}\oindicator{\Omega_{n,k}}\right]e_{(i-1)n+k}\right|^{2}=o_{\alpha}(n^{-\alpha}).
	 \end{align*}
	 \begin{Details}
	 \begin{align*}
	 &e_{(i-1)n+k}^{T}\E_{k}\left[\mathcal{G}_{n}^{(k)}(z)\oindicator{\Omega_{n,k}}\right]\mathcal{D}_{i-m}\E_{k}\left[(\mathcal{G}_{n}^{(k)}(w))^{*}\oindicator{\Omega_{n,k}}\right]e_{(i-1)n+k}\\
	 \ifdetail&\quad=e_{(i-1)n+k}^{T}\mathcal{D}_{i-m}\frac{1}{z\bar{w}}(\P_{k}(\Omega_{n,k}))^{2}e_{(i-1)n+k}\\\fi 
	 \ifdetail&\quad\quad-e_{(i-1)n+k}^{T}\frac{1}{z\overline{w}}\sum_{s\neq jn+k}\mathcal{D}_{i-m}\E_{k}\left[e_{s}c_{s}^{*}(\mathcal{G}_{n}^{(k)}(w))^{*}\oindicator{\Omega_{n,k}}\right]\P_{k}(\Omega_{n,k})e_{(i-1)n+k}\\\fi 
	 \ifdetail&\quad\quad-e_{(i-1)n+k}^{T}\frac{1}{z\overline{w}}\sum_{t\neq jn+k}\E_{k}\left[\mathcal{G}_{n}^{(k)}(z)c_{t}e_{t}^{T}\oindicator{\Omega_{n,k}}\right]\mathcal{D}_{i-m}\P_{k}(\Omega_{n,k})e_{(i-1)n+k}\\\fi 
	 \ifdetail&\quad\quad+e_{(i-1)n+k}^{T}\frac{1}{z\bar{w}}\sum_{s\neq jn+ k}\sum_{t\neq jn+k}\E_{k}\left[\mathcal{G}_{n}^{(k)}(z)c_{t}e_{t}^{T}\oindicator{\Omega_{n,k}}\right]\mathcal{D}_{i-m}\\\fi
	 \ifdetail&\quad\quad\quad\quad\quad\quad\quad\quad\quad\quad\quad\quad\quad\times\E_{k}\left[e_{s}c_{s}^{*}(\mathcal{G}_{n}^{(k)}(w))^{*}\oindicator{\Omega_{n,k}}\right]e_{(i-1)n+k}\\\fi
	 &\quad=\frac{1}{z\bar{w}}(\P_{k}(\Omega_{n,k}))^{2}+e_{(i-1)n+k}^{T}\frac{1}{z\bar{w}}\sum_{s\neq jn+k}\sum_{t\neq jn+k}\E_{k}\left[\mathcal{G}_{n}^{(k)}(z)c_{t}e_{t}^{T}\oindicator{\Omega_{n,k}}\right]\mathcal{D}_{i-m}\\
	 &\quad\quad\quad\quad\quad\quad\quad\quad\quad\quad\quad\quad\quad\quad\quad\quad\quad\quad\quad\times\E_{k}\left[e_{s}c_{s}^{*}(\mathcal{G}_{n}^{(k)}(w))^{*}\oindicator{\Omega_{n,k}}\right]e_{(i-1)n+k}\\
	 \ifdetail&\quad=\frac{1}{z\bar{w}}(\P_{k}(\Omega_{n,k}))^{2}+\frac{1}{z\bar{w}}e_{(i-1)n+k}^{T}\sum_{\substack{s= (i-1)n+1,\\ s\neq jn+k}}^{(i-m)n}\E_{k}\left[\mathcal{G}_{n}^{(k)}(z)c_{s}\oindicator{\Omega_{n,k}}\right]\\\fi 
	 \ifdetail&\quad\quad\quad\quad\quad\quad\quad\quad\quad\quad\quad\quad\quad\quad\quad\quad\quad\quad\quad\quad\quad\quad\times\E_{k}\left[c_{s}^{*}(\mathcal{G}_{n}^{(k)}(w))^{*}\oindicator{\Omega_{n,k}}\right]e_{(i-1)n+k}\\\fi 
	 &\quad=\frac{1}{z\bar{w}}+\frac{1}{z\bar{w}}e_{(i-1)n+k}^{T}\sum_{\substack{s= (i-1)n+1,\\ s\neq jn+k}}^{(i-m)n}\E_{k}\left[\mathcal{G}_{n}^{(k)}(z)c_{s}\oindicator{\Omega_{n,k}}\right]\\
	 &\quad\quad\quad\quad\quad\quad\quad\quad\quad\quad\quad\quad\quad\quad\quad\quad\times\E_{k}\left[c_{s}^{*}(\mathcal{G}_{n}^{(k)}(w))^{*}\oindicator{\Omega_{n,k}}\right]e_{(i-1)n+k}+o(1)
	 \end{align*}
	\end{Details}
	By the same argument as in Lemma \ref{Lem:Iteration}, we have 
	 \begin{align*}
	 &\E\Bigg|\frac{1}{n}\sum_{k=1}^{n}\sum_{i=1}^{m}\left(e_{(i-1)n+k}^{T}\E_{k}\left[\mathcal{G}_{n}^{(k)}(z)\oindicator{\Omega_{n,k}}\right]\mathcal{D}_{i-m}\E_{k}\left[(\mathcal{G}_{n}^{(k)}(w))^{*}\oindicator{\Omega_{n,k}}\right]e_{(i-1)n+k}\right.\\
	 &\quad\quad\quad-\frac{1}{z\bar{w}}-\frac{1}{z\bar{w}}\frac{k-1}{n}e_{(i-1)n+k}^{T}\E_{k}\left[\mathcal{G}_{n}^{(k)}(z)\oindicator{\Omega_{n,k}}\right]\mathcal{D}_{i-1}\\
	 &\quad\quad\quad\quad\quad\quad\quad\quad\quad\quad\quad\quad\quad\quad\quad\left.\times\E_{k}\left[(\mathcal{G}_{n}^{(k)}(w))^{*}\oindicator{\Omega_{n,k}}\right]e_{(i-1)n+k}\right)\Bigg|^{2}=o(1).
	\end{align*}
	\begin{Details}
	 \begin{align*}
	 &\frac{1}{z\bar{w}}+\frac{1}{z\bar{w}}e_{(i-1)n+k}^{T}\sum_{\substack{s= (i-1)n+1,\\ s\neq jn+k}}^{(i-m)n}\E_{k}\left[\mathcal{G}_{n}^{(k)}(z)c_{s}\oindicator{\Omega_{n,k}}\right]\\
	 &\quad\quad\quad\quad\quad\quad\quad\quad\quad\quad\quad\quad\quad\quad\times \E_{k}\left[c_{s}^{*}(\mathcal{G}_{n}^{(k)}(w))^{*}\oindicator{\Omega_{n,k}}\right]e_{(i-1)n+k}+o(1)\\
	 &=\frac{1}{z\bar{w}}+\frac{1}{z\bar{w}}e_{(i-1)n+k}^{T}\sum_{\substack{s= (i-1)n+1,\\ s\neq jn+k}}^{(i-m)n}\E_{k}\left[\mathcal{G}_{n}^{(k,s)}(z)c_{s}\oindicator{\Omega_{n,k,s}}\right]\\
	 &\quad\quad\quad\quad\quad\quad\quad\quad\quad\quad\quad\quad\quad\quad\quad\times \E_{k}\left[c_{s}^{*}(\mathcal{G}_{n}^{(k,s)}(w))^{*}\oindicator{\Omega_{n,k,s}}\right]e_{(i-1)n+k}+o(1)\\
	 &=\frac{1}{z\bar{w}}+\frac{1}{z\bar{w}}e_{(i-1)n+k}^{T}\sum_{\substack{s= (i-1)n+1,\\ s\neq jn+k}}^{(i-m)n}\E_{k}\left[\mathcal{G}_{n}^{(k,s)}(z)\oindicator{\Omega_{n,k,s}}\right]\E_{k}\left[c_{s}\right]\\
	 &\quad\quad\quad\quad\quad\quad\quad\quad\quad\quad\quad\quad\quad\quad\quad\times\E_{k}\left[c_{s}^{*}\right]\E_{k}\left[(\mathcal{G}_{n}^{(k,s)}(w))^{*}\oindicator{\Omega_{n,k,s}}\right]e_{(i-1)n+k}+o(1)\\
	 &=\frac{1}{z\bar{w}}+\frac{1}{z\bar{w}}e_{(i-1)n+k}^{T}\sum_{s= (i-1)n+1}^{(i-1)n+k-1}\E_{k}\left[\mathcal{G}_{n}^{(k,s)}(z)\oindicator{\Omega_{n,k,s}}\right]c_{s}\\
	 &\quad\quad\quad\quad\quad\quad\quad\quad\quad\quad\quad\quad\quad\quad\quad\times c_{s}^{*}\E_{k}\left[(\mathcal{G}_{n}^{(k,s)}(w))^{*}\oindicator{\Omega_{n,k,s}}\right]e_{(i-1)n+k}+o(1)\\
	 \ifdetail&=\frac{1}{z\bar{w}}+\frac{1}{z\bar{w}}e_{(i-1)n+k}^{T}\sum_{s= (i-1)n+1}^{(i-1)n+k-1}\E_{k}\left[\mathcal{G}_{n}^{(k,s)}(z)\oindicator{\Omega_{n,k,s}}\right]\E\left[c_{s}c_{s}^{*}\right]\\\fi 
	 \ifdetail&\quad\quad\quad\quad\quad\quad\quad\quad\quad\quad\quad\quad\quad\quad\quad\times\E_{k}\left[(\mathcal{G}_{n}^{(k,s)}(w))^{*}\oindicator{\Omega_{n,k,s}}\right]e_{(i-1)n+k}+o(1)\\\fi
	 &=\frac{1}{z\bar{w}}+\frac{1}{z\bar{w}}\frac{1}{n}e_{(i-1)n+k}^{T}\sum_{s= (i-1)n+1}^{(i-1)n+k-1}\E_{k}\left[\mathcal{G}_{n}^{(k,s)}(z)\oindicator{\Omega_{n,k,s}}\right]\mathcal{D}_{i-1}\\
	 &\quad\quad\quad\quad\quad\quad\quad\quad\quad\quad\quad\quad\quad\quad\quad\times\E_{k}\left[(\mathcal{G}_{n}^{(k,s)}(w))^{*}\oindicator{\Omega_{n,k,s}}\right]e_{(i-1)n+k}+o(1)\\
	 &=\frac{1}{z\bar{w}}+\frac{1}{z\bar{w}}\frac{1}{n}e_{(i-1)n+k}^{T}\sum_{s= (i-1)n+1}^{(i-1)n+k-1}\E_{k}\left[\mathcal{G}_{n}^{(k)}(z)\oindicator{\Omega_{n,k}}\right]\mathcal{D}_{i-1}\\
	 &\quad\quad\quad\quad\quad\quad\quad\quad\quad\quad\quad\quad\quad\quad\quad\times\E_{k}\left[(\mathcal{G}_{n}^{(k)}(w))^{*}\oindicator{\Omega_{n,k}}\right]e_{(i-1)n+k}+o(1)\\
	 &=\frac{1}{z\bar{w}}+\frac{1}{z\bar{w}}\frac{k-1}{n}e_{(i-1)n+k}^{T}\E_{k}\left[\mathcal{G}_{n}^{(k)}(z)\oindicator{\Omega_{n,k}}\right]\mathcal{D}_{i-1}\\
	 &\quad\quad\quad\quad\quad\quad\quad\quad\quad\quad\quad\quad\quad\quad\quad\times\E_{k}\left[(\mathcal{G}_{n}^{(k)}(w))^{*}\oindicator{\Omega_{n,k}}\right]e_{(i-1)n+k}+o(1)
	 \end{align*}
	 \end{Details}
	 By recognizing that we have recovered $\mathcal{T}_{n,k}(z,\bar{w})$ in the previous expression, and by putting this together with the previous iterations of the process, we in total get 
	 \begin{equation}
	 \E\left|\frac{1}{n}\sum_{k=1}^{n}\left(\mathcal{T}_{n,k}(z,\overline{w})-\frac{m}{(z\bar{w})^{m}}\left(\frac{k-1}{n}\right)^{m-1}-\left(\frac{1}{z\bar{w}}\frac{k-1}{n}\right)^{m}\mathcal{T}_{n,k}(z,\bar{w})\right)\right|^{2}=o(1).\label{Equ:TnkInTotal}
	 \end{equation}
	 The goal now is to regroup in order to compare the object of study, $\frac{1}{n}\sum_{k=1}^{n}\mathcal{T}_{n,k}(z,\bar{w})$, to an appropriate Riemann sum. The sum we will compare to is
	 \begin{equation}
	 \frac{1}{n}\sum_{k=1}^{n}\frac{m(k-1)^{m-1}}{n^{m-1}}\left(\left(z\bar{w}\right)^{m}-\left(\frac{k-1}{n}\right)^{m}\right)^{-1}.
	 \label{Equ:RiemannSumForConv}
	 \end{equation}
	 As $n\rightarrow \infty$, \eqref{Equ:RiemannSumForConv} is the Riemann sum for $\int_{0}^{1} mx^{m-1}\left((z\bar{w})^{m}-x^{m}\right)^{-1}dx$  which, by a substitution of variables, is equal to $-\ln\left(1-\frac{1}{(z\bar{w})^{m}}\right).$
	 \begin{Details}
	 \begin{align*}
	 -\ln\left((z\bar{w})^{m}-1\right)+\ln\left((z\bar{w})^{m}\right)&=-\ln\left(1-\frac{1}{(z\bar{w})^{m}}\right).
	 \ifdetail&=\ln\left(\frac{(z\bar{w})^{m}}{(z\bar{w})^{m}-1}\right)\\\fi
	 \end{align*}
	\end{Details}
	 By regrouping the quantities inside the sum in \eqref{Equ:TnkInTotal}, we can write 
	 \begin{equation*}
	 \mathcal{T}_{n,k}(z,\bar{w})\left(1-\frac{(k-1)^{m}}{n^{m}(z\bar{w})^{m}}\right)=\frac{m}{(z\bar{w})^{m}}\left(\frac{k-1}{n}\right)^{m-1}+\mathcal{E}_{n,k}(z,\bar{w})
	 \end{equation*}
	 where $\mathcal{E}_{n,k}(z,\bar{w})$ is an error term which satisfies $\E\left|\frac{1}{n}\sum_{k=1}^{n}\mathcal{E}_{n,k}(z,\bar{w})\right|^{2}=o(1)$. This implies
	 \begin{equation*}
	 \mathcal{T}_{n,k}(z,\bar{w})\left(\frac{n^{m}(z\bar{w})^{m}-(k-1)^{m}}{n^{m}(z\bar{w})^{m}}\right)=\frac{m}{(z\bar{w})^{m}}\left(\frac{k-1}{n}\right)^{m-1}+\mathcal{E}_{n,k}(z,\bar{w})
	 \end{equation*}
	 and thus
	 \begin{equation}
	 \mathcal{T}_{n,k}(z,\bar{w})=\frac{mn(k-1)^{m-1}}{n^{m}(z\bar{w})^{m}-(k-1)^{m}}+\left(\frac{n^{m}(z\bar{w})^{m}}{n^{m}(z\bar{w})^{m}-(k-1)^{m}}\right)\times \mathcal{E}_{n,k}(z,\bar{w}).\label{Equ:RearrangementTerm}
	 \end{equation}
	 \begin{Details} 
	 \begin{align*}
	 \mathcal{T}_{n,k}(z,\bar{w})&=\frac{mn(k-1)^{m-1}}{n^{m}(z\bar{w})^{m}-(k-1)^{m}}+\left(\frac{n^{m}(z\bar{w})^{m}}{n^{m}(z\bar{w})^{m}-(k-1)^{m}}\right)\times \mathcal{E}_{n,k}(z,\bar{w}). \ifdetail&=\left(\frac{n^{m}(z\bar{w})^{m}}{n^{m}(z\bar{w})^{m}-(k-1)^{m}}\right)\left(\frac{m}{(z\bar{w})^{m}}\left(\frac{k-1}{n}\right)^{m-1}+o(1)\right)\\\fi 
	 \end{align*}
	\end{Details}
	 We are now ready to compare $\frac{1}{n}\sum_{k=1}^{n}\mathcal{T}_{n,k}$ to the Riemann sum in \eqref{Equ:RiemannSumForConv}. By rearranging \eqref{Equ:RearrangementTerm}, we have 
	 \begin{align*}
	 &\E\left|\frac{1}{n}\sum_{k=1}^{n}\mathcal{T}_{n,k}(z\bar{w})-\frac{1}{n}\sum_{k=1}^{n}\frac{m(k-1)^{m-1}}{n^{m-1}}\left(\left(z\bar{w}\right)^{m}-\left(\frac{k-1}{n}\right)^{m}\right)^{-1}\right|^{2}\\
	 \ifdetail&=\frac{1}{n}\E\left|\sum_{k=1}^{n}\frac{mn(k-1)^{m-1}}{n^{m}(z\bar{w})^{m}-(k-1)^{m}}+\left(\frac{n^{m}(z\bar{w})^{m}}{n^{m}(z\bar{w})^{m}-(k-1)^{m}}\right)\cdot o(1)\right.\\\fi 
	 \ifdetail&\quad\quad\quad\quad\quad\quad\quad\quad\quad\quad\left.-\frac{m(k-1)^{m-1}}{n^{m-1}}\left(\left(z\bar{w}\right)^{m}-\left(\frac{k-1}{n}\right)^{m}\right)^{-1}\right|\\\fi 
	 \ifdetail&=\frac{1}{n}\E\left|\sum_{k=1}^{n}\frac{mn(k-1)^{m-1}}{n^{m}(z\bar{w})^{m}-(k-1)^{m}}\right.\\\fi 
	 \ifdetail&\quad\quad\quad\quad\quad\quad\quad\quad\quad\quad-\frac{m(k-1)^{m-1}}{n^{m-1}}\cdot\frac{n^{m}}{n^{m}(z\bar{w})^{m}-(k-1)^{m}}\\\fi 
	 \ifdetail&\quad\quad\quad\quad\quad\quad\quad\quad\quad\quad\quad\quad\quad\quad\quad\left.+\frac{n^{m}(z\bar{w})^{m}}{n^{m}(z\bar{w})^{m}-(k-1)^{m}}\cdot o(1)\right|\\\fi 
	 \ifdetail&=\E\left|\frac{1}{n}\sum_{k=1}^{n}\frac{mn(k-1)^{m-1}}{n^{m}(z\bar{w})^{m}-(k-1)^{m}}-\frac{mn(k-1)^{m-1}}{n^{m}(z\bar{w})^{m}-(k-1)^{m}}\right.\\\fi
	 \ifdetail&\quad\quad\quad\quad\quad\quad\quad\quad\quad\quad\quad\quad\quad\left.+\frac{n^{m}(z\bar{w})^{m}}{n^{m}(z\bar{w})^{m}-(k-1)^{m}}\cdot \mathcal{E}_{n,k}(z,\bar{w})\right|^{2}\\\fi
	 \ifdetail&=\frac{1}{n}\E\left|\sum_{k=1}^{n}\frac{n^{m}(z\bar{w})^{m}}{n^{m}(z\bar{w})^{m}-(k-1)^{m}}\cdot o(1)\right|\\\fi 
	 \ifdetail&\leq\E\left|\frac{1}{n}\sum_{k=1}^{n}\frac{n^{m}(z\bar{w})^{m}}{n^{m}(z\bar{w})^{m}-(k-1)^{m}}\cdot \mathcal{E}_{n,k}(z,\bar{w})\right|^{2}\\\fi
	 &\ll \E\left|\frac{1}{n}\sum_{k=1}^{n} \mathcal{E}_{n,k}(z,\bar{w})\right|^{2}\\
	 &=o(1).
	 \end{align*}
	 Therefore $\frac{1}{n}\sum_{k=1}^{n}\mathcal{T}_{n,k}(z,\bar{w})$ converges to $-\ln\left(1-\frac{1}{(z\bar{w})^{m}}\right)$ in probability as $n\rightarrow\infty$ as claimed in \eqref{Equ:MDSReductionAntiderivative}. 
	 \begin{Details} 
	 	Because we applied Vitali's theorem twice, we only need to take a derivative with respect to $z$ then with respect to $\bar{w}$. Note that 
	 \begin{equation*}
	 \frac{d^{2}}{d\bar{w}dz}\left[-\ln\left(1-\frac{1}{(z\bar{w})^{m}}\right)\right]=\frac{m^{2}(z\bar{w})^{m-1}}{((z\bar{w})^{m}-1)^{2}}.
	 \end{equation*}
	\end{Details}
	 \begin{Details}
	 \begin{equation*}
	 \frac{d}{dz}\left[-\ln\left(1-\frac{1}{(z\bar{w})^{m}}\right)\right]=-m\left(z^{m+1}\bar{w}^{m}-z\right)^{-1}
	 \end{equation*}
	 and 
	 \begin{equation*}
	 \frac{d}{d\bar{w}}\left[-m\left(z^{m+1}\bar{w}^{m}-z\right)^{-1}\right]=\frac{m^{2}(z\bar{w})^{m-1}}{((z\bar{w})^{m}-1)^{2}}.
	 \end{equation*}
	 Taking a derivative with respect to $z$ then $\bar{w}$ shows that
	 \begin{equation*}
	 \sum_{i,j=1}^{L}\sum_{k=1}^{n}\alpha_{i}\beta_{j}\E_{k}\left[\breve{Z}_{n,k}(z)\overline{\breve{Z}_{n,k}(w)}\right]\rightarrow \alpha_{i}\beta_{j}\frac{m^{2}(z\bar{w})^{m-1}}{((z\bar{w})^{m}-1)^{2}}
	 \end{equation*}
	 in probability as $n\rightarrow\infty.$ 
	\end{Details}
	 By recalling that we invoked Vitali's theorem, this implies 
	 \begin{equation*}
	 \sum_{k=1}^{n}\alpha_{i}\beta_{j}\E_{k}\left[\breve{Z}_{n,k}(z)\overline{\breve{Z}_{n,k}(w)}\right]\rightarrow \alpha_{i}\beta_{j}\frac{m^{2}(z\bar{w})^{m-1}}{((z\bar{w})^{m}-1)^{2}}
	 \end{equation*}
	 in probability as $n\rightarrow\infty.$ This concludes the proof of Lemma \ref{Lem:MDSReduction}. \ifdetail arguments for the convergence of terms \eqref{Equ:VarExpansion2} and \eqref{Equ:VarExpansion3}. The same arguments show the convergence for terms \eqref{Equ:VarExpansion1} and \eqref{Equ:VarExpansion4} as well.\fi 
	 \begin{Details} 
	 \begin{equation*}
	 \sum_{i,j=1}^{L}\sum_{k=1}^{n}\alpha_{i}\alpha_{j}\E_{k}\left[\breve{Z}_{n,k}(z)\breve{Z}_{n,k}(w)\right]\rightarrow \alpha_{i}\alpha_{j}\frac{m^{2}(zw)^{m-1}}{((zw)^{m}-1)^{2}}.
	 \end{equation*}
	 in probability as $n\rightarrow\infty$, which concludes the arguments for terms \eqref{Equ:VarExpansion1} and \eqref{Equ:VarExpansion4}.
	 \end{Details}
	 \begin{Details} 
	  we have 
	 \begin{equation*}
	 \sum_{i,j=1}^{L}\sum_{k=1}^{n}\beta_{i}\beta_{j}\E_{k}\left[\overline{\breve{Z}_{n,k}(z_{i})}\overline{\breve{Z}_{n,k}(z_{j})}\right]\rightarrow \beta_{i}\beta_{j}\frac{m^{2}(\bar{z_{i}}\bar{z_{j}})^{m-1}}{((\bar{z_{i}}\bar{z_{j}})^{m}-1)^{2}}
	 \end{equation*}
	 in probability as $n\rightarrow\infty$. 
	 \begin{equation*}
	 \sum_{i,j=1}^{L}\sum_{k=1}^{n}\beta_{i}\alpha_{j}\E_{k}\left[\overline{\breve{Z}_{n,k}(z_{i})}\breve{Z}_{n,k}(z_{j})\right]\rightarrow \beta_{i}\alpha_{j}\frac{m^{2}(\bar{z_{i}}z_{j})^{m-1}}{((\bar{z_{i}}z_{j})^{m}-1)^{2}}
	 \end{equation*}
	 in probability as $n\rightarrow\infty$.
	 This concludes the proof of Lemma \ref{Lem:MDSReduction}.
	\end{Details} 
 \end{proof}

\begin{remark}
	\label{Remark:ComplexCase3}
	In the case where the atom variables are complex-valued, the limit of $\sum_{k=1}^{n}\E_{k-1}[\breve{M}_{n,k}^{2}]$ would differ from that of Lemma \ref{Lem:MDSReduction}. In this case, the calculations for terms of the form $\beta_{i}\alpha_{j}\E_{k}\left[\overline{\breve{Z}_{n,k}(z_{i})}\breve{Z}_{n,k}(z_{j})\right]$ would differ from those of the form $\beta_{i}\alpha_{j}\E_{k}\left[\breve{Z}_{n,k}(z_{i})\breve{Z}_{n,k}(z_{j})\right]$ due to the fact that the conjugation would need to be carried throughout the entire calculation.
\end{remark}

\begin{Details} 
Theorem \ref{Thm:MDS_CLT} together with Lemma \ref{Lem:ReductionZ} implies $M_{n}$ converges to the mean zero Gaussian with variance and covariance terms determined in Lemma \ref{Lem:MDSReduction}. Specifically,
\begin{equation*}
\sum_{l=1}^{L}\left(\alpha_{l}\Xi_{n}(z_{l})+\beta_{l}\overline{\Xi_{n}(z_{l})}\right)
\end{equation*}
converges to a real mean-zero Gaussian with variance
\begin{align*}
&\sum_{i,j=1}^{L}\left(\alpha_{i}\beta_{j}\frac{m^{2}(z_{i}\bar{z_{j}})^{m-1}}{((z_{i}\bar{z_{j}})^{m}-1)^{2}}+\beta_{i}\alpha_{j}\frac{m^{2}(\bar{z_{i}}z_{j})^{m-1}}{((\bar{z_{i}}z_{j})^{m}-1)^{2}}\right.\\
&\quad\quad\quad\quad\quad\quad\left.+\alpha_{i}\alpha_{j}\frac{m^{2}(z_{i}z_{j})^{m-1}}{((z_{i}z_{j})^{m}-1)^{2}}+\beta_{i}\beta_{j}\frac{m^{2}(\bar{z_{i}}\bar{z_{j}})^{m-1}}{((\bar{z_{i}}\bar{z_{j}})^{m}-1)^{2}}\right).
\end{align*}
\end{Details}
The Cramer--Wold theorem implies the convergence of finite dimensional distributions which completes the proof of Theorem \ref{Thm:FiniteDimDist}. The remainder of this section is devoted to proving the technical lemmas needed for Lemma \ref{Lem:Iteration}.

\begin{lemma} Define all quantities as in Lemma \ref{Lem:MDSReduction}. Then under the assumptions of Lemma \ref{Lem:MDSReduction}, for any $\alpha>0$, we have \ifdetail Let $e_{1},\dots e_{mn}$ denote the standard basis elements of $\C^{mn}$. Define $\blmat{Y}{n}$ as in \eqref{Def:Y_n} and let $c_{k}$ denote the $k$th column of $\blmat{Y}{n}$. Let $\blmat{Y}{n}^{(k)}$ denote the matrix $\blmat{Y}{n}$ with columns $c_{k},c_{n+k},c_{2n+k},\dots c_{(m-1)n+k}$ replaced with zeros, and define $\mathcal{G}_{n}^{(k)}(z)$ as in \eqref{Def:G^{(k)}_{n}}. Define events $\Omega_{n,k}$ and $\Omega_{n,k,s}$ as in \eqref{Def:Omega_{n,k}} and \eqref{Def:Omega_{n,k,s}} respectively. Then, under the assumptions of Lemma \ref{Lem:MDSReduction}, \fi
	\begin{align*} 
	&\E\left|e_{(i-1)n+k}^{T}\E_{k}\left[\mathcal{G}_{n}^{(k)}(z)c_{s}\oindicator{\Omega_{n,k}}\right]\E_{k}\left[c_{s}^{*}(\mathcal{G}_{n}^{(k)}(w))^{*}\oindicator{\Omega_{n,k}}\right]e_{(i-1)n+k}\right.\\
	&\quad\left.- e_{(i-1)n+k}^{T}\E_{k}\left[\mathcal{G}_{n}^{(k)}(z)c_{s}\oindicator{\Omega_{n,k}\cap\Omega_{n,k,s}}\right]\E_{k}\left[c_{s}^{*}(\mathcal{G}_{n}^{(k)}(w))^{*}\oindicator{\Omega_{n,k}\cap\Omega_{n,k,s}}\right]e_{(i-1)n+k}\right|^{2}\\
	&\quad\quad\quad\quad\quad\quad\quad\quad\quad\quad\quad\quad\quad\quad\quad\quad\quad\quad\quad\quad\quad\quad\quad\quad\quad\quad\quad\quad\quad=o_{\alpha}(n^{4-\alpha/4})
	\end{align*}
	uniformly in $i$ and $k$.
	\label{Lem:FiniteDimError1}
\end{lemma}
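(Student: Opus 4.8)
The plan is to show that the two expressions inside the $L^2$-norm differ by a term supported on the low-probability event $\Omega_{n,k,s}^c$, and then to bound that difference crudely using deterministic norm estimates together with the fact that $\Omega_{n,k,s}$ holds with overwhelming probability (Corollary \ref{Cor:Omega_nksOverwhelming}). Write $\oindicator{\Omega_{n,k}} = \oindicator{\Omega_{n,k} \cap \Omega_{n,k,s}} + \oindicator{\Omega_{n,k} \cap \Omega_{n,k,s}^c}$ in each of the two conditional expectations appearing in the first product. Expanding the product of the two (conditional expectations of) sums, the ``main'' cross term is exactly the quantity in the second line of the display, and the three remaining terms each carry at least one factor of $\oindicator{\Omega_{n,k,s}^c}$ inside a conditional expectation. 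So it suffices to bound, in $L^2$, each such leftover term.

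For a typical leftover term, say
\[
e_{(i-1)n+k}^{T}\E_{k}\left[\mathcal{G}_{n}^{(k)}(z)c_{s}\oindicator{\Omega_{n,k}\cap\Omega_{n,k,s}^c}\right]\E_{k}\left[c_{s}^{*}(\mathcal{G}_{n}^{(k)}(w))^{*}\oindicator{\Omega_{n,k}}\right]e_{(i-1)n+k},
\]
I would take absolute values, pull out the unit vectors, and bound the operator norm of each conditional expectation by the conditional expectation of the operator norm (Jensen). On $\Omega_{n,k}$ we have $\|\mathcal{G}_n^{(k)}(z)\| \le C$ by Lemma \ref{Lem:G_nBounded}, and $\|c_s\| \le \|\blmat{Y}{n}\| \le n$ crudely (each entry is $O(n^{1/2-\eps}/\sqrt n)$ and there are at most $n$ of them, so in fact $\|c_s\|^2 = O(1)$, but even the lossy bound $\|c_s\|\le n$ is fine here), so each of the two conditional-expectation factors has operator norm $\ll n \cdot \oindicator{\cdot}$-conditional-probability, and at least one factor contributes a conditional probability of $\Omega_{n,k,s}^c$. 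Then one applies the tower property, Cauchy--Schwarz, and $\P(\Omega_{n,k,s}^c) = O_\alpha(n^{-\alpha})$: schematically $\E|\cdots|^2 \ll n^4\, \E[\oindicator{\Omega_{n,k,s}^c}] = n^4 \P(\Omega_{n,k,s}^c) \ll_\alpha n^{4-\alpha}$, and the extra slack in the stated exponent $n^{4-\alpha/4}$ accommodates the Cauchy--Schwarz step when the conditional probability appears to a fractional power. The fourth leftover term, carrying $\oindicator{\Omega_{n,k,s}^c}$ in both factors, is only better. Summing the (at most three) leftover terms and using $(a+b+c)^2 \ll a^2+b^2+c^2$ gives the claimed bound, uniformly in $i,k$ since all the constants ($C$ from Lemma \ref{Lem:G_nBounded}, the implied constant in $\P(\Omega_{n,k,s}^c)$) are uniform and the vectors $e_{(i-1)n+k}$, $c_s$ appear only through norms.

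The only mild subtlety — and the step I would be most careful about — is the bookkeeping in the product expansion: because we have a \emph{product} of two conditional expectations rather than a single conditional expectation of a product, one cannot simply combine indicators, and one must verify that the genuinely mixed term $\E_k[\cdots \oindicator{\Omega_{n,k}\cap\Omega_{n,k,s}}]\,\E_k[\cdots\oindicator{\Omega_{n,k}\cap\Omega_{n,k,s}}]$ is exactly what appears in the second line of the target display; this is immediate from bilinearity of the product. After that, everything is a routine application of Jensen's inequality, the tower property, Cauchy--Schwarz, Lemma \ref{Lem:G_nBounded}, and the overwhelming-probability estimate for $\Omega_{n,k,s}$, with the power of $n$ at each step controlled losslessly by the trivial bounds $\|c_s\| \le \|\blmat{Y}{n}\| \ll n$ and $\|\mathcal{G}_n^{(k)}\| \le C$ on the relevant event. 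One chooses $\alpha$ as large as needed in the invocations of Corollary \ref{Cor:Omega_nksOverwhelming}; since the exponent on the right is $4-\alpha/4$, taking $\alpha$ large makes this $o(n^{-1})$, which is the form in which the lemma is used in the proof of Lemma \ref{Lem:Iteration}.
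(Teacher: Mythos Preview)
Your proposal is correct and follows essentially the same approach as the paper: expand each $\oindicator{\Omega_{n,k}}$ as $\oindicator{\Omega_{n,k}\cap\Omega_{n,k,s}} + \oindicator{\Omega_{n,k}\cap\Omega_{n,k,s}^c}$, identify the main cross term as the subtracted quantity, and bound the three remaining error terms via Jensen, Cauchy--Schwarz, Lemma~\ref{Lem:G_nBounded}, and the overwhelming-probability estimate for $\Omega_{n,k,s}$. The paper's displayed calculation is slightly tighter (it uses $\E\|c_s\|^4 \ll 1$ rather than the crude $\|c_s\|\le n$, arriving at $\P(\Omega_{n,k,s}^c)^{1/2}\ll_\alpha n^{-\alpha/2}$), but your lossy bound is equally sufficient for the stated conclusion.
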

\begin{proof}
	Observe that
	\begin{align} 
	&e_{(i-1)n+k}^{T}\E_{k}\left[\mathcal{G}_{n}^{(k)}(z)c_{s}\oindicator{\Omega_{n,k}}\right]\E_{k}\left[c_{s}^{*}(\mathcal{G}_{n}^{(k)}(w))^{*}\oindicator{\Omega_{n,k}}\right]e_{(i-1)n+k}\notag\\
	&\quad=e_{(i-1)n+k}^{T}\E_{k}\left[\mathcal{G}_{n}^{(k)}(z)c_{s}\oindicator{\Omega_{n,k}\cap\Omega_{n,k,s}}\right]\E_{k}\left[c_{s}^{*}(\mathcal{G}_{n}^{(k)}(w))^{*}\oindicator{\Omega_{n,k}\cap\Omega_{n,k,s}}\right]e_{(i-1)n+k}\notag\\ 
	&\quad\quad+e_{(i-1)n+k}^{T}\E_{k}\left[\mathcal{G}_{n}^{(k)}(z)c_{s}\oindicator{\Omega_{n,k}\cap\Omega_{n,k,s}^{c}}\right]\E_{k}\left[c_{s}^{*}(\mathcal{G}_{n}^{(k)}(w))^{*}\oindicator{\Omega_{n,k}\cap\Omega_{n,k,s}}\right]e_{(i-1)n+k}\label{Equ:Lem:FiniteDimError1:1}\\
	&\quad\quad+e_{(i-1)n+k}^{T}\E_{k}\left[\mathcal{G}_{n}^{(k)}(z)c_{s}\oindicator{\Omega_{n,k}\cap\Omega_{n,k,s}}\right]\E_{k}\left[c_{s}^{*}(\mathcal{G}_{n}^{(k)}(w))^{*}\oindicator{\Omega_{n,k}\cap\Omega_{n,k,s}^{c}}\right]e_{(i-1)n+k}\label{Equ:Lem:FiniteDimError1:2}\\
	&\quad\quad+e_{(i-1)n+k}^{T}\E_{k}\left[\mathcal{G}_{n}^{(k)}(z)c_{s}\oindicator{\Omega_{n,k}\cap\Omega_{n,k,s}^{c}}\right]\E_{k}\left[c_{s}^{*}(\mathcal{G}_{n}^{(k)}(w))^{*}\oindicator{\Omega_{n,k}\cap\Omega_{n,k,s}^{c}}\right]e_{(i-1)n+k}.\label{Equ:Lem:FiniteDimError1:3}
	\end{align}
	Therefore, we must show terms \eqref{Equ:Lem:FiniteDimError1:1}, \eqref{Equ:Lem:FiniteDimError1:2}, and \eqref{Equ:Lem:FiniteDimError1:3} are sufficiently small in the $L^{2}$-norm. The argument for all three terms is very similar. We use Jensen's inequality and the Cauchy--Schwarz inequality to separate the inner conditional expectations. For resolvent terms where a complement event is not present, we bound by a constant and in terms where a complement event is present, we bound by $O(n^{-\alpha})$ since each event holds with overwhelming probability. We show the calculation for term \eqref{Equ:Lem:FiniteDimError1:1}, and the other terms follow in a similar manner. Observe 
	\begin{align*}
	&\E\left|e_{(i-1)n+k}^{T}\E_{k}\left[\mathcal{G}_{n}^{(k)}(z)c_{s}\oindicator{\Omega_{n,k}\cap\Omega_{n,k,s}^{c}}\right]\E_{k}\left[c_{s}^{*}(\mathcal{G}_{n}^{(k)}(w))^{*}\oindicator{\Omega_{n,k}\cap\Omega_{n,k,s}}\right]e_{(i-1)n+k}\right|^{2}\\
	&\quad\leq\E\left[\E_{k}\left[\lnorm\mathcal{G}_{n}^{(k)}(z)\oindicator{\Omega_{n,k}\cap\Omega_{n,k,s}^{c}}\rnorm^{2}\lnorm c_{s}\rnorm^{2}\right]\E_{k}\left[\lnorm c_{s}^{*}\rnorm^{2}\lnorm(\mathcal{G}_{n}^{(k)}(w))^{*}\oindicator{\Omega_{n,k}\cap\Omega_{n,k,s}}\rnorm^{2}\right] \right] \\
	&\quad\leq\left(\E\lnorm\mathcal{G}_{n}^{(k)}(z)\oindicator{\Omega_{n,k}\cap\Omega_{n,k,s}^{c}}\rnorm^{4}\E\lnorm c_{s}\rnorm^{4}\E\lnorm c_{s}^{*}\rnorm^{4}\E\lnorm(\mathcal{G}_{n}^{(k)}(w))^{*}\oindicator{\Omega_{n,k}\cap\Omega_{n,k,s}}\rnorm^{4}\right)^{1/2}\\
	&\quad\ll\left(\E\lnorm\mathcal{G}_{n}^{(k)}(z)\oindicator{\Omega_{n,k}\cap\Omega_{n,k,s}^{c}}\rnorm^{4}\E\lnorm(\mathcal{G}_{n}^{(k)}(w))^{*}\oindicator{\Omega_{n,k}\cap\Omega_{n,k,s}}\rnorm^{4}\right)^{1/2}\\
	&\quad\ll \P(\Omega_{n,k,s}^{c})^{1/2}\\
	&\quad\ll n^{-\alpha/2}.
	\end{align*}
	\ifdetail The arguments for terms \eqref{Equ:Lem:FiniteDimError1:2} and \eqref{Equ:Lem:FiniteDimError1:3} are similar. \fi 
	\begin{Details}
		\begin{align*}
		&\E\left|e_{(i-1)n+k}^{T}\E_{k}\left[\mathcal{G}_{n}^{(k)}(z)c_{s}\oindicator{\Omega_{n,k}}\right]\E_{k}\left[c_{s}^{*}(\mathcal{G}_{n}^{(k)}(w))^{*}\oindicator{\Omega_{n,k}}\right]e_{(i-1)n+k}\right.\\
		&\quad\quad\left.- e_{(i-1)n+k}^{T}\E_{k}\left[\mathcal{G}_{n}^{(k)}(z)c_{s}\oindicator{\Omega_{n,k}\cap\Omega_{n,k,s}}\right]\E_{k}\left[c_{s}^{*}(\mathcal{G}_{n}^{(k)}(w))^{*}\oindicator{\Omega_{n,k}\cap\Omega_{n,k,s}}\right]e_{(i-1)n+k}\right|^{2}\\
		&\quad\leq \E\left|e_{(i-1)n+k}^{T}\E_{k}\left[\mathcal{G}_{n}^{(k)}(z)c_{s}\oindicator{\Omega_{n,k}\cap\Omega_{n,k,s}^{c}}\right]\E_{k}\left[c_{s}^{*}(\mathcal{G}_{n}^{(k)}(w))^{*}\oindicator{\Omega_{n,k}\cap\Omega_{n,k,s}}\right]e_{(i-1)n+k}\right|^{2}\\
		&\quad\quad+\E\left|e_{(i-1)n+k}^{T}\E_{k}\left[\mathcal{G}_{n}^{(k)}(z)c_{s}\oindicator{\Omega_{n,k}\cap\Omega_{n,k,s}}\right]\E_{k}\left[c_{s}^{*}(\mathcal{G}_{n}^{(k)}(w))^{*}\oindicator{\Omega_{n,k}\cap\Omega_{n,k,s}^{c}}\right]e_{(i-1)n+k}\right|^{2}\\
		&\quad\quad+\E\left|e_{(i-1)n+k}^{T}\E_{k}\left[\mathcal{G}_{n}^{(k)}(z)c_{s}\oindicator{\Omega_{n,k}\cap\Omega_{n,k,s}^{c}}\right]\E_{k}\left[c_{s}^{*}(\mathcal{G}_{n}^{(k)}(w))^{*}\oindicator{\Omega_{n,k}\cap\Omega_{n,k,s}^{c}}\right]e_{(i-1)n+k}\right|^{2}\\
		\ifdetail&\quad\leq \lnorm e_{(i-1)n+k}^{T}\rnorm^{2}\E\left[\E_{k}\left[\lnorm\mathcal{G}_{n}^{(k)}(z)\oindicator{\Omega_{n,k}\cap\Omega_{n,k,s}^{c}}\rnorm^{2}\lnorm c_{s}\rnorm^{2}\right]\right.\\\fi 
		\ifdetail&\quad\quad\quad\quad\quad\quad\quad\quad\quad\cdot\left.\E_{k}\left[\lnorm c_{s}^{*}\rnorm^{2}\lnorm(\mathcal{G}_{n}^{(k)}(w))^{*}\oindicator{\Omega_{n,k}\cap\Omega_{n,k,s}}\rnorm^{2}\right] \right]\lnorm e_{(i-1)n+k}\rnorm^{2} \\\fi 
		\ifdetail&\quad\quad+\lnorm e_{(i-1)n+k}^{T} \rnorm^{2}\E\left[\E_{k}\left[\lnorm\mathcal{G}_{n}^{(k)}(z)\oindicator{\Omega_{n,k}\cap\Omega_{n,k,s}}\rnorm^{2}\lnorm c_{s}\rnorm^{2}\right]\right.\\\fi 
		\ifdetail&\quad\quad\quad\quad\quad\quad\quad\quad\quad\quad\cdot\left.\E_{k}\left[\lnorm c_{s}^{*}\rnorm^{2}\lnorm(\mathcal{G}_{n}^{(k)}(w))^{*}\oindicator{\Omega_{n,k}\cap\Omega_{n,k,s}^{c}}\rnorm^{2}\right]\right]\lnorm e_{(i-1)n+k}\rnorm^{2}\\\fi 
		\ifdetail&\quad\quad+\lnorm e_{(i-1)n+k}^{T}\rnorm^{2}\E\left[\E_{k}\left[\lnorm\mathcal{G}_{n}^{(k)}(z)\oindicator{\Omega_{n,k}\cap\Omega_{n,k,s}^{c}}\rnorm^{2}\lnorm c_{s}\rnorm^{2}\right]\right.\\\fi
		\ifdetail&\quad\quad\quad\quad\quad\quad\quad\quad\quad\quad\cdot\left.\E_{k}\left[\lnorm c_{s}^{*}\rnorm^{2}\lnorm(\mathcal{G}_{n}^{(k)}(w))^{*}\oindicator{\Omega_{n,k}\cap\Omega_{n,k,s}^{c}}\rnorm^{2}\right]\right]\lnorm e_{(i-1)n+k}\rnorm^{2}\\\fi
		&\quad\leq\E\left[\E_{k}\left[\lnorm\mathcal{G}_{n}^{(k)}(z)\oindicator{\Omega_{n,k}\cap\Omega_{n,k,s}^{c}}\rnorm^{2}\lnorm c_{s}\rnorm^{2}\right]\E_{k}\left[\lnorm c_{s}^{*}\rnorm^{2}\lnorm(\mathcal{G}_{n}^{(k)}(w))^{*}\oindicator{\Omega_{n,k}\cap\Omega_{n,k,s}}\rnorm^{2}\right] \right] \\
		&\quad\quad+\E\left[\E_{k}\left[\lnorm\mathcal{G}_{n}^{(k)}(z)\oindicator{\Omega_{n,k}\cap\Omega_{n,k,s}}\rnorm^{2}\lnorm c_{s}\rnorm^{2}\right]\E_{k}\left[\lnorm c_{s}^{*}\rnorm^{2}\lnorm(\mathcal{G}_{n}^{(k)}(w))^{*}\oindicator{\Omega_{n,k}\cap\Omega_{n,k,s}^{c}}\rnorm^{2}\right]\right]\\
		&\quad\quad+\E\left[\E_{k}\left[\lnorm\mathcal{G}_{n}^{(k)}(z)\oindicator{\Omega_{n,k}\cap\Omega_{n,k,s}^{c}}\rnorm^{2}\lnorm c_{s}\rnorm^{2}\right]\E_{k}\left[\lnorm c_{s}^{*}\rnorm^{2}\lnorm(\mathcal{G}_{n}^{(k)}(w))^{*}\oindicator{\Omega_{n,k}\cap\Omega_{n,k,s}^{c}}\rnorm^{2}\right]\right]\\
		&\quad\leq\left(\E\lnorm\mathcal{G}_{n}^{(k)}(z)\oindicator{\Omega_{n,k}\cap\Omega_{n,k,s}^{c}}\rnorm^{8}\E\lnorm c_{s}\rnorm^{8}\E\lnorm c_{s}^{*}\rnorm^{8}\E\lnorm(\mathcal{G}_{n}^{(k)}(w))^{*}\oindicator{\Omega_{n,k}\cap\Omega_{n,k,s}}\rnorm^{8}\right)^{1/4}\\
		&\quad\quad+\left(\E\lnorm\mathcal{G}_{n}^{(k)}(z)\oindicator{\Omega_{n,k}\cap\Omega_{n,k,s}}\rnorm^{8}\E\lnorm c_{s}\rnorm^{8}\E\lnorm c_{s}^{*}\rnorm^{8}\E\lnorm(\mathcal{G}_{n}^{(k)}(w))^{*}\oindicator{\Omega_{n,k}\cap\Omega_{n,k,s}^{c}}\rnorm^{8}\right)^{1/4}\\
		&\quad\quad+\left(\E\lnorm\mathcal{G}_{n}^{(k)}(z)\oindicator{\Omega_{n,k}\cap\Omega_{n,k,s}^{c}}\rnorm^{8}\E\lnorm c_{s}\rnorm^{8}\E\lnorm c_{s}^{*}\rnorm^{8}\E\lnorm(\mathcal{G}_{n}^{(k)}(w))^{*}\oindicator{\Omega_{n,k}\cap\Omega_{n,k,s}^{c}}\rnorm^{8}\right)^{1/4}\\
		&\quad\leq n^{4}\E\left[\E_{k}\left[\lnorm\mathcal{G}_{n}^{(k)}(z)\oindicator{\Omega_{n,k}}\rnorm^{2}\oindicator{\Omega_{n,k,s}^{c}}\right]\E_{k}\left[\lnorm(\mathcal{G}_{n}^{(k)}(w))^{*}\oindicator{\Omega_{n,k}}\rnorm^{2}\oindicator{\Omega_{n,k,s}}\right] \right] \\
		&\quad\quad+n^{4}\E\left[\E_{k}\left[\lnorm\mathcal{G}_{n}^{(k)}(z)\oindicator{\Omega_{n,k}}\rnorm^{2}\oindicator{\Omega_{n,k,s}}\right]\E_{k}\left[\lnorm(\mathcal{G}_{n}^{(k)}(w))^{*}\oindicator{\Omega_{n,k}}\rnorm^{2}\oindicator{\Omega_{n,k,s}^{c}}\right]\right]\\
		&\quad\quad+n^{4}\E\left[\E_{k}\left[\lnorm\mathcal{G}_{n}^{(k)}(z)\oindicator{\Omega_{n,k}}\rnorm^{2}\oindicator{\Omega_{n,k,s}^{c}}\right]\E_{k}\left[\lnorm(\mathcal{G}_{n}^{(k)}(w))^{*}\oindicator{\Omega_{n,k}}\rnorm^{2}\oindicator{\Omega_{n,k,s}^{c}}\right]\right]\\
		\ifdetail&\quad\leq n^{4}C\E\left[\E_{k}\left[\oindicator{\Omega_{n,k,s}^{c}}\right]\E_{k}\left[\oindicator{\Omega_{n,k,s}}\right] \right] \\\fi 
		\ifdetail&\quad\quad+n^{4}C\E\left[\E_{k}\left[\oindicator{\Omega_{n,k,s}}\right]\E_{k}\left[\oindicator{\Omega_{n,k,s}^{c}}\right]\right]\\\fi 
		\ifdetail&\quad\quad+n^{4}C\E\left[\E_{k}\left[\oindicator{\Omega_{n,k,s}^{c}}\right]\E_{k}\left[\oindicator{\Omega_{n,k,s}^{c}}\right]\right]\\\fi 
		\ifdetail&\quad\leq n^{4}C\E\left[\E_{k}\left[\oindicator{\Omega_{n,k,s}^{c}}\right]\right]+n^{4}C\E\left[\E_{k}\left[\oindicator{\Omega_{n,k,s}^{c}}\right]\right]+n^{4}C\E\left[\E_{k}\left[\oindicator{\Omega_{n,k,s}^{c}}\right]\right]\\\fi 
		&\quad\ll n^{4}\P(\Omega_{n,k,s}^{c})+n^{4}\P(\Omega_{n,k,s}^{c})+n^{4}\P(\Omega_{n,k,s}^{c})\\
		&\quad\ll n^{4}n^{-\alpha}.
		\end{align*}
	\end{Details}
\end{proof}

\begin{lemma} Define all quantities as in Lemma \ref{Lem:MDSReduction}. Then, for any $i$ with $1\leq i\leq m$ where subscripts are reduced modulo $m$, under the assumptions of Lemma \ref{Lem:MDSReduction}, we have \ifdetail Let $e_{1},\dots e_{mn}$ denote the standard basis elements of $\C^{mn}$. Define $\blmat{Y}{n}$ as in \eqref{Def:Y_n} and let $c_{k}$ denote the $k$th column of $\blmat{Y}{n}$. Let $\blmat{Y}{n}^{(k,s)}$ denote the matrix $\blmat{Y}{n}$ with columns $c_{k},c_{n+k},c_{2n+k},\dots c_{(m-1)n+k}$ and $c_{s}$ replaced with zeros, and define $\mathcal{G}_{n}^{(k,s)}(z)$ as in \eqref{Def:G_{n}^{k,s}}. Define $\delta_{k,s}(z)$ as in \eqref{Def:delta_{n,k}} and define events $\Omega_{n,k,s}$ and $Q'_{n,k,s}$ as in \eqref{Def:Omega_{n,k,s}} and \eqref{Def:Q'} respectively. Then, for any $i$ with $1\leq i\leq m$ where subscripts are reduced modulo $m$, under the assumptions of Lemma \ref{Lem:MDSReduction}, \fi
	\begin{align*}
	&\E\left|e_{(i-1)n+k}^{T}\E_{k}\left[\mathcal{G}_{n}^{(k,s)}(z)c_{s}\delta_{k,s}(z)\oindicator{\Omega_{n,k,s}\cap Q'_{n,k,s}}\right]\right.\\
	&\quad\quad\quad\quad\quad\quad\left.\times\E_{k}\left[(\delta_{k,s}(w))^{*}c_{s}^{*}(\mathcal{G}_{n}^{(k,s)}(w))^{*}\oindicator{\Omega_{n,k,s}\cap Q'_{n,k,s}}\right]e_{(i-1)n+k}\right.\\
	&\quad\quad\left.-e_{(i-1)n+k}^{T}\E_{k}\left[\mathcal{G}_{n}^{(k,s)}(z)c_{s}\oindicator{\Omega_{n,k,s}\cap Q'_{n,k,s}}\right]\right.\\
	&\quad\quad\quad\quad\quad\quad\quad\quad\left.\times\E_{k}\left[c_{s}^{*}(\mathcal{G}_{n}^{(k,s)}(w))^{*}\oindicator{\Omega_{n,k,s}\cap Q'_{n,k,s}}\right]e_{(i-1)n+k}]\right|^{2}=o(n^{-2})
	\end{align*}
	uniformly in $i$, $s$, and $k$.
	\label{Lem:FiniteDimError2}
\end{lemma}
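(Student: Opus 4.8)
The statement is a ``replacing $\delta_{k,s}$ by $1$'' reduction, exactly analogous in spirit to the scalar analogue in \cite{RiS,OR:CLT}, but carried out with vector quantities. The difference inside the absolute value telescopes: writing $X_z := e_{(i-1)n+k}^{T}\E_{k}[\mathcal{G}_{n}^{(k,s)}(z)c_{s}\oindicator{\Omega_{n,k,s}\cap Q'_{n,k,s}}]$ (a scalar) and $Y_z := \E_{k}[(\delta_{k,s}(z))\cdot\text{(scalar)}\oindicator{\cdots}]$ and similarly for $\bar w$, one splits
\[
\delta_{k,s}(z)\,\delta_{k,s}(w)^{*} - 1 = (\delta_{k,s}(z)-1)\,\delta_{k,s}(w)^{*} + (\delta_{k,s}(w)^{*}-1),
\]
so the quantity to bound is a sum of two terms, each containing a factor $\delta_{k,s}(\cdot)-1$. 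First I would pull the conditional expectations apart using Jensen and Cauchy--Schwarz so that everything is controlled by $L^{2p}$-norms of the individual factors: $\lnorm\mathcal{G}_{n}^{(k,s)}(z)\rnorm\oindicator{\Omega_{n,k,s}}$ (bounded by a constant by Lemma \ref{Lem:G_nksBounded}), $\lnorm c_{s}\rnorm$ (whose even moments are $O(1)$ since $c_s$ has $n$ entries each of size $n^{-1/2}$ times a bounded atom variable), $|\delta_{k,s}(w)\oindicator{Q'_{n,k,s}}|$ (bounded by a constant by Lemma \ref{Lem:G_nksBounded}), and crucially $|\delta_{k,s}(z)-1|$ on the event $Q'_{n,k,s}$.

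\textbf{The key estimate.} The whole proof rests on showing that $\delta_{k,s}(z)$ is close to $1$ in an appropriate quantitative sense. By definition $\delta_{k,s}(z)^{-1} = 1 + e_{s}^{T}\mathcal{G}_{n}^{(k,s)}(z)c_{s}$, so on $Q'_{n,k,s}$ we have $|e_{s}^{T}\mathcal{G}_{n}^{(k,s)}(z)c_{s}\oindicator{\Omega_{n,k,s}}|\le 1/2$, hence
\[
|\delta_{k,s}(z)-1|\oindicator{\Omega_{n,k,s}\cap Q'_{n,k,s}} = \left|\frac{e_{s}^{T}\mathcal{G}_{n}^{(k,s)}(z)c_{s}}{1+e_{s}^{T}\mathcal{G}_{n}^{(k,s)}(z)c_{s}}\right|\oindicator{\cdots} \le 2\,\left|e_{s}^{T}\mathcal{G}_{n}^{(k,s)}(z)c_{s}\oindicator{\Omega_{n,k,s}}\right|.
\]
Now $e_{s}^{T}\mathcal{G}_{n}^{(k,s)}(z)c_{s}$ is a quadratic-type form in the column $c_{s}$, which is independent of $\mathcal{G}_{n}^{(k,s)}(z)$ and $\Omega_{n,k,s}$; each entry of $c_s$ has been scaled by $n^{-1/2}$, so by Lemma \ref{Lem:conjLessThanConstant} (applied to the rank-one matrix $(\mathcal{G}_{n}^{(k,s)}(z))^{*}e_s e_s^{T}\mathcal{G}_{n}^{(k,s)}(z)$, whose operator norm is bounded on $\Omega_{n,k,s}$) one obtains, for any $p\ge 2$,
\[
\E\left|e_{s}^{T}\mathcal{G}_{n}^{(k,s)}(z)c_{s}\oindicator{\Omega_{n,k,s}}\right|^{2p} = \E\left|c_s^{*}(\mathcal{G}_{n}^{(k,s)}(z))^{*}e_s e_s^{T}\mathcal{G}_{n}^{(k,s)}(z)c_s\oindicator{\Omega_{n,k,s}}\right|^{p} \ll_{p} n^{-p}\cdot n^{(1-2\varepsilon)p+4\varepsilon-2} = n^{-2\varepsilon p + 4\varepsilon - 2}.
\]
In particular $\E|\delta_{k,s}(z)-1|^{2p}\oindicator{\Omega_{n,k,s}\cap Q'_{n,k,s}}\ll_p n^{-2\varepsilon p + 4\varepsilon - 2}$, which decays like an arbitrarily large negative power of $n$ if $p$ is chosen large.

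\textbf{Assembly and main obstacle.} With this in hand, each of the two telescoped terms is estimated by a H\"older split: a product of $L^{2p}$-norms of the bounded factors ($\mathcal{G}_{n}^{(k,s)}$, $c_s$, $\delta_{k,s}(w)^*$ restricted to $Q'_{n,k,s}$) times one $L^{2p}$-norm of $|\delta_{k,s}(z)-1|$ or $|\delta_{k,s}(w)^*-1|$. The bounded factors contribute $O(1)$ (up to $m$-dependent constants), and the $\delta-1$ factor contributes $n^{(-2\varepsilon p + 4\varepsilon - 2)/(2p)}$, which is $o(n^{-2})$ once $p$ is taken large enough (depending on $\varepsilon$). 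Uniformity in $i$, $k$, $s$ is automatic because none of the bounds depend on these indices --- the resolvent norm bounds on $\Omega_{n,k,s}$ and the moment bounds for $c_s$ are uniform, and $e_{(i-1)n+k}$ is a unit vector. The main obstacle is purely bookkeeping: making sure the conditional expectations $\E_k[\cdot]$ are handled correctly when splitting (using $|\E_k[\,\cdot\,]|^2 \le \E_k[|\cdot|^2]$ and then Cauchy--Schwarz on the outer expectation, as in the proof of Lemma \ref{Lem:FiniteDimError1}), and keeping track of which events ($\Omega_{n,k,s}$, $Q'_{n,k,s}$) are present on which factor so that all the boundedness lemmas (Lemma \ref{Lem:G_nksBounded}, Lemma \ref{Lem:conjLessThanConstant}) genuinely apply; there is no conceptual difficulty beyond the quadratic-form concentration estimate above.
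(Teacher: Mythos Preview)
Your telescoping decomposition and your moment bound on $|\delta_{k,s}(z)-1|$ are both correct and match the paper.  The gap is in the assembly.  You claim the factor $n^{(-2\varepsilon p+4\varepsilon-2)/(2p)}$ is $o(n^{-2})$ for $p$ large, but this is false: the exponent equals $-\varepsilon + (2\varepsilon-1)/p$, which tends to $-\varepsilon > -1/2$ as $p\to\infty$, not to $-\infty$.  With only the $\delta-1$ factor supplying decay and the remaining factors bounded by $O(1)$ via $\lnorm e\rnorm\cdot\lnorm\mathcal{G}_{n}^{(k,s)}\rnorm\cdot\lnorm c_s\rnorm$, the best you can extract is $O(n^{-c\varepsilon})$ for some constant $c$, which falls well short of $o(n^{-2})$.

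What you are discarding is precisely the decay hidden in the scalars $e_{(i-1)n+k}^{T}\mathcal{G}_{n}^{(k,s)}(z)c_{s}$.  These are not merely $O(1)$: writing $|e^{T}\mathcal{G}c_{s}|^{2}=c_{s}^{*}\mathcal{G}^{*}e\,e^{T}\mathcal{G}c_{s}$ and applying Lemma~\ref{Lem:conjLessThanConstant} to the rank-one matrix $\mathcal{G}^{*}e\,e^{T}\mathcal{G}$ (exactly as you already do for $e_{s}^{T}\mathcal{G}c_{s}$ in the $\delta-1$ estimate) gives $\E|e^{T}\mathcal{G}c_{s}|^{2p}\ll_{p} n^{-2\varepsilon p+4\varepsilon-2}$, in particular $\E|e^{T}\mathcal{G}c_{s}|^{4}\ll n^{-2}$.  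The paper keeps these scalars intact and uses a three-factor H\"older split on each telescoped piece: one factor $(\E|e^{T}\mathcal{G}c_{s}\delta|^{4})^{1/2}\ll n^{-1}$, one factor $(\E|c_{s}^{*}\mathcal{G}^{*}e|^{8})^{1/4}\ll n^{-1/2-\varepsilon}$, and one factor $(\E|\delta-1|^{8})^{1/4}\ll n^{-1/2-\varepsilon}$ (the last obtained after one further expansion of $\delta-1$ via \eqref{Equ:ExpansionOfDelta}).  The product is $n^{-2-2\varepsilon}=o(n^{-2})$.  The fix to your argument is simply to treat $e_{(i-1)n+k}^{T}\mathcal{G}_{n}^{(k,s)}c_{s}$ as a single scalar and bound its moments by the rank-one quadratic-form lemma rather than splitting it into operator norms.
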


\begin{proof}
	Observe that by the triangle inequality, 
	\begin{align}
	&\E\left|e_{(i-1)n+k}^{T}\E_{k}\left[\mathcal{G}_{n}^{(k,s)}(z)c_{s}\delta_{k,s}(z)\oindicator{\Omega_{n,k,s}\cap Q'_{n,k,s}}\right]\right.\notag\\
	&\quad\quad\quad\quad\quad\quad\left.\times\E_{k}\left[(\delta_{k,s}(w))^{*}c_{s}^{*}(\mathcal{G}_{n}^{(k,s)}(w))^{*}\oindicator{\Omega_{n,k,s}\cap Q'_{n,k,s}}\right]e_{(i-1)n+k}\right.\notag\\
	&\quad\quad\left.-e_{(i-1)n+k}^{T}\E_{k}\left[\mathcal{G}_{n}^{(k,s)}(z)c_{s}\oindicator{\Omega_{n,k,s}\cap Q'_{n,k,s}}\right]\right.\notag\\
	&\quad\quad\quad\quad\quad\quad\quad\quad\left.\times\E_{k}\left[c_{s}^{*}(\mathcal{G}_{n}^{(k,s)}(w))^{*}\oindicator{\Omega_{n,k,s}\cap Q'_{n,k,s}}\right]e_{(i-1)n+k}]\right|^{2}\notag\\
	\ifdetail&\leq \E\left|e_{(i-1)n+k}^{T}\E_{k}\left[\mathcal{G}_{n}^{(k,s)}(z)c_{s}\delta_{k,s}(z)\oindicator{\Omega_{n,k,s}\cap Q'_{n,k,s}}\right]\right.\notag\\\fi 
	\ifdetail&\quad\quad\quad\quad\quad\quad\left.\times\E_{k}\left[(\delta_{k,s}(w))^{*}c_{s}^{*}(\mathcal{G}_{n}^{(k,s)}(w))^{*}\oindicator{\Omega_{n,k,s}\cap Q'_{n,k,s}}\right]e_{(i-1)n+k}\right.\notag\\\fi 
	\ifdetail&\quad\quad-e_{(i-1)n+k}^{T}\E_{k}\left[\mathcal{G}_{n}^{(k,s)}(z)c_{s}\delta_{k,s}(z)\oindicator{\Omega_{n,k,s}\cap Q'_{n,k,s}}\right]\notag\\\fi 
	\ifdetail&\quad\quad\quad\quad\quad\quad\quad\quad\times\E_{k}\left[c_{s}^{*}(\mathcal{G}_{n}^{(k,s)}(w))^{*}\oindicator{\Omega_{n,k,s}\cap Q'_{n,k,s}}\right]e_{(i-1)n+k}\notag\\\fi 
	\ifdetail&\quad\quad+e_{(i-1)n+k}^{T}\E_{k}\left[\mathcal{G}_{n}^{(k,s)}(z)c_{s}\delta_{k,s}(z)\oindicator{\Omega_{n,k,s}\cap Q'_{n,k,s}}\right]\notag\\ \fi 
	\ifdetail&\quad\quad\quad\quad\quad\quad\quad\quad\times\E_{k}\left[c_{s}^{*}(\mathcal{G}_{n}^{(k,s)}(w))^{*}\oindicator{\Omega_{n,k,s}\cap Q'_{n,k,s}}\right]e_{(i-1)n+k}\notag\\\fi 
	\ifdetail&\quad\quad-e_{(i-1)n+k}^{T}\E_{k}\left[\mathcal{G}_{n}^{(k,s)}(z)c_{s}\oindicator{\Omega_{n,k,s}\cap Q'_{n,k,s}}\right]\notag\\\fi 
	\ifdetail&\quad\quad\quad\quad\quad\quad\quad\quad\left.\times\E_{k}\left[c_{s}^{*}(\mathcal{G}_{n}^{(k,s)}(w))^{*}\oindicator{\Omega_{n,k,s}\cap Q'_{n,k,s}}\right]e_{(i-1)n+k}]\right|^{2}\notag\\\fi 
	&\ll \E\left|e_{(i-1)n+k}^{T}\E_{k}\left[\mathcal{G}_{n}^{(k,s)}(z)c_{s}\delta_{k,s}(z)\oindicator{\Omega_{n,k,s}\cap Q'_{n,k,s}}\right]\right.\notag\\
	&\quad\quad\quad\quad\quad\quad\quad\left.\times\E_{k}\left[((\delta_{k,s}(w))^{*}-1)c_{s}^{*}(\mathcal{G}_{n}^{(k,s)}(w))^{*}\oindicator{\Omega_{n,k,s}\cap Q'_{n,k,s}}\right]e_{(i-1)n+k}\right|^{2}\label{Equ:lem:FiniteDimError2:Equ1}\\
	&\quad+\E\left|e_{(i-1)n+k}^{T}\E_{k}\left[\mathcal{G}_{n}^{(k,s)}(z)c_{s}(\delta_{k,s}(z)-1)\oindicator{\Omega_{n,k,s}\cap Q'_{n,k,s}}\right]\right.\notag\\
	&\quad\quad\quad\quad\quad\quad\quad\quad\left.\times\E_{k}\left[c_{s}^{*}(\mathcal{G}_{n}^{(k,s)}(w))^{*}\oindicator{\Omega_{n,k,s}\cap Q'_{n,k,s}}\right]e_{(i-1)n+k}\right|^{2}.\label{Equ:lem:FiniteDimError2:Equ2}
	\end{align}
	We will show each of these terms are $o(n^{-2})$. We begin with \eqref{Equ:lem:FiniteDimError2:Equ1}. By the generalized H\"{o}lders inequality, Lemma \ref{Lem:conjLessThanConstant}, and Lemma \ref{Lem:DeltaBoundedOnEvent}, we have 
	\begin{Details}
		\begin{align}
		&\E\left|e_{(i-1)n+k}^{T}\E_{k}\left[\mathcal{G}_{n}^{(k,s)}(z)c_{s}\delta_{k,s}(z)\oindicator{\Omega_{n,k,s}\cap Q'_{n,k,s}}\right]\right.\notag\\
		&\quad\quad\quad\quad\quad\quad\left.\times\E_{k}\left[((\delta_{k,s}(w))^{*}-1)c_{s}^{*}(\mathcal{G}_{n}^{(k,s)}(w))^{*}\oindicator{\Omega_{n,k,s}\cap Q'_{n,k,s}}\right]e_{(i-1)n+k}\right|^{2}\notag \\
		&\leq\E\left|\E_{k}\left[e_{(i-1)n+k}^{T}\mathcal{G}_{n}^{(k,s)}(z)c_{s}\delta_{k,s}(z)\oindicator{\Omega_{n,k,s}\cap Q'_{n,k,s}}\right]\right.\notag\\
		&\quad\quad\quad\quad\quad\quad\times\left.\E_{k}\left[((\delta_{k,s}(w))^{*}-1)c_{s}^{*}(\mathcal{G}_{n}^{(k,s)}(w))^{*}e_{(i-1)n+k}\oindicator{\Omega_{n,k,s}\cap Q'_{n,k,s}}\right]\right|^{2}\notag \\ 
		&\leq\left(\E\lnorm\E_{k}\left[\mathcal{G}_{n}^{(k,s)}(z)c_{s}\delta_{k,s}(z)\oindicator{\Omega_{n,k,s}\cap Q'_{n,k,s}}\right]\rnorm^{4}\right.\notag \\ 
		&\quad\quad\quad\quad\left.\times\E\lnorm\E_{k}\left[((\delta_{k,s}(w))^{*}-1)c_{s}^{*}(\mathcal{G}_{n}^{(k,s)}(w))^{*}\oindicator{\Omega_{n,k,s}\cap Q'}\right]\rnorm^{4}\right)^{1/2}\notag \\
		&\leq\left(\E\left| e_{(i-1)n+k}^{T}\mathcal{G}_{n}^{(k,s)}(z)c_{s}\delta_{k,s}(z)\oindicator{\Omega_{n,k,s}\cap Q'_{n,k,s}}\right|^{4}\right.\notag \\
		&\quad\quad\quad\quad\left.\times\E\left|((\delta_{k,s}(w))^{*}-1)c_{s}^{*}(\mathcal{G}_{n}^{(k,s)}(w))^{*}e_{(i-1)n+k}\oindicator{\Omega_{n,k,s}\cap Q'_{n,k,s}}\right|^{4}\right)^{1/2}\notag\\ 
		&\leq\left(\E\left[|\delta_{k,s}(z)|^{4}\lnorm\mathcal{G}_{n}^{(k,s)}(z)c_{s}\rnorm^{4}\oindicator{\Omega_{n,k,s}\cap Q'_{n,k,s}}\right]\right.\notag\\
		&\quad\quad\quad\quad\left.\times\E\lnorm((\delta_{k,s}(w))^{*}-1)c_{s}^{*}(\mathcal{G}_{n}^{(k,s)}(w))^{*}\oindicator{\Omega_{n,k,s}\cap Q'_{n,k,s}}\rnorm^{4}\right)^{1/2}\notag\\
		&\ll\left(\E\left[\left| c_{s}^{*}(\mathcal{G}_{n}^{(k,s)}(z))^{*}e_{(i-1)n+k}e_{(i-1)n+k}^{T}\mathcal{G}_{n}^{(k,s)}(z)c_{s}\right|^{2}\oindicator{\Omega_{n,k,s}\cap Q'_{n,k,s}}\right]\right.\notag\\
		&\quad\quad\quad\quad\left.\times\E\left|((\delta_{k,s}(w))^{*}-1)c_{s}^{*}(\mathcal{G}_{n}^{(k,s)}(w))^{*}e_{(i-1)n+k}\oindicator{\Omega_{n,k,s}\cap Q'_{n,k,s}}\right|^{4}\right)^{1/2}\notag\\
		&\ll\left(n^{-2}\E\left[\lnorm (\mathcal{G}_{n}^{(k,s)}(z))^{*}\mathcal{G}_{n}^{(k,s)}(z)\rnorm^{2}\oindicator{\Omega_{n,k,s}\cap Q'_{n,k,s}}\right]\right.\notag\\
		&\quad\quad\quad\quad\quad\left.\times\E\lnorm((\delta_{k,s}(w))^{*}-1)c_{s}^{*}(\mathcal{G}_{n}^{(k,s)}(w))^{*}\oindicator{\Omega_{n,k,s}\cap Q'_{n,k,s}}\rnorm^{4}\right)^{1/2}\notag\\
		&\ll \frac{1}{n} \left(\E\left|((\delta_{k,s}(w))^{*}-1)c_{s}^{*}(\mathcal{G}_{n}^{(k,s)}(w))^{*}e_{(i-1)n+k}\oindicator{\Omega_{n,k,s}\cap Q'_{n,k,s}}\right|^{4}\right)^{1/2}\notag\\ 
		&\ll \frac{1}{n} \left(\E\left|((\delta_{k,s}(w))^{*}-1)\oindicator{\Omega_{n,k,s}\cap Q'_{n,k,s}}\right|^{8}\right.\notag\\
		&\quad\quad\quad\quad\times\left.\E\left| c_{s}^{*}(\mathcal{G}_{n}^{(k,s)}(w))^{*}e_{(i-1)n+k}e_{(i-1)n+k}^{T}\mathcal{G}_{n}^{(k,s)}(w)c_{s}\oindicator{\Omega_{n,k,s}\cap Q'_{n,k,s}}\right|^{4}\right)^{1/4}\notag\\ 
		&\leq Cn^{-1} \left(\E\left|((\delta_{k,s}(w))^{*}-1)\oindicator{\Omega_{n,k,s}\cap Q'_{n,k,s}}\right|^{8}\E\lnorm c_{s}^{*}(\mathcal{G}_{n}^{(k,s)}(w))^{*}\oindicator{\Omega_{n,k,s}\cap Q'_{n,k,s}}\rnorm^{8}\right)^{1/4}\notag\\ 
		&\leq Cn^{-1} \left(\E\left|((\delta_{k,s}(w))^{*}-1)\oindicator{\Omega_{n,k,s}\cap Q'_{n,k,s}}\right|^{8}\right.\notag\\ 
		&\quad\quad\quad\quad\quad\left.\times\E\lnorm c_{s}^{*}(\mathcal{G}_{n}^{(k,s)}(w))^{*}\mathcal{G}_{n}^{(k,s)}(z)c_{s}\oindicator{\Omega_{n,k,s}\cap Q'_{n,k,s}}\rnorm^{4}\right)^{1/4}\notag\\ 
		&\leq Cn^{-1} \left(\E\left|((\delta_{k,s}(w))^{*}-1)\oindicator{\Omega_{n,k,s}\cap Q'_{n,k,s}}\right|^{8}n^{-4\varepsilon-2}\right.\notag\\
		&\quad\quad\quad\quad\quad\left.\times\E\lnorm (\mathcal{G}_{n}^{(k,s)}(w))^{*}\mathcal{G}_{n}^{(k,s)}(z)\oindicator{\Omega_{n,k,s}\cap Q'_{n,k,s}}\rnorm^{4}\right)^{1/4}\notag\\
		&\ll n^{-1} \left(n^{-2-4\varepsilon}\E\left|((\delta_{k,s}(w))^{*}-1)\oindicator{\Omega_{n,k,s}\cap Q'_{n,k,s}}\right|^{8}\right)^{1/4}\notag\\
		&\ll n^{-3/2-\varepsilon} \left(\E\left|((\delta_{k,s}(w))^{*}-1)\oindicator{\Omega_{n,k,s}\cap Q'_{n,k,s}}\right|^{8}\right)^{1/4}.
		\end{align}
	\end{Details}
	\begin{align}
	&\E\left|e_{(i-1)n+k}^{T}\E_{k}\left[\mathcal{G}_{n}^{(k,s)}(z)c_{s}\delta_{k,s}(z)\oindicator{\Omega_{n,k,s}\cap Q'_{n,k,s}}\right]\right.\notag\\
	&\quad\quad\quad\quad\quad\quad\left.\times\E_{k}\left[((\delta_{k,s}(w))^{*}-1)c_{s}^{*}(\mathcal{G}_{n}^{(k,s)}(w))^{*}\oindicator{\Omega_{n,k,s}\cap Q'_{n,k,s}}\right]e_{(i-1)n+k}\right|^{2}\notag \\ 
	&\leq\left(\E\left| e_{(i-1)n+k}^{T}\mathcal{G}_{n}^{(k,s)}(z)c_{s}\delta_{k,s}(z)\oindicator{\Omega_{n,k,s}\cap Q'_{n,k,s}}\right|^{4}\right)^{1/2}\notag \\
	&\quad\times\left(\E\left|((\delta_{k,s}(w))^{*}-1)\oindicator{Q'_{n,k,s}}\right|^{8}\right)^{1/4}\left(\E\left|c_{s}^{*}(\mathcal{G}_{n}^{(k,s)}(w))^{*}e_{(i-1)n+k}\oindicator{\Omega_{n,k,s}}\right|^{8}\right)^{1/4}\notag\\ 
	&\ll\left(\E\left[\left| c_{s}^{*}(\mathcal{G}_{n}^{(k,s)}(z))^{*}e_{(i-1)n+k}e_{(i-1)n+k}^{T}\mathcal{G}_{n}^{(k,s)}(z)c_{s}\right|^{2}\oindicator{\Omega_{n,k,s}\cap Q'_{n,k,s}}\right]\right)^{1/2}\notag\\
	&\quad\times\left(\E\left|((\delta_{k,s}(w))^{*}-1)\oindicator{Q'_{n,k,s}}\right|^{8}\right)^{1/4}\notag\\
	&\quad\times\left(\E\left| c_{s}^{*}(\mathcal{G}_{n}^{(k,s)}(w))^{*}e_{(i-1)n+k}e_{(i-1)n+k}^{T}\mathcal{G}_{n}^{(k,s)}(w)c_{s}\oindicator{\Omega_{n,k,s}\cap Q'_{n,k,s}}\right|^{4}\right)^{1/4}\notag\\
	&\ll n^{-3/2-\varepsilon} \left(\E\left|((\delta_{k,s}(w))^{*}-1)\oindicator{\Omega_{n,k,s}\cap Q'_{n,k,s}}\right|^{8}\right)^{1/4}.\label{Equ:FinDimError2:Equ1}
	\end{align}
	Now, recall $\delta_{k,s}(z)$ defined in \eqref{Def:delta_{n,k}}. By the resolvent identity \eqref{Equ:ResolventIndentity}, we have $1-\delta_{k,s}(z)=1-\left(1+e_{s}^{T}\mathcal{G}_{n}^{(k,s)}(z)c_{s}\right)^{-1}=\left(e_{s}^{T}\mathcal{G}_{n}^{(k,s)}(z)c_{s}\right)\delta_{k,s}(z).$ 
	\ifdetail Thus $\delta_{k,s}(z)=1-\left(e_{s}^{T}\mathcal{G}_{n}^{(k,s)}(z)c_{s}\right)\delta_{k,s}(z)$ and by\fi This gives 
	 \[\delta_{k,s}(z)=1-(e_{s}^{T}\mathcal{G}_{n}^{(k,s)}(z)c_{s})+(e_{s}^{T}\mathcal{G}_{n}^{(k,s)}(z)c_{s})^{2}\delta_{k,s}(z).\]
	\begin{Details}
	\begin{align*}
	\delta_{k,s}(z)&=1-(e_{s}^{T}\mathcal{G}_{n}^{(k,s)}(z)c_{s})\delta_{k,s}(z)\\
	\ifdetail&=1-(e_{s}^{T}\mathcal{G}_{n}^{(k,s)}(z)c_{s})\left(1-(e_{s}^{T}\mathcal{G}_{n}^{(k,s)}(z)c_{s})\delta_{k,s}(z)\right)\\\fi
	&=1-(e_{s}^{T}\mathcal{G}_{n}^{(k,s)}(z)c_{s})+(e_{s}^{T}\mathcal{G}_{n}^{(k,s)}(z)c_{s})^{2}\delta_{k,s}(z).
	\end{align*}
	\end{Details}
	Ergo,
	\begin{equation}
	((\delta_{k,s}(w))^{*}-1=-(e_{s}^{T}\mathcal{G}_{n}^{(k,s)}(w)c_{s})^{*}+(e_{s}^{T}\mathcal{G}_{n}^{(k,s)}(w)c_{s})^{2*}(\delta_{k,s}(w))^{*}.\label{Equ:ExpansionOfDelta}
	\end{equation}
	We replace $((\delta_{k,s}(w))^{*}-1)$ in \eqref{Equ:FinDimError2:Equ1} with the expression on the right hand side of \eqref{Equ:ExpansionOfDelta} and use Lemma \ref{Lem:conjLessThanConstant} to see 
	\begin{align*}
	\ifdetail&\E\left|e_{(i-1)n+k}^{T}\E_{k}\left[\mathcal{G}_{n}^{(k,s)}(z)c_{s}\delta_{k,s}(z)\oindicator{\Omega_{n,k,s}\cap Q'_{n,k,s}}\right]\right.\\\fi 
	\ifdetail&\quad\quad\quad\quad\quad\quad\left.\cdot\E_{k}\left[((\delta_{k,s}(w))^{*}-1)c_{s}^{*}(\mathcal{G}_{n}^{(k,s)}(w))^{*}\oindicator{\Omega_{n,k,s}\cap Q'_{n,k,s}}\right]e_{(i-1)n+k}\right|^{2}\\\fi 
	&n^{-3/2-\varepsilon}\left(\E\left|((\delta_{k,s}(w))^{*}-1)\oindicator{\Omega_{n,k,s}\cap Q'_{n,k,s}}\right|^{8} \right)^{1/4}\\ 
	&\ll n^{-3/2-\varepsilon}\left(\E\left|(-(e_{s}^{T}\mathcal{G}_{n}^{(k,s)}(w)c_{s})^{*}+(\delta_{k,s}(w))^{*}(e_{s}^{T}\mathcal{G}_{n}^{(k,s)}(w)c_{s})^{2*})\oindicator{\Omega_{n,k,s}\cap Q'_{n,k,s}}\right|^{8} \right)^{1/4}\\
	\ifdetail&\leq 4Cn^{-3/2-\varepsilon}\left(\E\left|-(e_{s}^{T}\mathcal{G}_{n}^{(k,s)}(w)c_{s})^{*}\oindicator{\Omega_{n,k,s}\cap Q'_{n,k,s}}\right|^{8}\right.\\\fi 
	\ifdetail&\quad\quad\quad\quad\quad\quad\quad\left.+\E\left|(\delta_{k,s}(w))^{*}(e_{s}^{T}\mathcal{G}_{n}^{(k,s)}(w)c_{s})^{2*}\oindicator{\Omega_{n,k,s}\cap Q'_{n,k,s}}\right|^{8} \right)^{1/4}\\\fi 
	&\ll  n^{-3/2-\varepsilon}\left(\E\left|c_{s}^{*}(\mathcal{G}_{n}^{(k,s)}(w))^{*}e_{s}e_{s}^{T}\mathcal{G}_{n}^{(k,s)}(w)c_{s}\oindicator{\Omega_{n,k,s}\cap Q'_{n,k,s}}\right|^{4}\right.\\ 
	&\quad\quad\quad\quad\quad\quad\quad\left.+\E\left[\left|(\delta_{k,s}(w))^{*}\oindicator{\Omega_{n,k,s}\cap Q'_{n,k,s}}\right|^{8}\left|c_{s}^{*}(\mathcal{G}_{n}^{(k,s)}(w))^{*}e_{s}e_{s}^{T}\mathcal{G}_{n}^{(k,s)}(w)c_{s}\oindicator{\Omega_{n,k,s}\cap Q'_{n,k,s}}\right|^{8}\right] \right)^{1/4}\\ 
	\ifdetail&\leq 4Cn^{-3/2-\varepsilon}\left(\E\left|c_{s}^{*}(\mathcal{G}_{n}^{(k,s)}(w))^{*}e_{s}e_{s}^{T}\mathcal{G}_{n}^{(k,s)}(w)c_{s}\oindicator{\Omega_{n,k,s}\cap Q'_{n,k,s}}\right|^{4}\right.\\\fi 
	\ifdetail&\quad\quad\quad\quad\quad\quad\quad\left.+\E\left|c_{s}^{*}(\mathcal{G}_{n}^{(k,s)}(w))^{*}e_{s}e_{s}^{T}\mathcal{G}_{n}^{(k,s)}(w)c_{s}\oindicator{\Omega_{n,k,s}\cap Q'_{n,k,s}}\right|^{8} \right)^{1/4}\\ \fi 
	&\ll n^{-3/2-\varepsilon}\left(n^{-4\varepsilon-2}\E\lnorm(\mathcal{G}_{n}^{(k,s)}(w))^{*}e_{s}e_{s}^{T}\mathcal{G}_{n}^{(k,s)}(w)\oindicator{\Omega_{n,k,s}\cap Q'_{n,k,s}}\rnorm^{4}\right.\\
	&\quad\quad\quad\quad\quad\quad\quad\left.+n^{-12\varepsilon-2}\E\lnorm(\mathcal{G}_{n}^{(k,s)}(w))^{*}e_{s}e_{s}^{T}\mathcal{G}_{n}^{(k,s)}(w)\oindicator{\Omega_{n,k,s}\cap Q'_{n,k,s}}\rnorm^{8} \right)^{1/4}\\ 
	\ifdetail&\leq 4Cn^{-3/2-\varepsilon}\left(n^{-4\varepsilon-2}+n^{-12\varepsilon-2}\right)^{1/4}\\ \fi 
	\ifdetail&\leq 4Cn^{-2-2\varepsilon}+4Cn^{-2-4\varepsilon}\\\fi 
	&\ll n^{-3/2-\varepsilon}n^{-1/2-\varepsilon}.
	\end{align*}
	This shows term \eqref{Equ:lem:FiniteDimError2:Equ1} is $o(n^{-2})$. 
	\begin{Details} 
		Hence
		\begin{align*}
		&\E\left|e_{(i-1)n+k}^{T}\E_{k}\left[\mathcal{G}_{n}^{(k,s)}(z)c_{s}\delta_{k,s}(z)\oindicator{\Omega_{n,k,s}\cap Q'_{n,k,s}}\right]\right.\\
		&\quad\quad\quad\quad\quad\quad\left.\times\E_{k}\left[((\delta_{k,s}(w))^{*}-1)c_{s}^{*}(\mathcal{G}_{n}^{(k,s)}(w))^{*}\oindicator{\Omega_{n,k,s}\cap Q'_{n,k,s}}\right]e_{(i-1)n+k}\right|^{2}=o(n^{-1}).
		\end{align*}
	\end{Details}
	A very similar argument shows that term \eqref{Equ:lem:FiniteDimError2:Equ2} is $o(n^{-2})$ as well. We omit the details.
	\begin{Details}
		By the same argument as in the previous case, we can write
		\begin{align}
		&\E\left|e_{(i-1)n+k}^{T}\E_{k}\left[\mathcal{G}_{n}^{(k,s)}(z)c_{s}(\delta_{k,s}(z)-1)\oindicator{\Omega_{n,k,s}\cap Q'_{n,k,s}}\right]\right.\notag\\
		&\quad\quad\quad\quad\quad\quad\left.\times\E_{k}\left[c_{s}^{*}(\mathcal{G}_{n}^{(k,s)}(w))^{*}\oindicator{\Omega_{n,k,s}\cap Q'_{n,k,s}}\right]e_{(i-1)n+k}\right|^{2}\notag\\
		&\leq\left(\E\lnorm\mathcal{G}_{n}^{(k,s)}(z)c_{s}(\delta_{k,s}(z)-1)\oindicator{\Omega_{n,k,s}\cap Q'_{n,k,s}}\rnorm^{4}\E\lnorm c_{s}^{*}(\mathcal{G}_{n}^{(k,s)}(w))^{*}\oindicator{\Omega_{n,k,s}\cap Q'_{n,k,s}}\rnorm^{4}\right)^{1/2}\notag\\
		\ifdetail&\leq\left(\E\lnorm\mathcal{G}_{n}^{(k,s)}(z)c_{s}(\delta_{k,s}(z)-1)\oindicator{\Omega_{n,k,s}\cap Q'_{n,k,s}}\rnorm^{4}\right.\notag\\\fi 
		\ifdetail&\quad\quad\quad\left.\times\E\left[\lnorm c_{s}^{*}(\mathcal{G}_{n}^{(k,s)}(w))^{*}\mathcal{G}_{n}^{(k,s)}(w)c_{s}\rnorm^{2}\oindicator{\Omega_{n,k,s}\cap Q'_{n,k,s}}\right]\right)^{1/2}\notag\\\fi 
		\ifdetail&\leq n^{-1}\left(\E\lnorm\mathcal{G}_{n}^{(k,s)}(z)c_{s}(\delta_{k,s}(z)-1)\oindicator{\Omega_{n,k,s}\cap Q'_{n,k,s}}\rnorm^{4}\right.\notag\\\fi 
		\ifdetail&\quad\quad\quad\quad\quad\left.\times\E\left[\lnorm (\mathcal{G}_{n}^{(k,s)}(w))^{*}\mathcal{G}_{n}^{(k,s)}(w)\rnorm^{2}\oindicator{\Omega_{n,k,s}\cap Q'_{n,k,s}}\right]\right)^{1/2}\notag\\\fi 
		&\ll \frac{1}{n}\left(\E\lnorm\mathcal{G}_{n}^{(k,s)}(z)c_{s}(\delta_{k,s}(z)-1)\oindicator{\Omega_{n,k,s}\cap Q'_{n,k,s}}\rnorm^{4}\right)^{1/2}\notag\\
		\ifdetail&\leq Cn^{-1}\left(\E\lnorm\mathcal{G}_{n}^{(k,s)}(z)c_{s}\oindicator{\Omega_{n,k,s}\cap Q'_{n,k,s}}\rnorm^{8}\E\left|(\delta_{k,s}(z)-1)\oindicator{\Omega_{n,k,s}\cap Q'_{n,k,s}}\right|^{8}\right)^{1/4}\notag\\\fi 
		\ifdetail&\leq Cn^{-1}\left(\E\lnorm c_{s}^{*}(\mathcal{G}_{n}^{(k,s)}(z))^{*}\mathcal{G}_{n}^{(k,s)}(z)c_{s}\oindicator{\Omega_{n,k,s}\cap Q'_{n,k,s}}\rnorm^{4}\E\left|(\delta_{k,s}(z)-1)\oindicator{\Omega_{n,k,s}\cap Q'_{n,k,s}}\right|^{8}\right)^{1/4}\notag\\\fi 
		\ifdetail&\leq Cn^{-1}\left(n^{-4\varepsilon-2}\E\lnorm(\mathcal{G}_{n}^{(k,s)}(z))^{*}\mathcal{G}_{n}^{(k,s)}(z)\oindicator{\Omega_{n,k,s}\cap Q'_{n,k,s}}\rnorm^{4}\E\left|(\delta_{k,s}(z)-1)\oindicator{\Omega_{n,k,s}\cap Q'_{n,k,s}}\right|^{8}\right)^{1/4}\notag\\\fi 
		&\ll n^{-3/2-\varepsilon}\left(\E\left|(\delta_{k,s}(z)-1)\oindicator{\Omega_{n,k,s}\cap Q'_{n,k,s}}\right|^{8}\right)^{1/4}.\label{Equ:FinDimError2:Equ2}
		\end{align}
		Again, by the expansion \eqref{Equ:ExpansionOfDelta}, we can see that 
		\begin{align*}
		&n^{-3/2-\varepsilon}\left(\E\left|(\delta_{k,s}(z)-1)\oindicator{\Omega_{n,k,s}\cap Q'_{n,k,s}}\right|^{8}\right)^{1/4}\\
		&\ll n^{-3/2-\varepsilon}\left(\E\left|(-(e_{s}^{T}\mathcal{G}_{n}^{(k,s)}(z)c_{s})+(e_{s}^{T}\mathcal{G}_{n}^{(k,s)}(z)c_{s})^{2}\delta_{k,s}(z))\oindicator{\Omega_{n,k,s}\cap Q'_{n,k,s}}\right|^{8}\right)^{1/4}\\
		\ifdetail&\leq 4Cn^{-3/2-\varepsilon}\left(\E\left|e_{s}^{T}\mathcal{G}_{n}^{(k,s)}(z)c_{s}\oindicator{\Omega_{n,k,s}\cap Q'_{n,k,s}}\right|^{8}\right.\\\fi 
		\ifdetail&\quad\quad\quad\quad\quad\quad\quad\left.+\E\left|(e_{s}^{T}\mathcal{G}_{n}^{(k,s)}(z)c_{s})^{2}\delta_{k,s}(z)\oindicator{\Omega_{n,k,s}\cap Q'_{n,k,s}}\right|^{8}\right)^{1/4}\\\fi 
		\ifdetail&\leq 4Cn^{-3/2-\varepsilon}\left(\E\left|c_{s}^{*}(\mathcal{G}_{n}^{(k,s)}(z))^{*}e_{s}e_{s}^{T}\mathcal{G}_{n}^{(k,s)}(z)c_{s}\oindicator{\Omega_{n,k,s}\cap Q'_{n,k,s}}\right|^{4}\right.\\\fi 
		\ifdetail&\quad\quad\quad\quad\quad\quad\quad\left.+\E\left|c_{s}^{*}(\mathcal{G}_{n}^{(k,s)}(z))^{*}e_{s}e_{s}^{T}\mathcal{G}_{n}^{(k,s)}(z)c_{s}\delta_{k,s}(z)\oindicator{\Omega_{n,k,s}\cap Q'_{n,k,s}}\right|^{8}\right)^{1/4}\\\fi 
		\ifdetail&\leq 4Cn^{-3/2-\varepsilon}\left(n^{-4\varepsilon-2}\E\lnorm (\mathcal{G}_{n}^{(k,s)}(z))^{*}e_{s}e_{s}^{T}\mathcal{G}_{n}^{(k,s)}(z)\oindicator{\Omega_{n,k,s}\cap Q'_{n,k,s}}\rnorm^{4}\right.\\\fi 
		\ifdetail&\quad\quad\quad\quad\quad\quad\quad\left.+\E\left[\left|c_{s}^{*}(\mathcal{G}_{n}^{(k,s)}(z))^{*}e_{s}e_{s}^{T}\mathcal{G}_{n}^{(k,s)}(z)c_{s}\oindicator{\Omega_{n,k,s}}\right|^{8}\left|\delta_{k,s}(z)\oindicator{\Omega_{n,k,s}\cap Q'_{n,k,s}}\right|^{8}\right]\right)^{1/4}\\\fi 
		&\ll n^{-2-2\varepsilon}\left(\E\left|c_{s}^{*}(\mathcal{G}_{n}^{(k,s)}(z))^{*}e_{s}e_{s}^{T}\mathcal{G}_{n}^{(k,s)}(z)c_{s}\oindicator{\Omega_{n,k,s}}\right|^{8}\right)^{1/4}\\
		\ifdetail&\ll n^{-2-2\varepsilon}\left(n^{-12\varepsilon-2}\E\lnorm (\mathcal{G}_{n}^{(k,s)}(z))^{*}e_{s}e_{s}^{T}\mathcal{G}_{n}^{(k,s)}(z)\oindicator{\Omega_{n,k,s}}\rnorm^{8}\right)^{1/4}\\\fi 
		\ifdetail&\leq 4Cn^{-5/2-3\varepsilon}\\\fi
		&=o(n^{-1}).
		\end{align*}
	\end{Details}
	This completes the proof. 
\end{proof}

\begin{lemma}
	Define all quantities as in Lemma \ref{Lem:MDSReduction}. Then under the assumptions of Lemma \ref{Lem:MDSReduction}, for any $\alpha>0$ \ifdetail Let $\blmat{Y}{n}$ be as defined in \eqref{Def:Y_n} and let $c_{k}$ denote the $k$th column of $\blmat{Y}{n}$. Let $\mathcal{F}_{k}$ be the $\sigma$-algebra defined in \eqref{Def:F_{k}} and let $\P_{k}$ denote the conditional probability with respect to $\mathcal{F}_{k}$. Let $\Omega_{n,k}$ be as defined in \eqref{Def:Omega_{n,k}}. Then, under the assumptions of Lemma \ref{Lem:MDSReduction}, \fi 
	\[\E\left|\frac{1}{z\bar{w}}(1-\P_{k}(\Omega_{n,k})^{2})\right|^{2}=o_{\alpha}(n^{-\alpha})\]
	uniformly in $k$.
	\label{Lem:FiniteDimError3}
\end{lemma}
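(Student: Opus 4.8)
\textbf{Proof proposal for Lemma \ref{Lem:FiniteDimError3}.}

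The plan is to estimate $\E\left|\frac{1}{z\bar w}(1-\P_k(\Omega_{n,k})^2)\right|^2$ directly, using only that $z,\bar w$ lie on the contour $\mathcal{C}$ (so $|z\bar w|^{-1}$ is bounded by an absolute constant) and that $\Omega_{n,k}$ holds with overwhelming probability. First I would factor $1-\P_k(\Omega_{n,k})^2 = (1-\P_k(\Omega_{n,k}))(1+\P_k(\Omega_{n,k}))$; since $0\le \P_k(\Omega_{n,k})\le 1$ almost surely, the second factor is bounded by $2$, so it suffices to bound $\E\left|1-\P_k(\Omega_{n,k})\right|^2 = \E\left|\P_k(\Omega_{n,k}^c)\right|^2$. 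By Jensen's inequality applied to the conditional expectation, $\E\left|\P_k(\Omega_{n,k}^c)\right|^2 \le \E\left[\P_k(\Omega_{n,k}^c)\right] = \P(\Omega_{n,k}^c)$.

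Next I would invoke Corollary \ref{Cor:Omega_nkOverwhelming}, which guarantees that for every $\delta>0$ there is $c>0$ such that $\Omega_{n,k}$ holds with overwhelming probability; by definition this means $\P(\Omega_{n,k}^c)\le O_\alpha(n^{-\alpha})$ for every $\alpha>0$, uniformly in $k$ (the bound in Corollary \ref{Cor:Omega_nkOverwhelming} does not depend on $k$). Combining the two displays,
\begin{equation*}
\E\left|\frac{1}{z\bar w}(1-\P_k(\Omega_{n,k})^2)\right|^2 \ll \P(\Omega_{n,k}^c) = o_\alpha(n^{-\alpha}),
\end{equation*}
uniformly in $k$ and in $z,w$ on $\mathcal{C}$, which is the claim.

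There is essentially no obstacle here: the only mild point to be careful about is that ``with overwhelming probability'' must be unpacked correctly so that the exponent $\alpha$ in the conclusion is arbitrary and the implicit constant is uniform in $k$, which is exactly what Corollary \ref{Cor:Omega_nkOverwhelming} provides. One could shorten the argument further by noting $|1-\P_k(\Omega_{n,k})^2|\le \oindicator{\Omega_{n,k}^c} + \E_k[\oindicator{\Omega_{n,k}^c}]$ pointwise and bounding the $L^2$ norm of each term by $\P(\Omega_{n,k}^c)^{1/2}$, but the Jensen route above is cleanest.
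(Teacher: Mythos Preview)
Your proposal is correct and mirrors the paper's proof almost exactly: the paper also removes the harmless $1/(z\bar w)$ factor, bounds $|1-\P_k(\Omega_{n,k})^2|$ by a constant times $|1-\P_k(\Omega_{n,k})|=\P_k(\Omega_{n,k}^c)$, and then uses $\E[\P_k(\Omega_{n,k}^c)^2]\le \P(\Omega_{n,k}^c)=o_\alpha(n^{-\alpha})$ via Corollary~\ref{Cor:Omega_nkOverwhelming}.
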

\begin{proof}
	Observe that since $z,\bar{w}\in\mathcal{C}$ are fixed with $|z|=|\bar{w}|=1+\delta$, we know that 
	\begin{equation*}
	\left|\frac{1}{z\bar{w}}(1-\P_{k}(\Omega_{n,k})^{2})\right|^{2}\ll \left|1-\P_{k}(\Omega_{n,k})^{2}\right|^{2}\ll \left|1-\P_{k}(\Omega_{n,k})\right|^{2}.
	\ifdetail\leq C\left|1-\P_{k}(\Omega_{n,k})+\P_{k}(\Omega_{n,k})-\P_{k}(\Omega_{n,k})^{2}\right|\\\fi 
	\ifdetail \ll \left|1-\P_{k}(\Omega_{n,k})\right|+\left|\P_{k}(\Omega_{n,k})-\P_{k}(\Omega_{n,k})^{2}\right|\\\fi
	\ifdetail\leq C\left|1-\P_{k}(\Omega_{n,k})\right|+C\P_{k}(\Omega_{n,k})\left|1-\P_{k}(\Omega_{n,k})\right|\\\fi 
	\end{equation*}
	Since $\Omega_{n,k}$ holds with overwhelming probability by Corollary \ref{Cor:Omega_nkOverwhelming},
	\[\E\left|1-\P_{k}(\Omega_{n,k})\right|^{2}=\E\left|\P_{k}(\Omega_{n,k}^{c})\right|^{2}\leq \P(\Omega_{n,k}^{c})=o_{\alpha}(n^{-\alpha})\]
	for any $\alpha>0$.
\end{proof}

\begin{lemma}
	Define all quantities as in Lemma \ref{Lem:MDSReduction} and assume $(i-b-1)n+1\leq s\leq (i-b)n$. Then under the assumptions of Lemma \ref{Lem:MDSReduction}, \ifdetail Let $e_{1},\dots e_{mn}$ denote the standard basis elements of $\C^{mn}$. Define $\blmat{Y}{n}$ as in \eqref{Def:Y_n} and let $c_{k}$ denote the $k$th column of $\blmat{Y}{n}$. Let $\blmat{Y}{n}^{(k,s)}$ denote the matrix $\blmat{Y}{n}$ with columns $c_{k},c_{n+k},c_{2n+k},\dots c_{(m-1)n+k}$ and $c_{s}$ replaced with zeros, and define $\mathcal{G}_{n}^{(k,s)}(z)$ as in \eqref{Def:G_{n}^{k,s}}. Define $\mathcal{D}_{p}$ as in \eqref{Def:ProjectionMatrix} and define the event $\Omega_{n,k,s}$ as in \eqref{Def:Omega_{n,k,s}}. Then, under the assumptions of Lemma \ref{Lem:MDSReduction},\fi 
	\begin{align*}
	&\E\left|e_{(i-1)n+k}^{T}\E_{k}\left[\mathcal{G}_{n}^{(k,s)}(z)\oindicator{\Omega_{n,k,s}}\right]c_{s}c_{s}^{*}\E_{k}\left[(\mathcal{G}_{n}^{(k,s)}(w))^{*}\oindicator{\Omega_{n,k,s}}\right]e_{(i-1)n+k}\right.\\
	&\quad -\left.\frac{1}{n}e_{(i-1)n+k}^{T}\E_{k}\left[\mathcal{G}_{n}^{(k,s)}(z)\oindicator{\Omega_{n,k,s}}\right]\mathcal{D}_{i-b-1}\E_{k}\left[(\mathcal{G}_{n}^{(k,s)}(w))^{*}\oindicator{\Omega_{n,k,s}}\right]e_{(i-1)n+k}\right|^{2}=o(n^{-1})
	\end{align*}
	uniformly in $k$.
	\label{Lem:ReplaceColumnsWithExpectation}
\end{lemma}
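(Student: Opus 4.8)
The plan is to exploit the independence of the column $c_{s}$ from the two conditional-expectation factors surrounding it, reducing the estimate to a routine second-moment bound for a bilinear form in the truncated entries of $c_{s}$.

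First I would introduce the shorthand $A:=\E_{k}[\mathcal{G}_{n}^{(k,s)}(z)\oindicator{\Omega_{n,k,s}}]$, $B:=\E_{k}[(\mathcal{G}_{n}^{(k,s)}(w))^{*}\oindicator{\Omega_{n,k,s}}]$, $u:=e_{(i-1)n+k}$, $a:=A^{*}u$, and $v:=Bu$, so that the expression to be bounded is $\E\big|a^{*}c_{s}c_{s}^{*}v-\tfrac{1}{n}a^{*}\mathcal{D}_{i-b-1}v\big|^{2}$. The key observation is that $A$ and $B$ are independent of $c_{s}$: the entries of $c_{s}$ do not occur in $\blmat{Y}{n}^{(k,s)}$, so $\mathcal{G}_{n}^{(k,s)}(z)\oindicator{\Omega_{n,k,s}}$ is a measurable function of $\{c_{t}:t\neq s\}$ and hence independent of $c_{s}$; and since the columns of $\blmat{Y}{n}$ are mutually independent, conditioning on $\mathcal{F}_{k}$ has the same effect as conditioning on $\mathcal{F}_{k}$ with the generator $c_{s}$ removed, so $A$ and $B$ are measurable with respect to a sub-$\sigma$-field independent of $c_{s}$. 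Moreover, on $\Omega_{n,k,s}$ the resolvent is bounded (Proposition \ref{Prop:LargeAndSmallSingVals} and Lemma \ref{Lem:G_nksBounded}), so $\|A\|,\|B\|\leq C$ surely, whence $\|a\|,\|v\|\leq C$.

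Next, because $(i-b-1)n+1\leq s\leq(i-b)n$, the nonzero coordinates of $c_{s}$ lie in the $(i-b-1)$st diagonal block and are $n$ independent copies of $n^{-1/2}\hat{\xi}$, where $\hat{\xi}$ is the relevant truncated atom variable, with mean zero, unit variance and $\E|\hat{\xi}|^{4}=O(1)$ by Lemma \ref{Lem:TruncateLeveln}. Writing $c_{s}=n^{-1/2}w$, one has $a^{*}c_{s}c_{s}^{*}v=n^{-1}w^{*}(va^{*})w$, and since $\E[c_{s}c_{s}^{*}]=\tfrac{1}{n}\mathcal{D}_{i-b-1}$, the quantity $\tfrac{1}{n}a^{*}\mathcal{D}_{i-b-1}v$ is precisely the conditional mean of $n^{-1}w^{*}(va^{*})w$ given $(A,B)$. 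Conditioning on $(A,B)$ and applying the bilinear-form moment estimate of Lemma \ref{Lem:BilinearForms} to the rank-one matrix $va^{*}$ restricted to the support of $w$ (equivalently, expanding the double sum and separating the off-diagonal terms, bounded via $\E|\hat{\xi}|^{2}$, from the diagonal terms, bounded via $\E|\hat{\xi}|^{4}$) yields a conditional second moment of order $\E|\hat{\xi}|^{4}\,\|a\|^{2}\|v\|^{2}$. Therefore $\E\big|a^{*}c_{s}c_{s}^{*}v-\tfrac{1}{n}a^{*}\mathcal{D}_{i-b-1}v\big|^{2}\ll n^{-2}\E|\hat{\xi}|^{4}\|a\|^{2}\|v\|^{2}\ll n^{-2}=o(n^{-1})$, and since neither $C$ nor $\E|\hat{\xi}|^{4}$ depends on $k$, $i$, or $s$, this is uniform.

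The only delicate point is the independence step, since $c_{s}$ may itself be one of the columns generating $\mathcal{F}_{k}$ (indeed this is exactly the case that matters when the lemma is used inside Lemma \ref{Lem:Iteration}); the resolution is that $\mathcal{G}_{n}^{(k,s)}$ does not see $c_{s}$, so including $c_{s}$ in the conditioning contributes nothing and the measurability claim above survives. Everything else is a standard second-moment computation of the kind already performed repeatedly in this section.
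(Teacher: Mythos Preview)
Your argument is correct and matches the paper's proof almost exactly: both rewrite the scalar as a quadratic form $c_{s}^{*}Mc_{s}$ with the rank-one matrix $M=va^{*}=Buu^{T}A$, recognize the centering term as $\tr(M_{S\times S})$, and bound the conditional second moment via a bilinear-form estimate together with the rank-one bound $\tr(M^{*}M)=\|a\|^{2}\|v\|^{2}\leq C^{4}$. The only adjustment is that the appropriate reference for that estimate is Lemma~\ref{Lem:BlockBilinearFormsWithTrace} (or its parent Lemma~\ref{Lem:BilinearFormsWithTrace}) rather than Lemma~\ref{Lem:BilinearForms}, which is stated only for Hermitian positive semidefinite matrices; your parenthetical ``expanding the double sum'' is exactly the content of the former, and your explicit treatment of the independence of $A,B$ from $c_{s}$ under $\E_{k}$ is a welcome clarification the paper leaves implicit.
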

\begin{proof}
	To begin, observe that by viewing the expression as a trace, and by cyclic permutation, we can rewrite
	\begin{align*}
	&e_{(i-1)n+k}^{T}\E_{k}\left[\mathcal{G}_{n}^{(k,s)}(z)\oindicator{\Omega_{n,k,s}}\right]c_{s}c_{s}^{*}\E_{k}\left[(\mathcal{G}_{n}^{(k,s)}(w))^{*}\oindicator{\Omega_{n,k,s}}\right]e_{(i-1)n+k}\\
	&\quad = \frac{1}{n}\left(\sqrt{n}c_{s}^{*}\E_{k}\left[(\mathcal{G}_{n}^{(k,s)}(w))^{*}\oindicator{\Omega_{n,k,s}}\right]e_{(i-1)n+k}e_{(i-1)n+k}^{T}\E_{k}\left[\mathcal{G}_{n}^{(k,s)}(z)\oindicator{\Omega_{n,k,s}}\right]\sqrt{n}c_{s}\right).
	\end{align*}
	For any complex-valued $N\times N$ matrix $A$ and any subset $S\subseteq[N]$, let $A_{S\times S}$ denote the $|S|\times |S|$ matrix which has entries $A_{(i,j)}$ for $i,j\in S$. Let $S_{b}=\{(i-b-2)n+1, (i-b-2)n+2,\dots, (i-b-1)n\}$. Then observe by cyclic permutation of the trace, we have 
	\begin{align*}
	&\frac{1}{n}e_{(i-1)n+k}^{T}\E_{k}\left[\mathcal{G}_{n}^{(k,s)}(z)\oindicator{\Omega_{n,k,s}}\right]\mathcal{D}_{i-b-1}\E_{k}\left[(\mathcal{G}_{n}^{(k,s)}(w))^{*}\oindicator{\Omega_{n,k,s}}\right]e_{(i-1)n+k}\\
	&\quad = \tr\left(\frac{1}{n}e_{(i-1)n+k}^{T}\E_{k}\left[\mathcal{G}_{n}^{(k,s)}(z)\oindicator{\Omega_{n,k,s}}\right]\mathcal{D}_{i-b-1}\E_{k}\left[(\mathcal{G}_{n}^{(k,s)}(w))^{*}\oindicator{\Omega_{n,k,s}}\right]e_{(i-1)n+k}\right)\\
	&\quad =\frac{1}{n} \tr\left(\mathcal{D}_{i-b-1}\E_{k}\left[(\mathcal{G}_{n}^{(k,s)}(w))^{*}\oindicator{\Omega_{n,k,s}}\right]e_{(i-1)n+k}e_{(i-1)n+k}^{T}\E_{k}\left[\mathcal{G}_{n}^{(k,s)}(z)\oindicator{\Omega_{n,k,s}}\right]\right)\\
	&\quad =\frac{1}{n}\tr\left(\E_{k}\left[(\mathcal{G}_{n}^{(k,s)}(w))^{*}\oindicator{\Omega_{n,k,s}}\right]e_{(i-1)n+k}e_{(i-1)n+k}^{T}\E_{k}\left[\mathcal{G}_{n}^{(k,s)}(z)\oindicator{\Omega_{n,k,s}}\right]\right)_{S_{b}\times S_{b}}.
	\end{align*}
	By this observation and Lemma \ref{Lem:BlockBilinearFormsWithTrace} we have 
	\begin{align}
	&\E\left|e_{(i-1)n+k}^{T}\E_{k}\left[\mathcal{G}_{n}^{(k,s)}(z)\oindicator{\Omega_{n,k,s}}\right]c_{s}c_{s}^{*}\E_{k}\left[(\mathcal{G}_{n}^{(k,s)}(w))^{*}\oindicator{\Omega_{n,k,s}}\right]e_{(i-1)n+k}\right.\notag\\
	&\quad -\left.\frac{1}{n}e_{(i-1)n+k}^{T}\E_{k}\left[\mathcal{G}_{n}^{(k,s)}(z)\oindicator{\Omega_{n,k,s}}\right]\mathcal{D}_{i-b-1}\E_{k}\left[(\mathcal{G}_{n}^{(k,s)}(w))^{*}\oindicator{\Omega_{n,k,s}}\right]e_{(i-1)n+k}\right|^{2}\notag\\
	&=\frac{1}{n^{2}}\E\left|\sqrt{n}c_{s}^{*}\E_{k}\left[(\mathcal{G}_{n}^{(k,s)}(w))^{*}\oindicator{\Omega_{n,k,s}}\right]e_{(i-1)n+k}e_{(i-1)n+k}^{T}\E_{k}\left[\mathcal{G}_{n}^{(k,s)}(z)\oindicator{\Omega_{n,k,s}}\right]\sqrt{n}c_{s}\right.\notag\\
	&\quad -\left.\tr\left(\E_{k}\left[(\mathcal{G}_{n}^{(k,s)}(w))^{*}\oindicator{\Omega_{n,k,s}}\right]e_{(i-1)n+k}e_{(i-1)n+k}^{T}\E_{k}\left[\mathcal{G}_{n}^{(k,s)}(z)\oindicator{\Omega_{n,k,s}}\right]\right)_{S_{b}\times S_{b}}\right|^{2}\notag\\
	&\ll\frac{1}{n^{2}}\E\left[\tr\left(\left(\E_{k}\left[(\mathcal{G}_{n}^{(k,s)}(w))^{*}\oindicator{\Omega_{n,k,s}}\right]e_{(i-1)n+k}e_{(i-1)n+k}^{T}\E_{k}\left[\mathcal{G}_{n}^{(k,s)}(z)\oindicator{\Omega_{n,k,s}}\right]\right)^{*}\right.\right.\notag\\	&\quad\times\left.\left.\left(\E_{k}\left[(\mathcal{G}_{n}^{(k,s)}(w))^{*}\oindicator{\Omega_{n,k,s}}\right]e_{(i-1)n+k}e_{(i-1)n+k}^{T}\E_{k}\left[\mathcal{G}_{n}^{(k,s)}(z)\oindicator{\Omega_{n,k,s}}\right]\right)\right)\right].\label{Equ:RepColsWExpectation:Equ1}
	\end{align}
	Observe that the rank is at most 1 so by bounding the trace by the rank times the norm, and by the fact that each term in the above expression can be bounded in norm by a constant, we can bound \eqref{Equ:RepColsWExpectation:Equ1} by $O(n^{-2})$ which completes the proof.
\end{proof}

\begin{lemma}
	Define all quantities as in Lemma \ref{Lem:MDSReduction}. Then under the assumptions of Lemma \ref{Lem:MDSReduction}, we have \ifdetail Let $e_{1},\dots e_{mn}$ denote the standard basis elements of $\C^{mn}$. Define $\blmat{Y}{n}$ as in \eqref{Def:Y_n} and let $c_{k}$ denote the $k$th column of $\blmat{Y}{n}$. Let $\blmat{Y}{n}^{(k)}$ denote the matrix $\blmat{Y}{n}$ with columns $c_{k},c_{n+k},c_{2n+k},\dots c_{(m-1)n+k}$ replaced with zeros, and let $\blmat{Y}{n}^{(k,s)}$ denote $\blmat{Y}{n}^{(k)}$ with column $c_{s}$ replaced with zeros as well. Define $\mathcal{G}_{n}^{(k)}(z)$ and $\mathcal{G}_{n}^{(k,s)}(z)$ as in \eqref{Def:G^{(k)}_{n}} and \eqref{Def:G_{n}^{k,s}} respectively. Define $\mathcal{D}_{p}$ as in \eqref{Def:ProjectionMatrix} and define the events $\Omega_{n,k}$ and $\Omega_{n,k,s}$ as in \eqref{Def:Omega_{n,k}} and \eqref{Def:Omega_{n,k,s}} respectively. Then, under the assumptions of Lemma \ref{Lem:MDSReduction}, \fi
	\begin{align*}
	&\E\left|\frac{1}{n}\sum_{s=(i-b-1)n+1}^{(i-b-1)n+k-1}e_{(i-1)n+k}^{T}\E_{k}\left[\mathcal{G}_{n}^{(k,s)}(z)\oindicator{\Omega_{n,k,s}}\right]\mathcal{D}_{i-b-1}\E_{k}\left[(\mathcal{G}_{n}^{(k,s)}(w))^{*}\oindicator{\Omega_{n,k,s}}\right]e_{(i-1)n+k}\right.\\
	&\quad\left.-\frac{1}{n}\sum_{s=(i-2)n+1}^{(i-2)n+k-1}e_{(i-1)n+k}^{T}\E_{k}\left[\mathcal{G}_{n}^{(k)}(z)\oindicator{\Omega_{n,k}}\right]\mathcal{D}_{i-b-1}\E_{k}\left[(\mathcal{G}_{n}^{(k)}(w))^{*}\oindicator{\Omega_{n,k}}\right]e_{(i-1)n+k}\right|^{2}=o(1)
	\end{align*}
	uniformly in $k$. 
	\label{Lem:ReplaceColS}
\end{lemma}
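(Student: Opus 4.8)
The plan is to reduce the claim to a bound on each summand separately and then carry out a single-column reinsertion via the Sherman--Morrison formula, following the template of the proofs of Lemmas~\ref{Lem:FiniteDimError1} and \ref{Lem:FiniteDimError2}. Note first that the summand of the second sum does not depend on $s$, so both sums run over the same index set $\{(i-b-1)n+1,\dots,(i-b-1)n+k-1\}$ and the difference in the statement has the form $\tfrac1n\sum_{s}(X_{n,k,s}-Y_{n,k})$, where
\[
X_{n,k,s}:=e_{(i-1)n+k}^{T}\E_{k}\left[\mathcal{G}_{n}^{(k,s)}(z)\oindicator{\Omega_{n,k,s}}\right]\mathcal{D}_{i-b-1}\E_{k}\left[(\mathcal{G}_{n}^{(k,s)}(w))^{*}\oindicator{\Omega_{n,k,s}}\right]e_{(i-1)n+k}
\]
and $Y_{n,k}$ is the analogue with $\mathcal{G}_{n}^{(k,s)}$, $\Omega_{n,k,s}$ replaced by $\mathcal{G}_{n}^{(k)}$, $\Omega_{n,k}$. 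Since this sum has at most $n$ terms, the triangle inequality in $L^{2}$ gives $\E|\tfrac1n\sum_{s}(X_{n,k,s}-Y_{n,k})|^{2}\le\max_{s}\E|X_{n,k,s}-Y_{n,k}|^{2}$, so it suffices to show $\E|X_{n,k,s}-Y_{n,k}|^{2}=o(1)$ uniformly in $i$, $k$, and $s$.

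Second, I would replace $\oindicator{\Omega_{n,k,s}}$ inside $X_{n,k,s}$ and $\oindicator{\Omega_{n,k}}$ inside $Y_{n,k}$ by the common indicator $\oindicator{\Omega_{n,k}\cap\Omega_{n,k,s}\cap Q'_{n,k,s}}$, where $Q'_{n,k,s}=Q'_{n,k,s}(z)$ is the event from~\eqref{Def:Q'}. Because $\Omega_{n,k}$, $\Omega_{n,k,s}$, and $Q'_{n,k,s}$ all hold with overwhelming probability (Corollaries~\ref{Cor:Omega_nkOverwhelming} and \ref{Cor:Omega_nksOverwhelming} and Lemma~\ref{Lem:Q'Overwhelming}), the Cauchy--Schwarz/Jensen splitting used in Lemma~\ref{Lem:FiniteDimError1}, together with the resolvent operator-norm bounds of Lemma~\ref{Lem:G_nksBounded} and $\E\lnorm c_{s}\rnorm^{4}=O(1)$, shows that this swap changes each quantity by an $L^{2}$ amount that is $O_{\alpha}(n^{-\alpha})$ for every $\alpha>0$.

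Third, on $\Omega_{n,k}\cap\Omega_{n,k,s}\cap Q'_{n,k,s}$ the scalar $1+e_{s}^{T}\mathcal{G}_{n}^{(k,s)}(z)c_{s}$ is nonzero, and since $\blmat{Y}{n}^{(k)}=\blmat{Y}{n}^{(k,s)}+c_{s}e_{s}^{T}$, the Sherman--Morrison formula~\eqref{equ:ShermanMorrison1} gives
\[
\mathcal{G}_{n}^{(k)}(z)=\mathcal{G}_{n}^{(k,s)}(z)-\delta_{k,s}(z)\,\mathcal{G}_{n}^{(k,s)}(z)c_{s}e_{s}^{T}\mathcal{G}_{n}^{(k,s)}(z),
\]
with $\delta_{k,s}(z)$ as in~\eqref{Def:delta_{n,k}}, and the conjugate analogue for $(\mathcal{G}_{n}^{(k)}(w))^{*}$. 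Substituting both identities into $Y_{n,k}$ and expanding, one recovers $X_{n,k,s}$ (with the common indicator) plus three correction terms. In each correction the $\delta_{k,s}$ factors are bounded by a constant on $Q'_{n,k,s}$ (Lemma~\ref{Lem:DeltaBoundedOnEvent}) and the resolvents are bounded in operator norm on the relevant events (Lemma~\ref{Lem:G_nksBounded}), so after discarding the inner conditional expectations by Jensen's inequality, each correction is controlled by the Cauchy--Schwarz inequality and the bilinear-form estimate of Lemma~\ref{Lem:conjLessThanConstant}: writing $c_{s}=n^{-1/2}w$ with $w$ a vector of iid truncated atom variables independent of $\mathcal{G}_{n}^{(k,s)}$ and $\Omega_{n,k,s}$, and applying Lemma~\ref{Lem:conjLessThanConstant} to the rank-one matrix $(\mathcal{G}_{n}^{(k,s)}(z))^{*}e_{(i-1)n+k}e_{(i-1)n+k}^{T}\mathcal{G}_{n}^{(k,s)}(z)$ after conditioning, one obtains $\E|e_{(i-1)n+k}^{T}\mathcal{G}_{n}^{(k,s)}(z)c_{s}\oindicator{\Omega_{n,k,s}}|^{2}\ll n^{-1}$ and $\E|e_{(i-1)n+k}^{T}\mathcal{G}_{n}^{(k,s)}(z)c_{s}\oindicator{\Omega_{n,k,s}}|^{4}\ll n^{-2}$, and the same for the $w$-side form. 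Two of the corrections carry one such bilinear form together with factors bounded by a constant, hence are $O(n^{-1})$ in $L^{2}$; the third carries two such forms and is $O(n^{-2})$ by Cauchy--Schwarz. Thus $\E|X_{n,k,s}-Y_{n,k}|^{2}\ll n^{-1}+O_{\alpha}(n^{-\alpha})=o(1)$ uniformly, which by the first reduction proves the lemma.

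The main obstacle is not any individual estimate — each is routine given the machinery already in place — but the bookkeeping: tracking which events must be carried for the Sherman--Morrison identity to be valid and for the $\delta_{k,s}$ and resolvent norm bounds to apply, and correctly exploiting that $c_{s}$ is independent of $\mathcal{G}_{n}^{(k,s)}$ and $\Omega_{n,k,s}$ even though $c_{s}$ is $\mathcal{F}_{k}$-measurable (since $(i-b-1)n+1\le s\le(i-b-1)n+k-1$, the column $c_{s}$ lies among the first $k-1$ columns of its block). The point is that the $L^{2}$ bound is taken under the full expectation $\E$, so $\E|\E_{k}[\,\cdot\,]|^{2}\le\E|\cdot|^{2}$ lets one work with the unconditioned quantity, where this independence is manifest; a minor additional check is that the two sums share the same index range, so that the $s$-independent quantity $Y_{n,k}$ is summed exactly $k-1$ times.
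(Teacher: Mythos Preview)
Your proposal is correct and follows essentially the same strategy as the paper: reduce to a term-by-term $L^{2}$ estimate, swap all indicators to a common event using that each holds with overwhelming probability, and then control the rank-one column reinsertion $\mathcal{G}_{n}^{(k,s)}\to\mathcal{G}_{n}^{(k)}$ via a quadratic-form bound from Lemma~\ref{Lem:conjLessThanConstant}.

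The only noteworthy difference is in how the reinsertion is carried out. The paper uses the resolvent identity~\eqref{Equ:ResolventIndentity} directly, writing $\mathcal{G}_{n}^{(k,s)}(w)-\mathcal{G}_{n}^{(k)}(w)=\mathcal{G}_{n}^{(k,s)}(w)\,c_{s}e_{s}^{T}\,\mathcal{G}_{n}^{(k)}(w)$ on $\Omega_{n,k}\cap\Omega_{n,k,s}$; since both resolvents are operator-norm bounded on this event, no $\delta_{k,s}$ appears and the event $Q'_{n,k,s}$ is never needed. You instead invoke Sherman--Morrison~\eqref{equ:ShermanMorrison1}, which expresses $\mathcal{G}_{n}^{(k)}$ purely in terms of $\mathcal{G}_{n}^{(k,s)}$ and $\delta_{k,s}$, at the cost of carrying $Q'_{n,k,s}$ (and implicitly $Q'_{n,k,s}(w)$ for the $w$-side) to bound $\delta_{k,s}$. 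Both routes yield the same $O(n^{-1})$ control on the correction terms; the paper's is marginally more economical here since it avoids the extra event and the $\delta_{k,s}$ bookkeeping, while your route has the aesthetic advantage that the correction lives entirely on the $\mathcal{G}_{n}^{(k,s)}$ side.
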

\begin{proof}
	To begin, observe that 
	\begin{align*}
	&\E\left|\frac{1}{n}\sum_{s=(i-b-1)n+1}^{(i-b-1)n+k-1}e_{(i-1)n+k}^{T}\E_{k}\left[\mathcal{G}_{n}^{(k,s)}(z)\oindicator{\Omega_{n,k,s}}\right]\mathcal{D}_{i-b-1}\E_{k}\left[(\mathcal{G}_{n}^{(k,s)}(w))^{*}\oindicator{\Omega_{n,k,s}}\right]e_{(i-1)n+k}\right.\\
	&\quad\quad\left.-\frac{1}{n}\sum_{s=(i-b-1)n+1}^{(i-b-1)n+k-1}e_{(i-1)n+k}^{T}\E_{k}\left[\mathcal{G}_{n}^{(k)}(z)\oindicator{\Omega_{n,k}}\right]\mathcal{D}_{i-b-1}\E_{k}\left[(\mathcal{G}_{n}^{(k)}(w))^{*}\oindicator{\Omega_{n,k}}\right]e_{(i-1)n+k}\right|^{2}\\
	\ifdetail&\ll_{k} \frac{1}{n}\sum_{s=(i-b-1)n+1}^{(i-b-1)n+k-1}\E\left|e_{(i-1)n+k}^{T}\E_{k}\left[\mathcal{G}_{n}^{(k,s)}(z)\oindicator{\Omega_{n,k,s}}\right]\mathcal{D}_{i-b-1}\E_{k}\left[(\mathcal{G}_{n}^{(k,s)}(w))^{*}\oindicator{\Omega_{n,k,s}}\right]e_{(i-1)n+k}\right.\\\fi 
	\ifdetail&\quad\quad\quad\quad\quad\quad\quad\quad\left.-e_{(i-1)n+k}^{T}\E_{k}\left[\mathcal{G}_{n}^{(k)}(z)\oindicator{\Omega_{n,k}}\right]\mathcal{D}_{i-b-1}\E_{k}\left[(\mathcal{G}_{n}^{(k)}(w))^{*}\oindicator{\Omega_{n,k}}\right]e_{(i-1)n+k}\right|^{2}\\\fi 
	&\ll\frac{1}{n} \sum_{s=(i-b-1)n+1}^{(i-b-1)n+k-1}\E\left|\E_{k}\left[e_{(i-1)n+k}^{T}\mathcal{G}_{n}^{(k,s)}(z)\oindicator{\Omega_{n,k,s}}\right]\mathcal{D}_{i-b-1}\E_{k}\left[(\mathcal{G}_{n}^{(k,s)}(w))^{*}e_{(i-1)n+k}\oindicator{\Omega_{n,k,s}}\right]\right.\\
	&\quad\quad\quad\quad\quad\quad\quad\quad\quad\quad\left.-\E_{k}\left[e_{(i-1)n+k}^{T}\mathcal{G}_{n}^{(k)}(z)\oindicator{\Omega_{n,k}}\right]\mathcal{D}_{i-b-1}\E_{k}\left[(\mathcal{G}_{n}^{(k)}(w))^{*}e_{(i-1)n+k}\oindicator{\Omega_{n,k}}\right]\right|^{2}
	\end{align*}
	so it suffices to prove that 
	\begin{align*}
	&\E\left|\E_{k}\left[e_{(i-1)n+k}^{T}\mathcal{G}_{n}^{(k,s)}(z)\oindicator{\Omega_{n,k,s}}\right]\mathcal{D}_{i-b-1}\E_{k}\left[(\mathcal{G}_{n}^{(k,s)}(w))^{*}e_{(i-1)n+k}\oindicator{\Omega_{n,k,s}}\right]\right.\\
	&\quad\quad\quad\left.-\E_{k}\left[e_{(i-1)n+k}^{T}\mathcal{G}_{n}^{(k)}(z)\oindicator{\Omega_{n,k}}\right]\mathcal{D}_{i-b-1}\E_{k}\left[(\mathcal{G}_{n}^{(k)}(w))^{*}e_{(i-1)n+k}\oindicator{\Omega_{n,k}}\right]\right|^{2}=o(1)
	\end{align*}
	uniformly in $i$, $k$ and $s$. We can expand this difference using the triangle inequality to get differences which only vary by an event or a resolvent. We begin by observing that by the Cauchy--Schwarz inequality and Lemma \ref{Lem:G_nksBounded}, for any $\alpha>0$,
	\begin{align*}
	&\E\left|\E_{k}\left[e_{(i-1)n+k}^{T}\mathcal{G}_{n}^{(k,s)}(z)\oindicator{\Omega_{n,k,s}}\right]\mathcal{D}_{i-b-1}\E_{k}\left[(\mathcal{G}_{n}^{(k,s)}(w))^{*}e_{(i-1)n+k}\oindicator{\Omega_{n,k,s}}\right]\right.\notag\\
	&\quad\quad\quad\left.-\E_{k}\left[e_{(i-1)n+k}^{T}\mathcal{G}_{n}^{(k,s)}(z)\oindicator{\Omega_{n,k,s}}\right]\mathcal{D}_{i-b-1}\E_{k}\left[(\mathcal{G}_{n}^{(k,s)}(w))^{*}e_{(i-1)n+k}\oindicator{\Omega_{n,k,s}\cap\Omega_{n,k}}\right]\right|^{2}\\
	&\ll\E\lnorm\E_{k}\left[\mathcal{G}_{n}^{(k,s)}(z)\oindicator{\Omega_{n,k,s}}\right]\mathcal{D}_{i-b-1}\E_{k}\left[(\mathcal{G}_{n}^{(k,s)}(w))^{*}\oindicator{\Omega_{n,k,s}}\right]\right.\\
	&\quad\quad\quad\left.-\E_{k}\left[\mathcal{G}_{n}^{(k,s)}(z)\oindicator{\Omega_{n,k,s}}\right]\mathcal{D}_{i-b-1}\E_{k}\left[(\mathcal{G}_{n}^{(k,s)}(w))^{*}\oindicator{\Omega_{n,k,s}\cap\Omega_{n,k}}\right]\rnorm^{2}\\
	&=\E\lnorm\E_{k}\left[\mathcal{G}_{n}^{(k,s)}(z)\oindicator{\Omega_{n,k,s}}\right]\mathcal{D}_{i-b-1}\E_{k}\left[(\mathcal{G}_{n}^{(k,s)}(w))^{*}(\oindicator{\Omega_{n,k,s}}\oindicator{\Omega_{n,k}^{c}})\right]\rnorm^{2}\\
	\ifdetail&\leq \left(\E\lnorm\E_{k}\left[\mathcal{G}_{n}^{(k,s)}(z)\oindicator{\Omega_{n,k,s}}\right]\mathcal{D}_{i-b-1}\rnorm^{4}\E\lnorm\E_{k}\left[(\mathcal{G}_{n}^{(k,s)}(w))^{*}(\oindicator{\Omega_{n,k,s}}\oindicator{\Omega_{n,k}^{c}})\right]\rnorm^{4}\right)^{1/2}\\\fi 
	&\leq \left(\E\left[\lnorm\mathcal{G}_{n}^{(k,s)}(z)\oindicator{\Omega_{n,k,s}}\rnorm^{4}\lnorm\mathcal{D}_{i-b-1}\rnorm^{4}\right]\E\left[\lnorm(\mathcal{G}_{n}^{(k,s)}(w))^{*}\oindicator{\Omega_{n,k,s}}\rnorm^{4}\oindicator{\Omega_{n,k}^{c}}\right]\right)^{1/2}\\
	\ifdetail&\ll \left(\P\left(\Omega_{n,k}^{c}\right)\right)^{1/2}\\\fi 
	&\ll_{\alpha} n^{-\alpha/2}.
	\end{align*}
	The same argument shows that each indicator can be replaced with $\oindicator{\Omega_{n,k}\cap\Omega_{n,k,s}}$. We also bound terms that differ by a resolvent. To this end, observe that by the resolvent identity \eqref{Equ:ResolventIndentity} and Lemma \ref{Lem:conjLessThanConstant},
	\begin{align*}
	&\E\left|\E_{k}\left[e_{(i-1)n+k}^{T}\mathcal{G}_{n}^{(k,s)}(z)\oindicator{\Omega_{n,k}\cap\Omega_{n,k,s}}\right]\mathcal{D}_{i-b-1}\E_{k}\left[(\mathcal{G}_{n}^{(k,s)}(w))^{*}e_{(i-1)n+k}\oindicator{\Omega_{n,k}\cap\Omega_{n,k,s}}\right]\right.\\
	&\quad\left.-\E_{k}\left[e_{(i-1)n+k}^{T}\mathcal{G}_{n}^{(k,s)}(z)\oindicator{\Omega_{n,k}\cap\Omega_{n,k,s}}\right]\mathcal{D}_{i-b-1}\E_{k}\left[(\mathcal{G}_{n}^{(k)}(w))^{*}e_{(i-1)n+k}\oindicator{\Omega_{n,k}\cap\Omega_{n,k,s}}\right]\right|^{2}\\
	\ifdetail&=\E\left|\E_{k}\left[e_{(i-1)n+k}^{T}\mathcal{G}_{n}^{(k,s)}(z)\oindicator{\Omega_{n,k}\cap\Omega_{n,k,s}}\right]\mathcal{D}_{i-b-1}\right.\\\fi 
	\ifdetail&\quad\quad\quad\left.\times \E_{k}\left[(\mathcal{G}_{n}^{(k,s)}(w)(\mathcal{Y}_{n}^{(k)}-\blmat{Y}{n}^{(k,s)})\mathcal{G}_{n}^{(k)}(w))^{*}e_{(i-1)n+k}\oindicator{\Omega_{n,k}\cap\Omega_{n,k,s}}\right]\right|^{2}\\\fi 
	&=\E\left|\E_{k}\left[e_{(i-1)n+k}^{T}\mathcal{G}_{n}^{(k,s)}(z)\oindicator{\Omega_{n,k}\cap\Omega_{n,k,s}}\right]\mathcal{D}_{i-b-1}\right.\\
	&\quad\quad\quad\left.\times\E_{k}\left[(\mathcal{G}_{n}^{(k,s)}(w)(c_{s}e_{s}^{T})\mathcal{G}_{n}^{(k)}(w))^{*}e_{(i-1)n+k}\oindicator{\Omega_{n,k}\cap\Omega_{n,k,s}}\right]\right|^{2}\\
	\ifdetail&\leq\left(\E\lnorm\E_{k}\left[e_{(i-1)n+k}^{T}\mathcal{G}_{n}^{(k,s)}(z)\oindicator{\Omega_{n,k}\cap\Omega_{n,k,s}}\right]\mathcal{D}_{i-b-1}\rnorm^{4}\right.\\\fi 
	\ifdetail&\quad\quad\quad\quad\left.\times\E\lnorm\E_{k}\left[(\mathcal{G}_{n}^{(k,s)}(w)(c_{s}e_{s}^{T})\mathcal{G}_{n}^{(k)}(w))^{*}e_{(i-1)n+k}\oindicator{\Omega_{n,k}\cap\Omega_{n,k,s}}\right]\rnorm^{4}\right)^{1/2}\\\fi 
	&\leq\left(\E\left[\lnorm e_{(i-1)n+k}\rnorm^{4}\lnorm\mathcal{G}_{n}^{(k,s)}(z)\oindicator{\Omega_{n,k}\cap\Omega_{n,k,s}}\rnorm^{4}\lnorm\mathcal{D}_{i-b-1}\rnorm^{4}\right]\right.\\
	&\quad\quad\quad\quad\left.\times\E\left[\lnorm(\mathcal{G}_{n}^{(k)}(w))^{*}\oindicator{\Omega_{n,k}}\rnorm^{4}\lnorm e_{s}\rnorm^{4}\left|c_{s}^{*}(\mathcal{G}_{n}^{(k,s)}(w))^{*}e_{(i-1)n+k}\oindicator{\Omega_{n,k,s}}\right|^{4}\right]\right)^{1/2}\\
	&\ll \left(\E\left|c_{s}^{*}(\mathcal{G}_{n}^{(k,s)}(w))^{*}e_{(i-1)n+k}e_{(i-1)n+k}^{T}\mathcal{G}_{n}^{(k,s)}(w)c_{s}\oindicator{\Omega_{n,k,s}}\right|^{2}\right)^{1/2}\\
	\ifdetail&\leq \left(n^{-2}\E\lnorm (\mathcal{G}_{n}^{(k,s)}(w))^{*}\mathcal{G}_{n}^{(k,s)}(w)\oindicator{\Omega_{n,k,s}}\rnorm^{2}\right)^{1/2}\\\fi 
	\ifdetail &\ll \left(n^{-2}\E\lnorm(\mathcal{G}_{n}^{(k,s)}(w))^{*}e_{(i-1)n+k}e_{(i-1)n+k}^{T}\mathcal{G}_{n}^{(k,s)}(w)\oindicator{\Omega_{n,k,s}}\rnorm^{2}\right)^{1/2}\\\fi 
	&\ll n^{-1}.
	\end{align*}
	The same argument shows that all instances of $\mathcal{G}_{n}^{(k,s)}(z)$ or $(\mathcal{G}_{n}^{(k,s)}(w))^{*}$ can be replaced with $\mathcal{G}_{n}^{(k)}(z)$ or $(\mathcal{G}_{n}^{(k)}(w))^{*}$ respectively gaining an error that is $o(1)$ in $L^{2}$-norm. Finally, the same argument as before shows that 
	\begin{align*}
	&\E\left|\E_{k}\left[e_{(i-1)n+k}^{T}\mathcal{G}_{n}^{(k)}(z)\oindicator{\Omega_{n,k}\cap\Omega_{n,k,s}}\right]\mathcal{D}_{i-b-1}\E_{k}\left[(\mathcal{G}_{n}^{(k)}(w))^{*}e_{(i-1)n+k}\oindicator{\Omega_{n,k}\cap\Omega_{n,k,s}}\right]\right.\\
	&\left.-\E_{k}\left[e_{(i-1)n+k}^{T}\mathcal{G}_{n}^{(k)}(z)\oindicator{\Omega_{n,k}}\right]\mathcal{D}_{i-b-1}\E_{k}\left[(\mathcal{G}_{n}^{(k)}(w))^{*}e_{(i-1)n+k}\oindicator{\Omega_{n,k}}\right]\right|^{2}=o(1).
	\end{align*}
	Replacing all instances of $\oindicator{\Omega_{n,k}\cap\Omega_{n,k,s}}$ with $\oindicator{\Omega_{n,k}}$ completes the proof.
	\begin{Details}
		\begin{align}
		&\E\left|\E_{k}\left[e_{(i-1)n+k}^{T}\mathcal{G}_{n}^{(k,s)}(z)\oindicator{\Omega_{n,k,s}}\right]\mathcal{D}_{i-b-1}\E_{k}\left[(\mathcal{G}_{n}^{(k,s)}(w))^{*}e_{(i-1)n+k}\oindicator{\Omega_{n,k,s}}\right]\right.\notag\\
		&\quad\quad\quad\quad\left.-\E_{k}\left[e_{(i-1)n+k}^{T}\mathcal{G}_{n}^{(k)}(z)\oindicator{\Omega_{n,k}}\right]\mathcal{D}_{i-b-1}\E_{k}\left[(\mathcal{G}_{n}^{(k)}(w))^{*}e_{(i-1)n+k}\oindicator{\Omega_{n,k}}\right]\right|^{2}\notag\\
		\ifdetail&=\E\lnorm\E_{k}\left[\mathcal{G}_{n}^{(k,s)}(z)\oindicator{\Omega_{n,k,s}}\right]\mathcal{D}_{i-b-1}\E_{k}\left[(\mathcal{G}_{n}^{(k,s)}(w))^{*}\oindicator{\Omega_{n,k,s}}\right]\right.\\\fi 
		\ifdetail&\quad\quad\quad\quad-\E_{k}\left[\mathcal{G}_{n}^{(k,s)}(z)\oindicator{\Omega_{n,k,s}}\right]\mathcal{D}_{i-b-1}\E_{k}\left[(\mathcal{G}_{n}^{(k,s)}(w))^{*}\oindicator{\Omega_{n,k,s}\cap\Omega_{n,k}}\right]\\\fi
		\ifdetail&\quad+\E_{k}\left[\mathcal{G}_{n}^{(k,s)}(z)\oindicator{\Omega_{n,k,s}}\right]\mathcal{D}_{i-b-1}\E_{k}\left[(\mathcal{G}_{n}^{(k,s)}(w))^{*}\oindicator{\Omega_{n,k,s}\cap\Omega_{n,k}}\right]\\\fi 
		\ifdetail&\quad\quad\quad\quad\quad\quad-\E_{k}\left[\mathcal{G}_{n}^{(k,s)}(z)\oindicator{\Omega_{n,k,s}}\right]\mathcal{D}_{i-b-1}\E_{k}\left[(\mathcal{G}_{n}^{(k)}(w))^{*}\oindicator{\Omega_{n,k,s}\cap\Omega_{n,k}}\right]\\\fi 
		\ifdetail&\quad+\E_{k}\left[\mathcal{G}_{n}^{(k,s)}(z)\oindicator{\Omega_{n,k,s}}\right]\mathcal{D}_{i-b-1}\E_{k}\left[(\mathcal{G}_{n}^{(k)}(w))^{*}\oindicator{\Omega_{n,k,s}\cap\Omega_{n,k}}\right]\\\fi 
		\ifdetail&\quad\quad\quad\quad\quad\quad-\E_{k}\left[\mathcal{G}_{n}^{(k,s)}(z)\oindicator{\Omega_{n,k,s}}\right]\mathcal{D}_{i-b-1}\E_{k}\left[(\mathcal{G}_{n}^{(k)}(w))^{*}\oindicator{\Omega_{n,k}}\right]\\\fi
		\ifdetail&\quad+\E_{k}\left[\mathcal{G}_{n}^{(k,s)}(z)\oindicator{\Omega_{n,k,s}}\right]\mathcal{D}_{i-b-1}\E_{k}\left[(\mathcal{G}_{n}^{(k)}(w))^{*}\oindicator{\Omega_{n,k}}\right]\\\fi 
		\ifdetail&\quad\quad\quad\quad\quad\quad-\E_{k}\left[\mathcal{G}_{n}^{(k,s)}(z)\oindicator{\Omega_{n,k,s}\cap\Omega_{n,k}}\right]\mathcal{D}_{i-b-1}\E_{k}\left[(\mathcal{G}_{n}^{(k)}(w))^{*}\oindicator{\Omega_{n,k}}\right]\\\fi 
		\ifdetail&\quad+\E_{k}\left[\mathcal{G}_{n}^{(k,s)}(z)\oindicator{\Omega_{n,k,s}\cap\Omega_{n,k}}\right]\mathcal{D}_{i-b-1}\E_{k}\left[(\mathcal{G}_{n}^{(k)}(w))^{*}\oindicator{\Omega_{n,k}}\right]\\\fi
		\ifdetail&\quad\quad\quad\quad\quad\quad-\E_{k}\left[\mathcal{G}_{n}^{(k)}(z)\oindicator{\Omega_{n,k,s}\cap\Omega_{n,k}}\right]\mathcal{D}_{i-b-1}\E_{k}\left[(\mathcal{G}_{n}^{(k)}(w))^{*}\oindicator{\Omega_{n,k}}\right]\\\fi
		\ifdetail&\quad+\E_{k}\left[\mathcal{G}_{n}^{(k)}(z)\oindicator{\Omega_{n,k,s}\cap\Omega_{n,k}}\right]\mathcal{D}_{i-b-1}\E_{k}\left[(\mathcal{G}_{n}^{(k)}(w))^{*}\oindicator{\Omega_{n,k}}\right]\\\fi 
		\ifdetail&\quad\quad\quad\quad\quad\quad\left.-\E_{k}\left[\mathcal{G}_{n}^{(k)}(z)\oindicator{\Omega_{n,k}}\right]\mathcal{D}_{i-b-1}\E_{k}\left[(\mathcal{G}_{n}^{(k)}(w))^{*}\oindicator{\Omega_{n,k}}\right]\rnorm^{2}\\\fi
		&\ll\E\left|\E_{k}\left[e_{(i-1)n+k}^{T}\mathcal{G}_{n}^{(k,s)}(z)\oindicator{\Omega_{n,k,s}}\right]\mathcal{D}_{i-b-1}\E_{k}\left[(\mathcal{G}_{n}^{(k,s)}(w))^{*}e_{(i-1)n+k}\oindicator{\Omega_{n,k,s}}\right]\right.\notag\\
		&\quad\quad\left.-\E_{k}\left[e_{(i-1)n+k}^{T}\mathcal{G}_{n}^{(k,s)}(z)\oindicator{\Omega_{n,k,s}}\right]\mathcal{D}_{i-b-1}\E_{k}\left[(\mathcal{G}_{n}^{(k,s)}(w))^{*}e_{(i-1)n+k}\oindicator{\Omega_{n,k,s}\cap\Omega_{n,k}}\right]\right|^{2}\label{Equ:RepColS:Equ1}\\
		&\quad+\E\left|\E_{k}\left[e_{(i-1)n+k}^{T}\mathcal{G}_{n}^{(k,s)}(z)\oindicator{\Omega_{n,k,s}}\right]\mathcal{D}_{i-b-1}\E_{k}\left[(\mathcal{G}_{n}^{(k,s)}(w))^{*}e_{(i-1)n+k}\oindicator{\Omega_{n,k,s}\cap\Omega_{n,k}}\right]\right.\notag\\
		&\quad\quad\left.-\E_{k}\left[e_{(i-1)n+k}^{T}\mathcal{G}_{n}^{(k,s)}(z)\oindicator{\Omega_{n,k,s}}\right]\mathcal{D}_{i-b-1}\E_{k}\left[(\mathcal{G}_{n}^{(k)}(w))^{*}e_{(i-1)n+k}\oindicator{\Omega_{n,k,s}\cap\Omega_{n,k}}\right]\right|^{2}\label{Equ:RepColS:Equ2}\\
		&\quad+\E\left|\E_{k}\left[e_{(i-1)n+k}^{T}\mathcal{G}_{n}^{(k,s)}(z)\oindicator{\Omega_{n,k,s}}\right]\mathcal{D}_{i-b-1}\E_{k}\left[(\mathcal{G}_{n}^{(k)}(w))^{*}e_{(i-1)n+k}\oindicator{\Omega_{n,k,s}\cap\Omega_{n,k}}\right]\right.\notag\\
		&\quad\quad\left.-\E_{k}\left[e_{(i-1)n+k}^{T}\mathcal{G}_{n}^{(k,s)}(z)\oindicator{\Omega_{n,k,s}}\right]\mathcal{D}_{i-b-1}\E_{k}\left[(\mathcal{G}_{n}^{(k)}(w))^{*}e_{(i-1)n+k}\oindicator{\Omega_{n,k}}\right]\right|^{2}\label{Equ:RepColS:Equ3}\\
		&\quad+\E\left|\E_{k}\left[e_{(i-1)n+k}^{T}\mathcal{G}_{n}^{(k,s)}(z)\oindicator{\Omega_{n,k,s}}\right]\mathcal{D}_{i-b-1}\E_{k}\left[(\mathcal{G}_{n}^{(k)}(w))^{*}e_{(i-1)n+k}\oindicator{\Omega_{n,k}}\right]\right.\notag\\
		&\quad\quad\left.-\E_{k}\left[e_{(i-1)n+k}^{T}\mathcal{G}_{n}^{(k,s)}(z)\oindicator{\Omega_{n,k,s}\cap\Omega_{n,k}}\right]\mathcal{D}_{i-b-1}\E_{k}\left[(\mathcal{G}_{n}^{(k)}(w))^{*}e_{(i-1)n+k}\oindicator{\Omega_{n,k}}\right]\right|^{2}\label{Equ:RepColS:Equ4}\\
		&\quad+\E\left|\E_{k}\left[e_{(i-1)n+k}^{T}\mathcal{G}_{n}^{(k,s)}(z)\oindicator{\Omega_{n,k,s}\cap\Omega_{n,k}}\right]\mathcal{D}_{i-b-1}\E_{k}\left[(\mathcal{G}_{n}^{(k)}(w))^{*}e_{(i-1)n+k}\oindicator{\Omega_{n,k}}\right]\right.\notag\\
		&\quad\quad\left.-\E_{k}\left[e_{(i-1)n+k}^{T}\mathcal{G}_{n}^{(k)}(z)\oindicator{\Omega_{n,k,s}\cap\Omega_{n,k}}\right]\mathcal{D}_{i-b-1}\E_{k}\left[(\mathcal{G}_{n}^{(k)}(w))^{*}e_{(i-1)n+k}\oindicator{\Omega_{n,k}}\right]\right|^{2}\label{Equ:RepColS:Equ5}\\
		&\quad+\E\left|\E_{k}\left[e_{(i-1)n+k}^{T}\mathcal{G}_{n}^{(k)}(z)\oindicator{\Omega_{n,k,s}\cap\Omega_{n,k}}\right]\mathcal{D}_{i-b-1}\E_{k}\left[(\mathcal{G}_{n}^{(k)}(w))^{*}e_{(i-1)n+k}\oindicator{\Omega_{n,k}}\right]\right.\notag\\
		&\quad\quad\left.-\E_{k}\left[e_{(i-1)n+k}^{T}\mathcal{G}_{n}^{(k)}(z)\oindicator{\Omega_{n,k}}\right]\mathcal{D}_{i-b-1}\E_{k}\left[(\mathcal{G}_{n}^{(k)}(w))^{*}e_{(i-1)n+k}\oindicator{\Omega_{n,k}}\right]\right|^{2}.\label{Equ:RepColS:Equ6}
		\end{align}
	We will bound each of these terms separately. We begin with \eqref{Equ:RepColS:Equ1}. 
	\begin{align*}
		&\E\lnorm\E_{k}\left[\mathcal{G}_{n}^{(k,s)}(z)\oindicator{\Omega_{n,k,s}}\right]\mathcal{D}_{i-b-1}\E_{k}\left[(\mathcal{G}_{n}^{(k,s)}(w))^{*}\oindicator{\Omega_{n,k,s}}\right]\right.\\
		&\quad\quad\quad\quad\quad\quad\left.-\E_{k}\left[\mathcal{G}_{n}^{(k,s)}(z)\oindicator{\Omega_{n,k,s}}\right]\mathcal{D}_{i-b-1}\E_{k}\left[(\mathcal{G}_{n}^{(k,s)}(w))^{*}\oindicator{\Omega_{n,k,s}\cap\Omega_{n,k}}\right]\rnorm^{2}.
	\end{align*}
	Very similar arguments show that \eqref{Equ:RepColS:Equ3}, \eqref{Equ:RepColS:Equ4}, and \eqref{Equ:RepColS:Equ6} are all $o(1)$.
	\begin{align*} 
		&\E\lnorm\E_{k}\left[\mathcal{G}_{n}^{(k,s)}(z)\oindicator{\Omega_{n,k,s}}\right]\mathcal{D}_{i-2}\E_{k}\left[(\mathcal{G}_{n}^{(k)}(w))^{*}\oindicator{\Omega_{n,k,s}\cap\Omega_{n,k}}\right]\right.\\
		&\quad\quad\quad\quad\quad\quad\left.-\E_{k}\left[\mathcal{G}_{n}^{(k,s)}(z)\oindicator{\Omega_{n,k,s}}\right]\mathcal{D}_{i-2}\E_{k}\left[(\mathcal{G}_{n}^{(k)}(w))^{*}\oindicator{\Omega_{n,k}}\right]\rnorm^{2}=o(1),
	\end{align*}
	\begin{align*} 
		&\E\lnorm\E_{k}\left[\mathcal{G}_{n}^{(k,s)}(z)\oindicator{\Omega_{n,k,s}}\right]\mathcal{D}_{i-2}\E_{k}\left[(\mathcal{G}_{n}^{(k)}(w))^{*}\oindicator{\Omega_{n,k}}\right]\right.\\
		&\quad\quad\quad\quad\quad\quad\left.-\E_{k}\left[\mathcal{G}_{n}^{(k,s)}(z)\oindicator{\Omega_{n,k,s}\cap\Omega_{n,k}}\right]\mathcal{D}_{i-2}\E_{k}\left[(\mathcal{G}_{n}^{(k)}(w))^{*}\oindicator{\Omega_{n,k}}\right]\rnorm^{2}=o(1),
	\end{align*}
	and
	\begin{align*} 
		&\E\lnorm\E_{k}\left[\mathcal{G}_{n}^{(k)}(z)\oindicator{\Omega_{n,k,s}\cap\Omega_{n,k}}\right]\mathcal{D}_{i-2}\E_{k}\left[(\mathcal{G}_{n}^{(k)}(w))^{*}\oindicator{\Omega_{n,k}}\right]\right.\\
		&\quad\quad\quad\quad\quad\quad\left.-\E_{k}\left[\mathcal{G}_{n}^{(k)}(z)\oindicator{\Omega_{n,k}}\right]\mathcal{D}_{i-2}\E_{k}\left[(\mathcal{G}_{n}^{(k)}(w))^{*}\oindicator{\Omega_{n,k}}\right]\rnorm^{2}=o(1).
	\end{align*}
	This leaves us with only the two terms which differ by resolvents. We will first deal with \eqref{Equ:RepColS:Equ2}. 
	\begin{align*}
		&\E\lnorm\E_{k}\left[\mathcal{G}_{n}^{(k,s)}(z)\oindicator{\Omega_{n,k,s}}\right]\mathcal{D}_{i-2}\E_{k}\left[(\mathcal{G}_{n}^{(k,s)}(w))^{*}\oindicator{\Omega_{n,k,s}\cap\Omega_{n,k}}\right]\right.\\
		&\quad\quad\quad\quad\quad\left.-\E_{k}\left[\mathcal{G}_{n}^{(k,s)}(z)\oindicator{\Omega_{n,k,s}}\right]\mathcal{D}_{i-2}\E_{k}\left[(\mathcal{G}_{n}^{(k)}(w))^{*}\oindicator{\Omega_{n,k,s}\cap\Omega_{n,k}}\right]\rnorm^{2}.
	\end{align*}
	Observe that by the resolvent identity \eqref{Equ:ResolventIndentity} and Lemma \ref{Lem:conjLessThanConstant},
	\begin{align*}
	&\E\left|\E_{k}\left[e_{(i-1)n+k}^{T}\mathcal{G}_{n}^{(k,s)}(z)\oindicator{\Omega_{n,k,s}}\right]\mathcal{D}_{i-b-1}\E_{k}\left[(\mathcal{G}_{n}^{(k,s)}(w))^{*}e_{(i-1)n+k}\oindicator{\Omega_{n,k,s}\cap\Omega_{n,k}}\right]\right.\\
	&\quad\quad\quad\left.-\E_{k}\left[e_{(i-1)n+k}^{T}\mathcal{G}_{n}^{(k,s)}(z)\oindicator{\Omega_{n,k,s}}\right]\mathcal{D}_{i-b-1}\E_{k}\left[(\mathcal{G}_{n}^{(k)}(w))^{*}e_{(i-1)n+k}\oindicator{\Omega_{n,k,s}\cap\Omega_{n,k}}\right]\right|^{2}\\
	\ifdetail&=\E\lnorm\E_{k}\left[\mathcal{G}_{n}^{(k,s)}(z)\oindicator{\Omega_{n,k,s}}\right]\mathcal{D}_{i-b-1}\E_{k}\left[(\mathcal{G}_{n}^{(k,s)}(w))^{*}-(\mathcal{G}_{n}^{(k)}(w))^{*}\oindicator{\Omega_{n,k,s}\cap\Omega_{n,k}}\right]\rnorm^{2}\\\fi 
	\ifdetail&=\E\lnorm\E_{k}\left[\mathcal{G}_{n}^{(k,s)}(z)\oindicator{\Omega_{n,k,s}}\right]\mathcal{D}_{i-b-1}\E_{k}\left[(\mathcal{G}_{n}^{(k,s)}(w)-\mathcal{G}_{n}^{(k)}(w))^{*}\oindicator{\Omega_{n,k,s}\cap\Omega_{n,k}}\right]\rnorm^{2}\\\fi 
	&=\E\left|\E_{k}\left[e_{(i-1)n+k}^{T}\mathcal{G}_{n}^{(k,s)}(z)\oindicator{\Omega_{n,k,s}}\right]\mathcal{D}_{i-b-1}\right.\\
	&\quad\quad\quad\left.\times \E_{k}\left[(\mathcal{G}_{n}^{(k,s)}(w)(\mathcal{Y}_{n}^{(k)}-\blmat{Y}{n}^{(k,s)})\mathcal{G}_{n}^{(k)}(w))^{*}e_{(i-1)n+k}\oindicator{\Omega_{n,k,s}\cap\Omega_{n,k}}\right]\right|^{2}\\
	&=\E\left|\E_{k}\left[e_{(i-1)n+k}^{T}\mathcal{G}_{n}^{(k,s)}(z)\oindicator{\Omega_{n,k,s}}\right]\mathcal{D}_{i-b-1}\right.\\
	&\quad\quad\quad\left.\times\E_{k}\left[(\mathcal{G}_{n}^{(k,s)}(w)(c_{s}e_{s}^{T})\mathcal{G}_{n}^{(k)}(w))^{*}e_{(i-1)n+k}\oindicator{\Omega_{n,k,s}\cap\Omega_{n,k}}\right]\right|^{2}\\
	&\leq\left(\E\lnorm\E_{k}\left[e_{(i-1)n+k}^{T}\mathcal{G}_{n}^{(k,s)}(z)\oindicator{\Omega_{n,k,s}}\right]\mathcal{D}_{i-b-1}\rnorm^{4}\right.\\
	&\quad\quad\quad\quad\left.\times\E\lnorm\E_{k}\left[(\mathcal{G}_{n}^{(k,s)}(w)(c_{s}e_{s}^{T})\mathcal{G}_{n}^{(k)}(w))^{*}e_{(i-1)n+k}\oindicator{\Omega_{n,k,s}\cap\Omega_{n,k}}\right]\rnorm^{4}\right)^{1/2}\\
	&\leq\left(\E\left[\lnorm e_{(i-1)n+k}\rnorm^{4}\lnorm\mathcal{G}_{n}^{(k,s)}(z)\oindicator{\Omega_{n,k,s}}\rnorm^{4}\lnorm\mathcal{D}_{i-b-1}\rnorm^{4}\right]\right.\\
	&\quad\quad\quad\quad\left.\times\E\left[\lnorm(\mathcal{G}_{n}^{(k)}(w))^{*}\oindicator{\Omega_{n,k}}\rnorm^{4}\lnorm e_{s}\rnorm^{4}\left|c_{s}^{*}(\mathcal{G}_{n}^{(k,s)}(w))^{*}e_{(i-1)n+k}\oindicator{\Omega_{n,k,s}}\right|^{4}\right]\right)^{1/2}\\
	&\ll \left(\E\left|c_{s}^{*}(\mathcal{G}_{n}^{(k,s)}(w))^{*}e_{(i-1)n+k}e_{(i-1)n+k}^{T}\mathcal{G}_{n}^{(k,s)}(w)c_{s}\oindicator{\Omega_{n,k,s}}\right|^{2}\right)^{1/2}\\
	\ifdetail&\leq \left(n^{-2}\E\lnorm (\mathcal{G}_{n}^{(k,s)}(w))^{*}\mathcal{G}_{n}^{(k,s)}(w)\oindicator{\Omega_{n,k,s}}\rnorm^{2}\right)^{1/2}\\\fi 
	&\ll \left(n^{-2}\E\lnorm(\mathcal{G}_{n}^{(k,s)}(w))^{*}e_{(i-1)n+k}e_{(i-1)n+k}^{T}\mathcal{G}_{n}^{(k,s)}(w)\oindicator{\Omega_{n,k,s}}\rnorm^{2}\right)^{1/2}\\
	&\ll n^{-1}.
	\end{align*}
	A very similar argument also shows that \eqref{Equ:RepColS:Equ5} is $o(1)$ as well,
	\begin{align*}
		&\E\lnorm\E_{k}\left[\mathcal{G}_{n}^{(k,s)}(z)\oindicator{\Omega_{n,k,s}\cap\Omega_{n,k}}\right]\mathcal{D}_{i-2}\E_{k}\left[(\mathcal{G}_{n}^{(k)}(w))^{*}\oindicator{\Omega_{n,k}}\right]\right.\\
		&\quad\quad\quad\quad\quad\quad\left.-\E_{k}\left[\mathcal{G}_{n}^{(k)}(z)\oindicator{\Omega_{n,k,s}\cap\Omega_{n,k}}\right]\mathcal{D}_{i-2}\E_{k}\left[(\mathcal{G}_{n}^{(k)}(w))^{*}\oindicator{\Omega_{n,k}}\right]\rnorm^{2}=o(1)
	\end{align*}
	concluding the proof.
\end{Details}
\end{proof}

\section{Tightness}
\label{Sec:Tightness}
In order to extend the finite dimensional convergence proved in Section \ref{Sec:FiniteDimDist} to convergence of the stochastic process $\{\Xi_{n}(z)\}_{z\in\mathcal{C}}$, we must check that the sequence of stochastic processes $\{\Xi_{n}(z)\}_{z\in\mathcal{C}}$ is tight. Namely, recall that we must verify condition \eqref{Equ:TightnessCharacterization} in Theorem \ref{Thm:ConvergenceCharacterization}. To check this condition, it will be helpful to recenter $\Xi_{n}(z)$. 
\begin{Details} 
	there exists a $c>0$ such that 
\[\E\left|\frac{\Xi_{n}(t_{2})-\Xi_{n}(t_{1})}{t_{2}-t_{1}}\right|^{2}\leq c\]
for all $t_{1}$, $t_{2}$, and $n$. To this end, observe that in Section \ref{Sec:FiniteDimDist}, We proved that for any fixed $z_{i}$ on the contour $\mathcal{C}$, $(\Xi_{n}(z_{i}))_{i=1}^{L}$ converges to a mean-zero multivariate Gaussian. Therefore, by choosing only one point of the contour, $z=(1+\delta)e^{0\sqrt{-1}}$, we can conclude that the sequence $\{\Xi_{n}((1+\delta)e^{0\sqrt{-1}})\}$ converges in distribution to a mean-zero Gaussian random variable. Since convergence in distribution implies tightness, the first condition is satisfied.
	\[\Xi_{n}(z)=\tr(\mathcal{G}_{n}(z))\oindicator{\Omega_{n}}-\E[\tr(\mathcal{G}_{n}(z))\oindicator{\Omega_{n}}]=\sum_{k=1}^{n}(\E_{k}-\E_{k-1})[\tr(\mathcal{G}_{n}(z))\oindicator{\Omega_{n}}]\]
	and observe that 
	\[\sum_{k=1}^{n}(\E_{k}-\E_{k-1})[\tr(\mathcal{G}_{n}^{(k)})\oindicator{\Omega_{n,k}}]=0.\]
\end{Details}
Define the modified sequence 
\begin{equation}
\tilde{\Xi}_{n}(z)=\sum_{k=1}^{n}(\E_{k}-\E_{k-1})[(\tr(\mathcal{G}_{n}(z))-\tr(\mathcal{G}_{n}^{(k)}(z)))\oindicator{\Omega_{n}\cap\Omega_{n,k}}]
\label{Equ:Xi_tilde}
\end{equation}
which differs from $\Xi_{n}(z)$ by the fact that we have subtracted the trace of $\mathcal{G}_{n}^{(k)}(z)$ and multiplied by $\oindicator{\Omega_{n,k}}$. We wish to proceed from here working with $\tilde{\Xi}_{n}(z)$ instead of $\Xi_{n}(z)$. We will be justified in doing so after proving the following lemma.
\begin{lemma}
	Let $\Xi_{n}(z)$ be as defined in \eqref{Def:StochasticProcessXi} and $\hat{\Xi}_{n}(z)$ as in \eqref{Equ:Xi_tilde}. Then under the assumptions of Theorem \ref{Thm:LinearizedTruncatedCLT},
	\begin{equation*}
	\E\left|\frac{\Xi_{n}(z)-\Xi_{n}(w)}{z-w}\right|^{2}\leq c +\E\left|\frac{\tilde{\Xi}_{n}(z)-\tilde{\Xi}_{n}(w)}{z-w}\right|^{2}
	\end{equation*}
	for some constant $c>0$ independent of $n$ and of any choice of $z,w$ on the contour $\mathcal{C}$.
\end{lemma}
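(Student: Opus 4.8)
The plan is to show that the difference $\Xi_n(z) - \tilde\Xi_n(z)$ has a bounded difference quotient in $z$, after which the triangle inequality (in the $L^2$ norm applied to the difference quotient) gives the claimed bound. Writing
\[
\Xi_n(z) - \tilde\Xi_n(z) = \sum_{k=1}^n (\E_k - \E_{k-1})\bigl[ \tr\mathcal{G}_n(z)\oindicator{\Omega_n} - (\tr\mathcal{G}_n(z) - \tr\mathcal{G}_n^{(k)}(z))\oindicator{\Omega_n\cap\Omega_{n,k}}\bigr],
\]
and recalling that $(\E_k - \E_{k-1})[\tr\mathcal{G}_n^{(k)}(z)\oindicator{\Omega_{n,k}}] = 0$ because $\mathcal{G}_n^{(k)}(z)\oindicator{\Omega_{n,k}}$ is $\mathcal{F}_{k-1}$-measurable (it does not depend on the $k$th column of any block), I would rewrite this as a sum of martingale differences of the form $(\E_k-\E_{k-1})[W_{n,k}(z)]$ where $W_{n,k}(z)$ collects the terms carrying the complementary events $\Omega_n^c$ or $\Omega_{n,k}^c$ plus the term $\tr\mathcal{G}_n^{(k)}(z)\oindicator{\Omega_{n,k}^c}$. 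Concretely, one matches up $\tr\mathcal{G}_n(z)\oindicator{\Omega_n}$ with $\tr\mathcal{G}_n(z)\oindicator{\Omega_n\cap\Omega_{n,k}}$ (difference supported on $\Omega_{n,k}^c$), adds and subtracts $\tr\mathcal{G}_n^{(k)}(z)\oindicator{\Omega_{n,k}}$ and $\tr\mathcal{G}_n^{(k)}(z)\oindicator{\Omega_n\cap\Omega_{n,k}}$, so that everything is either supported on a complementary event or is the mean-zero term that vanishes.

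Next I would estimate the difference quotient of each surviving term. For the difference quotient of the resolvent itself, one uses the resolvent identity $\mathcal{G}_n(z) - \mathcal{G}_n(w) = (z-w)\mathcal{G}_n(z)\mathcal{G}_n(w)$, so that $\frac{\tr\mathcal{G}_n(z) - \tr\mathcal{G}_n(w)}{z-w} = \tr(\mathcal{G}_n(z)\mathcal{G}_n(w))$, whose absolute value is at most $mn\|\mathcal{G}_n(z)\|\,\|\mathcal{G}_n(w)\|$; on the event $\Omega_n$ (or $\Omega_{n,k}$ for the primed resolvent) Lemma \ref{Lem:G_nBounded} bounds these spectral norms by a constant $C$. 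Since the martingale difference operator $\E_k - \E_{k-1}$ contracts the $L^2$ norm (up to a factor $2$), and since for each $k$ the term carries an indicator of a complementary event which, by Lemma \ref{Lem:Omega_nOverwhelming} and Corollary \ref{Cor:Omega_nkOverwhelming}, has probability $O_\alpha(n^{-\alpha})$ for every $\alpha>0$, I get, for each $k$,
\[
\E\left|(\E_k-\E_{k-1})\left[\tfrac{W_{n,k}(z)-W_{n,k}(w)}{z-w}\right]\right|^2 \ll n^2 \cdot n^2 \cdot \P(\text{bad event}) \ll_\alpha n^{4-\alpha},
\]
where one factor of $n^2$ comes from bounding $|\tr(\cdot)|\le mn \cdot (\text{const})$ and the squared difference quotient, and the additional $n^2$ from Cauchy--Schwarz splitting off the indicator. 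Summing the orthogonal martingale differences over $k$ contributes another factor of $n$, giving a total of $O_\alpha(n^{5-\alpha})$, which is $o(1)$ — in particular bounded by a constant — once $\alpha$ is chosen large. This gives $\E\bigl|\frac{\Xi_n(z)-\tilde\Xi_n(z) - (\Xi_n(w)-\tilde\Xi_n(w))}{z-w}\bigr|^2 \le c$ uniformly in $z,w \in \mathcal{C}$ and $n$.

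Finally, I would combine these pieces: writing $\frac{\Xi_n(z)-\Xi_n(w)}{z-w} = \frac{(\Xi_n(z)-\tilde\Xi_n(z)) - (\Xi_n(w)-\tilde\Xi_n(w))}{z-w} + \frac{\tilde\Xi_n(z)-\tilde\Xi_n(w)}{z-w}$ and applying $\|a+b\|_{L^2}^2 \le 2\|a\|_{L^2}^2 + 2\|b\|_{L^2}^2$ (or the sharper $\|a+b\|^2 \le \|a\|^2 + 2\|a\|\|b\| + \|b\|^2$, absorbing the cross term via Young's inequality into a new constant), yields the stated inequality $\E\bigl|\frac{\Xi_n(z)-\Xi_n(w)}{z-w}\bigr|^2 \le c + \E\bigl|\frac{\tilde\Xi_n(z)-\tilde\Xi_n(w)}{z-w}\bigr|^2$. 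The main obstacle I anticipate is the bookkeeping of the decomposition into martingale differences — making sure the mean-zero cancellation $(\E_k - \E_{k-1})[\tr\mathcal{G}_n^{(k)}(z)\oindicator{\Omega_{n,k}}]=0$ is correctly exploited so that only terms carrying a complementary (overwhelmingly unlikely) event remain, since those are the only ones for which the crude $n^2$-type resolvent bounds are affordable. Everything else is a routine application of the resolvent identity, Lemma \ref{Lem:G_nBounded}, Cauchy--Schwarz, and the overwhelming-probability estimates for $\Omega_n$ and $\Omega_{n,k}$ already established in the excerpt.
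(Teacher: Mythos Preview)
Your proposal is correct and follows essentially the same route as the paper: decompose $\Xi_n-\tilde\Xi_n$ as a martingale difference sum, use the resolvent identity to turn the difference quotient into $\tr(\mathcal{G}_n(z)\mathcal{G}_n(w))$-type terms, exploit the cancellation $(\E_k-\E_{k-1})[\tr(\mathcal{G}_n^{(k)}(z)\mathcal{G}_n^{(k)}(w))\oindicator{\Omega_{n,k}}]=0$ so that only terms carrying $\oindicator{\Omega_n\cap\Omega_{n,k}^c}$ or $\oindicator{\Omega_n^c\cap\Omega_{n,k}}$ survive, and then use the deterministic resolvent bounds together with the overwhelming-probability estimates to get $O_\alpha(n^{3-\alpha})$. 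Two small remarks: (i) $\mathcal{G}_n^{(k)}(z)\oindicator{\Omega_{n,k}}$ is \emph{not} $\mathcal{F}_{k-1}$-measurable (it still depends on columns $k+1,\ldots,n$); the correct justification is the one you put in parentheses, namely independence from the $k$th columns, which indeed gives $\E_k[\cdot]=\E_{k-1}[\cdot]$; (ii) the extra Cauchy--Schwarz step you use to split off the indicator is unnecessary since the indicator is bounded by $1$, so the paper gets $n^{3-\alpha}$ rather than your $n^{5-\alpha}$, but as $\alpha$ is arbitrary this is immaterial.
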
	
\begin{proof}
	\begin{Details} 
		Recall that 
		\[\Xi_{n}(z)=\sum_{k=1}^{n}(\E_{k}-\E_{k-1})[\tr(\mathcal{G}_{n}(z))\oindicator{\Omega_{n}}]\]
		and
		\begin{align*}
		\tilde{\Xi}_{n}(z)&=\sum_{k=1}^{n}(\E_{k}-\E_{k-1})[(\tr(\mathcal{G}_{n}(z))-\tr(\mathcal{G}_{n}^{(k)}(z)))\oindicator{\Omega_{n}\cap\Omega_{n,k}}]\\
		&=\sum_{k=1}^{n}(\E_{k}-\E_{k-1})[\tr(\mathcal{G}_{n}(z))\oindicator{\Omega_{n}\cap\Omega_{n,k}}]
		\end{align*}
		since
		\[\sum_{k=1}^{n}(\E_{k}-\E_{k-1})[\tr(\mathcal{G}_{n}^{(k)})\oindicator{\Omega_{n,k}}]=0.\]
	\end{Details}
	We can see that 
	\begin{align*}
	&\E\left|\frac{\Xi_{n}(z)-\Xi_{n}(w)}{z-w}\right|^{2}\\
	\ifdetail&=\E\left|\frac{\Xi_{n}(z)-\Xi_{n}(w)}{z-w}-\frac{\tilde{\Xi}_{n}(z)-\tilde{\Xi}_{n}(w)}{z-w}+\frac{\tilde{\Xi}_{n}(z)-\tilde{\Xi}_{n}(w)}{z-w}\right|^{2}\\\fi
	&\ll \E\left|\frac{\Xi_{n}(z)-\Xi_{n}(w)}{z-w}-\frac{\tilde{\Xi}_{n}(z)-\tilde{\Xi}_{n}(w)}{z-w}\right|^{2}+\E\left|\frac{\tilde{\Xi}_{n}(z)-\tilde{\Xi}_{n}(w)}{z-w}\right|^{2}.
	\end{align*}
	Now note that by the resolvent identity \eqref{Equ:ResolventIndentity}, 
	\[\Xi_{n}(z)-\Xi_{n}(w) =  \sum_{k=1}^{n}(\E_{k}-\E_{k-1})[\tr(\mathcal{G}_{n}(z)(w-z)\mathcal{G}_{n}(w))\oindicator{\Omega_{n}}]\]
	\begin{Details}
		\begin{align*}
		\Xi_{n}(z)-\Xi_{n}(w) &=  \sum_{k=1}^{n}(\E_{k}-\E_{k-1})[\tr(\mathcal{G}_{n}(z)(w-z)\mathcal{G}_{n}(w))\oindicator{\Omega_{n}}]\\
		\ifdetail&=  \sum_{k=1}^{n}(\E_{k}-\E_{k-1})[\tr(\mathcal{G}_{n}(z))\oindicator{\Omega_{n}}]-\sum_{k=1}^{n}(\E_{k}-\E_{k-1})[\tr(\mathcal{G}_{n}(w))\oindicator{\Omega_{n}}]\\\fi 
		\ifdetail&=  \sum_{k=1}^{n}(\E_{k}-\E_{k-1})[\tr(\mathcal{G}_{n}(z)-\mathcal{G}_{n}(w))\oindicator{\Omega_{n}}]\\\fi 
		\end{align*}
	\end{Details}
	and 
	\begin{align*}
	&\tilde{\Xi}_{n}(z)-\tilde{\Xi}_{n}(w)\notag\\
	&\quad\quad=\sum_{k=1}^{n}(\E_{k}-\E_{k-1})[\tr(\mathcal{G}_{n}(z)(w-z)\mathcal{G}_{n}(w))\oindicator{\Omega_{n}\cap\Omega_{n,k}}\notag\\
	&\quad\quad\quad\quad\quad\quad\quad\quad\quad\quad\quad-\tr(\mathcal{G}_{n}^{(k)}(z)(w-z)\mathcal{G}_{n}^{(k)}(w))\oindicator{\Omega_{n}\cap\Omega_{n,k}}].
	\end{align*}
	Therefore, by cyclic permutation of the trace and since the covariance terms in a martingale difference sequence are zero, we have 
	\begin{align*}
	&\E\left|\frac{\Xi_{n}(z)-\Xi_{n}(w)}{z-w}-\frac{\tilde{\Xi}_{n}(z)-\tilde{\Xi}_{n}(w)}{z-w}\right|^{2}\\
	\ifdetail&=\E\left|\frac{\sum_{k=1}^{n}(\E_{k}-\E_{k-1})[\tr(\mathcal{G}_{n}(z)(w-z)\mathcal{G}_{n}(w))\oindicator{\Omega_{n}}]}{z-w}\right.\\\fi 
	\ifdetail&\quad\quad\quad\quad\left.-\frac{\sum_{k=1}^{n}(\E_{k}-\E_{k-1})[\tr(\mathcal{G}_{n}(z)(w-z)\mathcal{G}_{n}(w))\oindicator{\Omega_{n}\cap\Omega_{n,k}}]}{z-w}\right|^{2}\\\fi 
	&=\E\left|\sum_{k=1}^{n}\left((\E_{k}-\E_{k-1})[\tr(\mathcal{G}_{n}(w)\mathcal{G}_{n}(z))(\oindicator{\Omega_{n}}-\oindicator{\Omega_{n}\cap\Omega_{n,k}})]\right.\right.\\
	&\quad\quad\quad\quad\quad\quad\quad\quad\quad\left.\left.-(\E_{k}-\E_{k-1})[\tr(\mathcal{G}_{n}^{(k)}(z)\mathcal{G}_{n}^{(k)}(w))\oindicator{\Omega_{n}\cap\Omega_{n,k}}]\right)\right|^{2}\\
	\ifdetail&=\sum_{k=1}^{n}\E\left|(\E_{k}-\E_{k-1})[\tr(\mathcal{G}_{n}(w)\mathcal{G}_{n}(z))\oindicator{\Omega_{n}}\oindicator{\Omega_{n,k}^{c}}]\right|^{2}\\\fi 
	&\ll \sum_{k=1}^{n}\left(\E\left|\tr(\mathcal{G}_{n}(w)\mathcal{G}_{n}(z))\oindicator{\Omega_{n}}\oindicator{\Omega_{n,k}^{c}}\right|^{2}\right.\\
	&\quad\quad\quad\quad\quad\quad\quad\quad\quad\left.+\E\left|(\E_{k}-\E_{k-1})[\tr(\mathcal{G}_{n}^{(k)}(z)\mathcal{G}_{n}^{(k)}(w))\oindicator{\Omega_{n,k}}]\right|^{2}\right)\\
	\ifdetail&\leq 2m^{2}n^{2}\sum_{k=1}^{n}\E\left[\lnorm\mathcal{G}_{n}(w)\oindicator{\Omega_{n}}\rnorm^{2}\lnorm\mathcal{G}_{n}(z)\oindicator{\Omega_{n}}\rnorm^{2}\oindicator{\Omega_{n,k}^{c}}\right]\\\fi 
	\ifdetail&\leq 2Cm^{2}n^{2}\sum_{k=1}^{n}\P(\Omega_{n,k}^{c})\\\fi 
	\ifdetail&\leq 2Cm^{2}n^{2}\sum_{k=1}^{n}n^{-\alpha}\\\fi
	&\ll_{\alpha} n^{3-\alpha}
	\end{align*}
	for any $\alpha>0$ since $(\E_{k}-\E_{k-1})[\tr(\mathcal{G}_{n}^{(k)}(z)\mathcal{G}_{n}^{(k)}(w))\oindicator{\Omega_{n,k}}]=0$. Note that any choice of $\alpha \geq 3$ suffices to show this term is bounded by a constant, concluding the proof.
	\begin{Details}
		 so 
		\begin{align*}
		&\E\left|\frac{\Xi_{n}(z)-\Xi_{n}(w)}{z-w}\right|^{2}\ll c+\E\left|\frac{\tilde{\Xi}_{n}(z)-\tilde{\Xi}_{n}(w)}{z-w}\right|^{2}
		\ifdetail(move up)&\ll \E\left|\frac{\Xi_{n}(z)-\Xi_{n}(w)}{z-w}-\frac{\tilde{\Xi}_{n}(z)-\tilde{\Xi}_{n}(w)}{z-w}\right|^{2}+\E\left|\frac{\tilde{\Xi}_{n}(z)-\tilde{\Xi}_{n}(w)}{z-w}\right|^{2}\\\fi
		\end{align*}
	\end{Details}
\end{proof}
	
\begin{Details} 
Indeed, with the preceding lemma, bounding 
\[\E\left|\frac{\tilde{\Xi}_{n}(z)-\tilde{\Xi}_{n}(w)}{z-w}\right|^{2}\] 
by a constant will in turn bound 
\[\E\left|\frac{\Xi_{n}(z)-\Xi_{n}(w)}{z-w}\right|^{2}\]
as desired. 
\end{Details} 
The tightness of $\{\Xi_{n}(z)\}_{z\in\mathcal{C}}$ will follow from the following lemma.
\begin{lemma}
	Let $\{\tilde{\Xi}_{n}(z)\}$ be the sequence of stochastic processes defined in \eqref{Equ:Xi_tilde}. It holds that
	\[\E\left|\frac{\tilde{\Xi}_{n}(z)-\tilde{\Xi}_{n}(w)}{z-w}\right|^{2}\leq c\]
	for a constant $c>0$ independent of $n$ and of any choice of $z,w$ on the contour $\mathcal{C}$.
	\label{Lem:LipschitzInequality}
\end{lemma}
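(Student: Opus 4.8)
The plan is to combine the martingale-difference structure of $\tilde{\Xi}_n$ with the resolvent identity to reduce the estimate to a single bound of the form $\E|T_{n,k}|^2\ll n^{-1}$, uniform in $k$ and in $z,w\in\mathcal{C}$. Writing $\tilde{\Xi}_n(z)-\tilde{\Xi}_n(w)=\sum_{k=1}^n D_{n,k}$ with
\[
D_{n,k}:=(\E_k-\E_{k-1})\big[(\tr\mathcal{G}_n(z)-\tr\mathcal{G}_n^{(k)}(z)-\tr\mathcal{G}_n(w)+\tr\mathcal{G}_n^{(k)}(w))\oindicator{\Omega_n\cap\Omega_{n,k}}\big],
\]
the quantities $D_{n,k}/(z-w)$ form a martingale difference sequence with respect to $\{\mathcal{F}_k\}$, so the cross terms vanish; using that $\E_k-\E_{k-1}$ is an $L^2$-contraction together with the resolvent identity \eqref{Equ:ResolventIndentity} in the forms $\tr\mathcal{G}_n(z)-\tr\mathcal{G}_n(w)=(w-z)\tr(\mathcal{G}_n(z)\mathcal{G}_n(w))$ and the analogue for $\mathcal{G}_n^{(k)}$, one gets
\[
\E\left|\frac{\tilde{\Xi}_n(z)-\tilde{\Xi}_n(w)}{z-w}\right|^2\ll\sum_{k=1}^n\E\left|T_{n,k}\right|^2,\qquad T_{n,k}:=\big(\tr(\mathcal{G}_n^{(k)}(z)\mathcal{G}_n^{(k)}(w))-\tr(\mathcal{G}_n(z)\mathcal{G}_n(w))\big)\oindicator{\Omega_n\cap\Omega_{n,k}}.
\]
Since no factor $(z-w)^{-1}$ survives and $|z|=|w|=1+\delta$, any bound $\E|T_{n,k}|^2\ll n^{-1}$ obtained below is automatically uniform in $z,w$, and summing over the $n$ values of $k$ yields the claimed constant.

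Next I would expand $T_{n,k}$ on the events using $\mathcal{G}_n(\cdot)-\mathcal{G}_n^{(k)}(\cdot)=-\mathcal{G}_n(\cdot)U_kV_k^T\mathcal{G}_n^{(k)}(\cdot)$ (recall $\blmat{Y}{n}^{(k)}-\blmat{Y}{n}=-U_kV_k^T$, exactly as in the proof of Lemma \ref{Lem:ReductionZ}). Splitting $\mathcal{G}_n(z)\mathcal{G}_n(w)-\mathcal{G}_n^{(k)}(z)\mathcal{G}_n^{(k)}(w)=(\mathcal{G}_n(z)-\mathcal{G}_n^{(k)}(z))\mathcal{G}_n(w)+\mathcal{G}_n^{(k)}(z)(\mathcal{G}_n(w)-\mathcal{G}_n^{(k)}(w))$ and using cyclicity of the trace expresses $T_{n,k}$ (up to bounded constants, since $m$ is fixed) as a sum of terms of the shape $\tr\big(V_k^T\,\mathcal{G}_n^{(k)}(\cdot)\,\mathcal{G}_n(\cdot)\,\mathcal{G}_n(\cdot)\,U_k\big)$, i.e.\ traces of $m\times m$ matrices $V_k^T(\cdots)U_k$. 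Each such trace is at most a constant times $\|V_k^T(\cdots)U_k\oindicator{\Omega_n\cap\Omega_{n,k}}\|$, so it remains to show these operator norms are $O(n^{-1/2})$ in $L^2$.

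The heart of the matter is producing this $n^{-1/2}$ gain. A crude bound using $\|U_k\|=O(1)$ and $\|\mathcal{G}_n(\cdot)\|\le C$ on the events only gives $\E|T_{n,k}|^2=O(1)$ and hence $O(n)$ for the sum, so one must peel off enough resolvent corrections to expose factors $V_k^T(\text{matrix independent of }U_k)U_k$ and then invoke the bilinear-form estimates. Concretely, I would use the Sherman--Morrison--Woodbury identity \eqref{Equ:ShermanMorrisonWoodbury} to write $\mathcal{G}_n(z)U_k=\mathcal{G}_n^{(k)}(z)U_k(I_m+V_k^T\mathcal{G}_n^{(k)}(z)U_k)^{-1}$, and expand the middle factor $\mathcal{G}_n(w)=\mathcal{G}_n^{(k)}(w)-\mathcal{G}_n(w)U_kV_k^T\mathcal{G}_n^{(k)}(w)$ once more. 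After inserting the events $Q_{n,k}$, which hold with overwhelming probability by Lemma \ref{Lem:QOverwhelming} and on which $\|(I_m+V_k^T\mathcal{G}_n^{(k)}(\cdot)U_k)^{-1}\|\le 2$, this produces a main term $V_k^T\mathcal{G}_n^{(k)}(z)\mathcal{G}_n^{(k)}(w)\mathcal{G}_n^{(k)}(z)U_k$ --- whose sandwiched matrix is independent of the columns making up $U_k$ and of $\Omega_{n,k}$ --- plus a cross term carrying an extra factor $V_k^T\mathcal{G}_n^{(k)}(\cdot)U_k$. For the main term, the estimates of Lemmas \ref{Lem:conjLessThanConstant} and \ref{Lem:BoundingVGU} and Remark \ref{Remark:ImprovementOnVGU} (which apply verbatim with a fixed product of $\mathcal{G}_n^{(k)}$'s in place of a single resolvent, the mean-zero columns of $U_k$ contributing a $\tr/n$ factor against a bounded-rank matrix) give $\E\|V_k^T\mathcal{G}_n^{(k)}(z)\mathcal{G}_n^{(k)}(w)\mathcal{G}_n^{(k)}(z)U_k\oindicator{\Omega_{n,k}}\|^2\ll n^{-1}$; for the cross term, the Cauchy--Schwarz inequality together with the higher-moment version of those estimates for $V_k^T\mathcal{G}_n^{(k)}(w)\mathcal{G}_n^{(k)}(z)U_k$ and the crude $L^4$ bound $\|V_k^T\mathcal{G}_n^{(k)}(z)\mathcal{G}_n(w)U_k\|\le\|\mathcal{G}_n^{(k)}(z)\|\,\|\mathcal{G}_n(w)\|\,\|U_k\|$ (bounded on the events, using $\E\|U_k\|^4=O(1)$) gives an even smaller contribution. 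The successive event swaps --- keeping $\Omega_n$ as long as a full resolvent $\mathcal{G}_n$ is present, inserting $Q_{n,k}$, and finally dropping $\Omega_n$ once only $\mathcal{G}_n^{(k)}$ remains --- each cost $O(n^{-\alpha})$ by Lemma \ref{Lem:Omega_nOverwhelming}, Corollary \ref{Cor:Omega_nkOverwhelming}, and Lemma \ref{Lem:QOverwhelming}, hence are negligible and follow the template already used in the proof of Lemma \ref{Lem:ReductionZ}.

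I expect the decisive difficulty to be exactly this extraction of the $n^{-1}$ saving: one genuinely has to iterate the resolvent expansion the right number of times so that every surviving term is either a trace of an $m\times m$ block $V_k^T(\text{resolvents independent of }U_k)U_k$ --- to which the bilinear-form lemmas apply --- or carries a spare $V_k^T\mathcal{G}_n^{(k)}U_k$ factor that is itself $O(n^{-1/2})$. All the remaining bounds are immediate from Proposition \ref{Prop:LargeAndSmallSingVals}, Lemma \ref{Lem:G_nBounded}, and $\E\|U_k\|^2=O(1)$, and the bookkeeping of which events must be present at each stage is the only technical (but routine) overhead.
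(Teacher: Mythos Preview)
Your proposal is correct and follows essentially the same route as the paper: martingale orthogonality to reduce to a single-$k$ bound, the resolvent identity to cancel $(z-w)$, Sherman--Morrison--Woodbury together with the event $Q_{n,k}$ to replace $\mathcal{G}_n U_k$ by $\mathcal{G}_n^{(k)}U_k\Delta_{n,k}$, and finally the bilinear-form Lemmas~\ref{Lem:conjLessThanConstant}/\ref{Lem:BoundingVGU} (extended verbatim to a product of resolvents) to extract the $n^{-1}$ saving. The only cosmetic difference is that the paper uses the symmetric decomposition $AB-A'B'=(A-A')(B-B')+(A-A')B'+A'(B-B')$, which removes all full resolvents $\mathcal{G}_n$ after a single application of SMW, whereas your two-term telescope $(A-A')B+A'(B-B')$ leaves one $\mathcal{G}_n(w)$ in the middle and forces the extra expansion you describe; both lead to the same final terms and the same estimates.
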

\begin{Details}
	Note that since Theorem \ref{Thm:TightnessCharacterization} only requires the existence of constants $\gamma$ and $\alpha$ and a nondecreasing function $F$, Lemma \ref{Lem:LipschitzInequality} will imply Theorem \ref{Thm:TightnessCharacterization} holds with $\alpha=\gamma=2$ and $F$ being the identity function. 
\end{Details}
\begin{proof}
	The idea behind this proof is similar to what was done in the proof of Lemma \ref{Lem:MDSReduction} where we remove columns to achieve independence. First, observe that by definition of $\tilde{\Xi}_{n}(z)$, linearity of trace, and the resolvent identity \eqref{Equ:ResolventIndentity},
	\begin{align}
	&\frac{\tilde{\Xi}_{n}(z)-\tilde{\Xi}_{n}(w)}{z-w}\notag\\
	\ifdetail&=\sum_{k=1}^{n}\frac{(\E_{k}-\E_{k-1})[(\tr(\mathcal{G}_{n}(z))-\tr(\mathcal{G}_{n}^{(k)}(z))-(\tr(\mathcal{G}_{n}(w))-\tr(\mathcal{G}_{n}^{(k)}(w))))\oindicator{\Omega_{n}\cap\Omega_{n,k}}]}{z-w}\notag\\ \fi
	&=\sum_{k=1}^{n}\frac{(\E_{k}-\E_{k-1})[\tr(\mathcal{G}_{n}(z)-\mathcal{G}_{n}(w)-(\mathcal{G}_{n}^{(k)}(z)-\mathcal{G}_{n}^{(k)}(w)))\oindicator{\Omega_{n}\cap\Omega_{n,k}}]}{z-w}\notag\\
	\ifdetail&=\sum_{k=1}^{n}\frac{(\E_{k}-\E_{k-1})[\tr(\mathcal{G}_{n}(z)(w-z)\mathcal{G}_{n}(w)-(\mathcal{G}_{n}^{(k)}(z)(w-z)\mathcal{G}_{n}^{(k)}(w)))\oindicator{\Omega_{n}\cap\Omega_{n,k}}]}{z-w}\notag\\\fi
	&=-\sum_{k=1}^{n}(\E_{k}-\E_{k-1})[\tr(\mathcal{G}_{n}(z)\mathcal{G}_{n}(w)-\mathcal{G}_{n}^{(k)}(z)\mathcal{G}_{n}^{(k)}(w))\oindicator{\Omega_{n}\cap\Omega_{n,k}}].\label{Equ:LipschitzInequalityStep1}
	\end{align}
	Now note that
	\begin{align*}
	&(\mathcal{G}_{n}(z)-\mathcal{G}_{n}^{(k)}(z))(\mathcal{G}_{n}(w)-\mathcal{G}_{n}^{(k)}(w))\\
	&\quad=\mathcal{G}_{n}(z)\mathcal{G}_{n}(w)-\mathcal{G}_{n}(z)\mathcal{G}_{n}^{(k)}(w)-\mathcal{G}_{n}^{(k)}(z)\mathcal{G}_{n}(w)+\mathcal{G}_{n}^{(k)}(z)\mathcal{G}_{n}^{(k)}(w)
	\end{align*}
	which implies
	\begin{align*}
	&\mathcal{G}_{n}(z)\mathcal{G}_{n}(w)+\mathcal{G}_{n}^{(k)}(z)\mathcal{G}_{n}^{(k)}(w)\\
	&\quad=(\mathcal{G}_{n}(z)-\mathcal{G}_{n}^{(k)}(z))(\mathcal{G}_{n}(w)-\mathcal{G}_{n}^{(k)}(w))+\mathcal{G}_{n}(z)\mathcal{G}_{n}^{(k)}(w)+\mathcal{G}_{n}^{(k)}(z)\mathcal{G}_{n}(w).
	\end{align*}
	By subtracting $2\mathcal{G}_{n}^{(k)}(z)\mathcal{G}_{n}^{(k)}(w)$ from each side of the previous equality, regrouping,  and applying the resolvent identity \eqref{Equ:ResolventIndentity}, we have
	\begin{align*}
	&\mathcal{G}_{n}(z)\mathcal{G}_{n}(w)-\mathcal{G}_{n}^{(k)}(z)\mathcal{G}_{n}^{(k)}(w)\\
	\ifdetail&=(\mathcal{G}_{n}(z)-\mathcal{G}_{n}^{(k)}(z))(\mathcal{G}_{n}(w)-\mathcal{G}_{n}^{(k)}(w))\\\fi 
	\ifdetail&\quad+\mathcal{G}_{n}(z)\mathcal{G}_{n}^{(k)}(w)+\mathcal{G}_{n}^{(k)}(z)\mathcal{G}_{n}(w)-2\mathcal{G}_{n}^{(k)}(z)\mathcal{G}_{n}^{(k)}(w)\\\fi 
	\ifdetail&=(\mathcal{G}_{n}(z)-\mathcal{G}_{n}^{(k)}(z))(\mathcal{G}_{n}(w)-\mathcal{G}_{n}^{(k)}(w))\\\fi 
	\ifdetail&\quad+(\mathcal{G}_{n}(z)-\mathcal{G}_{n}^{(k)}(z))\mathcal{G}_{n}^{(k)}(w)\\\fi 
	\ifdetail&\quad+\mathcal{G}_{n}^{(k)}(z)(\mathcal{G}_{n}(w)-\mathcal{G}_{n}^{(k)}(w))\\\fi
	&=(\mathcal{G}_{n}(z)(U_{k}V_{k}^{T})\mathcal{G}_{n}^{(k)}(z))(\mathcal{G}_{n}(w)(U_{k}V_{k}^{T}\mathcal{G}_{n}^{(k)}(w))\\
	&\quad+(\mathcal{G}_{n}(z)(U_{k}V_{k}^{T})\mathcal{G}_{n}^{(k)}(z))\mathcal{G}_{n}^{(k)}(w)\\
	&\quad+\mathcal{G}_{n}^{(k)}(z)(\mathcal{G}_{n}(w)(U_{k}V_{k}^{T})\mathcal{G}_{n}^{(k)}(w))
	\end{align*}
	where we recall that $U_{k}$ is the $mn\times n$ matrix which contains as its columns $c_{k},c_{n+k},\dots,c_{(m-1)n+k}$ and $V_{k}$ is the $mn\times m$ matrix which contains as its columns $e_{k},e_{n+k},\dots,e_{(m-1)n+k}$. 
	\ifdetail where $U_{k}$ is again the $mn\times m$ matrix containing columns $c_{k},c_{n+k},\dots c_{(m-1)n+k}$ and $V_{k}$ is the $mn\times m$ matrix containing columns $e_{k},e_{n+k},\dots, e_{(m-1)n+k}$.\fi By the Sherman--Morrison--Woodbury formula \eqref{Equ:ShermanMorrisonWoodbury}, we know $\mathcal{G}_{n}(z)U_{k}=$ $\mathcal{G}_{n}^{(k)}(z)U_{k}(I_{m}+V_{k}^{T}\mathcal{G}_{n}^{(k)}(z)U_{k})^{-1}$ $=\mathcal{G}_{n}^{(k)}(z)U_{k}\Delta_{n,k}(z)$ where $\Delta_{n,k}(z):=(I_{m}+V_{k}^{T}\mathcal{G}_{n}^{(k)}(z)U_{k})^{-1}$ provided $I_{m}+V_{k}^{T}\mathcal{G}_{n}^{(k)}(z)U_{k}$ is invertible. Recall, as was done in Section \ref{Sec:FiniteDimDist}, we can guarantee that this matrix is invertible by working on the event $Q_{n,k}$ defined in \eqref{Def:EventQ}. Since $Q_{n,k}$ holds with overwhelming probability by Lemma \ref{Lem:QOverwhelming}, the same argument as in Section \ref{Sec:FiniteDimDist} shows that we can work on this event with error $o_{\alpha}(n^{-\alpha})$ for any $\alpha>0$, so we are justified doing so. Therefore we can continue on the event $\Omega_{n,k}\cap Q_{n,k}$, with
	\begin{align*}
	&\mathcal{G}_{n}(z)\mathcal{G}_{n}(w)-\mathcal{G}_{n}^{(k)}(z)\mathcal{G}_{n}^{(k)}(w)\\
	\ifdetail&=(\mathcal{G}_{n}(z)(U_{k}V_{k}^{T})\mathcal{G}_{n}^{(k)}(z))(\mathcal{G}_{n}(w)(U_{k}V_{k}^{T})\mathcal{G}_{n}^{(k)}(w))\\\fi 
	\ifdetail&\quad+(\mathcal{G}_{n}(z)(U_{k}V_{k}^{T})\mathcal{G}_{n}^{(k)}(z))\mathcal{G}_{n}^{(k)}(w)\\\fi
	\ifdetail&\quad+\mathcal{G}_{n}^{(k)}(z)(\mathcal{G}_{n}(w)(U_{k}V_{k}^{T})\mathcal{G}_{n}^{(k)}(w))\\\fi 
	&=(\mathcal{G}_{n}^{(k)}(z)U_{k}\Delta_{n,k}(z)V_{k}^{T}\mathcal{G}_{n}^{(k)}(z))(\mathcal{G}_{n}^{(k)}(w)U_{k}\Delta_{n,k}(w)V_{k}^{T}\mathcal{G}_{n}^{(k)}(w))\\
	&\quad+(\mathcal{G}_{n}^{(k)}(z)U_{k}\Delta_{n,k}(z)V_{k}^{T}\mathcal{G}_{n}^{(k)}(z))\mathcal{G}_{n}^{(k)}(w)\\
	&\quad+\mathcal{G}_{n}^{(k)}(z)(\mathcal{G}_{n}^{(k)}(w)U_{k}\Delta_{n,k}(w)V_{k}^{T}\mathcal{G}_{n}^{(k)}(w)).
	\end{align*}
	Since $(\blmat{Y}{n}^{(k)}-zI)$ and $(\blmat{Y}{n}^{(k)}-wI)$ commute, we can interchange the order in which we multiply $\mathcal{G}_{n}^{(k)}(z)$ and $\mathcal{G}_{n}^{(k)}(w)$. By this observation and by cyclic permutation of the trace, we have
	\begin{align*}
	&\tr(\mathcal{G}_{n}(z)\mathcal{G}_{n}(w)-\mathcal{G}_{n}^{(k)}(z)\mathcal{G}_{n}^{(k)}(w))\\
	&=\tr\left((\mathcal{G}_{n}^{(k)}(z)U_{k}\Delta_{n,k}(z)V_{k}^{T}\mathcal{G}_{n}^{(k)}(z))(\mathcal{G}_{n}^{(k)}(w)U_{k}\Delta_{n,k}(w)V_{k}^{T}\mathcal{G}_{n}^{(k)}(w))\right)\\
	&\quad+\tr\left((\mathcal{G}_{n}^{(k)}(z)U_{k}\Delta_{n,k}(z)V_{k}^{T}\mathcal{G}_{n}^{(k)}(z))\mathcal{G}_{n}^{(k)}(w)\right)\\
	&\quad+\tr\left(\mathcal{G}_{n}^{(k)}(z)(\mathcal{G}_{n}^{(k)}(w)U_{k}\Delta_{n,k}(w)V_{k}^{T}\mathcal{G}_{n}^{(k)}(w))\right).
	\end{align*}
	Putting all of these observations together, we have shown that 
	\begin{align}
	&\E\left|\frac{\tilde{\Xi}_{n}(z)-\tilde{\Xi}_{n}(w)}{z-w}\right|^{2}\notag\\
	&\quad\quad\ll\sum_{k=1}^{n}\E\left|(\E_{k}-\E_{k-1})[\tr(\mathcal{G}_{n}(z)\mathcal{G}_{n}(w)-\mathcal{G}_{n}^{(k)}(z)\mathcal{G}_{n}^{(k)}(w))\oindicator{\Omega_{n}\cap\Omega_{n,k}}]\right|^{2}\notag\\
	&\quad\quad\leq \sum_{k=1}^{n}\E\left|\tr\left((\mathcal{G}_{n}^{(k)}(z)U_{k}\Delta_{n,k}(z)V_{k}^{T}\mathcal{G}_{n}^{(k)}(z))\right.\right.\notag\\
	&\quad\quad\quad\quad\quad\quad\quad\quad\quad\quad\times\left.\left.(\mathcal{G}_{n}^{(k)}(w)U_{k}\Delta_{n,k}(w)V_{k}^{T}\mathcal{G}_{n}^{(k)}(w))\right)\oindicator{\Omega_{n,k}\cap Q_{n,k}}\right|^{2}\label{Equ:Lem:Tightness:1}\\
	&\quad\quad\quad\quad+\sum_{k=1}^{n}\E\left|\tr\left((\mathcal{G}_{n}^{(k)}(z)U_{k}\Delta_{n,k}(z)V_{k}^{T}\mathcal{G}_{n}^{(k)}(z))\mathcal{G}_{n}^{(k)}(w)\right)\oindicator{\Omega_{n,k}\cap Q_{n,k}}\right|^{2}\label{Equ:Lem:Tightness:2}\\
	&\quad\quad\quad\quad+\sum_{k=1}^{n}\E\left|\tr\left(\mathcal{G}_{n}^{(k)}(z)(\mathcal{G}_{n}^{(k)}(w)U_{k}\Delta_{n,k}(w)V_{k}^{T}\mathcal{G}_{n}^{(k)}(w))\right)\oindicator{\Omega_{n,k}\cap Q_{n,k}}\right|^{2}+O(1).\label{Equ:Lem:Tightness:3}
	\end{align}
	Note that since $\mathcal{G}_{n}(z)$ is no longer present in \eqref{Equ:Lem:Tightness:1}, \eqref{Equ:Lem:Tightness:2}, and \eqref{Equ:Lem:Tightness:3}, we are justified dropping the event $\Omega_{n}$ as well. The $O(1)$ is due to the error from introducing the event $Q_{n,k}$ and dropping the event $\Omega_{n}$. Next, we show that we can replace $\Delta_{n,k}(z)$ and $\Delta_{n,k}(w)$ with $I_{m}$ in \eqref{Equ:Lem:Tightness:1}, \eqref{Equ:Lem:Tightness:2}, and \eqref{Equ:Lem:Tightness:3}. We begin by showing the calculation for term \eqref{Equ:Lem:Tightness:1}. Observe that by cyclic permutation of the trace, on the event $\Omega_{n,k}\cap Q_{n,k}$, we have 
	\begin{align*}
	&\tr\left((\mathcal{G}_{n}^{(k)}(z)U_{k}\Delta_{n,k}(z)V_{k}^{T}\mathcal{G}_{n}^{(k)}(z))(\mathcal{G}_{n}^{(k)}(w)U_{k}\Delta_{n,k}(w)V_{k}^{T}\mathcal{G}_{n}^{(k)}(w))\right)\\
	&\quad-\tr\left((\mathcal{G}_{n}^{(k)}(z)U_{k}V_{k}^{T}\mathcal{G}_{n}^{(k)}(z))(\mathcal{G}_{n}^{(k)}(w)U_{k}V_{k}^{T}\mathcal{G}_{n}^{(k)}(w))\right)\\ 
	&=\tr\left((\Delta_{n,k}(z)-I_{m})V_{k}^{T}\mathcal{G}_{n}^{(k)}(z)\mathcal{G}_{n}^{(k)}(w)U_{k}\Delta_{n,k}(w)V_{k}^{T}\mathcal{G}_{n}^{(k)}(w)\mathcal{G}_{n}^{(k)}(z)U_{k}\right)\\
	&\quad+\tr\left(V_{k}^{T}\mathcal{G}_{n}^{(k)}(z)\mathcal{G}_{n}^{(k)}(w)U_{k}(\Delta_{n,k}(w)-I_{m})V_{k}^{T}\mathcal{G}_{n}^{(k)}(w)\mathcal{G}_{n}^{(k)}(z)U_{k}\right).
	\end{align*}
	We use the generalized H\"{o}lders inequality to break the above expression into pieces which have bounded expectation. By bounding the trace by the rank times the norm, we have
	\begin{align}
	&\E\left|\tr\left((\mathcal{G}_{n}^{(k)}(z)U_{k}\Delta_{n,k}(z)V_{k}^{T}\mathcal{G}_{n}^{(k)}(z))(\mathcal{G}_{n}^{(k)}(w)U_{k}\Delta_{n,k}(w)V_{k}^{T}\mathcal{G}_{n}^{(k)}(w))\right)\oindicator{\Omega_{n,k}\cap Q_{n,k}}\right.\notag\\
	&\quad-\left.\tr\left((\mathcal{G}_{n}^{(k)}(z)U_{k}V_{k}^{T}\mathcal{G}_{n}^{(k)}(z))(\mathcal{G}_{n}^{(k)}(w)U_{k}V_{k}^{T}\mathcal{G}_{n}^{(k)}(w))\right)\oindicator{\Omega_{n,k}\cap Q_{n,k}}\right|^{2}\notag\\
	\ifdetail &\ll \E\left[\lnorm(\Delta_{n,k}(z)-I_{m})\oindicator{\Omega_{n,k}\cap Q_{n,k}}\rnorm^{2}\lnorm V_{k}^{T}\mathcal{G}_{n}^{(k)}(z)\mathcal{G}_{n}^{(k)}(w)U_{k}\oindicator{\Omega_{n,k}\cap Q_{n,k}}\rnorm^{2}\right.\notag\\\fi 
	\ifdetail&\quad\quad\quad\quad\quad\left.\times\lnorm\Delta_{n,k}(w)\oindicator{\Omega_{n,k}\cap Q_{n,k}}\rnorm^{2}\lnorm V_{k}^{T}\mathcal{G}_{n}^{(k)}(w)\mathcal{G}_{n}^{(k)}(z)U_{k}\oindicator{\Omega_{n,k}\cap Q_{n,k}}\rnorm^{2}\right]\notag\\\fi 
	\ifdetail &\quad+\E\left[\lnorm V_{k}^{T}\mathcal{G}_{n}^{(k)}(z)\mathcal{G}_{n}^{(k)}(w) U_{k}\oindicator{\Omega_{n,k}\cap Q_{n,k}}\rnorm^{2}\right.\notag\\\fi 
	\ifdetail &\quad\quad\quad\quad\quad\left.\times\lnorm (\Delta_{n,k}(w)-I_{m})\oindicator{\Omega_{n,k}\cap Q_{n,k}}\rnorm^{2}\lnorm V_{k}^{T}\mathcal{G}_{n}^{(k)}(w)\mathcal{G}_{n}^{(k)}(z)U_{k}\oindicator{\Omega_{n,k}\cap Q_{n,k}}\rnorm^{2}\right]\notag\\\fi 
	&\ll \left(\E\lnorm(\Delta_{n,k}(z)-I_{m})\oindicator{\Omega_{n,k}\cap Q_{n,k}}\rnorm^{4}\right)^{1/2}\left(\E\lnorm V_{k}^{T}\mathcal{G}_{n}^{(k)}(z)\mathcal{G}_{n}^{(k)}(w)U_{k}\oindicator{\Omega_{n,k}}\rnorm^{8}\right)^{1/4}\label{Equ:Lem:Tightness:4}\\
	&\quad\quad\times\left(\E\lnorm\Delta_{n,k}(w)\oindicator{Q_{n,k}}\rnorm^{16}\right)^{1/8}\left(\E\lnorm V_{k}^{T}\mathcal{G}_{n}^{(k)}(w)\mathcal{G}_{n}^{(k)}(z)U_{k}\oindicator{\Omega_{n,k}}\rnorm^{16}\right)^{1/8}\label{Equ:Lem:Tightness:5}\\
	&\quad+\left(\E\lnorm V_{k}^{T}\mathcal{G}_{n}^{(k)}(z)\mathcal{G}_{n}^{(k)}(w) U_{k}\oindicator{\Omega_{n,k}}\rnorm^{4}\right)^{1/2}\label{Equ:Lem:Tightness:6}\\
	&\quad\quad\times\left(\E\lnorm (\Delta_{n,k}(w)-I_{m})\oindicator{\Omega_{n,k}\cap Q_{n,k}}\rnorm^{8}\right)^{1/4}\left(\E\lnorm V_{k}^{T}\mathcal{G}_{n}^{(k)}(w)\mathcal{G}_{n}^{(k)}(z)U_{k}\oindicator{\Omega_{n,k}}\rnorm^{8}\right)^{1/4}\label{Equ:Lem:Tightness:7}
	\end{align}
	We bound each expectation in \eqref{Equ:Lem:Tightness:4}, \eqref{Equ:Lem:Tightness:5}, \eqref{Equ:Lem:Tightness:6}, and \eqref{Equ:Lem:Tightness:7}. We start by bounding the expectation of terms in which two resolvents appear. By the same argument as in Lemma \ref{Lem:BoundingVGU}, we can show that for $p\geq 4$
	\[\E\lnorm V_{k}^{T}\mathcal{G}_{n}^{(k)}(w)\mathcal{G}_{n}^{(k)}(z)U_{k}\oindicator{\Omega_{n,k}}\rnorm^{p}\ll_{p} n^{-\varepsilon(p-4)-2}\]
	which shows 
	\[\left(\E\lnorm V_{k}^{T}\mathcal{G}_{n}^{(k)}(w)\mathcal{G}_{n}^{(k)}(z)U_{k}\oindicator{\Omega_{n,k}}\rnorm^{16}\right)^{1/8}\ll n^{-3/2\varepsilon-1/4},\]
	\[\left(\E\lnorm V_{k}^{T}\mathcal{G}_{n}^{(k)}(z)\mathcal{G}_{n}^{(k)}(w) U_{k}\oindicator{\Omega_{n,k}}\rnorm^{4}\right)^{1/2}\ll n^{-1},\]
	and
	\[\left(\E\lnorm V_{k}^{T}\mathcal{G}_{n}^{(k)}(z)\mathcal{G}_{n}^{(k)}(w)U_{k}\oindicator{\Omega_{n,k}}\rnorm^{8}\right)^{1/4}\ll n^{-\varepsilon-1/2}.\]
	Since the second term in \eqref{Equ:Lem:Tightness:7} differs from the last line above only by the order in which we multiply resolvents, the same bound holds for the second term in \eqref{Equ:Lem:Tightness:7}. Now we bound terms involving $\Delta_{n,k}$. Recall that $\Delta_{n,k}$ is bounded by a constant almost surely on $Q_{n,k}$ so we need only to bound the expectations involving $\Delta_{n,k}(z)-I_{m}$ in terms \eqref{Equ:Lem:Tightness:4} and \eqref{Equ:Lem:Tightness:7}. Recall the expansion from \eqref{Equ:ExpansionOfMatrixErrorTerm} can be iterated to get 
	\[\Delta_{n,k}(z)=I_{m}-(V_{k}^{T}\mathcal{G}_{n}^{(k)}(z)U_{k})+(V_{k}^{T}\mathcal{G}_{n}^{(k)}(z)U_{k})^{2}\Delta_{n,k}(z).\]
	Using this fact, Lemma \ref{Lem:BoundingVGU}, and the Cauchy--Schwarz inequality, we get
	\begin{align*}
	&\left(\E\lnorm(\Delta_{n,k}(z)-I_{m})\oindicator{\Omega_{n,k}\cap Q_{n,k}}\rnorm^{4}\right)^{1/2}\\
	& \ll \left(\E\lnorm V_{k}^{T}\mathcal{G}_{n}^{(k)}(z)U_{k}\oindicator{\Omega_{n,k}\cap Q_{n,k}}\rnorm^{4}+\E\lnorm(V_{k}^{T}\mathcal{G}_{n}^{(k)}(z)U_{k})^{2}\Delta_{n,k}(z)\oindicator{\Omega_{n,k}\cap Q_{n,k}}\rnorm^{4}\right)^{1/2}\\
	\ifdetail &\ll \left(n^{-4\varepsilon-2}+n^{-2}\right)^{1/2}\\\fi 
	&\ll n^{-2\varepsilon-1}+n^{-1}
	\end{align*}
	and
	\begin{align*}
	&\left(\E\lnorm(\Delta_{n,k}(w)-I_{m})\oindicator{\Omega_{n,k}\cap Q_{n,k}}\rnorm^{8}\right)^{1/4}\\
	& \ll \left(\E\lnorm V_{k}^{T}\mathcal{G}_{n}^{(k)}(z)U_{k}\oindicator{\Omega_{n,k}\cap Q_{n,k}}\rnorm^{8}+\E\lnorm(V_{k}^{T}\mathcal{G}_{n}^{(k)}(z)U_{k})^{2}\Delta_{n,k}(z)\oindicator{\Omega_{n,k}\cap Q_{n,k}}\rnorm^{8}\right)^{1/4}\\
	\ifdetail &\ll \left(n^{-4\varepsilon-2}+n^{-12\varepsilon-2}\right)^{1/4}\\\fi 
	&\ll n^{-\varepsilon-1/2}+n^{-3\varepsilon -1/2.}
	\end{align*}
	Therefore, combining these bounds, we can bound \eqref{Equ:Lem:Tightness:4}, \eqref{Equ:Lem:Tightness:5}, \eqref{Equ:Lem:Tightness:6} and \eqref{Equ:Lem:Tightness:7} by $O(n^{-7/4})$. This concludes the argument to show that we may replace all $\Delta_{n,k}(z)$ and $\Delta_{n,k}(w)$ in term \eqref{Equ:Lem:Tightness:1} with $I_{m}$. A very similar argument shows that, for terms \eqref{Equ:Lem:Tightness:2} and \eqref{Equ:Lem:Tightness:3},
	\begin{equation*}
	\E\left|\tr\left((\mathcal{G}_{n}^{(k)}(z)U_{k}\Delta_{n,k}(z)V_{k}^{T}\mathcal{G}_{n}^{(k)}(z))\mathcal{G}_{n}^{(k)}(w)\right)-\tr\left((\mathcal{G}_{n}^{(k)}(z)U_{k}V_{k}^{T}\mathcal{G}_{n}^{(k)}(z))\mathcal{G}_{n}^{(k)}(w)\right)\right|^{2}
	\end{equation*}
	and 
	\begin{equation*}
	\E\left|\tr\left(\mathcal{G}_{n}^{(k)}(z)(\mathcal{G}_{n}^{(k)}(w)U_{k}\Delta_{n,k}(w)V_{k}^{T}\mathcal{G}_{n}^{(k)}(w))\right)-\tr\left(\mathcal{G}_{n}^{(k)}(z)(\mathcal{G}_{n}^{(k)}(w)U_{k}V_{k}^{T}\mathcal{G}_{n}^{(k)}(w))\right)\right|^{2}
	\end{equation*}
	are both $O(n^{-2})$. Ergo, we need to show only that the expression
	\begin{align}
	&\E\left|\tr\left((\mathcal{G}_{n}^{(k)}(z)U_{k}V_{k}^{T}\mathcal{G}_{n}^{(k)}(z))(\mathcal{G}_{n}^{(k)}(w)U_{k}V_{k}^{T}\mathcal{G}_{n}^{(k)}(w))\right)\right.\notag\\
	&\quad+\tr\left((\mathcal{G}_{n}^{(k)}(z)U_{k}V_{k}^{T}\mathcal{G}_{n}^{(k)}(z))\mathcal{G}_{n}^{(k)}(w)\right)+\left.\tr\left(\mathcal{G}_{n}^{(k)}(z)(\mathcal{G}_{n}^{(k)}(w)U_{k}V_{k}^{T}\mathcal{G}_{n}^{(k)}(w))\right)\right|^{2}\notag\\
	\ifdetail&=\tr\left(V_{k}^{T}\mathcal{G}_{n}^{(k)}(w)\mathcal{G}_{n}^{(k)}(z)U_{k}V_{k}^{T}\mathcal{G}_{n}^{(k)}(z)\mathcal{G}_{n}^{(k)}(w)U_{k}\right)\\\fi 
	\ifdetail&\quad+\tr\left(V_{k}^{T}\mathcal{G}_{n}^{(k)}(z)\mathcal{G}_{n}^{(k)}(w)\mathcal{G}_{n}^{(k)}(z)U_{k}\right)\\\fi 
	\ifdetail&\quad+\tr\left(V_{k}^{T}\mathcal{G}_{n}^{(k)}(w)\mathcal{G}_{n}^{(k)}(z)\mathcal{G}_{n}^{(k)}(w)U_{k}\right)\\\fi 
	\ifdetail&=\tr\left(V_{k}^{T}\mathcal{G}_{n}^{(k)}(z)\mathcal{G}_{n}^{(k)}(w)U_{k}V_{k}^{T}\mathcal{G}_{n}^{(k)}(z)\mathcal{G}_{n}^{(k)}(w)U_{k}\right)\\\fi 
	\ifdetail&\quad+\tr\left(V_{k}^{T}\mathcal{G}_{n}^{(k)}(w)(\mathcal{G}_{n}^{(k)}(z))^{2}U_{k}\right)\\\fi 
	\ifdetail&\quad+\tr\left(V_{k}^{T}\mathcal{G}_{n}^{(k)}(z)(\mathcal{G}_{n}^{(k)}(w))^{2}U_{k}\right)\\\fi 
	&\ll \E\left|\tr\left(V_{k}^{T}\mathcal{G}_{n}^{(k)}(z)\mathcal{G}_{n}^{(k)}(w)U_{k}\right)^{2}\right|^{2}\label{Equ:Lem:Tightness:8}\\
	&\quad+\E\left|\tr\left(V_{k}^{T}\mathcal{G}_{n}^{(k)}(w)(\mathcal{G}_{n}^{(k)}(z))^{2}U_{k}\right) \right|^{2}\label{Equ:Lem:Tightness:9}\\
	&\quad+\E\left|\tr\left(V_{k}^{T}\mathcal{G}_{n}^{(k)}(z)(\mathcal{G}_{n}^{(k)}(w))^{2}U_{k}\right)\right|^{2}\label{Equ:Lem:Tightness:10}
	\end{align}
	is bounded by $O(n^{-1})$, where we cyclically permuted the trace and reordered the product of resolvents again. We will bound each term separately. First consider term \eqref{Equ:Lem:Tightness:8}, and observe that
	\begin{align*}
	&\E\left|\tr\left((V_{k}^{T}\mathcal{G}_{n}^{(k)}(z)\mathcal{G}_{n}^{(k)}(w)U_{k})^{2}\right)\oindicator{\Omega_{n,k}}\right|^{2}\\
	&\quad\ll \E\lnorm U_{k}^{*}(\mathcal{G}_{n}^{(k)}(w))^{*}(\mathcal{G}_{n}^{(k)}(z))^{*} V_{k}V_{k}^{T}\mathcal{G}_{n}^{(k)}(z)\mathcal{G}_{n}^{(k)}(w)U_{k}\oindicator{\Omega_{n,k}}\rnorm^{2}
	\end{align*}
	Since this matrix is $m\times m$, if we can bound an arbitrary entry uniformly, then we can bound the norm. We now wish to bound
	\[\max_{1\leq i,j\leq m}\E\left| c_{(i-1)n+k}^{T}(\mathcal{G}_{n}^{(k)}(w))^{*}(\mathcal{G}_{n}^{(k)}(z))^{*} V_{k}V_{k}^{T}\mathcal{G}_{n}^{(k)}(z)\mathcal{G}_{n}^{(k)}(w)c_{(j-1)n+k}\oindicator{\Omega_{n,k}}\right|^{2}.\]
	Note that $(\mathcal{G}_{n}^{(k)}(w))^{*}(\mathcal{G}_{n}^{(k)}(z))^{*}V_{k}V_{k}^{T}\mathcal{G}_{n}^{(k)}(z)\mathcal{G}_{n}^{(k)}(w)$ is independent of the $k$th column of each block, and hence is independent from $c_{(i-1)n+k}$ and $c_{(j-1)n+k}$. It is also rank at most $m$ and it is Hermitian positive definite. Then by Lemma \ref{Lem:conjLessThanConstant} (when $i=j$) or Lemma \ref{lem:BilinearFormWithDifferentVectors} (when $i\neq j$), we have 
	\begin{align*}
	&\E\left| c_{(i-1)n+k}^{*}(\mathcal{G}_{n}^{(k)}(w))^{*}(\mathcal{G}_{n}^{(k)}(z))^{*}V_{k}V_{k}^{T}\mathcal{G}_{n}^{(k)}(z)\mathcal{G}_{n}^{(k)}(w)c_{(j-1)n+k}\oindicator{\Omega_{n,k}}\right|^{2}\\
	&\quad\ll n^{-2}\E\left[\tr\left(((\mathcal{G}_{n}^{(k)}(w))^{*}(\mathcal{G}_{n}^{(k)}(z))^{*}V_{k}V_{k}^{T}\mathcal{G}_{n}^{(k)}(z)\mathcal{G}_{n}^{(k)}(w)\oindicator{\Omega_{n,k}})^{2}\right)\right]\\
	\ifdetail&\quad\ll n^{-2}\E\lnorm V_{k}^{T}\mathcal{G}_{n}^{(k)}(z)\mathcal{G}_{n}^{(k)}(w)(\mathcal{G}_{n}^{(k)}(w))^{*}(\mathcal{G}_{n}^{(k)}(z))^{*}V_{k}\oindicator{\Omega_{n,k}}\rnorm^{2}\\\fi 
	\ifdetail &\quad\ll n^{-2}\E\lnorm V_{k}^{T}\mathcal{G}_{n}^{(k)}(z)\mathcal{G}_{n}^{(k)}(w)(\mathcal{G}_{n}^{(k)}(w))^{*}(\mathcal{G}_{n}^{(k)}(z))^{*}V_{k}\oindicator{\Omega_{n,k}}\rnorm^{2}\\\fi 
	&\quad\ll n^{-2}
	\end{align*}
	where we bounded an arbitrary element of $V_{k}^{T}\mathcal{G}_{n}^{(k)}(z)\mathcal{G}_{n}^{(k)}(w)(\mathcal{G}_{n}^{(k)}(w))^{*}(\mathcal{G}_{n}^{(k)}(z))^{*}V_{k}$ by a constant on the event $\Omega_{n,k}$. This concludes the argument for term \eqref{Equ:Lem:Tightness:8}. Since terms \eqref{Equ:Lem:Tightness:9} and \eqref{Equ:Lem:Tightness:10} are symmetric in $z$ and $w$, the argument will be the same for both terms. We show the argument for \eqref{Equ:Lem:Tightness:9}. Observe that
	\begin{align*}
	&\E\left|\tr\left(V_{k}^{T}\mathcal{G}_{n}^{(k)}(w)(\mathcal{G}_{n}^{(k)}(z))^{2}U_{k}\right)\oindicator{\Omega_{n,k}}\right|^{2}\\
	\ifdetail&\quad =\E\left|\sum_{i=1}^{m}\left(V_{k}^{T}\mathcal{G}_{n}^{(k)}(w)(\mathcal{G}_{n}^{(k)}(z))^{2}U_{k}\right)_{i,i}\oindicator{\Omega_{n,k}}\right|^{2}\\\fi 
	\ifdetail&\quad \leq m^{2}\max_{1 \leq i \leq m}\E\left|\left(V_{k}^{T}\mathcal{G}_{n}^{(k)}(w)(\mathcal{G}_{n}^{(k)}(z))^{2}U_{k}\right)_{i,i}\oindicator{\Omega_{n,k}}\right|^{2}\\\fi 
	&\quad \ll \max_{1 \leq i \leq m}\E\left|e_{(i-1)n+k}^{T}\mathcal{G}_{n}^{(k)}(w)(\mathcal{G}_{n}^{(k)}(z))^{2}c_{(i-1)n+k}\oindicator{\Omega_{n,k}}\right|^{2}\\
	&\quad \ll \max_{1 \leq i \leq m}\E\left[c_{(i-1)n+k}^{*}(\mathcal{G}_{n}^{(k)}(z))^{2*}(\mathcal{G}_{n}^{(k)}(w))^{*}e_{(i-1)n+k}\right.\\
	&\quad\quad\quad\quad\quad\quad\quad\quad\quad\quad\quad\quad\left.\times e_{(i-1)n+k}^{T}\mathcal{G}_{n}^{(k)}(w)(\mathcal{G}_{n}^{(k)}(z))^{2}c_{(i-1)n+k}\oindicator{\Omega_{n,k}}\right]\\
	\ifdetail&\quad \ll m^{2}n^{-1}\max_{1 \leq i \leq m}\E\lnorm(\mathcal{G}_{n}^{(k)}(z))^{2*}(\mathcal{G}_{n}^{(k)}(w))^{*}e_{(i-1)n+k}\right.\\\fi 
	\ifdetail&\quad\quad\quad\quad\quad\quad\quad\quad\quad\quad\quad\quad\times\left. e_{(i-1)n+k}^{T}\mathcal{G}_{n}^{(k)}(w)(\mathcal{G}_{n}^{(k)}(z))^{2}\oindicator{\Omega_{n,k}}\rnorm\\\fi 
	\ifdetail&\quad \leq m^{2}n^{-1}\max_{1 \leq i \leq m}\E\left[\lnorm(\mathcal{G}_{n}^{(k)}(z))^{2*}(\mathcal{G}_{n}^{(k)}(w))^{*}\oindicator{\Omega_{n,k}}\rnorm\lnorm e_{(i-1)n+k}\rnorm\right.\\\fi 
	\ifdetail&\quad\quad\quad\quad\quad\quad\quad\quad\quad\quad\quad\quad\left.\times \lnorm e_{(i-1)n+k}^{T}\rnorm \lnorm\mathcal{G}_{n}^{(k)}(w)(\mathcal{G}_{n}^{(k)}(z))^{2}\oindicator{\Omega_{n,k}}\rnorm\right]\\\fi 
	&\quad \ll n^{-1}.
	\end{align*}
	This concludes the argument for \eqref{Equ:Lem:Tightness:9} and the proof of Lemma \ref{Lem:LipschitzInequality}. 
	\begin{Details} 
		The argument for the third term will be the same as the argument for the second. Therefore, we have shown that
		\begin{align*}
		&\E\left|\frac{\Xi_{n}(z)-\Xi_{n}(w)}{z-w}\right|^{2}\\
		&\ll \sum_{k=1}^{n} \E\left|\tr\left((V_{k}^{T}\mathcal{G}_{n}^{(k)}(z)\mathcal{G}_{n}^{(k)}(w)U_{k})^{2}\right)\oindicator{\Omega_{n,k}}\right|^{2}\\
		&\quad\quad\quad +\E\left|\tr\left(V_{k}^{T}\mathcal{G}_{n}^{(k)}(w)(\mathcal{G}_{n}^{(k)}(z))^{2}U_{k}\right)\oindicator{\Omega_{n,k}}\right|^{2}\\
		&\quad\quad\quad+\E\left|\tr\left(V_{k}^{T}\mathcal{G}_{n}^{(k)}(z)(\mathcal{G}_{n}^{(k)}(w))^{2}U_{k}\right)\oindicator{\Omega_{n,k}}\right|^{2}+O(1)\\
		&\ll_{m}\sum_{k=1}^{n}( n^{-2} + n^{-1})+O(1)\\
		&\ll_{m}1.
		\end{align*}
		This concludes the proof of the lemma.
	\end{Details}
	\end{proof}

\appendix
\section{Truncation Arguments}
\label{Sec:AppendexTruncation}
This section is devoted to the proof of Lemma \ref{Lem:TruncateLeveln}.
\begin{proof}[Proof of Lemma \ref{Lem:TruncateLeveln}]
	First, we prove property \ref{item:Lem:TruncateLeveln:i}. Observe that
	\begin{align*}
	1&=\Var(\xi)\\
	\ifdetail&=\E\left[\left(\xi-\E[\xi]\right)^{2}\right]\\\fi 
	\ifdetail&=\E[\xi^{2}]\\\fi 
	&=\E[\xi^{2}\indicator{|\xi|\leq n^{1/2-\varepsilon}}]+\E[\xi^{2}\indicator{|\xi|> n^{1/2-\varepsilon}}]\\
	\ifdetail&=\E[\xi^{2}\indicator{|\xi|\leq n^{1/2-\varepsilon}}]-\left(\E[\xi\indicator{|\xi|\leq n^{1/2-\varepsilon}}]\right)^{2}+\left(\E[\xi\indicator{|\xi|\leq n^{1/2-\varepsilon}}]\right)^{2}+\E[\xi^{2}\indicator{|\xi|> n^{1/2-\varepsilon}}]\\\fi 
	&=\Var(\tilde{\xi})+\left(\E[\xi\indicator{|\xi|\leq n^{1/2-\varepsilon}}]\right)^{2}+\E[\xi^{2}\indicator{|\xi|> n^{1/2-\varepsilon}}].
	\end{align*}
Also observe that 
	\begin{equation*}
	0=\E[\xi]=\E[\xi\indicator{|\xi|\leq n^{1/2-\varepsilon}}]+\E[\xi\indicator{|\xi|> n^{1/2-\varepsilon}}]\\
	\end{equation*}
which implies $\left|\E[\xi\indicator{|\xi|\leq n^{1/2-\varepsilon}}]\right|=\left|\E[\xi\indicator{|\xi|> n^{1/2-\varepsilon}}]\right|.$ Hence
	\begin{align*}
	|1-\Var(\tilde{\xi})|&=\left(\E[\xi\indicator{|\xi|\leq n^{1/2-\varepsilon}}]\right)^{2}+\E[\xi^{2}\indicator{|\xi|> n^{1/2-\varepsilon}}]\\
	\ifdetail&=\left|\E[\xi\indicator{|\xi|\leq n^{1/2-\varepsilon}}]\right|^{2}+\E[\xi^{2}\indicator{|\xi|> n^{1/2-\varepsilon}}]\\\fi 
	&=\left|\E[\xi\indicator{|\xi|> n^{1/2-\varepsilon}}]\right|^{2}+\E[\xi^{2}\indicator{|\xi|> n^{1/2-\varepsilon}}]\\
	\ifdetail &\leq 2\E[|\xi|^{2}\indicator{|\xi|> n^{1/2-\varepsilon}}]\\\fi
	&\leq 2\E\left[\frac{|\xi|^{4}}{n^{1-2\varepsilon}}\indicator{|\xi|> n^{1/2-\varepsilon}}\right]\\
	\ifdetail&= 2n^{-1+2\varepsilon}n^{-4\varepsilon}\cdot n^{4\varepsilon}\E\left[|\xi|^{4}\indicator{|\xi|> n^{1/2-\varepsilon}}\right]\\\fi 
	&=o(n^{-1-2\varepsilon}).
	\end{align*}
	Next we move onto \ref{item:Lem:TruncateLeveln:ii}. By construction, $\E[\hat{\xi}]=0$ and $\Var(\hat{\xi})=1$ provided $n$ is sufficiently large. By part \ref{item:Lem:TruncateLeveln:i},
	\begin{equation*}
	1-\frac{C}{n^{1+2\varepsilon}}\leq \var(\tilde{\xi})
	\end{equation*}
	for some constant $C>0$ so choosing $N_{0}>\left(\frac{4C}{3}\right)^{1/(1+2\varepsilon)}$ ensures that $\frac{1}{4}\leq\Var(\tilde{\xi})$, which gives $2\geq\left(\Var(\tilde{\xi})\right)^{-1/2}$ for $n>N_{0}$. With such an $n>N_{0}$, 
	\begin{align*}
	\left|\hat{\xi}\right|&=\left|\frac{\xi\indicator{|\xi|\leq n^{1/2-\varepsilon}}-\E\left[\xi\indicator{|\xi|\leq n^{1/2-\varepsilon}}\right]}{\sqrt{\Var(\tilde{\xi})}}\right|\\
	&\leq 2\left|\xi\indicator{|\xi|\leq n^{1/2-\varepsilon}}\right|+2\left|\E\left[\xi\indicator{|\xi|\leq n^{1/2-\varepsilon}}\right]\right|\\
	\ifdetail&\leq 2\left|\xi\right|\indicator{|\xi|\leq n^{1/2-\varepsilon}}+2\E\left[\left|\xi\right|\indicator{|\xi|\leq n^{1/2-\varepsilon}}\right]\\\fi 
	\ifdetail&\leq 2\left|\xi\right|\indicator{|\xi|\leq n^{1/2-\varepsilon}}+2\E\left[\left|\xi\right|\indicator{|\xi|\leq n^{1/2-\varepsilon}}\right]\\\fi 
	&\leq 4n^{1/2-\varepsilon}
	\end{align*}
	almost surely. For part \ref{item:Lem:TruncateLeveln:iii}, we have
	\begin{align*}
	\E|\hat{\xi}|^{4}&=\E\left|\frac{\xi\indicator{|\xi|\leq n^{1/2-\varepsilon}}-\E\left[\xi\indicator{|\xi|\leq n^{1/2-\varepsilon}}\right]}{\sqrt{\Var(\tilde{\xi})}}\right|^{4}\\
	&\leq 2^{4}\E\left|\xi\indicator{|\xi|\leq n^{1/2-\varepsilon}}-\E\left[\xi\indicator{|\xi|\leq n^{1/2-\varepsilon}}\right]\right|^{4}\\
	\ifdetail &\leq 2^{8}\E\left|\xi\indicator{|\xi|\leq n^{1/2-\varepsilon}}\right|^{4}\\\fi 
	&\leq 2^{8}\E\left|\xi\right|^{4}
	\end{align*} 
	completing the proof of the claim.
\end{proof}

\section{Largest and Smallest Singular Values}
\label{Sec:AppendexEvents}
In this section, we consider events concerning the largest and smallest singular values for the random matrices appearing in this paper. These results are included as an appendix because the methods used to prove them are slight modifications of those in \cite{COW,N,OR}. In order to prove these results, we need to introduce an intermediate truncation of the matrices. Specifically, let $\xi_{1},\xi_{2},\dots \xi_{m}$ be real-valued random variables each having mean zero, variance one, and finite $4+\tau$ moment for some $\tau>0$. Let $X_{n,1}X_{n,2},\dots X_{n,m}$ be independent iid $n\times n$ random matrices with atom random variables $\xi_{1},\xi_{2},\dots \xi_{m}$ respectively. For a fixed $\varepsilon>0$, and for each $1\leq k\leq m$, define truncated random variables (at $n^{1/2-\varepsilon}$) $\tilde{\xi}_{k}$ and $\hat{\xi}_{k}$ as in $\eqref{Equ:TruncateLeveln}$. Also define truncated matrices $\tilde{X}_{n,k}$ and $\hat{X}_{n,k}$ as in \eqref{def:Xtilde} and \eqref{def:Xhat} respectively. Define the linearized truncated matrix $\blmat{Y}{n}$ as in \eqref{Def:Y_n}. Also recall that $P_{n}=n^{-m/2}X_{n,1}X_{n,2}\cdots X_{n,m}$ and $\hat{P}_{n}=n^{-m/2}\hat{X}_{n,1}\hat{X}_{n,2}\cdots \hat{X}_{n,m}.$

Let $X$ be an $n\times n$ random matrix filled with iid copies of a random variable $\xi$ which has mean zero, unit variance, and finite $4+\tau$ moment. For a fixed constant $L>0$, define matrices $\mathring{X}$ and $\check{X}$ to be the $n\times n$ matrices with entries defined by 
\begin{equation}
\mathring{X}_{(i,j)}:=X_{(i,j)}\indicator{|X_{(i,j)}|\leq L/\sqrt{2}}-\E\left[X_{(i,j)}\indicator{|X_{(i,j)}|\leq L/\sqrt{2}}\right]\label{def:XLevelLIntermediate}
\end{equation}
and
\begin{equation}
\check{X}_{(i,j)}:= \frac{\mathring{X}_{(i,j)}}{\sqrt{\Var(\mathring{X}_{(i,j)})}}
\label{def:XLevelL}
\end{equation}
for $1 \leq i,j \leq n$. Define $\mathring{X}_{n,1},\mathring{X}_{n,2},\dots\mathring{X}_{n,m}$ and  $\check{X}_{n,1},\check{X}_{n,2},\dots\check{X}_{n,m}$ as in \eqref{def:XLevelLIntermediate} and \eqref{def:XLevelL} respectively. Finally, define the linearized truncated matrix
\begin{equation}
\check{\mathcal{Y}}_{n} :=n^{-1/2}\left[\begin{array}{ccccc}
0 & \check{X}_{n,1} & 0 & \cdots & 0\\
0 & 0 & \check{X}_{n,2} & \dots & 0\\
\vdots & \vdots & \vdots & \ddots & \vdots\\
0 & 0 & 0 & \cdots & \check{X}_{n,m-1}\\
\check{X}_{n,m} & 0 & 0 & \dots & 0
\end{array}\right].
\label{Def:Y_nLevelL}
\end{equation}

\begin{lemma}
	\label{Lem:Omega_nOverwhelming}
	Fix $\varepsilon>0$. For a fixed integer $m>0$, let $\xi_{1},\xi_{2},\dots \xi_{m}$ be real-valued random variables each mean zero, variance one, and finite $4+\tau$ moment for some $\tau>0$. Let $\hat{X}_{n,1},\hat{X}_{n,2},\dots,\hat{X}_{n,m}$ be independent iid random matrices with atom variables as defined in \eqref{def:Xhat}, and define $\blmat{Y}{n}$ as in \eqref{Def:Y_n}. For every $\delta>0$, there exists a constant $c>0$ depending only on $\delta$ such that
	\[\inf_{|z|> 1+\delta/2}s_{mn}\left(\blmat{Y}{n}-zI\right)\geq c\]
	with overwhelming probability. 
\end{lemma}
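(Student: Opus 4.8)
The plan is to reduce the statement about the linearized matrix $\blmat{Y}{n}$ (built from the level-$n^{1/2-\varepsilon}$ truncated matrices $\hat X_{n,k}$) to a statement about the coarsely truncated matrix $\check{\mathcal{Y}}_n$ of \eqref{Def:Y_nLevelL}, whose entries are bounded by a fixed constant $L$ independent of $n$. First I would show that the two matrices are close in operator norm: using Lemma \ref{Lem:XcloseXhat} (and the analogous comparisons between $\hat X_{n,k}$ and $\check X_{n,k}$, which follow from the same truncation estimates as in Appendix \ref{Sec:AppendexTruncation}), one gets $\|\blmat{Y}{n} - \check{\mathcal{Y}}_n\| = o(1)$ with overwhelming probability, since $\blmat{Y}{n} - \check{\mathcal{Y}}_n$ is a block matrix whose blocks are differences of $n^{-1/2}$-scaled iid matrices, and a telescoping/union bound controls the operator norm of such a block matrix by the sum of the block operator norms. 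By Weyl's inequality \eqref{Equ:weyl} for singular values, $|s_{mn}(\blmat{Y}{n}-zI) - s_{mn}(\check{\mathcal{Y}}_n - zI)| \le \|\blmat{Y}{n} - \check{\mathcal{Y}}_n\|$ uniformly in $z$, so it suffices to prove $\inf_{|z|>1+\delta/2} s_{mn}(\check{\mathcal{Y}}_n - zI) \ge 2c$ with overwhelming probability for the coarsely truncated matrix.

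Next I would handle the coarsely truncated matrix $\check{\mathcal{Y}}_n$, whose entries are uniformly bounded, have mean zero and unit variance. Here I would invoke the methods of \cite{COW, N, OR} (as the statement explicitly flags), which establish least-singular-value bounds for such linearized product matrices. The key point is that $\check{\mathcal{Y}}_n$ has the same eigenvalues-to-the-$m$-th-power structure as the product $\check P_n := n^{-m/2}\check X_{n,1}\cdots\check X_{n,m}$ by Proposition \ref{prop:linear}, and more importantly $\det(\check{\mathcal{Y}}_n - zI)$ can be related to $\det(\check P_n - z^m I)$ up to explicit factors (again via the block structure used in the proof of Proposition \ref{prop:linear}), so that $s_{mn}(\check{\mathcal{Y}}_n - zI)$ being bounded below is equivalent, up to polynomial-in-$n$ and $|z|$ factors that are harmless in the regime $|z| > 1+\delta/2$, to the least singular value of $\check P_n - z^m I$ being bounded below. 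One then applies the standard least-singular-value machinery for products of iid matrices with bounded entries: condition on $m-1$ of the factors and use the small-ball / distance-to-subspace estimates for the remaining factor, together with a net argument over the region $\{|z| \ge (1+\delta/2)\} \cap \{|z| \le R\}$ for $R$ a large constant (the region $|z| > R$ being trivial since $\|\check{\mathcal{Y}}_n\| = O(1)$ with overwhelming probability by Lemma \ref{Lem:TruncatedOpNormSquaredBounded} applied to the bounded-entry matrices, so $s_{mn}(\check{\mathcal{Y}}_n - zI) \ge |z| - \|\check{\mathcal{Y}}_n\| \ge R/2$ there).

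The main obstacle I anticipate is the net argument over $z$ combined with the least-singular-value estimate: one needs a bound of the form $\P(s_{mn}(\check{\mathcal{Y}}_n - zI) \le c) \le \exp(-\Omega(n))$ (or at least $O(n^{-A})$ for arbitrarily large $A$, after which a polynomial net of spacing $n^{-B}$ suffices because the map $z \mapsto s_{mn}(\check{\mathcal{Y}}_n - zI)$ is $1$-Lipschitz), for each fixed $z$ with $|z| > 1+\delta/2$. For a single iid matrix this is classical, but for a product — or equivalently for the block matrix $\check{\mathcal{Y}}_n$ — one must either track the least singular value through the product (where a naive bound loses because the least singular value of a product can be much smaller than the product of least singular values) or work directly with the block structure of $\check{\mathcal{Y}}_n - zI$, peeling off one block-row at a time via Schur complements. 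This is exactly where the arguments of \cite{COW, N, OR} do the real work; since the entries here are bounded by a fixed constant, those arguments apply essentially verbatim, and I would cite them rather than reproduce the small-ball analysis. The uniformity in $\delta$ of the resulting constant $c$ comes out of the proof because the only place $\delta$ enters is in bounding $|z^m - w|$-type quantities away from the spectrum, which is controlled by $\delta$ alone.
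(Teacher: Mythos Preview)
Your approach is essentially the paper's: reduce to the coarsely truncated linearized matrix $\check{\mathcal{Y}}_n$ of \eqref{Def:Y_nLevelL}, control $\|\blmat{Y}{n}-\check{\mathcal{Y}}_n\|$ via Weyl's inequality \eqref{Equ:weyl}, and then cite \cite{COW,N} (specifically \cite[Lemma~8.1]{COW}) for the least singular value bound on $\check{\mathcal{Y}}_n$. One small correction: the claim $\|\blmat{Y}{n}-\check{\mathcal{Y}}_n\|=o(1)$ is not true for \emph{fixed} $L$, since the entries in the range $L\le|\xi|\le n^{1/2-\varepsilon}$ contribute an operator norm that does not vanish as $n\to\infty$; the paper instead shows that for $L$ chosen sufficiently large (but still fixed) this norm is at most $c'/4$ with overwhelming probability, using \cite[Theorem~5.9]{BSbook} on the intermediate piece and separate control of the variance-renormalization factors --- the quantifiers are ``choose $L$ large depending on $c'$'' rather than ``$o(1)$ in $n$''.
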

\begin{proof}
	Fix $\delta>0$ and define $\check{\mathcal{Y}}_{n}$ as in \eqref{Def:Y_nLevelL}. By \cite[Lemma 8.1]{COW}, which is based on techniques in \cite{N,N2}, we know that there exists a constant $c'>0$ which depends only on $\delta$ such that $\inf_{|z|> 1+\delta/2}s_{mn}\left(\check{\mathcal{Y}}_{n}-zI\right)\geq c'$ with overwhelming probability. Note that by Weyl's inequality \eqref{Equ:weyl}, 
	\begin{equation}\label{Equ:Lem:Omega_nOverwhelming}
	\sup_{z\in\mathcal{C}}\left|s_{mn}\left(\check{\mathcal{Y}}_{n}-zI\right)-s_{mn}\left(\blmat{Y}{n}-zI\right)\right|\leq \lnorm \check{\mathcal{Y}}_{n}-\blmat{Y}{n}\rnorm\leq \max_{1 \leq k \leq m}\frac{1}{\sqrt{n}}\lnorm \check{X}_{n,k}-\hat{X}_{n,k}\rnorm.
	\end{equation} 
	\begin{Details}
		Since this is uniform in $z$, it will be sufficient to prove that 
		\[\limsup_{n\rightarrow\infty}\max_{1 \leq k \leq m}\frac{1}{\sqrt{n}}\lnorm \check{X}_{n,k}-\hat{X}_{n,k}\rnorm <\frac{c'}{2}.\]
	\end{Details}
	Focusing on an arbitrary value of $k$, we have
	\begin{equation*}
	\frac{1}{\sqrt{n}}\lnorm \check{X}_{n,k}-\hat{X}_{n,k}\rnorm  \leq \frac{1}{\sqrt{n}}\lnorm \frac{\mathring{X}_{n,k}}{\sqrt{\Var((\mathring{X}_{n,k})_{(i,j)})}}-\frac{\tilde{X}_{n,k}}{\sqrt{\Var((\tilde{X}_{n,k})_{(i,j)})}}\rnorm\\
	\end{equation*}
	for any $1\leq i,j\leq n$. Observe that 
	\begin{equation*}
	\frac{1}{\sqrt{n}}\lnorm \frac{\mathring{X}_{n,k}}{\sqrt{\Var((\mathring{X}_{n,k})_{(i,j)})}}- \mathring{X}_{n,k}\rnorm=\frac{1}{\sqrt{n}}\lnorm \frac{\mathring{X}_{n,k}\left(1-\sqrt{\Var((\mathring{X}_{n,k})_{(i,j)})}\right)}{\sqrt{\Var((\mathring{X}_{n,k})_{(i,j)})}}\rnorm.
	\end{equation*}
	By \cite[Lemma 7.1]{COW}, $\left(\Var((\mathring{X}_{n,k})_{(i,j)})\right)^{-1/2}\leq 2$ for $L$ sufficiently large. Additionally, an argument similar to that of \cite[Lemma 7.1]{COW} shows that\\ $\left|1-\sqrt{\Var((\mathring{X}_{n,k})_{(i,j)})}\right|\leq \frac{C}{L^{2}}$ for any $1\leq i,j\leq n$ and some constant $C>0$. Therefore by \cite[Theorem 1.4]{Tout}, for $L$ sufficiently large,
	\[\frac{1}{\sqrt{n}}\lnorm \frac{\mathring{X}_{n,k}}{\sqrt{\Var((\mathring{X}_{n,k})_{(i,j)})}}- \mathring{X}_{n,k}\rnorm\leq \frac{C}{L^{2}\sqrt{n}}\lnorm \frac{\mathring{X}_{n,k}}{\sqrt{\Var((\mathring{X}_{n,k})_{(i,j)})}}\rnorm \leq \frac{c'}{16}\]
	with overwhelming probability. Similarly, 
	\begin{equation*}
	\frac{1}{\sqrt{n}}\lnorm \frac{\tilde{X}_{n,k}}{\sqrt{\Var((\tilde{X}_{n,k})_{(i,j)})}}- \tilde{X}_{n,k}\rnorm=\frac{1}{\sqrt{n}}\lnorm \frac{\tilde{X}_{n,k}\left(1-\sqrt{\Var((\tilde{X}_{n,k})_{(i,j)})}\right)}{\sqrt{\Var((\tilde{X}_{n,k})_{(i,j)})}}\rnorm.
	\end{equation*}
	By the arguments to prove part \ref{item:Lem:TruncateLeveln:ii} of Lemma \ref{Lem:TruncateLeveln}, $\left(\Var((\tilde{X}_{n,k})_{(i,j)})\right)^{-1/2}\leq 2$ for $n$ sufficiently large. Also, by part \ref{item:Lem:TruncateLeveln:i} of Lemma \ref{Lem:TruncateLeveln}, we can show that $\left|1-\sqrt{\Var((\tilde{X}_{n,k})_{(i,j)})}\right|=o(n^{-1+2\varepsilon})$. Therefore by \cite[Theorem 5.9]{BSbook},
	\[\frac{1}{\sqrt{n}}\lnorm \frac{\tilde{X}_{n,k}}{\sqrt{\Var((\tilde{X}_{n,k})_{(i,j)})}}- \tilde{X}_{n,k}\rnorm =o(n^{-1-2\varepsilon}) \frac{1}{\sqrt{n}}\lnorm \tilde{X}_{n,k}\rnorm \leq \frac{c'}{16}\]
	with overwhelming probability. Ergo, by the triangle inequality, for $L$ sufficiently large,
	\begin{align}
	\frac{1}{\sqrt{n}}\lnorm \check{X}_{n,k}-\hat{X}_{n,k}\rnorm&\leq \frac{1}{\sqrt{n}}\lnorm \frac{\mathring{X}_{n,k}}{\sqrt{\Var((\mathring{X}_{n,k})_{(i,j)})}}-\frac{\tilde{X}_{n,k}}{\sqrt{\Var((\tilde{X}_{n,k})_{(i,j)})}}\rnorm\notag\\
	&\leq \frac{c'}{8}+ \frac{1}{\sqrt{n}}\lnorm \mathring{X}_{n,k}-\tilde{X}_{n,k}\rnorm\label{Equ:Lem:OmeganOverwhelming:1}
	\end{align} 
	with overwhelming probability. 
	
	Now, recall that the entries of $\mathring{X}_{n,k}$ are truncated at level $L$ for a fixed $L>0$ so for sufficiently large $n$, $L\leq n^{1/2-\varepsilon}$. Note that if all entries are less than $L$ in absolute value, then the entries in $\mathring{X}_{n,k}$ and $\tilde{X}_{n}$ agree. Similarly, if all entries are greater than $n^{1/2-\varepsilon}$ then the entries in $\mathring{X}_{n,k}$ and $\tilde{X}_{n}$ agree. Ergo, we need only consider the case when there exists some entries $1\leq i,j\leq n$ such that $L\leq|(\tilde{X}_{n,k})_{i,j}|\leq n^{1/2-\varepsilon}$. For each $1\leq k\leq m$, define the random variables
	\begin{equation*}
	\dot{\xi}_{k} := \xi_{k}\indicator{L\leq |\xi_{k}|\leq n^{1/2-\varepsilon}}-\E\left[\xi_{k}\indicator{L\leq |\xi_{k}|\leq n^{1/2-\varepsilon}}\right]
	\end{equation*}
	and define $\dot{X}_{n,k}$ to be the matrix with entries
	\[(\dot{X}_{n,k})_{(i,j)} := (X_{n,k})_{(i,j)}\indicator{L\leq |(X_{n,k})_{(i,j)}|\leq n^{1/2-\varepsilon}}-\E\left[(X_{n,k})_{(i,j)}\indicator{L\leq |(X_{n,k})_{(i,j)}|\leq n^{1/2-\varepsilon}}\right].\]
	for $1\leq i,j \leq n$. Note that the definitions of $\dot{\xi}$ and $\dot{X}_{n,k}$ differs from the definitions in Section \ref{Sec:Reduction}. We will use the definition given in this appendix for the remainder of this proof. We can write
	\begin{equation*}
	\frac{1}{\sqrt{n}}\lnorm \mathring{X}_{n,k}-\tilde{X}_{n,k}\rnorm  = \frac{1}{\sqrt{n}}\lnorm \dot{X}_{n,k}\rnorm.\\
	\end{equation*}
	By \cite[Lemma 5.9]{BSbook}, for $L$ sufficiently large
	\begin{equation} 
	\frac{1}{\sqrt{n}}\lnorm \dot{X}_{n,k}\rnorm \leq \frac{c'}{8}\label{Equ:Lem:OmeganOverwhelming:2}
	\end{equation}
	with overwhelming probability. Thus, by choosing $L$ large enough to satisfy both conditions, by \eqref{Equ:Lem:OmeganOverwhelming:1} and \eqref{Equ:Lem:OmeganOverwhelming:2}, 
	\[\max_{1 \leq k \leq m}\frac{1}{\sqrt{n}}\lnorm \check{X}_{n,k}-\hat{X}_{n,k}\rnorm <\frac{c'}{4}\]
	with overwhelming probability. By recalling \eqref{Equ:Lem:Omega_nOverwhelming}, this implies that, for $L$ sufficiently large,
	\[\inf_{|z|>1+\delta/2}s_{mn}\left(\blmat{Y}{n}-zI\right)\geq c\]
	with overwhelming probability where $c=\frac{c'}{2}$.
\end{proof}

\begin{lemma}
	\label{Lem:E_nHatOverwhelming}
	Fix $\varepsilon>0$. For a fixed integer $m>0$, let $\xi_{1},\xi_{2},\dots \xi_{m}$ be real-valued random variables each mean zero, variance one, and finite $4+\tau$ moment for some $\tau>0$. Let ${X}_{n,1},{X}_{n,2},\dots,{X}_{n,m}$ be independent iid random matrices with atom variables $\xi_{1},\xi_{2},\dots,\xi_{m}$ respectively. Define $\hat{X}_{n,1},\hat{X}_{n,2},\dots\hat{X}_{n,m}$ as in \eqref{def:Xhat}, and define $\hat{P}_{n}$ as in \eqref{Def:TruncatedProducts}. For any $\delta>0$, there exists a constant $c>0$ depending only on $\delta$ such that 
	\[\inf_{|z|> 1+\delta/2}s_{mn}\left(\hat{P}_{n}-zI\right)\geq c\]
	with overwhelming probability.
\end{lemma}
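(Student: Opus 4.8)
The plan is to deduce the statement from Lemma~\ref{Lem:Omega_nOverwhelming}, which controls the least singular value of the linearized matrix $\blmat{Y}{n}$, together with Proposition~\ref{prop:linear}. Recall from the proof of Proposition~\ref{prop:linear} that $\blmat{Y}{n}^{m}$ is block diagonal with diagonal blocks $Z_{1},\dots,Z_{m}$, where $Z_{1}=\hat{P}_{n}$ and the remaining $Z_{k}$ are cyclic rearrangements of the same product. Since the singular values of a block-diagonal matrix are the union (with multiplicity) of those of its blocks, for every $z\in\mathbb{C}$ we have
\[
s_{n}(\hat{P}_{n}-zI)\;=\;s_{n}(Z_{1}-zI)\;\geq\;\min_{1\leq k\leq m}s_{n}(Z_{k}-zI)\;=\;s_{mn}\bigl(\blmat{Y}{n}^{m}-zI\bigr).
\]
So it suffices to bound $s_{mn}(\blmat{Y}{n}^{m}-zI)$ from below, uniformly for $|z|>1+\delta/2$, on an event of overwhelming probability.

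To do this I would factor, for any fixed $m$-th root $z^{1/m}$ of $z$ and $\omega=e^{2\pi\sqrt{-1}/m}$,
\[
\blmat{Y}{n}^{m}-zI\;=\;\prod_{j=0}^{m-1}\bigl(\blmat{Y}{n}-z^{1/m}\omega^{j}I\bigr),
\]
which is valid because this is a polynomial identity in the commuting quantity $\blmat{Y}{n}$. Using the submultiplicativity of the least singular value, $s_{\min}(AB)\geq s_{\min}(A)s_{\min}(B)$ (which follows from $\lnorm ABx\rnorm\geq s_{\min}(A)\lnorm Bx\rnorm\geq s_{\min}(A)s_{\min}(B)\lnorm x\rnorm$), this gives
\[
s_{mn}\bigl(\blmat{Y}{n}^{m}-zI\bigr)\;\geq\;\prod_{j=0}^{m-1}s_{mn}\bigl(\blmat{Y}{n}-z^{1/m}\omega^{j}I\bigr).
\]
Now choose $\delta'>0$ with $1+\delta'/2=(1+\delta/2)^{1/m}$. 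If $|z|>1+\delta/2$, then $|z^{1/m}\omega^{j}|=|z|^{1/m}>(1+\delta/2)^{1/m}=1+\delta'/2$ for every $j$, so each factor on the right is at least $c'$ on the event supplied by Lemma~\ref{Lem:Omega_nOverwhelming} applied with $\delta'$ in place of $\delta$. Crucially, that lemma produces a \emph{single} overwhelming-probability event on which $s_{mn}(\blmat{Y}{n}-wI)\geq c'$ holds simultaneously for all $w$ with $|w|>1+\delta'/2$, so no union bound over $z$ is required; on that event $s_{mn}(\blmat{Y}{n}^{m}-zI)\geq (c')^{m}$ for all $|z|>1+\delta/2$. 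Setting $c:=(c')^{m}$ and combining with the block-diagonal bound above completes the proof.

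The argument is essentially bookkeeping: the only points needing care are (i) invoking Lemma~\ref{Lem:Omega_nOverwhelming} with the correctly rescaled radius $\delta'=2\bigl((1+\delta/2)^{1/m}-1\bigr)$, and (ii) verifying that every $m$-th root of a point in the exterior region $\{|z|>1+\delta/2\}$ lies in the exterior region $\{|w|>1+\delta'/2\}$, which is immediate since $|z^{1/m}\omega^{j}|=|z|^{1/m}$. The genuinely difficult input — the quantitative least-singular-value bound for $\blmat{Y}{n}$ itself — has already been established as Lemma~\ref{Lem:Omega_nOverwhelming} via the techniques of \cite{COW,N,N2}, so no substantial obstacle remains here; if anything, the mild subtlety is just ensuring the exponents of $n$ in the overwhelming-probability bounds are preserved under the finite product of $m$ factors, which is automatic for fixed $m$.
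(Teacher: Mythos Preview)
Your proof is correct and reaches the same conclusion from Lemma~\ref{Lem:Omega_nOverwhelming}, but by a different route than the paper. The paper does not pass through $\blmat{Y}{n}^{m}$ or the polynomial factorization; instead it computes directly that the $(1,1)$ block of the resolvent satisfies
\[
\bigl((\blmat{Y}{n}-zI)^{-1}\bigr)^{[1,1]}=z^{m-1}(\hat P_{n}-z^{m}I)^{-1},
\]
and then bounds the norm of a submatrix by the norm of the full matrix, so that $\|(\hat P_{n}-z^{m}I)^{-1}\|\leq |z|^{1-m}\,\|(\blmat{Y}{n}-zI)^{-1}\|\leq |z|^{1-m}/c'$ on the event from Lemma~\ref{Lem:Omega_nOverwhelming}. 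This gives the bound after the change of variable $z\mapsto z^{m}$ (with the same implicit rescaling of $\delta$ you made explicit).

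Your argument is arguably more elementary: it avoids the block-inverse identity and uses only the block-diagonal form of $\blmat{Y}{n}^{m}$, the factorization $t^{m}-z=\prod_{j}(t-z^{1/m}\omega^{j})$, and the inequality $s_{\min}(AB)\geq s_{\min}(A)s_{\min}(B)$. The paper's route, on the other hand, yields the explicit resolvent relation between $\blmat{Y}{n}$ and $\hat P_{n}$, which is conceptually closer to how the linearization is used elsewhere. Either way the substantive input is Lemma~\ref{Lem:Omega_nOverwhelming}, and both derivations are short bookkeeping on top of it.
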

\begin{proof}
	Fix $\delta>0$. By Lemma \ref{Lem:Omega_nOverwhelming}, we know that there exists some $c'>0$ such that $\inf_{|z|>1+\delta/2}s_{mn}\left(\blmat{Y}{n}-zI\right)\geq c'$ with overwhelming probability as well. Recall that $s_{mn}\left(\blmat{Y}{n}-zI\right)=s_{1}\left(\left(\blmat{Y}{n}-zI\right)^{-1}\right)$ provided $z$ is not an eigenvalue of $\blmat{Y}{n}$. A block inverse matrix calculation reveals that 
	\[\left(\left(\blmat{Y}{n}-zI\right)^{-1}\right)^{[1,1]}=z^{m-1}\left(\hat{P}_{n}-z^{m}I\right)^{-1}\]
	where the notation $A^{[1,1]}$ denotes the upper left $n\times n$ block of $A$. Therefore,
	\begin{equation*}
	\frac{1}{c'} \geq \sup_{|z|> 1+\delta/2}s_{1}\left(\left(\blmat{Y}{n}-zI\right)^{-1}\right)\geq \sup_{|z|> 1+\delta/2}|z|^{m-1}\lnorm\left(\hat{P}_{n}-z^{m}I\right)^{-1}\rnorm.
	\end{equation*}
	This implies that there exists a constant $c>0$ such that 
	\[\frac{1}{c}\geq \sup_{|z|> 1+\delta/2}s_{1}\left(\left(\hat{P}_{n}-zI\right)^{-1}\right)\] 
	with overwhelming probability. This gives $\inf_{|z| > 1+\delta/2}s_{n}\left(\hat{P}_{n}-zI\right)\geq c$ with overwhelming probability. 
\end{proof}

\begin{lemma}
	\label{Lem:E_nOverwhelming}
	For a fixed integer $m>0$, let $\xi_{1},\xi_{2},\dots \xi_{m}$ be real-valued random variables each satisfying Assumption \ref{assump:4PlusTau}. Fix $\delta >0$ and let $X_{n,1},X_{n,2},\dots X_{n,m}$ be independent iid random matrices with atom variables $\xi_{1},\xi_{2},\dots \xi_{m}$ respectively. Then there exists a constant $c>0$ depending only on $\delta$ such that 
	\[\inf_{|z|> 1+\delta/2}s_{n}\left(P_{n}/\sigma-zI\right)\geq c\]
	with probability $1-o(1)$ where $\sigma = \sigma_{1}\cdots \sigma_{m}$. 
\end{lemma}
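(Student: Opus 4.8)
The plan is to deduce this from the corresponding bound for the truncated product $\hat P_n$ (Lemma \ref{Lem:E_nHatOverwhelming}) together with Weyl's inequality for singular values. First I would reduce to the unit-variance case: replacing $X_{n,k}$ by $\sigma_k^{-1} X_{n,k}$ turns $P_n/\sigma$ into $n^{-m/2}(\sigma_1^{-1}X_{n,1})\cdots(\sigma_m^{-1}X_{n,m})$, and each rescaled atom variable still has mean zero, unit variance, and a finite $(4+\tau)$-th moment, so Assumption \ref{assump:4PlusTau} is preserved. Thus it suffices to prove the statement when $\sigma_k = 1$ for all $1 \le k \le m$, and I work under this assumption for the rest of the argument; write $P_n = n^{-m/2} X_{n,1}\cdots X_{n,m}$.

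Next, fix $\varepsilon > 0$ as in the truncation construction, let $\hat X_{n,k}$ be the truncated matrices from \eqref{def:Xhat}, and let $\hat P_n = n^{-m/2}\hat X_{n,1}\cdots \hat X_{n,m}$ as in \eqref{Def:TruncatedProducts}. By Lemma \ref{Lem:E_nHatOverwhelming} there is a constant $c' > 0$ depending only on $\delta$ such that the event $\{\inf_{|z| > 1+\delta/2} s_n(\hat P_n - zI) \ge c'\}$ holds with overwhelming probability. Weyl's inequality \eqref{Equ:weyl}, applied to $P_n - zI$ and $\hat P_n - zI$, gives
\[
\sup_{z \in \C} \bigl| s_n(P_n - zI) - s_n(\hat P_n - zI) \bigr| \le \| P_n - \hat P_n \|,
\]
while Lemma \ref{Lem:ProductsCloseToTruncatedProducts} yields $\P(\|P_n - \hat P_n\| > n^{-\varepsilon}) = o(1)$. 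Hence, for $n$ large enough that $n^{-\varepsilon} < c'/2$, the event $\{\|P_n - \hat P_n\| \le c'/2\}$ holds with probability $1 - o(1)$.

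Intersecting the two events — still a probability-$(1-o(1))$ event, since one of them is overwhelming — I obtain, uniformly over $|z| > 1 + \delta/2$,
\[
s_n(P_n - zI) \ge s_n(\hat P_n - zI) - \| P_n - \hat P_n \| \ge c' - c'/2 = c'/2,
\]
so the claim follows with $c := c'/2$. I do not expect a genuine obstacle here: all the quantitative content (the least-singular-value bound away from the spectrum) is already packaged in Lemma \ref{Lem:E_nHatOverwhelming}, which ultimately rests on the estimates of \cite{COW, N, N2}. The only subtlety is bookkeeping: the operator-norm comparison of Lemma \ref{Lem:ProductsCloseToTruncatedProducts} is controlled only by the $(4+\tau)$-th moment hypothesis, so at this last step one loses "overwhelming probability" and settles for "$1-o(1)$"; the step requiring the most care is simply verifying that the initial rescaling preserves Assumption \ref{assump:4PlusTau} and that the probabilistic combination of the overwhelming and the $1-o(1)$ events is handled correctly.
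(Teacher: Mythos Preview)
Your proposal is correct and matches the paper's proof essentially step for step: reduce to unit variance by rescaling, invoke Lemma~\ref{Lem:E_nHatOverwhelming} for $\hat P_n$, use Lemma~\ref{Lem:ProductsCloseToTruncatedProducts} to get $\|P_n-\hat P_n\|\le n^{-\varepsilon}<c'/2$ with probability $1-o(1)$, and combine via Weyl's inequality to conclude with $c=c'/2$. The only cosmetic difference is that the paper phrases the final step contrapositively while you phrase it directly.
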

\begin{proof}
	 By a simple rescaling, it is sufficient to assume that the variance of each random variable is 1 so that $\sigma=1$. Let $\delta>0$ and recall by Lemma \ref{Lem:E_nHatOverwhelming} there exists a $c'>0$ depending only on $\delta $ such that $\inf_{|z|> 1+\delta/2}s_{n}\left(\hat{P}_{n}-zI\right)\geq c'$
	with overwhelming probability. Then by Lemma \ref{Lem:ProductsCloseToTruncatedProducts},
	\begin{align*}
	&\P\left(\inf_{|z|> 1+\delta/2}s_{n}\left(P_{n}-zI\right)<\frac{c'}{2}\right)\\
	&\quad\quad = \P\left(\inf_{|z|> 1+\delta/2}s_{n}\left(P_{n}-zI\right)<\frac{c'}{2}\;\;\text{ and }\;\;\lnorm P_{n}-\hat{P}_{n}\rnorm\leq n^{-\varepsilon}\right)\\
	&\quad\quad\quad\quad + \P\left(\inf_{|z|> 1+\delta/2}s_{n}\left(P_{n}-zI\right)<\frac{c'}{2}\;\;\text{ and }\;\;\lnorm P_{n}-\hat{P}_{n}\rnorm> n^{-\varepsilon}\right)\\
	&\quad\quad \leq \P\left(\inf_{|z|> 1+\delta/2}s_{n}\left(P_{n}-zI\right)<\frac{c'}{2}\;\;\text{ and }\;\;\lnorm P_{n}-\hat{P}_{n}\rnorm\leq n^{-\varepsilon}\right)\\
	&\quad\quad\quad\quad + \P\left(\lnorm P_{n}-\hat{P}_{n}\rnorm> n^{-\varepsilon}\right)\\
	&\quad\quad \leq \P\left(\inf_{|z|> 1+\delta/2}s_{n}\left(P_{n}-zI\right)<\frac{c'}{2}\;\;\text{ and }\;\;\lnorm P_{n}-\hat{P}_{n}\rnorm\leq n^{-\varepsilon}\right)+o(1).
	\end{align*}
	Suppose that there exists a $z_{0}\in\C$ with $|z_{0}|\geq 1+\delta/2$ such that $s_{n}\left(P_{n}-z_{0}I\right)<\frac{c'}{2}$ and $\lnorm P_{n}-\hat{P}_{n}\rnorm <n^{-\varepsilon}<\frac{c'}{2}$. Then, by Weyl's inequality \eqref{Equ:weyl},\\ $\left|s_{n}(P_{n}-z_{0}I)-s_{n}(\hat{P}_{n}-z_{0}I)\right|<\frac{c'}{2}$ which implies $s_{n}(\hat{P}_{n}-z_{0}I)<c'$. Thus, for $n$ sufficiently large to ensure that $n^{-\varepsilon}<\frac{c'}{2}$, by Lemma \ref{Lem:ProductsCloseToTruncatedProducts}
	\begin{equation*}
	\P\left(\inf_{|z|> 1+\delta/2}s_{n}\left(P_{n}-zI\right)<\frac{c'}{2}\right)\leq\P\left(\inf_{|z|> 1+\delta/2}s_{n}\left(\hat{P}_{n}-zI\right)<c'\right)+o(1).
	\end{equation*}
Thus, selecting $c=\frac{c'}{2}$, we have $\inf_{|z|> 1+\delta/2}s_{n}\left(P_{n}-zI\right)\geq c$ with probability $1-o(1)$.
\end{proof}

\begin{lemma}\label{Lem:SingValColRemovedBoundedBelow}
	Let $A$ be an $n\times n$ matrix. Let $R$ be a subset of the integer set $\{1,2,\dots n\}$. Let $A^{(R)}$ denote the matrix $A$, but with the $r$th column replaced with zero for each $r\in R$. Then 
	\begin{equation*} 
	s_{n}\left(A^{(R)}-zI\right)\geq \min\{s_{n}(A-zI),|z|\}.
	\end{equation*}
\end{lemma}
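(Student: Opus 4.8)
The plan is to argue from the variational description $s_n(M)=\min_{\|x\|=1}\|Mx\|$. Fix a unit vector $x\in\C^n$ and split it along the two coordinate subspaces attached to $R$: write $x=x_S+x_R$, where $x_R$ is supported on $R$ and $x_S$ on its complement $S:=\{1,\dots,n\}\setminus R$, so that $\|x_S\|^2+\|x_R\|^2=1$. Since $A^{(R)}$ has zero columns at precisely the indices in $R$ and agrees with $A$ on the other columns, $A^{(R)}x=Ax_S$, which yields the basic identity
\[ (A^{(R)}-zI)x=(A-zI)x_S-zx_R . \]
This disposes of the two extreme cases immediately. If $x_R=0$, the right-hand side is $(A-zI)x$, whose norm is at least $s_n(A-zI)\|x\|=s_n(A-zI)$. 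If $x_S=0$, the right-hand side is $-zx_R$, of norm $|z|\|x_R\|=|z|$. So in both cases $\|(A^{(R)}-zI)x\|\ge\min\{s_n(A-zI),|z|\}$, and only the genuinely mixed case $0<\|x_R\|<1$ remains.

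For the mixed case I would project the identity onto the coordinate subspaces $V_S$ and $V_R$. Writing $\Pi_S,\Pi_R$ for the orthogonal projections and using $zx_R\in V_R$, one gets
\[ \|(A^{(R)}-zI)x\|^2=\|\Pi_S(A-zI)x_S\|^2+\|\Pi_R(A-zI)x_S-zx_R\|^2 , \]
while, since $\|(A-zI)x_S\|^2=\|\Pi_S(A-zI)x_S\|^2+\|\Pi_R(A-zI)x_S\|^2$, we have $\|(A-zI)x_S\|^2\ge s_n(A-zI)^2\|x_S\|^2$. The plan is to combine these two displays with a case split according to whether $|z|\|x_R\|$ or $\|\Pi_R(A-zI)x_S\|$ is the larger quantity, apply the reverse triangle inequality to the second term, and then minimize the resulting lower bound over $\|x_S\|\in[0,1]$; the minimum should come out to $\min\{s_n(A-zI)^2,|z|^2\}$.

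The step I expect to be the main obstacle is controlling the interaction between $\Pi_R(A-zI)x_S$ and $zx_R$: a plain Cauchy--Schwarz estimate $|\langle(A-zI)x_S,zx_R\rangle|\le|z|\,\|(A-zI)x_S\|\,\|x_R\|$ is too crude, since it permits $(A-zI)x_S$ to align with $x_R$ and cancel it, which destroys the bound. Closing the argument needs the fact that $\Pi_R(A-zI)x_S=\Pi_R Ax_S$ is not too large relative to $\|x_S\|$ and $\|(A-zI)x_S\|$ — i.e.\ that the rows of $A$ indexed by $R$ are tame. For the matrices to which this lemma is applied (the linearized block matrix $\mathcal{Y}$ and its column-deleted variants $\mathcal{Y}^{(k)},\mathcal{Y}^{(k,s)}$, whose diagonal blocks vanish and whose row/column pattern is cyclic), this is exactly the structure one has available, so I would carry out the remaining bookkeeping in that setting. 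Before assembling the general argument I would sanity-check the inequality on the small upper-triangular and $2\times2$ models, to be sure the case split and the optimization are set up correctly.
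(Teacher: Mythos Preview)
Your instinct that the cross term $\Pi_R(A-zI)x_S$ is the real obstacle is exactly right, and in fact it cannot be overcome: the lemma is false as stated. Take $n=2$, $R=\{2\}$, $z=1$, and
\[
A=\begin{pmatrix}0&100\\1&0\end{pmatrix},\qquad A^{(R)}-zI=\begin{pmatrix}-1&0\\1&-1\end{pmatrix},\qquad A-zI=\begin{pmatrix}-1&100\\1&-1\end{pmatrix}.
\]
One computes $s_2(A^{(R)}-zI)^2=(3-\sqrt{5})/2\approx 0.38$, while the Gram matrix of $A-zI$ has trace $10003$ and determinant $9801$, giving $s_2(A-zI)^2\approx 0.98$; with $|z|^2=1$ the claimed bound $0.38\ge\min\{0.98,1\}$ fails. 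In your decomposition this is precisely the cancellation you flagged: $\Pi_R(A-zI)x_S=(0,x_1)^T$ lines up with $zx_R=(0,x_2)^T$ and kills the second coordinate.

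The paper's own argument has the matching gap. It asserts that after a column permutation the Gram matrix $(A^{(R)}-zI)^*(A^{(R)}-zI)$ is block diagonal, with blocks $(A^{((R))}-zI^{((R))})^*(A^{((R))}-zI^{((R))})$ and $|z|^2 I_{|R|}$. But the off-diagonal block has $(s,r)$ entry equal to $(\text{column }s\text{ of }A-zI)^*(-ze_r)=-z\,\overline{A_{rs}}$ for $r\in R$ and $s\notin R$, and this vanishes only if the \emph{rows} of $A$ indexed by $R$ are zero off $R$ --- a hypothesis that is neither stated nor satisfied by the linearized matrices $\mathcal{Y}_n$ to which the lemma is later applied. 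So your plan to fall back on the specific block structure of $\mathcal{Y}_n$, $\mathcal{Y}_n^{(k)}$, $\mathcal{Y}_n^{(k,s)}$ is the correct instinct: the corollaries that invoke this lemma need an argument tailored to those matrices (or a direct least-singular-value bound for them), not the general deterministic statement, which is simply wrong.
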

\begin{proof}
	Let $A^{((R))}$ denote the matrix $A$ with column $r$ removed for all $r\in R$. Note that $A^{((R))}$ is an $n\times (n-|R|)$ matrix, which is distinct from the $n\times n$ matrix $A^{(R)}$. Also, let $I^{((R))}$ denote the $n\times n$ identity matrix with column $r$ removed for all $r\in R$. In order to bound the least singular value of $(A^{(R)}-zI)$, we will consider the eigenvalues of $\left(A-zI\right)^{*}\left(A-zI\right),$ $\left(A^{(R)}-zI\right)^{*}\left(A^{(R)}-zI\right),$ and $\left(A^{((R))}-zI^{((R))}\right)^{*}\left(A^{((R))}-zI^{((R))}\right).$
	
	Now, observe that $\left(A^{((R))}-zI^{((R))}\right)^{*}\left(A^{((R))}-zI^{((R))}\right)$ is an $(n-|R|)\times (n-|R|)$ matrix, and is a principle sub-matrix of the Hermitian matrix $(A-zI)^{*}(A-zI)$. Therefore, the eigenvalues of $\left(A^{((R))}-zI^{((R))}\right)^{*}\left(A^{((R))}-zI^{((R))}\right)$ must interlace with the eigenvalues of $\left(A-zI\right)^{*}\left(A-zI\right)$ by Cauchy's interlacing theorem \cite[Theorem 1]{HW}. This implies 
	\[s_{n}\left(A^{((R))}-zI^{((R))}\right)^{2}\geq s_{n}\left(A-zI\right)^{2}.\]
	Next, we compare the eigenvalues of $\left(A^{(R)}-zI\right)^{*}\left(A^{(R)}-zI\right)$ to the eigenvalues of $\left(A^{((R))}-zI^{((R))}\right)^{*}\left(A^{((R))}-zI^{((R))}\right)$. Note that, after a possible permutation of columns to move all zero columns of $A^{(R)}$ to be in the last $|R|$ columns, the product $\left(A^{(R)}-zI\right)^{*}\left(A^{(R)}-zI\right)$ becomes
	\[\left[\begin{array}{cc}
	\left(A^{((R))}-zI^{((R))}\right)^{*}\left(A^{((R))}-zI^{((R))}\right) & 0\cdot I_{|R|\times (n-|R|)}\\
	  &   \\
	0\cdot I_{(n-|R|)\times |R|} & |z|^{2}\cdot I_{|R|\times |R|}
	\end{array}\right].\]
	Due to the block structure of the matrix above, if $w$ is an eigenvalue of\\ $\left(A^{(R)}-zI\right)^{*}\left(A^{(R)}-zI\right)$, then either $w$ is an eigenvalue of\\ $\left(A^{((R))}-zI^{((R))}\right)^{*}\left(A^{((R))}-zI^{((R))}\right)$ or $w$ is $|z|^{2}$. Ergo,
	\begin{align*}
	s_{n}\left(A^{(R)}-zI\right)^{2} &= \min\left\{s_{n}\left(A^{((R))}-zI^{((R))}\right)^{2},\;|z|^{2}\right\}\\
	&\geq  \min\left\{s_{n}\left(A-zI\right)^{2},\;|z|^{2}\right\}
	\end{align*}
	which implies $s_{n}\left(A^{(R)}-zI\right)\geq  \min\left\{s_{n}\left(A-zI\right),\;|z|\right\}$ concluding the proof.
\end{proof}
This lemma gives way to the following two corollaries.

\begin{corollary}
	\label{Cor:Omega_nkOverwhelming}
	Fix $\varepsilon>0$. For a fixed integer $m>0$, let $\xi_{1},\xi_{2},\dots \xi_{m}$ be real-valued random variables each mean zero, variance one, and finite $4+\tau$ moment for some $\tau>0$.  Let ${X}_{n,1},{X}_{n,2},\dots,{X}_{n,m}$ be independent iid random matrices with atom variables $\xi_{1},\xi_{2},\dots,\xi_{m}$ respectively, and define $\hat{X}_{n,1},\hat{X}_{n,2},\dots\hat{X}_{n,m}$ as in \eqref{def:Xhat}. Define $\blmat{Y}{n}$ as in \eqref{Def:Y_n} and $\blmat{Y}{n}^{(k)}$ as $\blmat{Y}{n}$ with the columns $c_{k},c_{n+k},c_{2n+k},\dots,c_{(m-1)n+k}$ replaced with zeros. For any $\delta>0$, there exists a constant $c>0$ depending only on $\delta$ such that 
	\[\inf_{|z|> 1+\delta/2}s_{mn}\left(\blmat{Y}{n}^{(k)}-zI\right)\geq c\]
	with overwhelming probability. 
\end{corollary}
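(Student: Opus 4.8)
The plan is to deduce this corollary directly from the two preceding results: Lemma~\ref{Lem:Omega_nOverwhelming}, which is the hard least-singular-value estimate for the full linearized matrix $\blmat{Y}{n}$, and the elementary deterministic Lemma~\ref{Lem:SingValColRemovedBoundedBelow}, which bounds the least singular value of a matrix with some columns zeroed out in terms of the least singular value of the original matrix. No new estimate is needed; the entire content is in combining these two.

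First, fix $\delta>0$. By Lemma~\ref{Lem:Omega_nOverwhelming} there is a constant $c'>0$, depending only on $\delta$, such that the event
\[
\left\{\inf_{|z|>1+\delta/2}s_{mn}\left(\blmat{Y}{n}-zI\right)\geq c'\right\}
\]
holds with overwhelming probability. Next, observe that $\blmat{Y}{n}^{(k)}$ is precisely the matrix $\blmat{Y}{n}$ with the columns indexed by $R=\{k,\,n+k,\,2n+k,\,\dots,\,(m-1)n+k\}$ replaced by zero. Hence Lemma~\ref{Lem:SingValColRemovedBoundedBelow}, applied with $A=\blmat{Y}{n}$ and this choice of $R$, gives the \emph{deterministic} inequality
\[
s_{mn}\left(\blmat{Y}{n}^{(k)}-zI\right)\geq \min\left\{s_{mn}\left(\blmat{Y}{n}-zI\right),\,|z|\right\}
\]
for every $z\in\C$.

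Combining the two, on the overwhelming-probability event above we have, for every $z$ with $|z|>1+\delta/2$,
\[
s_{mn}\left(\blmat{Y}{n}^{(k)}-zI\right)\geq \min\left\{c',\,1+\delta/2\right\},
\]
using that $|z|>1+\delta/2$ on the relevant domain. Setting $c:=\min\{c',1+\delta/2\}$ and taking the infimum over such $z$ shows that the event $\{\inf_{|z|>1+\delta/2}s_{mn}(\blmat{Y}{n}^{(k)}-zI)\geq c\}$ contains an event of overwhelming probability, and therefore itself holds with overwhelming probability. Since $c$ depends only on $c'$ and $\delta$, hence only on $\delta$, this is exactly the asserted bound.

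There is no genuine obstacle here: the difficulty is entirely absorbed into Lemma~\ref{Lem:Omega_nOverwhelming} (which relies on the machinery of \cite{N,N2,COW}) and into the interlacing argument of Lemma~\ref{Lem:SingValColRemovedBoundedBelow}. The only two points needing a line of care are that the final constant $c$ must be taken as the minimum of $c'$ and the trivial lower bound $1+\delta/2$ on $|z|$, and that zeroing out the $m$ columns is a deterministic pointwise operation, so it neither degrades the probability estimate (an overwhelming-probability event is preserved under passing to a larger event) nor introduces any dependence on quantities other than $\delta$ and the fixed integer $m$.
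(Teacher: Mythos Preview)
Your proof is correct and follows essentially the same approach as the paper: combine Lemma~\ref{Lem:Omega_nOverwhelming} with the deterministic Lemma~\ref{Lem:SingValColRemovedBoundedBelow} applied to $\blmat{Y}{n}$ with the column set $R=\{k,n+k,\dots,(m-1)n+k\}$, and take the minimum of the resulting constant with the lower bound on $|z|$. The only cosmetic difference is that the paper bounds $|z|\geq 1$ rather than $|z|\geq 1+\delta/2$, yielding $c=\min\{c',1\}$ instead of your $c=\min\{c',1+\delta/2\}$; both are valid.
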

\begin{proof}
	Note that by Lemmas \ref{Lem:Omega_nOverwhelming} and \ref{Lem:SingValColRemovedBoundedBelow}, 
	\begin{align*}
	\inf_{|z|> 1+\delta/2}s_{mn}\left(\blmat{Y}{n}^{(k)}-zI\right)&\geq \inf_{|z|> 1+\delta/2} \min\left\{s_{mn}\left(\blmat{Y}{n}-zI\right),\;|z|\right\}\\
	&\geq \inf_{|z|> 1+\delta/2} \min\left\{s_{mn}\left(\blmat{Y}{n}-zI\right),\;1\right\}\\
	& \geq \min\left\{c',\;1\right\}
	\end{align*}
	with overwhelming probability for some constant $c'>0$ depending only on $\delta$. The result follows by setting $c=\min\left\{c',\;1\right\}$.
\end{proof}

\begin{corollary}
	\label{Cor:Omega_nksOverwhelming}
	Fix $\varepsilon>0$. For a fixed integer $m>0$, let $\xi_{1},\xi_{2},\dots \xi_{m}$ be real-valued random variables each mean zero, variance one, and finite $4+\tau$ moment for some $\tau>0$. Let $\hat{X}_{n,1},\hat{X}_{n,2},\dots,\hat{X}_{n,m}$ be independent iid random matrices with atom variables as defined in \eqref{def:Xhat}. Define $\blmat{Y}{n}$ as in \eqref{Def:Y_n} and $\blmat{Y}{n}^{(k,s)}$ as $\blmat{Y}{n}$ with the columns $c_{k},c_{n+k},c_{2n+k},\dots,c_{(m-1)n+k}$ and $c_{s}$ replaced with zeros. For any $\delta>0$, there exists a constant $c>0$ depending only on $\delta$ such that 
	\[\inf_{|z|> 1+\delta/2}s_{mn}\left(\blmat{Y}{n}^{(k,s)}-zI\right)\geq c\]
	with overwhelming probability.  
\end{corollary}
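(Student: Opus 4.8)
The plan is to reduce this to the already-established lower bound for $s_{mn}(\blmat{Y}{n} - zI)$ from Lemma \ref{Lem:Omega_nOverwhelming} via the purely deterministic singular-value inequality in Lemma \ref{Lem:SingValColRemovedBoundedBelow}, exactly as was done for Corollary \ref{Cor:Omega_nkOverwhelming} but with one extra column zeroed out. First I would fix $\delta > 0$ and observe that, by definition, $\blmat{Y}{n}^{(k,s)}$ is precisely the matrix $\blmat{Y}{n}$ with the columns indexed by the set
$$ R := \{k,\, n+k,\, 2n+k,\, \dots,\, (m-1)n+k,\, s\} $$
replaced by zero vectors (this holds whether or not $s$ coincides with one of the indices $jn+k$, since $R$ is just a subset of $\{1,\dots,mn\}$). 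Thus $\blmat{Y}{n}^{(k,s)} = \blmat{Y}{n}^{(R)}$ in the notation of Lemma \ref{Lem:SingValColRemovedBoundedBelow}.

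Next I would apply Lemma \ref{Lem:SingValColRemovedBoundedBelow} with $A = \blmat{Y}{n}$ and the above set $R$, which gives the deterministic bound
$$ s_{mn}\left(\blmat{Y}{n}^{(k,s)} - zI\right) \geq \min\left\{ s_{mn}\left(\blmat{Y}{n} - zI\right),\, |z| \right\} $$
for every $z \in \C$. Taking the infimum over $|z| > 1 + \delta/2$ and using that $|z| > 1 + \delta/2 > 1$ on this region, we obtain
$$ \inf_{|z| > 1+\delta/2} s_{mn}\left(\blmat{Y}{n}^{(k,s)} - zI\right) \geq \inf_{|z| > 1+\delta/2} \min\left\{ s_{mn}\left(\blmat{Y}{n} - zI\right),\, 1 \right\}. $$
By Lemma \ref{Lem:Omega_nOverwhelming} there is a constant $c' > 0$ depending only on $\delta$ such that $\inf_{|z|>1+\delta/2} s_{mn}(\blmat{Y}{n} - zI) \geq c'$ with overwhelming probability; on that event the right-hand side is at least $\min\{c', 1\}$. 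Setting $c := \min\{c', 1\}$ completes the argument.

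There is no real obstacle here: the statement is a direct corollary of two results established earlier in the paper, and the only thing to be careful about is the bookkeeping of which columns are zeroed out (so that Lemma \ref{Lem:SingValColRemovedBoundedBelow} is applied with the correct index set $R$) and the trivial observation that $|z| \geq 1$ throughout the region of interest so the $|z|$ term in the minimum never becomes the binding constraint. If anything, the ``hard part'' is purely notational — confirming that removing the $m$ block-columns together with column $s$ still matches the hypothesis of Lemma \ref{Lem:SingValColRemovedBoundedBelow}, which it does since that lemma places no restriction on the size of $R$.
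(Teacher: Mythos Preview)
Your proposal is correct and is exactly the argument the paper has in mind: the paper simply states that the proof ``follows in exactly the same way as the proof of Corollary~\ref{Cor:Omega_nkOverwhelming},'' namely by applying Lemma~\ref{Lem:SingValColRemovedBoundedBelow} (now with the enlarged index set $R$ including $s$) together with Lemma~\ref{Lem:Omega_nOverwhelming} and taking $c=\min\{c',1\}$.
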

The proof of Corollary \ref{Cor:Omega_nksOverwhelming} follows in exactly the same way as the proof of Corollary \ref{Cor:Omega_nkOverwhelming}.

\section{Useful Lemmas}
\label{Sec:AppendexMisc}


\begin{lemma}[Lemma 2.7 from \cite{BScov}]
	For $X = (x_{1},x_{2},\ldots,x_{N})^{T}$ iid standardized complex entries, $B$ an $N\times N$  complex matrix, we have, for any $p\geq 2$,
	\begin{equation*}
	\E\left|X^{*}BX-\tr(B)\right|^{p}\leq K_{p}\left(\left(\E\left|x_{1}\right|^{4}\emph{tr} B^{*}B\right)^{p/2}+\E|x_{1}|^{2p}\emph{tr}(B^{*}B)^{p/2}\right)
	\end{equation*}
	where the constant $K_{p}>0$ depends only on $p$. 
	\label{Lem:BilinearFormsWithTrace}
\end{lemma}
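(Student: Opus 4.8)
The plan is to prove this classical estimate (Lemma~2.7 of \cite{BScov}) by a martingale-difference decomposition combined with a Rosenthal-type inequality, handling the diagonal and off-diagonal parts of $B$ inside a single argument. Set $\mathcal{F}_k := \sigma(x_1,\dots,x_k)$ and $\E_k[\,\cdot\,] := \E[\,\cdot \mid \mathcal{F}_k]$. Since the entries are independent and standardized, $\E[X^*BX] = \tr B$, and a direct computation gives
$$ X^*BX - \tr B = \sum_{k=1}^{N} \gamma_k, \qquad \gamma_k := (\E_k - \E_{k-1})[X^*BX] = \bar{x}_k \beta_k + x_k \alpha_k + (|x_k|^2 - 1)B_{kk}, $$
where $\beta_k := \sum_{j<k} B_{kj} x_j$ and $\alpha_k := \sum_{i<k} \overline{x_i} B_{ik}$ are $\mathcal{F}_{k-1}$-measurable. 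Burkholder's inequality in its Rosenthal form for martingale difference sequences then gives
$$ \E\bigl|X^*BX - \tr B\bigr|^p \ll_p \E\Bigl(\sum_k \E_{k-1}|\gamma_k|^2\Bigr)^{p/2} + \sum_k \E|\gamma_k|^p, $$
so it suffices to bound each term by the right-hand side of the lemma.

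For the jump term, conditioning on $\mathcal{F}_{k-1}$ and using $\E||x_k|^2-1|^p \ll_p \E|x_1|^{2p}$ and $\E|x_k|^p \le \E|x_1|^{2p}$, one gets $\E|\gamma_k|^p \ll_p \E|x_1|^p(\E|\beta_k|^p + \E|\alpha_k|^p) + \E|x_1|^{2p}|B_{kk}|^p$, and the classical Rosenthal inequality for the independent sum $\beta_k$ gives $\E|\beta_k|^p \ll_p (\sum_{j<k}|B_{kj}|^2)^{p/2} + \E|x_1|^p \sum_{j<k}|B_{kj}|^p$. The point is to sum these over $k$ without loss: I would bound $\sum_k (\sum_{j<k}|B_{kj}|^2)^{p/2} \le \sum_k ((BB^*)_{kk})^{p/2}$ and then, by Schur majorization (the diagonal of the positive semidefinite matrix $BB^*$ is majorized by its spectrum, and $t \mapsto t^{p/2}$ is convex for $p\ge 2$), $\le \tr((BB^*)^{p/2}) = \tr((B^*B)^{p/2})$; similarly $\sum_{k,j}|B_{kj}|^p \le \tr((B^*B)^{p/2})$ since the entrywise $\ell^p$-norm is dominated by the Schatten-$p$ norm for $p\ge 2$. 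This yields $\sum_k \E|\gamma_k|^p \ll_p \E|x_1|^{2p}\tr((B^*B)^{p/2})$.

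For the predictable quadratic variation, expanding $|\gamma_k|^2$ and using $\E x_k = 0$, $\E|x_k|^2=1$, $\E(|x_k|^2-1)^2 \le \E|x_1|^4$, and Cauchy--Schwarz on the cross terms gives $\E_{k-1}|\gamma_k|^2 \ll |\beta_k|^2 + |\alpha_k|^2 + \E|x_1|^4|B_{kk}|^2$. The deterministic piece contributes at most $(\E|x_1|^4 \tr B^*B)^{p/2}$, the first term. The remaining piece is $\E(\sum_k |\beta_k|^2)^{p/2} = \E(X^*B_\ell^*B_\ell X)^{p/2}$, where $B_\ell$ is the strictly lower-triangular part of $B$ (plus the analogous term for the strictly upper part $B_u$); since this is again a quadratic form in $X$, I would close the argument by induction on $p$, applying Lemma~\ref{Lem:BilinearFormsWithTrace} at the strictly smaller exponent $p/2$ to the positive semidefinite matrix $M := B_\ell^*B_\ell$, using $\tr(M^2) \le (\tr M)^2 \le (\tr B^*B)^2$ and $\tr(M^{p/2}) = \|B_\ell\|_{S_p}^p \le c_p \tr((B^*B)^{p/2})$ (the last by the dimension-free boundedness of triangular truncation on the Schatten-$p$ class; for $2\le p<4$ one uses instead directly $\E|X^*MX-\tr M|^{p/2} \le (\Var(X^*MX))^{p/4} \ll_p (\E|x_1|^4(\tr M)^2)^{p/4}$). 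Combining the bounds for the two terms gives the stated inequality.

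The main obstacle is the quadratic-variation term, specifically producing exactly the factor $\tr((B^*B)^{p/2})$ rather than $(\tr B^*B)^{p/2}$. The naive estimate $\sum_k a_k^{p/2} \le (\sum_k a_k)^{p/2}$ would leave a stray term $\E|x_1|^p (\tr B^*B)^{p/2}$, which for heavy-tailed entries and $p>4$ genuinely exceeds the right-hand side (already for block-diagonal $B$); avoiding it is precisely what the Schur-majorization step and the triangular-truncation bound on $S_p$ are for. Of course, since the statement is Lemma~2.7 of \cite{BScov}, one may also simply invoke that reference; the above indicates why the bound holds.
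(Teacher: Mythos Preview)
The paper does not prove this lemma: it is stated in Appendix~C with the attribution ``Lemma~2.7 from \cite{BScov}'' and used as a black box, with no proof supplied. So there is no paper-proof to compare against, and your closing remark---that one may simply invoke \cite{BScov}---is exactly what the paper does.

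Your sketch is a correct outline of the classical Bai--Silverstein argument: write $X^*BX-\tr B$ as a martingale difference sum $\sum_k\gamma_k$, apply Burkholder's inequality in Rosenthal form, and control the jump and predictable quadratic variation terms separately. The jump term is handled cleanly by the Schur-majorization bound $\sum_k((BB^*)_{kk})^{p/2}\le\tr((BB^*)^{p/2})$. For the quadratic-variation term, your inductive step (apply the lemma at exponent $p/2$ to $M=B_\ell^*B_\ell$) does close, but it relies on the dimension-free boundedness of triangular truncation on the Schatten class $S_p$ for $1<p<\infty$, which is a nontrivial operator-theoretic fact (Gohberg--Krein/Macaev). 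This is correct but heavier than what is used in the original source; Bai and Silverstein's proof in \cite{BScov} proceeds more directly and avoids this input. Either route yields the stated bound.
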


\begin{lemma}
	Let $A$ be an $N\times N$ complex-valued matrix. Suppose that $\xi$ is a complex-valued random variable with mean zero and unit variance. Let $S\subseteq [N]$, and let $w=(w_{i})_{i=1}^{N}$ be a vector with the following properties:
	\begin{enumerate}[label=(\roman*)]
		\item $\{w_i : i \in S \}$ is a collection of iid copies of $\xi$, 
		\item $w_{i}=0$ for $i\not\in S$.
	\end{enumerate} 
	Additionally, $A_{S\times S}$ denote the $|S|\times|S|$ matrix which has entries $A_{(i,j)}$ for $i,j\in S$.
Then for any even $p\geq 2$,
\[\E\left|w^{*}Aw-\tr(A_{S\times S})\right|^{p}\ll_{p}\E\left|\xi \right|^{2p}\left(\tr(A^{*}A)\right)^{p/2}.\]
\label{Lem:BlockBilinearFormsWithTrace}
\end{lemma}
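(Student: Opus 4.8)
The plan is to reduce the statement directly to Lemma \ref{Lem:BilinearFormsWithTrace} (Lemma 2.7 of \cite{BScov}) by passing to the coordinates indexed by $S$. First I would observe that, since $w_i = 0$ for $i \notin S$, if $w_S$ denotes the $|S|$-vector with entries $w_i$, $i \in S$, then
\begin{equation*}
w^{*}Aw = \sum_{i,j \in S} \bar w_i A_{(i,j)} w_j = w_S^{*} A_{S\times S} w_S,
\end{equation*}
while $\tr(A_{S\times S})$ is of course unchanged. By hypothesis the entries of $w_S$ are iid copies of $\xi$, which has mean zero and unit variance, so $w_S$ is a standardized vector in the sense of Lemma \ref{Lem:BilinearFormsWithTrace}. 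Applying that lemma with $X = w_S$ and $B = A_{S\times S}$ gives, for any even $p \ge 2$,
\begin{equation*}
\E\left|w^{*}Aw - \tr(A_{S\times S})\right|^{p} \le K_{p}\left(\left(\E|\xi|^{4}\,\tr(A_{S\times S}^{*}A_{S\times S})\right)^{p/2} + \E|\xi|^{2p}\,\tr\!\left((A_{S\times S}^{*}A_{S\times S})^{p/2}\right)\right).
\end{equation*}

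Next I would dominate the two traces on the right by the corresponding quantities for the full matrix $A$. For the first, $\tr(A_{S\times S}^{*}A_{S\times S}) = \sum_{i,j \in S}|A_{(i,j)}|^{2} \le \sum_{i,j}|A_{(i,j)}|^{2} = \tr(A^{*}A)$. For the second, since $A_{S\times S}^{*}A_{S\times S}$ is Hermitian positive semidefinite and $p/2 \ge 1$ is an integer, the inequality $\sum_{i}\lambda_i^{p/2} \le \big(\sum_i \lambda_i\big)^{p/2}$ for nonnegative eigenvalues $\lambda_i$ yields $\tr\!\big((A_{S\times S}^{*}A_{S\times S})^{p/2}\big) \le \big(\tr(A_{S\times S}^{*}A_{S\times S})\big)^{p/2} \le (\tr(A^{*}A))^{p/2}$. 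Hence both error terms are at most a $p$-dependent constant times $\E|\xi|^{4}(\tr(A^{*}A))^{p/2}$ and $\E|\xi|^{2p}(\tr(A^{*}A))^{p/2}$, respectively.

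Finally I would absorb the two moment factors into $\E|\xi|^{2p}$. Since $2p \ge 4$, the power-mean inequality gives $\E|\xi|^{4} \le (\E|\xi|^{2p})^{2/p}$, so $(\E|\xi|^{4})^{p/2} \le \E|\xi|^{2p}$; and since $\E|\xi|^{2p} \ge (\E|\xi|^{2})^{p} = 1$ by Jensen's inequality, the term $\E|\xi|^{2p}(\tr(A^{*}A))^{p/2}$ already dominates. Combining these bounds yields $\E|w^{*}Aw - \tr(A_{S\times S})|^{p} \ll_{p} \E|\xi|^{2p}(\tr(A^{*}A))^{p/2}$, as claimed. I do not expect a real obstacle here: the entire content is the elementary observation that restricting a quadratic form to the support of $w$ turns it into a genuine quadratic form in a standardized vector of smaller dimension, after which Lemma \ref{Lem:BilinearFormsWithTrace} applies essentially verbatim; the only mild care required is in dominating the partial traces $\tr(A_{S\times S}^{*}A_{S\times S})$ and $\tr((A_{S\times S}^{*}A_{S\times S})^{p/2})$ by their full-matrix counterparts and in consolidating the moment constants.
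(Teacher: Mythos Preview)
Your proof is correct and follows essentially the same approach as the paper: reduce to $w_S^{*}A_{S\times S}w_S$, apply Lemma~\ref{Lem:BilinearFormsWithTrace}, bound $\tr((A_{S\times S}^{*}A_{S\times S})^{p/2}) \le (\tr(A_{S\times S}^{*}A_{S\times S}))^{p/2} \le (\tr(A^{*}A))^{p/2}$, and absorb $(\E|\xi|^{4})^{p/2}$ into $\E|\xi|^{2p}$ via Jensen. You are in fact a bit more explicit than the paper about the moment consolidation step, which the paper leaves implicit.
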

\begin{proof}
	Let $w_{S}$ denote the $|S|$-vector which contains entries $w_{i}$ for $i\in S$ and observe
	\begin{equation*}
	w^{*}Aw = \sum_{i,j}\bar{w}_{i}A_{(i,j)}w_{j}=  w_{S}^{*}A_{S\times S}w_{S}.
	\end{equation*}
	Therefore, by Lemma \ref{Lem:BilinearFormsWithTrace}, for any even $p\geq 2$,
	\begin{align*}
	\E\left|w^{*}Aw-\tr(A_{S\times S})\right|^{p}& = \E\left|w_{S}^{*}A_{S\times S}w_{S}-\tr(A_{S\times S})\right|^{p}\\
	&\ll_{p}\left(\E\left|\xi\right|^{4}\tr(A_{S\times S}^{*}A_{S\times S})\right)^{p/2}+\E\left|\xi\right|^{2p}\tr(A_{S\times S}^{*}A_{S\times S})^{p/2}\\
	&\ll_{p}\E\left|\xi\right|^{2p}\left(\tr(A_{S\times S}^{*}A_{S\times S})\right)^{p/2}.
	\end{align*}
	Now observe that 
	\[\tr(A_{S\times S}^{*}A_{S\times S})= \sum_{i,j\in S} A_{i,j}^{*}A_{j,i} \leq \sum_{i,j=1}^{N} A_{i,j}^{*}A_{j,i}= \tr(A^{*}A).\]
	Therefore 
	\begin{Details} 
		\begin{align*}
	\tr(A_{S\times S}^{*}A_{S\times S}) & = \sum_{i,j\in S} A_{i,j}^{*}A_{j,i}\\
	& = \sum_{i,j\in S} \overline{A}_{j,i}A_{j,i}\\
	& = \sum_{i,j\in S} \left|A_{j,i}\right|^{2}\\
	& \leq \sum_{i,j=1}^{N} \left|A_{j,i}\right|^{2}\\
	& \leq \sum_{i,j=1}^{N} A_{i,j}^{*}A_{j,i}\\
	& = \tr(A^{*}A).
	\end{align*}
	\end{Details}
\begin{equation*}
\E\left|w^{*}Aw-\tr(A_{S\times S})\right|^{p}\ll_{p}\E\left|\xi\right|^{2p}\left(\tr(A_{S\times S}^{*}A_{S\times S})\right)^{p/2}\leq \E\left|\xi\right|^{2p}\left(\tr(A^{*}A)\right)^{p/2}.
\end{equation*}
\end{proof}

\begin{lemma}[Lemma A.1 from \cite{BScov}]
	For $X = (x_{1},x_{2},\ldots,x_{N})^{T}$ iid standardized complex entries, $B$ an $N\times N$ complex-valued Hermitian nonnegative definite matrix, we have, for any $p\geq 1$,
	\begin{equation*}
	\E\left|X^{*}BX\right|^{p}\leq K_{p}\left(\left(\emph{tr} B\right)^{p}+\E|x_{1}|^{2p}\emph{tr}B^{p}\right).
	\end{equation*}
	where $K_{p}>0$ depends only on $p$. 
	\label{Lem:BilinearForms}
\end{lemma}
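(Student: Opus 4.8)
The plan is to write $X^*BX$ as its expectation plus a centered fluctuation and then invoke Lemma \ref{Lem:BilinearFormsWithTrace}. Because the entries are standardized, $\E[\bar x_i x_j]=\delta_{ij}$, so $\E[X^*BX]=\sum_{i,j}B_{ij}\E[\bar x_i x_j]=\tr B$. Writing $X^*BX=\tr B+(X^*BX-\tr B)$ and using $|a+b|^p\le 2^{p-1}(|a|^p+|b|^p)$ reduces the claim to bounding $\E|X^*BX-\tr B|^p$.

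I would first dispose of the range $1\le p<2$ by Jensen's inequality, $\E|X^*BX-\tr B|^p\le(\E|X^*BX-\tr B|^2)^{p/2}$, so it suffices to treat $p\ge 2$, which is exactly the hypothesis of Lemma \ref{Lem:BilinearFormsWithTrace}. That lemma gives
\[
\E\left|X^*BX-\tr B\right|^p\le K_p\left(\left(\E|x_1|^4\,\tr(B^*B)\right)^{p/2}+\E|x_1|^{2p}\,\tr\left(B^*B\right)^{p/2}\right).
\]
Now I would use that $B$ is Hermitian and nonnegative definite, say with eigenvalues $\lambda_1,\dots,\lambda_N\ge 0$. Then $B^*B=B^2$, so $\tr(B^*B)^{p/2}=\sum_i\lambda_i^p=\tr B^p$, while $\tr(B^*B)=\sum_i\lambda_i^2\le\left(\sum_i\lambda_i\right)^2=(\tr B)^2$, hence $(\tr(B^*B))^{p/2}\le(\tr B)^p$. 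Finally, Lyapunov's inequality gives $(\E|x_1|^4)^{p/2}\le\E|x_1|^{2p}$ for $p\ge 2$; combining these estimates bounds the displayed right-hand side by a $p$-dependent multiple of $\E|x_1|^{2p}\bigl((\tr B)^p+\tr B^p\bigr)$, which together with $\tr B^p\le(\tr B)^p$ is of the claimed shape, and adding back the $(\tr B)^p$ coming from the mean — harmless since $\E|x_1|^{2p}\ge(\E|x_1|^2)^p=1$ — yields the assertion.

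The step requiring the most care is the bookkeeping of the kurtosis factor $\E|x_1|^4$ produced by Lemma \ref{Lem:BilinearFormsWithTrace}: one must combine Lyapunov's inequality with the trace inequalities above (and, to obtain the sharpest grouping with the moment factor confined to the $\tr B^p$ term, interpolate $(\tr(B^*B))^{p/2}$ between $(\tr B)^p$ and $\tr B^p$ on the eigenvalue side via Hölder and Young) so that the first summand is controlled using only $(\tr B)^p$ and $\E|x_1|^{2p}\tr B^p$. An essentially equivalent alternative route that avoids using Lemma \ref{Lem:BilinearFormsWithTrace} at full rank is to diagonalize $B=\sum_k\lambda_k v_k v_k^*$, write $X^*BX=\sum_k\lambda_k|v_k^*X|^2$, bound each $\E|v_k^*X|^{2p}$ by applying Lemma \ref{Lem:BilinearFormsWithTrace} to the rank-one matrix $v_k v_k^*$, and conclude by Minkowski's inequality in $L^p$.
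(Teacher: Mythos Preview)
The paper does not supply a proof of this lemma; it is quoted verbatim as Lemma~A.1 of Bai--Silverstein \cite{BScov}, so there is nothing in the paper to compare against.  Your route via Lemma~\ref{Lem:BilinearFormsWithTrace} is the natural one, but two points in your write-up are not yet closed.

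First, for $p\ge 2$ your main paragraph reaches only the bound $\E|x_1|^{2p}\bigl((\tr B)^p+\tr B^p\bigr)$, which is \emph{not} of the asserted form: the term $\E|x_1|^{2p}(\tr B)^p$ cannot be absorbed into $K_p\bigl((\tr B)^p+\E|x_1|^{2p}\tr B^p\bigr)$ with a constant depending only on $p$ (try $B=I_N$ with $\E|x_1|^{2p}$ large).  The interpolation you relegate to a parenthetical is where the actual work sits: one needs the sharp Lyapunov interpolation $\E|x_1|^4\le (\E|x_1|^{2p})^{1/(p-1)}$ together with the eigenvalue H\"older bound $\tr B^2\le(\tr B)^{(p-2)/(p-1)}(\tr B^p)^{1/(p-1)}$, after which
\[
\bigl(\E|x_1|^4\,\tr B^2\bigr)^{p/2}\le \bigl[(\tr B)^p\bigr]^{\theta}\bigl[\E|x_1|^{2p}\tr B^p\bigr]^{1-\theta},\qquad \theta=\tfrac{p-2}{2(p-1)},
\]
and Young's inequality finishes.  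This should be the body of the argument, not an aside.

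Second, your Jensen reduction of $1\le p<2$ to $p=2$ tacitly uses Lemma~\ref{Lem:BilinearFormsWithTrace} at $p=2$, which requires $\E|x_1|^4<\infty$; the stated lemma assumes only standardization and finiteness of $\E|x_1|^{2p}$.  The case $p=1$ is trivial ($X^*BX\ge 0$, so $\E|X^*BX|=\tr B$), and for $1<p<2$ one must argue directly---e.g.\ split into diagonal and off-diagonal parts and use von Bahr--Esseen / Burkholder, together with the Schur--Horn majorization $\sum_i B_{ii}^p\le\tr B^p$.  Your Minkowski alternative at the end has the same defect as your first attempt: it lands on $C_p(1+\E|x_1|^{2p})(\tr B)^p$, again with the moment factor on the wrong term.
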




\begin{lemma} \label{Lem:ConjLessThanPartialTrace}
	Let $A$ be an $N\times N$ Hermitian positive semidefinite matrix. Suppose that $\xi$ is a complex-valued random variable with mean zero and unit variance. Let $S\subseteq [N]$, and let $w = (w_i)_{i=1}^N$ be a vector with the following properties:  
	\begin{enumerate}[label=(\roman*)]
		\item $\{w_i : i \in S \}$ is a collection of iid copies of $\xi$, 
		\item $w_{i}=0$ for $i\not\in S$.
	\end{enumerate} 
	Then for any $p\geq 2$,
	\begin{equation}
	\E\left|w^{*}Aw\right|^{p} \ll_{p}\E|\xi|^{2p}\left( \tr A\right)^{p}.
	\end{equation} 
\end{lemma}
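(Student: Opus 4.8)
The plan is to reduce the statement to the bilinear-form estimate already available in Lemma \ref{Lem:BilinearForms}. First I would restrict attention to the indices in $S$: set $w_S := (w_i)_{i \in S}$, the $|S|$-vector of nonzero entries of $w$, and let $A_{S\times S}$ denote the $|S|\times |S|$ principal submatrix of $A$ with entries $A_{(i,j)}$, $i,j\in S$. Since $w_i=0$ for $i\notin S$, one has the identity
\begin{equation*}
w^{*}Aw = \sum_{i,j} \bar w_i A_{(i,j)} w_j = w_S^{*} A_{S\times S} w_S.
\end{equation*}
The matrix $A_{S\times S}$ is again Hermitian positive semidefinite (a principal submatrix of a Hermitian PSD matrix is Hermitian PSD), and $w_S$ has iid standardized entries distributed as $\xi$, so Lemma \ref{Lem:BilinearForms} applies directly to $w_S^{*}A_{S\times S} w_S$: for any $p\ge 1$,
\begin{equation*}
\E\left| w_S^{*} A_{S\times S} w_S \right|^{p} \ll_{p} (\tr A_{S\times S})^{p} + \E|\xi|^{2p}\, \tr(A_{S\times S}^{p}).
\end{equation*}

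Next I would bound each of the two terms on the right by $\E|\xi|^{2p}(\tr A)^p$ (up to the implied constant), using positive semidefiniteness. Because $A$ is Hermitian PSD, its diagonal entries are nonnegative, so the partial trace over $S$ satisfies $\tr A_{S\times S} = \sum_{i\in S} A_{(i,i)} \le \sum_{i=1}^{N} A_{(i,i)} = \tr A$; since $\tr A \ge 0$ this also gives $(\tr A_{S\times S})^p \le (\tr A)^p$. For the second term, $A_{S\times S}$ is itself Hermitian PSD, so $\tr(A_{S\times S}^p) \le (\tr A_{S\times S})^p \le (\tr A)^p$ (the eigenvalues of a PSD matrix are nonnegative and the $p$-th power sum is dominated by the $p$-th power of the sum). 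Finally, since $\E|\xi|^2 = 1$ and $p\ge 2$, Jensen's (or Lyapunov's) inequality gives $\E|\xi|^{2p}\ge 1$, so the first term $(\tr A)^p$ is itself bounded by $\E|\xi|^{2p}(\tr A)^p$. Combining these estimates yields $\E|w^{*}Aw|^p \ll_p \E|\xi|^{2p}(\tr A)^p$, as claimed.

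There is essentially no serious obstacle here; the argument is a direct specialization of Lemma \ref{Lem:BilinearForms}. The only point requiring a moment's care is the chain of inequalities $\tr(A_{S\times S}^p) \le (\tr A_{S\times S})^p \le (\tr A)^p$, which relies on $A$ (and hence $A_{S\times S}$) being Hermitian positive semidefinite so that all relevant traces are nonnegative sums of nonnegative eigenvalues; this is exactly why the hypothesis ``positive semidefinite'' (rather than merely Hermitian) is needed. One should also note that this lemma is the tool invoked in the proof of Lemma \ref{Lem:conjLessThanConstant}, where the extra hypotheses $\E|\xi|^4 = O(1)$ and $|\xi|\le n^{1/2-\varepsilon}$ are used to convert $\E|\xi|^{2p}$ into the explicit power of $n$; in the present statement no such conversion is performed, so the proof stops at the bound in terms of $\E|\xi|^{2p}$.
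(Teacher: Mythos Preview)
Your proposal is correct and follows essentially the same route as the paper: restrict to the principal submatrix $A_{S\times S}$, apply Lemma~\ref{Lem:BilinearForms}, and then use positive semidefiniteness to bound $\tr(A_{S\times S}^p)\le(\tr A_{S\times S})^p\le(\tr A)^p$. Your explicit invocation of Jensen's inequality to absorb the $(\tr A)^p$ term into $\E|\xi|^{2p}(\tr A)^p$ is a small clarification the paper leaves implicit, but otherwise the arguments are identical.
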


\begin{proof}
	Let $w_{S}$ denote the $|S|$-vector which contains entries $w_{i}$ for $i\in S$, and let $A_{S\times S}$ denote the $|S|\times|S|$ matrix which has entries $A_{(i,j)}$ for $i,j\in S$. Then we have
	\begin{equation*}
	w^{*}Aw = \sum_{i,j}\bar{w}_{i}A_{(i,j)}w_{j}=  w_{S}^{*}A_{S\times S}w_{S}.
	\end{equation*}
	By Lemma \ref{Lem:BilinearForms}, we get
	\begin{equation*}
	\E\left|w^{*}Aw\right|^{p} \ll_{p}\left( \tr A_{S\times S}\right)^{p}+\E|\xi|^{2p}\tr A_{S\times S}^{p}.
	\end{equation*}
	Since $A$ is non-negative definite, the diagonal elements are non-negative so that $\tr(A_{S\times S}^{p})\leq (\tr(A_{A\times A}))^{p}$. By this and the fact that for a Hermitian positive semidefinite matrix, the partial trace is less than or equal to the full trace, we observe that 
	\begin{equation*}
	\left( \tr A_{S\times S}\right)^{p}+\E|\xi|^{2p}\tr A_{S\times S}^{p} \ll_{p} \E|\xi|^{2p}\left( \tr A_{S\times S}\right)^{p} \ll_{p} \E|\xi|^{2p}\left( \tr A\right)^{p}.
	\end{equation*}
\end{proof}

\begin{Details}
\begin{lemma} \label{lem:BilinearFormWithDifferentVectorsBig}
	Let $A$ be a deterministic complex $mn\times mn$ matrix for some fixed $m>0$. Suppose that $\xi$ is a complex-valued random variable with mean zero, unit variance, and which satisfies $|\xi| \leq n^{-\varepsilon}$ almost surely for some $\varepsilon > 0$. Let $S,R\subseteq [mn]$, and let $w = (w_i)_{i=1}^{mn}$ and $t = (t_i)_{i=1}^{mn}$ be independent vectors with the following properties:  
	\begin{enumerate}[label=(\roman*)]
		\item $\{w_i : i \in S \}$ and $\{t_j : j \in R \}$ are collections of iid copies of $\xi$, 
		\item $w_{i}=0$ for $i\not\in S$, and  $t_{j}=0$ for $j\not\in R$.
	\end{enumerate} 
	Then for any $p\geq 1$,
	\begin{equation}
	\E\left|w^{*}At\right|^{p}\ll_{m,p}n^{-p\varepsilon}(\tr(A^{*}A))^{p/2}.
	\end{equation}
\end{lemma}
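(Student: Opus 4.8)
The plan is to obtain this as a quick corollary of Lemma~\ref{lem:BilinearFormWithDifferentVectors}. That lemma applies here because $\xi$ is bounded and hence has finite moments of all orders, and because $w$ and $t$ being independent makes $\{w_i\}_{i\in S}$ and $\{t_j\}_{j\in R}$ independent collections of iid copies of $\xi$. First I would dispose of the case where $p$ is an even integer, say $p=2q$ with $q\geq 1$: Lemma~\ref{lem:BilinearFormWithDifferentVectors} gives $\E|w^{*}At|^{2q}\ll_{q}\E|\xi|^{4q}(\tr(A^{*}A))^{q}$. The key step is then to turn the moment $\E|\xi|^{4q}$ into a power of $n^{-\varepsilon}$ by using the almost sure bound $|\xi|\leq n^{-\varepsilon}$ together with the normalization $\E|\xi|^{2}=1$: writing $|\xi|^{4q}=|\xi|^{2}\cdot|\xi|^{4q-2}$ and bounding the second factor pointwise gives $\E|\xi|^{4q}\leq n^{-\varepsilon(4q-2)}\,\E|\xi|^{2}=n^{-\varepsilon(4q-2)}$. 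Thus $\E|w^{*}At|^{2q}\ll_{q}n^{-\varepsilon(4q-2)}(\tr(A^{*}A))^{q}\leq n^{-2q\varepsilon}(\tr(A^{*}A))^{q}$, which is the claimed estimate for $p=2q$. This is the same mechanism as Remark~\ref{Rem:lem:BilinearFormWithDifferentVectors}, only with a much stronger a.s. bound on $\xi$, which here produces genuine decay in $n$.

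For a general real $p\geq 1$, I would interpolate down from the nearest even integer using Lyapunov's (equivalently Jensen's) inequality. Setting $q:=\lceil p/2\rceil$, so that $2q\geq p$ and $q\geq 1$, and invoking the even-integer case,
\[
\E|w^{*}At|^{p}\leq\left(\E|w^{*}At|^{2q}\right)^{p/(2q)}\ll_{p}\left(n^{-\varepsilon(4q-2)}(\tr(A^{*}A))^{q}\right)^{p/(2q)}=n^{-\varepsilon p(2-1/q)}(\tr(A^{*}A))^{p/2}.
\]
Since $q\geq 1$ we have $2-1/q\geq 1$, hence $n^{-\varepsilon p(2-1/q)}\leq n^{-\varepsilon p}$, which yields $\E|w^{*}At|^{p}\ll_{p}n^{-p\varepsilon}(\tr(A^{*}A))^{p/2}$. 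The constant in fact depends only on $p$, so the $m$ in the statement's $\ll_{m,p}$ is harmless.

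I do not expect a real obstacle here: the argument is essentially bookkeeping layered on Lemma~\ref{lem:BilinearFormWithDifferentVectors}. The only points that need a small amount of care are (a) confirming the hypotheses of that lemma are satisfied (independence of $w$ and $t$, and boundedness of $\xi$ supplying all moments), and (b) checking that the exponent $-\varepsilon(4q-2)p/(2q)=-\varepsilon p(2-1/q)$ coming out of the interpolation never degrades below $-p\varepsilon$ for $q\geq 1$ — in particular in the edge case $p=1$, where $q=1$ and one simply uses $\E|\xi|^{4}\leq n^{-2\varepsilon}$ so that $\E|w^{*}At|\leq(\E|w^{*}At|^{2})^{1/2}\ll n^{-\varepsilon}(\tr(A^{*}A))^{1/2}$.
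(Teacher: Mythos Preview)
Your proof is correct and follows essentially the same route as the paper's: both reduce to even $p$ via Lyapunov/Cauchy--Schwarz and then rely on the double application of Lemma~\ref{Lem:BilinearForms} underlying Lemma~\ref{lem:BilinearFormWithDifferentVectors}. The only cosmetic difference is that the paper unpacks that double application by hand (inserting the bound $\E|\xi|^{k}\le n^{-k\varepsilon}$ at each stage), whereas you invoke Lemma~\ref{lem:BilinearFormWithDifferentVectors} as a black box and bound $\E|\xi|^{4q}$ once at the end.
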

\begin{proof}
	Let $w_{S}$ denote the $|S|$-vector which contains entries $w_{i}$ for $i\in S$, and let $t_{R}$ denote the $|R|$-vector which contains entries $t_{j}$ for $j\in R$.  For an $N \times N$ matrix $B$, we let $B_{S \times S}$ denote the $|S| \times |S|$ matrix with entries $B_{(i,j)}$ for $i,j \in S$.  Similarly, we let $B_{R \times R}$ denote the $|R| \times |R|$ matrix with entries $B_{(i,j)}$ for $i,j \in R$.  
	
	We first note that, by the Cauchy--Schwarz inequality, it suffices to assume $p$ is even.  In this case, since $w$ is independent of $t$, Lemma \ref{Lem:BilinearForms} implies that
	\begin{align*}
	\E|w^{*}At|^{p} &= \E|w^{*}Att^{*}A^{*}w|^{p/2}\\
	&= \E\left|w^{*}_{S}(Att^{*}A^{*})_{S\times S}w_{S}\right|^{p/2}\\
	&\ll_{p}\E\left[\left(\tr(Att^{*}A^{*})_{S\times S}\right)^{p/2}+(n^{-\varepsilon})^{p}\tr(Att^{*}A^{*})_{S\times S}^{p/2}\right]. 
	\end{align*}
	Recall that for any matrix $B$, $\tr(B^{*}B)^{p/2}\leq (\tr(B^{*}B))^{p/2}$. By this and by the fact that for a Hermitian positive semidefinite matrix, the partial trace is less than or equal to the full trace, we observe that 
	\begin{equation*} 
	\E\left[\left(\tr(Att^{*}A^{*})_{S\times S}\right)^{p/2}+(n^{-\varepsilon})^{p}\tr(Att^{*}A^{*})_{S\times S}^{p/2}\right]\ll_{p}n^{-p\varepsilon}\E\left[(\tr(Att^{*}A^{*}))^{p/2}\right].
	\end{equation*}
	By a cyclic permutation of the trace, we have 
	\begin{equation*}
	\E\left[(\tr(Att^{*}A^{*}))^{p/2}\right] = \E\left[(t^{*}A^{*}At)^{p/2}\right]\leq\E\left|t^{*}A^{*}At\right|^{p/2}.
	\end{equation*}
	By Lemma \ref{Lem:BilinearForms} and a similar argument as above, we have
	\begin{align*}
	\E\left|t^{*}A^{*}At\right|^{p/2}&=\E\left|t_{R}^{*}(A^{*}A)_{R\times R}t_{R}\right|^{p/2}\\
	&\ll_{p}(\tr(A^{*}A)_{R\times R})^{p/2}+(n^{-\varepsilon})^{p}\tr(A^{*}A)_{R\times R}^{p/2}\\
	&\ll_{p}n^{-p\varepsilon}(\tr(A^{*}A))^{p/2}, 
	\end{align*}
	completing the proof.
\end{proof}
\end{Details}

\begin{Details}
	\begin{lemma}
		Suppose that $X_{n}$ and $Y_{n}$ are two sequences of random variables and assume that $Y_{n}$ converges in distribution to a random variable $Y$ as $n\rightarrow\infty$. If any of the following hold:
		\begin{enumerate}
			\item For any $\varepsilon >0$, $\P(| X_{n}-Y_{n}| >\varepsilon)=o(1)$,
			\item $\P(X_{n}=Y_{n})=1-o(1)$,
			\item $\E| X_{n}-Y_{n}|^{p} =o(1)$ for $p\geq 1$,
		\end{enumerate}
		then $X_{n}$ converges in distribution to $Y$ as $n\rightarrow\infty$ as well.
		\label{Lem:ConvergenceInDistCharacterizations}
	\end{lemma}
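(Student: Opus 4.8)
The plan is to collapse all three hypotheses to the single statement that $X_n - Y_n \to 0$ in probability, and then run the standard ``converging together'' argument. First I would note that condition (2) implies condition (1): for every $\varepsilon > 0$,
\[ \P(|X_n - Y_n| > \varepsilon) \leq \P(X_n \neq Y_n) = o(1). \]
Likewise condition (3) implies condition (1) by Markov's inequality, since for every $\varepsilon > 0$,
\[ \P(|X_n - Y_n| > \varepsilon) \leq \varepsilon^{-p}\, \E|X_n - Y_n|^p = o(1). \]
Hence it is enough to prove the conclusion under hypothesis (1) alone.

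The second and main step is to show that (1), together with $Y_n \to Y$ in distribution, forces $X_n \to Y$ in distribution. I would use the characterization of convergence in distribution in terms of bounded Lipschitz test functions (the Portmanteau theorem; see, e.g., \cite{Bill}). Fix $f$ bounded by $M$ and Lipschitz with constant $L$. Then for every $\varepsilon > 0$,
\[ |\E f(X_n) - \E f(Y_n)| \leq \E|f(X_n) - f(Y_n)| \leq 2M\, \P(|X_n - Y_n| > \varepsilon) + L\varepsilon. \]
Letting $n \to \infty$ and invoking (1), the right-hand side tends to $L\varepsilon$; since $\varepsilon > 0$ is arbitrary, $\E f(X_n) - \E f(Y_n) \to 0$. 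Because $Y_n \to Y$ in distribution gives $\E f(Y_n) \to \E f(Y)$, we conclude $\E f(X_n) \to \E f(Y)$ for every bounded Lipschitz $f$, which is precisely the assertion that $X_n \to Y$ in distribution.

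There is no genuine obstacle here; the argument is entirely routine. The only point worth a word of care is that several of the random variables appearing in the body of the paper are complex-valued, so the test functions $f$ should be taken as bounded Lipschitz maps on $\C$ (equivalently on $\R^2$); the triangle-inequality splitting above is unaffected by this. If one prefers to bypass the Portmanteau theorem, an equivalent route is via characteristic functions: for $t$ in the appropriate Euclidean space,
\[ \left| \E e^{\sqrt{-1}\langle t, X_n\rangle} - \E e^{\sqrt{-1}\langle t, Y_n\rangle} \right| \leq \E\left| e^{\sqrt{-1}\langle t, X_n - Y_n\rangle} - 1 \right| \leq 2\, \P(|X_n - Y_n| > \varepsilon) + |t|\,\varepsilon, \]
so $\E e^{\sqrt{-1}\langle t, X_n\rangle}$ and $\E e^{\sqrt{-1}\langle t, Y_n\rangle}$ share the same limit for each $t$, and Lévy's continuity theorem completes the proof.
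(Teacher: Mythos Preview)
Your proof is correct and follows essentially the same route as the paper: reduce all three hypotheses to $X_n-Y_n\to 0$ in probability, then invoke the standard converging-together lemma. The paper's proof is a one-line citation of \cite[Theorem~25.4]{Bill} for that last step, whereas you spell out the Portmanteau/characteristic-function argument explicitly; the content is the same.
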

	\begin{proof}
		These all imply $|X_{n}-Y_{n}|\rightarrow0$ in distribution as $n\rightarrow\infty$. Thus the claim follows by \cite[Theorem 25.4]{Bill}.
	\end{proof}

\begin{lemma}[Theorem 3.2 from \cite{Bmart}]
	Let $\{X_{i}\}_{i=1}^{N}$ be a complex martingale difference sequence with respect to the filtration $\{\mathcal{F}_{i}\}_{i=1}^{N}$. Then
	\begin{equation*}
	\E\left|\sum_{i=1}^{N}X_{i}\right|^{2}\ll \sum_{i=1}^{N}\E|X_{i}|^{2}.
	\end{equation*} 	
	\label{Lem:MDSSumOfSquares}
\end{lemma}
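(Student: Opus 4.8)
The plan is to prove the stated bound — in fact, an equality — by exploiting the $L^2$-orthogonality of the increments of a martingale. First I would dispose of the trivial case: if $\E|X_i|^2 = \infty$ for some $i$, the right-hand side is infinite and there is nothing to prove, so assume $\E|X_i|^2 < \infty$ for every $1 \le i \le N$. By the Cauchy--Schwarz inequality this guarantees $\E|X_i \overline{X_j}| \le (\E|X_i|^2)^{1/2}(\E|X_j|^2)^{1/2} < \infty$ for all $i,j$, which justifies expanding the expectation of the square into a finite sum term by term.

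Next, using $|\sum_{i=1}^N X_i|^2 = (\sum_{i=1}^N X_i)\overline{(\sum_{j=1}^N X_j)}$ and linearity of expectation,
\[
\E\left|\sum_{i=1}^N X_i\right|^2 = \sum_{i=1}^N \E|X_i|^2 + \sum_{\substack{1 \le i,j \le N \\ i \ne j}} \E\left[X_i \overline{X_j}\right],
\]
so it suffices to show that each off-diagonal term vanishes. Fix $i < j$. Since $X_i$ is $\mathcal{F}_i$-measurable and $\mathcal{F}_i \subseteq \mathcal{F}_{j-1}$, conditioning on $\mathcal{F}_{j-1}$ and pulling out what is known gives
\[
\E\left[X_i \overline{X_j}\right] = \E\left[\E\left[X_i \overline{X_j} \mid \mathcal{F}_{j-1}\right]\right] = \E\left[X_i \,\overline{\E\left[X_j \mid \mathcal{F}_{j-1}\right]}\right] = 0,
\]
since $\E[X_j \mid \mathcal{F}_{j-1}] = 0$ by the martingale difference property (and conjugation commutes with conditional expectation). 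The symmetric computation with $i$ and $j$ interchanged handles the case $i > j$. Hence all cross terms are zero and $\E|\sum_{i=1}^N X_i|^2 = \sum_{i=1}^N \E|X_i|^2$, which is stronger than the asserted inequality.

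There is no genuine obstacle here: the only point requiring a moment's care is the integrability needed to split the expectation of the square into a finite sum and to carry out the conditioning, and this is handled by the Cauchy--Schwarz reduction in the first step. Alternatively, one could simply invoke the orthogonality of martingale increments in $L^2(\mathbb{C})$, but the elementary computation above is self-contained and gives the sharp constant.
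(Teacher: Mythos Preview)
Your proof is correct and in fact establishes the exact equality $\E|\sum_{i=1}^N X_i|^2 = \sum_{i=1}^N \E|X_i|^2$, which is stronger than the stated $\ll$ bound. The paper does not supply its own proof of this lemma; it simply cites it as Theorem 3.2 from an external reference, so there is nothing further to compare.
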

\end{Details}

\begin{lemma}
	Let $A$ and $B$ be $n\times n$ matrices. \ifdetail Let $\lnorm \cdot\rnorm$ denote the spectral norm and $\lnorm \cdot\rnorm_{2}$ denote the Hilbert--Schmidt norm.\fi Then
	\begin{equation*}
	\left|\tr(AB)\right|\leq \sqrt{n}\lnorm AB\rnorm _{2}\leq \sqrt{n}\lnorm A\rnorm \cdot \lnorm B\rnorm_{2}. 
	\end{equation*}
	\label{Lem:BoundOnTraceByNorms}
\end{lemma}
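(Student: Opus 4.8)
The plan is to prove the two inequalities separately and then chain them. For the first inequality, $|\tr(AB)| \le \sqrt{n}\,\|AB\|_2$, I would observe that for any $n\times n$ matrix $M$ one has $\tr(M) = \sum_{i=1}^n M_{ii}$, so by the Cauchy--Schwarz inequality applied to the diagonal vector $(M_{11},\dots,M_{nn})$ against the all-ones vector,
\[
|\tr(M)| \le \sqrt{n}\left(\sum_{i=1}^n |M_{ii}|^2\right)^{1/2} \le \sqrt{n}\left(\sum_{i,j=1}^n |M_{ij}|^2\right)^{1/2} = \sqrt{n}\,\|M\|_2,
\]
where the last equality is just the definition \eqref{eq:def:hs} of the Hilbert--Schmidt norm. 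Applying this with $M = AB$ gives the first inequality.

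For the second inequality, $\|AB\|_2 \le \|A\|\,\|B\|_2$, I would write $B$ in terms of its columns $b_1,\dots,b_n$, so that $AB$ has columns $Ab_1,\dots,Ab_n$. Then
\[
\|AB\|_2^2 = \sum_{j=1}^n \|Ab_j\|^2 \le \sum_{j=1}^n \|A\|^2\,\|b_j\|^2 = \|A\|^2 \sum_{j=1}^n \|b_j\|^2 = \|A\|^2\,\|B\|_2^2,
\]
using the definition of the operator norm $\|A\| = \sup_{v\ne 0}\|Av\|/\|v\|$ in the middle step. Taking square roots yields $\|AB\|_2 \le \|A\|\,\|B\|_2$.

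Combining the two displays gives $|\tr(AB)| \le \sqrt{n}\,\|AB\|_2 \le \sqrt{n}\,\|A\|\,\|B\|_2$, which is the claim. There is no real obstacle here: the argument is entirely elementary, and the only minor point to keep in mind is that the matrices are complex-valued, so the Cauchy--Schwarz step should be stated with absolute values (equivalently, one may note $|\tr(M)|^2 = |\langle \mathbf{1}, (M_{ii})_i\rangle|^2 \le n \sum_i |M_{ii}|^2$ over $\mathbb{C}$), but this requires no change to the structure of the proof.
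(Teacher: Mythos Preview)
Your proof is correct and follows essentially the same approach as the paper: the first inequality is obtained via Cauchy--Schwarz exactly as you do, and the second inequality $\|AB\|_2 \le \|A\|\,\|B\|_2$ is a standard fact which the paper cites from \cite[Theorem~A.10]{BSbook} (argued via singular values) rather than proving column-by-column as you do, but both arguments are elementary and equivalent in spirit.
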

\begin{proof}
	This follows by an application of the Cauchy--Schwarz inequality and an application of  \cite[Theorem A.10]{BSbook}.
	\begin{Details} 
		Observe
		\begin{align*}
		|\tr(AB)| &= \left|\sum_{i=1}^{n}(AB)_{i,i}\right|\\
		&=\sqrt{\left|\sum_{i=1}^{n}(AB)_{i,i}\right|^{2}}\\
		&\leq \sqrt{\sum_{i,j=1}^{n}\left|(AB)_{i,j}\right|^{2}}\\
		&=\lnorm AB\rnorm_{2}
		\end{align*}
		proving the first inequality. The second inequality follows since by \cite[Theorem A.10]{BSbook},
		\begin{align*}
		\lnorm AB\rnorm_{2} &=\sqrt{\sum_{i=1}^{n}s_{i}(AB)^{2}}\\
		&\leq \sqrt{s_{1}(A)^{2}\sum_{i=1}^{n}s_{i}(B)^{2}}\\
		&=\lnorm A\rnorm \lnorm B\rnorm_{2}.
		\end{align*}
	\end{Details}
\end{proof}

\begin{Details} 
\section{Proofs of Remark \ref{Remark:ImprovementOnVGU}}
\label{sec:ProofRemark}
\begin{lemma}
	Let $U_{k}$ be the $mn\times m$ matrix which contains as its columns $c_{k},c_{n+k},\dots,c_{(m-1)n+k}$, and define $V_{k}$ to be the $mn\times m$ matrix which contains as its columns $e_{k},e_{n+k},\dots, e_{(m-1)n+k}$ where $e_{1},\dots e_{mn}$ denote the standard basis elements of $\C^{mn}$. Let $\mathcal{G}_{n}^{(k)}(z)$ be defined as in \eqref{Def:G^{(k)}_{n}}. Then 
	\begin{equation*}
	\E\lnorm V_{k}^{T}\mathcal{G}_{n}^{(k)}(z)U_{k}\oindicator{\Omega_{n,k}}\rnorm^{2} \ll n^{-1}.
	\end{equation*}
	\label{Lem:BoundingVGU2}
\end{lemma}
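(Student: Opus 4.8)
The goal of Lemma~\ref{Lem:BoundingVGU2} is to bound $\E\lnorm V_{k}^{T}\mathcal{G}_{n}^{(k)}(z)U_{k}\oindicator{\Omega_{n,k}}\rnorm^{2}$ by $O(n^{-1})$. This is exactly the $p=1$ case of the first bound in Lemma~\ref{Lem:BoundingVGU}, so the proof is a restatement of the argument already given there, and I would simply reproduce that portion of the argument in full detail.

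The plan is as follows. First I would reduce the operator norm of the $m\times m$ matrix $V_{k}^{T}\mathcal{G}_{n}^{(k)}(z)U_{k}$ to a maximum over its entries: since $m$ is a fixed constant, $\lnorm V_{k}^{T}\mathcal{G}_{n}^{(k)}(z)U_{k}\rnorm^{2} \ll \max_{1\leq i,j\leq m}|(V_{k}^{T}\mathcal{G}_{n}^{(k)}(z)U_{k})_{(i,j)}|^{2}$, and each such entry equals $e_{(i-1)n+k}^{T}\mathcal{G}_{n}^{(k)}(z)c_{(j-1)n+k}$ by the definitions of $U_{k}$ and $V_{k}$. Next I would write
\[
\E\left|e_{(i-1)n+k}^{T}\mathcal{G}_{n}^{(k)}(z)c_{(j-1)n+k}\oindicator{\Omega_{n,k}}\right|^{2}
=\E\left[c_{(j-1)n+k}^{*}(\mathcal{G}_{n}^{(k)}(z))^{*}e_{(i-1)n+k}e_{(i-1)n+k}^{T}\mathcal{G}_{n}^{(k)}(z)c_{(j-1)n+k}\oindicator{\Omega_{n,k}}\right],
\]
observe that the matrix $(\mathcal{G}_{n}^{(k)}(z))^{*}e_{(i-1)n+k}e_{(i-1)n+k}^{T}\mathcal{G}_{n}^{(k)}(z)$ has rank at most one, and that $c_{(j-1)n+k}$ is independent of $\mathcal{G}_{n}^{(k)}(z)$ and of the event $\Omega_{n,k}$ (since $\blmat{Y}{n}^{(k)}$ has all of the columns $c_{k},c_{n+k},\dots,c_{(m-1)n+k}$ zeroed out). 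Each nonzero entry of $c_{(j-1)n+k}$ is a scaled copy of $\hat\xi$ with $\E|(c_{(j-1)n+k})_{i}|^{2}=1/n$, so applying Lemma~\ref{Lem:conjLessThanConstant} (or, since the rank is one, directly computing the conditional expectation over $c_{(j-1)n+k}$) yields a bound of $n^{-1}\lnorm(\mathcal{G}_{n}^{(k)}(z))^{*}e_{(i-1)n+k}e_{(i-1)n+k}^{T}\mathcal{G}_{n}^{(k)}(z)\oindicator{\Omega_{n,k}}\rnorm$. Finally, by Lemma~\ref{Lem:G_nBounded} the resolvent norm is bounded by a constant on $\Omega_{n,k}$, so this is $\ll n^{-1}$, uniformly in $i,j$, which gives the claim.

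There is no real obstacle here; the only point requiring care is making sure the indicator $\oindicator{\Omega_{n,k}}$ is handled correctly when factoring out the independence of $c_{(j-1)n+k}$ — this is fine because $\Omega_{n,k}$ is $\mathcal{F}$-measurable with respect to the columns other than $c_{k},\dots,c_{(m-1)n+k}$, hence independent of $c_{(j-1)n+k}$ — and ensuring that the constants in the $\ll$ may depend on $m$, which is permitted by the standing convention. I would simply reference Lemma~\ref{Lem:conjLessThanConstant} and Lemma~\ref{Lem:G_nBounded} and note that the proof is identical to the $p=1$ case of Lemma~\ref{Lem:BoundingVGU}.
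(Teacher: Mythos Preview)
Your proposal is correct and follows essentially the same approach as the paper's proof: reduce the $m\times m$ operator norm to a maximum over entries, rewrite each entry as a quadratic form $c_{(j-1)n+k}^{*}(\mathcal{G}_{n}^{(k)}(z))^{*}e_{(i-1)n+k}e_{(i-1)n+k}^{T}\mathcal{G}_{n}^{(k)}(z)c_{(j-1)n+k}$ with a rank-one middle matrix, apply Lemma~\ref{Lem:conjLessThanConstant} together with the $n^{-1/2}$ scaling of the columns, and finish with the resolvent bound from Lemma~\ref{Lem:G_nBounded}. Your observation that this is exactly the $p=1$ case of Lemma~\ref{Lem:BoundingVGU} is also how the paper treats it.
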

\begin{proof}
	Begin by observing that 
	\begin{align*}
	&\E\lnorm V_{k}^{T}\mathcal{G}_{n}(z)U_{k}\oindicator{\Omega_{n,k}}\rnorm^{2}\\
	&\quad\quad \ll \max_{1\leq i,j\leq m} \E\left|  (V_{k}^{T}\mathcal{G}_{n}^{(k)}(z)U_{k})_{(i,j)}\oindicator{\Omega_{n,k}}\right|^{2}\\
	&\quad\quad= \max_{1\leq i,j\leq m}
	\E\left|e_{(i-1)n+k}\mathcal{G}_{n}^{(k)}(z)c_{(j-1)n+k}\oindicator{\Omega_{n,k}}\right|^{2}\\
	&\quad\quad = \max_{1\leq i,j\leq m} \E\left|c_{(j-1)n+k}^{*}(\mathcal{G}_{n}^{(k)}(z))^{*}e_{(i-1)n+k}e_{(i-1)n+k}^{T}\mathcal{G}_{n}(z)c_{(j-1)n+k}\oindicator{\Omega_{n,k}}\right|.
	\end{align*}
	By Lemma \ref{Lem:conjLessThanConstant}, and since the rank of $(\mathcal{G}_{n}^{(k)}(z))^{*}e_{(i-1)n+k}e_{(i-1)n+k}^{T}\mathcal{G}_{n}(z)$ is at most 1, for any $1\leq j\leq m$ we have 
	\begin{align*}
	&\E\left|c_{(j-1)n+k}^{*}(\mathcal{G}_{n}^{(k)}(z))^{*}e_{(i-1)n+k}e_{(i-1)n+k}^{T}\mathcal{G}_{n}(z)c_{(j-1)n+k}\oindicator{\Omega_{n,k}}\right|\\
	&\quad\quad\ll n^{-1}\lnorm(\mathcal{G}_{n}^{(k)}(z))^{*}e_{(i-1)n+k}e_{(i-1)n+k}^{T}\mathcal{G}_{n}(z)\oindicator{\Omega_{n,k}}\rnorm\\
	&\quad\quad \ll n^{-1}.
	\end{align*}
	Thus,
	\[\E\lnorm V_{k}^{T}\mathcal{G}_{n}(z)U_{k}\oindicator{\Omega_{n,k}}\rnorm^{2}\ll n^{-1}\]
	as advertised. 
\end{proof}

\begin{lemma}
	Let $U_{k}$ be the $mn\times m$ matrix which contains as its columns $c_{k},c_{n+k},\dots,c_{(m-1)n+k}$, and define $V_{k}$ to be the $mn\times m$ matrix which contains as its columns $e_{k},e_{n+k},\dots, e_{(m-1)n+k}$ where $e_{1},\dots e_{mn}$ denote the standard basis elements of $\C^{mn}$. Let $\mathcal{G}_{n}^{(k)}(z)$ be defined as in \eqref{Def:G^{(k)}_{n}}. Then 
	\begin{equation*}
	\E\lnorm V_{k}^{T}(\mathcal{G}_{n}^{(k)}(z))^{2}U_{k}\oindicator{\Omega_{n,k}}\rnorm ^{2}\ll n^{-1}.
	\end{equation*}
	\label{Lem:BoundingVG^2U2}
\end{lemma}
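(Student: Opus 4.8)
The plan is to transcribe, essentially verbatim, the proof of Lemma \ref{Lem:BoundingVGU2}, replacing the single resolvent $\mathcal{G}_{n}^{(k)}(z)$ by its square $(\mathcal{G}_{n}^{(k)}(z))^{2}$ and observing that the squared resolvent still has operator norm bounded by a constant on $\Omega_{n,k}$. Since $m$ is fixed, the operator norm of the $m\times m$ matrix $V_{k}^{T}(\mathcal{G}_{n}^{(k)}(z))^{2}U_{k}$ is comparable to the maximum of the absolute values of its entries, so it suffices to show
\[
\max_{1\le i,j\le m}\E\left|e_{(i-1)n+k}^{T}(\mathcal{G}_{n}^{(k)}(z))^{2}c_{(j-1)n+k}\oindicator{\Omega_{n,k}}\right|^{2}\ll n^{-1}.
\]
Writing the quantity inside the expectation as a quadratic form, it equals
\[
c_{(j-1)n+k}^{*}\bigl((\mathcal{G}_{n}^{(k)}(z))^{2}\bigr)^{*}e_{(i-1)n+k}e_{(i-1)n+k}^{T}(\mathcal{G}_{n}^{(k)}(z))^{2}c_{(j-1)n+k}\oindicator{\Omega_{n,k}},
\]
so, setting $A:=\bigl((\mathcal{G}_{n}^{(k)}(z))^{2}\bigr)^{*}e_{(i-1)n+k}e_{(i-1)n+k}^{T}(\mathcal{G}_{n}^{(k)}(z))^{2}$, we deal with a Hermitian positive semidefinite matrix of rank at most one.

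The key structural point, exactly as in Lemma \ref{Lem:BoundingVGU2}, is independence: both $\mathcal{G}_{n}^{(k)}(z)$ and the event $\Omega_{n,k}$ are functions of $\blmat{Y}{n}^{(k)}$, which has the columns $c_{k},c_{n+k},\dots,c_{(m-1)n+k}$ zeroed out, so $A\oindicator{\Omega_{n,k}}$ is independent of $c_{(j-1)n+k}$. The nonzero entries of $c_{(j-1)n+k}$ are iid copies of $n^{-1/2}\hat{\xi}$ with second moment $1/n$, so conditioning on $A\oindicator{\Omega_{n,k}}$ and using positivity together with $\rank A\le 1$ gives
\[
\E\left[c_{(j-1)n+k}^{*}A\oindicator{\Omega_{n,k}}c_{(j-1)n+k}\right]=\frac1n\E\left[\tr\bigl(A\oindicator{\Omega_{n,k}}\bigr)_{S\times S}\right]\le\frac1n\E\left[\tr\bigl(A\oindicator{\Omega_{n,k}}\bigr)\right]\le\frac1n\E\left[\|A\|\oindicator{\Omega_{n,k}}\right],
\]
where $S$ is the support of $c_{(j-1)n+k}$ (one can alternatively cite Lemma \ref{Lem:ConjLessThanPartialTrace} with $p=1$). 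On $\Omega_{n,k}$ we have $\|\mathcal{G}_{n}^{(k)}(z)\|\le C$ uniformly in $z\in\mathcal{C}$ by Lemma \ref{Lem:G_nBounded}, hence $\|A\|\le\|\mathcal{G}_{n}^{(k)}(z)\|^{4}\oindicator{\Omega_{n,k}}\le C^{4}$, and the bound $\ll n^{-1}$ follows.

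I do not expect any genuine obstacle here: the argument is a routine adaptation of the $p=1$ case inside the proof of Lemma \ref{Lem:BoundingVGU}, and the extra factor of the resolvent is harmless precisely because the operator-norm bound of Lemma \ref{Lem:G_nBounded} survives squaring. The only mild care needed is to keep the indicator $\oindicator{\Omega_{n,k}}$ attached to $A$ throughout so that the conditioning step is valid; nothing beyond mean zero, unit variance, and the independence of $c_{(j-1)n+k}$ from $\blmat{Y}{n}^{(k)}$ is used. (For the higher-moment estimate mentioned in Remark \ref{Remark:ImprovementOnVGU}, which is not required for this lemma, one would instead invoke Lemma \ref{Lem:conjLessThanConstant} with the same $A$.)
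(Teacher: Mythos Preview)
Your proposal is correct and follows essentially the same approach as the paper: reduce the $m\times m$ operator norm to the maximum entry, rewrite $|e_{(i-1)n+k}^{T}(\mathcal{G}_{n}^{(k)}(z))^{2}c_{(j-1)n+k}|^{2}$ as the quadratic form $c^{*}Ac$ with $A=((\mathcal{G}_{n}^{(k)}(z))^{2})^{*}e_{(i-1)n+k}e_{(i-1)n+k}^{T}(\mathcal{G}_{n}^{(k)}(z))^{2}$ of rank at most one, and then use independence of $c_{(j-1)n+k}$ from $A\oindicator{\Omega_{n,k}}$ together with the resolvent bound from Lemma \ref{Lem:G_nBounded} to get $n^{-1}\|A\oindicator{\Omega_{n,k}}\|\ll n^{-1}$. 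This is exactly the $p=1$ case of the paper's proof of Lemma \ref{Lem:BoundingVGU} with $\mathcal{G}_{n}^{(k)}(z)$ replaced by its square, as the paper itself indicates in Remark \ref{Remark:ImprovementOnVGU}.
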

\begin{proof}
	By the same argument as in the proof of Lemma \ref{Lem:BoundingVGU2} above, we have  
	\begin{align*}
	&\E\lnorm V_{k}^{T}(\mathcal{G}_{n}(z))^{2}U_{k}\oindicator{\Omega_{n,k}}\rnorm^{2}\\
	\ifdetail &\quad\quad \ll \max_{1\leq i,j\leq m} \E\left| (V_{k}^{T}(\mathcal{G}_{n}^{(k)}(z))^{2}U_{k})_{(i,j)}\oindicator{\Omega_{n,k}}\right|^{2}\\\fi
	\ifdetail &\quad\quad =\max_{1\leq i,j\leq m} \E\left|e_{(i-1)n+k}(\mathcal{G}_{n}^{(k)}(z))^{2}c_{(j-1)n+k}\oindicator{\Omega_{n,k}}\right|^{2}\\\fi
	&\quad\quad \ll \max_{1\leq i,j\leq m} \E\left|c_{(j-1)n+k}^{*}(\mathcal{G}_{n}^{(k)}(z))^{2*}e_{(i-1)n+k}e_{(i-1)n+k}^{T}(\mathcal{G}_{n}(z))^{2}c_{(j-1)n+k}\oindicator{\Omega_{n,k}}\right|.
	\end{align*}
	By Lemma \ref{Lem:conjLessThanConstant}, and since the rank of $(\mathcal{G}_{n}^{(k)}(z))^{2*}e_{(i-1)n+k}e_{(i-1)n+k}^{T}(\mathcal{G}_{n}(z))^{2}$ is at most 1, for any $1\leq j\leq m$ we have 
	\begin{align*}
	&\E\left|c_{(j-1)n+k}^{*}(\mathcal{G}_{n}^{(k)}(z))^{2*}e_{(i-1)n+k}e_{(i-1)n+k}^{T}(\mathcal{G}_{n}(z))^{2}c_{(j-1)n+k}\oindicator{\Omega_{n,k}}\right|\\
	&\quad\quad\ll n^{-1}\lnorm(\mathcal{G}_{n}^{(k)}(z))^{2*}e_{(i-1)n+k}e_{(i-1)n+k}^{T}(\mathcal{G}_{n}(z))^{2}\oindicator{\Omega_{n,k}}\rnorm\\
	&\quad\quad \ll n^{-1}.
	\end{align*}
	Thus,
	\[\E\lnorm V_{k}^{T}(\mathcal{G}_{n}(z))^{2}U_{k}\oindicator{\Omega_{n,k}}\rnorm^{2}\ll n^{-1}\]
	as advertised. 
\end{proof}
\end{Details}

\end{document}